\DeclareMathOperator{\flim}{``\mathrm{lim}''}
\DeclareMathOperator{\fcolim}{``\mathrm{colim}''}
\newtheorem{theorem}{Theorem}
\numberwithin{theorem}{section}
\newtheorem{lemma}[theorem]{Lemma}
\newtheorem{corollary}[theorem]{Corollary}
\newtheorem{proposition}[theorem]{Proposition}
\newtheorem{defprop}[theorem]{Definition/Proposition}
\theoremstyle{definition}
\newtheorem{remark}[theorem]{Remark}
\newtheorem{exercise}[theorem]{Exercise}
\newtheorem{warning}[theorem]{Warning}
\newtheorem{definition}[theorem]{Definition}
\newtheorem{example}[theorem]{Example}
\newtheorem{construction}[theorem]{Construction}
\date{\today}
\title{Six-Functor Formalisms}
\author{Peter Scholze}
\begin{document}

\maketitle

\tableofcontents

\chapter*{Six-Functor Formalisms}

\section*{Preface}

These are lecture notes for a course in Winter 2022/23, updated and completed in October 2025.\\

The goal of the lectures is to present some recent developments around six-functor formalisms, in particular
\begin{enumerate}
\item[{\rm (1)}] the abstract theory of $6$-functor formalisms;
\item[{\rm (2)}] the $2$-category of cohomological correspondences, and resulting simplifications in the proofs of Poincar\'e--Verdier duality results;
\item[{\rm (3)}] the relation between $6$-functor formalisms and ``geometric rings'';
\item[{\rm (4)}] many examples of $6$-functor formalisms, both old and new.
\end{enumerate}

In the last few years, there has been a surge of activity in the area. Let me mention some (personal) highlights, necessarily highly incomplete:
\begin{enumerate}
\item[{\rm (1)}] The construction principle of $6$-functor formalisms has seen multiple (new) proofs. On the one hand, Chowdhury \cite{ChowdhuryIII} has reworked Liu--Zheng's proof (who also updated their work). On the other hand, Cnossen--Lenz--Linskens \cite{CnossenLenzLinskens} have reproved and sharpened Liu--Zheng's construction principle using $(\infty,2)$-categories of spans, thereby also relating it to the Gaitsgory--Rozenblyum approach \cite{GaitsgoryRozenblyum}. Unlike the work of Liu--Zheng that heavily relies on the quasicategory model for $(\infty,1)$-categories, their argument is ``model-independent''. In a related vein, the Gaitsgory--Rozenblyum approach has now been completed in work of Loubaton--Ruit \cite{LoubatonRuit}, with proofs of all of their conjectures.
\item[{\rm (2)}] In a related direction, Dauser--Kuijper \cite{DauserKuijper} have proved a conjecture stated in the original version of these notes, that the Liu--Zheng construction is in fact unique (up to all higher coherences), when all morphisms are truncated. Unlike the work of Cnossen--Lenz--Linskens, their result does not require $(\infty,2)$-categorical upgrades of the notion of a six-functor formalism.
\item[{\rm (3)}] Heyer--Mann \cite{HeyerMann} have developed many of the ideas presented in these lectures in much more detail, and the reader should consult their work for a more thorough treatment.
\item[{\rm (4)}] The idea that motives are closely related to ring stacks has been turned into precise (higher categorical) theorems in Aoki's upcoming thesis \cite{AokiThesis}.
\end{enumerate}

\hfill{October 2025}

\newpage

\section{Lecture I: Introduction}

The first main goal of these lectures is to answer the question: What is a $6$-functor formalism?

For a long time, the answer was ``you know it when you see it'': The first such formalism was established during the development of \'etale cohomology of schemes, and it was soon realized that very similar formalisms also exist in other contexts, such as for (usual) cohomology of topological spaces, or for $D$-modules on algebraic varieties in characteristic $0$. But as I will discuss below, it is a highly nontrivial task to formalize all the structure implicit in a $6$-functor formalism, and the abstract notion was only formalized recently.

To get started, we will recall the $6$-functor formalism on (nice) topological spaces. For this lecture, let us work with the category $C$ of finite-dimensional locally compact Hausdorff spaces $X$, say (for simplicity) those that can be written as locally closed subsets of some $\mathbb R^n$. We are interested in studying the cohomology of $X$, say $H^i(X,\mathbb Z)$. In this generality, where $X$ might be a Cantor set, the good definition of cohomology is not singular cohomology, but instead the sheaf cohomology. To define it, one starts with the abelian category $\mathrm{Ab}(X)$ of abelian sheaves on $X$ and the global sections functor
\[
H^0(X,-): \mathrm{Ab}(X)\to \mathrm{Ab}.
\]
This is a left exact functor, and one can define its right derived functors
\[
H^i(X,-): \mathrm{Ab}(X)\to \mathrm{Ab}.
\]
Applied to the constant sheaf $\mathbb Z$ on $X$, this defines the cohomology groups $H^i(X,\mathbb Z)$ as desired.

Reflecting on this definition, one is led to also contemplate the full derived functor
\[
R\Gamma(X,-): D(\mathrm{Ab}(X))\to D(\mathrm{Ab}).
\]
In particular, one is led to contemplate the derived category $D(X,\mathbb Z):=D(\mathrm{Ab}(X))$ of abelian sheaves on $X$, for any such $X$. As a categorically minded person, one is also immediately led to contemplate its functoriality in $X$. Namely, for any $f: X\to Y$, there is an exact pullback functor $f^\ast: \mathrm{Ab}(Y)\to \mathrm{Ab}(X)$ inducing a functor
\[
f^\ast: D(Y,\mathbb Z)\to D(X,\mathbb Z)
\]
(our first functor!) which has a right adjoint
\[
f_\ast: D(X,\mathbb Z)\to D(Y,\mathbb Z)
\]
(our second functor!). In the case where $Y=\ast$ is a point, the functor $f^\ast$ is the ``constant sheaf functor'', while $f_\ast=R\Gamma(X,-)$ is the functor introduced above. In particular, the cohomology of $X$,
\[
R\Gamma(X,\mathbb Z)=f_\ast \mathbb Z=f_\ast f^\ast \mathbb Z\in D(\ast,\mathbb Z)=D(\mathbb Z),
\]
has a simple description in terms of these functors.

What is the functor $f_\ast$ in general? It is a relative version of cohomology. Ideally, one would like to say that it interpolates the cohomology of all the fibres. While this is not true in general, it is true when $f$ is proper (i.e.~the preimage of any compact subset is compact):

\begin{theorem}[Proper Base Change] Let $f: X\to Y$ be a proper map in $C$ and let $y\in Y$ with fibre $X_y=X\times_Y \{y\}$, with inclusions $i_X: X_y\to X$ and $i: \{y\}\to Y$. Take any $A\in D(X,\mathbb Z)$. Then the natural map
\[
(f_\ast A)_y\to R\Gamma(X_y,i^\ast A)
\]
is an isomorphism, where $(f_\ast A)_y := i^\ast f_\ast A\in D(\{y\},\mathbb Z)=D(\mathbb Z)$ is the stalk of $f_\ast A$ at $y$.

More generally, for any other map $g: Y'\to Y$ with base change $f':X'=X\times_Y Y'\to Y'$ (and $g': X'\to X$), the natural base change transformation
\[
g^\ast f_\ast\to f'_\ast g^{\prime\ast}
\]
of functors $D(X,\mathbb Z)\to D(Y',\mathbb Z)$ is an isomorphism.
\end{theorem}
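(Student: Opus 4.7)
The plan is to derive the general base-change statement from the stalkwise statement, and to prove the stalkwise statement by expressing both sides as filtered colimits over open neighbourhoods.

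First I would reduce the base-change claim to the first part of the theorem. Isomorphisms in $D(Y',\mathbb{Z})$ can be tested on stalks, so fix $y' \in Y'$ with image $y = g(y') \in Y$. The stalk of $g^\ast f_\ast A$ at $y'$ is $(f_\ast A)_y$, which by the first part equals $R\Gamma(X_y, A|_{X_y})$. Applying the first part to the proper map $f'$ (properness is stable under base change) and to $y' \in Y'$ gives $(f'_\ast g^{\prime\ast} A)_{y'} \simeq R\Gamma(X'_{y'}, (g^{\prime\ast} A)|_{X'_{y'}})$. Since the fibre $X'_{y'}$ is canonically identified with $X_y$ and $g^{\prime\ast} A$ restricts on this fibre to $A|_{X_y}$, the two sides agree, so the natural base-change map is an isomorphism stalk-by-stalk.

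For the stalkwise statement, I would use two filtered-colimit formulas. A standard computation of stalks of derived pushforwards gives
\[
(f_\ast A)_y \;\simeq\; \mathrm{colim}_{y \in U}\, R\Gamma(f^{-1}(U), A),
\]
where $U$ ranges over open neighbourhoods of $y$ in $Y$. Dually, since $X_y$ is a compact subspace of the locally compact Hausdorff space $X$, a tautness result gives
\[
R\Gamma(X_y, A|_{X_y}) \;\simeq\; \mathrm{colim}_{X_y \subset V}\, R\Gamma(V, A),
\]
where $V$ ranges over open neighbourhoods of $X_y$ in $X$. The bridge between these two colimits is the key topological consequence of properness: the family $\{f^{-1}(U) : y \in U \text{ open}\}$ is cofinal among open neighbourhoods of $X_y$. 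Indeed, given any open $V \supset X_y$, the image $f(X \setminus V)$ is closed (as $f$ is closed by properness) and does not contain $y$, so its complement is an open $U \ni y$ with $f^{-1}(U) \subset V$.

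The hard part will be the second display, i.e.\ the tautness statement $R\Gamma(X_y, A|_{X_y}) \simeq \mathrm{colim}_V R\Gamma(V, A)$. For an ordinary abelian sheaf this is Bredon's classical theorem that compact subsets of locally compact Hausdorff spaces are taut; in the derived setting one has to promote this to complexes $A \in D(X,\mathbb{Z})$, which uses the finite cohomological dimension of the spaces in $C$ (guaranteed by the finite-dimensionality assumption) to control the interaction of $R\Gamma$ with the filtered colimit, either via a $K$-injective resolution argument or via a spectral sequence reducing to the sheaf case degree by degree. Once this analytic input is in place, the three displays combine to yield the stalkwise isomorphism, and the reduction above upgrades it to the full base-change statement.
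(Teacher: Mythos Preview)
Your argument is correct for the finite-dimensional setting of Lecture~I, and it is the classical route (tautness of compact subsets plus cofinality of $\{f^{-1}(U)\}$ via closedness of $f$). The paper does not prove the statement in Lecture~I; the actual proof appears later as Theorem~\ref{thm:properbasechangetopology} in Lecture~VII and takes a genuinely different path. There, one first redescribes $D(X)$ as contravariant functors on \emph{closed} subsets $K\subset X$ satisfying a Mayer--Vietoris condition and the continuity condition $\mathcal F(\bigcap_i K_i)=\mathrm{colim}_i\mathcal F(K_i)$; in this description $f_\ast$ is simply $(f_\ast\mathcal F)(K)=\mathcal F(f^{-1}(K))$, which visibly commutes with all colimits. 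Projection formula is then checked on generators $j_!\mathbb Z$, and general base change is reduced, via the projection formula for $f$, $g$ and $g'$, to the single isomorphism $g'_\ast\mathbb Z\cong f^\ast g_\ast\mathbb Z$, which is checked on stalks. What the paper's approach buys is that it works for \emph{all} locally compact Hausdorff spaces, not just finite-dimensional ones: the closed-subset description packages tautness once and for all and avoids the $K$-injective/cohomological-dimension gymnastics you flag for unbounded complexes. What your approach buys is directness and minimal machinery in the finite-dimensional case.
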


Here, $g^\ast f_\ast\to f'_\ast g^{\prime\ast}$ is adjoint to
\[
f^{\prime\ast} g^\ast f_\ast = g^{\prime\ast} f^\ast f_\ast\to g^{\prime\ast}
\]
where the latter map comes from the counit map $f^\ast f_\ast\to \mathrm{id}$.

At this point, we have defined the cohomology $R\Gamma(X,\mathbb Z)$ of any $X\in C$. In fact, we have also encoded the pullback maps $R\Gamma(Y,\mathbb Z)\to R\Gamma(X,\mathbb Z)$ for $f: X\to Y$: There is a unit map $\mathbb Z\to f_\ast f^\ast \mathbb Z=f_\ast \mathbb Z$, and taking $R\Gamma(Y,-)$ produces the desired map as $R\Gamma(Y,f_\ast \mathbb Z)=R\Gamma(X,\mathbb Z)$ as the lower-shriek functors compose.

But there are further important structures on cohomology. For example:

\begin{theorem}[K\"unneth Formula] For proper $X$ and $Y$, there is a natural isomorphism
\[
R\Gamma(X,\mathbb Z)\otimes R\Gamma(Y,\mathbb Z)\cong R\Gamma(X\times Y,\mathbb Z).
\]
\end{theorem}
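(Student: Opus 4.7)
The plan is to relate the cohomology of the product $X\times Y$ to that of its factors by running the two projections through the six-functor machinery already set up, with proper base change doing the essential work. Write $p_X\colon X\times Y\to X$, $p_Y\colon X\times Y\to Y$ for the projections, and $f_X,f_Y,f$ for the structure maps of $X$, $Y$, $X\times Y$ to the point $\ast$.

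First I would rewrite the right-hand side as
\[
R\Gamma(X\times Y,\mathbb Z) = f_\ast \mathbb Z_{X\times Y} = f_{Y\ast}\, p_{Y\ast}\, p_X^\ast \mathbb Z_X,
\]
using $\mathbb Z_{X\times Y} = p_X^\ast \mathbb Z_X$ and the factorisation $f = f_Y\circ p_Y$. Since $X$ is proper, the map $f_X$ is proper, so the proper base change theorem of the previous page applies to the Cartesian square with sides $p_X,p_Y,f_X,f_Y$ and gives a natural isomorphism
\[
f_Y^\ast f_{X\ast} \;\xrightarrow{\sim}\; p_{Y\ast}\, p_X^\ast.
\]
Evaluated on $\mathbb Z_X$ this yields $p_{Y\ast}\, p_X^\ast \mathbb Z_X \cong f_Y^\ast R\Gamma(X,\mathbb Z)$, i.e.~the constant sheaf (complex) on $Y$ with value $R\Gamma(X,\mathbb Z)$.

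To conclude, I would apply $f_{Y\ast}$ and invoke the projection formula for the proper map $f_Y$, in the form $f_{Y\ast}(f_Y^\ast M) \cong f_{Y\ast}\mathbb Z_Y \otimes M$ for $M\in D(\mathbb Z)$. With $M = R\Gamma(X,\mathbb Z)$ and $f_{Y\ast}\mathbb Z_Y = R\Gamma(Y,\mathbb Z)$, composing the three identifications yields
\[
R\Gamma(X\times Y,\mathbb Z) \;\cong\; R\Gamma(X,\mathbb Z)\otimes R\Gamma(Y,\mathbb Z).
\]
A naturality check then confirms that this isomorphism agrees with the expected cup-product map constructed by pulling back the constant sheaf along each projection and multiplying via $\mathbb Z\otimes\mathbb Z\to\mathbb Z$.

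The main obstacle, beyond organising the diagram chases, is the projection formula, which has not yet been stated in the excerpt. I would either introduce it as a companion result (it is a standard consequence of proper base change together with the symmetric monoidal structure, since proper pushforward upgrades to a morphism of $D(Y,\mathbb Z)$-modules), or bypass it by d\'evissage: for $M$ a bounded complex of finitely generated free $\mathbb Z$-modules the identification $f_{Y\ast} f_Y^\ast M \cong R\Gamma(Y,\mathbb Z)\otimes M$ is immediate from additivity of $f_{Y\ast}$, and one extends to general $M=R\Gamma(X,\mathbb Z)$ using that $X$ is compact and finite-dimensional, so that $R\Gamma(X,\mathbb Z)$ is a bounded complex. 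A more foundational subtlety is the coherent interpretation of $f_Y^\ast$ applied to a complex in $D(\mathbb Z)$; this is already implicit in the derived formulation of proper base change above, and is most cleanly handled in an $\infty$-categorical enhancement in which $f_Y^\ast$ is a symmetric monoidal functor from the start.
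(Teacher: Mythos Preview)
Your proof is correct and follows essentially the same route as the paper: one application of proper base change (to identify $p_{Y\ast}p_X^\ast\mathbb Z$ with the constant sheaf on $Y$ with value $R\Gamma(X,\mathbb Z)$) followed by one application of the projection formula. Your worry about the projection formula is unfounded---the paper states it as a theorem immediately before giving this computation, and in fact proves the slightly more general statement $R\Gamma(X,A)\otimes R\Gamma(Y,B)\cong R\Gamma(X\times Y,A\boxtimes B)$ using two projection formulas and one base change, factoring through the other projection.
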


Here, $\otimes: D(\mathbb Z)\times D(\mathbb Z)\to D(\mathbb Z)$ is the tensor product on $D(\mathbb Z)$, i.e.~the derived tensor product.

Even the formulation of this theorem requires us to contemplate the tensor product on $D(\mathbb Z)$; and of course one is then led to also contemplate the tensor product on $D(X,\mathbb Z)$ for any $X$. (This arises from the usual tensor product on $\mathrm{Ab}(X)$ by deriving.) Thus, we consider
\[
-\otimes-: D(X,\mathbb Z)\times D(X,\mathbb Z)\to D(X,\mathbb Z)
\]
(our third functor!), which again has a (partial) right adjoint
\[
\underline{\mathrm{Hom}}(-,-): D(X,\mathbb Z)^{\mathrm{op}}\times D(X,\mathbb Z)\to D(X,\mathbb Z)
\]
(our fourth functor!) characterized by the adjunction
\[
\mathrm{Hom}(A,\underline{\mathrm{Hom}}(B,C))\cong \mathrm{Hom}(A\otimes B,C).
\]

How do these new functors interact with the previous ones? The pullback functors $f^\ast$ are symmetric monoidal, i.e.~commute with the tensor product, so in particular
\[
f^\ast(A\otimes B)\cong f^\ast A\otimes f^\ast B.
\]
Note that these isomorphisms here are really extra data, that ought to be subject to further compatibilities. Fortunately, there is a theory of symmetric monoidal categories and symmetric monoidal functors that makes it possible to express these compatibilities.

There is also a compatibility between tensor products and pushforward, again in the proper case.

\begin{theorem}[Projection Formula] Let $f: X\to Y$ be proper and $A\in D(X,\mathbb Z)$, $B\in D(Y,\mathbb Z)$. Then the natural map
\[
f_\ast A\otimes B\to f_\ast(A\otimes f^\ast B)
\]
is an isomorphism.
\end{theorem}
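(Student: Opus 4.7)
The plan is to check the map is an isomorphism pointwise on $Y$ via proper base change, reducing the statement to a routine dévissage over a point. First I would construct the map precisely: by the $(f^\ast, f_\ast)$ adjunction, it corresponds to the composite
\[
f^\ast(f_\ast A\otimes B)\cong f^\ast f_\ast A\otimes f^\ast B\xrightarrow{\varepsilon\otimes\mathrm{id}} A\otimes f^\ast B,
\]
where the first isomorphism uses that $f^\ast$ is symmetric monoidal and the second arrow comes from the counit $\varepsilon:f^\ast f_\ast A\to A$.

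Next I would check this map is an isomorphism after pulling back along every point inclusion $i:\{y\}\hookrightarrow Y$; the stalk functors are jointly conservative on $D(Y,\mathbb{Z})$ and each is symmetric monoidal. Applying $i^\ast$ to both sides and invoking proper base change for the Cartesian square with corner $\{y\}$, the map becomes
\[
R\Gamma(X_y, A|_{X_y})\otimes B_y\longrightarrow R\Gamma(X_y, A|_{X_y}\otimes (f^\ast B)|_{X_y}),
\]
where $X_y$ is the (compact) fibre. Moreover $(f^\ast B)|_{X_y}$ is the constant sheaf $\underline{B_y}$ on $X_y$, since pulling back to $\{y\}$ and then to $X_y$ agrees with pulling back directly along the constant map $X_y\to \{y\}$. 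Thus the theorem reduces to the following: for $X\in C$ compact Hausdorff, the natural map $R\Gamma(X,A)\otimes B\to R\Gamma(X,A\otimes \underline{B})$ is an isomorphism for all $A\in D(X,\mathbb{Z})$ and $B\in D(\mathbb{Z})$.

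I would handle this remaining statement by dévissage in $B$: when $B=\mathbb{Z}$ both sides equal $R\Gamma(X,A)$ and the map is the identity; both sides are exact triangulated functors of $B$; and both commute with direct sums in $B$. Since $\mathbb{Z}$ generates $D(\mathbb{Z})$ under shifts, cones, and direct sums, the assertion follows.

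The step I expect to be the main technical obstacle is verifying that both sides commute with direct sums in $B$. The left-hand side obviously does, as the tensor product preserves colimits in each variable. For the right-hand side one needs that $R\Gamma(X,-)$ commutes with direct sums for $X$ compact Hausdorff; this is where compactness (inherited from properness of $f$ applied to the compact target $\{y\}$) genuinely enters, since compactness of $X$ ensures $\Gamma(X,-)$ commutes with filtered colimits of sheaves, and the finite-dimensionality built into the definition of $C$ ensures $\Gamma$ has bounded cohomological dimension, so the commutation survives to the total derived functor $R\Gamma(X,-)$.
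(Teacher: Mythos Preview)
Your proof is correct and follows the classical stalk-by-stalk approach. The paper (in Lecture~VII, where this is part~(2) of the Proper Base Change theorem) takes a different route: rather than reducing to points of $Y$, it reduces on the $B$-side to the generators $B=j_!\mathbb Z$ for open immersions $j:Y'\hookrightarrow Y$, using that $f_\ast$ commutes with colimits. The claim is then checked by restricting to $Y'$ (where it is base change along an open immersion, hence trivial) and to the closed complement $Z$ (where both sides vanish, since $j_!\mathbb Z$ is a colimit of sheaves $j_{i!}\mathbb Z$ supported away from a neighborhood of $Z$). Both arguments ultimately rest on the same input---that proper pushforward preserves colimits---but the paper's version avoids any appeal to base change. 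This matters for the paper's logical order: it proves the projection formula \emph{before} general proper base change, and in fact uses the former to establish the latter. Your argument only invokes base change along closed point inclusions, which is already contained in the paper's description of $f_\ast$ on closed subsets (part~(1)), so there is no genuine circularity; but the paper's route makes the independence from base change manifest and generalizes more readily to settings without enough points.
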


The map is adjoint to
\[
f^\ast(f_\ast A\otimes B)\cong f^\ast f_\ast A\otimes f^\ast B\to A\otimes f^\ast B.
\]

This is enough to prove the K\"unneth Formula. In fact, if $X$ and $Y$ are proper and $A\in D(X,\mathbb Z)$ and $B\in D(Y,\mathbb Z)$, we claim that there is a natural isomorphism
\[
R\Gamma(X,A)\otimes R\Gamma(Y,B)\cong R\Gamma(X\times Y,A\boxtimes B)
\]
where $A\boxtimes B := p_1^\ast A\otimes p_2^\ast B\in D(X\times Y,\mathbb Z)$. Here, we use the maps
\[\xymatrix{
X\times Y\ar[r]^{p_2}\ar[d]_{p_1}\ar[dr]^p & Y\ar[d]^{p_Y}\\
X\ar[r]^{p_X} & \ast.
}\]
Indeed,
\[\begin{aligned}
R\Gamma(X\times Y,A\boxtimes B)&=p_\ast(p_1^\ast A\otimes p_2^\ast B)\\
&=p_{X\ast}(p_{1\ast}(p_1^\ast A\otimes p_2^\ast B))\\
&\cong p_{X\ast}(A\otimes p_{1\ast} p_2^\ast B)\\
&\cong p_{X\ast}(A\otimes p_X^\ast p_{Y\ast} B)\\
&\cong p_{X\ast} A\otimes p_{Y\ast} B=R\Gamma(X,A)\otimes R\Gamma(Y,B).
\end{aligned}\]

Finally, there is one more important structural feature of cohomology: Duality.

\begin{theorem}[Poincar\'e Duality] Assume that $X$ is a compact oriented manifold of dimension $d$. Then there is a natural isomorphism
\[
R\Gamma(X,\mathbb Z)[d]\cong R\Gamma(X,\mathbb Z)^\vee
\]
where $-^\vee = \underline{\mathrm{Hom}}(-,\mathbb Z)$ is the duality functor on $D(\mathbb Z)$.
\end{theorem}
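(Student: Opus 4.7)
The plan is to deduce the theorem from the full six-functor formalism, which enlarges the quadruple $(f^\ast, f_\ast, \otimes, \underline{\mathrm{Hom}})$ introduced above with two further functors: the ``compactly supported pushforward'' $f_!$ and its right adjoint $f^!$, satisfying $f_! = f_\ast$ whenever $f$ is proper. Once this enhancement is available on $C$, the theorem will reduce to the computation of the dualizing complex $\omega_X := f^! \mathbb Z$ for $f: X\to \ast$, namely to the assertion that $\omega_X \cong \mathbb Z_X[d]$ for an oriented $d$-manifold $X$.

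Granting this, Poincar\'e Duality is a formal consequence. Since $X$ is compact, $f$ is proper, so $f_\ast = f_!$; the $(f_!, f^!)$-adjunction, internalized via the projection formula for $f_!$, gives a natural isomorphism $\underline{\mathrm{Hom}}(f_! A, B) \cong f_\ast \underline{\mathrm{Hom}}(A, f^! B)$. Applied with $A = \mathbb Z_X$ and $B = \mathbb Z$, this yields
\[
R\Gamma(X, \mathbb Z)^\vee = \underline{\mathrm{Hom}}(f_! \mathbb Z_X, \mathbb Z) \cong f_\ast \underline{\mathrm{Hom}}(\mathbb Z_X, f^! \mathbb Z) = f_\ast \omega_X \cong R\Gamma(X, \mathbb Z)[d].
\]

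The remaining work splits into three pieces: (i) construct $f_!$ and $f^!$ on $C$, establishing proper base change and the projection formula for $f_!$; (ii) verify $f_! = f_\ast$ for proper $f$, so that the Proper Base Change theorem above becomes a special case of $!$-base change; (iii) compute $\omega_X$ for an oriented manifold. Step (iii) is local on $X$: by reduction to Euclidean charts one may assume $X = \mathbb R^d$, and by translation invariance $\omega_{\mathbb R^d}$ is then a constant sheaf whose value is pinned down by $f_\ast \omega_{\mathbb R^d} \cong \underline{\mathrm{Hom}}(R\Gamma_c(\mathbb R^d, \mathbb Z), \mathbb Z) \cong \mathbb Z[d]$, via iterated K\"unneth starting from $R\Gamma_c(\mathbb R, \mathbb Z) = \mathbb Z[-1]$. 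The orientation is precisely what is needed to coherently glue the local trivializations $\omega_X|_U \cong \mathbb Z[d]$ into a single global isomorphism $\omega_X \cong \mathbb Z_X[d]$; without one, $\omega_X$ is only a twist by the orientation local system.

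The main obstacle is step (i): constructing the shriek functors on the category $C$ of finite-dimensional locally compact Hausdorff spaces, together with all the higher compatibilities they satisfy with each other and with $(f^\ast, f_\ast, \otimes, \underline{\mathrm{Hom}})$. This is precisely the content of the abstract theory of six-functor formalisms that is the subject of the following lectures; the modern approach packages all the required coherences at once via $(\infty,2)$-categories of correspondences, rather than proving each compatibility by hand from a concrete model of ``sections with proper support''.
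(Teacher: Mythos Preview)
Your plan is correct and matches the paper's overall strategy: construct the full six-functor formalism on $C$ (carried out in Lectures~II--IV and VII), then reduce Poincar\'e duality to identifying the dualizing complex locally on a manifold, with the orientation serving to glue the local trivializations. The one substantive difference is in how the local step is executed. You argue that $\omega_{\mathbb R^d}$ is constant by translation-equivariance and then pin down its value via the adjunction identity $f_\ast f^!\mathbb Z\cong\underline{\mathrm{Hom}}(f_!\mathbb Z,\mathbb Z)$ together with $R\Gamma_c(\mathbb R^d,\mathbb Z)\cong\mathbb Z[-d]$. The paper instead proves directly that $f:\mathbb R\to\ast$ is \emph{cohomologically smooth} via the criterion of Theorem~\ref{thm:critcohomsmooth}: one writes down explicit maps $\alpha:\Delta_!\mathbb Z\to p_2^\ast\mathbb Z[1]$ and $\beta:f_!\mathbb Z[1]\to\mathbb Z$ (the latter from $R\Gamma_c(\mathbb R,\mathbb Z[1])\cong\mathbb Z$, the former from a short computation with $\mathbb R^2\setminus\Delta$) and verifies, with the help of Lemma~\ref{lem:adjunctioncheat}, that they witness an adjunction in the $2$-category $\mathrm{LZ}_{\mathcal D}$. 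The payoff of the paper's route is that cohomological smoothness packages the identification $f^!\cong f^\ast\otimes\omega$, the invertibility of $\omega$, and its compatibility with all base changes into a single finite verification; your route is the classical one and is perfectly valid, but it treats these ingredients separately.
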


In other words, up to shift, cohomology is self-dual. In fact, one can state a more precise version applying to any sheaf $A\in D(X,\mathbb Z)$ with dual $A^\vee=\underline{\mathrm{Hom}}(A,\mathbb Z)$:
\[
R\Gamma(X,A^\vee)[d]\cong R\Gamma(X,A)^\vee.
\]
In fact, one can even replace the $\mathbb Z$ in the dual $\underline{\mathrm{Hom}}(-,\mathbb Z)$ by any $B\in D(\mathbb Z)$, thus leading to
\[
R\Gamma(X,\underline{\mathrm{Hom}}(A,f^\ast B))[d]\cong \mathrm{Hom}_{D(\mathbb Z)}(R\Gamma(X,A),B).
\]
In categorical language, this means precisely that the functor $B\mapsto f^\ast B[d]$ is right adjoint to $f_\ast = R\Gamma(X,-): D(X,\mathbb Z)\to D(\mathbb Z)$. Even more generally:

\begin{theorem}[Verdier duality] Let $f: X\to Y$ be a proper map that is a ``manifold bundle'' (i.e.~locally on $X$ and $Y$ of the form $Y\times \mathrm{ball}\to Y$) of relative dimension $d$. Then the functor $f_\ast: D(X,\mathbb Z)\to D(Y,\mathbb Z)$ admits a right adjoint which is given by
\[
f^\ast\otimes \omega_{X/Y}
\]
for some sheaf $\omega_{X/Y}\in D(X,\mathbb Z)$ that is locally isomorphic to $\mathbb Z[d]$.
\end{theorem}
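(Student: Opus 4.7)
The plan is to construct $f^!$ as the right adjoint of $f_*$ via the adjoint functor theorem, to identify $\omega_{X/Y}:=f^!\mathbb Z$ and the asserted formula via the Projection Formula, and to prove local triviality of $\omega_{X/Y}$ by reducing to a computation on a Euclidean ball. First, I would check that $f_*\colon D(X,\mathbb Z)\to D(Y,\mathbb Z)$ preserves small colimits: by Proper Base Change the stalk of $f_*A$ at $y$ is $R\Gamma(X_y, A|_{X_y})$, and compactness of $X_y$ ensures that this commutes with filtered colimits stalkwise, hence globally; combined with exactness in the stable setting this gives preservation of all colimits. The presentable adjoint functor theorem then produces a right adjoint $f^!$, and I set $\omega_{X/Y}:=f^!\mathbb Z$.

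To identify $f^!$ with $f^*(-)\otimes\omega_{X/Y}$, let $t\colon f_*\omega_{X/Y}\to\mathbb Z$ be the counit of adjunction at $\mathbb Z$, thought of as integration along the fibre. The Projection Formula gives $f_*(f^*B\otimes\omega_{X/Y})\simeq B\otimes f_*\omega_{X/Y}$; composition with $\mathrm{id}_B\otimes t$ and $(f_*,f^!)$-adjunction produces the natural comparison
\[
\alpha_B\colon f^*B\otimes\omega_{X/Y}\longrightarrow f^!B,
\]
which is tautologically the identity at $B=\mathbb Z$. To verify $\alpha_B$ is an isomorphism for all $B$, I would proceed locally on $Y$: both sides commute with open restriction along $V\hookrightarrow Y$ (using Proper Base Change on the left and an open base-change for $f^!$ on the right) and with colimits in $B$, so checking on a set of compact generators of $D(Y,\mathbb Z)$ reduces to the known case $B=\mathbb Z$ applied to each base change $f_V\colon X_V\to V$.

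For the local triviality of $\omega_{X/Y}$, the claim is local on $X$, so I would choose a chart $U\cong V\times W\hookrightarrow X$ with $V\subset Y$ open and $W\subset\mathbb R^d$ an open ball. Base-change compatibility of $f^!$ along the open immersions $U\hookrightarrow X$ and $V\hookrightarrow Y$ reduces the computation of $\omega_{X/Y}|_U$ to $\pi^!\mathbb Z$ for the projection $\pi\colon W\to\ast$; the latter is the classical computation that $H^*_c(W,\mathbb Z)$ is concentrated in degree $d$ and equal to $\mathbb Z$, giving $\pi^!\mathbb Z\simeq\mathbb Z[d]$. This last step is the main obstacle: whereas Steps 1--2 are formal consequences of the three theorems above plus the adjoint functor theorem, the local computation and, implicitly, the base-change properties of $f^!$ along open immersions have not yet been developed in these lectures --- a fully rigorous treatment would either introduce the compactly supported pushforward $f_!$ or invoke classical Poincar\'e--Lefschetz duality for the open ball as black-box input.
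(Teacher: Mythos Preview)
Your proposal follows a classical route, but the paper takes a fundamentally different one, and the gap you identify at the end is more serious than you suggest.

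This statement appears in the introductory Lecture~I without proof; the paper's actual argument is developed across Lectures~V and~VII. There, one first constructs the full $6$-functor formalism with $f_!$ defined for \emph{all} separated maps (not just proper ones), then introduces the abstract notion of cohomological smoothness (Definition~\ref{def:cohomsmooth}), and proves a criterion (Theorem~\ref{thm:critcohomsmooth}) via the $2$-category $\mathrm{LZ}_{\mathcal D}$: to show $f^!\cong f^\ast\otimes\omega$ with $\omega$ invertible, it suffices to exhibit maps $\alpha\colon\Delta_!1\to p_2^\ast L$ and $\beta\colon f_!L\to 1$ satisfying two triangle identities. One then checks this criterion directly for $\mathbb R\to\ast$ (Proposition~\ref{prop:realssmooth}), and deduces the case of arbitrary manifold bundles by locality (Proposition~\ref{prop:manifoldcohomsmooth}). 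For proper $f$ one has $f_!=f_\ast$, recovering the stated theorem.

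Your Step~2 has a genuine circularity that goes beyond ``not yet developed in these lectures.'' To check $\alpha_B$ on the generators $j_{V!}\mathbb Z$ of $D(Y,\mathbb Z)$, you need either an isomorphism $f^!j_{V!}\cong j'_{V!}f_V^!$ or the open base-change $j'^{\ast}f^!\cong f_V^!j^\ast$ together with a separate argument controlling $f^!(j_{V!}\mathbb Z)$ outside $X_V$. Neither follows formally from the proper base change and projection formula you have invoked; establishing such compatibilities for $f^!$ is essentially equivalent to what you are trying to prove. You also silently use that $f^!$ preserves colimits, which requires knowing that $f_\ast$ preserves compact objects --- true here because fibres are compact manifolds, but this already needs finiteness of their cohomology as input. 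The paper's $2$-categorical criterion is designed precisely to sidestep this chicken-and-egg problem: once the adjunction is witnessed in $\mathrm{LZ}_{\mathcal D}$, all base-change compatibilities of $f^!$ come for free, since base change is a functor of $2$-categories and hence preserves adjunctions.

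Your Step~3 is honest about needing the non-proper $f_!$ or black-box Poincar\'e duality for the ball; the paper's route through $\mathbb R\to\ast$ and Theorem~\ref{thm:critcohomsmooth} is exactly the clean way to supply this.
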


The ``oriented'' assumption in Poincar\'e duality ensures that $\omega_{X/\ast}\cong \mathbb Z[d]$ globally on $X$.

The problem with this form of Verdier duality is that the assumptions on $f$ combine a global assumption (``proper'') with a local assumption (``manifold bundle''); this also makes a direct proof more complicated. Fortunately, there is a way to state a purely local form of Verdier duality.

Namely, for any map $f: X\to Y$ there is a further ``proper pushforward'' functor
\[
f_!: D(X\mathbb Z)\to D(Y,\mathbb Z)
\]
(our fifth functor!). In this setting, this can be defined as the derived functor of the functor of sections with proper support. In particular, there is a natural transformation $f_!\to f_\ast$ that is an isomorphism when $f$ is proper. Again, there is a right adjoint
\[
f^!: D(Y,\mathbb Z)\to D(X,\mathbb Z)
\]
(the final sixth functor!) called the ``exceptional inverse image'' functor. Now we can state the local version:

\begin{theorem}[Verdier duality] Let $f: X\to Y$ be a ``manifold bundle'' of relative dimension $d$. Then $f^!$ is isomorphic to $f^\ast\otimes \omega_{X/Y}$ where $\omega_{X/Y}=f^!\mathbb Z$ is locally isomorphic to $\mathbb Z[d]$.
\end{theorem}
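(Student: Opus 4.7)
The strategy is to construct a canonical comparison map and then use base change and locality to reduce the entire claim to an explicit calculation on a single open ball mapping to a point.

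\textbf{Step 1 (The comparison map).} I would build a natural transformation
\[
\alpha_A \colon f^\ast A \otimes \omega_{X/Y} \longrightarrow f^! A, \qquad A \in D(Y, \mathbb Z),
\]
where $\omega_{X/Y} := f^!\mathbb Z_Y$, defined under the $(f_!, f^!)$-adjunction as the adjoint of
\[
f_!\bigl(f^\ast A \otimes f^!\mathbb Z\bigr) \xrightarrow{\;\cong\;} A \otimes f_!f^!\mathbb Z \xrightarrow{\;\mathrm{id}\otimes\epsilon\;} A,
\]
the first arrow being the projection formula for $f_!$ and the second the counit $\epsilon$. The theorem then asserts both that $\alpha_A$ is an isomorphism for all $A$ and that $\omega_{X/Y}$ is locally isomorphic to $\mathbb Z[d]$.

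\textbf{Step 2 (Locality and reduction).} Both claims are local on $X$ and on $Y$. Locality on $Y$ follows by passing to right adjoints in proper base change $g^\ast f_! \cong f'_! g'^\ast$, which yields $g'^\ast f^! \cong f'^! g^\ast$ on any cartesian square. Locality on $X$ follows from $j^! = j^\ast$ for an open immersion $j$ (since $j_!$, extension by zero, is already left adjoint to $j^\ast$), so restriction to an open commutes with $f^!$. Using the manifold-bundle hypothesis I reduce to $f = \mathrm{pr}_Y \colon Y\times B \to Y$ with $B \subset \mathbb R^d$ an open ball, and base-changing along $\{y\} \hookrightarrow Y$ reduces everything to the single model map $g \colon B \to \mathrm{pt}$.

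\textbf{Step 3 (The ball calculation).} For any open ball $U \subset B$ with inclusion $j_U$ and composite $g_U = g \circ j_U$, the identification $j_U^\ast = j_U^!$ together with $(g_{U!}, g_U^!)$-adjunction gives
\[
R\Gamma(U, g^!\mathbb Z) = R\mathrm{Hom}_U(\mathbb Z_U, g_U^!\mathbb Z) = R\mathrm{Hom}(g_{U!}\mathbb Z_U, \mathbb Z) = R\mathrm{Hom}(\mathbb Z[-d], \mathbb Z) = \mathbb Z[d],
\]
where I use the classical computation $R\Gamma_c(U, \mathbb Z) \cong \mathbb Z[-d]$ (from the one-point compactification to $S^d$ plus excision). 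Compatibility of these identifications across the basis of open balls, combined with the fact that $B$ is simply connected, forces $g^!\mathbb Z \cong \mathbb Z[d]$ as a constant sheaf on $B$. With this in hand, the adjoint of $\alpha_A$ reads $A \otimes g_!g^!\mathbb Z \to A$, where $g_!g^!\mathbb Z \cong R\Gamma_c(B,\mathbb Z)[d] \cong \mathbb Z$ and the counit realizes this last isomorphism canonically; hence $\alpha_A$ is an isomorphism for every $A$, completing the argument.

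\textbf{Main obstacle.} The crux is Step 3, the identification $g^!\mathbb Z \cong \mathbb Z[d]$ on a ball. This rests on two ingredients: the classical computation of $R\Gamma_c$ of a ball (easy but essential), and the passage from ``constant value $\mathbb Z[d]$ on a basis of opens, compatibly'' to ``constant sheaf $\mathbb Z[d]$''. The rest of the argument is a formal manipulation of the six-functor axioms (projection formula, proper base change, and $j^! = j^\ast$ for open immersions) that are already implicit in the preceding discussion.
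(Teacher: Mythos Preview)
Your Step 2 contains the real gap. First, the derivation is simply wrong: passing to right adjoints in $g^\ast f_! \cong f'_! g'^\ast$ yields $f^! g_\ast \cong g'_\ast f'^!$, not $g'^\ast f^! \cong f'^! g^\ast$. For an \emph{open} $g$ you can recover the identity you want via $g^! = g^\ast$ and functoriality of upper shriek, so locality along open covers of $Y$ is fine. But your decisive move---base-changing along the closed point $\{y\}\hookrightarrow Y$ to reduce $Y\times B\to Y$ to $B\to\ast$---cannot be justified this way. Compatibility of $f^!$ with \emph{arbitrary} base change is precisely condition (3) in Definition~\ref{def:cohomsmooth} and is part of what has to be proved, not assumed. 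Your Step 3 only establishes $g^!\mathbb Z\cong\mathbb Z[d]$ and that $\alpha_M$ is an isomorphism for the single map $B\to\ast$; it says nothing about $p^!\mathbb Z_Y$ or $\alpha_A$ for $p:Y\times B\to Y$ with general $Y$. Nor can you test $\alpha_A$ on generators of $D(Y)$, since $f^!$ is a right adjoint and not a priori colimit-preserving in $A$.

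The paper takes a genuinely different route designed to sidestep exactly this issue. It proves an abstract criterion (Theorem~\ref{thm:critcohomsmooth}): $f$ is cohomologically smooth if and only if there exist an invertible $L$ and maps $\alpha:\Delta_!1_X\to p_2^\ast L$, $\beta:f_!L\to 1_Y$ satisfying two triangle identities. The point is that these data exhibit an adjunction in the Lu--Zheng $2$-category $\mathrm{LZ}_{\mathcal D}$, and since base change along any $Y'\to Y$ is a $2$-functor on $\mathrm{LZ}_{\mathcal D}$, the adjunction---hence all of conditions (1)--(3)---is automatically preserved under every base change. The paper then constructs $(L,\alpha,\beta)$ explicitly for $\mathbb R\to\ast$ with $L=\mathbb Z[1]$ (Proposition~\ref{prop:realssmooth}), invoking Lemma~\ref{lem:adjunctioncheat} to avoid checking the triangle identities on the nose, and deduces the general manifold-bundle case by locality on the source (Proposition~\ref{prop:manifoldcohomsmooth}). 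What the $2$-categorical packaging buys is precisely the step you are missing: universal base-change compatibility from a single local computation.
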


The advantage of this local statement is that it can be formally reduced to the case that $X = Y\times \mathrm{ball}$ is the product of $Y$ with an (open $d$-dimensional) ball; and by induction one can also assume that $d=1$. Another advantage of introducing the functor $f_!$ is that it enables one to state more general versions of proper base change and the projection formula. In fact, if one replaces $f_\ast$ by $f_!$ in proper base change and the projection formula, they are true without the hypothesis that $f$ is proper! There is however a technical problem: In this more general version of proper base change and the projection formula, one does not have comparison maps a priori, and the isomorphisms are instead extra data that ought to be subject to further coherence isomorphisms.

Finally, we can say what a $6$-functor formalism is, roughly speaking: There is some category $C$ of geometric objects (topological spaces, schemes, stacks, analytic spaces, ...) and an association $X\mapsto D(X)$ from $C$ to (triangulated/stable $\infty$-/...) categories, together with functors $(f^\ast,f_\ast,\otimes,\underline{\mathrm{Hom}},f_!,f^!)$ satisfying several compatibilities. Notably, every second functor is the right adjoint of the previous one; pullback is symmetric monoidal; and $f_!$ satisfies general base change and projection formula. There are many examples: Etale cohomology (in various settings), but also $D$-modules, or mixed Hodge modules, or arithmetic $D$-modules, or...

However, it has been difficult to encode all the required compatibilities. Maybe the earliest formalization of the notion of a $6$-functor formalism, still in the language of triangulated categories, is due to Ayoub \cite[Definition 1.4.1]{AyoubSixFunctors}. He proved a nontrivial theorem using this formalism, that proper base change can be deduced from the other axioms in a nontrivial way. His definition contains a long list of compatibilities, and it is unclear how to upgrade this to setting where we treat $D(X)$ as a stable $\infty$-category (which leads to an implicit infinite system of higher coherences).

Our goal in this course is to first define and study abstract $6$-functor formalisms, following Mann \cite[A.5]{MannThesis}, whose work builds on the foundational work of Liu--Zheng \cite{LiuZhengArtin}; a slightly different approach is due to Gaitsgory--Rozenblyum \cite{GaitsgoryRozenblyum}. We will then set up some general machinery applying in this abstract generality that in particular reduces the proof of Poincar\'e--Verdier duality to a rather simple problem, building on work of Lu--Zheng \cite{LuZheng}. Afterwards, we will discuss many different examples of $6$-functor formalisms. A curious phenomenon is that in most cases the association $X\mapsto D(X)$ can be factored as a composite
\[
C\xrightarrow{F} \{\mathrm{analytic\ stacks}\}\xrightarrow{D_{\mathrm{qc}}} \mathrm{Cat}
\]
where the first functor $F$ takes any $X\in C$ to some other kind of geometric object $F(X)$, sometimes a scheme but often rather a stack or even an analytic stack, and the second functor is the functor of taking the derived category of quasicoherent sheaves on an analytic stack. This gives a more geometric perspective on a $6$-functor formalisms, as a functor $F$ between different kinds of geometric objects. This perspective also leads to a relation between $6$-functor formalisms and (analytic) ``ring stacks'' i.e.~(commutative) ring objects in (analytic) stacks (this was first observed by Drinfeld in relation to prismatic cohomology).

A somewhat curious case of this phenomenon is in the context of this introductory lecture:

\begin{exercise} For $X\in C$ as above a finite-dimensional locally compact Hausdorff space, consider the functor
\[
X_{\mathrm{Betti}}: \mathrm{Schemes}^{\mathrm{op}}\to \mathrm{Sets}
\]
taking any scheme $S$ to the continuous maps $|S|\to X$. Show that $X_{\mathrm{Betti}}$ is a ``pro-\'etale algebraic space'', i.e.~admits a pro-\'etale surjection from a scheme. Moreover, show that there is a natural equivalence
\[
D(X,\mathbb Z)\cong D_{\mathrm{qc}}(X_{\mathrm{Betti}}).
\]
\end{exercise}

\newpage

\section{Lecture II: Six-Functor Formalisms}

In the first part of this course, we want to develop some abstract theory of $6$-functor formalisms; the second part will then discuss many examples.

For this first part, we will work in the following setup: A category $C$, thought of as the category of geometric objects, and a class of morphisms $E$ of $C$, which will be the class of morphisms $f$ of $C$ for which the ``exceptional'' functors $f_!$ (and $f^!$) functors are defined. We will always assume the following hypotheses:\footnote{We previously did not always enforce stability under diagonals, but the theory behaves much better with that assumption.}
\begin{enumerate}
\item The category $C$ has all finite limits.
\item The class of morphisms $E$ contains all isomorphisms, and is stable under pullback, composition and diagonals.
\end{enumerate}

The naive idea of a $6$-functor formalism is the following:\footnote{In making the notions precise, we will be led to higher categorical notions, on which we will give a brief reminder below.}
\begin{enumerate}
\item An association $X\mapsto D(X)$ from $C$ to ($\infty$-)categories $D(X)$.
\item A symmetric monoidal structure $\otimes$ on $D(X)$ for each $X\in C$.
\item For each $f: X\to Y$, a pullback functor $f^\ast: D(Y)\to D(X)$, compatible with the symmetric monoidal structure, and compatible with composition of maps in $C$.
\item For each $f: X\to Y$ in $E$, a functor $f_!: D(X)\to D(Y)$, compatible with composition, and satisfying base change and projection formula isomorphisms.
\item Moreover, there should exist right adjoints to $-\otimes A$ (i.e., internal Hom's), $f^\ast$ (i.e.~$f_\ast$), and $f_!$ (i.e.~$f^!$) for $f\in E$.
\end{enumerate}

It is rather easy to formalize items (1)--(3): Indeed, this is precisely encoded in a functor
\[
C^{\mathrm{op}}\to \mathrm{CMon}(\mathrm{Cat}_\infty)
\]
where $\mathrm{Cat}_\infty$ denotes the ($\infty$-)category of $\infty$-categories, and $\mathrm{CMon}$ denotes the commutative monoids in it (for the Cartesian product), in other words $\mathrm{CMon}(\mathrm{Cat}_\infty)$ is the ($\infty$-)category of symmetric monoidal $\infty$-categories (with morphisms given by symmetric monoidal functors). Also, item (5) is just a condition. The whole difficulty in formalizing a $6$-functor formalism lies in formalizing the data in (4). Let us list some of the data one would like to have:

After defining the functor $f_!: D(X)\to D(Y)$ for each $f\in E$, one has to supply in addition:
\begin{enumerate}
\item For each $f: X\to Y$ and $g: Y\to Z$ in $E$, with composite $h: X\to Z$ thus also in $E$, an isomorphism $h_!\cong g_!f_!$ of functors $D(X)\to D(Z)$ (satisfying further associativity constraints for triple compositions).
\item For each cartesian diagram
\[\xymatrix{
X'\ar[r]^{g'}\ar[d]_{f'} & X\ar[d]^f\\
Y'\ar[r]^g & Y
}\]
with $f$ in $E$ (and thus also $f'\in E$), a base change isomorphism
\[
g^\ast f_!\cong f'_! g^{\prime\ast}
\]
of functors $D(X)\to D(Y')$. Moreover, these isomorphisms should be compatible with composing base change isomorphisms (horizontally and vertically).
\item For each $f: X\to Y$ in $E$, and $A\in D(X)$, $B\in D(Y)$, an isomorphism
\[
f_! A\otimes B\cong f_!(A\otimes f^\ast B),
\]
as functors
\[
D(X)\times D(Y)\to D(X).
\]
Moreover, this isomorphism should also be compatible with compositions in $f$ in a suitable sense, as well as the base change isomorphisms. It should also be compatible with further writing $B$ as a tensor product $B'\otimes B''$ and the symmetric monoidal structure of $f^\ast$, etc.pp.
\end{enumerate}

Especially with the projection formula, which simultaneously uses all of $\otimes$, $f^\ast$ and $f_!$, one sees that it quickly becomes tedious to explicitly write out all the compatibility isomorphisms (and associativity-type constraints) that have to be satisfied. This problem gets accentuated when $D(X)$ is actually an $\infty$-category, in which case even the associativity constraint is not just a condition, but a further datum that has to be subjected to a tower of higher associativity constraints (and similar remarks apply to the base change formula and projection formula).

In the context where $D(X)$ is an $\infty$-category, there have been at least two formalizations of the datum of a $6$-functor formalism:
\begin{enumerate}
\item By Liu--Zheng \cite{LiuZhengArtin} in the context of \'etale cohomology of schemes. Their formalization is firmly rooted in Lurie's foundational works, but it relies heavily on the combinatorics of specific simplicial sets.
\item By Gaitsgory--Rozenblyum \cite{GaitsgoryRozenblyum} in the context of coherent cohomology of schemes. They propose a very nice formal structure, but their formalization makes use of the formalism of $(\infty,2)$-categories which at the time was much less developed; in particular, they assume certain statements on faith. Fortunately, in recent years the foundations of $(\infty,2)$-categories have been developed and in particular all required statements have now been proven \cite{LoubatonRuit}.
\end{enumerate}

Very recently, Mann \cite[Appendix A.5]{MannThesis} has found a definition that combines the best of both worlds: Like Liu--Zheng's work, it is firmly rooted in Lurie's formalism, while like Gaitsgory--Rozenblyum's work it is a nice definition.

The goal of this lecture is to state Mann's definition, but first we want to say a few words about higher categories.

Very roughly, a higher category is something like a category that besides objects and morphisms also allows $2$-morphisms between morphisms, and possibly $3$-morphisms between $2$-morphisms, ad infinitum. There are basically two routes leading to higher categories:
\begin{enumerate}
\item The collection of all categories is naturally a $2$-category: The objects are categories, the morphisms are functors, and the $2$-morphisms are natural transformations. Likewise, the collection of all $2$-categories should naturally form a $3$-category, etc.
\item In homotopy theory, one studies the category of topological spaces up to homotopy. More precisely, the objects are topological spaces, while morphisms are homotopy classes of morphisms. However, passing to homotopy classes loses information -- often, it is important to know how, and not just that, two maps are homotopic. This can be recorded if instead of passing to homotopy classes, one instead introduces homotopies as $2$-morphisms. But then one is naturally led to introduce homotopies between homotopies as $3$-morphisms, and so on.
\end{enumerate}

A property of the second example is that $2$-morphisms (and all higher morphisms) are invertible (while this is not the case in the first example -- there are natural transformations that are not isomorphisms). Somewhat curiously, it turns out that it is easier to allow higher morphisms if one insists that they are all invertible. This leads to the following very vague definition.

{\bf Slogan.} For $\infty\geq n\geq m$, an $(n,m)$-category is a higher category having $i$-morphisms for $i\leq n$, which are invertible for $i>m$.

In particular through the works of Lurie (but building on previous work of Boardman--Vogt and Joyal), there is a well-developed notion of $(\infty,1)$-categories. In the literature, these are now often simply called $\infty$-categories, and we will (reluctantly) do the same. But be warned that they do not generalize $2$-categories (which are usually understood to mean $(2,2)$-categories, not $(2,1)$-categories)! Allowing non-invertible $2$-morphisms, to get a theory of $(\infty,2)$-categories, is part of the active area of higher category theory, and due to ignorance we will not say anything about it. Roughly speaking, the situation seems to be that there are now many different models of $(\infty,2)$-categories, and they have recently all been proved to be equivalent (in the relevant sense).

It turns out that the language of simplicial sets is a very convenient way to capture both the structure of categories, and the homotopy theory of topological spaces.

\begin{definition}\leavevmode
\begin{enumerate}
\item The simplex category $\Delta$ is the category of nonempty finite totally ordered sets, with weakly increasing morphisms. Equivalently, it has objects $\Delta^n\in \Delta$ for each $n\geq 0$ corresponding to the totally ordered set $\{0,1,\ldots,n\}$, with $\mathrm{Hom}(\Delta^n,\Delta^m)$ given by the set of weakly increasing maps $\{0,1,\ldots,n\}\to \{0,1,\ldots,m\}$.
\item The category of simplicial sets $\mathrm{sSet}$ is the category freely generated under colimits by $\Delta$. Equivalently, it is the functor category
\[
\Delta^{\mathrm{op}}\to \mathrm{Set}
\]
which contains $\Delta\hookrightarrow \mathrm{Fun}(\Delta^{\mathrm{op}},\mathrm{Set})$ via the Yoneda embedding.
\end{enumerate}
\end{definition}

We want to think of simplicial sets $C$ as modelling some kind of categories. Thinking of $C$ as a functor $\Delta^{\mathrm{op}}\to \mathrm{Set}$, we think of $C_n:=C(\Delta^n)$ as the set of $n$-simplices in $C$, which we think of as functors from the category $\{0\to 1\to \ldots\to n\}$ to $C$. In other words:
\begin{enumerate}
\item For $n=0$, $C_0$ is the set of objects of $C$.
\item For $n=1$, $C_1$ is the set of morphisms of $C$, i.e.~of pairs $(X,Y)$ of objects of $C$ together with a map $f: X\to Y$.
\item For $n=2$, $C_2$ is the set of commutative triangles in $C$, i.e.~of triples $(X,Y,Z)$ of $C$, maps $f: X\to Y$, $g: Y\to Z$ and $h: X\to Z$, as well as a witness that $h$ is the composite of $f$ and $g$.
\item $\ldots$
\end{enumerate}

In particular, this recipee defines a (fully faithful) functor from the (strict $1$-)category of categories towards simplicial sets.

A critical property of a category should be that composites of morphisms are defined. This means that whenever we have objects $X,Y,Z$ of $C$ as well as maps $f: X\to Y$ and $g: Y\to Z$, there is a way to extend this to a whole $2$-simplex of $C$ (and in particular a composite $h: X\to Z$). Equivalently, denoting
\[
\Lambda_1^2 = \Delta^1\sqcup_{\Delta^0} \Delta^1\subset \Delta^2
\]
the inner horn, any map $\Lambda_1^2\to C$ extends to $\Delta^2\to C$. (More generally, for any $0\leq i\leq n$, one defines the horn $\Lambda_i^n\subset \Delta^n$ obtained by removing the interior and the $i$-th face.) Moreover, composition should be unique, at least up to homotopy (which again, should be unique homotopy, ad infinitum). This condition of uniqueness up to all higher homotopies is that of being a trivial Kan fibration. This leads to the following definition.

\begin{definition}\leavevmode
\begin{enumerate}
\item A map $f: C\to D$ of simplicial sets is a trivial Kan fibration if for all $n\geq 0$ and any diagram
\[\xymatrix{
\partial\Delta^n\ar[r]\ar[d] & C\ar[d]\\
\Delta^n\ar[r] & D,
}\]
there is an extension $\Delta^n\to C$ making both triangles commute (where $\partial\Delta^n\subset \Delta^n$ is obtained by removing the interior). Equivalently, the same condition holds with $\partial\Delta^n\hookrightarrow \Delta^n$ replaced by any injection of simplicial sets.
\item An $\infty$-category is a simplicial set $C$ such that
\[
\mathrm{Map}(\Delta^2,C)\to \mathrm{Map}(\Lambda_1^2,C)
\]
is a trivial Kan fibration.
\end{enumerate}
\end{definition}

Here, for simplicial sets $C$ and $D$, $\mathrm{Map}(C,D)$ is the internal mapping object, with $n$-simplices given by $\mathrm{Hom}_{\mathrm{sSet}}(C\times \Delta^n,D)$. It turns out that $C$ is an $\infty$-category if and only if for all $0<i<n$, any map
\[
\Lambda_i^n\to C
\]
extends to the full $n$-simplex $\Delta^n$.

Let us list some basic facts and definitions about $\infty$-categories.

\begin{enumerate}
\item For any $\infty$-category $C$ and any simplicial set $D$, the mapping $\mathrm{Map}(D,C)$ is an $\infty$-category; we write $\mathrm{Fun}(D,C)$ and call it the functor $\infty$-category (especially when $D$ itself is an $\infty$-category).
\item For an $\infty$-category $C$, one can defines its homotopy category $\mathrm{Ho}(C)$. Its objects are the same as the objects of $C$, while morphisms are homotopy classes of morphisms in $C$. Here, for objects $X,Y$ of $C$ and morphisms $f,g:X\to Y$, they are homotopic if there is a $2$-simplex witnessing that $g:X\to Y$ is the composite of $f: X\to Y$ and the identity $Y\to Y$ (which is equivalent to the existence of a $2$-simplex witnessing $g: X\to Y$ as the composite of the identity $X\to X$ and $f: X\to Y$).
\item A map $f: X\to Y$ in $C$ is defined to be an isomorphism\footnote{sometimes called an equivalence} if it becomes an isomorphism in $\mathrm{Ho}(C)$. Equivalently, there is a morphism $g: Y\to X$ and $2$-simplices witnessing the identity $X\to X$ as the composite $f: X\to Y$ and $g: Y\to X$, and the identity $Y\to Y$ as the composite $g: Y\to X$ and $f: X\to Y$.
\item An $\infty$-groupoid is an $\infty$-category $C$ such that all morphisms are invertible; equivalently, $\mathrm{Ho}(C)$ is a groupoid. This is equivalent to asking that $C$ is a Kan complex, i.e.~for all $0\leq i\leq n$, $n>0$, any horn $\Lambda_i^n\to C$ extends to $\Delta^n\to C$.

At this point, recall that Kan complexes form a model for the theory of homotopy types; for any topological space $X$, one can define the Kan complex whose $n$-simplices are the continuous maps from the topological $n$-simplex towards $X$, and this captures all homotopy groups of $X$. As discussed below, Kan complexes naturally form an $\infty$-category, and we will generally to refer to them as anima when they are thought of as objects of that $\infty$-category.
\item For any $\infty$-category $C$, there is a maximal sub-$\infty$-groupoid $C^\simeq\subset C$, consisting of all objects of $C$ but restricting the morphisms to those that are isomorphisms in $C$.
\item Given objects $X,Y$ of $C$, one can form the pullback
\[\xymatrix{
\mathrm{Map}_C(X,Y)\ar[r]\ar[d] & \ast\ar[d]^{(X,Y)}\ar[d]\\
\mathrm{Fun}(\Delta^1,C)\ar[r]^{\mathrm{ev}_0,\mathrm{ev}_1} & C\times C;
}\]
intuitively speaking, $\mathrm{Map}_C(X,Y)$ is the simplicial set of maps $X\to Y$ in $C$. Then $\mathrm{Map}_C(X,Y)$ is a Kan complex. Roughly speaking, this makes $C$ into a ``category enriched in anima''. In fact, there is a natural way to treat any category enriched in anima as an $\infty$-category.
\item The $\infty$-category $\mathrm{Cat}_\infty$ of $\infty$-categories has objects $\infty$-categories, and mapping anima $\mathrm{Fun}(C,D)^{\simeq}\subset \mathrm{Fun}(C,D)$ (i.e.~one only allows invertible natural transformations as $2$-morphisms). The $\infty$-category $\mathrm{An}\subset \mathrm{Cat}_\infty$ of anima is the full sub-$\infty$-category on anima, i.e.~$\infty$-groupoids.
\end{enumerate}

Concerning the (non-set-theoretic) details, we note that everything in the above is precise, except for the implicit passage from categories enriched in Kan complexes to $\infty$-categories.\footnote{We note that with the given definitions, $\infty$-categories with mapping Kan complexes $\mathrm{Fun}^\simeq$ define a $1$-category enriched in the $1$-category of Kan complexes.} This latter procedure can be defined explicitly in terms of a homotopy-coherent nerve functor.

Another notion we will need is that of a symmetric monoidal structure on an $\infty$-category, and of (lax) symmetric monoidal functors; we will defer a more precise discussion of this to the next lecture. We only note here the following points:

\begin{enumerate}
\item In a symmetric monoidal $\infty$-category $(C,\otimes)$, one has a notion of commutative monoid, which is roughly speaking an object $X\in C$ together with a unit map $1\to X$ and a multiplication map $m: X\otimes X\to X$ together with (higher) unitality, commutativity, and associativity constraints.
\item If $C$ is an $\infty$-category admitting finite products, then $C$ has the Cartesian symmetric monoidal structure, where $X\otimes Y=X\times Y$.
\item The $\infty$-category $\mathrm{Cat}_\infty$ has finite products and thus the Cartesian symmetric monoidal structure. A symmetric monoidal $\infty$-category is a commutative monoid in $(\mathrm{Cat}_\infty,\times)$; we denote their $\infty$-category by $\mathrm{CMon}(\mathrm{Cat}_\infty)$.
\item If $(C,\otimes)$ and $(D,\otimes)$ are symmetric monoidal $\infty$-categories, a lax symmetric monoidal functor is a functor $F: C\to D$ together with (not necessarily invertible) maps $1_D\to F(1_C)$ and $F(X)\otimes F(Y)\to F(X\otimes Y)$ functorial in $X,Y\in C$, and compatible with the (higher) unitality, commutativity, and associativity constraints. A symmetric monoidal functor is one for which these natural transformations are equivalences.
\end{enumerate}

Finally, we can state Mann's definition of a $6$-functor formalism. Recall that we start with a geometric setup $(C,E)$, consisting of an ($\infty$-)category $C$ admitting finite limits, and a class of morphisms $E$ stable under pullback and composition (and containing all isomorphisms). The following definition will be made more precise in the next lecture; we note that even if $C$ is a $1$-category (as is usually the case), $\mathrm{Corr}(C,E)$ will be a $(2,1)$-category.

\begin{definition}\label{def:correspondences} The symmetric monoidal $\infty$-category of correspondences $\mathrm{Corr}(C,E)$ is given as follows.
\begin{enumerate}
\item The objects are the objects of $C$.
\item The symmetric monoidal structure is the Cartesian symmetric monoidal structure of $C$.
\item The morphisms are correspondences: $\mathrm{Hom}_{\mathrm{Corr}(C,E)}(X,Y)$ is given by the ($\infty$-)groupoid of objects $W\in C$ together with maps
\[\xymatrix{
& W \ar[dl]^f\ar[dr]^g\\
X && Y
}\]
where $g\in E$.
\item The composition of morphisms is given by the composition of correspondences, i.e.~the composite of two correspondences $X\leftarrow W\to Y$ and $Y\leftarrow W'\to Z$ is given by the long triangle in the diagram
\[\xymatrix{
&& W\times_Y W'\ar[dl]\ar[dr]\\
& W\ar[dl]\ar[dr] && W'\ar[dl]\ar[dr] \\
X && Y && Z.
}\]
\end{enumerate}
\end{definition}

As in the naive discussion above, we will first formalize just three functors $\otimes$, $f^\ast$ and $f_!$. In the following definition, $\mathrm{Cat}_\infty$ is endowed with the Cartesian symmetric monoidal structure.

\begin{definition}\label{def:3functors} A $3$-functor formalism is a lax symmetric monoidal functor
\[
D: \mathrm{Corr}(C,E)\to \mathrm{Cat}_\infty.
\]
\end{definition}

The reader should take a moment to appreciate how concise this definition is! The claim is that this encodes the $3$ functors $\otimes$, $f^\ast$, $f_!$, and (all of) their relations:

\begin{enumerate}
\item On objects, $D$ defines an association $X\mapsto D(X)$.
\item The lax symmetric monoidal structure defines a natural ``exterior tensor product'' functor $D(X)\otimes D(X)\to D(X\times X)$. Together with the (diagonal) pullback defined just below, this defines the tensor product $\otimes$ on $D(X)$.
\item For any map $f: X\to Y$, the correspondence $Y\xleftarrow{f} X=X$ defines the pullback functor $f^\ast: D(Y)\to D(X)$. 
\item For any map $f: X\to Y$ in $E$, the correspondence $X=X\xrightarrow{g} Y$ defines the functor $f_!: D(X)\to D(Y)$.
\end{enumerate}

In particular, a correspondence $X\xleftarrow{f} W\xrightarrow{g} Y$ gets sent to the functor $g_! f^\ast: D(X)\to D(Y)$. The compatibility of this with composition amounts to the base change formula. Next time, we will also discuss how to find the projection formula.

Finally, as promised we define a $6$-functor formalism.

\begin{definition}\label{def:6functors} A $6$-functor formalism is a $3$-functor formalism
\[
D: \mathrm{Corr}(C,E)\to \mathrm{Cat}_\infty
\]
for which the functors $-\otimes A$, $f^\ast$ and $f_!$ admit right adjoints.
\end{definition}

We note that no further coherences are necessary here: Adjoints automatically acquire all relevant coherences.

\newpage

\section{Lecture III: Symmetric monoidal $\infty$-categories}

Let us recall the definition of $3$-functor formalisms from the last lecture. Given a geometric setting, given by an $\infty$-category $C$ admitting finite limits, together with some class $E$ of morphisms of $C$ that is stable under pullback, composition and diagonals (and contains all isomorphisms), one defines a symmetric monoidal $\infty$-category $\mathrm{Corr}(C,E)$ of correspondences.

\begin{definition} A $3$-functor formalism is a lax symmetric monoidal functor
\[
D: \mathrm{Corr}(C,E)\to \mathrm{Cat}_\infty.
\]
\end{definition}

The goal of this lecture is to unpack this definition, and in particular give some background on symmetric monoidal $\infty$-categories and (lax) symmetric monoidal functors.

But first, let us give an honest definition of $\mathrm{Corr}(C,E)$ as an $\infty$-category. For any $n\geq 0$, let
\[
(\Delta^n)^2_+\subset (\Delta^n)^{\mathrm{op}}\times \Delta^n
\]
be the subset spanned by those simplices $(i,j)\in \{0,1,\ldots,n\}^2$ with $i\geq j$. For example, for $n=2$, this corresponds to the category
\[\xymatrix{
&& (2,0)\ar[dr]\ar[dl] \\
& (1,0)\ar[dr]\ar[dl] && (2,1)\ar[dr]\ar[dl]\\
(0,0) && (1,1) && (2,2).
}\]
Varying $n$, this defines a cosimplicial category $(\Delta^\bullet)^2_+$.

\begin{defprop} The correspondence $\infty$-category $\mathrm{Corr}(C,E)$ is the simplicial set whose $n$-simplices are maps from $(\Delta^n)^2_+$ to $C$ with the following two properties:
\begin{enumerate}
\item All arrows going down-right are in $E$.
\item All small squares are Cartesian.
\end{enumerate}
\end{defprop}

One has to prove that this is, indeed, an $\infty$-category, i.e.~that all inner horns $\Lambda_i^n\subset \Delta^n$ can be filled, $0<i<n$. For $i=1$, $n=2$, this amounts to completing a diagram
\[\xymatrix{
& W\ar[dl]\ar[dr] && W'\ar[dl]\ar[dr]\\
X && Y && Z
}\]
in $C$, with $W\to Y$ and $W'\to Z$ in $E$, to a diagram
\[\xymatrix{
&& \tilde{W}\ar[dl]\ar[dr]\\
& W\ar[dl]\ar[dr] && W'\ar[dl]\ar[dr]\\
X && Y && Z
}\]
such that the square is cartesian, and also $\tilde{W}\to W'$ is in $E$. But this can be filled in, as $C$ has pullbacks, and $E$ is stable under pullbacks. The general assertion is \cite[Lemma 6.1.2]{LiuZhengArtin}.

Now we want to endow $\mathrm{Corr}(C,E)$ with a symmetric monoidal structure. This requires a digression on the general notion of symmetric monoidal $\infty$-categories.

Recall that classically, a symmetric monoidal category is a category $C$ together with:
\begin{enumerate}
\item A unit object $1_C\in C$;
\item A ``tensor product'' $-\otimes-: C\times C\to C$;
\item A ``unitality constraint'' $u_X: 1\otimes X\cong X$ functorially in $X$;
\item A ``commutativity constraint'' $c_{X,Y}: X\otimes Y\cong Y\otimes X$ functorially in $X,Y$;
\item An ``associativity constraint'' $a_{X,Y,Z}: (X\otimes Y)\cong Z\cong X\otimes (Y\otimes Z)$ functorially in $X,Y,Z$,
\end{enumerate}
subject to a number of commutative diagrams, like the pentagon axiom and the hexagon axiom. What these axioms express is that for all finite sets $I$, and all objects $X_i\in C$ enumerated by $i\in I$, there is a well-defined
\[
\bigotimes_{i\in I} X_i\in C.
\]
Of course, one way to obtain this is to order $I\cong \{1,\ldots,n\}$ and then inductively define the tensor product. Using the commutativity and associativity constraint, any two such choices can be related by an isomorphism, and the extra axioms ensure that if one goes around in a circle using these constraints, one will always end up with the identity. For example, the pentagon axioms says that, when filled in with the evident associativity constraints, the diagram
\[\xymatrix{
((X\otimes Y)\otimes Z)\otimes W\ar[r]\ar[d] & (X\otimes (Y\otimes Z))\otimes W\ar[r] & X\otimes ((Y\otimes Z)\otimes W)\ar[d]\\
(X\otimes Y)\otimes (Z\otimes W)\ar[rr] && X\otimes (Y\otimes (Z\otimes W))
}\]
commutes.

Clearly, it would quickly become tedious to adapt this definition to higher categories, and a more conceptual approach is required. But the idea should just be that for any finite set $I$ and any collection of objects $X_i\in C$ for $i\in I$, one can unambiguously define $\bigotimes_{i\in I} X_i$.

Let us first take a step back, and use this perspective to define commutative monoids in $\infty$-categories. Again, in a commutative monoid $X$, addition should give well-defined ``sum all elements'' maps from $X^I$ to $X$ for any finite set $I$. 

\begin{definition} A commutative monoid in an $\infty$-category $C$ is a functor
\[
X: \mathrm{Fin}^{\mathrm{part}}\to C
\]
from the category of finite sets with partially defined maps, such that for all finite sets $I$ the map
\[
X(I)\to \prod_{i\in I} X(\{i\}) = X(\ast)^I
\]
is an isomorphism (where the map is induced by the partially defined maps $I\to \{i\}$ sending $i$ to $i$, and the rest nowhere).
\end{definition}

Note that on objects, $X$ is determined by $X(\ast)$; indeed, $X(I)\cong X(\ast)^I$ by the condition. Thus, a commutative monoid $X$ is given by extra structure on $X(\ast)$, and we will sometimes abuse notation and write $X=X(\ast)$. The map $\emptyset\to \ast$ in $\mathrm{Fin}^{\mathrm{part}}$ defines a unit map $\ast=X(\emptyset)\to X(\ast)$; while for any finite set $I$, the projection map $I\to \ast$ defines a map $X(\ast)^I\cong X(I)\to X(\ast)$ ``summing all elements''. For a general partially defined map $f: I\dasharrow J$, the induced map $X^I=X(I)\to X^J=X(J)$ is on the $j$-th coordinate given by summing over $\prod_{i\in f^{-1}(j)} X_i$.

This definition can, in particular, be applied to $C=\mathrm{Cat}_\infty$, leading to the first definition of a symmetric monoidal $\infty$-category.

\begin{definition} A symmetric monoidal $\infty$-category is a commutative monoid in $\mathrm{Cat}_\infty$.
\end{definition}

Thus, giving a symmetric monoidal $\infty$-category requires us to write down functors
\[
\mathrm{Fin}^{\mathrm{part}}\to \mathrm{Cat}_\infty.
\]
It turns out that it is, in general, hard to write down functors towards $\mathrm{Cat}_\infty$ directly. This is an issue that already occurs in classical category theory, where it was solved by Grothendieck. A standard situation in algebraic geometry is to consider the functor
\[
\mathrm{Sch}^{\mathrm{op}}\to \mathrm{Cat}
\]
taking any scheme $S$ to the category $\mathrm{QCoh}(S)$ of quasicoherent sheaves on $S$ (or in the context of the first lecture, the functor
\[
\mathrm{Top}^{\mathrm{op}}\to \mathrm{Cat}
\]
taking any topological space $X$ to the category $\mathrm{Ab}(X)$ of abelian sheaves on $X$). Any map $f: S'\to S$ defines a pullback functor
\[
f^\ast: \mathrm{QCoh}(S)\to \mathrm{QCoh}(S').
\]
But when one composes two morphisms, $(fg)^\ast$ is not literally identical to $g^\ast f^\ast$. Instead, one only has a natural isomorphism between the two, and of course this needs to be subject to a coherence axiom.

Grothendieck's solution was to write down certain categorical fibrations instead. Namely, one can define the category
\[
\mathrm{QCoh}_{\mathrm{Sch}} = \{(S\in \mathrm{Sch}, \mathcal M\in \mathrm{QCoh}(S))\}\to \mathrm{Sch}
\]
consisting of pairs $(S,\mathcal M)$ of a scheme $S$ and a quasicoherent sheaf $\mathcal M$ on $S$; and where morphisms $(S',\mathcal M')\to (S,\mathcal M)$ are pairs of a morphism $f: S'\to S$ and a morphism $f^\ast \mathcal M\to \mathcal M'$. It is straightforward to define this category, and a functor to $\mathrm{Sch}$ which is a Cartesian fibration (for whose definition see below). But then a theorem says that Cartesian fibrations over a category $C$ (e.g.~$C=\mathrm{Sch}$) are equivalent to functors $C^{\mathrm{op}}\to \mathrm{Cat}$, giving the desired functor.

The following discussion can be found in \cite[Chapter 2, 3]{LurieHTT}, see in particular \cite[Proposition 2.4.2.8]{LurieHTT} for the equivalence of the following definition with other definitions.

\begin{definition} A functor $F: D\to C$ of $\infty$-categories is a coCartesian\footnote{This notion is dual to Cartesian fibrations, and classifies covariant functors to $\mathrm{Cat}_\infty$ as opposed to contravariant ones.} fibration if it is an inner fibration (i.e.~any inner horn can be lifted), and
\begin{enumerate}
\item Any morphism of $C$ admits a locally coCartesian lift;
\item Composites of locally coCartesian lifts are locally coCartesian.
\end{enumerate}
Here, a morphism $g: Y\to Y'$ in $D$ is a locally coCartesian lift of a morphism $f: X\to X'$ if it is initial among all morphisms with source $Y$ that lift $f$.
\end{definition}

\begin{theorem}[Lurie, ``Straightening/Unstraightening''] There is a natural equivalence between the $\infty$-categories of functors $C\to \mathrm{Cat}_\infty$ and the $\infty$-category of coCartesian fibrations over $C$.
\end{theorem}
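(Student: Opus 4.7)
The plan is to exhibit an explicit adjoint pair implementing the equivalence at the model-categorical level, and then transport it to $\infty$-categorical language; the most direct route, due to Lurie, passes through marked simplicial sets. On the geometric side I would use the coCartesian model structure on $(\mathrm{sSet}^+)_{/C}$, the slice of marked simplicial sets over $C$ (with $C$ itself marked by its equivalences). The fibrant objects of this model structure are by construction the coCartesian fibrations $p \colon D \to C$ with their coCartesian edges marked, and its underlying $\infty$-category recovers the $\infty$-category of coCartesian fibrations over $C$ with functors preserving coCartesian edges as morphisms. On the algebraic side I would rigidify $C$ to a simplicial category $\mathfrak{C}[C]$ via the left adjoint of the homotopy-coherent nerve, and use the projective model structure on $\mathrm{Fun}(\mathfrak{C}[C], \mathrm{sSet}^+)$, whose underlying $\infty$-category is $\mathrm{Fun}(C, \mathrm{Cat}_\infty)$.

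Next I would construct the straightening functor
\[
\mathrm{St}_C \colon (\mathrm{sSet}^+)_{/C} \longrightarrow \mathrm{Fun}(\mathfrak{C}[C], \mathrm{sSet}^+)
\]
by an explicit combinatorial formula: for $p \colon X \to C$ and $c \in \mathfrak{C}[C]$, the marked simplicial set $\mathrm{St}_C(X)(c)$ is built out of the simplices of $X$ together with simplicial paths in $\mathfrak{C}[C]$ emanating from $c$, glued along $p$. Being defined as a colimit in $X$, $\mathrm{St}_C$ preserves all colimits and hence admits a right adjoint $\mathrm{Un}_C$, the unstraightening functor. Direct inspection of the generating (trivial) cofibrations --- in the coCartesian model structure these are enumerated by the ``marked anodyne'' maps --- then shows that $(\mathrm{St}_C, \mathrm{Un}_C)$ is a Quillen adjunction.

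The hard part is showing it is a Quillen equivalence: the derived unit and counit must be weak equivalences. The standard strategy is a triple reduction. First, check that straightening is compatible with base change along any $f \colon C' \to C$, reducing the general case to $C = \Delta^n$. Second, induct on $n$ using the pushout decomposition of $\Delta^n$ along $\partial\Delta^n$ together with a coCartesian gluing argument. Third, handle the base case $C = \Delta^0$ (essentially tautological) and $C = \Delta^1$, where one verifies by hand that a coCartesian fibration over $\Delta^1$ is precisely the datum of a functor of $\infty$-categories between its two fibers. This combinatorial induction is where all the actual content sits, and is the main obstacle; once it is completed, the naturality in $C$ needed to upgrade the family of equivalences to an equivalence of functors of $C$ is then formal, since the straightening construction is manifestly functorial in $C$.
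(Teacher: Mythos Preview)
The paper does not prove this theorem at all: it is stated as a black-box result attributed to Lurie, with a reference to \cite[Chapter 2, 3]{LurieHTT}, and the text moves on immediately to discuss which morphisms of coCartesian fibrations correspond to invertible versus non-invertible natural transformations. So there is no ``paper's own proof'' to compare against.

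That said, your sketch is a faithful outline of Lurie's actual argument in \emph{Higher Topos Theory}, Chapter~3: the marked simplicial set model, the explicit $(\mathrm{St}_C, \mathrm{Un}_C)$ adjunction, the verification that it is Quillen, and the reduction to simplices via base change compatibility. One small correction: the base case of the induction in Lurie's treatment is really just $C=\Delta^0$; the case $C=\Delta^1$ is not handled separately ``by hand'' but falls out of the same inductive mechanism once one has the gluing lemma for pushouts along cofibrations. Also, the naturality in $C$ at the end is not entirely formal in the strong sense you suggest --- getting a genuine equivalence of $\infty$-categories (rather than a family of equivalences indexed by objects $C$) requires care about how the model structures vary; Lurie addresses this separately. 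But as a high-level roadmap your proposal is correct, and goes well beyond what the paper itself supplies.
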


One has to be a bit careful here which morphisms of coCartesian fibrations one allows. Indeed, one can already observe that functors $C\to \mathrm{Cat}_\infty$ form most naturally an $(\infty,2)$-category, and the above theorem restricts to the underlying $(\infty,1)$-category here (allowing only invertible natural transformations). But coCartesian fibrations know about the non-invertible transformations. Namely, consider two coCartesian fibrations $F: D\to C$, $F': D'\to C$ over $C$, and a functor $G: D\to D'$ over $C$. For each object $X$ of $C$, this induces a functor $G_X: D_X\to D'_X$ between the fibres. Moreover, for any map $f: X\to X'$, one can look at the diagram
\[\xymatrix{
D_X\ar[r]^{D_f}\ar[d]_{G_X} & D_{X'}\ar[d]^{G_{X'}}\\
D'_X\ar[r]^{D'_f} & D'_{X'},
}\]
where $D_f$ and $D'_f$ are the induced functors (of locally coCartesian lifts). This diagram may not commute, but the initiality condition on locally coCartesian lifts gives a natural transformation
\[
D'_f G_X\to G_{X'} D_f.
\]
This map is an isomorphism if and only if $G$ preserves locally coCartesian lifts.

In practice, all functors to $\mathrm{Cat}_\infty$ are constructed by constructing the corresponding coCartesian fibration instead. With this in mind, we redefine symmetric monoidal $\infty$-categories:

\begin{definition}[{\cite[Definition 2.0.0.7]{LurieHA}}]\label{def:symmetricmonoidal2} A symmetric monoidal $\infty$-category is a coCartesian fibration
\[
C^\otimes\to \mathrm{Fin}^{\mathrm{part}}
\]
such that, denoting $C=C^\otimes_\ast$ the fibre over the one-element set $\ast\in \mathrm{Fin}^{\mathrm{part}}$, for all finite sets $I$ the functor
\[
C^\otimes_I\to \prod_{i\in I} C,
\]
induced by the partially defined maps $I\dasharrow \{i\}$ sending $i$ to $i$ (and the rest nowhere), is an equivalence.
\end{definition}

Starting with a symmetric monoidal $\infty$-category $(C,\otimes)$ in the old sense, $C^\otimes$ is roughly given as follows. Objects of $C^\otimes$ are pairs $(I,(X_i)_{i\in I})$ of a finite set $I$ and objects $X_i\in C$ for $i\in I$. A map
\[
(I,(X_i)_{i\in I})\to (J,(Y_j)_{j\in J})
\]
in $C^\otimes$ is given by a partially defined map $f: I\dasharrow J$ together with maps $\bigotimes_{i\in f^{-1}(j)} X_i\to Y_j$ for all $j\in J$.

Finally, let us discuss (lax) symmetric monoidal functors. Recall that a lax symmetric monoidal functor $F: (C,\otimes)\to (D,\otimes)$ is, intuitively speaking, a functor $F: C\to D$ together with natural maps $\bigotimes_{i\in I} F(X_i)\to F(\bigotimes_{i\in I} X_i)$ for any finite $I$ and objects $X_i$, $i\in I$. A symmetric monoidal functor is a lax symmetric monoidal functor for which these maps are all isomorphisms.

It turns out that lax symmetric monoidal functors are best defined using this perspective of coCartesian fibrations $C^\otimes\to \mathrm{Fin}^{\mathrm{part}}$.

\begin{definition}\label{def:laxsymmetricmonoidal} Let $(C,\otimes)$ and $(D,\otimes)$ be symmetric monoidal $\infty$-categories, with corresponding coCartesian fibrations
\[
C^\otimes,D^\otimes\to \mathrm{Fin}^{\mathrm{part}}.
\]
A lax symmetric monoidal functor $(C,\otimes)\to (D,\otimes)$ is a functor $F^\otimes: C^\otimes\to D^\otimes$ over $\mathrm{Fin}^{\mathrm{part}}$ such that $F^\otimes$ preserves locally coCartesian lifts of the morphisms $I\dasharrow \{i\}$ sending $i\in I$ to $i$ and the rest nowhere. The functor $F^\otimes$ is symmetric monoidal if it preserves all locally coCartesian lifts.
\end{definition}

The condition on locally coCartesian lifts of the morphisms $I\dasharrow \{i\}$ is ensuring that $F^\otimes_I: C^\otimes_I\to D^\otimes_I$ is given by $F^I: C^I\to D^I$ under the equivalences $C^\otimes_I\cong C^I$, $D^\otimes_I\cong D^I$. Then the above remarks on maps between coCartesian fibrations is precisely ensuring that $F^\otimes$ induces maps $\bigotimes_{i\in I} F(X_i)\to F(\bigotimes_{i\in I} X_i)$.

\begin{remark} Arguably, a drawback of the definition of symmetric monoidal $\infty$-categories in terms of coCartesian fibrations is that the definition is not evidently selfdual -- it is not clear that if $(C,\otimes)$ is a symmetric monoidal $\infty$-category then so is $(C^{\mathrm{op}},\otimes)$. There is a dual notion of colax symmetric monoidal functors, and this is nontrivial to define in this language. See however \cite{HaugsengHebestreitLinskensNuiten} for a detailed discussion of this duality.
\end{remark}

If $C$ is any $\infty$-category that admits finite products, it acquires a unique symmetric monoidal $\infty$-category structure whose operation is the cartesian product.\footnote{This is actually easier to see for coproducts.} In particular, this applies to $\mathrm{Cat}_\infty$. The cartesian symmetric monoidal structure has a useful universal property.

\begin{theorem}[{\cite[Proposition 2.4.1.7]{LurieHA}}] Let $(C,\otimes)$ be any symmetric monoidal $\infty$-category, and let $D$ be an $\infty$-category admitting finite products, with its cartesian symmetric monoidal structure. Then lax symmetric monoidal functors $(C,\otimes)\to (D,\times)$ are equivalent to functors
\[
F: C^\otimes\to D
\]
such that for all finite sets $I$ and objects $X_i\in C$, $i\in I$, the map
\[
F((I,(X_i)_{i\in I}))\to \prod_{i\in I} F(X_i)
\]
is an equivalence (the map induced as usual by the forgetful maps $I\dasharrow \{i\}$ for $i\in I$).
\end{theorem}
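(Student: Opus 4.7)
The plan is to construct explicit functors in both directions and verify they are mutually inverse; the content of the theorem is essentially that the Cartesian symmetric monoidal structure on $D$ is ``free on $D$'' subject to the Segal condition, a fact encoded in the way $D^\times$ is built from $D$.

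In the forward direction, start with a lax symmetric monoidal functor $F^\otimes: C^\otimes \to D^\times$. Because $D^\times$ has Cartesian product as its tensor product, there is a canonical ``total product'' functor $\Pi: D^\times \to D$ sending $(I, (Y_i)_{i \in I}) \mapsto \prod_{i \in I} Y_i$, defined as the locally coCartesian pushforward along the map $I \dasharrow \ast$ in $\mathrm{Fin}^{\mathrm{part}}$ with $f^{-1}(\ast) = I$. Set $F := \Pi \circ F^\otimes$. The Segal condition on $F$ follows because a lax symmetric monoidal functor preserves locally coCartesian lifts of the maps $I \dasharrow \{i\}$ by Definition \ref{def:laxsymmetricmonoidal}, and $\Pi$ sends these lifts to the canonical projections $\prod_i F(X_i) \to F(X_i)$.

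In the reverse direction, given $F: C^\otimes \to D$ satisfying the Segal condition, construct $F^\otimes: C^\otimes \to D^\times$ as follows. On an object $(I, (X_i))$ of $C^\otimes$, set $F^\otimes((I, (X_i))) := (I, (F((\{i\}, X_i)))_{i \in I})$. On a morphism $(I, (X_i)) \to (J, (X'_j))$ over a partial map $\phi: I \dasharrow J$, the required lift to $D^\times$ amounts, for each $j \in J$, to a map $\prod_{i \in \phi^{-1}(j)} F(X_i) \to F(X'_j)$; produce this by composing with the morphism $(J, (X'_j)) \to (\{j\}, X'_j)$ in $C^\otimes$, applying $F$, and using the Segal hypothesis to identify the source. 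Verification that these two procedures are mutually inverse amounts to unwinding definitions: applying $\Pi$ to the reconstructed $F^\otimes$ recovers $F$ by the Segal condition defining $D^\times$, while a lax symmetric monoidal $F^\otimes$ is determined up to equivalence by its restriction to singletons plus the structure maps.

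The main obstacle is promoting the reverse construction from a pointwise prescription to an honest map of simplicial sets $C^\otimes \to D^\times$, carrying all the higher coherence data. I would handle this by appealing to the explicit model of $D^\times$ from \cite[\S 2.4.1]{LurieHA}, in which $D^\times$ is realized as a full subcategory of a functor $\infty$-category spanned by Segal-type diagrams into $D$. In that presentation, constructing $F^\otimes$ reduces by the evident adjunction to constructing a single functor into $D$ (from an appropriate fiber product over $\mathrm{Fin}^{\mathrm{part}}$), whose Segal condition matches the hypothesis on $F$ verbatim.
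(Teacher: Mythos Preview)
The paper does not give its own proof of this theorem; it is stated as a citation to \cite[Proposition 2.4.1.7]{LurieHA} and then immediately used. So there is no in-paper argument to compare against.

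Your sketch is broadly in line with how Lurie's proof actually goes: the real content is your final paragraph, where you invoke the explicit model of $D^\times$ as a full subcategory of a functor $\infty$-category and reduce the comparison to an adjunction. That is exactly the mechanism in \cite[\S 2.4.1]{LurieHA}, and once you commit to it, both directions of the equivalence fall out simultaneously---the forward and reverse constructions you describe pointwise become shadows of this single adjunction. One caution: your ``total product'' functor $\Pi: D^\times \to D$ is not quite well-defined as written, since the coCartesian pushforward along $I \dasharrow \ast$ depends on $I$ and does not obviously assemble into a global functor on all of $D^\times$. In Lurie's model this is finessed because the relevant projection to $D$ is built into the construction of $D^\times$ as a diagram category, so the issue dissolves once you work in that model from the start rather than treating it as a patch for coherence at the end.
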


To finish the discussion of the terms appearing in the definition of a $3$-functor formalism, it remains to define $\mathrm{Corr}(C,E)$ as a symmetric monoidal $\infty$-category. Conveniently, $\mathrm{Corr}(C,E)^\otimes$ can be defined itself as a correspondence category in
\[
C^{\times'} := ((C^{\mathrm{op}})^{\sqcup})^{\mathrm{op}}.
\]
Here $(C^{\mathrm{op}})^{\sqcup}$ is the coCartesian fibration corresponding to the symmetric monoidal $\infty$-category $C^{\mathrm{op}}$ with coproducts (i.e., products in $C$). Concretely, $C^{\times'}$ as objects given by pairs $(I,(X_i)_{i\in I})$ as usual, but maps
\[
(I,(X_i)_{i\in I})\to (J,(Y_j)_{j\in J})
\]
are given by partially defined maps $f:J\dasharrow I$ together with maps
\[
X_i\to \prod_{j\in f^{-1}(i)} Y_j
\]
for all $i\in I$ (instead of maps in the opposite direction). Thus, $C^{\times'}$ is not the coCartesian fibration over $\mathrm{Fin}^{\mathrm{part}}$ encoding the cartesian symmetric monoidal structure on $C$.

Let $E^\times$ be the class of morphisms of $C^{\times'}$ that lie over identity morphisms of $\mathrm{Fin}^{\mathrm{part}}$, and where all maps $Y_i\to X_i$ are in $E$.

\begin{defprop}[{\cite[Proposition 6.1.3]{LiuZhengArtin}}] Let $\mathrm{Corr}(C,E)^\otimes = \mathrm{Corr}(C^{\times'},E^\times)$. This is naturally a coCartesian fibration over $\mathrm{Fin}^{\mathrm{part}}$, defining a symmetric monoidal structure on $\mathrm{Corr}(C,E)$.
\end{defprop}

Objects of $\mathrm{Corr}(C,E)^\otimes$ are again pairs $(I,(X_i)_{i\in I})$. Morphisms $(I,(X_i)_{i\in I})\to (J,(Y_j)_{j\in J})$ are given by partially defined maps $f:I\dasharrow J$ together with correspondences
\[
\prod_{i\in f^{-1}(j)} X_i\leftarrow W_j\to Y_j
\]
for all $j\in J$, where the morphism $W_j\to Y_j$ lies in $E$. We note that we needed the contravariant maps in $C^{\times'}$ to cancel the contravariance of the first map in the correspondences. We note in particular the following three kinds of morphisms of $\mathrm{Corr}(C,E)^\otimes$:
\begin{enumerate}
\item For all $X\in C$, a map $(\{1,2\},(X,X))\to (\ast,X)$ given by the projection $\{1,2\}\to \ast$ and the correspondence $X\times X\leftarrow X=X$ where the first map is the diagonal.
\item For any map $f: X\to X'$ in $C$, a map $(\ast,X')\to (\ast,X)$ given by $\ast=\ast$ and the correspondence $X'\leftarrow X=X$.
\item For any map $f: X\to X'$ in $E$, a map $(\ast,X)\to (\ast,X')$ given by $\ast=\ast$ and the correspondence $X=X\to X'$.
\end{enumerate}

Finally, we can restate the definition of a $3$-functor formalism.

\begin{definition} A $3$-functor formalism is a lax symmetric monoidal functor $\mathrm{Corr}(C,E)\to \mathrm{Cat}_\infty$. Equivalently, it is a functor
\[
D: \mathrm{Corr}(C,E)^\otimes\to \mathrm{Cat}_\infty
\]
such that for all finite sets $I$ and $X_i\in C$, $i\in I$, the functor
\[
D((I,(X_i)_{i\in I}))\to \prod_{i\in I} D(X_i)
\]
is an equivalence.
\end{definition}

In particular, on objects this gives $X\mapsto D(X)$, and the three types of morphisms above induce the tensor product, the pullback $f^\ast$, and the exceptional pushforward $f_!$. In general, a map
\[
(I,(X_i)_{i\in I})\to (J,(Y_j)_{j\in J})
\]
given by $f:I\dasharrow J$ and correspondences
\[
\prod_{i\in f^{-1}(j)} X_i\xleftarrow{f_j} W_j\xrightarrow{g_j} Y_j
\]
induces the functor
\[
\prod_{i\in I} D(X_i)\to \prod_{j\in J} D(Y_j)
\]
whose $j$-th component is given by $g_{j!} f_j^\ast (\boxtimes_{i\in f^{-1}(j)})$ where
\[
\boxtimes_{i\in f^{-1}(j)}: \prod_{i\in f^{-1}(j)} D(X_i)\to D(\prod_{i\in f^{-1}(j)} X_i)
\]
denotes the exterior tensor product (i.e., the tensor product of the pullbacks from each factor). Indeed, this easily follows by writing this correspondence as a composite of smaller correspondences. Now the compatibility with composites of correspondences is encoding all the desired compatibilities between tensor, pullback, and exceptional pushforward.

For example, we can get the projection formula
\[
f_! A\otimes B\cong f_!(A\otimes f^\ast B)
\]
along a morphism $f: X\to Y$ in $E$, by writing the correspondence
\[
(\{1,2\},(X,Y))\to (\ast,Y)
\]
given by $X\times Y\leftarrow X\to Y$ as a composite in two ways. First, as the composite of
\[
(\{1,2\},(X,Y))\to (\{1,2\},(Y,Y))
\]
given by the correspondences $X=X\to Y$ and $Y=Y=Y$, and
\[
(\{1,2\},(Y,Y))\to (\ast,Y)
\]
given by the correspondence $Y\times Y\leftarrow Y=Y$; and as the composite of
\[
(\{1,2\},(X,Y))\to (\ast,X)
\]
given by the correspondence $X\times Y\leftarrow X=X$, and
\[
(\ast,X)\to (\ast,Y)
\]
given by the correspondence $X=X\to Y$. Here, the first composite unravels to
\[
(A,B)\mapsto (f_!A,B)\mapsto f_! A\otimes B,
\]
and the second composite to
\[
(A,B)\mapsto A\otimes f^\ast B\mapsto f_!(A\otimes f^\ast B).
\]

\begin{remark} In some other formulations of $6$-functor formalisms, it is stressed that the functor $f_!: \mathcal D(X)\to \mathcal D(Y)$ should be $\mathcal D(Y)$-linear (where $\mathcal D(Y)$ acts on $\mathcal D(X)$ by tensoring with the pullback); this is a ``homotopy coherent'' version of the projection formula. This structure of $f_!$ as a $\mathcal D(Y)$-linear functor is in fact encoded in the lax symmetric monoidal functor $\mathcal D: \mathrm{Corr}(C,E)\to \mathrm{Cat}_\infty$. In fact, lax symmetric monoidal functors preserve algebras and modules, and the relevant structure can be defined already on $\mathrm{Corr}(C,E)$. Namely, any $X$ defines a commutative algebra object in $C^{\mathrm{op}}$ (this is always true for the disjoint union symmetric monoidal structure, as there is a canonical map $X\sqcup X\to X$), and any map $f: X\to Y$ defines a map $f^\ast$ of commutative algebras in $C^{\mathrm{op}}$, and in particular makes $X$ a module over $Y$. The symmetric monoidal functor $C^{\mathrm{op}}\to \mathrm{Corr}(C,E)$ induces the same structure in $\mathrm{Corr}(C,E)$. Now any map $f: X\to X'$ over $Y$ in $E$ induces a covariant map of the associated modules over $Y$ in $\mathrm{Corr}(C,E)$. Thus, the same structure is present after applying $\mathcal D$, making $f_!: \mathcal D(X)\to \mathcal D(X')$ a map of $\mathcal D(Y)$-modules, i.e.~$\mathcal D(Y)$-linear. In particular, taking $X'=Y$, the functor $f_!: \mathcal D(X)\to \mathcal D(Y)$ is naturally $\mathcal D(Y)$-linear.
\end{remark}

In the next lecture, we will discuss a construction principle for $3$-functor formalisms, showing how to get all these intricate coherences in practice.\newpage

\section{Lecture IV: Construction of Six-Functor Formalisms}

Let us again fix a geometric setting $(C,E)$ as before. We are interested in constructing a lax symmetric monoidal functor
\[
\mathcal D: \mathrm{Corr}(C,E)\to \mathrm{Cat}_\infty.
\]
Equivalently, this is described by a functor
\[
\mathcal D: \mathrm{Corr}(C,E)^\otimes\to \mathrm{Cat}_\infty
\]
such that for any finite set $I$ and $X_i\in C$ for $i\in I$, the functor
\[
\mathcal D((I,(X_i)_i))\to \prod_{i\in I} \mathcal D(X_i)
\]
is an equivalence.

In practice, it is easy to construct a functor
\[
\mathcal D_0: C^{\mathrm{op}}\to \mathrm{CMon}(\mathrm{Cat}_\infty)
\]
to symmetric monoidal $\infty$-categories, encoding $\otimes$ and $f^\ast$. Note that $\mathcal D_0$ is equivalent to a lax symmetric monoidal functor
\[
C^{\mathrm{op}}\to \mathrm{Cat}_\infty
\]
by \cite[Theorem 2.4.3.18]{LurieHA}. The problem is now to extend this from $C^{\mathrm{op}}\cong \mathrm{Corr}(C,\mathrm{isom})$ to $\mathrm{Corr}(C,E)$.

In practice (at least in contexts of algebraic geometry), there are two special classes of morphisms $I,P\subset E$ of ``open immersions'' and ``proper'' maps such that for $f\in I$, the functor $f_!$ is the left adjoint of $f^\ast$, while for $f\in P$, the functor $f_!$ is the right adjoint of $f^\ast$. Moreover, any $f\in E$ admits a factorization $f=\overline{f} j$ with $j\in I$ and $\overline{f}\in P$, i.e.~a compactification. As $f_!$ should be compatible with composition, we need to have $f_! = \overline{f}_! j_!$ where the latter two functors are defined (as the right resp.~left adjoint of pullback). It is, however, far from clear that this is well-defined, especially as a functor of $\infty$-categories!

\begin{example}\leavevmode
\begin{enumerate}
\item If $C$ is the category of locally compact Hausdorff topological spaces, one can take for $I$ the open immersions, and for $P$ the proper maps (i.e., preimages of compact subsets are compact). Any map $f: X\to Y$ admits a compactification $X\hookrightarrow \overline{X}\to Y$, for example $\overline{X}=\beta X\times_{\beta Y} Y$ using the Stone-\v{C}ech compactification.
\item If $C$ is the category of qcqs schemes and $E$ is the class of separated morphisms of finite type, then one can again take for $I$ the open immersions and for $P$ the proper maps. Indeed, the Nagata compactifion theorem ensures that any separated map of finite type $f: X\to Y$ between qcqs schemes admits a compactification $X\hookrightarrow \overline{X}\to Y$.
\end{enumerate}
\end{example}

One might expect that in order for $f_!$ to be defined uniquely as a functor of $\infty$-categories, one has to put some rather strong assumptions on $C$ (and the classes $E$, $I$, $P$) -- for example, the existence of some canonical compactification, or at least a canonical ``cofinal'' collection of such. Somewhat surprisingly, it turns out that minimal hypotheses ensure that $f_!$ is canonically defined. In fact, we will only make the following assumptions. (Only the assumptions on $I$ and $P$ are not obviously necessary, but they are very weak and satisfied in all practical situations.)

\begin{enumerate}
\item Assumptions on $I$ and $P$: The classes of morphisms $I$ and $P$ are stable under pullback, composition and diagonals and contain all isomorphisms. If $f\in I\cap P$, then $f$ is $n$-truncated for some $n$.\footnote{This is automatic if $C$ is a category (as opposed to an $\infty$-category). It means that for any $g: Z\to X$, the anima of lifts of $g$ to $Y$ is $n$-truncated, i.e.~has $\pi_i=0$ for $i>n$.} Any $f\in E$ is a composite $f=\overline{f}j$ with $j\in I$ and $\overline{f}\in P$.
\item For any $f\in I$, the functor $f^\ast$ admits a left adjoint $f_!$ which satisfies base change and the projection formula.
\item For any $f\in P$, the functor $f^\ast$ admits a right adjoint $f_\ast$ which satisfies base change and the projection formula.
\item For any Cartesian diagram
\[\xymatrix{
X'\ar[r]^{j'}\ar[d]^{g'} & X\ar[d]^g\\
Y'\ar[r]^j & Y
}\]
with $j\in I$ (hence $j'\in I$) and $g\in P$ (hence $g'\in P$), the natural map $j_! g'_\ast\to g_\ast j'_!$ is an isomorphism.
\end{enumerate}

\begin{remark}\label{rem:lastconditionvacuousexcision} In the situation of (4), assume that $j$ is a monomorphism. Then this follows from (2) and (3).\footnote{This was observed by Heyer--Mann \cite[Lemma 1.2.6]{HeyerMann}, improving on a weaker statement originally made in these lectures.} Indeed, by (2) and $j$ being a monomorphism, we have $j^\ast j_!=\mathrm{id}$ (and $j'^\ast j'_! =\mathrm{id}$); in particular $j'^\ast$ is essentially surjective. Thus, it suffices to see that the map
\[
j_! g'_\ast j'^\ast\to g_\ast j'_! j'^\ast
\]
is an isomorphism. In fact, it is a priori clear that the map in (4) is an isomorphism after applying $j^\ast$ (using base change from (2) and (3), and noting that after pullback along $j$ the cartesian square has isomorphisms as horizontal maps); so it is enough to show that the functor $g_\ast j'_! j'^\ast$ takes values in the essential image of $j_!$. (Via this reduction, we do not have to track commutative diagrams in the following.) But $j'_! j'^\ast M\cong j'_! 1\otimes M$ by the projection formula in (2), and $j'_! 1\cong g^\ast j_! 1$ by the base change in (2). Thus
\[
g_\ast j'_! j'^\ast M\cong g_\ast(g^\ast j_! 1\otimes M)\cong j_! 1\otimes g_\ast M\cong j_!j^\ast g_\ast M
\]
using the projection formula from (3) for $g$ and from (2) for $j$. 
\end{remark}

We note that in (2), (3) and (4), one is only asking that certain natural maps (defined by adjunctions) are isomorphisms. For example, the last map $j_! g'_\ast\to g_\ast j'_!$ is by definition adjoint to
\[
g^\ast j_! g'_\ast\cong j'_! g^{\prime\ast} g'_\ast\to j'_!
\]
using the base change isomorphism for $j_!$ from (2).

In fact, in (4) one would expect to have a similar isomorphism $j_! g'_\ast\cong g_\ast j'_!$ even if the diagram is not Cartesian. However, in that case one cannot a priori define a natural isomorphism. Fortunately, it turns out that this is a consequence of the axioms. To see this, we make the following construction.

\begin{construction} For any $f\in I\cap P$, there is a natural isomorphism $f_!\cong f_\ast$ between the left and the right adjoint of $f^\ast$.

Indeed, we argue by induction on $n$ such that $f$ is $n$-truncated. If $n=-2$, then $f$ is an isomorphism, and the claim is clear. In general, consider the cartesian diagram
\[\xymatrix{
X\times_Y X\ar[r]^g\ar[d]^h & X\ar[d]^f\\
X\ar[r]^f & Y
}\]
as well as $\Delta: X\to X\times_Y X$. Then $g\in I\cap P$ by pullback-stability, and hence $\Delta\in I\cap P$ as it is a map between $X$ and $X\times_Y X$, which have compatible projections to $X$ that are in $I\cap P$. Moreover, $\Delta$ is $n-1$-truncated, so by induction we have constructed an identification $\Delta_!\cong \Delta_\ast$. Now
\[
f_! = f_! g_\ast \Delta_\ast\cong f_! g_\ast \Delta_!\cong f_\ast h_! \Delta_! = f_\ast
\]
using respectively $g \Delta = \mathrm{id}_X$, the identification $\Delta_!\cong \Delta_\ast$, assumption (4), and $h\Delta = \mathrm{id}_X$.
\end{construction}

\begin{construction} For any commutative diagram
\[\xymatrix{
X'\ar[r]^{j'}\ar[d]^{g'} & X\ar[d]^g\\
Y'\ar[r]^j & Y
}\]
with $j,j'\in I$ and $g,g'\in P$, there is a natural isomorphism $j_!g'_\ast\cong g_\ast j_!$.

Indeed, consider the induced map $h: X'\to X\times_Y Y'$ with $j'': X\times_Y Y'\to X$ and $g'': X\times_Y Y'\to Y'$. Then $h\in I$ as it is a map between $X'$ and $X\times_Y Y'$ both of whose projections to $X$ are in $I$; and $h\in P$ as it is a map between $X'$ and $X\times_Y Y'$ both of whose projections to $Y'$ are in $P$. Thus, the previous construction gives us an isomorphism $h_!\cong h_\ast$. Now
\[
j_!g'_\ast= j_! g''_\ast h_\ast\cong j_! g''_\ast h_!\cong g_\ast j''_! h_! = g_\ast j'_!
\]
using respectively $g'=g''h$, the identification $g_!\cong g_\ast$, assumption (4), and $j''h=j'$.
\end{construction}

Now we can also see that if $f\in E$ has two different compactifications, then the two induced possible definitions of $f_!$ can be identified.

\begin{construction} Let $f: X\to Y$ in $E$ be written in two ways as $f=\overline{f} j$ and $\overline{f}' j'$, with $j,j'\in I$ and $\overline{f},\overline{f}'\in P$. Then there is a natural isomorphism
\[
\overline{f}_\ast j_!\cong \overline{f}'_\ast j'_!.
\]

Indeed, let $\overline{X}$ and $\overline{X}'$ be the implicit compactifications of $X$ over $Y$, and consider $Z=\overline{X}\times_Y \overline{X}'$. The projection to $Y$ lies in $P$, and the diagonal map $X\to X\times_Y X\to \overline{X}\times_Y \overline{X}'$ is a morphism in $E$ (one can check that morphisms of $E$ also satisfy the 2-out-of-3 property). Thus, it can be factored as $X\to \overline{X}''\to \overline{X}\times_Y \overline{X}'$ where $\overline{f}'': \overline{X}''\to Y$ still lies in $P$, and $j'': X\to \overline{X}''$ lies in $I$. It now suffices to construct isomorphisms
\[
\overline{f}_\ast j_!\cong \overline{f}''_\ast j''_!\cong \overline{f}'_\ast j'_!.
\]
Restricting attention to one half, we can (renaming $\overline{X}''$ as $\overline{X}'$) assume there is a map $g: \overline{X}'\to \overline{X}$ over $Y$. But then $g_\ast j'_!\cong j_!$ by the previous construction (in the degenerate case where one of the maps in $P$ is the identity), and post-composing with $\overline{f}_\ast$ gives the desired isomorphism.
\end{construction}

The previous constructions merely serve as an indication that the statement of the following theorem (which is really a construction) is sensible.

\begin{theorem}[Mann {\cite[Proposition A.5.10]{MannThesis}}, Liu--Zheng]\label{thm:construct6functors} Under the above assumptions, there is a canonical extension of $\mathcal D_0$ to a lax symmetric monoidal functor
\[
\mathcal D: \mathrm{Corr}(C,E)\to \mathrm{Cat}_\infty
\]
such that for $f\in I$, $f_!$ is the left adjoint of $f^\ast$, while for $f\in P$, $f_!$ is the right adjoint of $f^\ast$.
\end{theorem}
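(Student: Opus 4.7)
The plan is to decompose any $f \in E$ via the factorization $f = \overline f \circ j$ with $j \in I$, $\overline f \in P$, construct the extension of $\mathcal D_0$ separately on the $I$-part (where $f_!$ is realized as a left adjoint) and on the $P$-part (where $f_!$ is realized as a right adjoint), and then glue these two extensions coherently to obtain a single functor out of $\mathrm{Corr}(C,E)$.

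First I would construct two intermediate lax symmetric monoidal extensions $\mathcal D_I \colon \mathrm{Corr}(C, I) \to \mathrm{Cat}_\infty$ and $\mathcal D_P \colon \mathrm{Corr}(C, P) \to \mathrm{Cat}_\infty$. For $\mathcal D_P$, the functor $f_\ast$ associated to $f \in P$ is just the right adjoint of $f^\ast$, which exists by assumption (3); its compatibility with composition is automatic from uniqueness of adjoints, while the base change and projection formula in (3) supply the Beck--Chevalley data needed to extend $\mathcal D_0$ along $\mathrm{Corr}(C, P)$. The construction of $\mathcal D_I$ is formally dual, using (2). Both are instances of a general ``passing to adjoints'' principle: given a functor $\mathcal D_0 \colon C^{\mathrm{op}} \to \mathrm{Cat}_\infty$ and a pullback-stable class $K$ of morphisms for which $f^\ast$ admits a one-sided adjoint satisfying Beck--Chevalley, one obtains a canonical extension to $\mathrm{Corr}(C, K)$. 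This principle is contained in \cite[Proposition A.5.10]{MannThesis} and is essentially the Liu--Zheng construction in the one-sided case.

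Next I would glue $\mathcal D_I$ and $\mathcal D_P$. The obstruction to a naive gluing is that on the intersection $I \cap P$ one has two a priori different definitions of $f_!$, the left and the right adjoint of $f^\ast$; the preliminary constructions in the excerpt, inductively identifying $f_! \cong f_\ast$ on $I \cap P$ using $n$-truncatedness together with axiom (4), exhibit the required pointwise isomorphisms and moreover show independence of the chosen compactification. Formally, I would introduce an auxiliary symmetric monoidal $\infty$-category $\mathrm{Corr}(C; I, P)$ whose morphisms are correspondences $X \leftarrow W \to Y$ equipped with a chosen factorization of the right leg as $W \xrightarrow{j} \overline W \xrightarrow{\overline f} Y$ with $j \in I$ and $\overline f \in P$; composition uses pullbacks together with the Beck--Chevalley isomorphism from (4) to re-factorize. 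Building an extension $\widetilde{\mathcal D} \colon \mathrm{Corr}(C; I, P) \to \mathrm{Cat}_\infty$ is then straightforward from $\mathcal D_I$, $\mathcal D_P$, and (4). To land on $\mathrm{Corr}(C, E)$ itself, I would show that the forgetful functor $\mathrm{Corr}(C; I, P) \to \mathrm{Corr}(C, E)$ is an equivalence of symmetric monoidal $\infty$-categories: essential surjectivity on morphisms is immediate from the existence of compactifications, and full faithfulness is a matter of showing that the anima of compactifications of a fixed $f \in E$ is contractible. The compactification-independence construction in the excerpt furnishes $\pi_0$-contractibility, and the higher homotopies follow by iterating the same argument (any finite diagram of compactifications admits a common refinement, assembling into a Kan-contractible simplicial anima). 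Transporting $\widetilde{\mathcal D}$ across this equivalence yields $\mathcal D$.

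The main obstacle will be the homotopy-coherence of the gluing step, and in particular the verification that $\mathrm{Corr}(C; I, P) \to \mathrm{Corr}(C, E)$ is a trivial Kan fibration of the relevant simplicial sets (with all symmetric monoidal decorations). While each pointwise step is forced by assumptions (1)--(4) and the preliminary constructions, assembling them into a coherent functor requires either the intricate combinatorial bookkeeping of Liu--Zheng (explicitly filling inner horns in $\mathrm{Corr}(C, E)^\otimes$ by choosing compactifications of many morphisms simultaneously in a compatible way) or, as in Mann, an argument exploiting the contractibility of the diagram category of compactifications to descend from the marked correspondence category. Either way, the subtle point is handling the symmetric monoidal structure: everything must be repeated on $C^{\times'}$ in a manner compatible with the coCartesian fibration over $\mathrm{Fin}^{\mathrm{part}}$, since the projection formula is what forces the interaction between the symmetric monoidal structure and the factorizations used in the gluing.
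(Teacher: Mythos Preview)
Your high-level plan---separate the $I$- and $P$-directions, pass to adjoints in each, then glue using the contractibility of compactification data---matches the paper's strategy, but the technical implementation diverges substantially. The paper does not build an auxiliary $\infty$-category $\mathrm{Corr}(C;I,P)$ of correspondences with chosen factorizations. Instead it works with multisimplicial sets: a bisimplicial $\mathrm{Corr}(C,E)_{\mathrm{contra},\mathrm{co}}$ and a trisimplicial $\mathrm{Corr}(C,I,P)_{\mathrm{contra},\mathrm{co},\mathrm{co}}$ (whose $(n,m,k)$-simplices are maps $(\Delta^n)^{\mathrm{op}}\times\Delta^m\times\Delta^k\to C$ with the $m$-direction in $I$, the $k$-direction in $P$, and all squares cartesian). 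Two black-box theorems from Liu--Zheng then do the heavy lifting: first that $\delta^\ast X\to\delta^\ast_+ X$ is a categorical equivalence for any bisimplicial $X$, and second that $\delta^\ast\mathrm{Corr}(C,I,P)_{\mathrm{contra},\mathrm{co},\mathrm{co}}\to\delta^\ast\mathrm{Corr}(C,E)_{\mathrm{contra},\mathrm{co}}$ is a categorical equivalence. The latter is precisely the ``space of compactifications is contractible'' statement you want, but packaged so that it never requires defining composition of factorized correspondences. The functor is then built by starting from $\delta^\ast\mathrm{Corr}(C,I,P)_{\mathrm{contra},\mathrm{contra},\mathrm{contra}}\to C^{\mathrm{op}}\xrightarrow{\mathcal D_0}\mathrm{Cat}_\infty$ and passing to adjoints one simplicial direction at a time, using conditions (2)--(4) to check the Beck--Chevalley compatibilities.

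The gap in your version is the definition of $\mathrm{Corr}(C;I,P)$ itself. Saying ``composition uses pullbacks together with (4) to re-factorize'' hides exactly the coherence problem: given two factorized correspondences, the pullback composite does not come with a canonical $P\circ I$ factorization, and producing one requires a choice whose higher coherences must be supplied by hand. The multisimplicial approach avoids this entirely by never asking for an $\infty$-category structure on the factorized data---the simplicial sets $\delta^\ast(\cdots)$ are not $\infty$-categories, and the argument only needs them up to categorical equivalence. Your step ``building $\widetilde{\mathcal D}$ is straightforward from $\mathcal D_I$, $\mathcal D_P$, and (4)'' is similarly optimistic: the paper does not glue two pre-existing functors but rather flips the variance of two simplicial directions in a single functor, which is why conditions (2), (3), (4) enter sequentially (first (2) for the $I$-direction, then (3) and (4) together for the $P$-direction). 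You correctly identify in your last paragraph that the coherence bookkeeping is the real obstacle; the paper's answer is that Liu--Zheng's multisimplicial machinery is precisely the tool that absorbs it.
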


\begin{proof}[Sketch of Construction] In this sketch, we will ignore the lax symmetric monoidal structure; this can be taken care of by thinking about the functor $\mathrm{Corr}(C,E)^\otimes\to \mathrm{Cat}_\infty$ instead, where the source is itself a correspondence $\infty$-category to which the constructions below apply. (Indeed, one only has to replace $C$ below by $C^{\times'}$.)

An important component of the construction are multisimplicial sets; more precisely, bi- and trisimplicial sets. Note that $\mathrm{Corr}(C,E)$ can be constructed in two steps. First, one can define a bisimplicial set $\mathrm{Corr}(C,E)_{\mathrm{contra},\mathrm{co}}$ whose $(n,m)$-simplices are maps
\[
(\Delta^n)^{\mathrm{op}}\times \Delta^m\to C
\]
satisfying an analogue of the condition defining $\mathrm{Corr}(C,E)$; i.e., sending morphisms of the second variable to morphisms in $E$, and making all squares cartesian. Now for any bisimplicial set $X$, one can define a simplicial set $\delta^\ast_+ X$ whose $n$-simplices are given by maps $\Delta^{(n,n)}_+\to X$ where $\Delta^{(n,n)}$ is the bisimplicial set given by an $(n,n)$-simplex, and $\Delta^{(n,n)}_+\subset \Delta^{(n,n)}$ denotes the sub-bisimplicial set spanned by vertices $(i,j)$ with $i\geq j$. Unraveling the definitions, we have
\[
\mathrm{Corr}(C,E) = \delta^\ast_+ \mathrm{Corr}(C,E)_{\mathrm{contra},\mathrm{co}}.
\]
There is a more standard way to pass from a bisimplicial set $X$ to a simplicial set $\delta^\ast X$, via taking as $n$-simplices the maps $\Delta^{(n,n)}\to X$. The inclusion $\Delta^{(n,n)}_+\subset \Delta^{(n,n)}$ induces a comparison map
\[
\delta^\ast X\to \delta^\ast_+ X.
\]
Note that in general neither $\delta^\ast X$ nor $\delta^\ast_+ X$ is an $\infty$-category; and in our case, $\delta^\ast \mathrm{Corr}(C,E)_{\mathrm{contra},\mathrm{co}}$ is not an $\infty$-category. Thus, we are here really using the interpretation of $\infty$-categories as simplicial sets, and ambient simplicial sets that are not $\infty$-categories.

\begin{remark} Vertices of $\delta^\ast \mathrm{Corr}(C,E)_{\mathrm{contra},\mathrm{co}}$ are objects of $C$, while edges (from $X$ to $Y$) are given by cartesian squares
\[\xymatrix{
& W\ar[dr]\ar[dl] \\
X\ar[dr] && Y\ar[dl]\\
& U
}\]
where the right-vertical arrows are in $E$. But given two such composable edges, one cannot in general compose them to a $2$-simplex, which would be a diagram
\[\xymatrix{
&& W''\ar[dr]\ar[dl]\\
& W\ar[dr]\ar[dl] && W'\ar[dr]\ar[dl]\\
X\ar[dr] && Y\ar[dr]\ar[dl] && Z\ar[dl]\\
 & U\ar[dr] && U'\ar[dl]\\
&& U''.
}\]
Indeed, it is in general hard to find $U''$.
\end{remark}

\begin{theorem}[{\cite[Theorem 4.27]{LiuZhengGluing}}] For any bisimplicial set $X$, the map $\delta^\ast X\to \delta^\ast_+ X$ is a categorical equivalence. In other words, for any $\infty$-category $\mathcal E$, the functor
\[
\mathrm{Fun}(\delta^\ast_+ X,\mathcal E)\to \mathrm{Fun}(\delta^\ast X,\mathcal E)
\]
of $\infty$-categories is an equivalence.
\end{theorem}

\begin{remark} One might wonder whether $\delta^\ast \mathrm{Corr}(C,E)_{\mathrm{contra},\mathrm{co}}$ has enough morphisms for this to be true, as few correspondences $X\leftarrow W\to Y$ extend to a cartesian square. But any correspondence for which one of $W\to X$ and $W\to Y$ is an isomorphism naturally lifts to an edge of $\delta^\ast \mathrm{Corr}(C,E)_{\mathrm{contra},\mathrm{co}}$ (by taking $U=X$ or $U=Y$), and any correspondence can be canonically written as the composite of two such. As composites of morphisms are unique (up to contractible choice) in an $\infty$-category, any morphism in $\mathrm{Corr}(C,E)$ lifts canonically (up to contractible choice) to any fibrant replacement of $\delta^\ast \mathrm{Corr}(C,E)_{\mathrm{contra},\mathrm{co}}$ (i.e.~any $\infty$-category equipped with a categorical equivalence from this simplicial set).
\end{remark}

Using the theorem, it is thus sufficient to construct a map of simplicial sets
\[
\delta^\ast \mathrm{Corr}(C,E)_{\mathrm{contra},\mathrm{co}}\to \mathrm{Cat}_\infty.
\]

At this point, tri-simplicial sets enter the picture. Namely, we consider the trisimplicial set
\[
\mathrm{Corr}(C,I,P)_{\mathrm{contra},\mathrm{co},\mathrm{co}}
\]
whose $(n,m,k)$-simplices are maps
\[
(\Delta^n)^{\mathrm{op}}\times \Delta^m\times \Delta^k\to C
\]
sending morphisms of the second variable to morphisms in $I$, morphisms of the third variable to morphisms in $P$, and making all squares and cubes cartesian. We can turn this into a bisimplicial set by contracting the last two factors, getting a natural map to $\mathrm{Corr}(C,E)_{\mathrm{contra},\mathrm{co}}$; and then we can turn both into simplicial sets by further diagonal restriction. In summary, denoting by $\delta^\ast$ full diagonal restriction for both tri- and bi-simplicial sets, we get a map of simplicial sets
\[
\delta^\ast \mathrm{Corr}(C,I,P)_{\mathrm{contra},\mathrm{co},\mathrm{co}}\to \delta^\ast \mathrm{Corr}(C,E)_{\mathrm{contra},\mathrm{co}}.
\]

\begin{theorem}[{\cite[Theorem 5.4]{LiuZhengGluing}}]\label{thm:decomposemorphisms} This map is a categorical equivalence.
\end{theorem}

\begin{remark} This theorem guarantees, roughly speaking, that decomposing a map in $E$ into maps in $I$ and $P$ is ``unique up to contractible choice'', and encompasses the explicit constructions above.
\end{remark}

In other words, it is enough to construct a map of simplicial sets
\[
\delta^\ast \mathrm{Corr}(C,I,P)_{\mathrm{contra},\mathrm{co},\mathrm{co}}\to \mathrm{Cat}_\infty.
\]
What is easy to construct is a map
\[
\delta^\ast \mathrm{Corr}(C,I,P)_{\mathrm{contra},\mathrm{contra},\mathrm{contra}}\to \mathrm{Cat}_\infty.
\]
Indeed, there is a natural functor
\[
\delta^\ast \mathrm{Corr}(C,I,P)_{\mathrm{contra},\mathrm{contra},\mathrm{contra}}\to C^{\mathrm{op}}
\]
recording the diagonal composition of morphisms; and we can compose this with the given functor $\mathcal D_0$ to $\mathrm{Cat}_\infty$.

The idea now is to pass to adjoints in the second and third variable. In general, for a trisimplicial set $X$, giving a map $\delta^\ast X\to \mathrm{Cat}_\infty$ means to give for each $(n,m,k)$-simplex $\Delta^{(n,m,k)}$ of $X$ a map
\[
\Delta^n\times \Delta^m\times \Delta^k\to \mathrm{Cat}_\infty,
\]
functorially in the simplex of $X$. This is equivalent to
\[
\Delta^n\times \Delta^m\to \mathrm{Fun}(\Delta^k,\mathrm{Cat}_\infty)
\]
where the latter is (up to equivalence) given by $\infty$-categories $\mathcal C_0,\ldots,\mathcal C_k$ with functors $F_i: \mathcal C_i\to \mathcal C_{i+1}$ for $i=0,\ldots,k-1$. One can define a sub-$\infty$-category $\mathrm{Fun}^L(\Delta^k,\mathrm{Cat}_\infty)$ whose objects are those where each $F_i$ is a left adjoint, i.e.~admits a right adjoint $G_i$, and where the morphisms are those maps $\mathcal C_i\to \mathcal C_i'$ (commuting with the $F_i$) which also, after passing to adjoints, commute with the $G_i$. On this subcategory, we can pass to adjoints, giving a functor
\[
\mathrm{Fun}^L(\Delta^k,\mathrm{Cat}_\infty)\to \mathrm{Fun}((\Delta^k)^{\mathrm{op}},\mathrm{Cat}_\infty).
\]
In the context of the trisimplicial set, this gives a map
\[
\Delta^n\times \Delta^m\times (\Delta^k)^{\mathrm{op}}\to \mathrm{Cat}_\infty,
\]
i.e.~one has reversed the arrows in the third direction. Of course, the same discussion applies if one passes to left adjoints instead.

We apply these remarks to first pass to left adjoints in the second direction (which is made possible by condition (2) on base change (and projection formula when keeping track of the symmetric monoidal structure)) and then pass to right adjoints in the third variable (which is made possible by condition (3) on base change (and projection formula when keeping track of the symmetric monoidal structure), as well as condition (4)), to turn the functor
\[
\delta^\ast \mathrm{Corr}(C,I,P)_{\mathrm{contra},\mathrm{contra},\mathrm{contra}}\to \mathrm{Cat}_\infty
\]
into a functor
\[
\delta^\ast \mathrm{Corr}(C,I,P)_{\mathrm{contra},\mathrm{co},\mathrm{co}}\to \mathrm{Cat}_\infty
\]
as desired.
\end{proof}

\begin{remark} The construction shows that if one formulates the requirement that $f_!$ is the left (resp.~right) adjoint of $f^\ast$ for $f\in I$ (resp.~$f\in P$) in a suitably strong form, then the extension is in fact unique. Indeed, the only parts of the construction that were not of propositional nature were the parts about passing to left resp.~right adjoints in the context of this tri-simplicial set, but doing so was essentially dictated by the desired outcome.
\end{remark}

\begin{remark} To get a full $6$-functor formalism, it remains to ask that all $\mathcal D(X)$ are closed symmetric monoidal $\infty$-categories; that all $f^\ast$ admit right adjoints $f_\ast$; and that for $f\in P$, the functor $f_\ast$ admits a further right adjoint $f^!$.
\end{remark}\newpage

\section{Lecture V: Poincar\'e Duality}

The goal of this lecture is to discuss, in the generality of abstract $6$-functor formalisms, a notion of Poincar\'e Duality, and give a simple means for establishing it.

As discussed in the first lecture, Poincar\'e Duality should say that for a ``smooth'' morphism $f: X\to Y$, the right adjoint $f^!$ of $f_!$ (exists and) agrees with $f^\ast$ up to a twist. Axiomatizing this desiratum leads to the following definition. Here, we fix $(C,E)$ as before, and a $3$-functor formalism
\[
\mathcal D: \mathrm{Corr}(C,E)\to \mathrm{Cat}_\infty.
\]

\begin{definition}\label{def:cohomsmooth} Let $f: X\to Y$ be a morphism in $E$. Then $f$ is ($\mathcal D$-)cohomologically smooth if the following properties are satisfied.
\begin{enumerate}
\item The right adjoint $f^!$ of $f_!$ exists, and the natural transformation
\[
f^!(1_Y)\otimes f^\ast(-)\to f^!(-)
\]
of functors $\mathcal D(Y)\to \mathcal D(X)$ is an isomorphism.
\item The object $f^!(1_Y)\in \mathcal D(X)$ is $\otimes$-invertible.
\item For any $g: Y'\to Y$ with base change $f':X'=X\times_Y Y'\to Y'$ of $f$, properties (1) and (2) also hold for $f'$, and moreover the natural map
\[
g^{\prime\ast} f^!(1_Y)\to f^{\prime !}(1_{Y'})
\]
is an isomorphism, where $g': X'\to X$ is the base change of $g$.
\end{enumerate}
\end{definition}

Here, the map $f^!(1_Y)\otimes f^\ast\to f^!$ is adjoint to
\[
f_!(f^!(1_Y)\otimes f^\ast(-))\cong f_!f^!(1_Y)\otimes (-)\to (-)
\]
using the projection formula and the counit map $f_!f^!(1_Y)\to 1_Y$; and $g^{\prime\ast} f^!(1_Y)\to f^{\prime !}(1_{Y'})$ is adjoint to
\[
f'_! g^{\prime\ast} f^!(1_Y)\cong g^\ast f_! f^!(1_Y)\to g^\ast(1_Y) = 1_{Y'}
\]
using base change and the same counit map.

\begin{remark} This notion was defined in \cite{EtCohDiamonds} when I failed to define a notion of smoothness for maps of perfectoid spaces. The issue there was that the topological definition (``locally looks like euclidean space'') did not work as not all examples were locally isomorphic; and the algebraic-geometric definition (``lifting against nilpotent thickenings'') did not work as perfectoid spaces have no nilpotents. Thus, in the end I simply characterized the desired cohomological properties, and then showed that all relevant examples have this property.
\end{remark}

\begin{remark} In general, cohomological smoothness is weaker than smoothness; for example, any universal homeomorphism of schemes is cohomologically smooth with respect to \'etale cohomology, as \'etale sheaves are insensitive to universal homeomorphisms.
\end{remark}

\begin{remark} It follows from the definition that the class of cohomologically smooth morphisms is stable under base change, composition, and contains all isomorphisms. Moreover, in case the $3$-functor formalism is defined from classes of morphisms $I$ and $P$ as in the last lecture, all morphisms in $I$ are cohomologically smooth. In fact, in that case $f_!$ is left adjoint to $f^\ast$, i.e.~$f^\ast=f^!$, and the properties are clear.
\end{remark}

Checking that $f: X\to Y$ is cohomologically smooth seems highly nontrivial. Indeed, for any base change of $f$, one needs to prove that some map is an isomorphism for all $B\in \mathcal D(Y)$; and the map involves $f^! B$ which is abstractly defined as an adjoint, so one has to compute the morphisms from any $A\in \mathcal D(X)$ towards $f^! B$.

The goal of this lecture is to show that in fact, it is enough to construct a surprisingly small amount of data (and check the commutativity of two diagrams), and this data involves only some very simple sheaves on $X$, $Y$ and $X\times_Y X$.

To simplify the situation, we will in the following replace $C$ by the slice $C_{/Y}$, so that we can assume that $Y$ is the final object of $C$. Moreover, we will restrict to the subcategory of objects $X\in C$ such that $X\to Y$ lies in $E$, and assume that all morphisms $X\to X'$ over $Y$ are still in $E$ -- this is notably satisfied in the setup of the last lecture. Thus, in fact all morphisms of $C$ lie in $E$.

\begin{theorem}\label{thm:critcohomsmooth} Assume that all morphisms of $C$ lie in $E$, and let $f: X\to Y$ be a map to the final object $Y\in C$. Let $\Delta: X\to X\times_Y X$ be the diagonal. Then $f$ is cohomologically smooth if and only if there is a $\otimes$-invertible object $L\in D(X)$ and maps
\[
\alpha: \Delta_! 1_X\to p_2^\ast L, \beta: f_! L\to 1_Y
\]
such that the composite
\[
1_X\cong p_{1!}\Delta_! 1_X\xrightarrow{p_{1!}\alpha} p_{1!} p_2^\ast L\cong f^\ast f_! L\xrightarrow{f^\ast\beta} 1_X
\]
is the identity, as well as the composite
\[
L\cong p_{2!}(p_1^\ast L\otimes \Delta_! 1_X)\xrightarrow{p_{2!}(p_1^\ast L\otimes \alpha)} p_{2!}(p_1^\ast L\otimes p_2^\ast L)\cong p_{2!} p_1^\ast L\otimes L\cong f^\ast f_! L\otimes L\xrightarrow{f^\ast \beta\otimes L} L.
\]
\end{theorem}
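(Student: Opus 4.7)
The plan is to treat the two directions separately. The \emph{only if} direction is a formal unwinding of the axioms of cohomological smoothness, while the \emph{if} direction requires constructing a candidate right adjoint to $f_!$ from $(L,\alpha,\beta)$ and verifying the triangle identities.

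For the \emph{only if} direction, set $L:=f^!(1_Y)$, which is $\otimes$-invertible by Definition~\ref{def:cohomsmooth}(2), and let $\beta:f_!L\to 1_Y$ be the counit of $(f_!,f^!)$. Applying Definition~\ref{def:cohomsmooth}(3) to the Cartesian square with projections $p_1,p_2:X\times_Y X\to X$ yields a canonical isomorphism $p_2^*L\cong p_1^!(1_X)$, and I take $\alpha:\Delta_!1_X\to p_2^*L$ to be the $(p_{1!},p_1^!)$-adjoint of the identification $p_{1!}\Delta_!1_X\cong 1_X$ (coming from $p_1\Delta=\mathrm{id}_X$). Both composites in the theorem then reduce to standard triangle identities: the first to the triangle identity for $(p_{1!},p_1^!)$ at $1_X$, after identifying $f^*\beta$ with the $(p_{1!},p_1^!)$-counit at $1_X$ via base-change compatibility of counits; and the second, after tensoring with $L$, to the triangle identity for $(f_!,f^!)$ at $1_Y$.

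For the \emph{if} direction, I define the candidate right adjoint by $G(B):=L\otimes f^*B$. The counit is the composite
\[\epsilon_B : f_!G(B)=f_!(L\otimes f^*B)\cong f_!L\otimes B\xrightarrow{\beta\otimes B} B,\]
built using the projection formula. The unit $\eta_A:A\to L\otimes f^*f_!A$ is built from $\alpha$ as follows: since $p_2\Delta=\mathrm{id}_X$ we have $A\cong p_{2!}\Delta_!A$, and the projection formula identifies $\Delta_!A\cong \Delta_!1_X\otimes p_1^*A$. Applying $p_{2!}(\alpha\otimes p_1^*A)$, then projection formula and proper base change, produces the natural map $A\to p_{2!}(p_2^*L\otimes p_1^*A)\cong L\otimes p_{2!}p_1^*A\cong L\otimes f^*f_!A$.

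It remains to verify the two triangle identities for $(f_!,G)$ with these unit and counit. The two displayed composites in the theorem are precisely these triangle identities at specific inputs. The second is the $B=1_Y$ instance of the identity $GB\xrightarrow{\eta_{GB}} GFGB\xrightarrow{G\epsilon_B}GB=\mathrm{id}$; a projection-formula bookkeeping identifies $\eta_{L\otimes f^*B}$ with $\eta_L\otimes f^*B$, so this case forces the full identity for every $B$. The first composite is the $A=1_X$ instance of the opposite triangle identity $FA\xrightarrow{F\eta_A} FGFA\xrightarrow{\epsilon_{FA}}FA=\mathrm{id}$, upgraded from $f_!1_X\to f_!1_X$ to a statement about $1_X\to 1_X$; here the subtlety is that the displayed composite uses $p_1$ whereas $\eta_A$ is built from $p_2$, so matching the two requires using the swap symmetry $\sigma:X\times_Y X\to X\times_Y X$ (fixing $\Delta$ and exchanging $p_1,p_2$) together with the invertibility of $L$, and this combinatorial identification is the main obstacle I anticipate. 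Once both identities hold, $f^!:=G=L\otimes f^*$ is a right adjoint to $f_!$, giving conditions (1) and (2) of Definition~\ref{def:cohomsmooth}. Condition (3) follows formally: the construction of $\eta,\epsilon$ uses only projection formula, proper base change, and tensoring with an invertible object, all of which commute with pullback, so for any $g:Y'\to Y$ the base-changed triple $(g'^*L,g'^*\alpha,g^*\beta)$ satisfies the two identities for $f'$ and produces $f'^!(1_{Y'})\cong g'^*L=g'^*f^!(1_Y)$ as required.
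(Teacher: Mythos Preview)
Your proposal is essentially correct but takes a different route from the paper. You work directly at the level of functors $D(X)\leftrightarrow D(Y)$, constructing the unit and counit of a putative adjunction $(f_!,\,L\otimes f^\ast)$ by hand and then upgrading the two displayed composites (which are the triangle identities evaluated at $1_X$ and $1_Y$) to all inputs via projection formula. The paper instead introduces the $2$-category $\mathrm{LZ}_{\mathcal D}$ of cohomological correspondences, in which the objects $F=1_X\in\mathrm{Hom}(X,Y)$ and $G=L\in\mathrm{Hom}(Y,X)$ are $1$-morphisms, and the two displayed composites \emph{are literally} the two triangle identities for an adjunction in $\mathrm{LZ}_{\mathcal D}$. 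Once $(F,G,\alpha,\beta)$ is an adjunction there, applying the $2$-functor $\mathrm{Hom}_{\mathrm{LZ}_{\mathcal D}}(Y,-)$ yields the adjunction of functors, and applying base-change $2$-functors yields condition~(3) with no extra bookkeeping.

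The payoff of the paper's approach is precisely that it dissolves the obstacle you flag: there is no ``$p_1$ versus $p_2$'' mismatch and no swap-symmetry argument, because at the kernel level the first composite already \emph{is} the triangle identity $F\to FGF\to F$ in $D(X)=\mathrm{Hom}(X,Y)$, not merely its image under $f_!$. Your route is more elementary (no $2$-category needed) but you pay for it with the combinatorics you anticipate. In the forward direction you also undersell the second composite: the paper notes that reducing it to a standard triangle identity is a ``direct, but elaborate, diagram chase'' and instead invokes a short $2$-categorical lemma (Lemma~\ref{lem:adjunctionleftright}, a ``left inverse equals right inverse'' principle for units) to show that the $\alpha$ you wrote down necessarily coincides with the adjunction unit; the second identity then falls out. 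Your claim that it ``reduces, after tensoring with $L$, to the triangle identity for $(f_!,f^!)$ at $1_Y$'' is the right endpoint but hides the work of matching your $\alpha$ to the actual unit.
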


Let us first prove the forward direction. For this, we take $L=f^!(1_Y)$, and $\beta: f_! L = f_! f^!(1_Y)\to 1_Y$ the counit of the adjunction. For $\alpha$, we note that $p_2^\ast L = p_1^!(1_X)$ by property (3) in Definition~\ref{def:cohomsmooth}, and then $\alpha$ is adjoint to $1_X=\Delta^! p_2^!(1_X)$.

We need to prove that these two composites are the identity. The first one is actually straightforward from the definition, but the second is more subtle. It could be done by a direct, but elaborate, diagram chase. Let us give a more abstract argument instead, one that will actually introduce the techniques that will be useful in the converse direction. Namely, note that if $X_1,X_2\in C$ and $K\in D(X_1\times_Y X_2)$ is an object, then it induces a ``Fourier--Mukai functor with kernel $K$'':
\[
\mathcal D(X_1)\to \mathcal D(X_2): A\mapsto p_{2!}(p_1^\ast A\otimes K).
\]
In particular, taking $X_1=X$ and $X_2=Y$, the object $K=1_X$ corresponds to the functor
\[
F=f_!: \mathcal D(X)\to \mathcal D(Y),
\]
while taking $X_1=Y$ and $X_2=X$, the object $K=L$ corresponds to the functor
\[
G=f^\ast\otimes L: \mathcal D(Y)\to \mathcal D(X).
\]
Cohomological smoothness ensures that these two functors are adjoint, so there are natural transformations
\[
\alpha_0: \mathrm{id}_{\mathcal D(X)}\to GF
\]
and
\[
\beta_0: FG\to \mathrm{id}_{\mathcal D(Y)}.
\]
Now note that a composite of two functors given by a kernel is itself induced by a kernel, through convolution of kernels. In particular $FG$ is given by $f_! L\in \mathcal D(Y)$ and $\mathrm{id}_{\mathcal D(Y)}$ by $1_Y\in \mathcal D(Y)$, and $\beta_0$ is given by $\beta: f_! L\to 1_Y$. On the other hand, $GF$ is given by the kernel $p_2^\ast L\in \mathcal D(X\times_Y X)$, while $\mathrm{id}_{\mathcal D(X)}$ is given by the kernel $\Delta_!(1_X)\in \mathcal D(X\times_Y X)$. We claim that
\[
\alpha: \Delta_!(1_X)\to p_2^\ast L
\]
induces
\[
\alpha_0: \mathrm{id}_{\mathcal D(X)}\to GF.
\]
Let temporarily $\alpha_0'$ denote the transformation induced by $\alpha$. Then
\[
(\mathrm{Cat},D(X),D(Y),f_!,f^\ast\otimes L,\beta_0,\alpha_0,\alpha_0')
\]
satisfy the hypothesis of the following lemma, concerning a version of ``left and right inverse agree'' in the context of adjunctions in $2$-categories.

\begin{lemma}\label{lem:adjunctionleftright} Let $\mathcal C$ be a $2$-category and $F: X\to Y$, $G: Y\to X$ be $1$-morphisms in $\mathcal C$, together with $2$-morphisms
\[
\beta: FG\to 1, \alpha: 1\to GF, \alpha': 1\to GF
\]
such that the composites
\[
F\xrightarrow{F\alpha'} FGF\xrightarrow{\beta F} F, G\xrightarrow{\alpha G} GFG\xrightarrow{G\beta} G
\]
are the identity. Then $\alpha=\alpha'$.
\end{lemma}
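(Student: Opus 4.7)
The statement is the classical ``left and right inverse agree'' argument, carried out purely formally in an arbitrary $2$-category; I would imitate the usual proof that adjoints are unique up to canonical isomorphism. The key ingredients are (a) whiskering the given triangle identities with $F$ and $G$, and (b) the interchange law for horizontal and vertical composition of $2$-morphisms, which says that the two natural ways of composing $\alpha$ and $\alpha'$ to a $2$-morphism $1_X\to GFGF$ agree.

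First I would write down the $G$-triangle identity $G\beta\cdot \alpha G = \mathrm{id}_G$, whisker it on the right with $F$ to obtain $G\beta F\cdot \alpha GF = \mathrm{id}_{GF}$, and then precompose with $\alpha'$ to rewrite
\[
\alpha' \;=\; G\beta F\cdot \alpha GF\cdot \alpha'.
\]
Next I would invoke the interchange law applied to $\alpha$ and $\alpha'$ (both of type $1\to GF$): the horizontal composite $\alpha * \alpha': 1\to GFGF$ can be computed in two ways, giving the identity
\[
\alpha GF\cdot \alpha' \;=\; GF\alpha'\cdot \alpha.
\]
Substituting and using functoriality of $G$ on vertical composition, this rewrites the previous display as
\[
\alpha' \;=\; G(\beta F\cdot F\alpha')\cdot \alpha.
\]
Finally I would apply the $F$-triangle identity assumed for $\alpha'$, namely $\beta F\cdot F\alpha' = \mathrm{id}_F$, which collapses $G(\beta F\cdot F\alpha')$ to $\mathrm{id}_{GF}$ and yields $\alpha' = \alpha$.

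The only real content beyond bookkeeping is the interchange step, which is the one place where one has to be careful about left versus right whiskering; this is what I would expect to be the main (though still minor) obstacle, and it is cleanest to verify by drawing the relevant string diagram or by writing out the interchange square with corners $1\cdot 1$, $GF\cdot 1$, $1\cdot GF$, $GF\cdot GF$ and edges $\alpha, \alpha', GF\alpha', \alpha GF$.
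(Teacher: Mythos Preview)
Your proof is correct and essentially identical to the paper's: both use one triangle identity to insert a $G\beta F$-composite, the interchange law $\alpha GF\cdot \alpha' = GF\alpha'\cdot \alpha$ to swap the order, and then the other triangle identity to collapse. The only cosmetic difference is that the paper starts from $\alpha$ and arrives at $\alpha'$, while you start from $\alpha'$ and arrive at $\alpha$.
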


\begin{proof} We have
\[\begin{aligned}
1\xrightarrow{\alpha} GF &= 1\xrightarrow{\alpha} GF\xrightarrow{GF\alpha'} GFGF\xrightarrow{G\beta F} GF\\
&=1\xrightarrow{\alpha'} GF\xrightarrow{\alpha GF} GFGF\xrightarrow{G\beta F} GF\\
&=1\xrightarrow{\alpha'} GF.
\end{aligned}\]
\end{proof}

Thus, the unit transformation $\alpha_0: \mathrm{id}_{\mathcal D(X)}\to GF$ is indeed induced by $\alpha: \Delta_!(1_X)\to p_2^\ast L$. Now the composite
\[
G\xrightarrow{\alpha_0 G} GFG\xrightarrow{G\beta_0} G
\]
is the identity, and by unraveling the definitions one computes that this composite is induced by the map on kernels
\[
L\cong p_{2!}(p_1^\ast L\otimes \Delta_! 1_X)\xrightarrow{p_{2!}(p_1^\ast L\otimes \alpha)} p_{2!}(p_1^\ast L\otimes p_2^\ast L)\cong p_{2!} p_1^\ast L\otimes L\cong f^\ast f_! L\otimes L\xrightarrow{f^\ast \beta\otimes L} L.
\]
In general, passing from kernels to functors loses information, but in this case $L$ encodes the functor $G=f^\ast\otimes L: \mathcal D(Y)\to \mathcal D(X)$ which sends $1_Y\in \mathcal D(Y)$ to $L\in \mathcal D(X)$. Thus, knowing that the induced transformation of functors is the identity implies that the transformation on kernels is the identity.

To prove the converse direction of Theorem~\ref{thm:critcohomsmooth}, we formalize the preceding arguments through the introduction of a certain $2$-category; some version of this idea is in work of Lu--Zheng \cite{LuZheng}, and this was adapted slightly in \cite{FarguesScholze}, \cite{HansenScholze}. Informally, given a geometric setup $(C,E)$ where $E$ consists of all morphisms, and a $3$-functor formalism $\mathcal D$, let
\[
\mathrm{LZ}_{\mathcal D}
\]
be the $2$-category with:
\begin{itemize}
\item Objects given by objects of $C$;
\item Morphism categories: $\mathrm{Hom}_{\mathrm{LZ}_{\mathcal D}}(X,X') = D(X\times X')$;
\item Identity morphisms: $\mathrm{id}_X\in \mathrm{Hom}_{\mathrm{LZ}_{\mathcal D}}(X,X)=D(X\times X)$ is given by $\Delta_!(1_X)$;
\item Composition:
\[
\mathrm{Hom}_{\mathrm{LZ}_{\mathcal D}}(X,X')\times \mathrm{Hom}_{\mathrm{LZ}_{\mathcal D}}(X',X'')\to \mathrm{Hom}_{\mathrm{LZ}_{\mathcal D}}(X,X'')
\]
is given by the convolution
\[
(A,B)\mapsto A\star B = p_{X,X''!}(p_{X,X'}^\ast A\otimes p_{X',X''}^\ast B).
\]
\end{itemize}

To turn this into a definition, one has to supply isomorphisms between $(A\star B)\star C$ and $A\star (B\star C)$ satisfying the pentagon axiom for fourfould convolution (and similar isomorphisms for the identity morphisms). This becomes a bit cumbersome, so let us sketch a high-tech construction of $\mathrm{LZ}_{\mathcal D}$; see \cite{Zavyalov} for an elaboration of this construction. One starts with the symmetric monoidal $\infty$-category $\mathrm{Corr}(C,E)$. All of its objects are dualizable (in fact, self-dual, using diagonal correspondences). Thus, this is a closed symmetric monoidal $\infty$-category, and hence enriched over itself, with internal mapping objects
\[
\underline{\mathrm{Hom}}_{\mathrm{Corr}(C,E)}(X,X') = X\times X'.
\]
Enrichments can be transferred along lax symmetric monoidal functors. Applying this to the functor $\mathcal D$, we turn $\mathrm{Corr}(C,E)$ into a $\mathrm{Cat}_\infty$-enriched $\infty$-category, with internal Hom-objects given by $\mathcal D(X\times X')$. But $\mathrm{Cat}_\infty$-enriched $\infty$-categories are a model for $(\infty,2)$-categories, and in particular the homotopy $2$-category gives the desired $2$-category $\mathrm{LZ}_{\mathcal D}$. We outline another approach based on presentable $(\infty,2)$-categories in the appendix to this lecture.

\begin{remark} It is worth pointing out how well the definition of a $3$-functor formalism fits into this construction.
\end{remark}

The idea behind the definition of $\mathrm{LZ}_{\mathcal D}$ is that there is a natural functor
\[
\mathrm{Hom}_{\mathrm{LZ}_{\mathcal D}}(Y,-): \mathrm{LZ}_{\mathcal D}\to \mathrm{Cat}
\]
taking any $X$ to $D(X)=D(Y\times_Y X)$, and any $K\in D(X\times_Y X')$ into the functor
\[
D(X)\to D(X'): A\mapsto p_{2!}(K\otimes p_1^\ast A),
\]
i.e.~the ``Fourier--Mukai functor with kernel $K$''. In general, this passage from $K$ to the induced functor $D(X)\to D(X')$ is very lossful, and working in $\mathrm{LZ}_{\mathcal D}$ amounts to working with kernels of functors directly, instead of with the induced functors.

To prove cohomological smoothness, we want to show that (up to twist) $f^\ast$ is a right adjoint of $f_!$. What we will do instead is to prove the adjointness already in $\mathrm{LZ}_{\mathcal D}$. This makes sense, as adjunctions can be defined in any $2$-category:

\begin{definition} Let $\mathcal C$ be a $2$-category and $F: X\to Y$ a $1$-morphism in $\mathcal C$. A right adjoint of $F$ is a triple $(G,\alpha,\beta)$ of a $1$-morphism $G: Y\to X$ and $2$-morphisms $\alpha: 1\to GF$ and $\beta: FG\to 1$ such that the composites
\[
F\xrightarrow{F\alpha} FGF\xrightarrow{\beta F} F, G\xrightarrow{\alpha G} GFG\xrightarrow{G\beta} G
\]
are the identity.
\end{definition}

Some basic properties of adjunctions are:

\begin{enumerate}
\item Adjunctions are unique up to unique isomorphism. More precisely, if $(G,\alpha,\beta)$ and $(G',\alpha',\beta')$ are right adjoints of $F$, then we get a unique isomorphism $(G,\alpha,\beta)\cong (G',\alpha',\beta')$ making the obvious diagrams commute. For the construction, for example the map $G\to G'$ is defined as
\[
G\xrightarrow{\alpha' G} G'FG\xrightarrow{G'\beta} G'.
\]
\item Any functor of $2$-categories preserves adjunctions.
\item Adjunctions in $\mathrm{Cat}$ are usual adjunctions of categories.
\end{enumerate}

Properties (2) and (3) are immediate from the definition. With this preparation, we can give the proof of Theorem~\ref{thm:critcohomsmooth}.

\begin{proof}[Proof of Theorem~\ref{thm:critcohomsmooth}] We need to prove the backwards direction, so take $(L,\alpha,\beta)$. Consider $X$ and $Y$ as objects of $\mathrm{LZ}_{\mathcal D}$, and the morphisms
\[
F=1_X\in D(X)=\mathrm{Hom}_{\mathrm{LZ}_{\mathcal D}}(X,Y), G=L\in D(X)=\mathrm{Hom}_{\mathrm{LZ}_{\mathcal D}}(Y,X).
\]
(We recall that we assumed that $Y$ is the final object of $C$.) Then $F$ encodes the functor $f_!: D(X)\to D(Y)$ and $G$ encodes the functor $f^\ast\otimes L: D(Y)\to D(X)$, and we wish to see that $f_!$ is the left adjoint of $f^\ast\otimes L$. We will in fact show that $F$ is a left adjoint of $G$ in $\mathrm{LZ}_{\mathcal D}$. Indeed, $\alpha$ and $\beta$ translate into maps
\[
\mathrm{id}_X=\Delta_!(1_X)\to GF=p_2^\ast L\in D(X\times_Y X)=\mathrm{Hom}_{\mathrm{LZ}_{\mathcal D}}(X,X)
\]
and
\[
FG=f_! L\to \mathrm{id}_Y = 1_Y\in D(Y) = \mathrm{Hom}_{\mathrm{LZ}_{\mathcal D}}(Y,Y),
\]
and the required commutative diagrams translate into the ones in Theorem~\ref{thm:critcohomsmooth}.

In particular, it follows that the right adjoint $f^!$ is given by a kernel, and in particular it is $D(Y)$-linear, so $f^!(1_Y)\otimes f^\ast\to f^!$ is an isomorphism. Moreover, we must have $L=f^!(1_Y)$, which we assumed to be $\otimes$-invertible.

Finally, for any base change of $f$ along $g: Y'\to Y$, base change along $g$ defines a functor of $2$-category $\mathrm{LZ}_{\mathcal D}\to \mathrm{LZ}_{\mathcal D|_{C_{/Y'}}}$, which hence gives the same adjunction for the base change of $f$, with the base change of $L$ as dualizing complex.
\end{proof}

Let us end the lecture by discussing the example of locally compact Hausdorff topological spaces.

\begin{example} Consider the category $C$ of finite-dimensional locally compact Hausdorff topological spaces, with the functor $\mathcal D: X\mapsto \mathcal D(\mathrm{Ab}(X))$. For the classes of open immersions $I$ and proper maps $P$, this satisfies the conditions of the last lecture, giving us a $6$-functor formalism (where $E$ consists of all maps).

\begin{proposition}\label{prop:realssmooth} The map $f: \mathbb R\to \ast$ is cohomologically smooth.
\end{proposition}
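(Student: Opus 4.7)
The plan is to apply Theorem~\ref{thm:critcohomsmooth} to $f:\mathbb R\to\ast$. The natural candidate for the invertible object is $L=\mathbb Z_{\mathbb R}[1]\in D(\mathbb R,\mathbb Z)$; it is $\otimes$-invertible because $\mathbb Z_{\mathbb R}$ is the unit and the shift is invertible in any stable $\infty$-category. The counit $\beta:f_!L\to 1_Y$ is produced by the canonical isomorphism
\[
f_!L=R\Gamma_c(\mathbb R,\mathbb Z)[1]\xrightarrow{\sim} \mathbb Z=1_\ast,
\]
valid because $H^0_c(\mathbb R,\mathbb Z)=0$ and $H^1_c(\mathbb R,\mathbb Z)=\mathbb Z$, generated by the orientation class of $\mathbb R$.

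For $\alpha:\Delta_!1_X\to p_2^\ast L$, note $\Delta_!1_X=\Delta_\ast\mathbb Z_{\mathbb R}$ (since $\Delta$ is a closed immersion, hence proper) and $p_2^\ast L=\mathbb Z_{\mathbb R^2}[1]$. Decompose the open complement of the diagonal as $\mathbb R^2\setminus\Delta=U_+\sqcup U_-$ with $U_\pm=\{(x,y):\pm(y-x)>0\}$, with inclusion $j_\pm:U_\pm\hookrightarrow\mathbb R^2$. The excision triangle
\[
j_!\mathbb Z_{\mathbb R^2\setminus\Delta}\to\mathbb Z_{\mathbb R^2}\to\Delta_\ast\mathbb Z_{\mathbb R}\xrightarrow{\partial} j_!\mathbb Z_{\mathbb R^2\setminus\Delta}[1]
\]
together with the projection to the $U_+$-summand and the forget-support inclusion $j_{+,!}\mathbb Z_{U_+}\hookrightarrow\mathbb Z_{\mathbb R^2}$ yields the composite
\[
\alpha:\Delta_\ast\mathbb Z_{\mathbb R}\xrightarrow{\partial}j_{+,!}\mathbb Z_{U_+}[1]\oplus j_{-,!}\mathbb Z_{U_-}[1]\twoheadrightarrow j_{+,!}\mathbb Z_{U_+}[1]\hookrightarrow\mathbb Z_{\mathbb R^2}[1].
\]

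To verify the two composites, the plan is to reduce each one to a fiberwise statement via proper base change along a point $i_x:\ast\hookrightarrow\mathbb R$ of the base. For the first composite, $p_{1!}\alpha$ becomes, after pulling back along $i_x$ and invoking proper base change for the cartesian square $\{x\}\times\mathbb R\hookrightarrow\mathbb R^2$, the boundary-plus-projection map $\delta_\ast\mathbb Z\to j_{+,!}\mathbb Z_{(x,\infty)}[1]\hookrightarrow\mathbb Z_{\mathbb R}[1]$ on the fibre $\{x\}\times\mathbb R$, where $\delta:\{(x,x)\}\hookrightarrow\{x\}\times\mathbb R$. Applying $R\Gamma_c$ produces precisely the generator of $H^1_c(\mathbb R,\mathbb Z)=\mathbb Z$ identified by $\beta$, so the fibrewise composite is the identity on $\mathbb Z$, and hence globally on $1_X$. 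For the second composite, the projection formula $p_{2!}(p_1^\ast L\otimes-)\cong L\otimes p_{2!}p_1^\ast(-)$ combined with base change $p_{2!}p_1^\ast=f^\ast f_!$ reduces the diagram to the same fiberwise statement as the first composite, tensored with $L$, again giving the identity.

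I expect the main obstacle to be orientation bookkeeping rather than any deep computation: the map $\alpha$ involves a choice of \emph{one} of the two half-planes $U_\pm$, while $\beta$ implicitly picks an orientation of $\mathbb R$ via the isomorphism $H^1_c(\mathbb R,\mathbb Z)\cong\mathbb Z$. One has to check that these two choices are compatible so that the fibrewise composite is $+\mathrm{id}$, not $-\mathrm{id}$; this amounts to tracking the sign of the boundary map $\partial$ in the excision triangle against the sign convention for $H^1_c(\mathbb R,\mathbb Z)$. Once that sign is fixed, both coherence diagrams of Theorem~\ref{thm:critcohomsmooth} commute, and cohomological smoothness of $f:\mathbb R\to\ast$ follows.
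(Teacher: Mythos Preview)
Your approach is correct and begins exactly as the paper does (same $L=\mathbb Z[1]$, same excision triangle on $\mathbb R^2$), but diverges in how the two triangle identities are handled. You construct an explicit $\alpha$ by singling out the half-plane $U_+$ and plan to verify both composites by a fibrewise computation, flagging the sign compatibility as the remaining work. The paper instead observes that both $\mathrm{Hom}(\Delta_!\mathbb Z,\mathbb Z[1])$ and $\mathrm{Hom}(f_!L,\mathbb Z)$ are free of rank~$1$, so \emph{every} intermediate map in each composite is an isomorphism regardless of which generators are chosen; it then invokes Lemma~\ref{lem:adjunctioncheat} (together with Lemma~\ref{lem:adjunctionleftright}), which says that once both composites are merely isomorphisms one can always adjust $\alpha$ to make them identities. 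This completely sidesteps the orientation bookkeeping you identified as the main obstacle.

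One small caution if you pursue the direct route: your reduction of the second composite to ``the same fibrewise statement, tensored with $L$'' is a bit quick. The second composite uses $p_2$ rather than $p_1$, and $U_+$ restricts to $(-\infty,y)$ in the fibre $\mathbb R\times\{y\}$ rather than to $(x,\infty)$ in $\{x\}\times\mathbb R$; these produce opposite generators of $H^1_c(\mathbb R,\mathbb Z)$, so the two fibrewise checks are not literally identical and this is precisely where the sign must be tracked. Your approach yields explicit data; the paper's buys a sign-free argument that generalizes cleanly.
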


Note that by compatibility with composites and base change and open immersions, this implies that all manifold bundles are cohomologically smooth.

\begin{proof} We take $L=\mathbb Z[1]$. We need to construct maps $\alpha$ and $\beta$. But $R\Gamma_c(\mathbb R,\mathbb Z[1])\cong \mathbb Z$ giving us $\alpha$; and using the triangle $0\to \Delta_! \mathbb Z\to \mathbb Z\to j_! \mathbb Z$ for $j: \mathbb R^2\setminus \Delta\to \mathbb R^2$ the complementary open, one computes
\[
\mathrm{RHom}(\Delta_! \mathbb Z[-1],\mathbb Z)\cong \mathbb Z,
\]
giving us $\beta$. Indeed, the triangle shows that $\mathrm{RHom}(\Delta_! \mathbb Z[-1],\mathbb Z)$ is the cone of
\[
R\Gamma(\mathbb R^2,\mathbb Z)\to R\Gamma(\mathbb R^2\setminus \Delta,\mathbb Z),
\]
and this map is the diagonal embedding $\mathbb Z\to \mathbb Z^2$.

To finish the proof, one would like to see that an endomorphism of the constant sheaf $\mathbb Z$ on $\mathbb R$ is the identity (if the signs of $\alpha$ and $\beta$ are chosen compatibly). It is helpful that in this case, all intermediate maps are isomorphisms, so it has to be true up to sign. A priori, one might worry that these signs cannot be chosen compatibly, but here Lemma~\ref{lem:adjunctionleftright} comes to the rescue. Even better, one has the following general principle about adjunctions.
\end{proof}
\end{example}

\begin{lemma}\label{lem:adjunctioncheat} Let $\mathcal C$ be a $2$-category, $F: X\to Y$, $G: Y\to X$ be $1$-morphisms in $\mathcal C$ as well as $2$-morphisms $\alpha: 1\to GF$ and $\beta: FG\to 1$. Assume that the $2$-morphisms
\[
F\xrightarrow{F\alpha} FGF\xrightarrow{\beta F} F, G\xrightarrow{\alpha G} GFG\xrightarrow{G\beta} G
\]
are isomorphisms. Then one can find some $\alpha': 1\to GF$ such that $(G,\alpha',\beta)$ is a right adjoint of $F$.
\end{lemma}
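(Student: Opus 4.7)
The plan is to construct two candidate modifications of $\alpha$, each verifying one of the two triangle identities separately, and then to appeal to Lemma~\ref{lem:adjunctionleftright} to show that they must coincide. Write $u = \beta F\cdot F\alpha: F\to F$ and $v = G\beta\cdot \alpha G: G\to G$ for the given isomorphisms, and set
\[
\alpha_1 := (Gu^{-1})\cdot \alpha, \qquad \alpha_2 := (v^{-1}F)\cdot \alpha,
\]
both $2$-morphisms $1\to GF$.

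First I would verify that $\alpha_1$ satisfies the first triangle identity with $\beta$. The composite $\beta F\cdot F\alpha_1$ unfolds to $\beta F\cdot FGu^{-1}\cdot F\alpha$, and the interchange law applied to the horizontal composition of $\beta: FG\Rightarrow 1_Y$ and $u^{-1}: F\Rightarrow F$ yields the identity $\beta F\cdot FGu^{-1} = u^{-1}\cdot \beta F$. Thus the composite equals $u^{-1}\cdot \beta F\cdot F\alpha = u^{-1}\cdot u = 1_F$. Symmetrically, applying the interchange law to the horizontal composition of $v^{-1}: G\Rightarrow G$ and $\beta: FG\Rightarrow 1_Y$ gives $G\beta\cdot v^{-1}FG = v^{-1}\cdot G\beta$, and a parallel computation shows $\alpha_2$ satisfies the second triangle identity: $G\beta\cdot \alpha_2 G = v^{-1}\cdot G\beta\cdot \alpha G = v^{-1}\cdot v = 1_G$.

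At this point the data $(F,G,\beta,\alpha_2,\alpha_1)$ satisfies the hypotheses of Lemma~\ref{lem:adjunctionleftright}, with $\alpha_2$ playing the role of $\alpha$ (providing the second triangle) and $\alpha_1$ playing the role of $\alpha'$ (providing the first). That lemma then gives $\alpha_1 = \alpha_2$, and this common value $\alpha'$ therefore satisfies both triangle identities with $\beta$, so $(G,\alpha',\beta)$ is a right adjoint of $F$.

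I do not expect a serious obstacle: the substance is the two symmetric interchange-law manipulations, both of which are straightforward whiskering identities, and Lemma~\ref{lem:adjunctionleftright} then packages things cleanly. One could alternatively try to verify both triangle identities directly for $\alpha_1$ (or for $\alpha_2$), but routing through the uniqueness lemma bypasses the considerably more delicate interchange computation that this would otherwise require.
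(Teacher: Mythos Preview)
Your proof is correct and follows essentially the same approach as the paper: the paper also constructs two modifications of $\alpha$ (one by postcomposing with a whiskered automorphism of $G$, one of $F$) each satisfying one triangle identity, and then invokes Lemma~\ref{lem:adjunctionleftright} to conclude they coincide. Your version is simply more explicit about the formulas $\alpha_1 = (Gu^{-1})\cdot\alpha$ and $\alpha_2 = (v^{-1}F)\cdot\alpha$ and the interchange-law verifications that the paper leaves implicit.
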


\begin{proof} Changing $\alpha$ by an isomorphism $G\cong G$, we can arrange that $G\xrightarrow{\alpha G} GFG\xrightarrow{G\beta} G$ is the identity. We can also find some $\alpha': 1\to GF$ such that $F\xrightarrow{F\alpha'} FGF\xrightarrow{\beta F} F$ is the identity. By Lemma~\ref{lem:adjunctionleftright}, we necessarily have $\alpha=\alpha'$, which then does the trick.
\end{proof}\newpage

\section*{Appendix to Lecture V: Presentable $(\infty,2)$-categories of kernels}

The construction of this lecture also admits a large ``presentable'' version. Fix some geometric setup $(C,E)$ and a six-functor formalism
\[
\mathcal D: \mathrm{Corr}(C,E)\to \mathrm{Cat}_\infty.
\]
We assume that $D(X)\in \mathrm{Pr}^L$ is a presentable $\infty$-category for all $X\in C$, i.e.~admits all colimits and for some cardinal $\kappa$ it is freely generated under $\kappa$-filtered colimits by a set of $\kappa$-compact objects. We note that for any geometric setup $(C,E)$, the functor category
\[
\mathrm{Fun}(\mathrm{Corr}(C,E),\mathrm{Pr}^L)
\]
admits a symmetric monoidal Day convolution structure. A standard result about Day convolutions is that the commutative algebra objects are precisely the lax symmetric monoidal functors. Thus,
\[
\mathcal D\in \mathrm{CAlg}(\mathrm{Fun}(\mathrm{Corr}(C,E),\mathrm{Pr}^L)).
\]
We can in particular form the $\infty$-category
\[
\mathrm{Mod}_{\mathcal D}(\mathrm{Fun}(\mathrm{Corr}(C,E),\mathrm{Pr}^L))
\]
of $\mathcal D$-modules.

It turns out that it is better to do this when all maps are in $E$, as in the lecture. Thus, fix some object $X\in C$ and restrict to $C^E_{/X}$, the slice category of maps $Y\to X$ in $E$. Let
\[
\mathcal D_{/X}: \mathrm{Corr}(C^E_{/X})\to \mathrm{Pr}^L
\]
be the restriction of $\mathcal D$, and let
\[
\mathrm{Pr}^L_{\mathcal D,X} = \mathrm{Mod}_{\mathcal D_{/X}}(\mathrm{Fun}(\mathrm{Corr}(C^E_{/X}),\mathrm{Pr}^L).
\]
This is a symmetric monoidal $\infty$-category equipped with a symmetric monoidal functor from $\mathrm{Pr}^L$, taking any $\mathcal C\in \mathrm{Pr}^L$ to the module
\[
(Y\to X)\mapsto \mathcal D(Y)\otimes \mathcal C.
\]
In particular, for $F,G\in \mathrm{Pr}^L_{\mathcal D}$, we can hope to define an internal Hom object $\mathrm{Fun}_{\mathrm{Pr}^L_{\mathcal D}}(F,G)\in \mathrm{Pr}^L$ representing $\mathcal C\mapsto \mathrm{Hom}_{\mathrm{Pr}^L_{\mathcal D}}(F\otimes \mathcal C,G)$, thereby in particular defining some $(\infty,2)$-categorical structure. In general, it is not clear that this exists (as $\mathrm{Pr}^L$ is not itself presentable).

Thus, to simplify the discussion, we fix some regular cardinal $\kappa$ so that
\[
\mathcal D\in \mathrm{CAlg}(\mathrm{Fun}(\mathrm{Corr}(C,E),\mathrm{Pr}_\kappa^L))
\]
where $\mathrm{Pr}_\kappa^L$ is the non-full subcategory of $\mathrm{Pr}_\kappa^L$ consisting of $\kappa$-compactly generated presentable $\infty$-categories, and functors preserving $\kappa$-compact objects. (In practice, $\kappa=\omega_1$ works always.) Then $\mathrm{Pr}_\kappa^L$ is, itself, a $\kappa$-compactly generated presentable $\infty$-category. Indeed, $\mathrm{Pr}_\kappa^L$ is equivalent to the $\infty$-category of small $\infty$-categories admitting all $\kappa$-small colimits, and functors preserving $\kappa$-small colimits. This is generated by $\mathrm{Fun}(\Delta^n,\mathrm{Ani}^\kappa)$ for integers $n$, and these are $\kappa$-compact.

Now, leaving the choice of $\kappa$ implicit,
\[
\mathrm{Pr}_{\mathcal D,X} := \mathrm{Mod}_{\mathcal D_{/X}}(\mathrm{Fun}(\mathrm{Corr}(C^E_{/X}),\mathrm{Pr}_\kappa^L))
\]
is a symmetric monoidal $\infty$-category equipped with a symmetric monoidal functor $\mathrm{Pr}_\kappa^L\to \mathrm{Pr}_{\mathcal D,X}$. As $\mathrm{Pr}_\kappa^L$ is presentable, we get in particular a $\mathrm{Pr}_\kappa^L$-enrichment of $\mathrm{Pr}_{\mathcal D,X}$.

Note that by Yoneda
\[
\mathrm{Fun}(\mathrm{Corr}(C^E_{/X}),\mathrm{Pr}_\kappa^L)=\mathrm{Fun}(\mathrm{Corr}(C^E_{/X}),\mathrm{Ani})\otimes_{\mathrm{Ani}} \mathrm{Pr}_\kappa^L
\]
is the free $\mathrm{Pr}_\kappa^L$-linear presentable $\infty$-category with a functor from $\mathrm{Corr}(C^E_{/X})^{\mathrm{op}}$. In particular, there is a natural symmetric monoidal functor
\[
\mathrm{Corr}(C^E_{/X})^{\mathrm{op}}\to \mathrm{Fun}(\mathrm{Corr}(C^E_{/X}),\mathrm{Pr}_\kappa^L).
\]
Explicitly, this takes any $Y\to X$ to the functor taking any $Y'$ to the free $\kappa$-presentable $\infty$-category on the anima $\mathrm{Hom}_{\mathrm{Corr}(C^E_{/X})}(Y',Y)$, i.e.
\[
Y'\mapsto \mathrm{Fun}(\mathrm{Hom}_{\mathrm{Corr}(C^E_{/X})}(Y',Y),\mathrm{Ani}).
\]
We can also pass further to $\mathcal D_{/X}$-modules and get a symmetric monoidal functor
\[
\mathrm{Corr}(C^E_{/X})^{\mathrm{op}}\to \mathrm{Pr}_{\mathcal D,X}.
\]
As all objects of $\mathrm{Corr}(C^E_{/X})^{\mathrm{op}}$ are selfdual, we see that the mapping object (in $\mathrm{Pr}^\kappa$) between the images of $Y$ and $Y'$ is the same as the one for mapping from $Y\times_X Y'$ to $X$. Now the object corresponding to $X$ is just $\mathcal D$, and mapping out of the object corresponding to $Y\times_X Y'$ amounts to evaluation, so this mapping object is exactly $\mathcal D(Y\times_X Y')$. We see that the full subcategory of $\mathrm{Pr}_{\mathcal D,X}$ on the image of $\mathrm{Corr}(C^E_{/X})^{\mathrm{op}}\to \mathrm{Pr}_{\mathcal D,X}$ is precisely the $2$-category considered in this lecture. (We did not give a precise definition in the lecture, so the reader is invited to take this as the official definition.)

The construction $X\mapsto \mathrm{Pr}_{\mathcal D,X}$ is contravariantly functorial. In fact, pullback along $f: X'\to X$ gives a functor
\[
\mathrm{Corr}(C^E_{/X})\to \mathrm{Corr}(C^E_{/X'})
\]
and this passes to the free $\mathrm{Pr}_\kappa^L$-linear presentable $\infty$-categories. In particular, we get a symmetric monoidal functor
\[
f^\ast: \mathrm{Fun}(\mathrm{Corr}(C^E_{/X}),\mathrm{Pr}_\kappa^L)\to \mathrm{Fun}(\mathrm{Corr}(C^E_{/X'}),\mathrm{Pr}_\kappa^L).
\]
This functor is in general inexplicit, but its right adjoint is given by the functor
\[
f_\ast: \mathrm{Fun}(\mathrm{Corr}(C^E_{/X'}),\mathrm{Pr}_\kappa^L)\to \mathrm{Fun}(\mathrm{Corr}(C^E_{/X}),\mathrm{Pr}_\kappa^L): F\mapsto ((Y\to X)\mapsto F(Y\times_X X')).
\]
There is a natural map $\mathcal D_{/X}\to f_\ast \mathcal D_{/X'}$ of commutative algebras: When evaluated on $Y\to X$, this is $\mathcal D(Y)\to \mathcal D(Y\times_X X')$, which is just the pullback along $Y\times_X X'\to Y$. By adjunction, we get a functor $f^\ast \mathcal D_{/X}\to \mathcal D_{/X'}$, and this induces the desired functor
\[
f^\ast_{\mathrm{Pr}_{\mathcal D}}: \mathrm{Pr}_{\mathcal D,X} = \mathrm{Mod}_{\mathcal D_{/X}}(\mathrm{Fun}(\mathrm{Corr}(C^E_{/X'}),\mathrm{Pr}_\kappa^L))\to \mathrm{Pr}_{\mathcal D,X'} = \mathrm{Mod}_{\mathcal D_{/X'}}(\mathrm{Fun}(\mathrm{Corr}(C^E_{/X'}),\mathrm{Pr}_\kappa^L)).
\]

If $f: X'\to X$ lies in $E$, the pullback functor
\[
\mathrm{Corr}(C^E_{/X})\to \mathrm{Corr}(C^E_{/X'})
\]
has a simultaneous left and right adjoint, given by sending $Y\to X'$ to the composite $Y\to X'\to X$. It follows that in this case
\[
f^\ast: \mathrm{Fun}(\mathrm{Corr}(C^E_{/X}),\mathrm{Pr}_\kappa^L)\to \mathrm{Fun}(\mathrm{Corr}(C^E_{/X'}),\mathrm{Pr}_\kappa^L)
\]
also has a simultaneous left and right adjoint $f_\ast$, which is also linear, and $f^\ast$ is given explicitly by $F\mapsto ((Y\to X')\mapsto F(Y))$; in particular it follows that the map $f^\ast \mathcal D_{/X}\to \mathcal D_{/X'}$ is an isomorphism. We can then pass to modules over $\mathcal D_{/X}$ to see that
\[
f^\ast_{\mathrm{Pr}_{\mathcal D}}: \mathrm{Pr}_{\mathcal D,X}\to \mathrm{Pr}_{\mathcal D,X'}
\]
has a simultaneous linear left and right adjoint $f_{\ast,\mathrm{Pr}_{\mathcal D}}$.

\begin{theorem}\label{thm:shriekimpliesstar} Let $f: X'\to X$ be a morphism in $E$. The following conditions are equivalent.
\begin{enumerate}
\item[{\rm (1)}] For all $Y\to X$ in $E$, the map $X'\times_X Y\to Y$ satisfies $!$-descent, i.e.
\[
\mathcal D(Y)\to \mathrm{lim}^! \mathcal D(X'^{\times_X \bullet}\times_X Y)
\]
is an equivalence.
\item[{\rm (2)}] The functor
\[
\mathrm{Pr}_{\mathcal D,X}\to \mathrm{lim}^* \mathrm{Pr}_{\mathcal D,X'^{\times_X \bullet}}
\]
is fully faithful.
\item[{\rm (3)}] The functor
\[
\mathrm{Pr}_{\mathcal D,X}\to \mathrm{lim}^* \mathrm{Pr}_{\mathcal D,X'^{\times_X \bullet}}
\]
is an equivalence.
\item[{\rm (4)}] For all $Y\to X$ in $C$, the map $X'\times_X Y\to Y$ satisfies $*$-descent and $!$-descent, i.e. both functors
\[
\mathcal D(Y)\to \mathrm{lim}^! \mathcal D(X'^{\times_X \bullet}\times_X Y)
\]
and
\[
\mathcal D(Y)\to \mathrm{lim}^\ast \mathcal D(X'^{\times_X \bullet}\times_X Y)
\]
are equivalences.
\end{enumerate}

More generally, a similar statement holds if $X'\to X$ is replaced by a family of maps $\{X'_i\to X\}_i$.
\end{theorem}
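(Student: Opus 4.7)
The implications (3)$\Rightarrow$(2) and (4)$\Rightarrow$(1) are immediate: an equivalence is fully faithful, and (1) is the $!$-descent half of (4) specialised to $Y\to X$ lying in $E$. It thus suffices to prove (1)$\Rightarrow$(3), (3)$\Rightarrow$(4), and (2)$\Rightarrow$(1).

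For (1)$\Rightarrow$(3), the plan is to exploit the presentation $\mathrm{Pr}_{\mathcal D,X}=\mathrm{Mod}_{\mathcal D_{/X}}$ together with the fact that $f^{\ast}_{\mathrm{Pr}_{\mathcal D}}$ admits a $\mathrm{Pr}_{\mathcal D,X}$-linear right adjoint $f_{\ast,\mathrm{Pr}_{\mathcal D}}$. By Lurie's comonadic Barr--Beck theorem (applicable thanks to this linearity, which makes the relevant Beck--Chevalley conditions automatic), the equivalence in (3) reduces to conservativity of $f^{\ast}_{\mathrm{Pr}_{\mathcal D}}$. I would test conservativity on the ``representable'' modules $M_{Y\to X}:(Y'\to X)\mapsto \mathcal D(Y\times_X Y')$ for $(Y\to X)\in C^{E}_{/X}$, which generate $\mathrm{Pr}_{\mathcal D,X}$ under colimits. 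Unwinding, the totalisation of the augmented cosimplicial diagram attached to $M_{Y\to X}$ inside $\mathrm{Pr}_{\mathcal D,X}$ is computed objectwise by the limit diagram appearing in (1); condition (1) is then exactly the assertion that this totalisation recovers $M_{Y\to X}$, which forces conservativity.

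For (3)$\Rightarrow$(4), the strategy is to evaluate the equivalence on suitable modules. Given any $g:Y\to X$ in $C$, the module $M_{Y\to X}$ still makes sense, and under the equivalence it decomposes as a totalisation; reading off its value at the terminal test object gives the $\ast$-descent of $\mathcal D(Y\times_X-)$. The $!$-descent half is extracted by computing the $\mathrm{Pr}^{L}_\kappa$-valued internal Hom between $M_{Y\to X}$ and the unit module on both sides of the equivalence, using that the $\mathcal D_{/X}$-linear structure of $\mathrm{Pr}_{\mathcal D,X}$ encodes the exceptional pushforwards, so that transitions in the totalisation are precisely $!$-pullbacks. The implication (2)$\Rightarrow$(1) is the same internal Hom calculation applied with $Y\to X$ itself in $E$, where fully faithfulness alone suffices.

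The main obstacle lies in (1)$\Rightarrow$(3): the object $\mathrm{Pr}_{\mathcal D,X}$ mixes $!$-pushforwards and $\ast$-pullbacks through all of $\mathrm{Corr}(C^{E}_{/X})$, whereas (1) sees only the $!$-direction. Showing that comonadic descent for $f^{\ast}_{\mathrm{Pr}_{\mathcal D}}$ reduces \emph{precisely} to $!$-descent of $\mathcal D$ in the slice $C^{E}_{/X}$ hinges on verifying that the $\mathcal D_{/X}$-linear structure encodes the exceptional pushforwards in exactly the right way, so that the comparison maps in the \v{C}ech totalisation are the $!$-pullbacks of the sheaf $\mathcal D$ and not its $\ast$-pullbacks.
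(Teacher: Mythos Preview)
Your proposal has the right scent but a genuine gap in the core step (1)$\Rightarrow$(3). You invoke Barr--Beck to reduce the equivalence to \emph{conservativity} of $f^{\ast}_{\mathrm{Pr}_{\mathcal D}}$, and then claim to test conservativity on the generators $M_{Y\to X}$. But conservativity cannot be tested on generators, and what you actually compute in the next sentence --- ``the totalisation recovers $M_{Y\to X}$'' --- is not conservativity at all; it is the unit (or counit) of the descent adjunction being an isomorphism on a particular object, i.e.\ a piece of \emph{fully faithfulness}. So the logical link between (1) and Barr--Beck never closes.

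The paper bypasses Barr--Beck entirely. The key observation is that for $f\in E$ the pullback $f^{\ast}_{\mathrm{Pr}_{\mathcal D}}$ admits a $\mathrm{Pr}_{\mathcal D,X}$-\emph{linear left} adjoint $f_{\sharp,\mathrm{Pr}_{\mathcal D}}$ (indeed, levelwise the left and right adjoints coincide, since restriction and induction along $C^{E}_{/X'}\to C^{E}_{/X}$ are both given by composition with $f$). These assemble to a linear left adjoint $F_\sharp$ of $F^\ast:\mathrm{Pr}_{\mathcal D,X}\to \lim^\ast \mathrm{Pr}_{\mathcal D,X'^{\times_X\bullet}}$. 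Fully faithfulness (2) is then $F_\sharp F^\ast\to\mathrm{id}$ being an equivalence, and by linearity it suffices to check this on the unit $\mathcal D_{/X}$. Now $F_\sharp F^\ast \mathbf 1$ is the object $Y\mapsto \mathrm{colim}_!\,\mathcal D(X'^{\times_X\bullet}\times_X Y)$; but a colimit in $\mathrm{Pr}^L_\kappa$ is computed as the limit along right adjoints, and the right adjoints here are precisely the $!$-pullbacks. So $F_\sharp F^\ast\mathbf 1\to\mathbf 1$ is an equivalence exactly when (1) holds. This is how the ``main obstacle'' you flag is resolved: the appearance of $!$-pullbacks comes from the passage \emph{colimit in $\mathrm{Pr}^L$ $=$ limit along right adjoints}, not from any explicit bookkeeping in $\mathrm{Corr}$.

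The remaining implications are then handled differently from your sketch. For (2)$\Rightarrow$(4), the paper notes that fully faithfulness also says $\mathrm{id}\to F_\ast F^\ast$ is an equivalence; evaluating on the unit gives $\ast$-descent, and everything commutes with base change since $F_\sharp$ does. For (2)$\Rightarrow$(3), one must show $\mathrm{id}\to F^\ast F_\sharp$ is an equivalence; by (2) this can be checked after pulling back along $X'\to X$, where the cover acquires a section and descent is automatic. Your internal-Hom route for (3)$\Rightarrow$(4) and (2)$\Rightarrow$(1) may be salvageable, but the paper's argument is both shorter and avoids computing any $\mathrm{Pr}^L_\kappa$-enriched Homs.
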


\begin{proof} First, we show that (1) is equivalent to (2). The functor $\mathrm{Pr}_{\mathcal D,X}\to \mathrm{Pr}_{\mathcal D,X'}$ admits a linear left adjoint (commuting with any base change); these left adjoints exist also for all $X'^{\times_X \bullet}$ and combine to yield a linear left adjoint $F_\sharp$ to
\[
F^\ast: \mathrm{Pr}_{\mathcal D,X}\to \mathrm{lim}^* \mathrm{Pr}_{\mathcal D,X'^{\times_X \bullet}}
\]
explicitly given by taking $M_\bullet\in \mathrm{lim}^* \mathrm{Pr}_{\mathcal D,X'^{\times_X \bullet}}$ to the geometric realization of the left (and right) adjoints applied to $M_\bullet$. (We warn the reader that even while at each individual level, the left and right adjoints agree, it does not follow that the left adjoint $F_\sharp$ and right adjoint $F_\ast$ of $F^\ast$ agree -- $F_\ast$ uses a totalization instead of a geometric realization.)

Fully faithfulness in (2) is now equivalent to $F_\sharp F^\ast\to \mathrm{id}$ being an equivalence. But by linearity, it suffices to check this on the unit of $\mathrm{Pr}_{\mathcal D,X}$. Then $F_\sharp F^\ast 1$ is given by the object of
\[
\mathrm{Pr}_{\mathcal D,X} = \mathrm{Mod}_{\mathcal D_{/X}}(\mathrm{Fun}(\mathrm{Corr}(C^E_{/X}),\mathrm{Pr}_\kappa^L))
\]
given by $Y\mapsto \mathrm{colim}_! \mathcal D(X'^{\times_X \bullet}\times_X Y)$. Thus, we have to see that
\[
\mathrm{colim}_! \mathcal D(X'^{\times_X \bullet}\times_X Y)\to \mathcal D(Y)
\]
is an isomorphism for all $Y\to X$ in $E$; but this is exactly condition (1) under identifying colimits in $\mathrm{Pr}_\kappa^L$ with limits along right adjoints.

On the other hand, fully faithfulness is also equivalent to $\mathrm{id}\to F_\ast F^\ast$ being an equivalence. Applying this to the unit, we see that $\ast$-descent along $f$ follows from (2). Also, as the left adjoints on the level of $\mathrm{Pr}_{\mathcal D}$ commute with any base change, it follows that this condition is stable under any base change. Thus, (2) implies that $f$ satisfies universal $\ast$-descent and universal $!$-descent, i.e.~(4). But (4) evidently implies (1), so (1), (2) and (4) are equivalent.

Finally, it is clear that (3) implies (2), so it remains to see that (2) implies (3). For this, we have to see that the transformation $\mathrm{id}\to F^\ast F_\sharp$ is an isomorphism. But this can be checked after pullback along $X'\to X$, by the fully faithfulness of (2). As everything commutes with base change, this reduces us to the case that the map $X'\to X$ admits a section. But then descent is automatic.

Note that all the arguments work as well for a family of maps $\{X'_i\to X\}_i$.
\end{proof}

In fact, one can see that if we let $2\mathrm{Pr}_\kappa^L = \mathrm{Mod}_{\mathrm{Pr}_\kappa^L}(\mathrm{Pr}_\kappa^L)$ be the $\infty$-category of $\mathrm{Pr}_\kappa^L$-linear $\kappa$-presentable $\infty$-categories, the construction $X\mapsto \mathrm{Pr}_{\mathcal D,X}\in 2\mathrm{Pr}_\kappa^L$ assembles into a lax symmetric monoidal functor
\[
\mathrm{Pr}_{\mathcal D}: \mathrm{Corr}(C,E)\to 2\mathrm{Pr}_\kappa^L.
\]
In fact, we have already constructed a lax symmetric monoidal functor
\[
C^{\mathrm{op}}\to 2\mathrm{Pr}_\kappa^L: X\mapsto \mathrm{Pr}_{\mathcal D,X}
\]
and observed that for all $f\in E$, the pullback functors admit linear left adjoints satisfying base change. We can thus apply the construction of six-functor formalisms with respect to $I=E$ (and $P$ being just the isomorphisms). (We could also swap $I$ and $P$ here, as left and right adjoints agree in this formalism.)

One can then repeat the same construction one categorical level higher, and inductively define lax symmetric monoidal functor
\[
n\mathrm{Pr}_{\mathcal D}: \mathrm{Corr}(C,E)\to (n+1)\mathrm{Pr}_\kappa^L
\]
where inductively $(n+1)\mathrm{Pr}_\kappa^L = \mathrm{Mod}_{n\mathrm{Pr}_\kappa^L}(\mathrm{Pr}_\kappa^L)$.\footnote{This notion of $\kappa$-presentable $(\infty,n)$-categories is due to Aoki \cite{AokiPresentable}, building on ideas of Stefanich \cite{StefanichCatSp}.} For a fixed $X\in C$, these assemble into a categorical spectrum
\[
(\mathcal D(X),\mathrm{Pr}_{\mathcal D,X},2\mathrm{Pr}_{\mathcal D,X},\ldots)
\]
in the sense of Stefanich \cite{StefanichCatSp}, see also \cite{StefanichSheafGestalt} for a detailed account of this constrution.

\newpage

\section*{Appendix to Lecture V: Passage to stacks}

The main goal of the work of Liu--Zheng \cite{LiuZhengArtin} was to extend the \'etale $6$-functor formalism to stacks. Some of their work has been streamlined by Mann \cite[Appendix A.5]{MannThesis}. Let us present a slightly different take on this passage to stacks. Our discussion here strongly relies on extended discussions of the author with Clausen.\footnote{Heyer--Mann give a version of this construction in \cite[Theorem 3.4.11]{HeyerMann}, noting some oversights of the first version here in \cite[Remark 3.4.12]{HeyerMann}. These are corrected now. On the other hand, using the presentable $(\infty,2)$-categories we have been able to simplify the discussion.}

Start with any $\infty$-category $C$ with all finite limits, and a class of morphisms $E$ as usual (in particular, stable under diagonals). We make the additional assumption that $C$ has finite disjoint unions, and that $E$ is stable under taking finite disjoint unions, and contains the maps $\emptyset\to X$ and $X\sqcup X\to X$ for any $X\in C$. Let
\[
D: \mathrm{Corr}(C,E)\to \mathrm{Pr}^L
\]
be a presentable $6$-functor formalism. We also assume that $D$ takes finite disjoint unions to products.

Our goal is to introduce a Grothendieck topology on $C$ such that $D$ extends to a $6$-functor formalism on all sheaves (of anima) on $C$. Our principle will be to work with the finest possible topology, and try to find the largest possible class of morphisms for which the $!$-functors are defined. In the process, some of the conditions we impose may look a bit artificial; we will later discuss how to verify some of the conditions (like being a $D$-cover) in practice, see the second appendix to Lecture VI.

First, we extend $D$ to all (small) presheaves of anima on $C$ via right Kan extension, i.e.~for any presheaf of anima $\tilde{X}$, we let
\[
D(\tilde{X}) = \mathrm{lim}_{X\to \tilde{X},X\in C} D(X).
\]
This is still a symmetric monoidal presentable stable $\infty$-category. By \cite[Proposition A.5.16]{MannThesis}, one can moreover extend this to a $6$-functor formalism with respect to the maps of presheaves of anima any of whose pullbacks to $C$ are representable by a morphism in $E$.

\begin{definition}\label{def:Dtopology} Consider a family of maps $\{f_i: X_i\to Y\}_i$ in $C$.

The $f_i$ \emph{form a $D$-cover} if all $f_i\in E$, they form a cover in the canonical topology, and satisfy universal $!$-descent.
\end{definition}

Recall that forming a cover in the canonical topology means that for all $Z\in C$, the presheaf $\mathrm{Hom}(-,Z)$ satisfies the sheaf condition with respect to any pullback of $\{f_i: X_i\to Y\}_i$ along a map $Y'\to Y$ in $C$.

As $C$ admits finite disjoint unions, the maps $\{X_i\to Y\}_i$ for finite disjoint unions $Y=\bigsqcup_i X_i$ form a $D$-cover.

\begin{remark} It seems artificial that one has to impose that the $D$-topology is subcanonical -- one often has morphisms in $C$ which are inverted by $D$, for example the morphism $X_{\mathrm{red}}\to X$ of schemes under the \'etale $6$-functor formalism. In that case, one would expect that one can allow $X_{\mathrm{red}}\to X$ as a cover in the $D$-topology, but this is precluded by the requirement of being subcanonical. One way around is to restrict to reduced schemes from the start (or even absolutely weakly normal schemes, giving a canonical representative in each universal homeomorphism class).
\end{remark}

\begin{remark} By Theorem~\ref{thm:shriekimpliesstar}, universal $!$-descent implies universal $\ast$-descent, and to prove universal $!$-descent, it is enough to prove $!$-descent after pullback along maps $Y'\to Y$ in $E$.
\end{remark}

Now let $\tilde{C}$ be the $\infty$-category of sheaves of anima on $C$ for the $D$-topology. The functor $X\mapsto D(X)$ to symmetric monoidal presentable $\infty$-categories is a sheaf with respect to the $D$-topology, and thus gives a functor on $\tilde{C}$; for all $\tilde{X}\in \tilde{C}$, one still has
\[
D(\tilde{X}) = \mathrm{lim}_{X\to \tilde{X},X\in C} D(X).
\]
Let $\tilde{E}_0$ be the class of maps $\tilde{f}: \tilde{X}\to \tilde{Y}$ such that for any $Y\in C$ and $Y\to \tilde{Y}$, the pullback $f: X=\tilde{X}\times_{\tilde{Y}} Y\to Y$ satisfies $X\in C$ and $f\in E$.\footnote{For this condition to make sense, we need to make the $D$-topology subcanonical.} The following result is an application of right Kan extension.

\begin{proposition}[{\cite[Proposition A.5.16]{MannThesis}}]\label{prop:extendtostacks} The $6$-functor formalism $D$ on $(C,E)$ extends uniquely to a $6$-functor formalism on $(\tilde{C},\tilde{E}_0)$ such that the associated functor from $\tilde{C}^{\mathrm{op}}$ to symmetric monoidal presentable $\infty$-categories is given by the above recipe
\[
D(\tilde{X}) = \mathrm{lim}_{X\to \tilde{X},X\in C} D(X).
\]
\end{proposition}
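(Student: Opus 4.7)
The plan is to construct the extension by right Kan extension along the symmetric monoidal inclusion $\iota: \mathrm{Corr}(C, E) \hookrightarrow \mathrm{Corr}(\tilde{C}, \tilde{E}_0)$, leveraging that any map in $\tilde{E}_0$ becomes a map in $E$ after pullback to $C$. Uniqueness is straightforward: the formula forces the value on objects, the pullback $\tilde{g}^*$ is forced by the universal property of the limit, and for $\tilde{f}: \tilde{X} \to \tilde{Y}$ in $\tilde{E}_0$ the functor $\tilde{f}_!$ is pinned down by the base change constraint, namely that for each $Y \to \tilde{Y}$ in $C$ with pullback $f_Y: X_Y \to Y$ in $E$, the composite $\tilde{D}(\tilde{X}) \to \tilde{D}(\tilde{Y}) \to D(Y)$ coincides with $(f_Y)_!$ applied to the projection $\tilde{D}(\tilde{X}) \to D(X_Y)$.

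For existence, I would proceed in two stages. Stage one is to extend just the pullback and tensor product data, i.e.\ the functor $D|_{C^{\mathrm{op}}}: C^{\mathrm{op}} \to \mathrm{CMon}(\mathrm{Cat}_\infty)$, to $\tilde{C}^{\mathrm{op}} \to \mathrm{CMon}(\mathrm{Cat}_\infty)$ via right Kan extension. This works because $\mathrm{CMon}(\mathrm{Cat}_\infty)$ admits all small limits (computed in $\mathrm{Cat}_\infty$), and yields the prescribed formula for $\tilde{D}(\tilde{X})$ together with the pullback functors and their symmetric monoidal structure.

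Stage two is to construct the exceptional pushforward $\tilde{f}_!$ for $\tilde{f} \in \tilde{E}_0$. The assignment $(Y \to \tilde{Y}) \mapsto (X_Y \to \tilde{X})$ defines a functor $C_{/\tilde{Y}} \to C_{/\tilde{X}}$ which is cofinal --- any $(X \to \tilde{X})$ in $C_{/\tilde{X}}$ factors through $X_X = \tilde{X} \times_{\tilde{Y}} X$ via the diagonal --- so $\tilde{D}(\tilde{X}) \simeq \lim_{Y \to \tilde{Y}} D(X_Y)$. The family $\{(f_Y)_!: D(X_Y) \to D(Y)\}_{Y \to \tilde{Y}}$ is natural in $Y$ thanks to base change for $D$, and so assembles into the desired functor $\tilde{f}_!: \tilde{D}(\tilde{X}) \to \tilde{D}(\tilde{Y})$. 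The $6$-functor axioms (base change, projection formula, composition) for $\tilde{f}_!$ then follow by testing on each $Y \in C_{/\tilde{Y}}$ and invoking the corresponding axiom for $D$.

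The main obstacle will be the final step: coherently assembling these pieces into a lax symmetric monoidal functor out of $\mathrm{Corr}(\tilde{C}, \tilde{E}_0)$, as opposed to producing a loose collection of functors satisfying axioms only up to (unspecified) higher homotopies. The cleanest resolution is to invoke right Kan extension at the level of $\mathrm{Corr}(C, E)^\otimes \to \mathrm{Corr}(\tilde{C}, \tilde{E}_0)^\otimes$ directly: one shows the relevant comma categories have enough cofinality to reproduce the stated formula on objects (using that $\tilde{E}_0$-morphisms are representable), and then inherits the lax symmetric monoidal structure from $D$ using that right Kan extension of a lax symmetric monoidal functor along a symmetric monoidal functor is again lax symmetric monoidal when the pointwise limits exist. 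The restriction back to $\mathrm{Corr}(C, E)$ recovers $D$ because $X$ is terminal in $C_{/X}$ for each $X \in C$, so the defining limit collapses.
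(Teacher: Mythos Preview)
Your proposal is correct and matches the paper's approach: the paper does not give its own proof but simply remarks that ``the following result is an application of right Kan extension'' and cites Mann's thesis for the details. Your outline---right Kan extending along $\mathrm{Corr}(C,E)^\otimes \hookrightarrow \mathrm{Corr}(\tilde{C},\tilde{E}_0)^\otimes$, with the cofinality argument that the pullback functor $C_{/\tilde{Y}}\to C_{/\tilde{X}}$ (right adjoint to composition with $\tilde{f}$) lets one compute $D(\tilde{X})$ as $\lim_{Y\to\tilde{Y}} D(X_Y)$---is exactly the content behind that citation.
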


We want to extend from $\tilde{E}_0$ to a larger class of morphisms $\tilde{E}$. Here are some desirable conditions.

\begin{definition} Let $\tilde{E}\supset \tilde{E}_0$ be a class of morphisms of $\tilde{C}$ that is stable under pullback and composition.
\begin{enumerate}
\item The class $\tilde{E}$ is \emph{stable under disjoint unions} if whenever $\tilde{f}_i: \tilde{X}_i\to \tilde{Y}$ are morphisms in $\tilde{E}$, then also $\bigsqcup_i \tilde{f}_i: \bigsqcup_i \tilde{X}_i\to \tilde{Y}$ is in $\tilde{E}$.
\item The class $\tilde{E}$ is \emph{local on the target} if whenever $\tilde{f}: \tilde{X}\to \tilde{Y}$ is a morphism in $\tilde{C}$ such that for all $Y\in C$ with a map $Y\to \tilde{Y}$, the pullback $\tilde{X}\times_{\tilde{Y}} Y\to Y$ lies in $\tilde{E}$, then $\tilde{f}\in \tilde{E}$.
\item Assume that $D$ extends uniquely from $(\tilde{C},\tilde{E}_0)$ to $(\tilde{C},\tilde{E})$. The class $\tilde{E}$ is \emph{local on the source} if whenever $\tilde{f}: \tilde{X}\to \tilde{Y}$ is a morphism in $\tilde{C}$ such that there is some surjective map $\tilde{g}: \tilde{X}'\to \tilde{X}$ in $\tilde{E}$ satisfying universal $!$-descent, and such that $\tilde{f}\tilde{g}$ lies in $\tilde{E}$, then $\tilde{f}\in \tilde{E}$.
\item Assume that $D$ extends uniquely from $(\tilde{C},\tilde{E}_0)$ to $(\tilde{C},\tilde{E})$. The class $\tilde{E}$ is \emph{tame} if whenever $Y\in C$ and $\tilde{f}: \tilde{X}\to Y$ is a map in $\tilde{E}$, then there are morphisms $h_i: X_i\to Y$ in $E$ and a surjective morphism $\bigsqcup_i X_i\to \tilde{X}$ over $Y$ that lies in $\tilde{E}$ and satisfies universal $!$-descent.
\end{enumerate}
\end{definition}

\begin{remark} The condition of being local on the target may seem stricter than expected, as after pullback to $Y\in C$, no further localization in $Y$ is allowed. But if $\tilde{E}$ is both local on the target and local on the source, then to check whether a morphism lies in $\tilde{E}$ it suffices to check it after pullback along an $\tilde{E}$-map $Y'\to Y$ that is of universal $!$-descent. Thus, for the formulation of the following theorem, one could also take this stronger condition of being ``local on the target''.
\end{remark}

\begin{theorem}\label{thm:stacky6functors} There is a minimal collection of morphisms $\tilde{E}\supset \tilde{E}_0$ of $\tilde{C}$ such that $D$ extends uniquely from $(\tilde{C},\tilde{E}_0)$ to $(\tilde{C},\tilde{E})$, and such that $\tilde{E}$ is stable under disjoint unions, local on the target, local on the source, and tame.
\end{theorem}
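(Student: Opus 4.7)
The plan is to build $\tilde E$ as an increasing union $\tilde E = \bigcup_\alpha \tilde E_\alpha$ along a transfinite chain starting from $\tilde E_0$, adjoining at each successor step precisely the morphisms forced by one of the closure conditions and simultaneously extending the $6$-functor formalism. Because each closure operation forces inclusions that any competing class $\tilde E'$ satisfying the four properties must respect, the resulting $\tilde E$ is automatically minimal: one checks inductively that $\tilde E_\alpha \subset \tilde E'$ for every $\alpha$.

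At a successor step $\alpha \to \alpha+1$, I would close $\tilde E_\alpha$ under the following three kinds of new morphism (plus pullback and composition), iterating until stabilization. First, arbitrary disjoint unions $\bigsqcup_i \tilde f_i$ of morphisms in $\tilde E_\alpha$, with $(\bigsqcup_i \tilde f_i)_!$ defined componentwise via the assumption $D(\bigsqcup_i \tilde X_i) = \prod_i D(\tilde X_i)$. Second, morphisms $\tilde f: \tilde X \to \tilde Y$ all of whose pullbacks to $Y \in C$ already lie in $\tilde E_\alpha$; here $\tilde f_!$ is glued along $D(\tilde Y) = \lim_{Y \to \tilde Y} D(Y)$ by reapplying Proposition~\ref{prop:extendtostacks}. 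Third, morphisms $\tilde f: \tilde X \to \tilde Y$ admitting a surjective $\tilde g: \tilde X' \to \tilde X$ in $\tilde E_\alpha$ of universal $!$-descent with $\tilde f \tilde g \in \tilde E_\alpha$; here $\tilde f_!$ descends the family $(\tilde f \tilde g_n)_!$ along the equivalence $D(\tilde X) \simeq \lim^{!} D(\tilde X'^{\times_{\tilde X} \bullet})$. At limit ordinals I take unions, and a size argument using accessibility of $D$ bounds the chain, giving a stable $\tilde E = \tilde E_{\alpha_0}$.

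Tameness is the most delicate of the four properties, and I would verify it by a parallel induction on the stage at which a morphism $\tilde f: \tilde X \to Y$ with $Y \in C$ first enters $\tilde E$. The base case $\tilde E_0$ is trivial since $\tilde X \in C$ and $\tilde f$ itself serves as its own cover. For disjoint unions, tame covers of the components combine into one. For local-on-target additions with $\tilde Y \in C$, taking $Y = \tilde Y$ as the test object shows $\tilde f$ is already in $\tilde E_\alpha$, so the inductive hypothesis applies. For local-on-source, a tame cover of $\tilde X'$ (existing by induction on $\tilde f \tilde g$) composes with $\tilde g$ to produce one of $\tilde X$, using that composites of surjective morphisms of universal $!$-descent retain the same property. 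Pullback along $Y' \to Y$ in $C$ preserves tameness by pulling the cover back within $C$, and a composition $\tilde f_2 \tilde f_1$ with target in $C$ is handled by successively applying tameness to a cover of $\tilde f_2$ and then to the pullbacks $\tilde X \times_{\tilde Y} Y_j \to Y_j$ of $\tilde f_1$.

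The main obstacle will be case (iii), where the extension of $D$ must be constructed by $!$-descent and shown to respect all higher coherences (base change, projection formula, and composition of correspondences). Rather than building these by hand, I would carry out the construction in the $(\infty,2)$-categorical formalism of the first appendix to this lecture: by Theorem~\ref{thm:shriekimpliesstar}, universal $!$-descent for $\tilde g$ is equivalent to the descent statement $\mathrm{Pr}_{\mathcal D, \tilde X} \simeq \lim^{*} \mathrm{Pr}_{\mathcal D, \tilde X'^{\times_{\tilde X} \bullet}}$ inside $2\mathrm{Pr}_\kappa^L$, so the extension of the lax symmetric monoidal functor $\mathrm{Corr}(\tilde C, \tilde E_{\alpha+1}) \to 2\mathrm{Pr}_\kappa^L$ is forced uniquely by the universal property of this limit in the symmetric monoidal $(\infty,2)$-category. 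Base change and the projection formula then come along automatically from the $(\infty,2)$-categorical structure rather than needing to be verified diagram by diagram, and uniqueness of the extension at each successor step is immediate from the forced definition.
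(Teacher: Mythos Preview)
Your transfinite-iteration scaffold matches the paper's, and your tameness induction is essentially correct. But two points deserve attention.

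\textbf{Ordering of the closure operations.} You apply all three closures simultaneously at each successor step. The paper instead enlarges in a definite order: first disjoint unions, then local-on-source (iterated countably until stable), and only \emph{then} local-on-target. This is not cosmetic. The local-on-target closure $\tilde E'$ is not obviously stable under composition; the paper proves composition-stability by showing that for any $\tilde Y'$ admitting an $\tilde E$-map to some $Y\in C$, the pullback $\tilde X\times_{\tilde Y}\tilde Y'\to\tilde Y'$ already lies in $\tilde E$, and that argument requires $\tilde E$ to already be tame, local on the source, and stable under disjoint unions. It then passes through an auxiliary subcategory $C'\subset\tilde C$ of objects with an $\tilde E$-map to $C$ and reapplies right Kan extension from $(C',E')$. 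Your ``plus pullback and composition'' hides exactly this difficulty.

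\textbf{The extension mechanism for local-on-source.} The paper invokes Mann's \cite[Proposition A.5.14]{MannThesis}, which performs a left Kan extension of the lax symmetric monoidal functor out of the correspondence category along a universal-$!$-descent cover; this packages the base change and projection formula coherences directly. Your alternative via Theorem~\ref{thm:shriekimpliesstar} is a genuinely different idea, but as written it does not close the gap: that theorem gives you descent for $\mathrm{Pr}_{\mathcal D,\tilde X}$ as an object of $2\mathrm{Pr}_\kappa^L$, but what you need is an extension of the $\mathrm{Pr}^L$-valued functor $D$ on $\mathrm{Corr}(\tilde C,\tilde E_{\alpha+1})$. Recovering the latter from the former would require explaining how the new correspondence $\tilde X\leftarrow W\to\tilde Y$ (with $W\to\tilde Y$ the new map $\tilde f$) acts on $\mathrm{Pr}_{\mathcal D}$, and then how to extract $\tilde f_!$ on $D$ from that; neither step is furnished by the limit description of $\mathrm{Pr}_{\mathcal D,\tilde X}$ alone. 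The paper's remark that the formalism is ``alternatingly extended via left Kan extension and via right Kan extension'' is precisely what makes the construction go through, and is also why no clean universal property is available.
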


\begin{proof} Consider the class $A$ of all possible collections of morphisms $\tilde{E}$ such that $D$ extends uniquely to $(\tilde{C},\tilde{E})$, and which are tame. Note that $\tilde{E}_0$ is an example (and minimal). We note that any filtered union of such collections is again such a collection. Given any such collection $\tilde{E}$, one can consider the collection $\tilde{E}'$ of all morphisms $\tilde{f}: \tilde{X}\to \tilde{Y}$ such that $\tilde{X}$ can be written as a disjoint union of $\tilde{X}_i$ with all $\tilde{X}_i\to \tilde{Y}$ being in $\tilde{E}$. This class is stable under pullback and composition, it stays tame, and by \cite[Proposition A.5.12]{MannThesis} the $6$-functor formalism $D$ extends uniquely to $(\tilde{C},\tilde{E}')$. Thus, any class $\tilde{E}\in A$ can be minimally enlarged to be stable under disjoint unions.

Also, if $\tilde{E}\in A$, we can define a new class of morphisms $\tilde{E}'$ as being those morphisms $\tilde{f}: \tilde{X}\to \tilde{Y}$ such that there is some surjective $\tilde{E}$-map $\tilde{g}: \tilde{X}'\to \tilde{X}$ of universal $!$-descent for which $\tilde{f}\tilde{g}$ lies in $\tilde{E}$. This class is stable under pullback and composition, and by \cite[Proposition A.5.14]{MannThesis} (an application of left Kan extension), the $6$-functor formalism $D$ extends uniquely to $(\tilde{C},\tilde{E}')$; here, the special class of covers is taken to be those of universal $!$-descent. Moreover, $\tilde{E}'$ stays tame. Iterating this procedure countably many times, one can enlarge any $\tilde{E}\in A$ in a minimal way to make it local on the source. Combined with the procedure of making it stable under disjoint unions, we can also arrange both properties in a minimal way.

Now assume that $\tilde{E}\in A$ is stable under disjoint unions and local on the source. Consider the class $\tilde{E}'$ of morphisms $\tilde{f}: \tilde{X}\to \tilde{Y}$ such that for all $Y\in C$ with a map $Y\to \tilde{Y}$, the pullback $\tilde{X}\times_{\tilde{Y}} Y\to Y$ lies in $\tilde{E}$. This class is clearly stable under pullback; we claim it is also stable under composition. For this, we show more generally that if $\tilde{Y}'\in \tilde{C}$ is an object that admits an $\tilde{E}$-map $\tilde{Y}'\to Y$ to an object of $C$, then for any map $\tilde{Y}'\to \tilde{Y}$, the pullback $\tilde{X}\times_{\tilde{Y}} \tilde{Y}'\to \tilde{Y}'$ lies in $\tilde{E}$. To see this, note that by tameness there is a cover of $\tilde{Y}'$ by an $\tilde{E}$-map $\bigsqcup_i Y_i\to \tilde{Y}'$ of universal $!$-descent with $Y_i\in C$ and $Y_i\to X$ in $E$. As $\tilde{E}$ is local on the source and stable under disjoint unions, to check that $\tilde{X}\times_{\tilde{Y}} \tilde{Y}'\to \tilde{Y}'$ lies in $\tilde{E}$, it suffices to check after pullback to the $Y_i$. But here it is true by assumption.

Now let $C'\subset \tilde{C}$ be the full subcategory of those objects that admit an $\tilde{E}$-map towards an object of $C\subset \tilde{C}$. Let $E'$ be the restriction of $\tilde{E}$ to $C'$. Then $D$ restricts to a $6$-functor formalism on $(C',E')$. Now we can apply \cite[Proposition A.5.16]{MannThesis} to extend $D$ uniquely from $(C',E')$ back to $(\tilde{C},\tilde{E}')$, as desired. (The uniqueness assertion of this extension, applied to the extension from $(C',E')$ to $(\tilde{C},\tilde{E})$, ensures that this really extends the formalism on $(\tilde{C},\tilde{E})$.) It is clear that $\tilde{E}'$ stays tame.

Thus, given some $\tilde{E}\in A$ that is stable under disjoint unions and local on the source, we can find a minimal $\tilde{E}'\in A$ containing $\tilde{E}$ that is local on the target. This process may destroy the other stability properties, but these can be retained through further transfinitely iterating these construction principles.
\end{proof}

We note that in the proof of the theorem, the $6$-functor formalism is alternatingly extended via left Kan extension and via right Kan extension; this makes it difficult to formulate a direct universal property for this extended $6$-functor formalism.

Still, the constructions of this appendix are functorial in maps of six-functor formalisms. More precisely, consider a second presentable six-functor formalism
\[
D': \mathrm{Corr}(C,E')\to \mathrm{Pr}^L
\]
with $E'\supset E$, and a lax symmetric monoidal transformation $F: D\to D'$.

\begin{proposition} In this situation, any $D$-cover is also a $D'$-cover, and hence one gets a functor
\[
\mathrm{Shv}_D(C)\to \mathrm{Shv}_{D'}(C)
\]
commuting with small colimits and finite limits. Moreover, for the extended $6$-functor formalism of Theorem~\ref{thm:stacky6functors}, any morphism in $\tilde{E}$ gets sent to a morphism in $\tilde{E'}$, and there is a lax symmetric monoidal transformation from the extended $6$-functor formalism
\[
\mathrm{Corr}(\mathrm{Shv}_D(C),\tilde{E})\to \mathrm{Pr}^L
\]
towards the composite
\[
\mathrm{Corr}(\mathrm{Shv}_D(C),\tilde{E})\to \mathrm{Corr}(\mathrm{Shv}_{D'}(C),\tilde{E'})\to \mathrm{Pr}^L.
\]
\end{proposition}

\begin{proof} The condition of being subcanonical is clearly preserved as the category is unchanged. We have to see that the property of universal $!$-descent is preserved. For this, let $C''=C\times \Delta^1$ and $E''=E\times\{1\}\sqcup E'\times\{0\}$, with
\[
D'': \mathrm{Corr}(C'',E'')\to \mathrm{Pr}^L
\]
encoding the lax symmetric monoidal transformation $D\to D'$. Then the preservation of universal $!$-descent follows from Theorem~\ref{thm:shriekimpliesstar}: This property for maps in $(C,E)$ implies it when these maps are embedded in $(C'',E'')$ via $(C,E)=(C\times \{1\},E\times \{1\})\subset (C'',E'')$, as one only has to check it for base changes along maps in $E''$, and maps in $E''$ with target in $C\times \{1\}$ already lie in $C\times \{1\}$. But then it also holds for the base change along $X\times \{0\}\to X\times \{1\}$, which is what we wanted to see.

Now the rest follows from the construction of Theorem~\ref{thm:stacky6functors} --- we apply the theorem first to $D'$, and then when we construct the extension of $D$, we note that at each step of the construction, the lax symmetric monoidal transformation towards the six-functor formalism on $\mathrm{Shv}_{D'}(\tilde{E'})$ gets preserved.
\end{proof}
\newpage

\section{Lecture VI: Complements on the abstract formalism}

This is the final lecture on the abstract theory of $6$-functor formalisms; starting from the next lecture, we will discuss various examples. In this lecture, I want to discuss some complements that the $2$-category from the previous lecture naturally leads to, and that give answers to the following natural questions:
\begin{enumerate}
\item For which morphisms is $f^\ast$ essentially $f^!$? For which morphisms is $f_!$ essentially $f_\ast$?
\item What are reasonable finiteness conditions that one can put on objects $A\in \mathcal D(X)$? For example, which conditions ensure that $A$ maps isomorphically to its double Verdier dual?
\end{enumerate}

As in the last lecture, we will fix some $\infty$-category $C$ admitting all finite limits, and assume that $E$ consists of all morphisms of $C$. Let $\mathrm{Corr}(C):=\mathrm{Corr}(C,\mathrm{all})$ be the resulting symmetric monoidal $\infty$-category of correspondences, and
\[
\mathcal D: \mathrm{Corr}(C)\to \mathrm{Cat}_\infty
\]
a lax symmetric monoidal functor. In this lecture, we will assume that it gives a $6$-functor formalism, i.e.~internal Homs, and the right adjoints $f_\ast$ and $f^!$ (of $f^\ast$ and $f_!$) exist for all $f$.

As in the last lecture, we get the $2$-category $\mathrm{LZ}_{\mathcal D}$. Note that $\mathrm{LZ}_{\mathcal D}$ is equivalent to $\mathrm{LZ}_{\mathcal D}^{\mathrm{op}}$ (basically, as $\mathrm{Corr}(C)\cong \mathrm{Corr}(C)^{\mathrm{op}}$). Generalizing the idea of using adjunctions in $\mathrm{LZ}_{\mathcal D}$, we are led to consider the following notions.\footnote{The names ``suave'' and ``prim'' were introduced in \cite{HeyerMann}, and replace the (overloaded) terms ``smooth'' and ``proper'' originally used in these notes.}

\begin{definition}\label{def:suaveprimsheaves} Let $Y\in C$ be the final object, $f: X\to Y$ some morphism, and $A\in D(X)$.
\begin{enumerate}
\item The object $A\in D(X)$ is $f$-suave if $A\in \mathrm{Hom}_{\mathrm{LZ}_{\mathcal D}}(X,Y)$ is a left adjoint in $\mathrm{LZ}_{\mathcal D}$.
\item The object $A\in D(X)$ is $f$-prim if $A\in \mathrm{Hom}_{\mathrm{LZ}_{\mathcal D}}(Y,X)$ is a left adjoint in $\mathrm{LZ}_{\mathcal D}$.
\end{enumerate}
\end{definition}

\begin{remark} Both properties are stable under any base change, using that base change induces functors of $2$-categories and hence preserves adjunctions. Here, if $f': X'\to Y'$ is a morphism to some other object $Y'$, then we first replace $C$ by the slice $C_{/Y'}$ and then apply the previous definition to define $f'$-suave and $f'$-prim objects $A'\in D(X')$.
\end{remark}

\begin{remark}\label{rem:opposite3functors} If $\mathcal D: \mathrm{Corr}(C)\to \mathrm{Cat}_\infty$ is a $3$-functor formalism, then also $\mathcal D^{\mathrm{op}}: \mathrm{Corr}(C)\to \mathrm{Cat}_\infty$, sending any $X$ to the opposite $\infty$-category $\mathcal D(X)^{\mathrm{op}}$, is a $3$-functor formalism (as passing to opposite $\infty$-categories is a (covariant!) symmetric monoidal self-equivalence of $\mathrm{Cat}_\infty$). This abstract procedure exchanges $f$-suave and $f$-prim objects, and allows one to formally dualize some statements. Note, however, that this translation turns the right adjoints $f_\ast$, $f^!$, etc., into left adjoint functors; this explains some apparent asymmetries in the presentation below.
\end{remark}

\begin{remark} The second condition of ``$f$-primness'' was, to our knowledge, first suggested by Mann in \cite{MannNuclear}.
\end{remark}

Concretely, $A$ is $f$-suave if and only if there is some $B\in D(X)=\mathrm{Hom}_{\mathrm{LZ}_{\mathcal D}}(Y,X)$ together with maps
\[
\alpha: \Delta_!(1_X)\to p_1^\ast A\otimes p_2^\ast B, \beta: f_!(A\otimes B)\to 1_Y
\]
such that the composite
\[
B\cong p_{2!}(p_1^\ast B\otimes \Delta_!(1_X))\xrightarrow{\alpha} p_{2!}(p_1^\ast B\otimes p_1^\ast A\otimes p_2^\ast B)\cong p_{2!} p_1^\ast(B\otimes A)\otimes B\cong f^\ast f_!(B\otimes A)\otimes B\xrightarrow{\beta} f^\ast(1_Y)\otimes B\cong B
\]
as well as the similar one with $A$ and $B$ exchanged, is the identity. (As in the last lecture, to check that $A$ is $f$-suave, it suffices to check that these composites are isomorphisms, not necessarily equal to the identity.)

We note that this data is in fact symmetric in $A$ and $B$, which is a consequence of the self-duality of $\mathrm{LZ}_{\mathcal D}$ which ensures that the right adjoint of $A$ is itself $f$-suave, with right adjoint given again by $A$. We record some of these obversations below.

\begin{proposition}\label{prop:suavesheafproperties} Let $A\in D(X)$ be $f$-suave, with right adjoint $B\in D(X)$.
\begin{enumerate}
\item The object $B\in D(X)$ is $f$-suave, with right adjoint $A$.
\item There is a natural isomorphism of functors
\[
B\otimes f^\ast(-)\cong \mathscr{H}\mathrm{om}(A,f^!(-)): D(Y)\to D(X).
\]
In particular, $B\cong \mathscr{H}\mathrm{om}(A,f^!(1_Y)) = \mathbb D_f(A)$ is the (relative) Verdier dual of $A$.
\item The Verdier biduality map
\[
A\to \mathbb D_f(\mathbb D_f(A))
\]
is an isomorphism.
\item The formation of the Verdier dual $\mathbb D_f(A)$ commutes with any base change.
\end{enumerate}
\end{proposition}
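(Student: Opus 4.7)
The plan is to deduce all four claims from the 2-categorical adjunction $A\dashv B$ in $\mathrm{LZ}_{\mathcal D}$, leveraging two structural facts recalled in the lecture: the self-duality $\mathrm{LZ}_{\mathcal D}\simeq \mathrm{LZ}_{\mathcal D}^{\mathrm{op}}$ (inherited from $\mathrm{Corr}(C)\simeq \mathrm{Corr}(C)^{\mathrm{op}}$ by reversal of correspondences), and the 2-functor $\mathrm{Hom}_{\mathrm{LZ}_{\mathcal D}}(Y,-)\colon \mathrm{LZ}_{\mathcal D}\to \mathrm{Cat}_\infty$ that extracts ordinary functors from kernels.

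For (1), I would unpack $A\dashv B$ as the tuple $(A,B,\alpha,\beta)$ with $\alpha\colon \Delta_!1_X\to p_1^\ast A\otimes p_2^\ast B$ and $\beta\colon f_!(A\otimes B)\to 1_Y$ satisfying the zigzag identities displayed in the lecture. Pulling $\alpha$ back along the swap involution $\sigma\colon X\times_Y X\to X\times_Y X$ (using $\sigma\circ\Delta=\Delta$) and invoking commutativity of $\otimes$ in $\beta$ converts this data into the corresponding data $(B,A,\alpha',\beta')$ exhibiting $B$ as $f$-suave with right adjoint $A$. Conceptually, this is exactly the image of $A\dashv B$ under the self-duality, which turns left adjoints into right adjoints while also reversing the direction of 1-morphisms, so that $B\dashv A$ in $\mathrm{LZ}_{\mathcal D}$ itself.

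For (2), I apply $\mathrm{Hom}_{\mathrm{LZ}_{\mathcal D}}(Y,-)$ to $A\dashv B$. Under the identifications $\mathrm{Hom}_{\mathrm{LZ}_{\mathcal D}}(X,Y)=D(X)=\mathrm{Hom}_{\mathrm{LZ}_{\mathcal D}}(Y,X)$, the kernels $A$ and $B$ induce the Fourier--Mukai functors $F\colon N\mapsto f_!(A\otimes N)$ and $G\colon M\mapsto B\otimes f^\ast M$, and the 2-categorical adjunction specializes to an honest adjunction $F\dashv G$ in $\mathrm{Cat}_\infty$. Chaining with the ambient adjunctions $f_!\dashv f^!$ and $-\otimes A\dashv \mathscr{H}\mathrm{om}(A,-)$ gives
\[
\mathrm{Hom}(N,B\otimes f^\ast M)\cong \mathrm{Hom}(f_!(A\otimes N),M)\cong \mathrm{Hom}(N,\mathscr{H}\mathrm{om}(A,f^!M))
\]
naturally in $N\in D(X)$, and the Yoneda lemma yields the stated isomorphism $B\otimes f^\ast(-)\cong \mathscr{H}\mathrm{om}(A,f^!(-))$; specialization to $M=1_Y$ identifies $B$ with $\mathbb D_f(A)$.

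For (3) and (4), the argument is essentially formal. Combining (1) with (2) applied to both $A$ and $B$ produces $A\cong \mathbb D_f(B)\cong \mathbb D_f(\mathbb D_f(A))$; what remains is to check this matches the tautological biduality map, which reduces to observing that both maps are adjoint (via $f_!\dashv f^!$) to the pairing $A\otimes B\to f^!1_Y$ corresponding under $f_!\dashv f^!$ to the counit $\beta\colon f_!(A\otimes B)\to 1_Y$. For (4), pullback along $g\colon Y'\to Y$ defines a 2-functor $\mathrm{LZ}_{\mathcal D}\to \mathrm{LZ}_{\mathcal D|_{C_{/Y'}}}$ (cf.\ the proof of Theorem~\ref{thm:critcohomsmooth}), which preserves adjunctions and thus carries $A\dashv B$ to $g^{\prime\ast}A\dashv g^{\prime\ast}B$; applying (2) to the base-changed adjunction gives $g^{\prime\ast}B\cong \mathbb D_{f'}(g^{\prime\ast}A)$, which combined with $B\cong \mathbb D_f(A)$ is exactly the base-change compatibility. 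I expect the main obstacle to be the coherence in (3): verifying that the isomorphism $A\cong \mathbb D_f(\mathbb D_f(A))$ manufactured from two applications of the adjunction really is the canonical biduality map, rather than differing from it by an automorphism. This should succumb to a bookkeeping argument tracking $\alpha$ and $\beta$ through the chain of adjunctions, or alternatively to Lemma~\ref{lem:adjunctionleftright} applied in $\mathrm{LZ}_{\mathcal D}$ to pin down the relevant unit uniquely.
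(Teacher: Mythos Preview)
Your proposal is correct and follows essentially the same route as the paper's proof: self-duality of $\mathrm{LZ}_{\mathcal D}$ for (1), applying the 2-functor $\mathrm{Hom}_{\mathrm{LZ}_{\mathcal D}}(Y,-)$ and uniqueness of right adjoints for (2), applying (2) twice with the observation that both maps $A\to \mathbb D_f(B)$ and $B\to \mathbb D_f(A)$ are induced by the single pairing $\beta$ for (3), and preservation of adjunctions under the base-change 2-functor for (4). The paper dispatches your anticipated coherence obstacle in (3) exactly as you suggest first---both maps are adjoint to $\beta\colon f_!(A\otimes B)\to 1_Y$---so there is no need to fall back on Lemma~\ref{lem:adjunctionleftright}.
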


In particular, (2) says that $f$-suave objects $A$ lead to isomorphisms between twisted versions of $f^\ast$ and $f^!$. Also note that by (1), one also has
\[
A\otimes f^\ast(-)\cong \mathscr{H}\mathrm{om}(B,f^!(-)): D(Y)\to D(X).
\]

\begin{proof} We already saw (1). For (2), note that applying the functor $\mathrm{LZ}_{\mathcal D}\to \mathrm{Cat}: X\mapsto D(X)$, we see that $B\otimes f^\ast(-)$ is the right adjoint of $f_!(A\otimes -)$; but that right adjoint is $\mathscr{H}\mathrm{om}(A,f^!(-))$. For (3), apply (2) twice; note that the maps $A\to \mathbb D_f(B)$ and $B\to \mathbb D_f(A)$ are both induced by the pairing $\beta: f_!(A\otimes B)\to 1_Y$ (equivalently, $A\otimes B\to f^!(1_Y)$). For (4), use that the formation of right adjoints (when they exist) commutes with any functor of $2$-categories, in particular with base change.
\end{proof}

We see that $B$ is necessarily given by the Verdier dual $\mathbb D_f(A)$ of $A$. If one takes this as the definition of $B$, one automatically gets the map $\beta: f_!(A\otimes B)\to 1_Y$. One can then wonder what it takes to supply the map $\alpha: \Delta_!(1_X)\to p_1^\ast A\otimes p_2^\ast B$. This is analyzed in the next proposition.

\begin{proposition}\label{prop:checksuavesheaf} Let $A\in D(X)$. Then $A$ is $f$-suave if and only if the natural map
\[
p_1^\ast A\otimes p_2^\ast\mathbb D_f(A)\to \mathscr{H}\mathrm{om}(p_2^\ast A,p_1^! A)
\]
is an isomorphism. In fact, it suffices to see that it induces an isomorphism on global sections after applying $\Delta^!$.
\end{proposition}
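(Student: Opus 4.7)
The plan is to handle the two directions separately, and to arrange the backward direction so that it only invokes the weaker hypothesis in the last sentence. For the forward direction ($A$ $f$-suave implies $\eta$ an isomorphism), I would exploit base-change stability of suaveness. In the cartesian square with two copies of $f$, the pullback $p_2^\ast A\in D(X\times X)$ is $p_1$-suave, with Verdier dual $\mathbb D_{p_1}(p_2^\ast A)\cong p_2^\ast\mathbb D_f(A)$ by Proposition~\ref{prop:suavesheafproperties}(4). Applying Proposition~\ref{prop:suavesheafproperties}(2) to this $p_1$-suave object delivers the natural equivalence
\[
p_2^\ast\mathbb D_f(A)\otimes p_1^\ast(-) \;\cong\; \mathscr{H}\mathrm{om}(p_2^\ast A, p_1^!(-))
\]
of functors $D(X)\to D(X\times X)$; specializing at $-=A$ gives the asserted isomorphism, and one unwinds the canonical transformations to confirm that this is the map in the statement.

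For the backward direction, I would construct the suavity datum explicitly. Set $B:=\mathbb D_f(A)$ and define $\beta\colon f_!(A\otimes B)\to 1_Y$ as $f_!$ applied to the evaluation $A\otimes B\to\omega_X=f^!(1_Y)$ composed with the counit $f_!f^!(1_Y)\to 1_Y$. For the candidate unit $\alpha\colon\Delta_!(1_X)\to p_1^\ast A\otimes p_2^\ast B$, chain the natural adjunction-induced identifications
\[
\mathrm{Map}(\Delta_!(1_X), \mathscr{H}\mathrm{om}(p_2^\ast A, p_1^! A)) \cong \mathrm{Map}(\Delta_!(A), p_1^! A) \cong \mathrm{Map}_{D(X)}(A,A),
\]
using tensor-Hom together with the projection formula $\Delta_!(1_X)\otimes p_2^\ast A\cong\Delta_!(A)$, and $p_{1!}\Delta_!=\mathrm{id}$. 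Combined with the iso induced by $\eta$ on mapping anima, this gives $\mathrm{Map}(\Delta_!(1_X), p_1^\ast A\otimes p_2^\ast B)\cong\mathrm{Map}(A,A)$, and I take $\alpha$ to be the preimage of $\mathrm{id}_A$. Note that the construction uses $\eta$ only through its action on $\mathrm{Map}(\Delta_!(1_X),-)$, which by $\Delta_!\dashv\Delta^!$ and $\mathrm{Map}_{D(X)}(1_X,-)=R\Gamma(X,-)$ is exactly ``$\eta$ on global sections after $\Delta^!$''; this is what makes the strengthening in the last sentence transparent.

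It remains to verify the triangle identities for $(\alpha,\beta)$ in $\mathrm{LZ}_{\mathcal D}$. Unwinding the convolution formula and the identification $\mathrm{Hom}_{\mathrm{LZ}_{\mathcal D}}(X,Y)=D(X)$, the first triangle composite $F\xrightarrow{F\alpha}FGF\xrightarrow{\beta F}F$ is literally the map $A\to A$ encoded by the chain of identifications above, so it equals $\mathrm{id}_A$ by construction. The main obstacle I expect is the second triangle composite $G\xrightarrow{\alpha G}GFG\xrightarrow{G\beta}G$, an endomorphism of $B$: my plan is to compute it by the symmetric convolution-and-projection-formula trace, identify it via $\mathrm{End}(B)\cong\mathrm{Map}(A\otimes B,\omega_X)$ with a specific map $A\otimes B\to\omega_X$, and show it is the canonical evaluation (hence $\mathrm{id}_B$) by exploiting the swap-symmetry of $\eta$ on $X\times X$ to feed the same universal identity that trivialized the first triangle into the second. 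If this argument only yields that the second composite is an isomorphism rather than the identity, Lemma~\ref{lem:adjunctioncheat} still allows us to adjust $\alpha$ into a bona fide unit, completing the proof that $A$ is $f$-suave; once this is established, the forward direction supplies the full isomorphism $\eta$ from the weaker hypothesis.
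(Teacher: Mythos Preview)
Your proposal is correct and follows essentially the same route as the paper: the forward direction via base-change of suaveness along $f$ and Proposition~\ref{prop:suavesheafproperties}(2) evaluated at $A$, and the backward direction by constructing $\alpha$ from $\mathrm{id}_A$ through the adjunction chain $\mathrm{Map}(\Delta_!(1_X),\mathscr{H}\mathrm{om}(p_2^\ast A,p_1^!A))\cong\mathrm{Map}(A,A)$ together with the action of $\eta$ on $\mathrm{Map}(\Delta_!(1_X),-)$, which is exactly what isolates the weaker hypothesis. The paper leaves the triangle identities as an exercise, whereas you sketch a verification with Lemma~\ref{lem:adjunctioncheat} as a fallback; this is a reasonable elaboration rather than a different argument.
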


Here, the natural map is the one adjoint to the natural map
\[
p_1^\ast A\otimes p_2^\ast \mathbb D_f(A)\otimes p_2^\ast A\to p_1^\ast A\otimes p_2^\ast f^!(1_Y)\to p_1^\ast A\otimes p_1^!(1_X)\to p_1^! A.
\]

\begin{proof} If $A$ is $f$-suave, then for any base change $f': X'\to Y'$ along $g: Y'\to Y$ (with base change $g': X'\to X$) and any $B\in D(Y')$, the map
\[
f^{\prime\ast} B\otimes g^{\prime\ast} \mathbb D_f(A)\to \mathscr{H}\mathrm{om}(g^{\prime\ast} A,f^{\prime !} B)
\]
is an isomorphism. Applying this with $g: Y'\to Y$ given by $f: X\to Y$ and $B=A$ yields the desired isomorphism.

For the converse, we need to find the map $\alpha: \Delta_!(1_X)\to p_1^\ast A\otimes p_2^\ast\mathbb D_f(A)$. Conveniently, the hypothesis of the proposition shows that the target is naturally isomorphic to $\mathscr{H}\mathrm{om}(p_2^\ast A,p_1^! A)$, so it suffices to find a natural map
\[
\Delta_!(1_X)\to \mathscr{H}\mathrm{om}(p_2^\ast A,p_1^! A).
\]
By adjunction, this amounts to a map $\Delta_!(1_X)\otimes p_2^\ast A=\Delta_!(A)\to p_1^! A$, which by a further adjunction amounts to a morphism $p_{1!}\Delta_! A=A\to A$, where we take the identity map. It remains to see that two diagrams commute, which we leave as an exercise.

For the final sentence, note that the construction of the previous paragraph did not in fact require that the displayed map is an isomorphism, but only that it becomes one after applying global sections to $\Delta^!$ of this map.
\end{proof}

\begin{remark} This discussion may seem like formal nonsense, but when writing \cite{FarguesScholze}, I was for a long time unsuccessfully trying to prove that certain $A\in D(X)$ satisfying the condition of Proposition~\ref{prop:checksuavesheaf} also satisfy Verdier biduality. The work of Lu--Zheng \cite{LuZheng} then trivialized this problem!
\end{remark}

Let us now look at the ``dual'' case of $f$-prim $A\in D(X)$. In this case, the right adjoint $B\in D(X)$ comes with maps
\[
\alpha: p_1^\ast A\otimes p_2^\ast B\to \Delta_!(1_X), \beta: 1_Y\to f_!(A\otimes B),
\]
satisfying similar commutative diagrams. The analogue of Proposition~\ref{prop:suavesheafproperties} is the following.

\begin{proposition}\label{prop:primsheafproperties} Let $A\in D(X)$ be $f$-prim, with right adjoint $B\in D(X)$.
\begin{enumerate}
\item The object $B\in D(X)$ is $f$-prim, with right adjoint $A$.
\item There is a natural isomorphism of functors
\[
f_!(B\otimes -)\cong f_\ast\mathscr{H}\mathrm{om}(A,-): D(X)\to D(Y).
\]
\item The pairing
\[
\alpha: p_1^\ast A\otimes p_2^\ast B\to \Delta_!(1_X)
\]
induces isomorphisms
\[
B\cong p_{2\ast} \mathscr{H}\mathrm{om}(p_1^\ast A,\Delta_!(1_X)), A\cong p_{1\ast} \mathscr{H}\mathrm{om}(p_2^\ast B,\Delta_!(1_X)).
\]
\end{enumerate}
\end{proposition}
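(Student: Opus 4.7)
The plan is to deduce all three items from the single input --- the adjunction $A\dashv B$ of $1$-morphisms $A: Y\to X$ and $B: X\to Y$ in $\mathrm{LZ}_{\mathcal D}$ --- by transporting it along suitable $2$-functors, mirroring the treatment of Proposition~\ref{prop:suavesheafproperties}.

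For (1), I would invoke the self-duality $\mathrm{LZ}_{\mathcal D}^{\mathrm{op}}\simeq \mathrm{LZ}_{\mathcal D}$ inherited from flipping correspondences in $\mathrm{Corr}(C)$. Passing to the opposite $2$-category interchanges the two sides of an adjunction and swaps source with target, so $A\dashv B$ becomes $B\dashv A$ in $\mathrm{LZ}_{\mathcal D}$ with $B$ now viewed as a $1$-morphism $Y\to X$ and $A$ as $X\to Y$; this is exactly $f$-primness of $B$ with right adjoint $A$.

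For (2), I would apply the $2$-functor $\mathrm{Hom}_{\mathrm{LZ}_{\mathcal D}}(Y,-): \mathrm{LZ}_{\mathcal D}\to \mathrm{Cat}_\infty$ (post-composition, realized via the Fourier--Mukai formula on fibers). With $Y$ terminal, post-composition with $A$ unravels to $A\otimes f^\ast(-): \mathcal D(Y)\to \mathcal D(X)$ and post-composition with $B$ unravels to $f_!(B\otimes -): \mathcal D(X)\to \mathcal D(Y)$, and $2$-functoriality yields an adjunction between them. On the other hand, the right adjoint to $A\otimes f^\ast(-)$ can be computed directly by splicing the tensor-Hom adjunction with $f^\ast\dashv f_\ast$, yielding $f_\ast\mathscr{H}\mathrm{om}(A,-)$. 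Uniqueness of right adjoints then gives the claimed isomorphism.

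For (3), I would apply instead the $2$-functor $\mathrm{Hom}_{\mathrm{LZ}_{\mathcal D}}(X,-): \mathrm{LZ}_{\mathcal D}\to \mathrm{Cat}_\infty$. Using the projection labeling of the statement, post-composition with $A$ becomes $L\mapsto p_1^\ast L\otimes p_2^\ast A: \mathcal D(X)\to \mathcal D(X\times_Y X)$ and post-composition with $B$ becomes the Fourier--Mukai-type functor $K\mapsto p_{1!}(K\otimes p_2^\ast B)$. Specializing the adjunction isomorphism $\mathrm{Hom}(p_1^\ast L\otimes p_2^\ast A, K)\cong \mathrm{Hom}(L, p_{1!}(K\otimes p_2^\ast B))$ to $K=\Delta_!(1_X)$, and using $p_i\circ\Delta=\mathrm{id}_X$ with the projection formula to compute $p_{1!}(\Delta_!(1_X)\otimes p_2^\ast B)\cong B$, collapses the right-hand side to $\mathrm{Hom}(L,B)$; tensor-Hom and $p_1^\ast\dashv p_{1\ast}$ rewrite the left-hand side as $\mathrm{Hom}(L, p_{1\ast}\mathscr{H}\mathrm{om}(p_2^\ast A, \Delta_!(1_X)))$. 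Yoneda then identifies $B$ with the latter (matching the proposition's expression up to the swap automorphism of $X\times_Y X$, which fixes $\Delta_!(1_X)$), and a naturality check shows the identification is induced by $\alpha$. The symmetric formula for $A$ follows by combining this with (1).

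The main obstacle --- such as it is --- is bookkeeping: correctly matching the counit of $A\dashv B$ in $\mathrm{LZ}_{\mathcal D}$ to the pairing $\alpha$ as written (i.e., keeping track of which projection carries $A$ and which carries $B$ under the chosen labeling), and confirming via a small diagram chase that the isomorphism produced by Yoneda is genuinely the one adjoint to $\alpha$ rather than merely abstractly isomorphic to $B$. Everything past that is formal manipulation of projection formulas, tensor-Hom, and base change.
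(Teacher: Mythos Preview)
Your proposal is correct and matches the paper's strategy: parts (1) and (2) are argued exactly as in the proof of Proposition~\ref{prop:suavesheafproperties}, via the self-duality of $\mathrm{LZ}_{\mathcal D}$ and the representable $2$-functor $\mathrm{Hom}_{\mathrm{LZ}_{\mathcal D}}(Y,-)$. For (3) there is a cosmetic difference: the paper phrases the argument as ``apply (2) to the base change of $f$ along itself and evaluate at $\Delta_!(1_X)$'', whereas you apply $\mathrm{Hom}_{\mathrm{LZ}_{\mathcal D}}(X,-)$ directly and then specialize; these are the same transport of the adjunction (the base-change $2$-functor followed by $\mathrm{Hom}(Y',-)$ is exactly your $\mathrm{Hom}(X,-)$), and the projection swap you flag is the only discrepancy. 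The paper's packaging has the mild advantage of landing on the stated formula without invoking the swap automorphism and of making the ``induced by $\alpha$'' claim automatic via the base-changed counit, but the content is identical.
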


Of course, one could also formulate an analogue of Proposition~\ref{prop:suavesheafproperties}~(4), that the dual in the sense of (3) commutes with any base change.

\begin{proof} The proof is identical to the proof of Proposition~\ref{prop:suavesheafproperties}. To get the (first) isomorphism in (3), apply the result of (2) to the base change of $f: X\to Y$ along the map $g: Y'\to Y$ given by $f: X\to Y$, and the sheaf $\Delta_!(1_X)$.
\end{proof}

We note that the dual in part (3) simplifies considerably in case $\Delta_!(1_X)=\Delta_\ast(1_X)$, as is often the case (and conditions for which we will discuss momentarily). Indeed, in that case
\[
p_{2\ast} \mathscr{H}\mathrm{om}(p_1^\ast A,\Delta_!(1_X))=p_{2\ast} \mathscr{H}\mathrm{om}(p_1^\ast A,\Delta_\ast(1_X)) = p_{2\ast} \Delta_\ast \mathscr{H}\mathrm{om}(\Delta^\ast p_1^\ast A,1_X) = \mathscr{H}\mathrm{om}(A,1_X)
\]
is simply the naive dual of $A\in D(X)$. In particular, if $A=1_X$ is the unit, then this naive dual is $1_X$.

We also have an analogue of Proposition~\ref{prop:checksuavesheaf}.

\begin{proposition}\label{prop:checkprimsheaf} Let $A\in D(X)$. Then $A$ is $f$-prim if and only if the natural map
\[
f_!(A\otimes p_{2\ast}\mathscr{H}\mathrm{om}(p_1^\ast A,\Delta_!(1_X)))\to f_\ast\mathscr{H}\mathrm{om}(A,A)
\]
is an isomorphism on global sections.
\end{proposition}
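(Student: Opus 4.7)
Write $B' := p_{2\ast}\mathscr{H}\mathrm{om}(p_1^\ast A, \Delta_!(1_X))$, and let $\alpha'\colon p_1^\ast A \otimes p_2^\ast B' \to \Delta_!(1_X)$ be the tautological pairing, adjoint to the counit $p_2^\ast p_{2\ast} \to \mathrm{id}$. The natural map in the statement is built from $\alpha'$: tensor with $p_2^\ast A$ to get $p_1^\ast A \otimes p_2^\ast(A \otimes B') \to \Delta_!(A)$, apply $p_{1!}$ using the projection formula and base change to obtain $A \otimes f^\ast f_!(A \otimes B') \to A$, and pass to $\otimes$-$\mathscr{H}\mathrm{om}$ and $(f^\ast, f_\ast)$ adjoints. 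For the forward direction, assume $A$ is $f$-prim with data $(B,\alpha,\beta)$. Proposition~\ref{prop:primsheafproperties}(3) identifies $B \cong B'$ so that $\alpha$ corresponds to $\alpha'$, while part (2) produces an isomorphism of functors $f_!(B \otimes -) \cong f_\ast\mathscr{H}\mathrm{om}(A, -)$ assembled from the same pairing. Its evaluation at $A$ coincides with our natural map, which is therefore already an isomorphism in $D(Y)$, and a fortiori on global sections.

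For the backward direction, set $B := B'$ and $\alpha := \alpha'$. It remains to produce a unit $\beta\colon 1_Y \to f_!(A \otimes B')$ such that $(B, \alpha, \beta)$ exhibits $A \dashv B'$ in $\mathrm{LZ}_{\mathcal D}$. Applying $\mathrm{Hom}_{D(Y)}(1_Y, -)$ to the natural map and using the $(f^\ast, f_\ast)$ and $\otimes$-$\mathscr{H}\mathrm{om}$ adjunctions identifies its target with $\mathrm{End}_{D(X)}(A)$; by hypothesis the resulting map on global sections is an isomorphism, and we define $\beta$ to be the unique preimage of $\mathrm{id}_A$.

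A direct unravelling of the construction of the natural map shows that the image in $\mathrm{End}_{D(X)}(A)$ of any $\tilde\beta\colon 1_Y \to f_!(A \otimes B')$ is the composite
\[
A \xrightarrow{f^\ast\tilde\beta \otimes A} f^\ast f_!(A \otimes B') \otimes A \xrightarrow{\epsilon_A} A,
\]
where $\epsilon_A$ is the map coming from $\alpha'$ that plays the role of counit at $A$ of the tentative adjunction $A \dashv B'$ in $\mathrm{LZ}_{\mathcal D}$. Our $\beta$ thus makes this composite the identity, which (by $D(Y)$-linearity of the functors in $\mathrm{LZ}_{\mathcal D}$) is equivalent to the first triangle identity of the tentative adjunction evaluated at $1_Y$ and hence everywhere. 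The main obstacle is then the second triangle identity, which involves $B'$ in place of $A$ and is not directly controlled by a hypothesis formulated only at $A$. Following the template of Proposition~\ref{prop:checksuavesheaf}, Lemmas~\ref{lem:adjunctionleftright} and~\ref{lem:adjunctioncheat} reduce the verification to checking that the second zig-zag composite is merely an isomorphism; this final diagram chase we leave as an exercise, in the spirit of the closing sentence of the proof of Proposition~\ref{prop:checksuavesheaf}.
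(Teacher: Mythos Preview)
Your proposal is correct and follows essentially the same approach as the paper, which simply says ``The proof is similar to the proof of Proposition~\ref{prop:checksuavesheaf}.'' You have carried out that dualization explicitly: take $B' = p_{2\ast}\mathscr{H}\mathrm{om}(p_1^\ast A,\Delta_!(1_X))$ with its tautological pairing $\alpha'$, and construct $\beta$ as the preimage of $\mathrm{id}_A$ under the global-sections isomorphism, then leave the triangle identities as an exercise---exactly parallel to what the paper does in the suave case.

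The one minor deviation is your invocation of Lemmas~\ref{lem:adjunctionleftright} and~\ref{lem:adjunctioncheat} at the end to reduce the second triangle identity to checking that the zig-zag is merely an isomorphism. This is logically fine (Lemma~\ref{lem:adjunctioncheat} lets you replace $\beta$ by some $\beta'$ keeping $\alpha'$ fixed), but note that it is a slight detour: having gone to the trouble of choosing $\beta$ so the first triangle is the \emph{identity}, you then discard this and only use that it is an isomorphism. The paper's template in Proposition~\ref{prop:checksuavesheaf} instead just asserts that both triangles commute on the nose and leaves that as the exercise; in practice the direct diagram chase is no harder than showing the second composite is an isomorphism, so your reduction does not really buy anything---but it is not wrong either.
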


In fact, one can construct a more general natural transformation
\[
f_!(-\otimes p_{2\ast}\mathscr{H}\mathrm{om}(p_1^\ast A,\Delta_!(1_X)))\to f_\ast\mathscr{H}\mathrm{om}(A,-)
\]
of functors $D(X)\to D(Y)$. This is adjoint to
\[\begin{aligned}
f^\ast f_!(-\otimes p_{2\ast}\mathscr{H}\mathrm{om}(p_1^\ast A,\Delta_!(1_X)))&\cong p_{1!}(p_2^\ast(-)\otimes p_2^\ast p_{2\ast} \mathscr{H}\mathrm{om}(p_1^\ast A,\Delta_!(1_X)))\\
&\to p_{1!}(p_2^\ast(-)\otimes \mathscr{H}\mathrm{om}(p_1^\ast A,\Delta_!(1_X)))\to \mathscr{H}\mathrm{om}(A,-)
\end{aligned}\]
which in turn is adjoint to
\[
p_2^\ast(-)\otimes \mathscr{H}\mathrm{om}(p_1^\ast A,\Delta_!(1_X))\to p_1^!\mathscr{H}\mathrm{om}(A,A)\cong \mathscr{H}\mathrm{om}(p_1^\ast A,p_1^!(-))
\]
which in turn is adjoint to
\[
p_2^\ast(-)\otimes \mathscr{H}\mathrm{om}(p_1^\ast A,\Delta_!(1_X))\otimes p_1^\ast A\to p_2^\ast(-)\otimes \Delta_!(1_X)\to p_1^!(-)
\]
where the latter map is adjoint to the isomorphism
\[
p_{1!}(p_2^\ast(-)\otimes \Delta_!(1_X))\cong (-).
\]

\begin{proof} The proof is similar to the proof of Proposition~\ref{prop:checksuavesheaf}.
\end{proof}

Let us now come back to the question of when $f_!$ and $f_\ast$ agree.

\begin{definition}\label{def:cohomproper} A map $f: X\to Y$ in $C$ is cohomologically proper if it is $n$-truncated for some $n$, the object $1_X\in D(X)$ is $f$-prim, and the diagonal $\Delta_f: X\to X\times_Y X$ is cohomologically proper (or an isomorphism).
\end{definition}

This definition may seem self-referential, but as $\Delta_f$ is $n-1$-truncated, it works by induction on $n$ (with $\Delta_f$ being an isomorphism as the base case).

\begin{proposition}\label{prop:checkcohomproper} Let $f: X\to Y$ be a morphism in $C$ such that $\Delta_f$ is cohomologically proper. Then there is a natural transformation $f_!\to f_\ast$ of functors $D(X)\to D(Y)$. It is an isomorphism if and only if $f$ is cohomologically proper. To check that $f$ is cohomologically proper, it suffices to check that $f_!(1_X)\to f_\ast(1_X)$ is an isomorphism on global sections.
\end{proposition}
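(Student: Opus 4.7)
The plan is to proceed by induction on the truncation level of $f$, using as main inputs the criterion for $f$-primness from Proposition~\ref{prop:checkprimsheaf} and the description of the dual of a prim object from Proposition~\ref{prop:primsheafproperties}(3). By hypothesis $\Delta:=\Delta_f$ is cohomologically proper and one truncation level lower than $f$, so inductively the proposition is available for $\Delta$; in particular there is a canonical isomorphism $\alpha: \Delta_!(1_X)\xrightarrow{\sim}\Delta_\ast(1_X)$.

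To construct the natural transformation $f_!\to f_\ast$, I would start with the unit of the $(\Delta^\ast,\Delta_\ast)$-adjunction $1_{X\times_Y X}\to \Delta_\ast(1_X)$ and compose with $\alpha^{-1}$ to obtain a map $1_{X\times_Y X}\to \Delta_!(1_X)$. Tensoring with $p_1^\ast A$ and applying the projection formula for $\Delta_!$ (together with $\Delta^\ast p_1^\ast=\mathrm{id}$) produces $p_1^\ast A\to \Delta_!(A)$; applying $p_{2!}$ and using $p_2\Delta=\mathrm{id}_X$ together with base change along the cartesian square, one obtains $f^\ast f_! A\cong p_{2!}p_1^\ast A\to p_{2!}\Delta_!(A)=A$, whose adjoint under $(f^\ast, f_\ast)$ is the sought natural transformation $\eta_A: f_! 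A\to f_\ast A$.

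For the equivalence between $f$ being cohomologically proper and $\eta$ being an isomorphism, I would argue through the $f$-primness of $1_X$. If $f$ is cohomologically proper, then $1_X$ is $f$-prim, and by Proposition~\ref{prop:primsheafproperties}(3) its dual in $\mathrm{LZ}_{\mathcal D}$ is
\[
p_{2\ast}\mathscr{H}\mathrm{om}(p_1^\ast 1_X,\Delta_!(1_X))\cong p_{2\ast}\Delta_!(1_X)\cong p_{2\ast}\Delta_\ast(1_X)=(p_2\Delta)_\ast 1_X=1_X,
\]
using $\alpha$ in the middle step. Proposition~\ref{prop:primsheafproperties}(2) then gives $f_\ast=f_\ast\mathscr{H}\mathrm{om}(1_X,-)\cong f_!(1_X\otimes-)=f_!$. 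Conversely, if $\eta$ is an isomorphism then evaluating at $1_X$ and taking global sections, the criterion of Proposition~\ref{prop:checkprimsheaf} (with $A=1_X$ and using the same simplifications provided by $\alpha$) tells us that $1_X$ is $f$-prim, so $f$ is cohomologically proper.

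The final sentence is an immediate specialization of this same reasoning: under the standing hypothesis on $\Delta_f$, being cohomologically proper reduces to $1_X$ being $f$-prim, and Proposition~\ref{prop:checkprimsheaf} then collapses to requiring that $f_!(1_X)\to f_\ast(1_X)$ be an isomorphism on global sections. The main obstacle I expect is the bookkeeping identification that the natural transformation $\eta$ constructed in the second paragraph agrees with the specific isomorphism $f_!\cong f_\ast$ produced by $f$-primness in the forward direction of the equivalence, and that the map appearing in Proposition~\ref{prop:checkprimsheaf} for $A=1_X$ agrees with $\eta(1_X)$; both are obtained from the same primitive data $\alpha$ and the unit of the $(\Delta^\ast,\Delta_\ast)$-adjunction, so the identification amounts to unwinding the Fourier--Mukai dictionary from Lecture V, invoking Lemma~\ref{lem:adjunctionleftright} as needed to pin down signs and compatibility of unit/counit.
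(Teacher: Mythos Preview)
Your proposal is correct and follows essentially the same approach as the paper: induction on the truncation level to obtain $\Delta_{f!}\cong\Delta_{f\ast}$, then specializing the general natural transformation constructed just before Proposition~\ref{prop:checkprimsheaf} to $A=1_X$ (where it collapses to a map $f_!\to f_\ast$), and invoking Proposition~\ref{prop:checkprimsheaf} as the criterion. Your explicit construction of $\eta$ via the unit $1_{X\times_Y X}\to\Delta_\ast(1_X)\cong\Delta_!(1_X)$ is just an unwinding of that same transformation, and the bookkeeping worry you flag is exactly what the paper glosses over with ``the construction in fact gives a natural transformation $f_!\to f_\ast$''.
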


\begin{proof} We argue by induction on $n$ such that $f$ is $n$-truncated. Thus, we can assume that $\Delta_{f!}\cong \Delta_{f\ast}$. In particular, the map of Proposition~\ref{prop:checkprimsheaf} applied to $1_X\in D(X)$ reduces to a map $f_!(1_X)\to f_\ast(1_X)$, which is an isomorphism on global sections if and only if $f$ is cohomologically proper; and the construction in fact gives a natural transformation $f_!\to f_\ast$ of functors $D(X)\to D(Y)$. If $f$ is indeed cohomologically proper, then it follows that $f_!\cong f_\ast$ as functors $D(X)\to D(Y)$.
\end{proof}

In particular, we see that the natural transformation $f_!\to f_\ast$ that often exists in practice (for separated maps), and that often is encoded as extra data in a $6$-functor formalism, notably in the approach of Gaitsgory--Rozenblyum, can in fact be constructed directly out of the data that we have fixed.

To restore the symmetry, let us also single out the corresponding class of cohomologically smooth morphisms, which we call cohomologically \'etale (as usually for \'etale morphisms, $f^\ast=f^!$ without twist).

\begin{definition}\label{def:cohometale} A map $f: X\to Y$ in $C$ is cohomologically \'etale if it is $n$-truncated for some $n$, the object $1_X\in D(X)$ is $f$-suave, and the diagonal $\Delta_f: X\to X\times_Y X$ is cohomologically \'etale (or an isomorphism).
\end{definition}

\begin{proposition}\label{prop:checkcohometale} Let $f: X\to Y$ be a morphism in $C$ such that $\Delta_f$ is cohomologically \'etale. Then there is a natural transformation $f^!\to f^\ast$ of functors $D(Y)\to D(X)$. It is an isomorphism if and only if $f$ is cohomologically \'etale. To check that $f$ is cohomologically \'etale, it suffices to check that $f^!(1_Y)\to 1_X$ is an isomorphism on global sections.
\end{proposition}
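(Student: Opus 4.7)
The plan is to mirror the argument of Proposition~\ref{prop:checkcohomproper}, substituting the $f$-suave machinery (Propositions~\ref{prop:suavesheafproperties} and~\ref{prop:checksuavesheaf}) for the $f$-prim machinery. I would argue by induction on $n$ such that $f$ is $n$-truncated, with the base case $f$ an isomorphism being trivial. For the inductive step, since $\Delta_f$ is $(n-1)$-truncated and cohomologically \'etale, the inductive hypothesis provides a natural isomorphism $\Delta_f^!\cong \Delta_f^\ast$.

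To construct the natural transformation $f^!\to f^\ast$, I would use the proper base change isomorphism $f^\ast f_!\cong p_{1!}p_2^\ast$ for the self-base-change cartesian square with projections $p_1,p_2\colon X\times_Y X\to X$ and diagonal $\Delta$. Taking $(p_{1!},p_1^!)$-adjoints of the composite $p_{1!}p_2^\ast f^!\cong f^\ast f_!f^!\xrightarrow{f^\ast\epsilon} f^\ast$ produces a natural transformation
\[
p_2^\ast f^!\to p_1^! f^\ast
\]
of functors $D(Y)\to D(X\times_Y X)$. Applying $\Delta^\ast$ to it, and using $\Delta^\ast p_2^\ast=(p_2\Delta)^\ast=\mathrm{id}$ together with $\Delta^\ast p_1^!\cong \Delta^! p_1^!=(p_1\Delta)^!=\mathrm{id}$ (the first isomorphism via the inductive hypothesis), yields the desired transformation $f^!\to f^\ast$.

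For the equivalence with cohomological \'etaleness, one direction is immediate: if $f$ is cohomologically \'etale then $1_X$ is $f$-suave with dualizing object $f^!(1_Y)\cong 1_X$, and Proposition~\ref{prop:suavesheafproperties}(2) yields $f^!(-)\cong f^!(1_Y)\otimes f^\ast(-)=f^\ast(-)$. For the converse and the global-sections reduction, I would apply Proposition~\ref{prop:checksuavesheaf} with $A=1_X$: its test map $p_2^\ast f^!(1_Y)\to p_1^! 1_X$ is precisely the value of our base change transformation at $M=1_Y$, and applying $\Delta^!$ on global sections extracts exactly the map $f^!(1_Y)\to 1_X$ obtained from the constructed transformation at $M=1_Y$. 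By the final sentence of Proposition~\ref{prop:checksuavesheaf}, this map being an isomorphism on global sections already implies $f$-suaveness of $1_X$ with trivial twist, i.e., cohomological \'etaleness of $f$.

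The hard part will be the bookkeeping to verify that the constructed natural transformation $f^!\to f^\ast$ coincides, after the inductive identification $\Delta^!\cong \Delta^\ast$, with the one implicit in the test map of Proposition~\ref{prop:checksuavesheaf}, so that the single global-sections check at the unit suffices. This parallels the analogous verification in Proposition~\ref{prop:checkcohomproper}, and is ultimately made rigorous by the rigidity of adjunctions in $\mathrm{LZ}_{\mathcal D}$, in particular via Lemma~\ref{lem:adjunctionleftright}, which forces the various candidate identifications to agree.
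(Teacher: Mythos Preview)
Your proposal is correct and follows the ``alternatively'' route the paper itself mentions: mirroring the proof of Proposition~\ref{prop:checkcohomproper} with the suave machinery in place of the prim machinery. The paper's primary proof is slicker---it simply invokes Remark~\ref{rem:opposite3functors} to formally dualize Proposition~\ref{prop:checkcohomproper} (passing to $\mathcal D^{\mathrm{op}}$ swaps suave and prim, and the transformation $f_!\to f_\ast$ dualizes, after passing to right adjoints, to a transformation $f^!\to f^\ast$)---but it explicitly offers your direct translation as a legitimate alternative, so you are on solid ground. Your explicit construction of $f^!\to f^\ast$ via the base-change square and the identification with the test map of Proposition~\ref{prop:checksuavesheaf} at $A=1_X$ is exactly the content one has to verify when unwinding that translation.
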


\begin{proof} This is the dual (in the sense of Remark~\ref{rem:opposite3functors}) of Proposition~\ref{prop:checkcohomproper}. Note that Proposition~\ref{prop:checkcohomproper} refers to the right adjoint $f_\ast$ of $f^\ast$, which translates badly. But what used to be a transformation $f_!\to f_\ast$ becomes a transformation from a hypothetical left adjoint of $f^\ast$ to $f_!$; but by passing to right adjoints, this manifests itself again as a transformation from $f^!$ to $f^\ast$.

Alternatively, follow the steps of the proof of Proposition~\ref{prop:checkcohomproper} and translate them accordingly.
\end{proof}

As a final topic, let us discuss how the conditions of being $f$-suave and $f$-prim relate to finiteness conditions one can put on $A\in D(X)$ as an object of the category $D(X)$. There are two notable conditions: As $D(X)$ is symmetric monoidal, there is the condition of being dualizable; and if $\mathcal D(X)$ has all colimits, one can ask that $A\in \mathcal D(X)$ is compact (i.e.~$\mathrm{Hom}(A,-)$ commutes with all filtered colimits).

\begin{proposition}\label{prop:suaveoverid} Let $f: X=Y\to Y$ be the identity. Then $A\in D(Y)$ is $f$-suave if and only if $A$ is $f$-prim if and only if $A$ is dualizable.
\end{proposition}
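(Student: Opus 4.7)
The plan is to reduce all three notions to the standard dualizability condition by unpacking what happens to the $2$-category $\mathrm{LZ}_{\mathcal D}$ when $X = Y$ is the final object. The point is that in this degenerate case, the portion of $\mathrm{LZ}_{\mathcal D}$ relevant to the question collapses onto the delooping of the symmetric monoidal $\infty$-category $(\mathcal D(Y),\otimes)$, and so both ``$f$-suave'' and ``$f$-prim'' become, tautologically, ``dualizable''.

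First I would unpack the mapping objects. Since we have replaced $C$ by $C_{/Y}$ and taken $X = Y$, all relevant fiber products collapse: $X\times_Y Y = Y\times_Y X = Y\times_Y Y = Y$. Hence
\[
\mathrm{Hom}_{\mathrm{LZ}_{\mathcal D}}(X,Y) \;=\; \mathrm{Hom}_{\mathrm{LZ}_{\mathcal D}}(Y,X) \;=\; \mathrm{Hom}_{\mathrm{LZ}_{\mathcal D}}(Y,Y) \;=\; \mathcal D(Y).
\]
The diagonal $\Delta_f : X\to X\times_Y X$ is the identity, so $\mathrm{id}_Y = \Delta_!(1_X) = 1_Y$. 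For the composition, the convolution formula $A\star B = p_{1!}(p_1^\ast A\otimes p_2^\ast B)$ with $p_1,p_2,p_{1!}$ all identities reduces to $A\star B = A\otimes B$. Thus the endomorphism monoidal $\infty$-category of $Y$ in $\mathrm{LZ}_{\mathcal D}$ is exactly $(\mathcal D(Y),\otimes)$.

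Next I would translate the adjunction data. Following the definition of $f$-suave (as a left adjoint in $\mathrm{LZ}_{\mathcal D}$), the data $(B,\alpha,\beta)$ with $\alpha:\Delta_!(1_X)\to p_1^\ast A\otimes p_2^\ast B$ and $\beta: f_!(A\otimes B)\to 1_Y$ specializes under the identifications above to maps $\alpha: 1_Y\to A\otimes B$ and $\beta: A\otimes B\to 1_Y$ in $\mathcal D(Y)$. The two composite conditions from the criterion of Theorem~\ref{thm:critcohomsmooth} collapse, term by term (each $p_i^\ast$, $\Delta_!$, $f_!$, $f^\ast$ is the identity), to the classical triangle composites
\[
A \;\xrightarrow{A\otimes\alpha}\; A\otimes B\otimes A \;\xrightarrow{\beta\otimes A}\; A, \qquad B \;\xrightarrow{\alpha\otimes B}\; A\otimes B\otimes B \;\xrightarrow{B\otimes\beta}\; B.
\]
These being the identity is precisely the assertion that $A$ is dualizable in $(\mathcal D(Y),\otimes)$ with dual $B$. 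The same computation applied to the definition of $f$-prim (where we instead view $A$ as an element of $\mathrm{Hom}_{\mathrm{LZ}_{\mathcal D}}(Y,X)=\mathcal D(Y)$ and ask for a right adjoint there) gives exactly the same unfolded data, because everything lives in the same endomorphism monoidal category $(\mathcal D(Y),\otimes)$ and the roles of $A$ and its partner $B$ are symmetric under the symmetry of $\otimes$. Hence both $f$-suave and $f$-prim are equivalent to dualizability.

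The only real obstacle is careful bookkeeping: one must verify that under the specialization $X=Y$, each projection, diagonal, and lower-shriek that appears in the triangle composites of Definition~\ref{def:suaveprimsheaves} is indeed the identity, so that the verification conditions become literally the classical dualizability triangles. Once this is done, no further work is needed: the equivalence ``dualizable $\Leftrightarrow$ has a right adjoint (in the delooping)'' is the standard, tautological one, and the equivalence between $f$-suave and $f$-prim falls out of the symmetry of the tensor product.
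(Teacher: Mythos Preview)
Your proof is correct and is exactly what the paper means by ``immediate from the definitions'': you have simply unpacked that when $f=\mathrm{id}_Y$, all the structural maps $\Delta$, $p_i$, $f$ become identities, so the $2$-categorical adjunction data defining $f$-suave and $f$-prim both collapse to the standard evaluation/coevaluation triangles for dualizability in $(\mathcal D(Y),\otimes)$. The paper gives no further detail, so there is nothing to compare.
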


\begin{proof} This is immediate from the definitions.
\end{proof}

\begin{proposition}\label{prop:suavesheafcompact} Let $f: X\to Y$ be a morphism and let $A\in \mathcal D(X)$ be $f$-suave. Assume that $\Delta_!(1_X)\in \mathcal D(X\times_Y X)$ is compact. Then $A\in \mathcal D(X)$ is compact.
\end{proposition}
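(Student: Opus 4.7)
The plan is to leverage the interpretation of $f$-suaveness as an adjunction in the $2$-category $\mathrm{LZ}_{\mathcal{D}}$ introduced in Lecture V, and thereby translate compactness of $A \in \mathcal{D}(X)$ into the given compactness of $\Delta_!(1_X) \in \mathcal{D}(X \times_Y X)$. After replacing $C$ by the slice $C_{/Y}$, which is harmless by the base-change remark following Definition~\ref{def:suaveprimsheaves}, I may assume $Y$ is the final object. Let $B = \mathbb{D}_f(A) \in \mathcal{D}(X)$ be the right adjoint of $A$ in $\mathrm{LZ}_{\mathcal{D}}$, with unit $\alpha\colon \Delta_!(1_X) \to p_1^\ast A \otimes p_2^\ast B$ and counit $\beta\colon f_!(A \otimes B) \to 1_Y$.

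The key step is the formal $2$-categorical adjunction identity: in any $2$-category, for a left adjoint $1$-morphism $L\colon X \to Y$ with right adjoint $R\colon Y \to X$, one has for every parallel $1$-morphism $N\colon X \to Y$ a natural equivalence $\mathrm{Hom}(L, N) \simeq \mathrm{Hom}(\mathrm{id}_X, R \circ N)$, sending a $2$-morphism $\phi\colon L \to N$ to the composite $\mathrm{id}_X \xrightarrow{\eta} R \circ L \xrightarrow{R\phi} R \circ N$. Applying this to the adjunction $A \dashv B$ in $\mathrm{LZ}_{\mathcal{D}}$, and unfolding composition as convolution of kernels, I obtain a natural equivalence
\[
\mathrm{Hom}_{\mathcal{D}(X)}(A, M) \;\simeq\; \mathrm{Hom}_{\mathcal{D}(X \times_Y X)}\bigl(\Delta_!(1_X),\, p_1^\ast M \otimes p_2^\ast B\bigr)
\]
for all $M \in \mathcal{D}(X)$. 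As a consistency check, taking $M = A$ sends $\mathrm{id}_A$ to $\alpha$, as expected.

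From here the conclusion is immediate. The functor $M \mapsto p_1^\ast M \otimes p_2^\ast B$ commutes with all colimits: $p_1^\ast$ is a left adjoint (to $p_{1\ast}$), and $-\otimes p_2^\ast B$ admits the right adjoint $\mathscr{H}\mathrm{om}(p_2^\ast B, -)$ by closedness of the $6$-functor formalism. The functor $\mathrm{Hom}_{\mathcal{D}(X \times_Y X)}(\Delta_!(1_X), -)$ commutes with filtered colimits by the compactness hypothesis on $\Delta_!(1_X)$. Composing, $\mathrm{Hom}_{\mathcal{D}(X)}(A, -)$ preserves filtered colimits, so $A$ is compact.

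The only substantive point is the $2$-categorical adjunction formula in $\mathrm{LZ}_{\mathcal{D}}$, but this is purely formal once one trusts the construction of $\mathrm{LZ}_{\mathcal{D}}$ as an honest $2$-category (or the homotopy $2$-category of a $\mathrm{Cat}_\infty$-enrichment) — in the same spirit as how the backward direction of Theorem~\ref{thm:critcohomsmooth} was carried out.
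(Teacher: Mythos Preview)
Your proof is correct and arrives at the same key identity as the paper, namely
\[
\mathrm{Hom}_{\mathcal D(X)}(A,M)\;\cong\;\mathrm{Hom}_{\mathcal D(X\times_Y X)}\bigl(\Delta_!(1_X),\,p_1^\ast M\otimes p_2^\ast \mathbb D_f(A)\bigr),
\]
after which both arguments conclude identically. The only difference is in how this identity is obtained: the paper derives it by hand, starting from the characterizing isomorphism $p_1^\ast M\otimes p_2^\ast \mathbb D_f(A)\cong \mathscr{H}\mathrm{om}(p_2^\ast A,p_1^! M)$ of Proposition~\ref{prop:checksuavesheaf} and then running the chain of adjunctions $(\otimes,\mathscr{H}\mathrm{om})$, projection formula for $\Delta_!$, and $(p_{1!},p_1^!)$; you instead invoke the abstract $2$-categorical adjunction law $\mathrm{Hom}(L,N)\simeq\mathrm{Hom}(\mathrm{id},RN)$ in $\mathrm{LZ}_{\mathcal D}$ directly. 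Your route is cleaner and makes the conceptual content more visible; the paper's route is more self-contained in that it does not rely on the full $2$-categorical coherence of $\mathrm{LZ}_{\mathcal D}$ but only on the explicit isomorphism already established. Either way the substance is the same.
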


The condition that $\Delta_!(1_X)$ is compact is satisfied in many, but not all, situations, and there are important situations where $f$-suave objects are quite far from compact objects.

\begin{proof} For $B\in \mathcal D(X)$, we have an isomorphism
\[
p_1^\ast B\otimes p_2^\ast \mathbb D_f(A)\cong \mathscr{H}\mathrm{om}(p_2^\ast A,p_1^! B).
\]
We apply $\mathrm{Hom}(\Delta_!(1_X),-)$ to this, getting an isomorphism
\[\begin{aligned}
\mathrm{Hom}(\Delta_!(1_X),p_1^\ast B\otimes p_2^\ast \mathbb D_f(A))&\cong \mathrm{Hom}(\Delta_!(1_X),\mathscr{H}\mathrm{om}(p_2^\ast A,p_1^! B))\\
&\cong \mathrm{Hom}(\Delta_!(1_X)\otimes p_2^\ast A,p_1^! B)\\
&\cong \mathrm{Hom}(\Delta_!(A),p_1^! B)\cong \mathrm{Hom}(p_{1!} \Delta_!(A),B)\cong \mathrm{Hom}(A,B).
\end{aligned}\]
But by assumption, the functor $B\mapsto \mathrm{Hom}(\Delta_!(1_X),p_1^\ast B\otimes p_2^\ast \mathbb D_f(A))$ commutes with all filtered colimits.
\end{proof}

Again, there is an analogue for $f$-prim objects, this time under a much weaker assumption.

\begin{proposition}\label{prop:primsheafcompact} Let $f: X\to Y$ be a morphism and let $A\in \mathcal D(X)$ be $f$-prim. Assume that $1_Y\in \mathcal D(Y)$ is compact. Then $A\in \mathcal D(X)$ is compact.
\end{proposition}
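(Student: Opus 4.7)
The plan is to use Proposition~\ref{prop:primsheafproperties}(2) to rewrite $\mathrm{Hom}_{\mathcal{D}(X)}(A,-)$ in a form where each operation manifestly preserves filtered colimits. Let $B \in \mathcal{D}(X)$ denote the right adjoint of $A$ in $\mathrm{LZ}_{\mathcal{D}}$. I first want to establish the identification
\[
\mathrm{Hom}_{\mathcal{D}(X)}(A,C) \cong \mathrm{Hom}_{\mathcal{D}(Y)}(1_Y, f_!(B \otimes C)),
\]
functorially in $C \in \mathcal{D}(X)$. One way is to combine Proposition~\ref{prop:primsheafproperties}(2) with the $f^\ast \dashv f_\ast$ adjunction and the tensor-hom adjunction: $\mathrm{Hom}(1_Y, f_\ast \mathscr{H}\mathrm{om}(A,C)) = \mathrm{Hom}(f^\ast 1_Y, \mathscr{H}\mathrm{om}(A,C)) = \mathrm{Hom}(A,C)$, since $f^\ast$ is symmetric monoidal. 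Alternatively, one can read this directly off the adjunction $A \dashv B$ in $\mathrm{LZ}_{\mathcal{D}}$: applying the $2$-functor $\mathrm{Hom}_{\mathrm{LZ}_{\mathcal{D}}}(Y,-): \mathrm{LZ}_{\mathcal{D}} \to \mathrm{Cat}$ turns the adjunction $A \dashv B$ in $\mathrm{LZ}_{\mathcal{D}}$ into an adjunction of the Fourier--Mukai functors $f^\ast(-)\otimes A : \mathcal{D}(Y) \to \mathcal{D}(X)$ and $f_!(- \otimes B): \mathcal{D}(X) \to \mathcal{D}(Y)$, and the displayed formula is then just the adjunction applied with source $1_Y$.

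Granting this, the proof is immediate: the functor $C \mapsto f_!(B \otimes C)$ factors as $C \mapsto B \otimes C$ followed by $f_!$. The first factor is colimit-preserving because, by the $6$-functor formalism hypothesis, $- \otimes B$ admits a right adjoint $\mathscr{H}\mathrm{om}(B,-)$; the second is colimit-preserving because $f_!$ admits the right adjoint $f^!$. Hence $C \mapsto f_!(B \otimes C)$ preserves all colimits, in particular filtered ones. Since $1_Y$ is compact by hypothesis, $\mathrm{Hom}(1_Y,-)$ preserves filtered colimits, and composing gives that $\mathrm{Hom}(A,-)$ preserves filtered colimits, i.e.\ $A$ is compact.

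The only non-routine point is the first step, namely confirming that the identification $\mathrm{Hom}(A,-) \cong \mathrm{Hom}(1_Y, f_!(B \otimes -))$ really comes out of the $f$-primness of $A$; once this is in place the argument is purely formal. Since Proposition~\ref{prop:primsheafproperties}(2) has already been proved via the $\mathrm{LZ}_{\mathcal{D}}$-adjunction, I would simply invoke it and avoid a separate analysis of the Fourier--Mukai picture.
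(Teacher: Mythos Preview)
Your proof is correct and follows essentially the same approach as the paper. The paper's proof invokes the same isomorphism $f_!(B\otimes -)\cong f_\ast\mathscr{H}\mathrm{om}(A,-)$ from Proposition~\ref{prop:primsheafproperties}(2), only with the explicit formula $B = p_{2\ast}\mathscr{H}\mathrm{om}(p_1^\ast A,\Delta_!(1_X))$ substituted in from part~(3), then applies $\mathrm{Hom}(1_Y,-)$ and observes that the left-hand side commutes with filtered colimits---exactly your argument.
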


\begin{proof} By $f$-primness, we have an isomorphism
\[
f_!(p_{2\ast}\mathscr{H}\mathrm{om}(p_1^\ast A,\Delta_!(1_X))\otimes B)\cong f_\ast \mathscr{H}\mathrm{om}(A,B).
\]
Applying $\mathrm{Hom}(1_Y,-)$, we get
\[
\mathrm{Hom}(1_Y,f_!(p_{2\ast}\mathscr{H}\mathrm{om}(p_1^\ast A,\Delta_!(1_X))\otimes B))\cong \mathrm{Hom}(A,B).
\]
But, as a functor of $B$, the left-hand side commutes with all filtered colimits.
\end{proof}

Finally, we note the following stability of these conditions under retracts.

\begin{proposition}\label{prop:fsuaveretract} Let $f: X\to Y$ and $f': X'\to Y$ be morphisms, and assume that $X'$ is a retract of $X$ over $Y$, i.e.~there are $i: X'\to X$ and $r: X\to X'$ over $Y$ with $ri=\mathrm{id}_{X'}$. Let $A'\in \mathcal D(X')$ and $A\in \mathcal D(X)$ be such that there are maps $i^\ast A\to A'$ and $r^\ast A'\to A$ such that the composite $A'=i^\ast r^\ast A'\to i^\ast A\to A'$ is the identity; or else that there are maps $i_! A'\to A$ and $r_! A\to A'$ such that the composite $A'=r_! i_! A'\to r_! A\to A'$ is the identity. Then if $A\in \mathcal D(X)$ is $f$-suave, also $A'$ is $f'$-suave.
\end{proposition}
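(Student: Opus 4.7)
The plan is to interpret $f$-suaveness through the $2$-category $\mathrm{LZ}_{\mathcal D}$ from Lecture V, in which $A\in D(X)=\mathrm{Hom}_{\mathrm{LZ}_{\mathcal D}}(X,Y)$ is $f$-suave precisely when it is a left adjoint $1$-morphism. I will exhibit $A'$ as a retract of a left adjoint in $\mathrm{LZ}_{\mathcal D}$, and then use stability of the Hom $\infty$-categories to conclude that $A'$ is itself a left adjoint, i.e., $f'$-suave.

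First, I would note that any morphism $i:X'\to X$ over $Y$ lifts to an adjunction $i_!\dashv i^*$ inside $\mathrm{LZ}_{\mathcal D}$: explicitly, $i_!$ is represented by the kernel $\gamma_!1_{X'}\in D(X'\times_Y X)$ with $\gamma(x')=(x',i(x'))$, and its right adjoint in $\mathrm{LZ}_{\mathcal D}$ is represented by the ``transposed'' kernel in $D(X\times_Y X')$ whose Fourier--Mukai functor is $i^*$. This $2$-categorical adjunction is inherited from the adjunction in $\mathrm{Corr}(C,E)$ between the forward correspondence $X'=X'\xrightarrow{i}X$ and the backward correspondence $X\xleftarrow{i}X'=X'$. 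Consequently $A\circ i_!:X'\to Y$ is a composite of two left adjoints in $\mathrm{LZ}_{\mathcal D}$, hence a left adjoint; a direct composition-of-kernels calculation identifies its kernel with $i^*A\in D(X')$, so $i^*A$ is $f'$-suave.

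Next, I would reformulate both hypotheses as saying that $A'$ is a retract of $i^*A$ in $D(X')=\mathrm{Hom}_{\mathrm{LZ}_{\mathcal D}}(X',Y)$. In Case~1 the section $A'=i^*r^*A'\xrightarrow{i^*\rho}i^*A$ and retraction $\iota:i^*A\to A'$ compose to $\mathrm{id}_{A'}$ by hypothesis. In Case~2, applying $i_!\dashv i^*$ to $u:i_!A'\to A$ yields $\tilde u:A'\to i^*A$, and applying $r_!\dashv r^*$ to $v:r_!A\to A'$ yields $\tilde v:A\to r^*A'$; the hypothesis $v\circ r_!(u)=\mathrm{id}_{A'}$ is the iterated adjoint (under $r_!\dashv r^*$ then $i_!\dashv i^*$, via the identifications $r_!i_!\cong \mathrm{id}\cong i^*r^*$ coming from $ri=\mathrm{id}_{X'}$) of the statement $i^*\tilde v\circ \tilde u=\mathrm{id}_{A'}$, so $\tilde u$ serves as section and $i^*\tilde v:i^*A\to i^*r^*A'=A'$ as retraction.

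Finally, I would apply the criterion of Proposition~\ref{prop:checksuavesheaf} to $A'$. The Hom $\infty$-category $D(X')$ is stable, so the retract data split $i^*A\cong A'\oplus A''$, and the natural map
\[
p'_1{}^*A'\otimes p'_2{}^*\mathbb D_{f'}(A')\to \mathscr{H}\mathrm{om}(p'_2{}^*A',p'_1{}^!A')
\]
appears as a direct summand of the analogous map for $i^*A$, which is an isomorphism by the $f'$-suaveness of $i^*A$ already established. Summands of isomorphisms are isomorphisms, so the map for $A'$ is an isomorphism, and Proposition~\ref{prop:checksuavesheaf} yields that $A'$ is $f'$-suave. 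The main obstacle is precisely this last step: in an arbitrary $2$-category retracts of left adjoints need not be left adjoints, so the argument genuinely uses the stability (or at least idempotent-completeness in an additive sense) of the Hom $\infty$-categories of $\mathrm{LZ}_{\mathcal D}$.
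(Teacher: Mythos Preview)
Your argument has a genuine gap at the very first step: the claimed adjunction $[i_!]\dashv [i^*]$ inside $\mathrm{LZ}_{\mathcal D}$ does not hold for an arbitrary morphism $i:X'\to X$. Take the topological six-functor formalism with $Y=\ast$, $X=[0,1]$, $X'=\{0\}$, and $i$ the inclusion. The counit would have to be a map
\[
(i,i)_!\,1_{X'}\;\longrightarrow\;\Delta_{X!}\,1_X
\]
in $D([0,1]^2)$, i.e.\ a map from the skyscraper at $(0,0)$ to $\Delta_*\mathbb Z$. But
\[
\mathrm{Hom}\bigl((0,0)_*\mathbb Z,\Delta_*\mathbb Z\bigr)\;=\;\mathrm{Hom}\bigl(\mathbb Z,(0,0)^!\Delta_*\mathbb Z\bigr)\;=\;\mathrm{Hom}\bigl(\mathbb Z,0^!\mathbb Z\bigr)\;=\;0,
\]
since $0^!\mathbb Z=0$ for the inclusion of an endpoint. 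Equivalently, your $[i_!]\in\mathrm{Hom}_{\mathrm{LZ}}(\ast,[0,1])=D([0,1])$ is the skyscraper $i_*\mathbb Z$, which is not $f$-prim (Proposition~\ref{prop:topologicalpropersheaves}), hence not a left adjoint in $\mathrm{LZ}$. The heuristic that ``the adjunction is inherited from $\mathrm{Corr}$'' fails because $\mathrm{LZ}_{\mathcal D}$ is obtained by \emph{enrichment} along $\mathcal D$, not via a $2$-functor from an $(\infty,2)$-category of spans; the $2$-morphisms in $\mathrm{LZ}_{\mathcal D}$ are maps in $D(X\times_Y X')$, and the span-level counit $i:X'\to X$ does not produce a map $i_!1_{X'}\to 1_X$. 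There is also a smaller slip in your Case~2 translation: at the level of ordinary functors one has $i_!\dashv i^!$ and $r_!\dashv r^!$, not $i_!\dashv i^*$ and $r_!\dashv r^*$, so $u:i_!A'\to A$ adjoints to a map $A'\to i^!A$, not to $A'\to i^*A$.

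The paper avoids pulling back to $X'$ altogether. Instead it encodes the retract data as an ``idempotent'' $2$-morphism $h:Ae\Rightarrow A$ in $\mathrm{LZ}_{\mathcal D}$ (where $e=ir$), uses the genuine adjunction $A\dashv B$ together with the self-anti-equivalence $(-)^t$ of $\mathrm{LZ}_{\mathcal D}$ to transport $h$ to an ``idempotent'' $h':e_!B\to B$ on the Verdier dual, and then splits both idempotents to extract the pair $A'\dashv B'$. Your retract-of-a-suave-object idea in the last paragraph is sound and is essentially what this final splitting step amounts to, but it has to be run at the level of the kernel $A$ on $X$ (using $e$) rather than via a purported suaveness of $i^*A$ on $X'$.
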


Again, there is a similar result in the $f$-prim case; as usual, the two cases are swapped under passing to opposite categories, so one should have maps $A'\to i^\ast A$ and $A\to r^\ast A'$ etc.

\begin{proof} We consider the case first scenario. Let $e=ir: X\to X$ be the idempotent endomorphism of $X$. We note that $A$ comes equipped with a map $h: e^\ast A\to A$ given as the composite $r^\ast i^\ast A\to r^\ast A'\to A$; and this map is idempotent in the sense that the composite
\[
e^\ast A = e^\ast e^\ast A\xrightarrow{e^\ast h} e^\ast A\xrightarrow{h} A
\]
agrees with $h$. Conversely, given $A$ with such an ``idempotent'' $h: e^\ast A\to A$, we get an actual idempotent endomorphism $i^\ast h: i^\ast A = i^\ast e^\ast A\to i^\ast A$, and letting $A'$ be the corresponding retract of $i^\ast A$, we get a map $A'\to i^\ast A$ as well as a map $r^\ast A'\to r^\ast i^\ast A\to A$. This way, the datum of $A$ and $A'$ and the maps $i^\ast A\to A'$ and $r^\ast A'\to A$ is equivalently captured in the datum of $A$ and $h: e^\ast A\to A$.

In other words, in $\mathrm{LZ}_{\mathcal D}$ (where as usual we assume that $\mathcal C$ has final object $Y$), we have the $1$-morphism $A: X\to Y$ and $e: X\to X$ as well as the $2$-morphism $h: Ae\Rightarrow A$. If $B$ is the $f$-Verdier dual of $A$ (i.e., the right adjoint morphism $B: Y\to X$), then $h$ corresponds to a $2$-morphism $e\Rightarrow BA$. If we denote the self-anti-equivalence of $\mathrm{LZ}_{\mathcal D}$ by a transpose $-^t$, then this is also the same as a morphism $e^t\Rightarrow A^t B^t$ (recall that $-^t$ is contravariant for $1$-morphisms but covariant for $2$-morphisms). Now $A^t$ is right adjoint to $B^t$, so this can be further translated into $B^t e^t\Rightarrow B^t$. Concretely, this is a morphism $h': e_! B\to B$ in $\mathcal D(X)$. Again, this morphism is ``idempotent'', and can equivalently be described in terms of $B'\in \mathcal D(X')$ with maps $i_! B'\to B$ and $r_! B\to B'$ whose composite $B'=r_! i_! B\to r_! B\to B'$ is the identity. We note that this whole process could also be reversed, starting from the second type of data and producing the first on the Verdier dual side.

It is now some formal $2$-categorical nonsense to obtain the adjunction between $A': X'\to Y$ and $B': Y\to X'$ as a ``retract'' of the adjunction between $A: X\to Y$ and $B: Y\to X$.
\end{proof}
\newpage

\section*{Appendix to Lecture VI: Uniqueness of $f_!$}

In Lecture IV, we stated a construction principle for $3$-functor formalisms, or more precisely for constructing $f_!$ when given $\otimes$ and $f^\ast$. In fact, using the notions of cohomologically \'etale and proper morphisms introduced in this lecture, there is a uniqueness theorem for it. This was conjectured in the original version of these notes and has now been proved by Dauser--Kuijper.

\begin{theorem}[{\cite[Theorem 3.3]{DauserKuijper}}]\label{thm:dauserkuijper} Assume that $C$ is some $\infty$-category admitting all finite limits. Let $E$, $I$ and $P$ be classes of morphisms of $C$ satisfying hypothesis (1) from Lecture IV, and assume that all morphisms in $E$ are $n$-truncated for some $n$ (that may depend on the morphism).

The following two $\infty$-categories are equivalent via the forgetful functor from (2) to (1).

\begin{enumerate}
\item The $\infty$-category of functors
\[
\mathcal D_0: C^{\mathrm{op}}\to \mathrm{CMon}(\mathrm{Cat}_\infty)
\]
satisfying the properties from Lecture IV; and where morphisms $\mathcal D_0\to \mathcal D_0'$ are those natural transformations that also commute with the functors $j_!$ for $j\in I$ and $f_\ast$ for $f\in P$ (which is just a condition, as these are adjoints).
\item The $\infty$-category of lax symmetric monoidal functors
\[
\mathcal D: \mathrm{Corr}(C,E)\to \mathrm{Cat}_\infty
\]
(i.e., $3$-functor formalisms) with the property that all morphisms in $I$ are cohomologically \'etale, and all morphisms in $P$ are cohomologically proper; and morphisms $\mathcal D\to \mathcal D'$ are lax symmetric monoidal natural transformations.
\end{enumerate}
\end{theorem}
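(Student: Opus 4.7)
My plan is to prove the equivalence in two stages: first establish essential surjectivity by leveraging Mann's construction (Theorem~\ref{thm:construct6functors}), and then establish fully faithfulness via a contractibility statement for the space of extensions, ultimately reducing to the uniqueness of adjoints in $\mathrm{Cat}_\infty$. For essential surjectivity, given $\mathcal{D}_0$ in (1), Theorem~\ref{thm:construct6functors} produces a $3$-functor formalism $\mathcal{D}$ in which $j_!$ is left adjoint to $j^\ast$ for $j \in I$, and $f_!$ is right adjoint to $f^\ast$ for $f \in P$. I would verify that this $\mathcal{D}$ lies in (2) by inducting on the truncatedness parameter $n$: the base case of isomorphisms is immediate. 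In the inductive step, for $j \in I$ that is $n$-truncated, stability of $I$ under diagonals gives $\Delta_j \in I$ which is $(n-1)$-truncated, hence cohomologically \'etale by induction; combined with $j^!(1_Y) \cong 1_X$ (forced by $j_! \dashv j^\ast$), Proposition~\ref{prop:checkcohometale} reduces the verification to the pointed statement that $j^!(1_Y) \to 1_X$ is an isomorphism, which is evident from the adjointness. The argument for $P$ is dual, using Proposition~\ref{prop:checkcohomproper}.

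\textbf{Fully faithfulness.} The core claim is that the space of extensions of a fixed $\mathcal{D}_0$ to an object of (2), together with compatible morphisms, is contractible. Tracking the proof of Theorem~\ref{thm:construct6functors}, any such extension is obtained by starting from the diagonal functor $\delta^\ast \mathrm{Corr}(C,I,P)_{\mathrm{contra,contra,contra}} \to \mathrm{Cat}_\infty$ coming from $\mathcal{D}_0$, and then passing to adjoints in the second and third variables of the tri-simplicial set. The imposed cohomological \'etaleness (resp.\ properness) conditions in (2) force these adjoints to coincide with the canonical ones supplied by the adjoint structure of $\mathcal{D}_0$, and since adjoints in $\mathrm{Cat}_\infty$ are unique up to contractible choice, the space of such passages is contractible. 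For natural transformations, a lax symmetric monoidal transformation $\mathcal{D} \to \mathcal{D}'$ in (2) is determined by its restriction to (1) together with compatibility data with the adjoints $j_!, f_!$, and these compatibility data again live in a contractible space, as the transfer of natural transformations along adjunctions is unique.

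\textbf{Main obstacle.} The hardest step is to make the contractibility argument rigorous at the level of tri-simplicial sets. The difficulty is that the passage to adjoints in Lecture IV was phrased via a (non-full) subcategory $\mathrm{Fun}^L(\Delta^k, \mathrm{Cat}_\infty)$ of adjointable functors equipped with an equivalence to $\mathrm{Fun}((\Delta^k)^{\mathrm{op}}, \mathrm{Cat}_\infty)$, and formulating precisely that the space of ways to perform this passage is contractible, coherently across the entire tri-simplicial set, requires some care. My approach would combine Theorem~\ref{thm:decomposemorphisms} (Liu--Zheng) with a purely formal uniqueness argument for adjoints in functor $\infty$-categories, applied levelwise. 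An alternative, more in the spirit of this lecture, is to characterize the cohomological properties through adjunctions in $\mathrm{LZ}_{\mathcal{D}}$ and propagate natural transformations along factorizations $f = \overline{f} \circ j$, exploiting contractibility of the space of such factorizations (which is itself a consequence of Theorem~\ref{thm:decomposemorphisms}). Either way, the role of the $n$-truncatedness hypothesis is that it permits the inductive reduction in essential surjectivity; the fully faithful half is then formal.
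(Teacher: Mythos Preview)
The paper does not contain a proof of this theorem; it is stated with a citation to Dauser--Kuijper \cite{DauserKuijper} and followed only by the one-sentence gloss ``In other words, given the classes $I$ and $P$, the functors $\otimes$ and $f^\ast$ determine the functors $f_!$, and all of their compatibilities, uniquely up to all higher coherences.'' There is therefore no in-paper argument to compare your proposal against.

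On the merits of your sketch: essential surjectivity via Theorem~\ref{thm:construct6functors}, with the inductive verification (on the truncation level) that the output lies in (2) using Propositions~\ref{prop:checkcohometale} and~\ref{prop:checkcohomproper}, is correct and complete as stated. For fully faithfulness, however, you have accurately located the obstacle but not surmounted it. The slogan ``adjoints are unique up to contractible choice'' governs individual functors, but the datum to be pinned down is a lax symmetric monoidal functor out of $\mathrm{Corr}(C,E)^\otimes$, which packages an infinite hierarchy of coherences among those adjoints. Your appeal to Theorem~\ref{thm:decomposemorphisms} shows that the \emph{source} simplicial set has the right shape (factorizations $f=\overline f j$ are essentially unique), but that is a statement about categorical equivalences of simplicial sets, not about the space of functors out of them satisfying an adjointness constraint. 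What is actually needed is a precise statement that the passage-to-adjoints operation on $\mathrm{Cat}_\infty$-valued multisimplicial diagrams is an \emph{equivalence} onto the subcategory of diagrams satisfying the relevant Beck--Chevalley conditions, coherently in all simplicial directions; establishing this equivalence (and then transporting it through the $\delta^\ast$/$\delta^\ast_+$ comparison) is the substance of the Dauser--Kuijper argument, and it is not a formality.
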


In other words, given the classes $I$ and $P$, the functors $\otimes$ and $f^\ast$ determine the functors $f_!$, and all of their compatibilities, uniquely up to all higher coherences.\newpage

\section*{Appendix to Lecture VI: Suave or prim descent}

Assume that $D(X)$ is a presentable $\infty$-category for all $X\in C$. Let $f: X\to Y$ be a map in $E$. If $1_X$ is $f$-suave or $f$-prim, one can prove strong descent properties of $X\mapsto D(X)$. (Usually, when discussing things like Artin stacks, one uses a smooth topology; but the results here show that one could also allow proper covers, at least as far as the dicussion of $6$ functors is concerned.)

\begin{proposition}\label{prop:Dsuavedescent} Let $f: X\to Y$ be an $E$-morphism such that $1_X$ is $f$-suave. Then
\[
f^\ast: D(Y)\to D(X)
\]
is conservative if and only if the natural map
\[
\mathrm{colim}_{[n]\in \Delta^{\mathrm{op}}} f_{n!} f_n^! (1_Y)\to 1_Y
\]
is an isomorphism (where $f_n: X^{n/Y}\to Y$ are the fibre products), and this condition passes to any base change. In that case, the pullback functors
\[
(f_n^\ast)_n: D(Y)\to \mathrm{lim}_{[n]\in \Delta} D(X^{n/Y}), (f_n^!)_n: D(Y)\to \mathrm{lim}_{[n]\in \Delta} D(X^{n/Y})
\]
are equivalences. In particular, $f$ is of universal $!$-descent.
\end{proposition}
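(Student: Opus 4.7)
The plan is to recognize the simplicial diagram $f_{\bullet,!}f_\bullet^!(1_Y)$ as the bar resolution of $1_Y$ for the comonad $T = f_!f^!$ on $D(Y)$, coming from the adjunction $f_! \dashv f^!$ in $\mathrm{LZ}_{\mathcal D}$ produced by $f$-suaveness of $1_X$. By Proposition~\ref{prop:suavesheafproperties} one has $f^!(-) \simeq f^\ast(-)\otimes L$ with $L := f^!(1_Y)$, whose formation commutes with base change. The projection formula gives $T(M) \simeq M\otimes f_!L$, and iterating it together with base change identifies $f_{n,!}f_n^!(1_Y) \simeq (f_!L)^{\otimes n}$ functorially in the simplicial structure, matching the counit-face maps with the \v{C}ech-nerve face maps. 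The ``easy'' direction (colim iso $\Rightarrow$ $f^\ast$ conservative) is then immediate: for $M \in D(Y)$ with $f^\ast M = 0$, one has $f_n^\ast M = 0$ since $f_n$ factors through $f$, so the projection formula gives $f_{n,!}f_n^!(1_Y)\otimes M \simeq f_{n,!}(f_n^!(1_Y)\otimes f_n^\ast M) = 0$, and tensoring the colimit identity with $M$ forces $M \simeq 0$.

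For the converse, assume $f^\ast$ is conservative. The strategy is to pull the colim map back along $f$ itself, where it becomes trivial. Base change and Proposition~\ref{prop:suavesheafproperties}(4) applied to each $f_n$ identify $f^\ast$ of the colim map with the analogous colim map for the base change $f' : X\times_Y X \to X$ of $f$ along itself. But $f'$ admits the canonical section $\Delta : X \to X\times_Y X$ with $f'\circ\Delta = \mathrm{id}_X$, and a section of $f'$ induces an extra degeneracy on the augmented \v{C}ech nerve $(X\times_Y X)^{\bullet/X} \to X$. This extra degeneracy passes through to $(f')_{\bullet,!}(f')_\bullet^!(1_X)$, making the augmented simplicial object in $D(X)$ split augmented, hence with colimit equal to its augmentation $1_X$. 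Thus $f^\ast$ of the colim map is an isomorphism, and conservativity of $f^\ast$ then forces the colim map itself to be an isomorphism, proving (B).

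That the colim identity passes automatically to any base change is now formal: for $g : Y' \to Y$, the same base-change identifications show $g^\ast$ of the colim map for $f$ equals the colim map for $f_g$. Extending from $1_{Y'}$ to arbitrary $M' \in D(Y')$ by the projection formula ($\mathcal D(Y')$-linearity), (B) universally is precisely condition (1) of Theorem~\ref{thm:shriekimpliesstar} --- universal $!$-descent of $f$; applying that theorem yields the equivalences $(f_n^\ast)_n$ and $(f_n^!)_n$ as well as universal $!$-descent, completing the proof. The main obstacle is the extra-degeneracy argument in the second paragraph: one must verify that the section $\Delta$ really splits the bar-resolution simplicial object, i.e.\ check that the \v{C}ech-nerve-induced extra degeneracies are compatible with the simplicial structure coming from the unit/counit of $f'_! \dashv (f')^!$ under the identification $(f_!L)^{\otimes n} \simeq f_{n,!}f_n^!(1_Y)$ --- a coherence check that is classical but not purely formal.
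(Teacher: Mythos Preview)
Your argument for the biconditional is correct and matches the paper's: the colimit identity tensored with $M$ gives conservativity, and conversely you reduce via base change along $f$ to the split case. The base-change stability of the colimit condition is also handled correctly.

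The gap is in your final step. You claim that the universal colimit condition ``(B) universally is precisely condition (1) of Theorem~\ref{thm:shriekimpliesstar}'', but condition (1) there is the full $!$-descent equivalence $D(Y)\xrightarrow{\sim}\mathrm{lim}_{\Delta}^! D(X^{\bullet/Y})$, not the colimit identity on objects. What you have shown is that the counit
\[
\mathrm{colim}_{[n]\in\Delta^{\mathrm{op}}} f_{n!}f_n^!(A)\to A
\]
of the adjunction between $(f_n^!)_n$ and its left adjoint is an isomorphism for all $A$; this yields only \emph{fully faithfulness} of $(f_n^!)_n$. You have not checked essential surjectivity, so you have not verified condition (1), and invoking Theorem~\ref{thm:shriekimpliesstar} here is circular.

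The fix is to finish the argument directly, as the paper does: both the functor $(f_n^\ast)_n$ (or $(f_n^!)_n$) and its left adjoint commute with base change in $Y$, so the unit transformation can be checked after pullback along $f:X\to Y$. There the \v{C}ech nerve acquires a splitting (via $\Delta$), and descent holds automatically, giving the unit isomorphism and hence essential surjectivity. This is exactly the extra-degeneracy argument you already carried out for the counit; you just need to invoke it once more for the unit. With that in place, both descent equivalences follow without any appeal to Theorem~\ref{thm:shriekimpliesstar}.
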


Recall here that if $1_X$ is $f$-suave, then $f^! = f^\ast \otimes f^!(1_Y)$, and $f^\ast = \mathscr{H}\mathrm{om}(f^!(1_Y),f^!)$. In particular, $f^\ast$ admits a left adjoint $f_!(f^!(1_Y)\otimes -)$, and the resulting counit transformation $f_!(f^!(1_Y)\otimes f^\ast)\to \mathrm{id}$ agrees with the counit transformation $f_!f^!\to \mathrm{id}$. These will be used implicitly in the proof.

\begin{proof} Assume first that
\[
(f_i^\ast)_i: D(Y)\to \prod_i D(X_i)
\]
is conservative. If $f$ has a section, then the map
\[
\mathrm{colim}_{[n]\in \Delta^{\mathrm{op}}} f_{n!} f_n^! (1_Y)\to 1_Y
\]
is an isomorphism for formal reasons. But under the conservativity assumption, we can in general reduce to this situation, using that the formation of $f^!$ commutes with any pullback when $1_X$ is $f$-suave.

In the converse direction, assume that this map is an isomorphism. In that case, the projection formula implies that
\[
\mathrm{colim}_{[n]\in \Delta^{\mathrm{op}}} f_{n!} f_n^! A\to A
\]
is an isomorphism for all $A\in D(Y)$. In particular, if $f^\ast A=0$, then $f^! A = f^\ast A\otimes f^!(1_Y)=0$, and the isomorphism implies $A=0$.

The pullback functor
\[
(f_n^\ast)_n: D(Y)\to \mathrm{lim}_{[n]\in \Delta} D(X^{n/Y})
\]
admits a left adjoint, such that the counit of the adjunction is precisely the map
\[
\mathrm{colim}_{[n]\in \Delta^{\mathrm{op}}} f_{n!} f_n^! A\to A
\]
This is an equivalence, yielding fully faithfulness. On the other hand, both functors in this adjunction commute with any base change in $Y$, so to check that the unit transformation is also an equivalence, we can check after pullback along $f: X\to Y$. But there, the cover admits a splitting, and the claim is automatic. The similar argument applies to $!$-pullback.
\end{proof}

The $f$-prim analogue is the following result, related to the notion of descendability of Mathew \cite{MathewDescendable}. (Roughly speaking, the reason we have to impose stronger conditions here is that $D(X)$ is presentable, but $D(X)^{\mathrm{op}}$ is not, so we have to ensure that all limits and colimits that are implicit in the previous proof stay finite.)

\begin{proposition}\label{prop:Dprimdescent} Assume that for all $X\in C$, the presentable $\infty$-category $D(X)$ is stable. Let $f: X\to Y$ be an $E$-morphism such that $1_X$ is $f$-prim, with $f_n: X^{n/Y}\to Y$ the fibre product. Assume that the map
\[
1_Y\to \flim_{[n]\in \Delta} f_{n\ast} 1_{X^{n/Y}}
\]
is an isomorphism in $\mathrm{Pro}(D(Y))$; equivalently (by base change and the projection formula), $f_\ast 1_X\in \mathrm{CAlg}(D(Y))$ is descendable. (This condition passes to any base change of $f$.)

Then the pullback functors
\[
(f_n^\ast)_n: D(Y)\to \mathrm{lim}_{[n]\in \Delta} D(X^{n/Y}), (f_n^!)_n: D(Y)\to \mathrm{lim}_{[n]\in \Delta} D(X^{n/Y})
\]
are equivalences. In particular,  $f$ is of universal $!$-descent.
\end{proposition}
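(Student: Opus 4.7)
The strategy is to use $f$-primness to transform $f_\ast$ into a twisted version of $f_!$, combine this with Mathew descendability to establish universal $\ast$-descent at the level of $\mathcal D$, and then lift the descent to the presentable $(\infty,2)$-category $\mathrm{Pr}_{\mathcal D,Y}$ of the appendix in order to apply Theorem~\ref{thm:shriekimpliesstar} and conclude universal $!$-descent.

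The key input is Proposition~\ref{prop:primsheafproperties}(2) applied to $A=1_X$: $f$-primness gives a natural isomorphism $f_\ast\cong f_!(B\otimes-)$ with $B$ the $f$-Verdier dual of $1_X$. Combined with the $f_!$-projection formula, this yields the ``$\ast$-projection formula'' $f_\ast f^\ast M\cong f_\ast 1_X\otimes M$, and in particular $f_\ast$ preserves colimits. Being $1$-prim is preserved under pullback (by the $2$-functoriality of base change for $\mathrm{LZ}_{\mathcal D}$) and under composition of such base-changed maps (by composing adjunctions in $\mathrm{LZ}_{\mathcal D}$), so a quick induction shows that $1_{X^{n/Y}}$ is $f_n$-prim and that $f_{n\ast}f_n^\ast M\cong f_{n\ast}1_{X^{n/Y}}\otimes M$; iterated base change moreover identifies $f_{n\ast}1_{X^{n/Y}}\cong (f_\ast 1_X)^{\otimes n}$ with the Amitsur complex of the commutative algebra $f_\ast 1_X$.

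Descendability is precisely the statement that the tower of partial totalizations of this Amitsur complex is pro-constantly $1_Y$ in $\mathcal D(Y)$. Since $\mathcal D(Y)$ is stable, $-\otimes M$ is exact and preserves pro-constant towers, so
\[
M\xrightarrow{\sim}\lim_{[n]\in\Delta}\bigl(f_{n\ast}1_{X^{n/Y}}\otimes M\bigr)\cong \lim_{[n]\in\Delta} f_{n\ast}f_n^\ast M.
\]
Both $f$-primness and descendability are stable under pullback along any $g\colon Y'\to Y$ (the latter because descendability is preserved by any symmetric monoidal functor), so this $\ast$-descent is universal. Conservativity of $f^\ast$ is immediate from the descent formula, and Barr--Beck--Lurie then yields $(f_n^\ast)_n\colon \mathcal D(Y)\xrightarrow{\sim}\lim^\ast\mathcal D(X^{n/Y})$.

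For universal $!$-descent I would transport this to the presentable $(\infty,2)$-category $\mathrm{Pr}_{\mathcal D,Y}$. The object $\mathcal D_{/X}\in \mathrm{Pr}_{\mathcal D,Y}$ is a commutative algebra whose underlying $\mathcal D(Y)$-algebra is $f_\ast 1_X$; since descendability is a tensor-nilpotence condition on the cofibre of the unit map and is therefore preserved by any symmetric monoidal functor, $\mathcal D_{/X}$ is itself descendable in $\mathrm{Pr}_{\mathcal D,Y}$. Combined with the comonadic identification $\mathrm{Mod}_{(\mathcal D_{/X})^{\otimes n}}(\mathrm{Pr}_{\mathcal D,Y})\simeq \mathrm{Pr}_{\mathcal D,X^{n/Y}}$, this makes the pullback $F^\ast\colon \mathrm{Pr}_{\mathcal D,Y}\xrightarrow{\sim}\lim^\ast_{[n]}\mathrm{Pr}_{\mathcal D,X^{n/Y}}$ an equivalence, which by Theorem~\ref{thm:shriekimpliesstar} is equivalent to $f$ being of universal $!$-descent; in particular $(f_n^!)_n$ is an equivalence. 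The hardest point is this last lift of descendability from $f_\ast 1_X$ in $\mathcal D(Y)$ to $\mathcal D_{/X}$ in the $(\infty,2)$-categorical setting, which rests on a careful deployment of the presentable $(\infty,2)$-formalism developed in the appendix.
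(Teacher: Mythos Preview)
Your $\ast$-descent argument is essentially the paper's: $f$-primness gives the projection formula for $f_\ast$, and tensoring the pro-isomorphism $1_Y\xrightarrow{\sim}\flim_n f_{n\ast}1$ with $M$ yields $M\xrightarrow{\sim}\flim_n f_{n\ast}f_n^\ast M$. The paper verifies the counit directly (conservativity of $f^\ast$ on $\mathrm{Pro}$, then base change along $f$ to a split cover) rather than citing Barr--Beck--Lurie, but your route is fine.

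The $!$-descent argument, however, has a genuine gap. You propose to lift descendability of $f_\ast 1_X$ to descendability of $f_{\ast,\mathrm{Pr}_{\mathcal D}}(1)$ in $\mathrm{Pr}_{\mathcal D,Y}$ and invoke Theorem~\ref{thm:shriekimpliesstar}. Two problems. First, Mathew's descendability is a tensor-nilpotence condition on a \emph{cofibre} and presupposes stability; $\mathrm{Pr}_{\mathcal D,Y}$ is $\mathrm{Pr}_\kappa^L$-linear, not stable, so the notion does not literally apply. Second, ``preserved by symmetric monoidal functors'' goes the wrong way: you would need a symmetric monoidal functor \emph{into} $\mathrm{Pr}_{\mathcal D,Y}$ carrying $f_\ast 1_X$ to the algebra in question, and none is supplied. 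Your comonadic identification $\mathrm{Mod}_A(\mathrm{Pr}_{\mathcal D,Y})\simeq \mathrm{Pr}_{\mathcal D,X}$ is likewise unproved; it would require conservativity of $f_{\ast,\mathrm{Pr}_{\mathcal D}}$, which by the proof of Theorem~\ref{thm:shriekimpliesstar} is essentially equivalent to the $!$-descent you are trying to establish.

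The paper avoids all of this by dualizing the $\ast$-argument directly. From $f$-primness one has $f_!f^!\cong\mathscr{H}\mathrm{om}(f_\ast 1_X,-)$, and similarly $f_{n!}f_n^!\cong\mathscr{H}\mathrm{om}(f_{n\ast}1,-)$. Applying $\mathscr{H}\mathrm{om}(-,A)$ to the pro-isomorphism $1_Y\xrightarrow{\sim}\flim_n f_{n\ast}1$ produces an ind-isomorphism $\fcolim_n f_{n!}f_n^! A\xrightarrow{\sim}A$ in $\mathrm{Ind}(D(Y))$; this is the counit of the $(f_{n!},f_n^!)$-adjunction and yields conservativity of $f^!$ on $\mathrm{Ind}$. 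The unit is then checked after $!$-pullback along $f$ (where the cover splits), exactly parallel to the $\ast$-case. This is elementary and stays entirely within $D$, with no $(\infty,2)$-categorical detour.
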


Recall that if $1_X$ is $f$-prim, with dual object $A\in D(X)$, then $f_\ast = f_!(A\otimes-)$ and $f_! = f_\ast \mathscr{H}\mathrm{om}(A,-)$. In particular, $f_\ast$ satisfies the projection formula and commutes with any $\ast$-base change. Similarly, $f_!$ commutes with any $!$-base change. Moreover, one has
\[
f_! f^! = f_\ast \mathscr{H}\mathrm{om}(A,f^!) = \mathscr{H}\mathrm{om}(f_! A,-) = \mathscr{H}\mathrm{om}(f_\ast 1_X,-).
\] 

\begin{proof} The functor $(f_n^\ast)_n$ has a right adjoint taking a collection $(A_n)_n\in \mathrm{lim}_{[n]\in \Delta} D(X^{n/Y})$ to $\mathrm{lim}_n f_{n\ast} A_n$. The unit of the adjunction is the natural map
\[
A\to \mathrm{lim}_n f_{n\ast} f_n^\ast A.
\]
We claim that this is an isomorphism, by showing that in fact the map
\[
A\to \flim_n f_{n\ast} f_n^\ast A
\]
in $\mathrm{Pro}(D(Y))$ is an isomorphism. But this latter map is obtained by tensoring the given isomorphism $1_Y\to \flim_{[n]\in \Delta} f_{n\ast} 1_{X^{n/Y}}$ with $A$ (by the projection formula).

In fact, more generally, the same argument applies to $A\in \mathrm{Pro}(D(Y))$, and shows that $f^\ast: \mathrm{Pro}(D(Y))\to \mathrm{Pro}(D(X))$ is conservative. We now claim that the counit map is an isomorphism, for which we have to see that for all $(A_n)_n\in \mathrm{lim}_{[n]\in \Delta} D(X^{n/Y})$, the map
\[
f^\ast \mathrm{lim}_n f_{n\ast} A_n\to A_0
\]
is an isomorphism. Again, we in fact claim that the map
\[
f^\ast \flim_n f_{n\ast} A_n\to A_0
\]
in $\mathrm{Pro}(D(X))$ is an isomorphism. But pullback under one projection map $\mathrm{Pro}(D(X))\to \mathrm{Pro}(D(X\times_Y X))$ is conversative (we proved this above for $f^\ast$, but the hypotheses pass to any base change of $f^\ast$), so we can check this after base changing everything along $f: X\to Y$. But now the cover is split, so descent is automatic.

Now consider the case of $!$-descent; here, $(f_n^!)_n$ has a left adjoint taking $(A_n)_n\in \mathrm{lim}_{[n]\in \Delta} D(X^{n/Y})$ to $\mathrm{colim}_n f_{n!} A_n$. The counit of the adjunction is now given by
\[
\mathrm{colim}_n f_{n!} f_n^! A\to A.
\]
This is an equivalence by taking the internal $\mathrm{Hom}$ from the Pro-isomorphism $1_Y\to \flim_n f_{n\ast} 1_{X^{n/Y}}$ towards $A$. In fact, this shows the slightly more precise claim that the map
\[
\fcolim_n f_{n!} f_n^! A\to A
\]
is an isomorphism in $\mathrm{Ind}(D(Y))$, and in fact this holds for all $A\in \mathrm{Ind}(D(Y))$. In particular, this implies that $f^!: \mathrm{Ind}(D(Y))\to \mathrm{Ind}(D(X))$ is conservative.

To show that the unit of the adjunction is an isomorphism, we need to show that for all $(A_n)_n\in \mathrm{lim}_{[n]\in \Delta} D(X^{n/Y})$, the natural map
\[
A_0\to f^!(\mathrm{colim}_n f_{n!} A_n)
\]
is an isomorphism. Again, we make the more precise claim that the map
\[
A_0\to f^!(\fcolim_n f_{n!} A_n) = \fcolim_n f^! f_{n!} A_n
\]
is an isomorphism in $\mathrm{Ind}(D(X))$. We conclude as in the case of $\ast$-descent: It suffices to prove this after applying $!$-pullback to $X\times_Y X$, where it reduces to the similar descent for the split cover $X\times_Y X\to X$.
\end{proof}
\newpage

\section{Lecture VII: Topological spaces}

Let us come back to the first example, of locally compact Hausdorff spaces. So $C$ is the category of locally compact Hausdorff spaces; equivalently, these are those topological spaces that can be written as open subsets of compact Hausdorff spaces (for example, their one-point compactification). We take $E$ to be the class of all morphisms.

In general, there are several slightly different possible definitions of the derived ($\infty$-)category of abelian sheaves $D(X,\mathbb Z)$ on $X$.

\begin{enumerate}
\item One can literally take the derived $\infty$-category $D(\mathrm{Ab}(X))$ of the abelian category of abelian sheaves on $X$. This is obtained from the category of chain complexes by inverting weak equivalences (in the $\infty$-categorical sense).
\item One can look at the $\infty$-category of sheaves $\mathrm{Shv}(X,D(\mathbb Z))$ (in the sense of Lurie \cite{LurieHTT}) on $X$ with values in $D(\mathbb Z)$. This is the $\infty$-category of contravariant functors $\mathcal F$ from open subsets of $X$ towards $D(\mathbb Z)$, such that $\mathcal F(\emptyset)=\ast$ and the following two conditions are satisfied. First, if $U=U_1\cup U_2$, then $\mathcal F(U)\to \mathcal F(U_1)\times_{\mathcal F(U_1\cap U_2)}\mathcal F(U_2)$ is an isomorphism. Second, if $U$ is a filtered union of subsets $U_i\subset U$, then $\mathcal F(U)\to \mathrm{lim}_i \mathcal F(U_i)$ is an isomorphism (where all limits are taken in the $\infty$-category $D(\mathbb Z)$).
\item The $\infty$-category $\mathrm{HypShv}(X,D(\mathbb Z))$ of hypersheaves on $X$ with values in $D(\mathbb Z)$, which is the localization of $\mathrm{Shv}(X,D(\mathbb Z))$ at the morphisms that induce isomorphisms on all stalks.
\item The versions bounded to the left $D^{\geq n}(\mathrm{Ab}(X))$ and $\mathrm{Shv}(X,D^{\geq n}(\mathbb Z))=\mathrm{HypShv}(X,D^{\geq n}(\mathbb Z))$ (as being bounded to the left implies being hypercomplete), and the left-completions
\[
\hat{D}(\mathrm{Ab}(X))=\mathrm{lim}_n D^{\geq n}(\mathrm{Ab}(X)), \widehat{\mathrm{Shv}}(X,D(\mathbb Z)) = \mathrm{lim}_n \mathrm{Shv}(X,D^{\geq n}(\mathbb Z)).
\]
\end{enumerate}

Let us compare the various possibilities.

\begin{proposition}\label{prop:comparederivedcategories} The composite functor $\mathrm{Ch}(\mathrm{Ab}(X))\to \mathrm{Shv}(X,D(\mathbb Z))\to \mathrm{HypShv}(X,D(\mathbb Z))$ factors over $D(\mathrm{Ab}(X))$ and induces a $t$-exact equivalence
\[
D(\mathrm{Ab}(X))\cong \mathrm{HypShv}(X,D(\mathbb Z)).
\]
In particular, one gets an equivalence of left-completions
\[
\hat{D}(\mathrm{Ab}(X))\cong \widehat{\mathrm{Shv}}(X,D(\mathbb Z)).
\]

If $X$ is paracompact and has finite covering dimension, then the functors
\[
\mathrm{Shv}(X,D(\mathbb Z))\to \mathrm{HypShv}(X,D(\mathbb Z))\to \widehat{\mathrm{Shv}}(X,D(\mathbb Z))
\]
are equivalences.
\end{proposition}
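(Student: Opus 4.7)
The plan is to reduce both assertions to two standard results of Lurie: the universal property of the derived $\infty$-category of a Grothendieck abelian category (\cite{LurieHA}, \S1.3.5), and the hypercompleteness of the $\infty$-topos of sheaves on a paracompact space of finite covering dimension (\cite{LurieHTT}, Theorem 7.2.3.6).

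For the first equivalence, I would begin by noting that $\mathrm{HypShv}(X,D(\mathbb{Z}))$ is a presentable stable $\infty$-category with a right-complete $t$-structure (pulled back objectwise from $D(\mathbb{Z})$) whose heart is canonically $\mathrm{Ab}(X)$, since a $0$-truncated hypersheaf with values in $D(\mathbb{Z})$ is simply a sheaf of abelian groups. The universal property of $D(\mathrm{Ab}(X))$ then yields a unique $t$-exact, colimit-preserving functor $\Phi: D(\mathrm{Ab}(X))\to \mathrm{HypShv}(X,D(\mathbb{Z}))$ extending the inclusion of hearts, through which the composite $\mathrm{Ch}(\mathrm{Ab}(X))\to \mathrm{HypShv}(X,D(\mathbb{Z}))$ factors by construction. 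To verify that $\Phi$ is an equivalence, I would first handle the bounded-below case via injective resolutions in $\mathrm{Ab}(X)$: injective abelian sheaves restrict to injectives on every open and hence have vanishing higher cohomology, so both sides admit the same complex-of-injectives description, giving $D^{\geq n}(\mathrm{Ab}(X))\simeq \mathrm{HypShv}(X,D^{\geq n}(\mathbb{Z}))$ for every $n$. The unbounded equivalence follows because both $D(\mathrm{Ab}(X))$ and $\mathrm{HypShv}(X,D(\mathbb{Z}))$ are generated under filtered colimits by their bounded-below subcategories, and $\Phi$ preserves colimits.

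For the paracompact, finite-dimensional case, Lurie's theorem gives $\mathrm{Shv}(X)=\mathrm{HypShv}(X)$ for sheaves of anima on $X$; this transfers to $D(\mathbb{Z})$-valued sheaves since hypercompleteness of a $D(\mathbb{Z})$-valued sheaf $\mathcal{F}$ is controlled by hypercompleteness of each $\pi_n\mathcal{F}\in \mathrm{Shv}(X,\mathrm{Ab})$, yielding the first equivalence. For the second, finite covering dimension $d$ gives the cohomological vanishing $H^i(U,\mathcal{G})=0$ for $i>d$, any abelian sheaf $\mathcal{G}$ and any open $U\subseteq X$. The sectionwise Postnikov spectral sequence then forces the canonical map $\mathcal{F}\to \lim_n \tau^{\geq -n}\mathcal{F}$ to be an equivalence for every $\mathcal{F}\in \mathrm{HypShv}(X,D(\mathbb{Z}))$, identifying this $\infty$-category with the left-completion $\widehat{\mathrm{Shv}}(X,D(\mathbb{Z}))$.

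The hard part will be the unbounded step in the first equivalence. Since $D(\mathrm{Ab}(X))$ -- defined by inverting quasi-isomorphisms in $\mathrm{Ch}(\mathrm{Ab}(X))$ -- need not be left-complete in general (which is precisely why the proposition separately introduces $\hat D$), one cannot argue by applying $\widehat{(\cdot)}$ to a bounded equivalence. Instead one must lean on the sharp form of Lurie's universal property, which produces $\Phi$ out of $t$-exactness and colimit-preservation and thus sidesteps any left-completeness issue; once this is in hand, the remaining steps are either cohomological book-keeping or a direct appeal to \cite{LurieHTT}, Theorem 7.2.3.6.
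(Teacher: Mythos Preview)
Your approach has a genuine gap in the unbounded step. The claim that $D(\mathrm{Ab}(X))$ and $\mathrm{HypShv}(X,D(\mathbb Z))$ are ``generated under filtered colimits by their bounded-below subcategories'' is not justified, and in fact points in the wrong direction: the canonical filtered-colimit presentation of an object is $C\simeq \mathrm{colim}_n\,\tau^{\leq n}C$, whose terms are bounded \emph{above}, not below. For the bounded-below direction one would instead need an inverse limit $\lim_n \tau^{\geq -n}C$, and you correctly note that $D(\mathrm{Ab}(X))$ need not be left-complete, so that limit is useless here. Nor can you fall back on compact generation: for $X$ of infinite cohomological dimension the objects $j_!\mathbb Z$ are not compact (since $R\Gamma(U,-)$ fails to commute with filtered colimits), so there is no reason the $\kappa$-compact objects lie in $D^+$. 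The appeal to ``Lurie's universal property'' does not close the gap either: that universal property produces the $t$-exact colimit-preserving functor $\Phi$, but it does not by itself prove $\Phi$ is an equivalence---you would still need to check that $\mathrm{HypShv}(X,D(\mathbb Z))$ satisfies the same universal property, which is essentially the original problem.

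The paper bypasses this entirely by working with $K$-injective resolutions, which exist for \emph{unbounded} complexes in any Grothendieck abelian category. The key point is that if $C$ is $K$-injective then the presheaf $U\mapsto C(U)$ is already a hypersheaf: a hypercover $U_\bullet\to U$ gives a resolution of $j_!\mathbb Z$ in $D(\mathrm{Ab}(X))$, and $R\mathrm{Hom}(-,C)=\mathrm{Hom}(-,C)$ for $K$-injective $C$ turns this into the required totalization identity $C(U)\simeq \lim C(U_\bullet)$. This immediately yields $\mathrm{Hom}_{D(\mathrm{Ab}(X))}(j_!\mathbb Z,C)\cong \mathrm{Hom}_{\mathrm{HypShv}}(j_!\mathbb Z,\tilde C)$ for \emph{all} $C$, and since the $j_!\mathbb Z$ generate both sides under colimits and shifts, the equivalence follows. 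Your bounded-below argument via ordinary injectives is morally a special case of this, but the $K$-injective version is what makes the unbounded case go through without any filtered-colimit gymnastics. For the final statement your strategy is fine and matches the paper, which simply cites \cite[Theorem 7.2.3.6, Proposition 7.2.1.10, Corollary 7.2.1.12]{LurieHTT}.
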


\begin{remark} In general, the three variants
\[
\mathrm{Shv}(X,D(\mathbb Z))\to \mathrm{HypShv}(X,D(\mathbb Z))\to \widehat{\mathrm{Shv}}(X,D(\mathbb Z))
\]
are different. The latter two agree when $X$ has ``locally finite cohomological dimension''. Hypersheaves always form a Bousfield localization of sheaves, but the left-completion in general does not. The functor $\mathrm{HypShv}\to \widehat{\mathrm{Shv}}$ (given by taking $\mathcal F$ to the system $(\tau_{\leq n} \mathcal F)_n$) has a right adjoint, taking a truncation-compatible sequence $\mathcal F_n$ to $\mathcal F=\mathrm{lim}_n \mathcal F_n$. But in general the map $\tau_{\leq n}\mathcal F\to \mathcal F_n$ is not an isomorphism. For example, if $\mathcal F_n=\prod_{i=1}^n \mathbb Z[i]$, then $\mathcal F=\prod_{i=1}^\infty \mathbb Z[i]$ and $\pi_0\mathcal F$ is the sheafification of $U\mapsto \prod_{i=1}^\infty H^i(U,\mathbb Z)$, which is in general nontrivial.
\end{remark}

\begin{proof} A map in $\mathrm{HypShv}(X,D(\mathbb Z))$ is an isomorphism if and only if it induces an isomorphism on all stalks. Indeed, passing to cones, it suffices to see that a hypercomplete sheaf $\mathcal F$ on $X$ is trivial as soon as all of its stalks vanish. But if all stalks vanish, then in particular all sheaves $\pi_i \mathcal F$ vanish, and hence $\mathcal F$ is $\infty$-connective, and thus trivial if hypercomplete.

Thus, to show that $\mathrm{Ch}(\mathrm{Ab}(X))\to \mathrm{HypShv}(X,D(\mathbb Z))$ factors over $D(\mathrm{Ab}(X))$, it suffices to see that quasi-isomorphisms induce isomorphisms on stalks (valued in $D(\mathbb Z)$). But this is clear, as the stalks are unchanged under the sheafification implicit in the functor $\mathrm{Ch}(\mathrm{Ab}(X))\to \mathrm{HypShv}(X,D(\mathbb Z))$: The functor $\mathrm{Ch}(\mathrm{Ab}(X))\to \mathrm{HypShv}(X,D(\mathbb Z))$ is the composite of the functor to presheaves, taking any open $U\subset X$ to the value at $U$ of the complex of sheaves, and hypersheafification.

When restricted to $K$-injective complexes, it turns out that the hypersheafification step is unnecessary. In other words, if $U\subset X$ has a hypercover $U_\bullet\to U$, and $C\in \mathrm{Ch}(\mathrm{Ab}(X))$ is $K$-injective, then the map
\[
C(U)\to \mathrm{lim} C(U^\bullet)
\]
in $D(\mathbb Z)$ is an isomorphism. This follows by noting that the hypercover induces a resolution of $j_! \mathbb Z$ (where $j: U\subset X$ is the open immersion), and taking the associated $R\mathrm{Hom}$ into $C$ (which vanishes by assumption that $C$ is $K$-injective).

This implies that for any $j: U\subset X$ and any $C\in D(\mathrm{Ab}(X))$, the map
\[
\mathrm{Hom}_{D(\mathrm{Ab}(X))}(j_! \mathbb Z,C)\to \mathrm{Hom}_{\mathrm{HypShv}(X,D(\mathbb Z))}(j_!\mathbb Z,\tilde{C})
\]
is an isomorphism (where $\tilde{C}\in \mathrm{HypShv}(X,D(\mathbb Z))$ is the associated hypersheaf). Indeed, we can take a $K$-injective representative for $C$, and use that morphisms in $D(\mathrm{Ab}(X))$ can be computed via $K$-injective resolutions, as well as the previous paragraph.

As the objects $j_! \mathbb Z$ generate $D(\mathrm{Ab}(X))$ as well as $\mathrm{HypShv}(X,D(\mathbb Z))$ under colimits (and shifts), this proves the desired equivalence. Moreover, for all $i$, the $i$-th homology functor on $D(\mathrm{Ab}(X))$ corresponds under this equivalence to $\pi_i$ on hypersheaves -- on $K$-injective representatives, this is even true on the level of presheaves of abelian groups, and thus after sheafification. In particular, the equivalence is $t$-exact.

The equivalence on left-completions is a corollary. For the final statement, see \cite[Theorem 7.2.3.6, Proposition 7.2.1.10, Corollary 7.2.1.12]{LurieHTT}.
\end{proof}

In \cite{EtCohDiamonds}, the author preferred left-completed versions. The reason are the very strong descent properties of $X\mapsto \widehat{\mathrm{Shv}}(X,D(\mathbb Z))$.

\begin{proposition}\label{prop:proetaledescentleftcomplete} Endow $C$ with the Grothendieck topology where a family of maps $f_i: X_i\to X$, $i\in I$, form a cover if for any compact subset $K\subset X$ there is some finite index set $J\subset I$ and compact subsets $K_j\subset X_j$ for $j\in J$ such that $K=\bigcup_j f_j(K_j)$.

Then $X\mapsto \widehat{\mathrm{Shv}}(X,D(\mathbb Z))$ defines a hypersheaf of $\infty$-categories on $C$, which agrees with the sheafification of $X\mapsto \mathrm{Shv}(X,D(\mathbb Z))$.
\end{proposition}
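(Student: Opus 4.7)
The plan is to reduce hyperdescent for the given topology to descent for two generating classes of covers—open covers, handled by the definition of $\mathrm{Shv}(-,D(\mathbb Z))$, and single surjective proper maps, handled via proper base change—and then to identify $\widehat{\mathrm{Shv}}(-,D(\mathbb Z))$ with the sheafification by the universal property.

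First I would verify that the topology is generated by open covers and single proper surjections. Open covers are covers in this topology: for an open cover $\{U_i\subset X\}$ and $K\subset X$ compact, a finite subcover $K\subset U_{i_1}\cup\cdots\cup U_{i_n}$ can, using normality of locally compact Hausdorff spaces, be shrunk to compacts $K_j\subset U_{i_j}$ with $K=\bigcup K_j$. A proper surjection $f\colon Y\to X$ is a cover since $f^{-1}(K)$ is compact and surjects onto $K$. Conversely, any cover $\{f_i\colon X_i\to X\}$ in the stated topology can, after passing to an open cover of $X$ by relatively compact opens $U\subset \bar U\subset X$, be refined over $\bar U$ by a single proper surjection $\bigsqcup_j K_j\to \bar U$ from a finite disjoint union of compact subsets of the $X_i$. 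Since $\widehat{\mathrm{Shv}}(-,D(\mathbb Z))$ is defined as a left-completed sheaf category, descent for open covers is automatic, so it remains to handle proper surjections.

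Next, for a proper surjection $f\colon Y\to X$ with \v{C}ech nerve $f_n\colon Y_n=Y^{n+1/X}\to X$, I would show that the unit
\[
\mathcal F\to \lim_{[n]\in\Delta} f_{n\ast}f_n^\ast \mathcal F
\]
is an equivalence in $\widehat{\mathrm{Shv}}(X,D(\mathbb Z))$, stable under arbitrary base change; full cosimplicial descent along the \v{C}ech nerve then follows by applying Barr--Beck--Lurie to the adjunction $f^\ast\dashv f_\ast$, where the Beck--Chevalley conditions are exactly proper base change. Using left-completeness of $\widehat{\mathrm{Shv}}$ and the Bousfield--Kan spectral sequence, stalks commute with the cosimplicial totalization in each fixed truncation degree. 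By proper base change, the stalk at $x\in X$ of $f_{n\ast}f_n^\ast\mathcal F$ is $R\Gamma(f^{-1}(x)^{n+1},\mathcal F_x)\simeq \mathcal F_x\otimes R\Gamma(f^{-1}(x)^{n+1},\mathbb Z)$. The fiber $f^{-1}(x)$ is a nonempty compact Hausdorff space, so a choice of basepoint gives the augmented cosimplicial object $\mathbb Z\to R\Gamma(f^{-1}(x)^{\bullet+1},\mathbb Z)$ an extra degeneracy; it is therefore split, with totalization $\mathbb Z$. Hence the stalk of $\lim_n f_{n\ast}f_n^\ast\mathcal F$ at $x$ is $\mathcal F_x$, giving the equivalence.

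For the sheafification claim, the natural transformation $\mathrm{Shv}(-,D(\mathbb Z))\to\widehat{\mathrm{Shv}}(-,D(\mathbb Z))$ inverts exactly the stalkwise equivalences, which are local for the open-cover topology and a fortiori for the given finer topology; since $\widehat{\mathrm{Shv}}(-,D(\mathbb Z))$ is a hypersheaf by the preceding, the universal property of sheafification identifies it with the sheafification of $\mathrm{Shv}(-,D(\mathbb Z))$. The main obstacle I anticipate is justifying the interchange of stalks with the cosimplicial totalization—the stalk functor is only a left adjoint and does not commute with arbitrary limits, and infinite cohomological dimension makes naive convergence of the Bousfield--Kan tower delicate—but the combination of left-completeness of $\widehat{\mathrm{Shv}}$ with the split augmentation provided by any basepoint in the fiber reduces the problematic computation to the tautology $R\Gamma(\ast,\mathbb Z)\simeq\mathbb Z$.
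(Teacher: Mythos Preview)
Your overall strategy—proper base change plus splitting over points—is exactly the mechanism the paper uses, but there are two genuine gaps.

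First, you reduce hyperdescent to \v{C}ech descent for a generating family of covers. This reduction is not valid: satisfying descent for generating covers only shows the presheaf is a sheaf, not a hypersheaf. The paper instead takes an arbitrary hypercover $f_\bullet: X_\bullet\to X$ and runs the argument directly for it. The mechanism you describe—base change to a point of $X$, where the (hyper)cover splits—works just as well for hypercovers as for \v{C}ech nerves, so the fix is simply to treat general hypercovers from the start rather than trying to reduce to generators.

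Second, and more substantively, the paper handles the interchange of stalks with the cosimplicial totalization by a reduction you gesture at but do not carry out. Rather than working in $\widehat{\mathrm{Shv}}$ and invoking Bousfield--Kan convergence, the paper observes that $\widehat{\mathrm{Shv}}(X,D(\mathbb Z))=\lim_n \mathrm{Shv}(X,D^{\geq n}(\mathbb Z))$ and proves hyperdescent for each $\mathrm{Shv}(-,D^{\geq 0}(\mathbb Z))$ separately. In $D^{\geq 0}$, each $f_{n\ast}f_n^\ast\mathcal F$ is connective, so the totalization is a finite limit in any bounded range of degrees; hence both the pullback $f_\bullet^\ast$ and its right adjoint $f_{\bullet\ast}$ commute with base change to points, and one checks the unit and counit there, where the hypercover is split. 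This is the clean replacement for your Bousfield--Kan step, and it also makes your K\"unneth-type formula $R\Gamma(f^{-1}(x)^{n+1},\mathcal F_x)\simeq \mathcal F_x\otimes R\Gamma(f^{-1}(x)^{n+1},\mathbb Z)$ unnecessary (this formula can fail for unbounded $\mathcal F_x$).

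Finally, your sheafification argument is off: the map $\mathrm{Shv}(X,D(\mathbb Z))\to \widehat{\mathrm{Shv}}(X,D(\mathbb Z))$ is left-completion, not hypercompletion, and does not in general ``invert exactly the stalkwise equivalences''. The paper's argument is different and direct: any $X$ admits a cover in the given topology by a disjoint union of profinite sets (and so do all terms of the resulting \v{C}ech nerve), and on such spaces Proposition~\ref{prop:comparederivedcategories} gives $\mathrm{Shv}=\widehat{\mathrm{Shv}}$; since $\widehat{\mathrm{Shv}}$ is already a (hyper)sheaf, the comparison map becomes an equivalence after sheafification.
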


\begin{proof} As $\widehat{\mathrm{Shv}} = \mathrm{lim}_n \mathrm{Shv}(X,D^{\geq n}(\mathbb Z))$, it suffices to prove that $\mathrm{Shv}(X,D^{\geq n}(\mathbb Z))$ defines a hypersheaf of $\infty$-categories for all $n$; and by shifting we can assume $n=0$. Let us write $D^{\geq 0}(X,\mathbb Z)=\mathrm{Shv}(X,D^{\geq 0}(\mathbb Z))$, recalling that in this case all possible definitions agree.

Let $f_\bullet: X_\bullet\to X$ be a hypercover of $X$. One reduces easily to the case that $X$ and all $X_n$ are compact. We get the pullback functor
\[
f_\bullet^\ast: D^{\geq 0}(X,\mathbb Z)\to \mathrm{lim} D^{\geq 0}(X_\bullet,\mathbb Z)
\]
which has a right adjoint $f_{\bullet\ast}$ given by a totalization of the pushforwards along $f_n: X_n\to X$. The formation of these functors commutes with any base change in $X$, by proper base change (Theorem~\ref{thm:properbasechangetopology} below) and noting that the totalization is a finite limit in any bounded range of degrees (using critically that all sheaves lie in $D^{\geq 0}$). Thus, to check that the unit and counit transformations are equivalences, we can base change to points of $X$, where the hypercover splits and the desired descent is automatic.

For the last part, it suffices to show that the functor $\mathrm{Shv}(X,D(\mathbb Z))\to \widehat{\mathrm{Shv}}(X,D(\mathbb Z))$ is an equivalence locally in the Grothendieck topology. But any $X$ admits a cover by a disjoint union of profinite sets (which have covering dimension $0$), where it is an equivalence by Proposition~\ref{prop:comparederivedcategories}. (Note that to get an equivalence of sheafifications, one should check more precisely that this also gives an equivalence on all terms of the corresponding Cech cover; but the terms there are all closed inside such disjoint unions of profinite sets, and thus themselves disjoint unions of profinite sets.)
\end{proof}

Now we state the theorem on the existence of a $6$-functor formalism. Note that taking a topos to its $\infty$-category of $D(\mathbb Z)$-valued sheaves defines a functor to symmetric monoidal $\infty$-categories, so we have
\[
C^{\mathrm{op}}\to \mathrm{CAlg}(\mathrm{Cat}_\infty): X\mapsto \mathrm{Shv}(X,D(\mathbb Z)).
\]
For the classes $I$ and $P$, we take the open immersions and the proper maps, respectively.

\begin{theorem}\label{thm:topological6functors} In this situation, the conditions of Theorem~\ref{thm:construct6functors} are satisfied, and one gets a resulting $6$-functor formalism
\[
X\mapsto D(X):=\mathrm{Shv}(X,D(\mathbb Z)).
\]
\end{theorem}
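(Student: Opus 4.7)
The plan is to verify the four hypotheses of Theorem~\ref{thm:construct6functors} for the functor $\mathcal D_0: C^{\mathrm{op}}\to \mathrm{CAlg}(\mathrm{Cat}_\infty)$, $X\mapsto \mathrm{Shv}(X,D(\mathbb Z))$, taking $I$ to be the class of open immersions and $P$ the class of proper maps. The symmetric monoidal $f^\ast$ for continuous $f$ is the standard pullback of $D(\mathbb Z)$-valued sheaves, and the tensor product is the pointwise tensor product on $D(\mathbb Z)$, suitably sheafified.

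First I would check condition (1). Open immersions and proper maps are both classically stable under pullback and composition and contain all isomorphisms. The diagonal of an open immersion is an isomorphism, while the diagonal of a proper map is a closed immersion (since $Y$ Hausdorff makes the diagonal closed, and closed immersions into proper maps are proper), so both classes are stable under diagonals. The intersection $I\cap P$ consists of clopen inclusions, which are monomorphisms, hence $(-1)$-truncated. For the factorization, given $f:X\to Y$ in $C$, set $\bar X := \beta X\times_{\beta Y} Y$, where $\beta f:\beta X\to\beta Y$ is the map induced on Stone--\v{C}ech compactifications. Then $X\hookrightarrow \bar X$ via $x\mapsto (x,f(x))$ is an open immersion (as $X\subset\beta X$ and $Y\subset\beta Y$ are open), while $\bar X\to Y$ is proper as a base change of the map $\beta X\to\beta Y$ of compact Hausdorff spaces.

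Next I would verify conditions (2) and (3). For $j\in I$, the extension by zero $j_!$ is left adjoint to $j^\ast$; the base change formula is immediate because $j_!A$ is characterized locally ($j_!A|_{j(U)}=A$ and $j_!A|_{Y\setminus j(U)}=0$), and the projection formula reduces, via the identification of $j_!A$ with its local description, to the tautological statement that tensor products restrict to open subsets. For $f\in P$, the functor $f^\ast$ admits a right adjoint $f_\ast$ as for any continuous map. The content is then proper base change and the projection formula: I would invoke Lurie's proper base change theorem for sheaves on locally compact Hausdorff spaces (\cite[Corollary 7.3.1.18]{LurieHTT}), which is formulated at the $\infty$-categorical level and applies to coefficients in any presentable $\infty$-category, in particular to $D(\mathbb Z)$. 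The projection formula follows in the usual way from the base change formula, using that $f^\ast$ is symmetric monoidal and that, for $A\in D(X,\mathbb Z)$ and $B\in D(Y,\mathbb Z)$, the transformation $f_\ast A\otimes B\to f_\ast(A\otimes f^\ast B)$ can be checked on stalks by proper base change.

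Finally, condition (4) asks that for a Cartesian square with $j\in I$ and $g\in P$, the exchange transformation $j_!g'_\ast\to g_\ast j'_!$ is an isomorphism. Since open immersions are monomorphisms, this is exactly the situation of Remark~\ref{rem:lastconditionvacuousexcision}, and hence is a formal consequence of (2) and (3). The main obstacle in the plan is proper base change in the unbounded $\infty$-categorical setting, since the classical statements are often phrased for abelian sheaves or bounded-below complexes; however, Lurie's formulation applies uniformly to sheaves valued in any presentable stable $\infty$-category, and all other axiom checks are essentially formal consequences of Hausdorffness, local compactness, and the definitions of $j_!$ and $f_\ast$.
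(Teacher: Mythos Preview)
Your proposal is correct and follows essentially the same route as the paper: verify the four hypotheses of Theorem~\ref{thm:construct6functors}, with (1), (2), and (4) being formal (the latter via Remark~\ref{rem:lastconditionvacuousexcision}) and (3) being the real content. The only substantive difference is that you outsource proper base change to Lurie's \cite{LurieHTT}, whereas the paper proves it directly as the subsequent Theorem~\ref{thm:properbasechangetopology} (first commutation with colimits via the description of $D(X)$ in terms of closed subsets, then the projection formula on generators $j_!\mathbb Z$, and only then base change); your compactification via $\beta X\times_{\beta Y} Y$ is exactly the one already mentioned in Lecture~IV and is equivalent to the paper's closure-of-the-graph construction.
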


\begin{remark} We work with $D(\mathbb Z)$-coefficients here, but everything works in a similar way with coefficients in the $\infty$-category of spectra (or other coefficients). See Volpe's thesis \cite{VolpeThesis} for a much more detailed and general discussion. Moreover, Kashiwara--Shapira have done a lot of work on topological $6$-functor formalisms, see for example \cite{KashiwaraShapiraSheavesOnManifolds}.
\end{remark}

\begin{proof} Condition (1) on the morphisms in $I$ and $P$ is easy to check. Note that if $f: X\to Y$ is any map in $C$, then by choosing a compactification $\overline{X}_0$ and and letting $\overline{X}\subset \overline{X}_0\times Y$ be the closure of the graph of $f$, one gets factorization of $f$ into an open immersion $j: X\to \overline{X}$ and a proper map $\overline{f}: \overline{X}\to Y$.

Condition (2) is essentially automatic. Roughly speaking, whenever $D(X)$ is some category of sheaves on $X$ with respect to a topology for which the morphisms $j\in I$ are open immersions, the pullback functors $j^\ast$ automatically acquire left adjoints $j_!$ having good properties.

Condition (3) is the hardest work and stated separately as Theorem~\ref{thm:properbasechangetopology} below. Condition (4) is then actually automatic by Remark~\ref{rem:lastconditionvacuousexcision}.
\end{proof}

\begin{theorem}[Proper Base Change]\label{thm:properbasechangetopology} Let $f: X\to Y$ be a proper map of locally compact Hausdorff spaces.
\begin{enumerate}
\item The functor $f_\ast: D(X)\to D(Y)$ commutes with all colimits.
\item For $A\in D(X)$ and $B\in D(Y)$, the projection formula map
\[
f_\ast A\otimes B\to f_\ast(A\otimes f^\ast B)
\]
is an isomorphism.
\item For any other map $g: Y'\to Y$ of locally compact Hausdorff spaces and pullback $f': X'=X\times_Y Y'\to Y'$, $g': X'\to X$, the base change map
\[
g^\ast f_\ast\to f'_\ast g^{\prime\ast}: D(X)\to D(Y')
\]
is an isomorphism.
\end{enumerate}
\end{theorem}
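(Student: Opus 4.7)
The plan is to reduce all three statements to a single stalk-wise identification. Namely, for any $y\in Y$ with fibre $X_y = f^{-1}(y)$ and inclusion $i_X: X_y\hookrightarrow X$, I would first establish that the natural map
\[
(f_\ast A)_y \longrightarrow R\Gamma(X_y, i_X^\ast A)
\]
is an isomorphism for all $A\in D(X)$. Given this, statement (3) is immediate by checking the base change map on stalks at points $y'\in Y'$: at such a point both sides compute the cohomology of the fibre $X_{g(y')} = X'_{y'}$ applied to the restriction of $A$, and the given base change map identifies the two expressions via the stalk formula. Statement (1) follows because colimits in $D(Y)$ are detected on stalks, so it suffices to see that $R\Gamma(K,-)$ preserves colimits on $D(K)$ for each compact Hausdorff fibre $K = X_y$; this is standard. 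Statement (2) similarly reduces on stalks at $y$ to the projection formula $R\Gamma(X_y, C)\otimes M \cong R\Gamma(X_y, C\otimes \underline{M})$ for $C\in D(X_y)$ and $M\in D(\mathbb Z)$ with constant sheaf $\underline{M}$, which is formal once $R\Gamma(X_y,-)$ is known to commute with colimits.

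The heart of the matter is therefore the stalk formula. By definition of the sheafification inherent in $f_\ast$, one has
\[
(f_\ast A)_y = \mathrm{colim}_{V\ni y} R\Gamma(f^{-1}(V), A),
\]
the filtered colimit indexed by open neighbourhoods $V$ of $y$. Properness of $f$ together with compactness of the fibre $X_y$ implies that as $V$ shrinks to $y$, the preimages $f^{-1}(V)$ form a neighbourhood basis of $X_y$ in $X$. Hence the stalk formula reduces to the following key lemma, which I would state and prove separately: for any locally compact Hausdorff $X$, any compact subset $K\subset X$, and any $A\in D(X)$,
\[
\mathrm{colim}_{U\supset K} R\Gamma(U, A) \longrightarrow R\Gamma(K, A|_K)
\]
is an isomorphism, where $U$ ranges over open neighbourhoods of $K$.

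The main obstacle is proving this compact-subset colimit formula. On each individual cohomology group $H^i$ it is a classical theorem, proved via a \v{C}ech-cochain argument that uses compactness of $K$ crucially: any cocycle on $K$ is witnessed by a finite open cover (by compactness), the cover extends to cover some open neighbourhood $U\supset K$, and two such extensions agree on a still smaller neighbourhood. Promoting this from the level of $H^i$ to the derived $\infty$-category requires a little care: I would first establish it for bounded-below complexes $A\in D^{\geq 0}(X)$ by combining the identification $D^{\geq 0}(\mathrm{Ab}(X))\cong \mathrm{Shv}(X,D^{\geq 0}(\mathbb Z))$ with the injective-resolution calculation of $R\Gamma$ and exactness of filtered colimits, and then pass to the full hypercomplete or left-completed $D(X)$ via the Postnikov-tower description of Proposition~\ref{prop:comparederivedcategories}. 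Once this lemma is in place, the logical structure above reduces statements (1)--(3) to the routine stalk-level bookkeeping indicated above.
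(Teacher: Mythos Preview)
Your key lemma --- that for compact $K\subset X$ and $A\in D(X)$ one has $\mathrm{colim}_{U\supset K} R\Gamma(U,A)\cong R\Gamma(K,A|_K)$ --- is exactly the content the paper isolates, and your overall strategy is close in spirit to the paper's. The genuine gap is in how you deduce (1)--(3) from it. All three of your reductions rest on the claim that isomorphisms in $D(Y)=\mathrm{Shv}(Y,D(\mathbb Z))$ can be checked on stalks, but for Lurie sheaves (which is what the theorem is about, see Theorem~\ref{thm:topological6functors}) this fails whenever $\mathrm{Shv}(Y)$ is not hypercomplete, e.g.\ for infinite-dimensional compact Hausdorff $Y$ such as a Hilbert cube. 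Your proposed passage from $D^{\geq 0}$ to all of $D$ ``via the Postnikov-tower description'' runs into the same wall: Lurie sheaves are in general not Postnikov-complete, so that route only proves the theorem for the hypercomplete or left-completed variants of $D(X)$, not for $\mathrm{Shv}(X,D(\mathbb Z))$ itself.

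The paper circumvents this by upgrading your lemma to a full alternative description of $\mathrm{Shv}(X,D(\mathbb Z))$ (for $X$ compact Hausdorff) as certain functors on \emph{all} closed subsets $K\subset X$, satisfying a Mayer--Vietoris condition and a cofiltered-intersection condition; see \cite[Corollary 7.3.4.10]{LurieHTT}. Evaluation at closed subsets is then tautologically conservative, and in this description $(f_\ast\mathcal F)(K)=\mathcal F(f^{-1}(K))$ while colimits visibly preserve the defining conditions, giving (1) directly with no stalk argument. For (2) the paper uses (1) to reduce to $B=j_!\mathbb Z$ and argues by approximating $j_!\mathbb Z$ by extensions from smaller opens. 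For (3) it first handles closed immersions $g$ (where the claim is built into the closed-subset description), then for general $g$ applies $g_\ast$ and uses the already-established projection formulas for $g$, $g'$, $f$ to reduce to the single isomorphism $f^\ast g_\ast\mathbb Z\cong g'_\ast\mathbb Z$; only here, for this bounded-below object, does the paper invoke stalks. So your instinct about the core lemma is right, but you need the closed-subset description as a conservative family of evaluations, not merely the stalk formula.
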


\begin{proof} Choosing compactifications, we can assume that $Y$ (and hence $X$) is compact Hausdorff. For part (1), it is convenient to give a different description of $D(X)=\mathrm{Shv}(X,D(\mathbb Z))$. Given $\mathcal F\in D(X)$, to any closed subset $K\subset X$ one can associate $\mathcal F(K)$ which are the global sections of the pullback to $K$; equivalently,
\[
\mathcal F(K) = \mathrm{colim}_{U\supset K} \mathcal F(U)
\]
is the colimit over all open neighborhoods $U$ of $K$. Then $D(X)$ is equivalent to the $\infty$-category of contravariant functors $\mathcal F$ taking closed subsets $K$ of $X$ to $D(\mathbb Z)$, with the properties that if $K=K_1\cup K_2$, then
\[
\mathcal F(K)\to \mathcal F(K_1)\times_{\mathcal F(K_1\cap K_2)} \mathcal F(K_2)
\]
is an isomorphism, and if $K=\bigcap_i K_i$ is a cofiltered intersection, then
\[
\mathcal F(K) = \mathrm{colim}_i \mathcal F(K_i).
\]
Indeed, one can recover $\mathcal F(U) = \mathrm{lim}_{K\subset U} \mathcal F(K)$, noting that relatively compact open subsets are cofinal with relatively compact closed subsets. See also \cite[Corollary 7.3.4.10]{LurieHTT}.

In this alternative description, the pushforward is given by $(f_\ast \mathcal F)(K)=\mathcal F(f^{-1}(K))$, noting that this does in fact satisfy all required conditions. Moreover, arbitrary colimits in $\mathcal F$ also respect the conditions, yielding (1).

Now for (2), as all functors commute with colimits, it suffices to check the result on generators, so we can assume $B=j_! \mathbb Z$ for an open immersion $j: Y'\to Y$. The result is clearly true over $Y'$ (as base change to open subsets always holds true), so we need to show that $f_\ast(A\otimes f^\ast j_! \mathbb Z)$ vanishes on the closed complement $Z\subset Y$ of $Y'$. But $j_! \mathbb Z$ can be written as a colimit of $j_{i!} \mathbb Z$ for open immersions $Y'_i\to Y$ such that $Z$ admits an open neighborhood $Z_i$ that does not meet $Y'_i$. This implies that $f_\ast(A\otimes f^\ast j_{i!}\mathbb Z)$ vanishes upon restriction to $Z_i$, and in particular to $Z$; and hence the colimit also vanishes on $Z$.

For (3), we can also assume that $Y'$ is compact. If $g$ is injective (i.e., a closed immersion), then the result is part of the description of pushforward given in (1). In general, to prove the result, it suffices to check that the sections over all compact subsets of $Y'$ agree. Replacing $Y'$ by such a compact subset, we see in particular that it suffices to prove the result after applying $g_\ast$. Now
\[
g_\ast g^\ast f_\ast\cong f_\ast\otimes g_\ast \mathbb Z
\]
by the projection formula (i.e., part (2)) for $g$; while
\[
g_\ast f'_\ast g^{\prime\ast}\cong f_\ast g'_\ast g^{\prime\ast}\cong f_\ast(-\otimes g'_\ast \mathbb Z)
\]
by the projection formula for $g'$. Finally, the projection formula for $f$ reduces us to showing
\[
g'_\ast \mathbb Z\cong f^\ast g_\ast \mathbb Z,
\]
i.e., up to switching the factors, the isomorphism of (3) restricted to the constant sheaf. But now $\mathbb Z\in D^{\geq 0}$, so to check this isomorphism, we can check on stalks. This means we can reduce to the case that $Y'$ is a point, which we have already handled.
\end{proof}

Finally, we discuss a few things regarding cohomological smoothness, and $f$-prim and $f$-suave objects. The following proposition was observed before:

\begin{proposition}\label{prop:manifoldcohomsmooth} Let $f: X\to Y$ be a topological manifold bundle. Then $f$ is cohomologically smooth.
\end{proposition}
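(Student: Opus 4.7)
The plan is to reduce to \ref{prop:realssmooth}, that $\mathbb{R}\to\ast$ is cohomologically smooth, by combining it with stability of cohomological smoothness under composition and base change, together with a local-on-the-source property. A manifold bundle $f\colon X\to Y$ of relative dimension $d$ admits an open cover $\{j_i\colon U_i\hookrightarrow X\}$ such that $f(U_i)$ is open in $Y$ and $U_i\cong f(U_i)\times B^d$ over $f(U_i)$, where $B^d$ is an open $d$-ball. Writing $B^d\cong \mathbb{R}^d$ as an iterated product $\mathbb{R}\times\cdots\times\mathbb{R}$, \ref{prop:realssmooth} together with pullback-stability and composition shows $B^d\to\ast$, and hence (by base change) $f(U_i)\times B^d\to f(U_i)$, is cohomologically smooth. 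Composing with the open immersion $f(U_i)\hookrightarrow Y$, which is cohomologically smooth by the remark after \ref{def:cohomsmooth} since it lies in the class $I$, one concludes that each restriction $g_i:=f\circ j_i\colon U_i\to Y$ is cohomologically smooth.

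The remaining, and only nontrivial, step is to show that cohomological smoothness is local on the source: if $\{j_i\colon U_i\hookrightarrow X\}$ is an open cover with each $g_i:=f\circ j_i$ cohomologically smooth, then $f$ is cohomologically smooth. Since open immersions are cohomologically \'etale (equivalently, $j_!$ is the left adjoint of $j^\ast$, so $j^!=j^\ast$), one has $g_i^!=j_i^\ast f^!$. Setting $L:=f^!(1_Y)\in D(X)$, each $j_i^\ast L=g_i^!(1_Y)$ is $\otimes$-invertible, and since $\otimes$-invertibility is local, $L$ is $\otimes$-invertible. The natural transformation $L\otimes f^\ast(-)\to f^!(-)$ pulls back along each $j_i$ to the analogous transformation for $g_i$, which is an isomorphism by condition (1) of \ref{def:cohomsmooth} for $g_i$; by joint conservativity of $\{j_i^\ast\}_i$, the transformation for $f$ is an isomorphism. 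For the base change compatibility (3), apply the same argument to any base change $f'\colon X'=X\times_Y Y'\to Y'$ using the pulled back cover $\{U_i\times_Y Y'\}_i$; the map $g'^\ast L\to f'^!(1_{Y'})$ pulls back to the corresponding base change isomorphism for $g_i$, which holds by cohomological smoothness of $g_i$, and is therefore an isomorphism.

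The main obstacle is this local-to-global step, and within it, the identification $j^!=j^\ast$ for an open immersion $j$ is the key technical input that lets one compute restrictions of $f^!$ as $!$-functors of the restrictions. Once this is in hand, all three conditions of \ref{def:cohomsmooth} are reduced to statements that can be verified after pullback to the cover, where they hold by the assumed cohomological smoothness of each $g_i$.
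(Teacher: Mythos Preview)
Your proof is correct and follows exactly the paper's approach: reduce to $\mathbb{R}\to\ast$ via base change, composition, and locality on the source. The paper's own proof is a single sentence that merely asserts ``cohomological smoothness can be checked locally on the source, and is stable under pullback and passage to open subsets''; you have supplied the details of the local-on-source step (via $j^!=j^\ast$ for open immersions and conservativity of the $j_i^\ast$), which the paper leaves implicit.
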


\begin{proof} Cohomological smoothness can be checked locally on the source, and is stable under pullback and passage to open subsets. This reduces the question to $\mathbb R\to \ast$, which we handled directly in Proposition~\ref{prop:realssmooth}.
\end{proof}

Conversely, one can ask the following question. Say $X$ is some compact Hausdorff space. When is $f: X\to \ast$ cohomologically smooth? Recall that cohomological smoothness is equivalent to $\mathbb Z$ being $f$-suave, with invertible dualizing sheaf. The first condition is actually satisfied in large generality.

\begin{proposition}\label{prop:ENRULA} Let $X$ be a locally compact Hausdorff space that is a euclidean neighborhood retract, i.e.~a retract of an open subset of $\mathbb R^n$ for some $n$. Then $\mathbb Z$ is $f$-suave for $f: X\to \ast$.
\end{proposition}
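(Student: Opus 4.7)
The strategy is to combine two results already available: cohomological smoothness of manifolds (Proposition~\ref{prop:manifoldcohomsmooth}) and the stability of $f$-suaveness under retracts (Proposition~\ref{prop:fsuaveretract}). By assumption there is an open subset $U \subset \mathbb{R}^n$ together with maps $i : X \to U$ and $r : U \to X$ such that $r \circ i = \mathrm{id}_X$. Write $f_U : U \to \ast$ and $f : X \to \ast$; both morphisms live over the final object $\ast$, so the setup of Proposition~\ref{prop:fsuaveretract} applies with ambient target $Y = \ast$.

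First I would observe that $U$, being open in $\mathbb{R}^n$, is a topological manifold and in particular a (trivial) manifold bundle over $\ast$. Hence by Proposition~\ref{prop:manifoldcohomsmooth}, the map $f_U$ is cohomologically smooth. In our setting, this means (cf.\ Definition~\ref{def:cohometale} and the discussion around Theorem~\ref{thm:critcohomsmooth}) that $\mathbb{Z}_U \in D(U)$ is $f_U$-suave (with dualizing object $\mathbb{Z}_U[n]$, which is in particular $\otimes$-invertible).

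Next I would apply the retract criterion. Take $A = \mathbb{Z}_U \in D(U)$ and $A' = \mathbb{Z}_X \in D(X)$. The canonical identifications $i^\ast \mathbb{Z}_U \cong \mathbb{Z}_X$ and $r^\ast \mathbb{Z}_X \cong \mathbb{Z}_U$ (constant sheaves pull back to constant sheaves, in a manner compatible with composition) supply the data $i^\ast A \to A'$ and $r^\ast A' \to A$ required by Proposition~\ref{prop:fsuaveretract}, and the composite
\[
A' = i^\ast r^\ast A' \xrightarrow{\sim} i^\ast A \xrightarrow{\sim} A'
\]
is literally the identity, since it arises from the functorial isomorphism for the composite pullback $(ri)^\ast = \mathrm{id}^\ast$. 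Proposition~\ref{prop:fsuaveretract} then yields that $\mathbb{Z}_X$ is $f$-suave, as claimed.

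There is essentially no obstacle in the argument: once one has recognized the proposition as a direct consequence of the retract stability of suaveness, the only things to verify are the compatibilities of the unit pullback maps, which are tautological for the constant sheaf. The only genuine input is Proposition~\ref{prop:manifoldcohomsmooth}, which was established earlier by reduction to the one-dimensional case $\mathbb{R} \to \ast$.
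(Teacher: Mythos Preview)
Your proof is correct and follows exactly the paper's approach: the paper's proof is the one-liner ``This is a consequence of the stability under retracts, see Proposition~\ref{prop:fsuaveretract},'' and you have simply spelled out the details of that application (together with the implicit input that $\mathbb{Z}_U$ is $f_U$-suave from Proposition~\ref{prop:manifoldcohomsmooth}).
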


\begin{proof} This is a consequence of the stability under retracts, see Proposition~\ref{prop:fsuaveretract}.
\end{proof}

In particular, if $X$ is a euclidean neighborhood retract, then $f: X\to \ast$ is cohomologically smooth if and only if the dualizing complex $f^! \mathbb Z$ is invertible. This can actually be tested pointwise. Note first that if $X$ is a euclidean neighborhood retract, then the homology of $X$ is given by $f_! f^! \mathbb Z$ (where $f: X\to \ast$ denotes the projection). Indeed, this is true for topological manifolds, and then passes to retracts. (Also note that the dual of this is given by cohomology $f_\ast \mathscr{H}\mathrm{om}(f^!\mathbb Z,f^!\mathbb Z)=f_\ast \mathbb Z$ (using Proposition~\ref{prop:suavesheafproperties}~(3)).) It follows that the stalk $(f^! \mathbb Z)_x$ at a point $x\in X$ is given by the relative homology of the pair $(X,X\setminus \{x\})$.

\begin{proposition}\label{prop:homologicalmanifold} Let $X$ be a euclidean neighborhood retract. Assume that there is some $d$ such that for all $x\in X$, the relative homology group $H_i(X,X\setminus \{x\};\mathbb Z)$ vanishes for $i\neq d$, and is given by $\mathbb Z$ for $i=d$. Then $f^!\mathbb Z$ is locally isomorphic to $\mathbb Z[d]$, and hence invertible.
\end{proposition}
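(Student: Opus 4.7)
The plan is to reduce the statement to the local constancy of a single cohomology sheaf, and then to exhibit local generators using the ENR structure.

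First, by Proposition~\ref{prop:ENRULA}, $\mathbb{Z}\in D(X)$ is $f$-suave, so $f^!\mathbb{Z}$ is well-defined and its formation commutes with restriction to open subsets (Proposition~\ref{prop:suavesheafproperties}(4)). Using the identification $(f^!\mathbb{Z})_x \cong H_*(X,X\setminus\{x\};\mathbb{Z})$ recalled before the proposition, together with the exactness of filtered colimits, the hypothesis forces $\mathcal{H}^i(f^!\mathbb{Z})_y = 0$ for every $y\in X$ and $i\neq -d$, hence $\mathcal{H}^i(f^!\mathbb{Z}) = 0$ for $i\neq -d$. Setting $\mathcal{L} := \mathcal{H}^{-d}(f^!\mathbb{Z})$, one obtains $f^!\mathbb{Z} \cong \mathcal{L}[d]$, where $\mathcal{L}$ is a sheaf of abelian groups with $\mathcal{L}_y \cong \mathbb{Z}$ for every $y\in X$. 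The proposition thus reduces to showing that $\mathcal{L}$ is locally constant, since any locally constant sheaf of stalk $\mathbb{Z}$ is locally isomorphic to the constant sheaf $\mathbb{Z}$, and then $f^!\mathbb{Z} = \mathcal{L}[d]$ is locally isomorphic to $\mathbb{Z}[d]$ and in particular $\otimes$-invertible.

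To produce local trivializations of $\mathcal{L}$ around a given point $x\in X$, I would exploit the ENR hypothesis. Write $X$ as a retract of an open subset $V\subset \mathbb{R}^n$, with closed embedding $i\colon X\hookrightarrow V$ and continuous retraction $r\colon V\to X$ satisfying $r\circ i = \mathrm{id}_X$. Since $V$ is a topological manifold, Proposition~\ref{prop:manifoldcohomsmooth} yields $f_V^!\mathbb{Z} \cong \mathbb{Z}[n]$, and the functoriality $(gh)^! = h^! g^!$ applied to $f_X = f_V\circ i$ gives $f_X^!\mathbb{Z} \cong i^!\mathbb{Z}_V[n]$. Thus $\mathcal{L}$ is, up to shift, a cohomology sheaf of the local cohomology complex $i^!\mathbb{Z}_V$ along the closed subspace $X\subset V$, fitting into the open--closed triangle $i_*i^!\mathbb{Z}_V \to \mathbb{Z}_V \to Rj_*\mathbb{Z}_{V\setminus X}$ with $j\colon V\setminus X\hookrightarrow V$. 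Given a generator $\alpha\in \mathcal{L}_x$, corresponding to a generator of $H_d(X,X\setminus\{x\};\mathbb{Z})$, I would lift $\alpha$ to a class in $H_d(X,X\setminus K;\mathbb{Z})$ for a sufficiently small compact neighborhood $K$ of $x$ in $X$ (so that the restriction to $H_d(X,X\setminus\{x\};\mathbb{Z})$ is an isomorphism). Such a class yields a section of $\mathcal{L}$ over the interior of $K$ restricting to a generator at every nearby stalk, completing the proof.

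The main obstacle is the last step: rigorously producing a local section of $\mathcal{L}$ that generates every nearby stalk. The content is the classical continuity of local orientations for homology manifolds, and it does not reduce to the manifold case by formal manipulation — the ENR hypothesis must be used in an essential way to guarantee that $H_d(X,X\setminus K;\mathbb{Z}) \to H_d(X,X\setminus\{x\};\mathbb{Z})$ is an isomorphism for $K$ small enough, which is what ensures a local trivialization exists. A clean way to carry this out is via the retraction $r\colon V\to X$: one pulls back a local manifold orientation on a small open $W\subset V$ with $W\cap X = K^\circ$, and uses the identification $f_X^!\mathbb{Z} \cong i^!\mathbb{Z}_V[n]$ together with the open--closed triangle to transport this to the desired section of $\mathcal{L}$.
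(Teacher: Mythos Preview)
Your reduction to showing that $\mathcal{L}=\mathcal{H}^{-d}(f^!\mathbb{Z})$ is locally constant, with all stalks $\mathbb{Z}$, is exactly how the paper begins. After that point, however, the paper does something quite different, and your proposal has a genuine gap precisely where you yourself flag the ``main obstacle.''

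The paper does \emph{not} try to produce a local generator by geometric means. Instead it exploits a consequence of $f$-suaveness that you did not use: Proposition~\ref{prop:suavesheafproperties}(3) (Verdier biduality) gives $\mathscr{H}\mathrm{om}(\mathcal{L},\mathcal{L})=\mathbb{Z}$, and in particular $\mathrm{End}(\mathcal{L}/p)=\mathbb{F}_p$ for every prime $p$. Now take \emph{any} local section $s\in\mathcal{L}(U)$ generating the stalk at $x$ (such a section exists trivially, since $\mathcal{L}_x\cong\mathbb{Z}$ and sections of a sheaf surject onto stalks); the content is to show that $s$ generates every stalk on the connected component of $U$ containing $x$. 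Working modulo $p$, the image of the induced map $\mathbb{F}_p\to\mathcal{L}/p$ is of the form $i_\ast\mathbb{F}_p$ for a closed subset $Z\subset U$, and the resulting short exact sequence splits, exhibiting $i_\ast\mathbb{F}_p$ as a direct summand of $\mathcal{L}/p$. Since $\mathrm{End}(\mathcal{L}/p)=\mathbb{F}_p$, the sheaf $\mathcal{L}/p$ is indecomposable, forcing $Z=U$ and $\mathcal{L}/p\cong\mathbb{F}_p$. This holds for every $p$, so $s$ is a generator everywhere on $U$.

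Your proposed route via the ENR embedding and the open--closed triangle for $i^!\mathbb{Z}_V$ amounts to reproving the classical continuity-of-local-orientations theorem for homology manifolds, and the sketch you give does not close the gap: the assertion that $H_d(X,X\setminus K;\mathbb{Z})\to H_d(X,X\setminus\{x\};\mathbb{Z})$ is an isomorphism for small $K$ is precisely the nontrivial statement, and ``transporting a manifold orientation along $r$'' does not obviously produce it, because $r$ need not send $V\setminus r^{-1}(K)$ into $X\setminus K$. The paper's argument sidesteps all of this: the hard geometric input (ENR) is used only once, to establish $f$-suaveness, and the rest is the purely sheaf-theoretic endomorphism argument above.
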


\begin{proof} The assumption ensures that $f^!\mathbb Z = \mathcal F[d]$ for some sheaf of abelian groups $\mathcal F$ all of whose stalks are isomorphic to $\mathbb Z$. Moreover, Proposition~\ref{prop:suavesheafproperties}~(3) ensures that (the derived) $\mathscr{H}\mathrm{om}(\mathcal F,\mathcal F)=\mathbb Z$. We will show that these properties force $\mathcal F$ to be locally isomorphic to $\mathbb Z$. Take any $x\in X$ and a local section $s\in \mathcal F(U)$ that is a generator at the stalk at some $x\in U\subset X$. We can assume that $U$ is connected (as $X$ is locally connected, as a euclidean neighborhood retract); our goal is to show that $s$ induces an isomorphism $\mathbb Z\to \mathcal F|_U$. We can replace $X$ by $U$. Moreover, it suffices to see that it induces an isomorphism modulo $p$ for any prime $p$. The image of the map $\mathbb F_p\to \mathcal F/p$ is given by some quotient of the constant sheaf $\mathbb F_p$, and hence is of the form $i_\ast \mathbb F_p$ for some closed immersion $i: Z\to X$. We get an exact sequence
\[
0\to i_\ast \mathbb F_p\to \mathcal F/p\to j_! j^\ast\mathcal F/p\to 0;
\]
note that indeed the quotient is of the form $j_! \mathcal F'$ for some sheaf $\mathcal F'$, which must be the restriction of $\mathcal F/p$ to $U$. But the projection map splits naturally, so $i_\ast \mathbb F_p$ is a summand of $\mathcal F/p$. But the endomorphisms of $\mathcal F/p$ are just $\mathbb F_p$, so in particular it is irreducible, and thus $i_\ast \mathbb F_p\to \mathcal F/p$ is an isomorphism, so that $Z=X$ and $\mathcal F/p=\mathbb F_p$.
\end{proof}

Such $X$ are classically known as (ENR) homology manifolds, and they have been intensely studied. Let me highlight some of their fascinating theory.
\begin{enumerate}
\item If $X$ is any (connected) homology manifold, one can define an invariant $I(X)\in 1+8\mathbb Z$ known as the Quinn index, \cite{QuinnIndex}. (The strange value group $1+8\mathbb Z$ is justified by the multiplicativity property $I(X\times Y)=I(X)I(Y)$.)
\item If $X$ is a topological manifold, then $I(X)=1$.
\item Conversely, if $I(X)=1$ and satisfies the ``disjoint disks property'' (any two maps $D^2\to X$ admit small perturbations that are disjoint), then $X$ is a topological manifold.
\item A theorem of Bryant--Ferry--Mio--Weinberger \cite{BryantFerryMioWeinberger} states that (in dimension $\geq 6$) there are homology manifolds with any given Quinn index.
\end{enumerate}

Unfortunately, the construction of homology manifolds with $I(X)\neq 1$ is extremely indirect, relying on a lot of (high-dimensional) surgery theory. It is an open problem to give explicit constructions of homology manifolds, and whether there are local ``model spaces'' like euclidean space.

A consequence of Proposition~\ref{prop:ENRULA} is that ``constructible'' sheaves are $f$-suave.

\begin{proposition}\label{prop:topologicalconstructibleULA} Let $X$ be a euclidean neighborhood retract, and let $A\in D(X)$ be such that there is a locally finite stratification of $X$ into locally closed subsets $X_i\subset X$ whose closures $\overline{X}_i\subset X$ are neighborhood retracts, and such that $A|_{X_i}$ is a constant sheaf on a perfect complex of abelian groups. Then $A$ is $f$-suave for $f: X\to \ast$.
\end{proposition}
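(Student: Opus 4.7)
The plan is to verify $f$-suaveness locally on $X$ and then induct on the number of strata, bootstrapping from the ENR case of Proposition~\ref{prop:ENRULA}.

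I would first establish that for any $f\colon X\to Y$, the class of $f$-suave objects of $D(X)$ forms a thick stable subcategory: closure under retracts is Proposition~\ref{prop:fsuaveretract} specialised to $X'=X$, while closure under shifts and fibres/cofibres is built into the stability of the Hom $\infty$-categories of $\mathrm{LZ}_{\mathcal D}$, in which a $2$-morphism between left adjoints automatically dualises, so cofibre diagrams of adjunctions yield adjunctions (concretely, applying the five-lemma to the condition in Proposition~\ref{prop:checksuavesheaf}). Since perfect complexes form the smallest thick subcategory of $D(\mathbb Z)$ containing $\mathbb Z$, and the constant-sheaf functor is exact and retract-preserving, this reduces the ENR base case to Proposition~\ref{prop:ENRULA}: if $X$ is an ENR, then $\underline{\mathbb Z}$ is $f$-suave, hence so is $\underline V$ for any perfect complex $V$.

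For the general stratified case I would use Proposition~\ref{prop:checksuavesheaf} to reduce $f$-suaveness to an isomorphism of sheaves on $X\times_Y X$, which is local on $X$. Covering $X$ by open subsets meeting only finitely many strata (possible by local finiteness) reduces to a finite stratification. In a finite stratification of a Hausdorff space, any two disjoint non-empty locally closed subsets have distinct closures (a locally closed subset is open in its closure, and two disjoint open subsets of a Hausdorff space cannot share a dense subset). Hence the stratum $X_{i_0}$ whose closure contains the fewest other strata must satisfy $\overline{X}_{i_0}=X_{i_0}$, i.e.\ is closed and equal to an ENR. Setting $Z=X_{i_0}$ and $U=X\setminus Z$, each remaining stratum $X_j$ ($j\neq i_0$) has $X_j\cap U=X_j$ (disjoint from $Z$), its closure in $U$ is open in the original ENR closure and hence still an ENR, and $A|_U$ satisfies the hypothesis with one fewer stratum. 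By induction $A|_U$ is $f_U$-suave, and by the base case $A|_Z=\underline V_{i_0}$ is $f_Z$-suave.

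The recollement triangle $j_!(A|_U)\to A\to i_\ast(A|_Z)$, with $j\colon U\hookrightarrow X$ open and $i\colon Z\hookrightarrow X$ closed, combined with closure under cofibres, reduces the proof to two preservation statements, which I would handle by expressing $j_!$ and $i_\ast$ as composition in $\mathrm{LZ}_{\mathcal D}$ against a kernel that is itself a left adjoint in the $2$-category. Since $j$ is cohomologically \'etale (as an open immersion), the kernel representing $j_!$ in $\mathrm{LZ}_{\mathcal D}$ is a left adjoint with right adjoint the kernel for $j^\ast$; since $i$ is cohomologically proper (a closed immersion in locally compact Hausdorff spaces), $i_\ast=i_!$ and the kernel representing $i_!$ is a left adjoint with right adjoint the kernel for $i^!$. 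Composition of left adjoints in a $2$-category remains a left adjoint, so $j_!(A|_U)$ and $i_\ast(A|_Z)$ are both $f$-suave, and the triangle gives that $A$ is $f$-suave.

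The main obstacle will be promoting the functor-level adjunctions $j_!\dashv j^\ast$ and $i_!\dashv i^!$ to adjunctions of kernels in $\mathrm{LZ}_{\mathcal D}$. Functorially these are immediate from the $6$-functor formalism, but to compose left-adjoint objects in the $2$-category one needs the units and counits to come from morphisms of kernels satisfying the triangle identities on the nose. The cleanest way to ensure this is via the presentable $(\infty,2)$-categorical framework of the Appendix to Lecture V, where cohomologically \'etale (resp.\ proper) morphisms are exactly those whose associated $1$-morphism admits a right (resp.\ left) adjoint at the $(\infty,2)$-level, and these adjunctions are preserved under composition by design.
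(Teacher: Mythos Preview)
There is a genuine gap in your handling of $j_!$. To show that $j_!(A|_U)\in D(X)$ is $f$-suave, you must exhibit it as a left adjoint $X\to\ast$ in $\mathrm{LZ}_{\mathcal D}$. Unwinding the convolution, one finds $j_!(A|_U)=[j^\ast]\star(A|_U)$, where $[j^\ast]\in D(X\times U)=\mathrm{Hom}_{\mathrm{LZ}}(X,U)$ is the graph kernel inducing $j^\ast$, and $(A|_U)\colon U\to\ast$ is a left adjoint by the inductive hypothesis. So the kernel that must be a left adjoint is $[j^\ast]\colon X\to U$, not $[j_!]\colon U\to X$. Cohomological \'etaleness of $j$ gives the adjunction $[j_!]\dashv[j^\ast]$, making $[j^\ast]$ a \emph{right} adjoint; for $[j^\ast]$ to be a \emph{left} adjoint one would need its right adjoint $j_\ast$ to be represented by a kernel, i.e.\ $j$ cohomologically proper, which open immersions are not. (The same confusion appears in your $i_\ast$ step: the relevant fact is not that $[i_!]$ is a left adjoint, but that $[i^\ast]\colon X\to Z$ is a left adjoint with right adjoint $[i_\ast]=[i_!]$, which does follow from $i$ being cohomologically proper. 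So your conclusion for $i_\ast$ is correct, but for the reason dual to the one you give.)

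The paper sidesteps this entirely by never invoking $j_!$. Using that the closure of each stratum is a union of strata, one sees that locally $A$ lies in the stable subcategory of $D(X)$ generated by $p_{i\ast}(\underline{V})$ for the \emph{closed} immersions $p_i\colon\overline{X}_i\hookrightarrow X$ and perfect complexes $V$. Each $\overline{X}_i$ is an ENR, so $\underline{V}$ is $f_i$-suave there by Proposition~\ref{prop:ENRULA}; and since $p_i$ is proper, $[p_i^\ast]\colon X\to\overline{X}_i$ is a left adjoint in $\mathrm{LZ}_{\mathcal D}$ (right adjoint $[p_{i\ast}]=[p_{i!}]$), so $p_{i\ast}$ preserves suaveness by composition of left adjoints. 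This, together with locality and stability under triangles, finishes the proof without any recollement induction.
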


We assume here that a closure of a stratum is a union of strata.

\begin{proof} The condition of being $f$-suave is local on $X$, and stable under triangles. Now $A$ is locally in the stable subcategory generated by the pushforwards of perfect complexes under the closed immersion $p_i: \overline{X}_i\to X$. (This uses that the closure of a stratum is a union of strata.) Now $\overline{X}_i$ is a euclidean neighborhood retract, hence the constant sheaf and therefore any perfect complex is $f_i$-suave for $f_i: \overline{X}_i\to \ast$; and proper pushforwards preserve this property.
\end{proof}

It seems hard to give a complete characterization of the $f$-suave objects. In the $f$-prim case, a complete characterization can be given.

\begin{proposition}\label{prop:topologicalpropersheaves} Let $X$ be a locally compact Hausdorff space and $A\in D(X)$, and denote $f: X\to \ast$. Then $A$ is $f$-prim if and only if $A$ is compact if and only if $A$ is locally isomorphic to the constant sheaf on a perfect complex and supported on a compact subset of $X$.
\end{proposition}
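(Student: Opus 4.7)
The plan is to prove $(1)\Rightarrow(2)$, then $(3)\Rightarrow(1)$, and finally the harder direction $(2)\Rightarrow(3)$.

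The implication $(1)\Rightarrow(2)$ is immediate from Proposition~\ref{prop:primsheafcompact}: with $Y=\ast$ the unit $1_\ast=\mathbb{Z}\in D(\mathbb{Z})$ is compact, so any $f$-prim object is compact. For $(3)\Rightarrow(1)$, suppose $A$ is locally isomorphic to $\underline{P}$ (for $P$ perfect) and has compact support $K\subset X$, and set $B:=\mathscr{H}\mathrm{om}(A,1_X)$. Since locally $B\cong \underline{P^\vee}$, the evaluation map $A\otimes B\to \mathscr{H}\mathrm{om}(A,A)$ is locally, and hence globally, an isomorphism. As $X$ is Hausdorff the diagonal $\Delta$ is a closed immersion, so $\Delta_!=\Delta_\ast$, and (as observed after Proposition~\ref{prop:primsheafproperties}) this gives $p_{2\ast}\mathscr{H}\mathrm{om}(p_1^\ast A,\Delta_!(1_X))\cong B$. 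Now Proposition~\ref{prop:checkprimsheaf} reduces $f$-primness to checking that $f_!(A\otimes B)\to f_\ast\mathscr{H}\mathrm{om}(A,A)$ is an isomorphism on global sections; but $A\otimes B\cong \mathscr{H}\mathrm{om}(A,A)$ and both sides are supported on the compact $K$, on which $f$ is proper so $f_!=f_\ast$, making the map tautological.

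For $(2)\Rightarrow(3)$, I first extract compact support. Writing $1_X=\mathrm{colim}_U j_{U!}\mathbb{Z}$ as a filtered colimit over opens $U\subset X$ with compact closure (these form a basis of the topology), the projection formula yields $A\cong\mathrm{colim}_U j_{U!}(A|_U)$. Compactness of $A$ forces $\mathrm{id}_A$ to factor through some $j_{U!}(A|_U)$, exhibiting $A$ as a retract of a sheaf supported on the compact set $\bar U$. I then replace $X$ by $\bar U$ --- which preserves compactness of $A$, since for the closed immersion $i: \bar U\to X$ the functor $i_\ast$ admits both adjoints $i^\ast$ and $i^!$ and is hence colimit-preserving, so $i^\ast$ preserves compact objects --- reducing to the case that $X$ is compact Hausdorff. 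Perfectness of the stalks $A_x=i_x^\ast A$ in $D(\{x\})=D(\mathbb{Z})$ then follows similarly, since $i_{x\ast}$ preserves filtered colimits (its only nonzero stalk is at the closed point $x$).

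It remains to show that a compact object $A$ on a compact Hausdorff space $X$ is locally isomorphic to a constant sheaf on a perfect complex; this is the main obstacle. My plan is first to show $A$ is dualizable in $D(X)$ with dual $A^\vee=\mathscr{H}\mathrm{om}(A,1_X)$, exploiting compactness of both $A$ and the unit $1_X$ (the latter because $R\Gamma(X,-)$ preserves filtered colimits for compact Hausdorff $X$, as used in the proof of Proposition~\ref{prop:proetaledescentleftcomplete}). Once dualizability is in hand, the coevaluation $1_X\to A\otimes A^\vee\cong\mathscr{H}\mathrm{om}(A,A)$ is a global section of $\mathscr{H}\mathrm{om}(A,A)$ whose stalk at each $x\in X$ is $\mathrm{id}_{A_x}$; together with a chosen identification $A_x\cong P$ with $P$ perfect, this produces a map $\underline{P}\to A|_U$ on a neighborhood $U$ of $x$ which is an isomorphism on the stalk at $x$. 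The delicate step --- and the place where the rigidity of the symmetric monoidal $\infty$-category $D(X)$ for compact Hausdorff $X$ enters essentially --- is arguing that this stalkwise isomorphism spreads to a genuine isomorphism of sheaves on some open neighborhood.
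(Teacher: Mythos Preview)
Your arguments for $(1)\Rightarrow(2)$ and $(3)\Rightarrow(1)$ are correct, and the latter is more explicit than the paper's (which simply asserts that such objects are ``easily seen to be $f$-prim''). Your extraction of compact support for $(2)\Rightarrow(3)$ via $1_X=\mathrm{colim}_U j_{U!}1_U$ is also fine.

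The genuine gap lies in the sentence ``first show $A$ is dualizable \ldots\ exploiting compactness of both $A$ and the unit $1_X$'': this is the heart of the matter and you have not supplied an argument. Compactness of $A$ and of $1_X$ do not by themselves force $A$ to be dualizable in an arbitrary presentably symmetric monoidal stable $\infty$-category; what is needed is precisely the \emph{rigidity} of $D(X)$ for compact Hausdorff $X$. The paper proves this concretely: the $\omega_1$-compact generators $B=j_{U!}\mathbb Z$ (for $U$ a countable union of opens $U_n$ with $\overline{U}_n\subset U_{n+1}$) are sequential colimits of $B_n=j_{U_n!}\mathbb Z$ along which the map
\[
\mathscr{H}\mathrm{om}(C,1_X)\otimes B\to \mathrm{colim}_n \mathscr{H}\mathrm{om}(C,B_n)
\]
is an isomorphism for every $C$; when $B$ is compact the system $(B_n)_n$ is pro-constant, and specializing to $C=B$ yields dualizability.

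You have also misplaced the difficulty in the second half. Once $A$ is dualizable, the spreading step is routine and does \emph{not} require rigidity: since $\mathscr{H}\mathrm{om}(\underline{P},A)\cong\underline{P^\vee}\otimes A$ and $\mathscr{H}\mathrm{om}(A,\underline{P})\cong A^\vee\otimes\underline{P}$, both sheaves have the expected stalks (pullback to a point is symmetric monoidal), and since stalks are filtered colimits of sections over neighbourhoods, both the chosen isomorphism $A_x\cong P$ and its inverse extend to maps on a neighbourhood, with the two composites equal to the identity on a yet smaller one. (The paper takes a different route here, reducing to profinite $X$ where $D(X)\simeq D(C(X,\mathbb Z))$ and dualizable objects are visibly locally constant, then descending; your spreading argument is a valid alternative once dualizability is established.) Incidentally, the coevaluation $1_X\to\mathscr{H}\mathrm{om}(A,A)$ plays no role in producing a map $\underline{P}\to A|_U$; what you actually need is just that $\mathscr{H}\mathrm{om}(\underline{P},A)$ has stalk $\mathrm{Hom}(P,A_x)$ at $x$.
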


Thus, this class of sheaves is very small, which also explains why that notion has not been much studied before (in contrast to $f$-suave objects). In fact, even the relative setting of a map $f: X\to Y$ can be analyzed completely, and $f$-prim objects are exactly those that are locally isomorphic to the constant sheaf on a perfect complex, and supported on a subset of $X$ that is compact over $Y$. Indeed, by base change compatibility, one can assume that $Y$ is compact. Then Proposition~\ref{prop:primsheafcompact} ensures that any $f$-prim $A$ must be compact, and Proposition~\ref{prop:topologicalpropersheaves} applies.

\begin{proof} By Proposition~\ref{prop:primsheafcompact}, if $A$ is $f$-prim, then $A$ is compact. The hard part is to show that $A$ being compact implies that $A$ is locally isomorphic to the constant sheaf on a perfect complex and supported on a compact subset of $X$. Indeed, such objects are easily seen to be $f$-prim.

Thus, let $A$ be compact. It is enough to show that $A$ is dualizable: Indeed, the dualizable objects are known to be locally isomorphism to the constant sheaf on a perfect complex, and the support must also be compact as $A$ is compact. (One way to show that any dualizable $A$ must be locally constant is to show this first on profinite sets, where sheaves are equivalent to modules over $C(X,\mathbb Z)$, and then descend.)

As the support of $A$ must be compact, we can assume that $X$ is compact. Then $D(X)$ is rigid dualizable as a presentable symmetric monoidal stable $\infty$-category, and in such categories dualizable and compact objects agree. Let us explain one direct way of arguing: One way to characterize ``rigid dualizable'' is that the unit is compact, it is $\omega_1$-compactly generated, and that any $\omega_1$-compact $B$ can be written as a sequential colimit of objects $B_n$ such that for all other objects $C$, the natural map
\[
\mathscr{H}\mathrm{om}(C,\mathbb Z)\otimes B\to \mathrm{colim}_n \mathscr{H}\mathrm{om}(C,B_n)
\]
is an isomorphism. This is in fact not hard to see in our case; the $\omega_1$-compact generators can be taken as $B=j_!\mathbb Z$ for open immersions $j: U\to X$ that are sequential unions of open subsets $j_n: U_n\to X$ along embeddings $\overline{U}_n\subset U_{n+1}$; then $B_n=j_{n!} \mathbb Z$ works. Applied to a compact $B$, the sequential colimit of $B_n$'s must necessarily be pro-constant, and then taking $C=B$ shows that $B$ is dualizable.
\end{proof}

\newpage

\section*{Appendix to Lecture VII: \'Etale Sheaves}

\'Etale sheaves on schemes behave in many ways quite similar to the case of abelian sheaves on locally compact Hausdorff spaces, at least with torsion coefficients (prime to the characteristic). Let us recall these results.

For the category $C$, we take qcqs schemes. Most naturally, one would take for $E$ the separated maps of finite type, but actually one can be a tiny bit more general, following Hamacher \cite{Hamacher}: we take for $E$ the class of separated morphisms $f: X\to Y$ of ``finite expansion'', i.e.~such that on open affines $\mathrm{Spec}(A)\subset X$, mapping to $\mathrm{Spec}(B)\subset Y$, there is a finite set of elements $X_1,\ldots,X_n\in A$ such that the map $B[X_1,\ldots,X_n]\to A$ is integral. (In particular, this includes separated maps of finite type, but it is slightly more general, and in particular it allows perfections of maps of finite type. It also includes things like pro-(finite \'etale) maps.) As $I$, we take the open immersions, while for $P$ we take the proper maps in $E$, i.e.~the ones that satisfy the valuative criterion of properness (equivalently, are universally closed).

The Nagata compactification theorem extends to this setting; this ensures that the condition on morphisms in Theorem~\ref{thm:construct6functors} is satisfied.

\begin{proposition}[{\cite[Proposition 1.8, Theorem 1.17]{Hamacher}}]\label{prop:nagatacompactification} Let $f: X\to Y$ be a morphism in $E$.
\begin{enumerate}
\item The morphism $f$ can be written as the composite of an integral map $X\to X'$ and a separated map of finite presentation $X'\to Y$.
\item If $f\in P$, then $f$ can be written as the composite of an integral map $X\to X'$ and a proper map of finite presentation $X'\to Y$.
\item The morphism $f$ can be written as the composite of an open immersion $X\hookrightarrow \overline{X}$ and a morphism $\overline{X}\to Y$ in $P$.
\end{enumerate}
\end{proposition}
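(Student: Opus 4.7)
The plan is to first prove (1), which is a global upgrade of the defining local property of $E$, and then derive (2) and (3) by combining (1) with classical results: for (2) a standard surjectivity-plus-universally-closed argument, and for (3) the classical Nagata compactification applied to the separated finite-presentation layer, together with an integral extension across the resulting open immersion.

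For (1), the strategy is a limit/approximation argument. Choose a finite affine cover of $X$ by opens $U_i = \mathrm{Spec}(A_i)$ lying over affines $V_i = \mathrm{Spec}(B_i)$ of $Y$. By the definition of $E$, each $A_i$ is integral over some polynomial subalgebra $B_i[T_{i,1},\ldots,T_{i,n_i}]$, and hence $A_i = \mathrm{colim}_\alpha A_{i,\alpha}$ is a filtered union of finite $B_i[T_{i,\cdot}]$-subalgebras $A_{i,\alpha}$, each of finite presentation over $B_i$. On overlaps $U_i \cap U_j$, the two resulting local families of finite-presentation subalgebras need not agree, but any two finite-type subalgebras of the restriction of $A_i$ to $U_i \cap U_j$ admit a common finite-type subalgebra containing both. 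Using this, together with the separatedness of $f$ (which approximates to a closed-immersion diagonal at some finite stage via standard EGA~IV.8-type techniques), one writes $X = \varprojlim_\alpha X_\alpha$ as a cofiltered limit of separated finite-presentation $Y$-schemes with affine (in fact finite) transition maps. Set $X' := X_\alpha$ for any sufficiently large $\alpha$; then $X \to X'$ is a cofiltered limit of finite morphisms, hence integral, yielding (1).

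For (2), apply (1) to a proper $f$, obtaining $X \xrightarrow{g} X' \xrightarrow{f'} Y$ with $g$ integral and $f'$ separated of finite presentation. Since $g$ is integral, it is universally closed, so $g(X) \subset X'$ is closed; replacing $X'$ by the scheme-theoretic image of $g$, we may assume $g$ is surjective (and still integral). Universal closedness of $f$ plus surjectivity of $g$ then force $f'$ to be universally closed, hence proper, completing (2). For (3), apply (1) to write $f = f' \circ g$ as above, and invoke classical Nagata compactification on $f'$ to obtain $j : X' \hookrightarrow \overline{X'}$ with $\overline{X'} \to Y$ proper. It remains to extend $g$ to an integral morphism $\overline{X} \to \overline{X'}$ whose restriction over $X'$ is $g$. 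Write $g_* \mathcal{O}_X = \mathrm{colim}_\lambda \mathcal{A}_\lambda$ as the filtered union of its coherent (equivalently, finite) $\mathcal{O}_{X'}$-subalgebras. Each $\mathcal{A}_\lambda$ admits a presentation $\mathcal{O}_{X'}[T_1,\ldots,T_{n_\lambda}]/I_\lambda$ in which the $T_i$ satisfy monic relations; extending the coefficients of these monic relations and a finite set of generators of $I_\lambda$ to coherent sections on $\overline{X'}$ (possible by standard extension of coherent sheaves across a qcqs open immersion into a proper scheme) yields a coherent $\mathcal{O}_{\overline{X'}}$-algebra $\overline{\mathcal{A}}_\lambda$ restricting to $\mathcal{A}_\lambda$ on $X'$. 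Setting $\overline{X} := \varprojlim_\lambda \mathrm{Spec}_{\overline{X'}}(\overline{\mathcal{A}}_\lambda)$ gives an integral morphism $\overline{X} \to \overline{X'}$, so $\overline{X} \to Y$ lies in $P$, and by construction $\overline{X} \times_{\overline{X'}} X' = X$, so $X \hookrightarrow \overline{X}$ is an open immersion.

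The main obstacle is the limit argument in (1): one must globalize the purely local integral-over-finite-type decomposition, and the delicate point is not so much finding compatible finite-type subalgebras on overlaps (easy by enlargement) but rather ensuring that separatedness of $f$ survives truncation to a finite stage $X_\alpha$, so that $X_\alpha \to Y$ is itself separated of finite presentation rather than merely becoming separated only in the limit. A secondary technical point is the compatibility of the coherent extensions $\overline{\mathcal{A}}_\lambda$ in (3) as $\lambda$ varies, which must be arranged so that transition maps at the $\overline{X'}$-level match those already present over $X'$; this is handled by extending a compatibly chosen system of generators and monic relations, and then passing to quotients to enforce the required identifications.
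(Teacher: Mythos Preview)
The paper does not give its own proof of this proposition; it simply cites Hamacher \cite[Proposition 1.8, Theorem 1.17]{Hamacher}. So there is no in-paper argument to compare against, and your outline should be judged on its own. Broadly your strategy is the right one and matches the structure of Hamacher's results: approximate to reduce to the finitely presented case, then invoke classical Nagata and extend the integral layer across the compactification.

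There is one genuine soft spot, in your argument for (2). You write ``replacing $X'$ by the scheme-theoretic image of $g$, we may assume $g$ is surjective''. Over a non-noetherian base this replacement can destroy finite presentation: a closed subscheme of a finitely presented $Y$-scheme is only of finite type, not of finite presentation, in general. Fortunately the fix is already implicit in your proof of (1). In your construction the local approximations $A_{i,\alpha}$ are \emph{sub}algebras of $A_i$ containing the image of $B_i[T_{i,\cdot}]$; hence $A_{i,\alpha}\hookrightarrow A_i$ is injective and $A_i$ is integral over $A_{i,\alpha}$. Thus $\mathrm{Spec}(A_i)\to \mathrm{Spec}(A_{i,\alpha})$ is integral and dominant, hence surjective (integral maps are closed). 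So the map $g\colon X\to X'$ produced by (1) is already surjective, and the scheme-theoretic image step is unnecessary. You should make this explicit rather than introduce a step that fails as stated.

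For (3), your extension of $\mathcal{A}_\lambda$ by lifting generators and monic relations works, but there is a cleaner device that sidesteps the compatibility bookkeeping you flag at the end: take $\overline{X}=\mathrm{Spec}_{\overline{X'}}\!\bigl(\text{integral closure of }\mathcal{O}_{\overline{X'}}\text{ in }j_\ast g_\ast\mathcal{O}_X\bigr)$, where $j\colon X'\hookrightarrow\overline{X'}$. This is integral over $\overline{X'}$ by construction, and since integral closure commutes with restriction to opens, its pullback to $X'$ is the integral closure of $\mathcal{O}_{X'}$ in $g_\ast\mathcal{O}_X$, which is $g_\ast\mathcal{O}_X$ itself because $g$ is integral; hence $\overline{X}\times_{\overline{X'}} X' = X$. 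This avoids having to choose and extend compatible finite layers.
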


Following the discussion for topological spaces, one can again consider several slightly different versions of the derived category of \'etale sheaves; effectively,
\begin{enumerate}
\item sheaves $\mathrm{Shv}(X_{\mathrm{\acute{e}t}},D(\mathbb Z))$ in the sense of Lurie;
\item hypersheaves $\mathrm{HypShv}(X_{\mathrm{\acute{e}t}},D(\mathbb Z))$; this agrees with the derived category of the abelian category of \'etale sheaves of $\Lambda$-modules;
\item the left-completion $\widehat{\mathrm{Shv}}(X_{\mathrm{\acute{e}t}},D(\mathbb Z))$, which agrees with the left-completion of hypersheaves.
\end{enumerate}

Under mild assumptions, all three notions agree by the following theorem of Clausen--Mathew.

\begin{theorem}[{\cite[Corollary 1.10, Corollary 4.40]{ClausenMathew}}]\label{thm:etalesheaveshypercomplete} Let $X$ be a qcqs scheme of finite Krull dimension that has a uniform bound on the virtual cohomological dimension of its residue fields; e.g., $X$ is of finite type over $\mathbb Z$ or an algebraically closed field. Then
\[
\mathrm{Shv}(X_{\mathrm{\acute{e}t}},D(\mathbb Z)) = \mathrm{HypShv}(X_{\mathrm{\acute{e}t}},D(\mathbb Z)) = \widehat{\mathrm{Shv}}(X_{\mathrm{\acute{e}t}},D(\mathbb Z)).
\]
\end{theorem}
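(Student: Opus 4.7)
The plan is to reduce both equivalences to a single input: a uniform bound on the étale cohomological dimension of every member of a basis of $X_{\mathrm{\acute{e}t}}$. Once such a bound $N$ is in hand, standard $\infty$-topos theory along the lines of \cite[\S 7.2]{LurieHTT} finishes things: for any sheaf $\mathcal F\in \mathrm{Shv}(X_{\mathrm{\acute{e}t}},D(\mathbb Z))$ and any basis element $U$, the cofiber $\tau_{<-n}\mathcal F$ has vanishing cohomology over $U$ for $n>N$, so $R\Gamma(U,\mathcal F)\to \mathrm{lim}_n R\Gamma(U,\tau_{\geq -n}\mathcal F)$ is an equivalence. This gives left-completeness on sections, hence $\mathrm{HypShv}=\widehat{\mathrm{Shv}}$; applying the same vanishing to an $\infty$-connective $\mathcal F$ yields hypercompleteness, i.e.~$\mathrm{Shv}=\mathrm{HypShv}$.

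First, I would take as basis the affine schemes $U=\mathrm{Spec}(A)$ étale over $X$. Étale morphisms preserve Krull dimension, and residue fields of $U$ are finite separable extensions of residue fields of $X$, so they have the same virtual cohomological dimension. Hence the hypotheses on $X$ (finite Krull dimension, uniform bound on vcd of residue fields) are inherited by every $U$ in this basis with the same numerical constants $n=\dim X$ and $d=\sup_x \mathrm{vcd}(k(x))$.

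Second, I would bound $\mathrm{cd}_{\mathrm{\acute{e}t}}(U)$ by some $N=N(n,d)$. For torsion sheaves of order prime to $2$, this is the classical Artin--Grothendieck bound $\mathrm{cd}(U)\leq n+\mathrm{cd}(k(x))$ from \cite[Exp.~X]{LurieHTT}-style arguments in SGA 4. For $2$-torsion one must use the virtual cohomological dimension input: by Scheiderer's comparison with real étale cohomology one still gets a uniform bound depending only on $n$ and $d$, which is precisely the content of the inputs used in \cite{ClausenMathew}. To promote a bound for torsion coefficients to $D(\mathbb Z)$-valued sheaves, I would filter by the Postnikov tower of $\mathbb Z$ and use that each $\pi_i\mathcal F$ is an abelian sheaf which is a filtered colimit of its finitely generated subsheaves, then dévissage via the exact sequences $0\to \mathbb Z\xrightarrow{n}\mathbb Z\to \mathbb Z/n\to 0$ plus the uniform torsion bound.

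The main obstacle is the $2$-torsion step: naïve bounds in terms of cohomological dimension of residue fields fail for a formally real field like $\mathbb R$, and one genuinely needs the virtual cohomological dimension hypothesis together with Scheiderer's real-étale machinery to obtain a uniform bound. Once this is granted, the remainder is a formal application of Postnikov convergence in an $\infty$-topos of uniformly bounded cohomological dimension, and does not require any further delicate input.
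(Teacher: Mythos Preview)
The paper does not supply its own proof of this theorem; it simply cites Clausen--Mathew. So there is no in-paper argument to compare against, and your proposal must be judged on its own merits.

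Your overall architecture is correct: a uniform bound $N$ on \'etale cohomological dimension over a basis of affines, together with standard Postnikov-tower arguments, yields both $\mathrm{Shv}=\mathrm{HypShv}$ and $\mathrm{HypShv}=\widehat{\mathrm{Shv}}$. The inheritance of the hypotheses by affine \'etale neighbourhoods is fine, and invoking the Artin--Grothendieck bound together with Scheiderer's real-\'etale input for the $2$-torsion is the right call.

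The gap is in your d\'evissage from torsion to arbitrary coefficients. The sequence $0\to\mathbb Z\xrightarrow{n}\mathbb Z\to\mathbb Z/n\to 0$ only tells you that $H^i(U,\mathbb Z)$ is uniquely $n$-divisible once $i$ exceeds the torsion bound; it does not force vanishing. Concretely, for $U=\mathrm{Spec}(k)$ with $G_k\cong\hat{\mathbb Z}$ (torsion $\mathrm{cd}=1$) one has $H^2(U,\mathbb Z)\cong\mathbb Q/\mathbb Z\neq 0$. Likewise, writing an abelian sheaf as a filtered colimit of ``finitely generated subsheaves'' does not reduce you to constant sheaves $\mathbb Z$ and $\mathbb Z/n$: finitely generated here means as a sheaf, not stalkwise. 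A correct d\'evissage uses instead that on a coherent site $H^i(U,-)$ is torsion in positive degrees (via corestriction through finite \'etale covers, as for profinite groups), so for a torsion-free sheaf $M$ the sequence $0\to M\to M\otimes\mathbb Q\to M\otimes\mathbb Q/\mathbb Z\to 0$ and the vanishing of rational cohomology in positive degrees give $H^i(U,M)\cong H^{i-1}(U,M\otimes\mathbb Q/\mathbb Z)$ for $i\geq 2$, hence a bound one larger than the torsion bound. With this correction your outline goes through, and is essentially the mechanism behind the cited result (which organises the bound via the Nisnevich/\'etale factorisation, but to the same effect).
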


Most sources in the literature either work with $D^+$, i.e.~sheaves bounded to the left (as in the original SGA setting), or with left completions (for example this is what we did in our own work \cite{EtCohDiamonds}). This has the advantage of guaranteeing extremely strong descent theorems, most generally v-descent or even descent for universal submersions, see \cite[Theorem 5.7]{HansenScholze}. On the other hand, considering all sheaves has the advantage that one can reduce to schemes of finite type:

\begin{proposition}\label{prop:sheavesfilteredcolimits} The contravariant functor $X\mapsto \mathrm{Shv}(X_{\mathrm{\acute{e}t}},D(\mathbb Z))$ from qcqs schemes to presentable stable $\infty$-categories takes cofiltered limits $X=\mathrm{lim}_i X_i$ along affine transition maps to filtered colimits. Equivalently, cf.~Lemma~\ref{lem:rightadjointcolimitPrL}~(1), the functor of $\infty$-categories
\[
\mathrm{Shv}(X_{\mathrm{\acute{e}t}},D(\mathbb Z))\to \mathrm{lim}_i \mathrm{Shv}(X_{i,\mathrm{\acute{e}t}},D(\mathbb Z)),
\]
induced by pushforward along the maps $X\to X_i$, is an equivalence.
\end{proposition}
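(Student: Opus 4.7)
By the lemma on colimits in $\mathrm{Pr}^L$ referenced in the statement, it suffices to prove that the functor
\[
\Phi: \mathrm{Shv}(X_{\mathrm{\acute{e}t}},D(\mathbb Z))\to \lim_i \mathrm{Shv}(X_{i,\mathrm{\acute{e}t}},D(\mathbb Z)),
\]
whose $i$-th component is pushforward along $f_i:X\to X_i$, is an equivalence. I will work with the small-site model: let $X_{\mathrm{\acute{e}t}}^{\mathrm{qcqs}}$ denote the full subcategory of qcqs étale $X$-schemes, equipped with the Grothendieck topology generated by finite jointly surjective families, and similarly $X_{i,\mathrm{\acute{e}t}}^{\mathrm{qcqs}}$ for each $X_i$. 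Because every étale cover of a qcqs scheme admits a refinement by a finite jointly surjective family of qcqs étale maps, this small site computes the same $\mathrm{Shv}(-,D(\mathbb Z))$ as the full étale site, with the Lurie (not hypercomplete) sheaf condition.

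\textbf{Geometric input.} The classical limit theorems for schemes of finite presentation (EGA IV${}_3$, \S8, or SGA 4 VII, or \cite[Tag 01ZC]{StacksProject}, as appropriate) provide the description
\[
X_{\mathrm{\acute{e}t}}^{\mathrm{qcqs}}\simeq \mathrm{colim}_i\, X_{i,\mathrm{\acute{e}t}}^{\mathrm{qcqs}}
\]
as a $2$-filtered colimit of categories: any qcqs étale $U\to X$ descends to some qcqs étale $U_i\to X_i$ (uniquely after enlarging $i$), morphisms descend in the same fashion, and finite jointly surjective covers on $X$ are pulled back from finite jointly surjective covers at some finite level. Fibre products, and thus Čech nerves of finite covers, commute with the transition functors, so this filtered colimit is compatible with the topologies.

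\textbf{Formal step.} I would then invoke the general principle that $\mathrm{Shv}(-,D(\mathbb Z))$ converts a filtered colimit of small sites whose topologies are generated by finite covers into the corresponding limit of sheaf $\infty$-categories along restriction (pushforward). For presheaves this is tautological, as $\mathcal C^{\mathrm{op}}=\mathrm{colim}_i\,\mathcal C_i^{\mathrm{op}}$ gives $\mathrm{PSh}(\mathcal C,D(\mathbb Z))=\lim_i \mathrm{PSh}(\mathcal C_i,D(\mathbb Z))$. The Lurie sheaf condition on $\mathcal C$ is the requirement that evaluation on the Čech nerve of any cover is a limit diagram in $D(\mathbb Z)$; since every such cover and its Čech nerve are base-changes of ones defined at a finite level, and all transition functors preserve these finite limit diagrams, a presheaf on $\mathcal C$ is a sheaf iff its restriction to each $\mathcal C_i$ is. Hence $\mathrm{Shv}(\mathcal C,D(\mathbb Z))\to \lim_i\mathrm{Shv}(\mathcal C_i,D(\mathbb Z))$ is an equivalence; combined with the geometric step, this gives the claim.

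\textbf{Main obstacle.} The subtle point is that $\mathrm{Shv}(X_{\mathrm{\acute{e}t}},D(\mathbb Z))$ here denotes the unbounded Lurie-style sheaves, not hypersheaves or $D^+$; for the latter, the analogous compatibility with filtered colimits of sites is standard (SGA 4 VII, \S5). What makes the unbounded statement go through without any cohomological-dimension hypothesis is precisely the reduction to finite covers in the first paragraph: the sheaf condition then amounts to descent along a set of \emph{finite} limit diagrams, and these interact cleanly with the filtered colimit of sites. Without this reduction one would have to argue about general hypercovers, at which point hypercompleteness and cohomological dimension enter, as in Theorem~\ref{thm:etalesheaveshypercomplete}.
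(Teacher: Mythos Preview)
Your argument is correct and is essentially the same as the paper's, just unpacked in more detail. The paper's one-sentence proof invokes the presentation of $\mathrm{Shv}(X_{\mathrm{\acute{e}t}},D(\mathbb Z))$ by generators (the free sheaves $\mathbb Z[U]$ for qcqs \'etale $U\to X$) and relations (\v{C}ech descent for finite \'etale covers), and then notes that both ``satisfy noetherian approximation''; you make exactly the same two points---objects of the small site descend, and finite covers together with their \v{C}ech nerves descend---but phrase the conclusion on the limit side (presheaves on the colimit of sites, with a finitary sheaf condition) rather than on the colimit-in-$\mathrm{Pr}^L$ side. Your explicit remark that the argument works for unbounded Lurie sheaves precisely because the sheaf condition involves only finite limits is the content of the paper's reference to ``\v{C}ech descent''.
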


\begin{proof} This follows easily from the presentation of $\mathrm{Shv}(X_{\mathrm{\acute{e}t}},D(\mathbb Z))$: It is generated by the free abelian sheaves on quasicompact separated \'etale maps to $X$, and for any \'etale cover between such, one gets a corresponding relation given by Cech descent. The generators and relations both satisfy noetherian approximation.
\end{proof}

In the case of schemes, some finiteness conditions on the morphisms have to be enforced in any case (as there is no scheme-theoretic compactification of infinite-dimensional affine space), and it turns out that proper base change holds for any choice. But as usual in the \'etale setting, we have to restrict to torsion sheaves, or at least profinitely complete sheaves. Thus, we take coefficients in the full subcategory
\[
\widehat{D}_{\mathrm{pf}}(\mathbb Z)\subset D(\mathbb Z)
\]
of profinitely complete complexes, i.e.~all $A$ such that $A\to \mathrm{lim}_n A/^L n$ is an isomorphism. This is also a Verdier quotient of $D(\mathbb Z)$ by $D(\mathbb Q)$, and acquires a symmetric monoidal structure. It is also equivalent to the full subcategory $D_{\mathrm{tor}}(\mathbb Z)\subset D(\mathbb Z)$ of torsion complexes.

\begin{theorem}\label{thm:etale6functors} Any of the functors
\[
X\mapsto \mathrm{Shv}(X_{\mathrm{\acute{e}t}},\widehat{D}_{\mathrm{pf}}(\mathbb Z)), X\mapsto \mathrm{HypShv}(X_{\mathrm{\acute{e}t}},\widehat{D}_{\mathrm{pf}}(\mathbb Z)), X\mapsto \widehat{\mathrm{Shv}}(X_{\mathrm{\acute{e}t}},\widehat{D}_{\mathrm{pf}}(\mathbb Z))
\]
satisfies the hypotheses of Theorem~\ref{thm:construct6functors} with respect to the classes $I$ and $P$, and hence defines a $6$-functor formalism. Moreover, the natural functors
\[
\mathrm{Shv}(X_{\mathrm{\acute{e}t}},\widehat{D}_{\mathrm{pf}}(\mathbb Z))\to \mathrm{HypShv}(X_{\mathrm{\acute{e}t}},\widehat{D}_{\mathrm{pf}}(\mathbb Z))\to \widehat{\mathrm{Shv}}(X_{\mathrm{\acute{e}t}},\widehat{D}_{\mathrm{pf}}(\mathbb Z))
\]
commute with the respective $\otimes$, $f^\ast$ and $f_!$ functors.
\end{theorem}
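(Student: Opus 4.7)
The plan is to verify, for each of the three candidates, the four hypotheses of Theorem~\ref{thm:construct6functors} with respect to the classes $I$ of open immersions and $P$ of proper morphisms in $E$, and then to check that the comparison functors respect $\otimes$, $f^\ast$ and $f_!$. Hypothesis (1) is essentially Proposition~\ref{prop:nagatacompactification}, noting also that $I\cap P$ consists of clopen immersions and hence of $(-1)$-truncated maps. Hypothesis (2) for an open immersion $j$ is formal in any sheaf category: $j^\ast$ is restriction to the sub-\'etale-site of $j$ and admits a left adjoint $j_!$ (``extension by zero'') which satisfies the projection formula and base change by a pointwise computation on the site. Hypothesis (4) is automatic from Remark~\ref{rem:lastconditionvacuousexcision} because open immersions are monomorphisms.

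The heart of the matter is hypothesis (3): proper base change and the projection formula for a proper morphism $f: X\to Y$. For the sheaf variant $\mathrm{Shv}(-_{\mathrm{\acute{e}t}},\widehat{D}_{\mathrm{pf}}(\mathbb Z))$, I would reduce via Proposition~\ref{prop:sheavesfilteredcolimits} and EGA~IV-style Noetherian approximation to the case where $X$ and $Y$ are of finite type over $\mathbb Z$. On such schemes Theorem~\ref{thm:etalesheaveshypercomplete} identifies all three candidate $\infty$-categories with the hypercomplete derived $\infty$-category of \'etale $\widehat{\mathbb Z}$-sheaves. Here proper base change and the projection formula for $\widehat{D}_{\mathrm{pf}}(\mathbb Z)$-coefficients follow from the classical SGA~4 statements for $\mathbb Z/n\mathbb Z$-torsion coefficients by a derived inverse-limit argument; the limit is well-behaved because Deligne's finiteness of \'etale cohomology of proper morphisms with torsion coefficients gives uniform cohomological dimension bounds, which in turn imply that proper pushforward commutes with the relevant sequential limits. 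Once established for the sheaf variant, the hypersheaf and left-completion variants inherit the axioms, since they agree with $\mathrm{Shv}$ on finite type $\mathbb Z$-schemes and both versions of condition (3) concern comparison maps defined by formal adjunction.

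To transfer the $6$-functor formalism between the three variants, the comparison maps $\mathrm{Shv}\to \mathrm{HypShv}\to \widehat{\mathrm{Shv}}$ are symmetric monoidal colimit-preserving localizations (respectively, Postnikov completion), so they automatically commute with $\otimes$, with $f^\ast$, and with the left adjoints $j_!$ for $j\in I$. For $f_!=f_\ast$ along a proper morphism they commute by reduction to the noetherian finite-dimensional case via Proposition~\ref{prop:sheavesfilteredcolimits}, using that proper pushforward commutes with the filtered colimits hidden inside that reduction. Alternatively, the full compatibility can be packaged in one step via Theorem~\ref{thm:dauserkuijper}: the three formalisms share a common underlying $(\mathcal D_0,\otimes,f^\ast)$, with all morphisms in $I$ (resp.~$P$) cohomologically \'etale (resp.~cohomologically proper) by construction, so the uniqueness clause of that theorem forces the comparison maps to be maps of $3$-functor formalisms. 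The main obstacle is the careful handling of proper base change for profinitely complete coefficients in the non-noetherian, unbounded setting; that reduction rests squarely on the classical constructibility and finite cohomological dimension theorems for proper morphisms, which are the deepest input of the whole proof.
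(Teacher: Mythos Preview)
Your overall structure is correct and matches the paper: hypotheses (1), (2), and (4) are handled exactly as you say, and for the $\mathrm{Shv}$-variant your reduction of hypothesis (3) via Proposition~\ref{prop:sheavesfilteredcolimits} and noetherian approximation to finite-type $\mathbb Z$-schemes (where Theorem~\ref{thm:etalesheaveshypercomplete} applies and classical proper base change kicks in) is precisely the paper's argument.

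The gap is in your treatment of the $\mathrm{HypShv}$ and $\widehat{\mathrm{Shv}}$ variants. You write that these ``inherit the axioms, since they agree with $\mathrm{Shv}$ on finite type $\mathbb Z$-schemes''; but the noetherian approximation step (Proposition~\ref{prop:sheavesfilteredcolimits}) is specific to $\mathrm{Shv}$ and does not hold for hypersheaves or the left-completion. So you cannot reduce those variants to the finite-type case this way, and hence they do not simply inherit condition (3). The paper instead gives separate arguments: for $\mathrm{HypShv}$, one checks the base change and projection formula maps after pullback to geometric points, where sheaves and hypersheaves coincide and the already-established $\mathrm{Shv}$ case applies; for $\widehat{\mathrm{Shv}}$, one uses that proper $f_\ast$ has bounded cohomological dimension (by twice the fibre dimension, not Deligne finiteness) and therefore commutes with Postnikov limits, reducing to the bounded case. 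The same issue recurs in your compatibility argument: the claim that the comparison functors commute with proper $f_\ast$ is not formal (a left-adjoint localization need not commute with a right adjoint), and your appeal to Theorem~\ref{thm:dauserkuijper} presupposes exactly this commutation; the paper obtains it as a byproduct of the variant-specific proofs of proper base change just described.
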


\begin{proof} We already discussed the conditions (1) on the classes of morphisms $I$ and $P$. For condition (2) regarding open immersions $j$, one has indeed the left adjoint $j_!$ of extension by zero, which satisfies base change and projection formula formally. Condition (3) is Theorem~\ref{thm:properbasechangeetale} below. Finally, for (4), we note that if $j: U\to X$ is an open immersion with closed complement $i: Z\to X$, then one gets an excision triangle
\[
j_! j^\ast A\to A\to i_\ast i^\ast A
\]
for any $A\in D(X)$ (for any choice of $D(X)$). Indeed, the projection formula and commutation with colimits reduce to the image of $\mathbb Z$ in profinitely complete sheaves (which is also the colimit of $1/n\mathbb Z/\mathbb Z[-1]$), where all objects are bounded to the left, and the exactness can be checked after pullback to geometric points, where it is evident. In particular Remark~\ref{rem:lastconditionvacuousexcision} applies to establish (4).

The functors between the different theories are by definition compatible with $\otimes$ and $f^\ast$, and it is easy to see that they commute with $j_!$ for open immersions, while the proof of Theorem~\ref{thm:properbasechangeetale} shows that proper pushforwards also commute with these functors.
\end{proof}

Again, the key input is the proper base change theorem.

\begin{theorem}[Proper Base Change]\label{thm:properbasechangeetale} Let $f: X\to Y$ be a universally closed separated map of finite expansion. Let $D(X)$ denote any of three options in Theorem~\ref{thm:etale6functors}.
\begin{enumerate}
\item The functor $f_\ast: D(X)\to D(Y)$ commutes with all colimits.
\item For $A\in D(X)$ and $B\in D(Y)$, the projection formula map
\[
f_\ast A\otimes B\to f_\ast(A\otimes f^\ast B)
\]
is an isomorphism.
\item For any other map $g: Y'\to Y$ of schemes and pullback $f': X'=X\times_Y Y'\to Y'$, $g': X'\to X$, the base change map
\[
g^\ast f_\ast\to f'_\ast g^{\prime\ast}: D(X)\to D(Y')
\]
is an isomorphism.
\end{enumerate}
\end{theorem}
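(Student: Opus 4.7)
The plan is to reduce the statement to the classical proper base change theorem of SGA~4 through a sequence of standard manipulations. The first reduction is to coefficients in $\mathbb Z/n$ for each $n\geq 1$: an object of $\widehat{D}_{\mathrm{pf}}(\mathbb Z)$ is determined by its reductions modulo $n$, and all three statements (1)--(3) as well as their formation can be tested after such a reduction. Working modulo $n$ has the additional benefit that, on the proper side, $f_\ast$ will have finite cohomological dimension, so the bounded-below subcategory controls everything and the three variants of $D(X)$ appearing in Theorem~\ref{thm:etale6functors} are interchangeable for our purposes. In particular, it suffices to prove (1)--(3) after composing with the forgetful functor to bounded-below complexes of $n$-torsion \'etale sheaves, where we are in the classical setting.

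Next I would factor $f\in P$ using Proposition~\ref{prop:nagatacompactification}(2) as an integral map $f_1: X\to X'$ followed by a proper map of finite presentation $f_2: X'\to Y$. Since $P$ is closed under composition and each of (1)--(3) is stable under composition of morphisms in $P$, I can treat $f_1$ and $f_2$ separately. For the integral $f_1$, write $X=\lim_j X_j$ as a cofiltered affine limit of finite $X'$-schemes; Proposition~\ref{prop:sheavesfilteredcolimits} then identifies $D(X)$ with the filtered colimit of the $D(X_j)$, so that $f_{1\ast}$ becomes the filtered colimit of the pushforwards along the finite $f_j: X_j\to X'$. For such finite morphisms, (1)--(3) are essentially immediate on stalks: \'etale-locally on the target, a finite morphism is a finite disjoint union of strictly henselian local targets, and the three assertions reduce to properties of finite coproducts in the derived category of the source. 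For $f_2$ proper of finite presentation, I would apply noetherian approximation, writing $Y=\lim_i Y_i$ as a cofiltered limit of noetherian affine schemes of finite type over $\mathbb Z$ and descending $f_2$ to some proper $f_{2,i}: X'_i\to Y_i$ over a sufficiently large $Y_i$. Another application of Proposition~\ref{prop:sheavesfilteredcolimits} on both $Y$ and $X'$ reduces (3) for $f_2$ to (3) for the $f_{2,i}$, which is exactly the classical proper base change theorem of SGA~4.

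The remaining items (1) and (2) for $f_2$ come along cheaply: (1) uses that finite $n$-torsion cohomological dimension of proper morphisms makes $f_{2\ast}$ commute with arbitrary direct sums on bounded-below sheaves, hence on all of $D$; while (2) can be checked on stalks of $Y$, which via (3) reduces to the case that $Y$ is strictly henselian local and $B$ is the shifted constant sheaf on a torsion abelian group, for which the statement is trivial. The principal obstacle, in my view, is the coherent treatment of the three variants of $D$ throughout these reductions --- especially verifying that the $\infty$-categorical $f_\ast$ produced by Mann's formalism agrees, on bounded-below torsion sheaves, with the classical derived functor on which SGA~4 operates, so that the classical theorem can be imported verbatim. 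This is controlled by the fact (Theorem~\ref{thm:etalesheaveshypercomplete} and the argument behind its noetherian analogue) that for torsion coefficients the three models of $D$ coincide on bounded-below objects, with compatible pushforwards; the passage to unbounded sheaves is then formal via truncation and the colimit-commutation established in (1).
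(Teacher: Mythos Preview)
Your approach is correct and tracks the paper's proof closely: both factor $f$ via Proposition~\ref{prop:nagatacompactification}(2) into an integral map followed by a finitely presented proper map, handle the integral part by writing it as a limit of finite morphisms (the paper uses Lemma~\ref{lem:rightadjointcolimitPrL}(1), you use Proposition~\ref{prop:sheavesfilteredcolimits}), and reduce the finitely presented proper part by noetherian approximation to a scheme of finite type over $\mathbb Z$, where Theorem~\ref{thm:etalesheaveshypercomplete} collapses the three variants and the classical SGA~4 theorem applies.

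The one organizational difference worth noting: you front-load a reduction to $\mathbb Z/n$-coefficients and bounded-below objects, arguing that all three models of $D$ agree there; the paper instead proves the full statement for Lurie sheaves first and then, in two short separate paragraphs, deduces the hypersheaf case (by checking after pullback to geometric points, where sheaves and hypersheaves coincide) and the left-completed case (by using finite cohomological dimension of $f_\ast$ to commute with Postnikov limits). Your route is slightly cleaner provided you are careful that the reduction to mod~$n$ is legitimate before knowing~(1)---which it is, since $f_\ast$ is exact on stable categories and hence commutes with the cofiber defining $-/n$, and since isomorphisms in $\widehat{D}_{\mathrm{pf}}$ can be tested after $-/n$. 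The paper's route has the mild advantage of making explicit exactly which step handles each variant, which matters for the compatibility claim at the end of Theorem~\ref{thm:etale6functors}.
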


\begin{proof} We first handle the case that $D(X)=\mathrm{Shv}(X_{\mathrm{\acute{e}t}},\widehat{D}_{\mathrm{pf}}(\mathbb Z))$. We can write $f$ as the composite of an integral map and a finitely presented proper map, and it suffices to handle both cases separately. By Lemma~\ref{lem:rightadjointcolimitPrL}~(1), one reduces to the case that $f$ is finitely presented proper. Then $f$ arises via base change from a similar map where $Y$ is of finite type over $\mathbb Z$, and Lemma~\ref{lem:rightadjointcolimitPrL}~(2) reduces us to the case that $Y$ is of finite type. Then all possible notions of $D(X)$ agree by Theorem~\ref{thm:etalesheaveshypercomplete}. Moreover, $f_\ast$ has finite cohomological dimension (bounded by twice the dimension of the geometric fibres of $f$), and one can reduce to $D^+(X)$ by Postnikov limits, or then to sheaves of abelian groups. Part (1) is then a general consequence of the \'etale sites of $X$ and $Y$ being coherent. Part (2) can be checked on geometric fibres, where it reduces to part (3) and part (1) (noting that $D(Y)$ is generated by the unit when $Y$ is a geometric point). Part (3) is then the usual proper base change theorem in \'etale cohomology.

In the case of hypercomplete sheaves, it suffices to prove all isomorphisms after pullbacks to geometric points. After base change to geometric points, sheaves and hypersheaves agree, and the assertions reduce to the case of sheaves. For example, for part (1), take any collection $A_i\in \mathrm{Shv}(X,D(\mathbb Z))$, $i\in I$, of sheaves, and assume that all $A_i$ are already hypersheaves. Let $\bigoplus_i A_i$ denote the direct sum as sheaves, and $\hat{\bigoplus}_i A_i$ its hypercompletion. We want to know whether the map
\[
\hat{\bigoplus}_i f_\ast A_i\to f_\ast(\hat{\bigoplus}_i A_i)
\]
is an isomorphism, where we note that $f_\ast$ preserves hypersheaves. To check this, take the base change to any geometric point $g: \overline{y}\to Y$. Note that base change taken in hypersheaves commutes with direct sums taken in hypersheaves. Also, pullback commutes with pushforwards as taken in sheaves (by the case of sheaves already established), but sheaves and hypersheaves agree after pullback. In the end, this reduces the displayed isomorphism to the case where $Y$ is a geometric point, where it was already established.

In the case of left-completed sheaves, one uses that $f_\ast$ has bounded cohomological dimension to show that it commutes with Postnikov limits, and then any question can be reduced to the case of bounded sheaves by passing to suitable truncations.
\end{proof}

\begin{lemma}\label{lem:rightadjointcolimitPrL} Let $I$ be an $\infty$-category.
\begin{enumerate}
\item Let $i\mapsto D_i$ be a covariant functor to presentable $\infty$-categories, so that all transition functors $f_{ij}: D_i\to D_j$ have right adjoints $g_{ij}: D_j\to D_i$. Then the colimit $D = \mathrm{colim}_i D_i$ in presentable $\infty$-categories is given by the limit $\mathrm{lim}_i D_i$ in $\infty$-categories, where the limit is taken along those right adjoint functors.

If $I$ is filtered and all $g_{ij}$ commute with filtered colimits, then the induced endofunctors $D_i\to D\to D_i$ are given by the colimit over $j\geq i$ of the composite functors $D_i\xrightarrow{f_{ij}} D_j\xrightarrow{g_{ij}} D_i$.
\item Let $i\mapsto (f_i: C_i\to D_i)$ be a covariant functor to the $\infty$-category of maps of presentable $\infty$-categories. Assume that $f_i$ admits a colimit-preserving right adjoint $g_i: D_i\to C_i$ that commutes with the transition functor $C_i\to C_j$, $D_i\to D_j$. Then the functor $f: C=\mathrm{colim}_i C_i\to \mathrm{colim}_i D_i$ given by the colimit of the $f_i$ admits the right adjoint $g: C=\mathrm{colim}_i C_i\to \mathrm{colim}_i D_i$ given as the colimit of the $g_i$.
\end{enumerate}
\end{lemma}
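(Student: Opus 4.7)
For part (1), the plan is to invoke the standard result that passing to right adjoints gives an equivalence $\mathrm{Pr}^L \simeq (\mathrm{Pr}^R)^{\mathrm{op}}$, and that the forgetful functor $\mathrm{Pr}^R \hookrightarrow \mathrm{Cat}_\infty$ preserves all small limits (Lurie, \emph{Higher Algebra}, 4.8.1.17 / \emph{HTT} 5.5.3). Applied to the diagram $i \mapsto D_i$ in $\mathrm{Pr}^L$ and its pointwise right adjoints $g_{ij}$, this identifies $\mathrm{colim}_i D_i$ (in $\mathrm{Pr}^L$) with $\lim_{I^{\mathrm{op}}} D_i$ computed in $\mathrm{Cat}_\infty$ along the $g_{ij}$, as claimed.

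For the composite formula in the filtered case, let $\pi_i \colon D \to D_i$ denote the projection, which admits a left adjoint $L_i$ by presentability; the claim is that $\pi_i L_i \simeq \mathrm{colim}_{j \geq i} g_{ij} f_{ij}$. Given $x_i \in D_i$ and a compatible system $y = (y_k)_k \in D$, the adjunction identity $\mathrm{Map}_D(L_i x_i, y) \simeq \mathrm{Map}_{D_i}(x_i, y_i)$ together with the compatibility isomorphism $y_i \simeq g_{ij}(y_j)$ and the adjunction $(f_{ij}, g_{ij})$ yields, for each $j \geq i$, natural maps
\[
\mathrm{Map}_{D_i}(g_{ij} f_{ij}(x_i), y_i) \simeq \mathrm{Map}_{D_j}(f_{ij}(x_i), y_j) \simeq \mathrm{Map}_{D_i}(x_i, y_i).
\]
These assemble over the filtered slice $I_{i/}$ and give a map $\mathrm{colim}_{j \geq i} g_{ij} f_{ij}(x_i) \to \pi_i L_i(x_i)$. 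To see this is an equivalence, one evaluates both sides against a general compatible system; the key point is that $I_{i/}$ is filtered, so $y_i \simeq \mathrm{colim}_{j \geq i} g_{ij}(y_j)$ (since the system is compatible and filtered colimits are preserved by the $g_{ij}$ by hypothesis). The main technical obstacle here is keeping track of the coherences in $\lim_{I^{\mathrm{op}}} D_i$; this is where filteredness and the preservation of filtered colimits by the $g_{ij}$ are genuinely needed.

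For part (2), the hypothesis that each $g_i$ is colimit-preserving and commutes (via a Beck--Chevalley equivalence) with the transition functors means exactly that the $g_i$ assemble into a morphism of diagrams $(D_i)_i \to (C_i)_i$ inside $\mathrm{Pr}^L$. Taking colimits in $\mathrm{Pr}^L$ produces the claimed functor $g \colon D \to C$, while the $f_i$ similarly produce $f \colon C \to D$. The units $\eta_i \colon \mathrm{id}_{C_i} \to g_i f_i$ and counits $\epsilon_i \colon f_i g_i \to \mathrm{id}_{D_i}$ of the pointwise adjunctions are natural in $i$ (a direct consequence of the commuting squares), so they induce a unit $\eta \colon \mathrm{id}_C \to g f$ and counit $\epsilon \colon f g \to \mathrm{id}_D$ upon passing to colimits. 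The two triangle identities on $(f, g, \eta, \epsilon)$ follow from the corresponding pointwise identities, since they are equalities of morphisms of colimit diagrams that can be checked level by level before taking the colimit.
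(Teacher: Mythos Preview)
Your treatment of the first assertion in (1) and of (2) is essentially the paper's: cite the $\mathrm{Pr}^L\simeq(\mathrm{Pr}^R)^{\mathrm{op}}$ equivalence for the former, and pass unit/counit of the levelwise adjunctions to the colimit for the latter. That is fine.

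The argument for the second assertion in (1) does not work as written. First, the displayed chain
\[
\mathrm{Map}_{D_i}(g_{ij} f_{ij}(x_i), y_i)\;\simeq\;\mathrm{Map}_{D_j}(f_{ij}(x_i), y_j)
\]
is not an equivalence: the adjunction $(f_{ij},g_{ij})$ gives $\mathrm{Map}_{D_i}(g_{ij}f_{ij}(x_i),y_i)\simeq\mathrm{Map}_{D_j}(f_{ij}g_{ij}f_{ij}(x_i),y_j)$, which differs from the right-hand side unless $g_{ij}$ is fully faithful. Second, even if you produce the comparison map by other means, ``evaluating both sides against a general compatible system $y\in D$'' cannot detect an isomorphism in $D_i$: you would need to test against arbitrary objects of $D_i$, and $\pi_i$ is not essentially surjective in general. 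Third, the identity $y_i\simeq\mathrm{colim}_{j\geq i} g_{ij}(y_j)$ you invoke is vacuous for a compatible system (each term equals $y_i$), so it does not use the hypothesis that the $g_{ij}$ preserve filtered colimits, which is the genuine input.

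The paper takes a more direct route: rather than computing $\pi_i L_i$ after the fact, it writes down the entire object $L_i(X_i)\in D=\lim_j D_j$ explicitly, with $j$-th component $\mathrm{colim}_{j'\geq i,j}\,g_{j'j}f_{ij'}(X_i)$, checks that this is indeed a compatible system (this is where preservation of filtered colimits by the $g$'s enters), and verifies the adjunction by exhibiting unit and counit. Reading off the $i$-th component then gives the stated formula immediately.
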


\begin{proof} The first assertion of (1) is the general description of colimits of presentable $\infty$-categories, see \cite[Theorem 5.5.3.18, Corollary 5.5.3.4]{LurieHTT}. The other assertion is also standard, but we do not know the correct reference, so we sketch the argument. Namely, one shows that the left adjoint to $D=\mathrm{lim}_j D_j\to D_i$ is the functor $D_i\to D=\mathrm{lim}_j D_j$ taking $X_i$ to $\mathrm{colim}_{j'\geq i,j} g_{j'j}(f_{ij}(X_i))$, noting that this indeed lands in $\mathrm{lim}_j D_j$, and that one can easily write down the unit and counit of the adjunction.

Part (2) again follows by noting that one can write down the unit and counit of the adjunction between $f$ and $g$, as the colimit of the ones for $f_i$ and $g_i$.
\end{proof}

Let us also prove Poincar\'e duality in this setting (modulo identifying the dualizing complex).

\begin{proposition}\label{prop:etalecohometaleDet} In any of the $6$-functor formalisms from Theorem~\ref{thm:etale6functors}, all \'etale morphisms of schemes are cohomologically \'etale.
\end{proposition}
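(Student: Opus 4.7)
The plan is to verify the criterion of Proposition~\ref{prop:checkcohometale}, reducing étale-locally to the case of open immersions, which is immediate from the construction.

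First I would note that open immersions $j: U \hookrightarrow Y$ are directly cohomologically étale. Indeed, by the construction of the formalism in Theorem~\ref{thm:etale6functors}, $j \in I$ has $j_!$ as its left adjoint to $j^*$, so $j^! = j^*$ and in particular $j^!(1_Y) = 1_U$; combined with the fact that $\Delta_j$ is an isomorphism (as $j$ is a monomorphism), Definition~\ref{def:cohometale} is satisfied.

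For a general étale morphism $f: X \to Y$, the diagonal $\Delta_f: X \to X \times_Y X$ is an open immersion, since étale morphisms are unramified. In particular $f$ is $0$-truncated, and $\Delta_f$ is cohomologically étale by the previous paragraph. Proposition~\ref{prop:checkcohometale} then reduces the problem to showing that the natural transformation $f^! \to f^*$ gives an isomorphism $f^!(1_Y) \xrightarrow{\sim} 1_X$ (on global sections, or better, as sheaves, which can be tested on stalks).

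Next I would verify the isomorphism stalk-wise at a geometric point $\bar{y}$ of $Y$. The natural transformation $f^! \to f^*$ is compatible with étale base change on $Y$ (a consequence of the formalism together with the cohomological étaleness of $\Delta_f$), and with restriction along Zariski open immersions $i: V \hookrightarrow X$ on the source, since for such $i$ one has $i^! = i^*$ and hence $(f \circ i)^! = i^! f^! = i^* f^!$. Using Proposition~\ref{prop:sheavesfilteredcolimits} to commute the formalism with the cofiltered limit of affine transitions defining strict henselization, we may replace $Y$ by $\mathrm{Spec}(\mathcal{O}_{Y,\bar{y}}^{\mathrm{sh}})$. Over a strictly henselian local base, the standard local structure theorem for étale morphisms of schemes guarantees that $f$ is, Zariski-locally on $X$, an open immersion into $Y$. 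Combined with the Zariski-local nature of the assertion on $X$, the desired isomorphism then follows from the case of open immersions treated at the outset.

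The main obstacle lies in the verification of the base change compatibilities for $f^!$ that are used in two places: étale base change on $Y$ (to pass to the strict henselization) and Zariski localization on $X$ (to invoke the local structure theorem). Both are part of the formalism but require a careful unpacking, especially in the non-hypercomplete variant of $D(X)$, where one must ensure the relevant statements survive the passage to stalks; for this one can always pass first to the hypercomplete variant (where all three formalisms agree under mild hypotheses by Theorem~\ref{thm:etalesheaveshypercomplete}), verify the statement there, and then note that the functors $\otimes$, $f^\ast$, $f_!$ are preserved by the comparison functors of Theorem~\ref{thm:etale6functors}, forcing their right adjoints to match on objects in the image of $1_X$.
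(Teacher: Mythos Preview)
Your approach is genuinely different from the paper's, and it contains a gap that is not easy to close.

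The paper argues by induction on the maximal cardinality of a geometric fibre of $f$. After reducing to surjective $f$, one base-changes $f$ along itself: the diagonal of $f$ is an open and closed immersion (since $f$ is separated \'etale), so $X\times_Y X$ splits as $X\sqcup X'$ and the resulting map $X\times_Y X\to X$ is the disjoint union of the identity and an \'etale map $X'\to X$ with strictly smaller geometric fibres. By induction the latter is cohomologically \'etale, hence so is $X\times_Y X\to X$; and since the categories $D(-)$ satisfy \'etale descent, the adjunction data witnessing cohomological \'etaleness descends back to $f$.

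Your argument instead passes to stalks and invokes the local structure of \'etale maps over a strictly henselian base. The problem is the step where you claim that the natural transformation $f^!\to f^\ast$ is ``compatible with \'etale base change on $Y$''. What you need is that the Beck--Chevalley map $g'^\ast f^!\to f'^!g^\ast$ is an isomorphism for \'etale $g:Y'\to Y$; but this is precisely the content of $f$ being cohomologically \'etale (or at least $f$-suave), which is what you are trying to prove. Proposition~\ref{prop:sheavesfilteredcolimits} gives continuity of $D(-)$ and of $f_!$, but to pass the right adjoint $f^!$ through the filtered colimit via Lemma~\ref{lem:rightadjointcolimitPrL}(2) you would again need $f^!$ to commute with the \'etale transition pullbacks, which is circular. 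The paper's induction sidesteps this because it checks the isomorphism $f^!(1_Y)\to 1_X$ after $p_1^\ast$ for $p_1:X\times_Y X\to X$, and by the inductive hypothesis $p_1$ is already known to be cohomologically \'etale, so $p_1^\ast=p_1^!$ is available.

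A smaller imprecision: over a strictly henselian base $Y_{(\bar y)}$, an \'etale $X\to Y_{(\bar y)}$ is Zariski-locally an open immersion only near points of $X$ lying over the closed point; points over non-closed points can map to nontrivial finite separable extensions of residue fields. This is harmless once you phrase the argument as checking stalks at geometric points $\bar x$ of $X$ (not of $Y$), since $\bar x$ then lies over the closed point of the strict henselization at $f(\bar x)$; but as written the quantification over $\bar y\in Y$ obscures this.
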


\begin{proof} By construction, open immersions are cohomologically \'etale. If $f: X\to Y$ is \'etale, then its diagonal is an open immersion, so $\Delta_f$ is cohomologically \'etale. As all categories satisfy \'etale descent, checking whether $f$ is cohomologically \'etale is an \'etale local question. We can thus base change $f$ along itself, yielding $X\times_Y X\to X$. This decomposes into two components, one of which is an isomorphism (the diagonal $X\hookrightarrow X\times_Y X\to X$). By induction on the maximal cardinality of a geometric fibre of $f$, we can thus prove the desired result.
\end{proof}

We note that this implies that $f_!$ is canonically a left adjoint of $f^\ast$ when $f$ is \'etale, even if we did not make this part of the definition.

\begin{theorem}\label{thm:smoothcohomsmoothDet} Let $f: X\to Y$ be a smooth morphism of schemes. Assume that a prime $\ell$ is invertible on $Y$. Then $f$ is cohomologically smooth in the $\ell$-completions of the $6$-functor formalisms from Theorem~\ref{thm:etale6functors}.
\end{theorem}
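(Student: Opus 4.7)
The plan is to reduce, via the stability properties of cohomological smoothness, to the case of the affine line over $\mathrm{Spec}(\mathbb{Z}[1/\ell])$, and then apply the criterion of Theorem~\ref{thm:critcohomsmooth} with $L = \mathbb{Z}_\ell(1)[2]$, in close analogy with the proof of Proposition~\ref{prop:realssmooth} for $\mathbb{R}\to\ast$.

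First I would carry out the reductions. Cohomological smoothness is stable under composition and base change (immediate from Definition~\ref{def:cohomsmooth}), and étale morphisms are cohomologically smooth because they are cohomologically étale by Proposition~\ref{prop:etalecohometaleDet} (cohomological étaleness is the special case $L = 1$ of cohomological smoothness). Combined with the fact that cohomologically étale surjections satisfy universal $!$-descent (cf.~Proposition~\ref{prop:Dsuavedescent}), the property of being cohomologically smooth is étale-local on the source. Any smooth $f: X\to Y$ admits, étale-locally on $X$, a factorization through a projection $\mathbb{A}^n_Y\to Y$; and since $\mathbb{A}^n_Y$ is an $n$-fold fibre product of copies of $\mathbb{A}^1_Y$ over $Y$, stability under composition reduces us to the case $f: \mathbb{A}^1_Y \to Y$. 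Base change stability reduces us further to $Y = \mathrm{Spec}(\mathbb{Z}[1/\ell])$.

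Next I would apply Theorem~\ref{thm:critcohomsmooth} with $L = \mathbb{Z}_\ell(1)[2]$, which is $\otimes$-invertible in the $\ell$-completed formalism. The counit $\beta: f_!\mathbb{Z}_\ell(1)[2]\to \mathbb{Z}_\ell$ is the trace, built from the classical identification $R\Gamma_c(\mathbb{A}^1_{\overline{k}},\mathbb{Z}_\ell(1)) \cong \mathbb{Z}_\ell[-2]$ (via the Kummer sequence on $\mathbb{P}^1$ together with the excision triangle for $\mathbb{A}^1 \subset \mathbb{P}^1$), and it globalizes over $\mathrm{Spec}(\mathbb{Z}[1/\ell])$ by proper base change. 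The map $\alpha: \Delta_!\mathbb{Z}_\ell \to p_2^\ast\mathbb{Z}_\ell(1)[2]$ on $\mathbb{A}^1\times\mathbb{A}^1$ is the cycle class of the diagonal; since $\Delta$ is a regular closed immersion of codimension one cut out by the single equation $t_1 - t_2$, this class is produced directly by the Kummer sequence (equivalently, by Gabber's absolute purity in codimension one).

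It then remains to verify that the two composites in Theorem~\ref{thm:critcohomsmooth} are the identity; by Lemma~\ref{lem:adjunctioncheat} it suffices that they are isomorphisms, the normalization of $\alpha$ being absorbed automatically afterwards. Both composites are endomorphisms of $\otimes$-invertible objects (of $1_{\mathbb{A}^1}$ and of $L$ respectively), so isomorphy can be tested on geometric stalks, reducing to $\mathbb{A}^1_{\overline{k}}$ with $\overline{k}$ algebraically closed of residue characteristic different from $\ell$. There the first composite is the pairing of the cycle class of a point in $\mathbb{A}^1$ with the fundamental class, and evaluates to a unit in $\mathbb{Z}_\ell$ by construction of $\beta$; the second composite is the analogous self-intersection statement on $\mathbb{A}^2$, which can be made explicit through the localization triangle $\Delta_!\mathbb{Z}_\ell\to \mathbb{Z}_\ell\to j_!\mathbb{Z}_\ell$ for $j:\mathbb{A}^2\setminus\Delta\hookrightarrow\mathbb{A}^2$, reducing it to the cone of $R\Gamma(\mathbb{A}^2,\mathbb{Z}_\ell(1)[2])\to R\Gamma(\mathbb{A}^2\setminus\Delta,\mathbb{Z}_\ell(1)[2])$, both of which are computed by the Kummer sequence.

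The main obstacle is computational rather than formal: one needs the identification $H^2_c(\mathbb{A}^1_{\overline{k}},\mathbb{Z}_\ell(1))\cong \mathbb{Z}_\ell$ together with the compatible normalization of the trace map and the cycle class of the diagonal. This is genuine $\ell$-adic input (ultimately the degree of the $n$-th power map on $\mathbb{G}_m$), not formal $2$-categorical nonsense. The virtue of the Lu--Zheng criterion is precisely that once this one-dimensional Kummer-theoretic computation is in hand, the abstract machinery of Theorem~\ref{thm:critcohomsmooth} delivers full Poincaré duality in the form of cohomological smoothness.
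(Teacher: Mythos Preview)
Your proposal is correct and follows essentially the same approach as the paper: reduce \'etale-locally to $\mathbb{A}^1_{\mathbb{Z}[1/\ell]}\to\mathrm{Spec}(\mathbb{Z}[1/\ell])$, take $L=\mathbb{Z}_\ell(1)[2]$, and produce $\alpha$ and $\beta$ from Kummer theory. Your ``cycle class of the diagonal via the Kummer sequence for $t_1-t_2$'' is exactly the paper's ``first Chern class of $\mathcal{O}(\Delta)$'', and your computation of $f_!L$ via $\mathbb{P}^1$ and excision matches the paper's ``first Chern class of the Cartier divisor of a section''; the paper then defers the remaining verifications to Zavyalov, while you sketch them via Lemma~\ref{lem:adjunctioncheat} and reduction to geometric points, which is entirely in the intended spirit.
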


In Zavyalov's paper \cite{Zavyalov}, it is explained how one can moreover canonically identify the dualizing complex as a Tate twist (using a deformation to the normal cone).

\begin{proof} The question is \'etale local on the source, so using Proposition~\ref{prop:etalecohometaleDet} one reduces to the case of affine space, and then to the affine line $f: \mathbb A^1_{\mathbb Z[\tfrac 1\ell]}\to \mathrm{Spec} \mathbb Z[\tfrac 1\ell]$. We take the sheaf $L=\mathbb Z_\ell(1)[2]$ on $\mathbb A^1$. One constructs a map
\[
\alpha: \Delta_\ast \mathbb Z_\ell\to p_2^\ast L
\]
using the first Chern class of the line bundle $\mathcal O(\Delta)$ on $\mathbb A^2$. (Indeed, $\mathcal O(\Delta)$ defines a $\mathbb G_m$-torsor trivialized outside $\Delta$, i.e.~a map $\Delta_\ast \mathbb Z\to \mathbb G_m[1]$. Passing to derived $\ell$-completions gives the desired map $\alpha$.) It is a standard computation that $f_! L\cong \mathbb Z_\ell$, where the isomorphism comes from the first Chern class of the Cartier divisor of a section. Combined, this suffices to construct the desired datum for Theorem~\ref{thm:critcohomsmooth}. See Zavyalov's paper \cite{Zavyalov} for a more detailed account.
\end{proof}

In \cite{HansenScholze} it is shown how the classical arguments of Deligne on Verdier duality and nearby cycles can be rephrased in terms of $f$-suave objects. In particular, $f$-suave objects are exactly the ``universally locally acyclic'' (ULA) objects in the sense of Deligne. Over a geometric point, this coincides with the constructible sheaves, i.e.~the compact objects. In particular, Verdier duality is a perfect duality on constructible sheaves on finite type schemes $X$ over an algebraically closed field $k$.

\newpage

\section{Lecture VIII: Coherent sheaves}

Let us now consider a rather different kind of sheaves, namely coherent sheaves. As our category of geometric objects, we would like to take schemes $X$, and as $D(X)$ the quasicoherent derived category $D_{\mathrm{qc}}(X)$. There are two possible definitions: Either this is the full subcategory of the derived category of sheaves of $\mathcal O_X$-modules such that all cohomology sheaves are quasicoherent; or, $\infty$-categorically, it is defined via descent from affine schemes, i.e.
\[
D_{\mathrm{qc}}(X) = \mathrm{lim}_{R, \mathrm{Spec}(R)\to X} D(R).
\]
There is of course a version of Poincar\'e duality in coherent cohomology, which is Grothendieck--Serre duality.

\begin{theorem}\label{thm:grothendieckserreduality} Let $f: X\to Y$ be a proper smooth map of schemes of relative dimension $d$. Then
\[
f_\ast: D_{\mathrm{qc}}(X)\to D_{\mathrm{qc}}(Y)
\]
has a right adjoint given by $\Omega^d_{X/Y}[d]\otimes f^\ast$.
\end{theorem}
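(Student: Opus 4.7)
The plan is to deduce this as a direct consequence of the abstract cohomological smoothness machinery: construct a $6$-functor formalism for $D_{\mathrm{qc}}$ on schemes, show that every smooth morphism is cohomologically smooth with dualizing complex $\omega_{X/Y}=\Omega^d_{X/Y}[d]$, and then observe that for a proper morphism $f$ the exceptional pushforward coincides with the ordinary one, $f_! = f_\ast$, so that the right adjoint $f^!$ becomes $f^\ast(-)\otimes\Omega^d_{X/Y}[d]$.

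First I would set up the $6$-functor formalism on qcqs schemes using Theorem~\ref{thm:construct6functors}, with $I$ the open immersions and $P$ the proper morphisms of finite presentation, so that $E$ is the class of separated morphisms of finite presentation (Nagata compactification supplies the required $I$-then-$P$ factorizations). The underlying functor $\mathcal{D}_0: \mathrm{Sch}^{\mathrm{op}}\to\mathrm{CMon}(\mathrm{Cat}_\infty)$ is $X\mapsto D_{\mathrm{qc}}(X)$ with its usual tensor product. For an open immersion $j$, the left adjoint $j_!$ of $j^\ast$ exists by presentability and satisfies base change and the projection formula (an affine-local check). For a proper map $f$, the right adjoint $f_\ast$ of $f^\ast$ preserves colimits and satisfies flat base change and the projection formula, both classical. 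Condition (4) of Lecture IV is automatic here by Remark~\ref{rem:lastconditionvacuousexcision}, since open immersions are monomorphisms. With this in hand, $f_! = f_\ast$ for $f\in P$ by construction.

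Next I would verify that every smooth morphism $f: X\to Y$ of relative dimension $d$ is cohomologically smooth with $L = \Omega^d_{X/Y}[d]$ via Theorem~\ref{thm:critcohomsmooth}. Cohomological smoothness is stable under base change and composition, étale morphisms are cohomologically étale (they are in $I$ up to étale-local isomorphism), and any smooth morphism locally on the source factors as an étale map to $\mathbb{A}^d_Y$. This reduces everything to the structure map $f:\mathbb{A}^1_Y\to Y$, where $L\cong\mathcal{O}_{\mathbb{A}^1_Y}[1]$ via the trivialization $dx$. The map $\alpha:\Delta_!\mathcal{O}_X\to p_2^\ast L$ on $\mathbb{A}^2_Y$ comes from the Koszul presentation of the diagonal: $\Delta_!\mathcal{O}_X=\Delta_\ast\mathcal{O}_X=\mathrm{cofib}\bigl(\mathcal{O}_{\mathbb{A}^2_Y}(-\Delta)\xrightarrow{\,x-y\,}\mathcal{O}_{\mathbb{A}^2_Y}\bigr)$, whose boundary map supplies the desired transformation into $\mathcal{O}_{\mathbb{A}^2_Y}[1]=p_2^\ast L$. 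The map $\beta:f_!L\to \mathcal{O}_Y$ is the classical Grothendieck residue/trace map, obtained by compactifying along $j:\mathbb{A}^1\hookrightarrow\mathbb{P}^1$ and using the standard identification $R\bar{f}_\ast\Omega^1_{\mathbb{P}^1_Y/Y}[1]\cong\mathcal{O}_Y$, restricted via excision to $\mathbb{A}^1$.

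The hard part will be checking that the two composites in Theorem~\ref{thm:critcohomsmooth} are the identity. Pleasantly, both composites are endomorphisms of invertible objects, so by Lemma~\ref{lem:adjunctioncheat} it suffices to verify that each is an \emph{isomorphism}; the correct normalization of $\alpha$ and $\beta$ is then automatic. Both composites reduce, after base change to a geometric point and repeated application of the projection and base-change formulas, to the statement that the pairing induced by $\alpha$ and $\beta$ between $H^0(\mathbb{A}^1,\mathcal{O})$ and the degree-$(-1)$ part of $f_!\mathcal{O}[1]$ is non-degenerate, i.e.\ to the classical residue identity on the affine line (equivalently, Serre duality on $\mathbb{P}^1$). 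Once this computation is done, smooth morphisms are cohomologically smooth with $\omega_{X/Y}=\Omega^d_{X/Y}[d]$ in the formalism; for proper smooth $f$ one then has $f_\ast=f_!$ and its right adjoint is $f^! = f^\ast\otimes\omega_{X/Y}=\Omega^d_{X/Y}[d]\otimes f^\ast$, which is the claim.
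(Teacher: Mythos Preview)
Your proposal contains a fundamental gap at the very first step: for an open immersion $j: U \hookrightarrow X$ of schemes, the pullback $j^\ast: D_{\mathrm{qc}}(X) \to D_{\mathrm{qc}}(U)$ does \emph{not} admit a left adjoint. Presentability only guarantees a right adjoint (since $j^\ast$ preserves colimits); a left adjoint would require $j^\ast$ to preserve limits, and it already fails to preserve countable products. Concretely, for $j: \mathrm{Spec}(A[\tfrac{1}{g}]) \hookrightarrow \mathrm{Spec}(A)$, the map $\bigl(\prod_{\mathbb{N}} A\bigr) \otimes_A A[\tfrac{1}{g}] \to \prod_{\mathbb{N}} A[\tfrac{1}{g}]$ is not an isomorphism. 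So the $6$-functor formalism you describe, with $I =$ open immersions and $P =$ proper maps on $D_{\mathrm{qc}}$, simply does not exist --- this is exactly the obstruction analyzed at length in Lectures~VIII and~IX. Consequently your reduction to $\mathbb{A}^1$ collapses: $\mathbb{A}^1 \to \ast$ is not proper, and without $j_!$ for the open immersion $\mathbb{A}^1 \hookrightarrow \mathbb{P}^1$ there is no way to form $f_!$ for it.

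The paper circumvents this in two ways. The route closest to Grothendieck--Serre duality is ``Option 1'': declare \emph{all} maps to lie in $P$ (so $f_! = f_\ast$ always) and take $I$ to consist only of isomorphisms. Then for proper $f$ of finite Tor-dimension one shows directly (Theorem~\ref{thm:option1ULA}) that $\mathcal{O}_X$ is $f$-suave, and for proper \emph{smooth} $f$ the Verdier trick (Proposition~\ref{prop:option1smooth}) --- computing $\mathcal{O}_X = \Delta^! p_1^! \mathcal{O}_X \cong f^!\mathcal{O}_Y \otimes \Delta^!\mathcal{O}_{X\times_Y X}$ --- yields invertibility of $f^!\mathcal{O}_Y$ and identifies it as the inverse of $\Delta^!\mathcal{O}_{X\times_Y X}$, which a local-coordinates (or deformation-to-the-normal-cone) computation then pins down as $\Omega^d_{X/Y}[d]$. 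The other route (Lecture~IX) enlarges $D_{\mathrm{qc}}$ to solid modules $D_\square$, where $j^\ast$ \emph{does} acquire a left adjoint; in that setting your strategy of reducing to $\mathbb{A}^1$ and invoking Theorem~\ref{thm:critcohomsmooth} goes through essentially as you outline.
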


This indicates that proper smooth maps should be cohomologically smooth in the formalism we seek. Moreover, the dualizing object has a description that is local on $X$; this would most naturally be explained if in fact all smooth morphisms are cohomologically smooth. We will see, however, that in the coherent setting things behave somewhat differently. In the next lecture, we will explain a way to modify $D_{\mathrm{qc}}$ so as to recover a $6$-functor formalism that feels much closer to the topological ones.

Before getting started, we must note that base change theorems for coherent sheaves tend to have some flatness or Tor-independence assumptions in them. The underlying reason is that when forming fibre products of schemes, on affine pieces one takes the corresponding tensor product of rings, but really this should be a derived tensor product. For this reason, any discussion of a coherent $6$-functor formalism has to work with derived schemes.

\subsection{Reminders on derived schemes}

As our model, we take here the ones modeled on animated commutative rings, but one could also work with (connective) $E_\infty$-rings. This is the model also studied by To\"en--Vezzosi \cite{ToenVezzosi}. Recall that the $\infty$-category of animated commutative rings is freely generated under sifted colimits by polynomial algebras $\mathbb Z[X_1,\ldots,X_n]$; equivalently, it is obtained from the category of simplicial commutative rings by inverting weak equivalences. (This procedure of animation is a classical operation, first introduced by Quillen as a non-abelian derived category (but see also the work of Illusie), and $\infty$-categorically the theory has been written up by Lurie \cite[Section 5.5.8]{LurieHTT}. The name animation is due to Clausen, and a general discussion of this procedure in this language is in \cite[Section 5.1.4]{CesnaviciusScholze}.)

\begin{definition}\label{def:derivedscheme} A derived scheme is a pair $(X,\mathcal O_X)$ consisting of a topological space $X$ and a sheaf of animated commutative rings $\mathcal O_X$ such that $(X,\pi_0 \mathcal O_X)$ is a scheme, and each $\pi_i \mathcal O_X$ is a quasicoherent $\pi_0 \mathcal O_X$-module. A morphism of derived schemes $(X,\mathcal O_X)\to (Y,\mathcal O_Y)$ is a map of topological spaces $f: X\to Y$ along with a map $f^\sharp: f^{-1} \mathcal O_Y\to \mathcal O_X$ of sheaves of animated commutative rings that induces a map of classical schemes upon passing to $\pi_0$ (i.e., induces local maps on local rings).

A derived scheme $(X,\mathcal O_X)$ is affine if the classical scheme $(X,\pi_0 \mathcal O_X)$ is affine.
\end{definition}

(To define this rigorously as an $\infty$-category, one first defines an $\infty$-category of topological spaces equipped with a sheaf of animated commutative rings, and then restricts the objects and the $1$-morphisms as above.) Then the $\infty$-category of affine derived schemes is equivalent to the $\infty$-category of animated commutative rings, and general derived schemes are glued from affine derived schemes along open covers. Also, there is an alternative definition of derived schemes in terms of their functor of points.

One can define flat maps of derived schemes (e.g., as those where any base change to a classical scheme becomes classical), and then \'etale resp.~smooth maps as those maps that are flat and induce \'etale resp.~smooth maps on the classical truncations.

Thus, we take for $C$ the $\infty$-category of derived schemes; for simplicity, we restrict to qcqs objects, i.e.~$|X|$ is a quasicompact and quasiseparated topological space. For any $X\in C$, one can define the quasicoherent $\infty$-category $D_{\mathrm{qc}}(X)$, for example via descent (along open covers) from the affine case. This defines a contravariant functor from $C$ to symmetric monoidal presentable stable $\infty$-categories. We will have occasion to consider some subcategories of $D_{\mathrm{qc}}(X)$.

\begin{theorem}[{Thomason--Trobaugh, \cite{ThomasonTrobaugh}}] The $\infty$-category $D_{\mathrm{qc}}(X)$ is compactly generated, and the compact objects agree with the dualizable objects, which are also the perfect complexes $\mathrm{Perf}(X)$, i.e.~on open affine subsets in the subcategory generated by $A$ under direct sums, cones, and retracts.
\end{theorem}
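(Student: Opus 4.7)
The plan is to reduce to the affine case by Zariski descent and induct on the size of an affine cover of $X$, with the Thomason--Trobaugh Koszul construction supplying the necessary compact objects supported on closed subsets.

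First I would dispatch the affine case. For $X = \mathrm{Spec}(A)$ with $A$ an animated commutative ring, $D_{\mathrm{qc}}(X) \simeq D(A)$, and $A$ itself is a compact generator: $\mathrm{Map}(A,M) \simeq M$ visibly preserves colimits and detects the zero object. The compact objects of $D(A)$ are therefore precisely the thick subcategory generated by $A$, i.e.\ $\mathrm{Perf}(X)$, and these coincide with the dualizable objects in a standard fashion (dualizable $M$ lies in the thick closure of $A$ via the evaluation $M \otimes M^\vee \to A$; conversely anything built in finitely many steps from $A$ is clearly dualizable).

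Second, I would promote this to a general qcqs derived scheme $X$ by Zariski descent. For $X = U \cup V$ with $U, V$ quasicompact opens and $W = U \cap V$, the restriction functor
\[
D_{\mathrm{qc}}(X) \longrightarrow D_{\mathrm{qc}}(U) \times_{D_{\mathrm{qc}}(W)} D_{\mathrm{qc}}(V)
\]
is an equivalence by the right-Kan-extension definition of $D_{\mathrm{qc}}$, and one inducts on the minimal number of affines needed to cover $X$. Along the way $\mathcal{O}_X$ is compact because $X$ is qcqs, so $\mathrm{R}\Gamma(X,-)$ commutes with filtered colimits on $D_{\mathrm{qc}}$.

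Third comes the technical heart: the Thomason--Trobaugh Koszul construction. For any quasicompact open $j : U \hookrightarrow X$ with closed complement $Z$, one produces a perfect complex $K \in \mathrm{Perf}(X)$ with $j^\ast K = 0$, in such a way that these $K$'s generate the full subcategory of $D_{\mathrm{qc}}(X)$ set-theoretically supported on $Z$. Affinely this is a Koszul complex on generators of the defining ideal of $Z$; globally one glues such complexes across the Mayer--Vietoris square using the inductive hypothesis. From this the remaining assertions follow: compact generation, since any $M \in D_{\mathrm{qc}}(X)$ killed by all perfect complexes has $M|_{U_i} = 0$ on each affine of a cover and hence vanishes; the implications perfect $\Rightarrow$ dualizable $\Rightarrow$ compact, which are formal once $\mathcal{O}_X$ is compact; and the converse compact $\Rightarrow$ perfect, which follows from Thomason's approximation lemma, stating that any compact object on $U$ extends, up to a perfect summand supported on $X \setminus U$, to a compact object on $X$ — the existence of $K$ being precisely what makes this correction possible.

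The main obstacle will be this third step. The naive extension-by-zero of a perfect complex across $j$ is not quasicoherent, so one must correct it by a compact object supported on the complement, and verifying that the corrected object still lies in the thick subcategory generated by $\mathcal{O}_X$ is the genuinely non-formal content of the theorem. Everything else — compact generation, the perfect/dualizable/compact trichotomy, and the Mayer--Vietoris bookkeeping — is routine once the Koszul gluing and the approximation lemma are in hand.
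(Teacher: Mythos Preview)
Your proposal is correct and follows essentially the same route as the paper: reduce to the affine case, then induct over an affine cover using Koszul complexes to compactly generate $D_{\mathrm{qc}}(X\ \mathrm{on}\ Z)$ and Thomason's $K_0$ trick (what you call the approximation lemma) to extend compact objects across a Mayer--Vietoris square. The only packaging difference is that the paper isolates the inductive step as a general lemma about sheaves of compactly generated stable $\infty$-categories on a spectral space, and derives ``compact $\Rightarrow$ perfect'' as a corollary of compact generation by perfects rather than directly from the approximation lemma; the actual content is the same.
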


\begin{proof} Let us recall the argument briefly. First, if $X=\mathrm{Spec}(A)$ is affine, then $A$ is a compact generator of $D_{\mathrm{qc}}(X)=D(A)$, and thus all compact objects are perfect complexes, and thus dualizable. Conversely, as the unit is compact, all dualizable objects are compact. Now in general one can classify the dualizable objects as the perfect complexes, as dualizable objects satisfy Zariski descent. Moreover, the unit on $D_{\mathrm{qc}}(X)$ is compact, and thus all dualizable objects are compact. It remains to show that the dualizable (i.e., perfect) objects generate $D_{\mathrm{qc}}(X)$. This follows from the following general categorical lemma (applied first to separated $X$ with basis of open affines, and then to general $X$ with basis of separated open subsets), together with the observation that for $X=\mathrm{Spec}(A)$ affine and a constructible closed subset $Z$ of $X$, given as the vanishing locus of some $f_1,\ldots,f_n\in A$, the subcategory $D_{\mathrm{qc}}(X\ \mathrm{on}\ Z)\subset D_{\mathrm{qc}}(X)$ of those complexes that vanish outside $Z$ is compactly generated by the Koszul complex
\[
A/^L(f_1,\ldots,f_n) = A\otimes^L_{\mathbb Z[X_1,\ldots,X_n]} \mathbb Z\in \mathrm{Perf}(X\ \mathrm{on}\ Z).\qedhere
\]
\end{proof}

\begin{lemma}\label{lem:compactgenerationglobalizes} Let $X$ be a spectral space equipped with basis $B$ of quasicompact open subsets stable under finite intersections. Consider a sheaf of presentable stable $\infty$-categories $U\mapsto \mathcal C_U$ on $X$. Assume that for all $U\in B$, the $\infty$-category $\mathcal C_U$ is compactly generated, that for all $U'\subset U$ with $U'\in B$, the functor $\mathcal C_U\to \mathcal C_{U'}$ is a Bousfield localization that preserves compact objects, with compactly generated kernel.

Then for all quasicompact open $U\subset X$, the $\infty$-category $\mathcal C_U$ is compactly generated, and for all quasicompact open $U'\subset U$, the functor $\mathcal C_U\to \mathcal C_{U'}$ is a Bousfield localization that preserves compact objects, and its kernel is compactly generated.
\end{lemma}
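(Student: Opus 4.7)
The plan is to induct on the minimum number $n$ of basic opens from $B$ required to cover $U$; this is finite because $X$ is spectral and $B$ is a basis of quasicompact opens. The base case $n=1$ is exactly the hypothesis on $U\in B$. For the inductive step, I would write $U=V\cup W$ with $V\in B$ and $W$ a union of $n-1$ basic opens; stability of $B$ under finite intersections ensures $V\cap W$ is also a union of $n-1$ basic opens, so the inductive hypothesis applies to both $W$ and $V\cap W$. The sheaf property then yields
\[
\mathcal{C}_U \simeq \mathcal{C}_V \times_{\mathcal{C}_{V\cap W}} \mathcal{C}_W.
\]

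The heart of the argument is to exhibit this pullback as the middle term of a recollement. The projection $j^*\colon\mathcal{C}_U\to\mathcal{C}_V$ has a right adjoint $j_*$ sending $F$ to $(F, r_*(F|_{V\cap W}))$, where $r_*$ is the fully faithful right adjoint to $\mathcal{C}_W\to\mathcal{C}_{V\cap W}$ (existing by induction); this makes $j^*$ a Bousfield localization, and a direct inspection of the pullback identifies $\ker(j^*)$ with $\ker(\mathcal{C}_W\to\mathcal{C}_{V\cap W})$, which is compactly generated by induction. The inductive hypothesis that $\mathcal{C}_W\to\mathcal{C}_{V\cap W}$ preserves compact objects translates into $r_*$ preserving colimits, hence so does $j_*$, so $j^*$ further admits a left adjoint $j_!$ preserving compacts. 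A standard recollement argument (à la Thomason--Neeman) then produces a compact generating set of $\mathcal{C}_U$ as $\{j_! e_\alpha\} \cup \{i\, k_\beta\}$, where $\{e_\alpha\}$ and $\{k_\beta\}$ are compact generators of $\mathcal{C}_V$ and $\ker(j^*)$ respectively and $i$ denotes the fully faithful inclusion of the kernel.

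To close the induction, I would strengthen the inductive statement to include the Bousfield localization / compact-preservation property of $\mathcal{C}_U\to\mathcal{C}_{U'}$ for every quasicompact open $U'\subset U$, and establish this via a secondary induction on the minimum cover size of $U'$. Writing $U' = V'\cup W'$ with $V'\in B$ and applying the Mayer--Vietoris decomposition to $\mathcal{C}_{U'}$, the restrictions $\mathcal{C}_U\to\mathcal{C}_{V'},\,\mathcal{C}_{W'},\,\mathcal{C}_{V'\cap W'}$ are inductively known to be Bousfield localizations preserving compact objects with compactly generated kernels, and the claim for $\mathcal{C}_U\to\mathcal{C}_{U'}$ should be assembled from these via the same recollement pattern as above. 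The main obstacle I foresee is precisely this last step: the kernel of $\mathcal{C}_U\to\mathcal{C}_{U'}$ is not literally a pullback of the three kernels on $V'$, $W'$, $V'\cap W'$ but sits in a recollement-type fiber sequence built from them, and one must carefully combine the inductively given compact generators while using that the relevant $j_!$-type functors preserve compactness in order to conclude that this kernel is again compactly generated.
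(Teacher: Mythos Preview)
Your recollement argument for compact generation is correct and genuinely different from the paper's. The paper instead first observes that $U\mapsto\mathcal{C}_U^\omega$ is itself a sheaf on $B$ (an object that is locally compact is compact, since $\mathrm{Hom}$'s are computed by a \emph{finite} descent diagram on a spectral space), extends it to a sheaf $\mathcal{C}^\omega(U)$ on all quasicompact opens, and then proves that the fully faithful functor $\mathrm{Ind}(\mathcal{C}^\omega(U))\to\mathcal{C}_U$ is an equivalence. Essential surjectivity is established via Thomason's $K_0$ trick: for $U=U_1\cup U_2$ with $U_i\in B$, lifting a compact from $U_1$ to $U$ amounts to lifting its restriction on $U_1\cap U_2$ to $\mathcal{C}_{U_2}^\omega$; since compacts of $\mathcal{C}_{U_1\cap U_2}$ are the idempotent completion of the Verdier quotient $\mathcal{C}_{U_2}^\omega/\ker^\omega$, the object $A\oplus A[1]$ (having trivial $K_0$-class) always lifts. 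You sidestep Thomason entirely by producing compact generators $j_!e_\alpha$ and $ik_\beta$ directly.

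What the paper's approach buys is precisely the part you flag as the obstacle. Its identification $\mathcal{C}_U^\omega\simeq\mathcal{C}^\omega(U)$ makes (b) nearly automatic: a compact object is by definition locally compact, so its restriction is again locally compact, hence compact. In your route, (b) must be argued by hand, and note that your base case is \emph{not} just the hypothesis --- for $U\in B$ the hypothesis only treats $U'\in B$, so the secondary induction on $|U'|$ must already run there; and in the primary inductive step the secondary base case $U'\in B$ with $U\notin B$ is itself nontrivial. Moreover, your generators $j_!e_\alpha$ have no explicit componentwise description (the would-be left adjoint of $\mathcal{C}_W\to\mathcal{C}_{V\cap W}$ need not exist, e.g.\ already for $D_{\mathrm{qc}}$), so tracking their compactness under further restriction forces you to argue on the pushforward side instead. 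This can be made to work, but the bookkeeping is heavier than in the paper's argument, where the ``routine verification'' really is routine once the compact objects have been identified intrinsically.
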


\begin{proof} Note first that $U\mapsto \mathcal C_U^\omega\subset \mathcal C_U$ is also a sheaf of $\infty$-categories on $B$ (as an object that is locally compact is in fact compact, by computing Hom's via descent, which is finitary on a spectral space). It thus extends to a sheaf $U\mapsto \mathcal C^\omega(U)$ for all quasicompact open $U\subset X$, which comes with a functor
\[
\mathrm{Ind}(\mathcal C^\omega(U))\to \mathcal C_U
\]
which is in fact fully faithful.

One proves that this is an equivalence by induction on $B$, the starting point being the case that $U$ is a quasicompact open subset of $X$ that can be written as a union $U=U_1\cup U_2$ with $U_1,U_2\in B$. To show that the functor is an equivalence, it suffices to show that for any $A\in \mathcal C_{U_1}^\omega$ the object $A\oplus A[1]$ lifts to an object of $(\mathcal C^\omega)(U)$. This is equivalent to the similar lifting from $U_1\cap U_2$ to $U_2$. But by assumption $\mathcal C_{U_2}\to \mathcal C_{U_1\cap U_2}$ is a left Bousfield localization of compactly generated presentable stable $\infty$-categories whose kernel is also compactly generated; thus, the compact objects of $\mathcal C_{U_1\cap U_2}$ are the idempotent completion of the Verdier quotient
\[
\mathcal C_{U_2}^\omega/\mathrm{ker}(\mathcal C_{U_2}^\omega\to \mathcal C_{U_1\cap U_2}^\omega).
\]
(Indeed, taking the Verdier quotient on the level of compact objects first and passing to $\mathrm{Ind}$-categories produces the Verdier quotient on the level of the big categories. But the compact objects of the $\mathrm{Ind}$-category give the idempotent completion.) By Proposition~\ref{prop:thomasonstrick} below we see that the desired lifting is possible for any object with trivial $K_0$-class, such as $A\oplus A[1]$. The rest of the proof is some routine verification.
\end{proof}

\begin{proposition}[Thomason--Trobaugh]\label{prop:thomasonstrick} Let $\mathcal C$ be a small stable $\infty$-category and let $\mathcal C'$ be its idempotent completion. Then $K_0(\mathcal C)$ injects into $K_0(\mathcal C')$, and an object $X\in \mathcal C'$ lies in $\mathcal C\subset \mathcal C'$ if and only if its $K_0$-class $[X]\in K_0(\mathcal C')$ lies in $K_0(\mathcal C)\subset K_0(\mathcal C')$.
\end{proposition}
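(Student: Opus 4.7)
The plan is to exploit the concrete structure of the idempotent completion: every object $X \in \mathcal{C}'$ is a retract of some object $A \in \mathcal{C}$, producing a complement $X^c \in \mathcal{C}'$ with $X \oplus X^c \simeq A$ in $\mathcal{C}'$. The whole argument then rests on two basic principles that I would establish first.

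The first is a \emph{reduction lemma}: an object $X \in \mathcal{C}'$ lies in $\mathcal{C}$ if and only if there exist $T_1, T_2 \in \mathcal{C}$ and an isomorphism $X \oplus T_1 \simeq T_2$ in $\mathcal{C}'$. The forward direction is trivial (take $T_1=0$). For the converse, the splitting presents a projection $T_2 \simeq X \oplus T_1 \to T_1$ in $\mathcal{C}'$ between two objects of $\mathcal{C}$, which by full faithfulness of $\mathcal{C} \hookrightarrow \mathcal{C}'$ comes from a morphism in $\mathcal{C}$; its fiber, computed in the stable subcategory $\mathcal{C}$, recovers $X$, so $X \in \mathcal{C}$. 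The second principle is a \emph{lifting procedure}: given any cofiber sequence $Y_1 \to Y_2 \to Y_3$ in $\mathcal{C}'$ and complements $Y_1^c, Y_3^c$ with $Y_1 \oplus Y_1^c, Y_3 \oplus Y_3^c \in \mathcal{C}$, taking direct sums with the trivial cofiber sequences $Y_1^c \xrightarrow{\mathrm{id}} Y_1^c \to 0$ and $0 \to Y_3^c \xrightarrow{\mathrm{id}} Y_3^c$ produces a cofiber sequence
\[
Y_1 \oplus Y_1^c \to Y_2 \oplus Y_1^c \oplus Y_3^c \to Y_3 \oplus Y_3^c
\]
whose outer terms lie in $\mathcal{C}$; the middle term then lies in $\mathcal{C}$ as well, since $\mathcal{C}$ is closed under cofibers. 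This new sequence contributes the same $K_0$-relation as the original, up to the trivially cancelling terms $[Y_1^c]$ and $[Y_3^c]$.

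Injectivity of $K_0(\mathcal{C}) \to K_0(\mathcal{C}')$ follows immediately. If $[A]=[B]$ in $K_0(\mathcal{C}')$ for $A,B \in \mathcal{C}$, this equality is witnessed by a finite zigzag of cofiber-sequence relations in $\mathcal{C}'$, and applying the lifting procedure to each step turns the zigzag into one in $\mathcal{C}$ yielding the same cancellation, hence $[A]=[B]$ already in $K_0(\mathcal{C})$.

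For the characterization, suppose $X \in \mathcal{C}'$ with $[X] \in K_0(\mathcal{C}) \subset K_0(\mathcal{C}')$. Using the identity $-[B] = [B[1]]$ (from the cofiber sequence $B \to 0 \to B[1]$, and noting $B[1] \in \mathcal{C}$ by stability), I can represent the class as $[X] = [A]$ for a single $A \in \mathcal{C}$. The remaining task is to promote this equality in $K_0(\mathcal{C}')$ to an actual isomorphism $X \oplus T_1 \simeq A \oplus T_2$ in $\mathcal{C}'$ with $T_1, T_2 \in \mathcal{C}$, after which the reduction lemma forces $X \in \mathcal{C}$. This promotion is the main obstacle, since $K_0$-equalities in a stable $\infty$-category do not in general stabilize to $\oplus$-isomorphisms. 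I would attack it by unfolding the relation $[X]=[A]$ as a finite zigzag of cofiber-sequence identifications in $\mathcal{C}'$, applying the lifting procedure to each step so that all the intermediate objects can be replaced by versions living in $\mathcal{C}$, and inductively assembling a compatible pair of complements $T_1, T_2 \in \mathcal{C}$ realizing $X \oplus T_1 \simeq A \oplus T_2$; the reduction lemma then concludes $X \in \mathcal{C}$.
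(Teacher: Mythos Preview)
Your reduction lemma and lifting procedure are correct and match the paper's approach almost exactly; the paper uses the specific complement $A'[1]$ (observing that the cone of $A\xrightarrow{1-e}A$ is $A'\oplus A'[1]$), while you use an arbitrary complement $X^c$, but either works. The injectivity argument is also fine once one notes that the stabilizer $T$ in a relation $A\oplus T\simeq B\oplus T$ can be replaced by $T\oplus T^c\in\mathcal C$.

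There is, however, a genuine gap in your final step. You correctly identify the obstacle: $K_0$-equality in a stable category does \emph{not} in general stabilize to a $\oplus$-isomorphism. But your proposed resolution, ``inductively assembling a compatible pair of complements $T_1,T_2\in\mathcal C$ realizing $X\oplus T_1\simeq A\oplus T_2$,'' does not explain how this induction actually runs. The lifting procedure replaces a triangle relation $[Z_i]-[Y_i]-[W_i]$ by a relation $[\bar Z_i]-[\bar Y_i]-[\bar W_i]$ with bars in $\mathcal C$, but these two expressions are equal only modulo the $\oplus$-relations $[P\oplus Q]=[P]+[Q]$, not in the free abelian group on isomorphism classes. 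So after lifting you have not produced a chain of cofiber sequences literally connecting $X$ to $A$; you have only shown that every triangle relation in $\mathcal C'$ lies in the subgroup generated by $\oplus$-relations together with triangle relations from $\mathcal C$. That is exactly the structural statement the paper extracts, and it is what closes the argument: it implies that the image of $[X]$ in the Grothendieck group $K_0^\oplus(\mathcal C')$ of the additive monoid already lies in the image of $K_0^\oplus(\mathcal C)$. In $K_0^\oplus$, equality \emph{is} witnessed by $\oplus$-stable isomorphism, so one gets $X\oplus Q\oplus T\simeq P\oplus T$ with $P,Q\in\mathcal C$ and $T\in\mathcal C'$; replacing $T$ by $T\oplus T^c$ puts the stabilizer in $\mathcal C$, and your reduction lemma finishes. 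Your sketch contains all the ingredients for this, but the passage through $K_0^\oplus$ (or an equivalent bookkeeping device) is the missing idea, not something that falls out of a vague induction on zigzag length.
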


Here, $K_0(\mathcal C)$ is generated by elements $[X]$ for all $X\in \mathcal C$, subject to $[X]=[X']+[X'']$ for any distinguished triangle $X'\to X\to X''$ in $\mathcal C$.

\begin{proof} Let us first show what we really need, namely that for any $A'\in \mathcal C'$, the object $A'\oplus A'[1]$ lies in $\mathcal C$. By definition, $A'$ is determined by an object $A\in \mathcal C$ with an idempotent endomorphism $e: A\to A$. But then the cone of $A\xrightarrow{1-e} A$ is $A'\oplus A'[1]$, as desired.

This implies that the relations defining $K_0(\mathcal C')$ are generated by the relations $[X]+[X']=[X\oplus X']$, and the relations coming from $K_0(\mathcal C)$. Indeed, any distinguished triangle $X'\to X\to X''$ can be modified into one of the form $X'\oplus X'[1]\to X\oplus X'[1]\oplus X''[1]\to X''\oplus X''[1]$ making the first and last term in $\mathcal C$, and thus also the middle term. This easily implies that $K_0(\mathcal C)\to K_0(\mathcal C')$ is injective. Moreover, it shows that for $X\in\mathcal C'$, one has $[X]\in K_0(\mathcal C)$ if and only if there is some $X'\in \mathcal C$ such $X\oplus X'\in \mathcal C$. But then $X\in\mathcal C$ as the cone of $X'\to X\oplus X'$ is.
\end{proof}

\begin{definition}\label{def:pseudocoherent} Let $A$ be an animated commutative ring. An object $K\in D(A)$ is pseudocoherent if for any $n$ there is a perfect complex $K_n$ and a map $K_n\to K$ whose cone sits in homological degrees $\geq n$. An object $K\in D(A)$ is coherent if it is pseudocoherent and bounded.
\end{definition}

These notions satisfy Zariski descent and thus globalize to conditions on $K\in D_{\mathrm{qc}}(X)$. On general derived schemes, one always has many perfect and pseudocoherent complexes, but not always coherent ones. One would like to obtain them by truncating pseudocoherent complexes, but this may not preserve pseudocoherent complexes. It is true, however, under mild conditions on the derived scheme $(X,\mathcal O_X)$.

\begin{proposition}\label{prop:truncatepseudocoherent} Assume that $(X,\pi_0\mathcal O_X)$ is a coherent scheme, i.e.~locally the spectrum of a coherent ring, and $\pi_i \mathcal O_X$ is a coherent $\pi_0 \mathcal O_X$-module for all $i$. Then $A\in D_{\mathrm{qc}}(X)$ is pseudocoherent if and only if all $\pi_i A$ are coherent $\pi_0 \mathcal O_X$-modules. In particular, any truncation of $A$ is again pseudocoherent.
\end{proposition}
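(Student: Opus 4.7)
The question is local on $X$, so I reduce to the case $X = \mathrm{Spec}(R)$ where $R$ is an animated commutative ring with $\pi_0 R$ coherent and each $\pi_i R$ coherent as a $\pi_0 R$-module. Under this hypothesis the category of coherent $\pi_0 R$-modules is abelian and closed under extensions and retracts inside all $\pi_0 R$-modules.

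For the forward implication I first observe that any perfect $P \in D(R)$ has coherent homotopy groups: $P$ is built from $R$ by finitely many cones, shifts, and retracts, and coherence is preserved by these operations via the long exact sequence of a cofiber triangle together with closure of coherent modules under kernels, cokernels, extensions, and retracts. If $A$ is pseudocoherent and $i$ is any integer, choose $n > i+1$ and a perfect approximation $K_n \to A$ with cofiber in degrees $\geq n$; the long exact sequence then gives $\pi_i A \cong \pi_i K_n$, which is coherent.

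For the converse I construct perfect approximations $K_n \to A$ by induction on $n$. Since pseudocoherence forces homotopy to be bounded below, I absorb this boundedness into the hypothesis (it is in any case satisfied by the truncations that appear in the last sentence) and start with $K_{n_0} = 0$ for $n_0$ small enough that $A$ lives in degrees $\geq n_0$. For the inductive step, given a perfect $K_{n-1} \to A$ with cofiber $C_{n-1}$ concentrated in degrees $\geq n-1$, the long exact sequence produces a short exact sequence
\[
0 \to \mathrm{coker}(\pi_{n-1} K_{n-1} \to \pi_{n-1} A) \to \pi_{n-1} C_{n-1} \to \ker(\pi_{n-2} K_{n-1} \to \pi_{n-2} A) \to 0
\]
of $\pi_0 R$-modules, whose end-terms are coherent by the previous paragraph and the hypothesis on $A$. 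Hence $\pi_{n-1} C_{n-1}$ is coherent; pick a surjection $(\pi_0 R)^m \twoheadrightarrow \pi_{n-1} C_{n-1}$ and realize it by a map $\alpha : R^m[n-1] \to C_{n-1}$, using the identification $\pi_0 \mathrm{Map}(R^m[n-1], C_{n-1}) \cong (\pi_{n-1} C_{n-1})^m$. Define $K_n$ by the pullback square
\[
\xymatrix{
K_n \ar[r] \ar[d] & R^m[n-1] \ar[d]^{\alpha} \\
A \ar[r] & C_{n-1}.
}
\]
The other fiber direction gives a cofiber sequence $K_{n-1} \to K_n \to R^m[n-1]$, so $K_n$ is perfect; and the cofiber of $K_n \to A$ identifies with the cofiber of $\alpha$, which lies in degrees $\geq n$ because $\alpha$ is surjective on $\pi_{n-1}$.

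The final sentence is then immediate: a truncation of a pseudocoherent complex is bounded on one side and has coherent homotopy in every degree, so it satisfies the criterion just established. The main obstacle in the argument is the inductive step of the converse, and specifically the verification that $\pi_{n-1} C_{n-1}$ is coherent; once that is in place, the pullback construction lifts a chosen finite generating set to an actual map of perfect complexes in a uniform way.
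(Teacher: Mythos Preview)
The paper states this proposition without proof, so there is no argument to compare yours against; your approach is the standard one (cf.\ SGA~6 or the Stacks Project treatment of pseudocoherence over coherent rings) and is correct. You rightly flag that the converse direction as literally stated requires $A$ to be bounded below: an object like $\prod_{n\leq 0} R[n]$ has coherent homotopy in every degree yet is not pseudocoherent. Since pseudocoherence forces bounded-belowness, and since the ``in particular'' clause only concerns truncations of objects already known to be pseudocoherent, reading the statement with this implicit hypothesis is the intended interpretation. One minor expository point: in the pullback step you use implicitly that a pullback square in a stable $\infty$-category is also a pushout, whence $\mathrm{cofib}(K_n\to A)\simeq\mathrm{cofib}(\alpha)$; this is correct but could be said in one sentence.
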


Finally, we have the following finiteness condition on maps $A\to B$.

\begin{definition}\label{def:pseudocoherentrings} A map $f: A\to B$ of animated commutative rings is almost of finite presentation if there is some factorization $A\to A[X_1,\ldots,X_n]\to B$ such that $B$ is pseudocoherent as $A[X_1,\ldots,X_n]$-module.
\end{definition}

One can check that this condition is independent of the chosen map $A[X_1,\ldots,X_n]\to B$ as long as it induces a surjection on $\pi_0$. Moreover, one can show that it globalizes to a notion for maps $f: X\to Y$ of derived schemes. We will use the following terminology.

\begin{definition} A map $f: X\to Y$ of derived schemes is proper if it is almost of finite presentation and the map of underlying classical schemes is proper.
\end{definition}

This definition is in particular relevant for the finiteness results in coherent cohomology:

\begin{theorem}\label{thm:finitenesscoherentcohom} Let $f: X\to Y$ be a proper map of derived schemes. Then $f_\ast$ takes pseudocoherent objects of $D_{\mathrm{qc}}(X)$ to pseudocoherent objects of $D_{\mathrm{qc}}(Y)$, and coherent objects to coherent objects. If $f$ is of finite Tor-dimension, then $f_\ast$ takes $\mathrm{Perf}(X)$ to $\mathrm{Perf}(Y)$.
\end{theorem}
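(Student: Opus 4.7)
The plan is to bootstrap from the classical Grothendieck finiteness theorem for proper morphisms of noetherian schemes, passing to the derived setting by noetherian approximation and a standard spectral-sequence argument, and then to deduce the coherent and perfect refinements from finiteness of cohomological dimension and the projection formula.

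First I would reduce to the case $Y=\mathrm{Spec}(A)$ affine: pseudocoherence, coherence, perfectness, and the property of being a proper almost-of-finite-presentation morphism are all Zariski local on the target. With $Y$ affine, the almost-of-finite-presentation hypothesis lets me write $A$ as a filtered colimit $A=\mathrm{colim}_\alpha A_\alpha$ of animated $\mathbb Z$-algebras with $\pi_0 A_\alpha$ noetherian and each $\pi_i A_\alpha$ finitely generated over $\pi_0 A_\alpha$, so that $f$ is the base change of a proper almost-of-finite-presentation $f_\alpha\colon X_\alpha\to\mathrm{Spec}(A_\alpha)$, and so that a given pseudocoherent $F\in D_{\mathrm{qc}}(X)$ descends (along one of the stages of its pseudocoherent approximation by perfect complexes) to a pseudocoherent $F_\alpha\in D_{\mathrm{qc}}(X_\alpha)$ with $F\cong A\otimes^L_{A_\alpha}F_\alpha$. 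I would then invoke derived base change for the proper map $f_\alpha$ (Tor-independent automatically since everything is derived) to identify $f_\ast F\cong A\otimes^L_{A_\alpha}f_{\alpha\ast}F_\alpha$, reducing the claim to the analogous statement for $f_{\alpha\ast}F_\alpha$.

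On the noetherian base, Proposition~\ref{prop:truncatepseudocoherent} applies at both source and target. In particular each $\pi_q F_\alpha$ is a coherent $\pi_0\mathcal O_{X_\alpha}$-module, and the classical Grothendieck finiteness theorem for the proper morphism $\overline{f_\alpha}\colon\overline{X_\alpha}\to\mathrm{Spec}(\pi_0 A_\alpha)$ of noetherian schemes (proved via Chow's lemma to reduce to a projective map, then direct computation with twists by $\mathcal O(n)$) implies that each $R^p\overline{f_\alpha}_\ast\pi_q F_\alpha$ is a coherent $\pi_0 A_\alpha$-module, vanishing for $p$ larger than the relative dimension of $\overline{f_\alpha}$. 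The spectral sequence
\[
E_2^{p,q}=R^p\overline{f_\alpha}_\ast \pi_q F_\alpha\Longrightarrow \pi_{q-p}f_{\alpha\ast}F_\alpha
\]
is bounded in the $p$-direction for each $q$, so each abutment $\pi_i f_{\alpha\ast}F_\alpha$ is coherent over $\pi_0 A_\alpha$; a second application of Proposition~\ref{prop:truncatepseudocoherent}, now on the target, upgrades this to pseudocoherence of $f_{\alpha\ast}F_\alpha$.

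A pseudocoherent complex is coherent if and only if it is bounded, and the relative-dimension bound on the classical truncation of $f$ shows that $f_\ast$ shifts cohomological amplitude by at most a fixed constant, so the coherent case follows at once. For the perfect case, I would use that $\mathrm{Perf}$ equals pseudocoherent with locally finite Tor-amplitude: if $F\in\mathrm{Perf}(X)$ has Tor-amplitude $[a,b]$ and $f$ has Tor-dimension $d$, then for any $G\in D_{\mathrm{qc}}(Y)^{\heartsuit}$ the projection formula gives $f_\ast F\otimes^L G\cong f_\ast(F\otimes^L f^\ast G)$, with $F\otimes^L f^\ast G$ of Tor-amplitude at most $[a-d,b]$; applying $f_\ast$ then gives a cohomological-amplitude bound independent of $G$, bounding the Tor-amplitude of $f_\ast F$. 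The principal obstacle is the descent step: producing the approximation of both $f$ and $F$ over a noetherian stage and matching it with pushforward along $f_\alpha$. Once that is in place, the classical finiteness input, the spectral sequence, and the projection-formula manipulation are routine.
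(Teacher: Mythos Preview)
Your argument is essentially correct and is precisely the classical route that the paper acknowledges as an alternative (``It can also be deduced from the usual finiteness results in coherent cohomology''), but the paper deliberately takes a different path. Rather than importing Grothendieck's theorem via noetherian approximation and a spectral sequence, the paper works inside the solid six-functor formalism $D_\square$: it identifies $f_\ast$ with $f_!$ by properness, reduces by gluing and a factorization through a closed immersion into affine space to the case $\mathbb A^1_Y\to Y$, and then observes that cohomological smoothness of $\mathbb A^1_{\mathbb Z}\to\mathrm{Spec}(\mathbb Z)$ in $D_\square$ (an explicit computation) forces $f^!$ to preserve colimits, hence $f_!$ to preserve compact objects, from which the (pseudo)coherent cases follow by approximation. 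Your approach is logically independent of the solid machinery but consumes the deep classical input (Chow's lemma, finiteness for projective morphisms) as a black box; the paper's approach is self-contained and avoids Chow's lemma entirely, illustrating the lecture's theme that coherent finiteness becomes a formal consequence once $\mathbb A^1$ is cohomologically smooth. One small imprecision in your write-up: writing an arbitrary animated ring $A$ as a filtered colimit of noetherian-like animated $\mathbb Z$-algebras is a general structural fact about animated rings and has nothing to do with the almost-of-finite-presentation hypothesis on $f$; that hypothesis is rather what lets you descend $f$ and $F$ to a finite stage of the colimit.
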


We will give a direct proof in the next lecture, using the formalism of solid modules. It can also be deduced from the usual finiteness results in coherent cohomology.

\subsection{Right and left adjoints to pullback}

We would like to extend $X\mapsto D_{\mathrm{qc}}(X)$ to a $6$-functor formalism. To do so, we need to define classes of morphisms $I$ and $P$, and these classes are restrained by the existence of suitable left and right adjoints to pullback. First, right adjoints actually always satisfy base change:

\begin{proposition}\label{prop:quasicoherentbasechange} Let $f: X\to Y$ be any map in $C$, i.e.~a map of qcqs derived schemes. Then $f^\ast$ admits a colimit-preserving right adjoint $f_\ast$ satisfying projection formula and base change.
\end{proposition}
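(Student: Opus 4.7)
The plan is to reduce all four claims to the affine-affine case by Zariski descent.

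First I would argue existence of $f_\ast$ and its colimit-preservation. For any qcqs derived scheme $X$, the $\infty$-category $D_{\mathrm{qc}}(X)$ is presentable: choosing a finite affine Zariski cover, descent presents it as a finite limit in $\mathrm{Pr}^R$ of presentable $\infty$-categories $D(R)$. Similarly, $f^\ast$ preserves colimits, since on affine charts it is extension of scalars, and colimits in $D_{\mathrm{qc}}$ are computed Zariski-locally. Hence the adjoint functor theorem produces a right adjoint $f_\ast$. To show $f_\ast$ preserves colimits, I would invoke Thomason–Trobaugh: $D_{\mathrm{qc}}(Y)$ is compactly generated, with compact objects equal to the dualizable (= perfect) objects. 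Since $f^\ast$ is symmetric monoidal it preserves dualizable objects, hence compact objects, so $f_\ast$ preserves filtered colimits. Combined with exactness (automatic for any right adjoint between stable $\infty$-categories), this yields preservation of all colimits.

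For base change and the projection formula, the relevant natural transformations are defined abstractly by adjunction: base change from the counit $f^\ast f_\ast \to \mathrm{id}$ transported across the square, and projection formula from the lax symmetric monoidal structure on $f_\ast$ induced by $f^\ast$ being symmetric monoidal. To check each is an isomorphism, both sides are colimit-preserving functors by the previous paragraph, so Zariski descent reduces the question to the affine case. Specifically, since $D_{\mathrm{qc}}$ satisfies Zariski descent on its target, I would first reduce to $Y$ (and for base change, also $Y'$) being affine. Then, since $X$ is qcqs, a finite affine open cover $\{U_i\}$ of $X$ has all higher intersections quasicompact, hence again expressible as finite unions of affines, giving an affine hypercover $U_\bullet\to X$. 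Using the sheaf property of $D_{\mathrm{qc}}$ and the fact that pushforward is compatible with the resulting Cech presentation, everything reduces to the affine-affine-affine case.

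In that case, $f_\ast$ is restriction of scalars along an animated ring map $A\to B$, the base change isomorphism becomes the associativity of the derived tensor product $M\otimes^L_A A'\cong M\otimes^L_B(B\otimes^L_A A')$ (which is exactly where the use of derived schemes matters, since $X'$ is the derived fibre product), and the projection formula is the similar identity $M\otimes^L_A N\cong M\otimes^L_B(N\otimes^L_A B)$; both are immediate. The main obstacle will be the descent step: verifying that the Cech computation of $f_\ast$ is genuinely natural in base change and in the lax monoidal structure, so that the formally defined comparison maps actually correspond, term by term in the Cech nerve, to the affine comparison maps. This is a routine but not entirely automatic bookkeeping that rests on organizing $X \mapsto D_{\mathrm{qc}}(X)$ as a symmetric monoidal sheaf of presentable $\infty$-categories, for which all base change and projection formula maps are natural transformations of sheaves and thus can be tested on affine stalks.
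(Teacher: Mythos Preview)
Your proof is correct and follows essentially the same reduction-to-affines strategy as the paper, but with one notable difference worth recording. For colimit-preservation of $f_\ast$, you invoke Thomason--Trobaugh (compact $=$ dualizable in $D_{\mathrm{qc}}(Y)$, so the symmetric monoidal $f^\ast$ preserves compacts, hence $f_\ast$ preserves filtered colimits). The paper instead argues more directly: it first establishes everything in the affine case, where $f_\ast$ is restriction of scalars and obviously preserves colimits, and then globalizes by observing that these locally defined right adjoints satisfy base change and hence glue. Your route is slicker in that it gets existence and colimit-preservation of $f_\ast$ globally in one stroke, at the cost of importing Thomason--Trobaugh; the paper's route is more self-contained but requires the two-step separated/general reduction on $X$ to be carried out first. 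For the base change and projection formula themselves, both arguments coincide: reduce to the affine--affine case and invoke associativity of the derived tensor product.
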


\begin{proof} Once one has proved the result for affine $Y$, it follows in general (as the then locally defined right adjoints commute with base change and thus globalize to the desired adjoint). Thus, assume that $Y$ is affine. One can then moreover reduce to the case that $X$ is affine, by first deducing the case of separated $X$, and then general $X$, using finite covers by open affine subsets. If $X=\mathrm{Spec}(A)$ and $Y=\mathrm{Spec}(B)$ are affine, then $f^\ast$ corresponds to the tensor product functor $-\otimes_B A: D(B)\to D(A)$, whose right adjoint $D(A)\to D(B)$ is the forgetful functor, and commutes with all colimits, and satisfies the projection formula. Base change is also immediate, noting that everything is suitably derived.
\end{proof}

Thus, in principle we could take for $P$ the class of all maps. Before turning to the discussion of such possibilities, let us analyze a case when $f^\ast$ admits a left adjoint.

\begin{proposition}\label{prop:leftadjointtoproperpullback} Let $f: X\to Y$ be a proper map of finite Tor-dimension. Then $f^\ast$ admits a left adjoint $f_\sharp$ satisfying projection formula and base change. Concretely, it is the functor taking a filtered colimit $\mathrm{colim}_i P_i$ of perfect complexes $P_i\in D_{\mathrm{qc}}(X)$ to $\mathrm{colim}_i (f_\ast P_i^\vee)^\vee$ where $P_i^\vee=\mathscr{H}\mathrm{om}(P_i,\mathcal O_X)$ denotes the naive dual (and the second $-^\vee$ denotes the naive dual on $Y$).
\end{proposition}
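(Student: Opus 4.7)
The plan is to construct $f_\sharp$ first on perfect complexes, then extend by ind-extension to all of $D_{\mathrm{qc}}(X)$, using that $\mathrm{Perf}(X) \subset D_{\mathrm{qc}}(X)$ is a set of compact generators by Thomason--Trobaugh. The essential input is Theorem~\ref{thm:finitenesscoherentcohom}: since $f$ is proper of finite Tor-dimension, $f_\ast$ restricts to a functor $\mathrm{Perf}(X) \to \mathrm{Perf}(Y)$. In particular, for any perfect complex $P$, the object $f_\ast P^\vee$ is again perfect and hence dualizable, so the formula $f_\sharp P := (f_\ast P^\vee)^\vee$ makes sense.

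The adjunction formula on perfect $P$ and arbitrary $M \in D_{\mathrm{qc}}(Y)$ is then a purely formal chain
\[
\mathrm{Map}(f_\sharp P, M) \simeq \mathrm{Map}(\mathcal{O}_Y, f_\ast P^\vee \otimes M) \simeq \mathrm{Map}(\mathcal{O}_Y, f_\ast(P^\vee \otimes f^\ast M)) \simeq \mathrm{Map}(P, f^\ast M),
\]
using (in order) dualizability of $f_\ast P^\vee$, the projection formula of Proposition~\ref{prop:quasicoherentbasechange}, and the $(f^\ast, f_\ast)$-adjunction together with dualizability of $P$. I would then take $f_\sharp \colon D_{\mathrm{qc}}(X) \to D_{\mathrm{qc}}(Y)$ to be the colimit-preserving extension of this assignment, i.e.\ the left Kan extension along $\mathrm{Perf}(X) \hookrightarrow D_{\mathrm{qc}}(X) = \mathrm{Ind}(\mathrm{Perf}(X))$. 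Concretely, $f_\sharp(\mathrm{colim}_i P_i) \simeq \mathrm{colim}_i (f_\ast P_i^\vee)^\vee$, and the global adjunction $\mathrm{Map}(f_\sharp A, M) \simeq \mathrm{Map}(A, f^\ast M)$ follows by writing $A = \mathrm{colim}_i P_i$ and using compactness of the $P_i$.

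Both the projection formula and base change are verified on perfect generators and then extended by colimits. For the projection formula $f_\sharp(A \otimes f^\ast B) \simeq f_\sharp A \otimes B$: testing via the adjunction against arbitrary $M$, on one side $\mathrm{Map}(A \otimes f^\ast B, f^\ast M) \simeq \mathrm{Map}(A, f^\ast \underline{\mathrm{Hom}}(B, M))$ using that $f^\ast$ is strong symmetric monoidal, and on the other $\mathrm{Map}(f_\sharp A \otimes B, M) \simeq \mathrm{Map}(f_\sharp A, \underline{\mathrm{Hom}}(B, M)) \simeq \mathrm{Map}(A, f^\ast \underline{\mathrm{Hom}}(B, M))$. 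For base change $g^\ast f_\sharp \simeq f'_\sharp g'^\ast$ along a cartesian square, note first that $f'$ remains proper of finite Tor-dimension (both stable under base change), so $f'_\sharp$ is defined by the same recipe; on $P \in \mathrm{Perf}(X)$,
\[
g^\ast f_\sharp P = g^\ast (f_\ast P^\vee)^\vee \simeq (g^\ast f_\ast P^\vee)^\vee \simeq (f'_\ast g'^\ast P^\vee)^\vee \simeq (f'_\ast (g'^\ast P)^\vee)^\vee = f'_\sharp g'^\ast P,
\]
using that $g^\ast$ and $g'^\ast$ are symmetric monoidal (hence preserve duals of dualizables) and the base change isomorphism of Proposition~\ref{prop:quasicoherentbasechange}.

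The main obstacle I expect is a coherence issue rather than a substantive one: ensuring that the assignment $P \mapsto (f_\ast P^\vee)^\vee$ and all the identifications above promote to genuine $\infty$-categorical data, rather than only statements on homotopy categories. This is handled cleanly by phrasing $f_\sharp$ via the left Kan extension framework, which packages all higher coherences uniformly; equivalently, one may observe that $f_\ast$ preserves filtered colimits and compact objects, whence the natural map of presheaves $M \mapsto \mathrm{Map}(-, f^\ast M)$ on $\mathrm{Perf}(X)$ is represented on compacts by $(f_\ast(-)^\vee)^\vee$ and extends uniquely to a colimit-preserving left adjoint on the ind-completion.
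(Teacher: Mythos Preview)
Your proof is correct and follows essentially the same approach as the paper: construct $f_\sharp$ on $\mathrm{Perf}$ via the formula $(f_\ast(-)^\vee)^\vee$ using that $f_\ast$ preserves perfect complexes, then Ind-extend. The paper phrases the first step more conceptually---observing that the self-duality of $\mathrm{Perf}$ transports the adjunction $(f^\ast,f_\ast)$ to an adjunction $(f_\sharp,f^\ast)$ since $f^\ast$ commutes with the naive duality---but this is exactly the content of your chain of equivalences unwound.
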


\begin{proof} We have the adjunction between $f^\ast$ and $f_\ast$ on $D_{\mathrm{qc}}=\mathrm{Ind}(\mathrm{Perf})$. But both functors preserve $\mathrm{Perf}$, so we get an adjunction there. On the other hand, $\mathrm{Perf}$ is selfdual (with the naive duality) and so one gets an adjunction the other way between the naive duals of $f^\ast$ and $f_\ast$ on $\mathrm{Perf}$. But $f^\ast$ commutes with the duality, so one gets a left adjoint of $f^\ast: \mathrm{Perf}(Y)\to \mathrm{Perf}(X)$. Passing to $\mathrm{Ind}$-categories again, we get the desired left adjoint, with the given formula. This shows that it commutes with base change and satisfies the projection formula.
\end{proof}

Something weird happens here: In the other formalisms, it were the open immersions where $f^\ast$ admits a left adjoint; here is it is the proper maps (of finite Tor-dimension). And indeed, for open immersions like $\mathrm{Spec}(A[\tfrac 1g])\hookrightarrow \mathrm{Spec}(A)$, the pullback functor
\[
-\otimes_A A[\tfrac 1g]: D(A)\to D(A[\tfrac 1g])
\]
does not admit a left adjoint, as it does not commute with products. We will see in the next lecture a way to change the setting so that it does, by working with some kind of topological modules, and remembering the product topology.

Today, however, we will stick with abstract modules, and see what we can do.

\subsection{Option 1: All maps are proper.}

The first option is to simply allow all maps to lie in $P$ (and hence in $E$) and for $I$ just take the isomorphisms. By Proposition~\ref{prop:quasicoherentbasechange} and Theorem~\ref{thm:construct6functors}, this defines a $6$-functor formalism on $C$ with values in presentable stable $\infty$-categories.

\begin{theorem}\label{thm:option1ULA} In this $6$-functor formalism, for any proper map $f: X\to Y$ of finite Tor-dimension the sheaf $\mathcal O_X$ is $f$-suave.
\end{theorem}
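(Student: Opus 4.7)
The plan is to exploit the self-duality of $\mathrm{LZ}_{\mathcal D}$, which identifies any kernel $K \in D(X \times Y)$ as either a $1$-morphism $X \to Y$ or as a $1$-morphism $Y \to X$. Through this self-duality, the $f$-suaveness of $\mathcal O_X$ becomes equivalent to the existence of a well-behaved left adjoint of $f^\ast$, and that is exactly the content of Proposition~\ref{prop:leftadjointtoproperpullback}.

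First, I pass to the slice $C_{/Y}$ so that $Y$ is the final object. The object $\mathcal O_X \in D(X)$ may be regarded either as a $1$-morphism $X \to Y$ in $\mathrm{LZ}_{\mathcal D}$ (representing $f_! = f_\ast$) or as a $1$-morphism $Y \to X$ (representing $f^\ast$). Since $\mathrm{Corr}(C) \simeq \mathrm{Corr}(C)^{\mathrm{op}}$ via reversal of spans, transferring through $\mathcal D$ provides a self-anti-equivalence $\iota$ of $\mathrm{LZ}_{\mathcal D}$ that preserves kernels and $2$-morphisms while swapping the source and target of $1$-morphisms. A routine $2$-categorical verification (using that $\iota$ is a $2$-functor) shows that $\iota$ converts an adjunction $K^{X \to Y} \dashv K'^{Y \to X}$ in $\mathrm{LZ}_{\mathcal D}$ into an adjunction $K'^{X \to Y} \dashv K^{Y \to X}$. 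Applied to our setting, $\mathcal O_X$ is $f$-suave if and only if there exists a kernel $K \in D(X)$ such that $K^{X \to Y}$ is left adjoint to $\mathcal O_X^{Y \to X}$ in $\mathrm{LZ}_{\mathcal D}$.

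Unwinding the definition of $\mathrm{LZ}_{\mathcal D}$, the existence of such $K$ amounts to a functor $f_\sharp : D(X) \to D(Y)$ left adjoint to $f^\ast$ that is furthermore $D(Y)$-linear (the projection formula, ensuring representability by a kernel) and compatible with arbitrary base change (ensuring that the unit and counit upgrade to $2$-morphisms of $\mathrm{LZ}_{\mathcal D}$). When $f$ is proper of finite Tor-dimension, Proposition~\ref{prop:leftadjointtoproperpullback} supplies exactly this data, with the explicit description $f_\sharp(\mathrm{colim}_i P_i) = \mathrm{colim}_i (f_\ast P_i^\vee)^\vee$ on filtered colimits of perfect complexes. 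This concludes the argument.

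The main obstacle is the dictionary between ``adjunctions of kernels in $\mathrm{LZ}_{\mathcal D}$'' and ``functors satisfying the projection formula and base change'' — conceptually clear, but requiring careful bookkeeping in the construction of $\mathrm{LZ}_{\mathcal D}$ from $\mathcal D: \mathrm{Corr}(C)\to \mathrm{Cat}_\infty$ via self-enrichment of $\mathrm{Corr}(C)$. A more concrete alternative is to verify Proposition~\ref{prop:checksuavesheaf} directly: writing $\omega_{X/Y} := f^!(\mathcal O_Y) = \mathbb D_f(\mathcal O_X)$, one must show the natural map $p_2^\ast \omega_{X/Y} \to p_1^!(\mathcal O_X)$ in $D(X \times_Y X)$ is an isomorphism. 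This is the base change isomorphism for $f^!$ along the square in which $f$ is base changed against itself, and it follows by passing to right adjoints from the base change statement for $f_\sharp$ provided by Proposition~\ref{prop:leftadjointtoproperpullback}.
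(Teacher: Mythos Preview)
Your alternative approach at the end is essentially the paper's proof, and is the right way to go. The paper argues: since $f_\ast$ preserves perfect (= compact) objects by Theorem~\ref{thm:finitenesscoherentcohom}, the right adjoint $f^!$ preserves colimits; hence $f^!$ is $D_{\mathrm{qc}}(Y)$-linear (first on dualizables via the projection formula for $f_\ast$, then on everything by colimits); and the resulting $D_{\mathrm{qc}}(Y)$-linear adjunction $(f_\ast,f^!)$ base-changes along any $Y'\to Y$ because $D_{\mathrm{qc}}(X\times_Y Y')\simeq D_{\mathrm{qc}}(X)\otimes_{D_{\mathrm{qc}}(Y)} D_{\mathrm{qc}}(Y')$. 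This gives base change for $f^!$, which is exactly the criterion of Proposition~\ref{prop:checksuavesheaf} for $A=\mathcal O_X$.

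One correction: your phrase ``passing to right adjoints from the base change statement for $f_\sharp$'' does not literally work. The adjoint chains are $f_\sharp\dashv f^\ast\dashv f_\ast\dashv f^!$, so taking right adjoints of the base change isomorphism $g^\ast f_\sharp\cong f'_\sharp g'^\ast$ only recovers the already-known base change for $g_\ast$, not base change for $f^!$. What is true is that the \emph{input} to Proposition~\ref{prop:leftadjointtoproperpullback} --- that $f_\ast$ preserves perfect complexes --- is the same finiteness that makes $f^!$ colimit-preserving, and it is this that yields base change for $f^!$. So route the argument through Theorem~\ref{thm:finitenesscoherentcohom} directly, as the paper does.

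As for your main approach via the self-anti-equivalence of $\mathrm{LZ}_{\mathcal D}$: the reduction ``$\mathcal O_X^{X\to Y}$ has a right adjoint $\Leftrightarrow$ $\mathcal O_X^{Y\to X}$ has a left adjoint'' is correct, but unwinding this into ``$f_\sharp$ exists with projection formula and base change'' is not a clean equivalence. A kernel $K^{X\to Y}$ represents the functor $A\mapsto f_\ast(K\otimes A)$, and there is no a priori reason $f_\sharp$ has this form without already knowing $\mathcal O_X$ is suave (so that $K=\omega_{X/Y}$ works). The self-duality transforms the suaveness datum $(\alpha,\beta)$ into itself with $p_1,p_2$ swapped --- it does not bypass the construction of those maps. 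So this route, while conceptually appealing, does not give a shortcut over verifying Proposition~\ref{prop:checksuavesheaf} directly.
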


In particular, we get a dualizing object $\omega_{X/Y}\in D_{\mathrm{qc}}(X)$ as the Verdier dual of $\mathcal O_X$, i.e.~$f^!\mathcal O_Y$ with respect to the $!$-functor of this formalism. (We are reluctant to denote this by $f^!$ in general, as $f^!$ has a standard meaning in coherent cohomology, but for proper maps it is the correct functor.)

\begin{proof} Using Proposition~\ref{prop:checksuavesheaf}, it suffices to see that the formation of $f^! \mathcal O_Y$ commutes with any base change. But $f^!$ commutes with all colimits by Theorem~\ref{thm:finitenesscoherentcohom} and is thus $D_{\mathrm{qc}}(Y)$-linear, and the resulting $D_{\mathrm{qc}}(Y)$-linear adjunction between $f_\ast$ and $f^!$ base changes to a similar adjunction after any base change $Y'\to Y$, noting that $D_{\mathrm{qc}}(X\times_Y Y')=D_{\mathrm{qc}}(X)\otimes_{D_{\mathrm{qc}}(Y)} D_{\mathrm{qc}}(Y')$.
\end{proof}

If $f$ is smooth, we can say more:

\begin{proposition}\label{prop:option1smooth} Let $f: X\to Y$ be a proper smooth map of derived schemes. Then $f$ and its diagonal $\Delta$ are cohomologically smooth. The dualizing sheaf $\omega_{X/Y} = f^! \mathcal O_Y$ is inverse to the sheaf $\Delta^! \mathcal O_{X\times_Y X}$ and thus local on $X$.
\end{proposition}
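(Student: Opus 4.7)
The starting point is Theorem~\ref{thm:option1ULA}, which already does the hard work. Since $f: X \to Y$ is proper and smooth, in particular it is proper of finite Tor-dimension, so the theorem gives that $\mathcal O_X$ is $f$-suave. For smooth $f$ (which, being proper, is separated) the diagonal $\Delta: X \to X\times_Y X$ is a regular closed immersion, hence again proper and of finite Tor-dimension, so the same theorem yields that $\mathcal O_X$ is $\Delta$-suave. In view of Definition~\ref{def:cohomsmooth}, the only remaining work is to verify (i) $\otimes$-invertibility of $\omega_{X/Y} := f^!(\mathcal O_Y)$ and of $\omega_\Delta := \Delta^!(\mathcal O_{X\times_Y X})$, and (ii) that their formation is base-change compatible; (ii) is automatic for suave units by Proposition~\ref{prop:suavesheafproperties}(4).

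The heart of the argument is to identify $\omega_{X/Y}$ and $\omega_\Delta$ as mutual $\otimes$-inverses. Consider the standard Cartesian square
\[
\xymatrix{X\times_Y X \ar[r]^-{p_2} \ar[d]_{p_1} & X \ar[d]^f \\ X \ar[r]^f & Y.}
\]
Applying the base-change compatibility of the Verdier dual (Proposition~\ref{prop:suavesheafproperties}(4)) to $A = \mathcal O_X$ yields $p_2^\ast\omega_{X/Y} \cong p_1^!(\mathcal O_X)$. Pulling back along $\Delta$ and using $p_1\Delta = \mathrm{id}_X$ gives $\Delta^!(p_2^\ast\omega_{X/Y}) \cong \mathcal O_X$. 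On the other hand, $\Delta$-suaveness of $\mathcal O_X$ gives, via Proposition~\ref{prop:suavesheafproperties}(2), the identity $\Delta^!(-) \cong \omega_\Delta \otimes \Delta^\ast(-)$, and using $p_2\Delta = \mathrm{id}_X$ this evaluates to
\[
\Delta^!(p_2^\ast\omega_{X/Y}) \cong \omega_\Delta \otimes \omega_{X/Y}.
\]
Comparing the two expressions yields $\omega_\Delta \otimes \omega_{X/Y} \cong \mathcal O_X$, so $\omega_{X/Y}$ is $\otimes$-invertible with inverse $\omega_\Delta$ (and, symmetrically, $\omega_\Delta$ is invertible).

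Combining suaveness, invertibility, and the automatic base-change compatibility of Proposition~\ref{prop:suavesheafproperties}(4) matches all three conditions of Definition~\ref{def:cohomsmooth}, so $f$ and $\Delta$ are cohomologically smooth. The last assertion, that $\omega_{X/Y}$ is local on $X$, is then immediate: it equals $(\Delta^!\mathcal O_{X\times_Y X})^{-1}$, and $\Delta^!$ of the structure sheaf along the immersion $\Delta: X \to X\times_Y X$ is manifestly local on the source $X$. The only input beyond formal manipulations in $\mathrm{LZ}_{\mathcal D}$ is the derived-geometric assertion that $\Delta$ is proper of finite Tor-dimension for $f$ proper smooth, which I expect to be the one mildly technical point; everything else is adjunction bookkeeping already packaged in Proposition~\ref{prop:suavesheafproperties} and Theorem~\ref{thm:option1ULA}.
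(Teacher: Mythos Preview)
Your proof is correct and follows essentially the same route as the paper: both invoke Theorem~\ref{thm:option1ULA} for $f$ and for $\Delta$ (the paper notes simply that $\Delta$ ``is still of finite Tor-dimension''), then run the identical chain $\mathcal O_X=\Delta^! p_1^!\mathcal O_X\cong \Delta^! p_2^\ast f^!\mathcal O_Y\cong \Delta^\ast p_2^\ast f^!\mathcal O_Y\otimes \Delta^!\mathcal O_{X\times_Y X}\cong \omega_{X/Y}\otimes\omega_\Delta$ using base-change compatibility and $\Delta$-suaveness. Your write-up is just more explicit about matching each clause of Definition~\ref{def:cohomsmooth}; the paper attributes the argument to Verdier.
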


This argument goes back to Verdier \cite{VerdierGrothendieckSerre}.

\begin{proof} Let $p_1,p_2: X\times_Y X\to X$ be the two projections. We compute, using the base change compatibility of $f^! \mathcal O_X$ and Theorem~\ref{thm:option1ULA} applied to $\Delta$ (which is still of finite Tor-dimension):
\[
\mathcal O_X = \Delta^! p_1^! \mathcal O_X\cong \Delta^! p_2^\ast f^! \mathcal O_Y\cong \Delta^\ast p_2^\ast f^! \mathcal O_Y\otimes \Delta^! \mathcal O_{X\times_Y X}\cong f^! \mathcal O_Y\otimes \Delta^! \mathcal O_{X\times_Y X}.\qedhere
\]
\end{proof}

If $X$ and $Y$ are classical, it is not hard to define a canonical isomorphism $\Delta^! \mathcal O_{X\times_Y X}\cong (\Omega_{X/Y}^d[d])^\vee$ by working with local coordinates, and checking that the isomorphism does not depend on the choice of coordinates. If $Y$ is derived, this does not work, but instead one can use a deformation to the normal cone as in \cite[Lecture 13]{ClausenScholzeComplex} to define this isomorphism.

\subsection{Option 2: $I$ consists of the proper maps of finite Tor-dimension, $P$ consists of the open immersions.}

If we want to allow some morphisms in $I$, then it has to be the proper maps of finite Tor-dimension. We restrict $P$ to consist of the open immersions. The class $E$ should then consist of the separated maps that are almost of finite presentation and of finite Tor-dimension. (We note that the discussion of Lecture 4 also applies if instead of asking that all maps in $E$ factor as $P\circ I$, they factor as $I\circ P$. In fact, one can formally deduce it by replacing all $D(X)$ by $D(X)^{\mathrm{op}}$.)

For this discussion, one needs a derived version of Nagata compactifications, see also \cite[Chapter 5, Proposition 2.1.6]{GaitsgoryRozenblyum}.

\begin{theorem}\label{thm:derivednagata} Let $f: X\to Y$ be a separated map of qcqs derived schemes that is almost of finite presentation. Then $f$ can be factored as the composite of an open immersion $j: X\hookrightarrow \overline{X}$ and a proper map $\overline{f}: \overline{X}\to Y$.

More precisely, any such Nagata compactification on the level of classical schemes can be lifted to the derived level, up to universal homeomorphism.
\end{theorem}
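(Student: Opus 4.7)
The plan is to lift a given classical Nagata compactification of the truncation $f_0\colon X_0\to Y_0$ step by step up the Postnikov tower of $Y$ using derived deformation theory, and to absorb any obstructions into nilpotent thickenings of the boundary (which accounts for the ``up to universal homeomorphism'' clause in the statement).

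First I would reduce to $Y=\mathrm{Spec}(B)$ affine and $X=\mathrm{Spec}(A)$ with $A$ almost of finite presentation over $B$, by Zariski patching plus the locality of the construction on $Y$. The induced map $f_0\colon X_0\to Y_0$ on classical truncations is then separated of finite type, and Proposition~\ref{prop:nagatacompactification} gives a classical factorization $X_0\hookrightarrow \overline{X}_0\to Y_0$ with $\overline{X}_0\to Y_0$ proper. I would then construct, by induction on $n$, a proper derived scheme $\overline{X}_n\to Y_{\leq n}:=\mathrm{Spec}(\tau_{\leq n}B)$ lifting $\overline{X}_0$ and containing $X_{\leq n}:=X\times_Y Y_{\leq n}$ as an open subscheme. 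The step from $n$ to $n+1$ is a square-zero extension of animated rings with fibre $M_n:=\pi_{n+1}(B)[n+1]$, and standard derived deformation theory supplies an obstruction
\[
\mathrm{ob}_n\in \mathrm{Ext}^{2}_{\mathcal O_{\overline{X}_n}}\!\bigl(L_{\overline{X}_n/Y_{\leq n}},\,\overline{f}_n^{\ast}M_n\bigr)
\]
whose vanishing is equivalent to the existence of a lift $\overline{X}_{n+1}\to Y_{\leq n+1}$, with the set of lifts (once nonempty) being a torsor under the corresponding $\mathrm{Ext}^{1}$. Because $X$ already exists as a derived scheme over $Y$, the restriction of $\mathrm{ob}_n$ to the open subscheme $X_{\leq n}\subset \overline{X}_n$ vanishes, and on $X_{\leq n}$ the torsor of lifts is canonically trivialized by $X_{\leq n+1}$.

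The hard part, and the main obstacle, is that $\mathrm{ob}_n$ need not vanish on the boundary $Z_n:=\overline{X}_n\setminus X_{\leq n}$, and similarly the trivialization on $X_{\leq n}$ need not extend to $\overline{X}_n$. My plan for this is to replace $\overline{X}_n$ by a derived nilpotent thickening of itself supported on $Z_n$, built so that $\mathrm{ob}_n$ maps to zero in the corresponding $\mathrm{Ext}^2$ for the thickened cotangent complex and so that the required $\mathrm{Ext}^0$-correction extending the trivialization is available. Since $\mathrm{ob}_n$ is represented by a class with support in $Z_n$ (the open restriction being trivial), this is a local problem on a formal neighborhood of $Z_n$, where vanishing can be arranged after sufficiently many derived cells are attached in the ideal $\mathcal I_{Z_n}^{N}$ for large $N$. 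Such thickenings are the identity on underlying topological spaces and only introduce nilpotents along the boundary, so they are universal homeomorphisms on classical truncations --- precisely the slack the theorem allows. Once $\mathrm{ob}_n$ is killed and a lift is chosen to restrict to $X_{\leq n+1}$, the induction proceeds; the compatible tower $(\overline{X}_n)$ assembles into a derived scheme $\overline{X}$ over $Y=\lim_n Y_{\leq n}$ which is proper and almost of finite presentation (both properties being preserved under Postnikov limits), contains $X$ as an open subscheme via the open immersions $X_{\leq n}\hookrightarrow \overline{X}_n$, and satisfies that $\pi_0\overline{X}\to \overline{X}_0$ is a universal homeomorphism. The non-affine case follows by Zariski descent, using uniqueness of the local constructions up to universal homeomorphism to glue.
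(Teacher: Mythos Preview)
Your overall strategy --- Postnikov induction on $Y$ with deformation theory and obstructions absorbed by nilpotent thickenings along the boundary --- is plausible in spirit, but there are real gaps. First, your reduction to $X$ affine is not justified: localizing on $Y$ makes $Y$ affine but does nothing to $X$, and localizing on $X$ changes the compactification problem. The correct locality is on $\overline{X}_0$ (the problem of extending the derived structure from an open is local on the target of the open immersion), and this is what makes the gluing work, not any ``uniqueness up to universal homeomorphism,'' which you have not proved and which is in fact false. Second, and more seriously, the step where you ``kill $\mathrm{ob}_n$ by attaching cells in $\mathcal I_{Z_n}^N$'' is the entire content of the argument and is left as an assertion. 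A thickening $\overline{X}_n \hookrightarrow \overline{X}_n'$ changes the cotangent complex via a triangle involving $L_{\overline{X}_n/\overline{X}_n'}$, and you would need to explain precisely which thickening maps the new obstruction (which lives in a different Ext group, for $L_{\overline{X}_n'/Y_{\leq n}}$) to zero, while keeping $\overline{X}_n'$ almost of finite presentation over $Y_{\leq n}$ and keeping the restriction to $X_{\leq n}$ unchanged. None of this is automatic.

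The paper avoids deformation theory entirely. It first observes that the extension problem is local on $\overline{X}_0$, so one may take $\overline{X}_0$ and $Y$ affine; then it replaces $Y$ by $\mathbb A^n_Y$ to arrange that $\overline{X}_0 \to Y_0$ is \emph{finite}. This reduces everything to a lemma of Zariski--Main-Theorem flavor (Lemma~\ref{lem:zariskimain}): given an open derived $U \subset \mathrm{Spec}(B_0)$ with $B_0$ a finitely presented $\pi_0 A$-module, build a pseudocoherent $A$-algebra $B$ directly by generators and relations --- first adjoin variables to $A$ to generate $\pi_0 \mathcal O_U$ and surject onto $B_0$ up to universal homeomorphism, then quotient by relations in degree $0$, then in higher degrees. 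This is concrete and short, and it produces the almost-finite-presentation and properness for free because $B$ is visibly pseudocoherent over $A$. Your approach could perhaps be salvaged, but you would essentially have to reprove this lemma in the course of justifying the thickening step.
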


\begin{proof} Let $f_0: X_0\to Y_0$ be the map of underlying classical schemes, and fix a compactification $X_0\hookrightarrow \overline{X}_0$. We will lift the given compactification to the derived level, up to universal homeomorphism. This problem is local on $\overline{X}_0$. (More precisely, we make $X_0$ inductively larger, and the resulting extension is local on $\overline{X}_0$.) We can thus assume that $Y$ and $\overline{X}_0$ are affine. Replacing $Y$ by $\mathbb A^n_Y$, we can moreover arrange that the map $\overline{X}_0\to Y_0$ is finite. Now we apply the following lemma.
\end{proof}

\begin{lemma}\label{lem:zariskimain} Let $A$ be an animated commutative ring, and let $U$ be a derived scheme over $\mathrm{Spec}(A)$ of almost finite presentation together with an open embedding of the corresponding classical scheme $U_0$ into $\mathrm{Spec}(B_0)$ for some classical $\pi_0 A$-algebra $B_0$ that is finitely presented as $\pi_0 A$-module. Then there is a pseudocoherent $A$-algebra $B$ with a map $\pi_0 B\to B_0$ that is a universal homeomorphism, and an open embedding of $U$ into $\mathrm{Spec}(B)$ lifting the given open embedding of classical schemes.
\end{lemma}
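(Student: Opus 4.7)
\emph{Plan.} The strategy is to build a coarse pseudocoherent lift $B'$ of $B_0$ and then correct it using the flexibility that $\pi_0 B \to B_0$ is only required to be a universal homeomorphism, i.e.\ that nilpotent thickenings supported on $Z_0 := \mathrm{Spec}(B_0) \setminus U_0$ are permitted.

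\emph{Step 1: Build a first candidate $B'$.} Since $B_0$ is finitely presented as a $\pi_0 A$-module and contains $1$, it is also finitely presented as a $\pi_0 A$-algebra; choose module generators $b_1,\ldots,b_n$ of $B_0$ and finitely many generators $f_1,\ldots,f_m$ of the kernel of the resulting surjection $\pi_0 A[x_1,\ldots,x_n] \twoheadrightarrow B_0$. Pick arbitrary lifts $\tilde f_j \in A[x_1,\ldots,x_n]$ and set
\[
B' := A[x_1,\ldots,x_n] \otimes^{\mathrm{L}}_{A[y_1,\ldots,y_m]} A,
\]
where the two structure maps send $y_j$ to $\tilde f_j$ and to $0$, respectively. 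Then $B'$ is a pseudocoherent $A$-algebra with $\pi_0 B' = B_0$, and the closed immersion $\mathrm{Spec}(B_0) \hookrightarrow \mathbb A^n_{\pi_0 A}$ lifts to a closed immersion $\mathrm{Spec}(B') \hookrightarrow \mathbb A^n_A$. Let $V' \subset \mathrm{Spec}(B')$ be the derived open subscheme whose classical truncation is $U_0$.

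\emph{Step 2: Compare $V'$ with $U$ and correct $B'$.} Both $V'$ and $U$ are derived $A$-schemes of almost finite presentation with classical truncation $U_0$, but they need not agree as derived schemes; so we cannot just take $B=B'$. Working through the Postnikov tower, the discrepancy is captured in each degree by an obstruction class living in a finitely generated $\pi_0 \mathcal O_{U_0}$-module controlled by $L_{U_0/A}$. To correct $B'$, observe that any element $h \in \pi_0 B'$ which restricts to zero on $U_0$ (equivalently, is supported on $Z_0$) can be ``killed'' by replacing $B'$ with $B' \otimes^{\mathrm{L}}_{A[z]} A$ along $z \mapsto h$ and $z \mapsto 0$: this preserves pseudocoherence, does not change $V'$ over $U_0$, and alters $\pi_0 B'$ only on $Z_0$ (by introducing a nilpotent). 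Iteratively applying this procedure, stage by stage along the Postnikov filtration, we successively match the higher homotopy of $V'$ to that of $U$ without disturbing the universal-homeomorphism property of $\pi_0 B \to B_0$. The (eventually eventually-constant) limit produces the desired $B$, together with the identification $V \simeq U$ inside $\mathrm{Spec}(B)$; the compatibility of these local adjustments across a finite cover of $\mathrm{Spec}(B_0)$ then yields a global $B$.

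\emph{Main obstacle.} The hard part is Step 2: verifying that the obstructions to identifying $V'$ with $U$ are genuinely supported on $Z_0$ and can be represented by classes that the nilpotent-thickening modifications above actually annihilate, while keeping the successive $B'$'s pseudocoherent and the identification on $U_0$ undisturbed. This relies on the fact that $U_0$ inside $\mathrm{Spec}(B_0)$ and inside $\mathrm{Spec}(B')$ carry the \emph{same} pseudocoherent $A$-algebra structure locally (since $U \to \mathrm{Spec}(A)$ is almost of finite presentation and both $V'$ and $U$ restrict to it over $U_0$), so that the obstructions can only be sourced at points of $Z_0$. The universal-homeomorphism clause is precisely the input that gives enough room to realize each annihilation as a derived cell attached at $Z_0$.
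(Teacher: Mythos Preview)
Your Step~2 contains a genuine gap. The central claim---that the obstructions to identifying $V'$ with $U$ are ``supported on $Z_0$'' and can be removed by cell attachments along elements $h\in\pi_0 B'$ that vanish on $U_0$---is false on both counts.

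First, the specific modification you describe does not behave as claimed. If $h|_{U_0}=0$ in $\pi_0\mathcal O_{V'}$, then over $U_0$ the derived quotient $B'\otimes^{\mathrm L}_{A[z]}A$ restricts to $\mathcal O_{V'}\otimes^{\mathrm L}_{A[z],z\mapsto 0}A\cong \mathcal O_{V'}\oplus\mathcal O_{V'}[1]$; so rather than leaving $V'$ unchanged, you have added a free class in $\pi_1$ everywhere on $U_0$. Second, and more fundamentally, the discrepancy between $V'$ and $U$ is not localized at $Z_0$: the higher homotopy of $V'$ comes from your Koszul presentation of $B_0$ and bears no relation to $\pi_i\mathcal O_U$ anywhere on $U_0$. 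A clean counterexample is $Z_0=\emptyset$: take $A=k$ a field, $B_0=k$, and $U=\mathrm{Spec}(k\otimes^{\mathrm L}_{k[x]}k)=\mathrm{Spec}(k\oplus k[1])$. Your Step~1 gives $B'=k$, hence $V'=\mathrm{Spec}(k)\ne U$, and there are no modifications ``supported on $Z_0$'' available. Your justification that ``both $V'$ and $U$ restrict to the same pseudocoherent $A$-algebra structure over $U_0$'' is simply not true: only their $\pi_0$'s agree.

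The paper's approach avoids this by never separating the two tasks. One builds $B$ directly by adjoining variables to $A$ whose images are chosen simultaneously to generate $\pi_0\mathcal O_U$ and to surject (up to universal homeomorphism) onto $B_0$; then one quotients by relations in degree~$0$ and in successively higher degrees, where each relation is chosen to be a relation that actually holds in $\mathcal O_U$. Thus the open part of $\mathrm{Spec}(B)$ is forced to track $U$ from the outset, and the universal-homeomorphism slack is used only to accommodate the global $\pi_0$, not to repair a derived mismatch over $U_0$ that your construction creates.
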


\begin{proof} One can argue by inductively modifying $A$ by first adjoining further elements, so as to generate $\pi_0 \mathcal O_U$ and make $\pi_0 A\to B_0$ surjective up to universal homeomorphism, and then quotienting by elements in degree $0$ (so that $U_0$ becomes an open subscheme of $\mathrm{Spec}(\pi_0 A)$), and then by elements in higher homotopy groups, giving the desired $B$ in the limit.
\end{proof}

However, in the current setting, we actually need to preserve the property of finite Tor-dimension. We do not know whether this property can be preserved under Nagata compactifications, so for the moment we do not know how to construct a $6$-functor formalism of the desired type. This would actually have the further problem that $P$ does not satisfy the 2-out-of-3 property, and neither does $E$. This is not actually really required for the machinery of Liu--Zheng \cite{LiuZhengGluing}, but it is required for some of our machinery like the notions of cohomologically \'etale (or proper) maps.

Still, let us check that the maps $I$ and $P$ would satisfy axiom (4) from Lecture IV:

\begin{proposition}\label{prop:option2axiom4} Consider a cartesian diagram of derived schemes
\[\xymatrix{
X'\ar[r]^{f'}\ar[d]_{g'} &X\ar[d]^g\\
Y'\ar[r]^f &Y
}\]
where $f$ is an open immersion, and $g$ is a proper map of finite Tor-dimension. Then the natural map
\[
g_\sharp f'_\ast\to f_\ast g'_\sharp: D_{\mathrm{qc}}(X')\to D_{\mathrm{qc}}(Y)
\]
is an isomorphism.
\end{proposition}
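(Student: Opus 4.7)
The plan is to reduce the claim to two pieces: (i) the natural comparison map becomes an isomorphism after applying $f^\ast$; and (ii) the source $g_\sharp f'_\ast A$ already lies in the essential image of the fully faithful functor $f_\ast$. Piece (i) is immediate from the base change formula for $g_\sharp$ (Proposition~\ref{prop:leftadjointtoproperpullback}) together with the fact that $f'$ is a monomorphism: indeed, by construction (as in Lecture IV) the natural map $g_\sharp f'_\ast A \to f_\ast g'_\sharp A$ is adjoint under $f^\ast \dashv f_\ast$ to the composite
\[
f^\ast g_\sharp f'_\ast A \xrightarrow{\mathrm{bc}} g'_\sharp f'^\ast f'_\ast A \xrightarrow{\mathrm{counit}} g'_\sharp A,
\]
and both arrows are isomorphisms. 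Since $f_\ast$ is fully faithful (because $f$ is a monomorphism), (i) together with (ii) implies the proposition.

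For (ii), the key observation is that, for the open immersion $f$, the essential image of $f_\ast$ in $D_{\mathrm{qc}}(Y)$ is precisely the full subcategory of modules over the idempotent commutative algebra $E := f_\ast \mathcal O_{Y'}$. This follows from the projection formula for $f_\ast$ (Proposition~\ref{prop:quasicoherentbasechange}), which identifies the map $M \to M \otimes_{\mathcal O_Y} E$ induced by $\mathcal O_Y \to E$ with the unit map $M \to f_\ast f^\ast M$. Thus (ii) amounts to equipping $g_\sharp f'_\ast A$ with a natural $E$-module structure.

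First, $f'_\ast A$ is naturally a module over $f'_\ast \mathcal O_{X'}$: by the projection formula for $f'_\ast$ and the equality $f'^\ast f'_\ast \mathcal O_{X'} = \mathcal O_{X'}$ (using again that $f'$ is a monomorphism), the canonical map $f'_\ast A \to f'_\ast A \otimes_{\mathcal O_X} f'_\ast \mathcal O_{X'}$ is an isomorphism. Second, applying base change for $f_\ast$ along $g$ (Proposition~\ref{prop:quasicoherentbasechange}) to $\mathcal O_{Y'}$ gives an isomorphism of $\mathcal O_X$-algebras $g^\ast E = g^\ast f_\ast \mathcal O_{Y'} \cong f'_\ast g'^\ast \mathcal O_{Y'} = f'_\ast \mathcal O_{X'}$, so $f'_\ast A$ is in fact a module over $g^\ast E$. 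Applying $g_\sharp$ and invoking its projection formula from Proposition~\ref{prop:leftadjointtoproperpullback},
\[
g_\sharp f'_\ast A \otimes_{\mathcal O_Y} E \;\cong\; g_\sharp\bigl(f'_\ast A \otimes_{\mathcal O_X} g^\ast E\bigr) \;\cong\; g_\sharp f'_\ast A,
\]
which exhibits $g_\sharp f'_\ast A$ as an $E$-module, and hence as an object of the essential image of $f_\ast$, proving (ii).

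The main (rather mild) obstacle is the conceptual identification of the essential image of $f_\ast$ with $E$-modules; after that, everything is a purely formal manipulation with projection formulas and base change, and the compatibility of the resulting isomorphism with the natural comparison map from Lecture IV is a routine check in the adjunction calculus.
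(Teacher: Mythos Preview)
Your argument is correct and is in fact cleaner than the paper's own proof of this proposition. The paper argues by reducing to the affine case, then to a standard open $Y'=\mathrm{Spec}(B[\tfrac 1h])$, then to perfect complexes coming from $X$ by pullback, and finally invokes base change compatibility of $g_\sharp$ upon inverting $h$. Your proof avoids all reductions and works purely from the formal properties (projection formula for $g_\sharp$ and $f_\ast$, base change, and $f$ being a monomorphism).

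What you have rediscovered is essentially the dual of the Heyer--Mann observation recorded in Remark~\ref{rem:lastconditionvacuousexcision}. That remark shows that condition~(4) of Lecture~IV is automatic when the map in $I$ is a monomorphism; here the roles of $I$ and $P$ are swapped (proper maps are in $I$, open immersions in $P$), so the monomorphism lies in $P$ rather than $I$, and your argument is the corresponding dual: instead of showing the \emph{target} lies in the essential image of $j_!$, you show the \emph{source} lies in the essential image of $f_\ast$, via the idempotent-algebra description of that image. The paper could simply have invoked (the dual of) Remark~\ref{rem:lastconditionvacuousexcision} here, as you effectively do. Your approach has the advantage of making transparent that no special features of quasicoherent sheaves are used beyond the projection formulas and base change already established; the paper's direct argument has the minor advantage of being self-contained at this point in the exposition.
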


\begin{proof} The claim is local on $Y$, so we can assume that $Y=\mathrm{Spec}(B)$ is affine. Moreover, we can use descent along open immersions in $Y'$ to reduce to the case that $Y'=\mathrm{Spec}(B[\tfrac 1h])$ is a standard open affine. All functors commute with colimits, so it suffices to check the isomorphism on perfect complexes on $X'$; moreover, we can assume that they arise via pullback from $X$. In this case, it follows from the base change compatibility of $g_\sharp$ upon inverting $h$.
\end{proof}

Thus, if we would have the required Nagata compactifications of finite Tor-dimension (and suitably generalize the construction principle of Lecture IV to allow situations where $P$ does not satisfy the 2-out-of-3 property), we would get a $6$-functor formalism
\[
X\mapsto D_{\mathrm{qc}}(X)
\]
on all derived schemes, with values in presentable stable $\infty$-categories. The class $E$ would conists of separated maps of finite Tor-dimension.

This time, Grothendieck--Serre duality is encoded not in terms of $f$-suaveness, but in terms of $f$-primness.

\begin{theorem}\label{thm:option2lciproper} In this putative $6$-functor formalism, any map $f: X\to Y$ in $E$ that is a local complete intersection has the property that $\mathcal O_X$ is $f$-prim, with $f$-prim dual being invertible, and the inverse of $\omega_{X/Y}$.
\end{theorem}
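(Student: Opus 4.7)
The plan is to apply the criterion of Proposition~\ref{prop:checkprimsheaf} to $A=\mathcal{O}_X$. Since $f$ is separated and lci, the diagonal $\Delta\colon X \to X\times_Y X$ is a regular closed immersion of codimension $d$ (the relative dimension of $f$), hence proper of finite Tor-dimension and thus in $I$. Its conormal is the locally free sheaf $\Omega^1_{X/Y}$, so $\omega_\Delta = (\det \Omega^1_{X/Y})^\vee[-d]$, and with the standard convention $\omega_{X/Y}=\det L_{X/Y}$ (a shift of the determinant of the perfect cotangent complex) one has $\omega_\Delta = \omega_{X/Y}^{-1}$, which is $\otimes$-invertible. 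Grothendieck--Serre duality for the regular closed immersion $\Delta$ (where $\Delta_! = \Delta_\sharp$) gives $\Delta_!(\mathcal{O}_X)=\Delta_\ast(\omega_{X/Y}^{-1})$, and using $p_2\Delta = \mathrm{id}_X$ I compute
\[
p_{2\ast}\mathscr{H}\mathrm{om}(p_1^\ast \mathcal{O}_X,\Delta_!\mathcal{O}_X) \;=\; p_{2\ast}\Delta_\ast(\omega_{X/Y}^{-1}) \;=\; \omega_{X/Y}^{-1}.
\]
This identifies the candidate $f$-prim dual as $B=\omega_{X/Y}^{-1}$.

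Next I verify that the map of Proposition~\ref{prop:checkprimsheaf}, which in this case is
\[
f_!(\omega_{X/Y}^{-1}) \longrightarrow f_\ast\mathcal{O}_X,
\]
is an isomorphism on global sections. When $f$ is proper (so $f\in I$), this is immediate from Grothendieck--Serre for proper lci maps: the left adjoint $f_\sharp = f_!$ of $f^\ast$ satisfies $f_\sharp(-) = f_\ast(-\otimes \omega_{X/Y})$, so $f_!(\omega_{X/Y}^{-1})=f_\ast\mathcal{O}_X$ in $\mathcal{D}(Y)$. For general $f\in E$ I propose to argue Zariski-locally on $X$. Locally, $f$ factors as a regular closed immersion $X \hookrightarrow \mathbb{A}^n_Y$ followed by the smooth (non-proper) projection $\mathbb{A}^n_Y\to Y$; compactifying to $\mathbb{P}^n_Y\to Y$ produces a local proper lci replacement for which the previous paragraph applies, and one transfers the $f$-prim pairing $(\alpha,\beta)$ along the open immersion $\mathbb{A}^n_Y\hookrightarrow \mathbb{P}^n_Y$ together with its preimage $X\hookrightarrow \overline X$ inside the closure. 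Once $\mathcal{O}_U$ is shown to be $(f|_U)$-prim with dual $\omega_{U/Y}^{-1}$ for a Zariski cover $\{U_i\}$ of $X$, the global $f$-prim datum is obtained by descent, using that the resulting pairings $\alpha_i,\beta_i$ are compatible on overlaps (since the dual $\omega_{X/Y}^{-1}$ is defined globally and the relevant sheaves satisfy Zariski descent in $\mathcal{D}_\mathrm{qc}$).

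The two zigzag identities required for $f$-primness reduce, via Lemma~\ref{lem:adjunctioncheat}, to checking that the two composites featuring $\alpha$ and $\beta$ are \emph{isomorphisms}; this invertibility is automatic from the invertibility of $\omega_{X/Y}$ and the Grothendieck--Serre formulas in the proper case, and passes to general $f$ through the local argument. The main obstacle is the non-proper case: the chosen compactification $f=\bar f \circ j$ does not in general preserve the lci (or even finite Tor-dimension) hypothesis, so one cannot apply Grothendieck--Serre to $\bar f$ directly, and one is forced either to use Zariski-locality on $X$ combined with local compactifications of $f$ itself, or to invoke the suave/prim descent framework of the appendix to Lecture~VI (Proposition~\ref{prop:Dprimdescent}) to transport $f$-primness from an $\tilde E$-cover where the computation is accessible back to $X$. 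A clean verification of this gluing is the technical heart of the argument.
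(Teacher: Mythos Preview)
Your computation of the candidate $f$-prim dual as $\omega_{X/Y}^{-1}$ and your treatment of the proper case are correct, but the passage to general $f$ has a genuine gap that you yourself flag but do not close. In Option~2 the class $P$ consists of open immersions, so for an open immersion $j$ one has $j_!=j_\ast$: open immersions are cohomologically \emph{proper} in this formalism, not cohomologically \'etale. Consequently there is no reason that $f$-primness of $\mathcal O_X$ is Zariski-local on $X$, and the gluing of local $f$-prim data $(\alpha_i,\beta_i)$ you propose is precisely the missing content. Your alternative via local compactifications fails for the reason you note: the closure of a regular immersion $X\hookrightarrow\mathbb A^n_Y$ inside $\mathbb P^n_Y$ need not be lci or even of finite Tor-dimension, so Grothendieck--Serre does not apply to~$\overline X\to Y$. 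Proposition~\ref{prop:Dprimdescent} is about descent \emph{along} prim covers, not about assembling primness from an open cover, so it does not help either.

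The paper resolves this by a single move you are missing: pass to opposite categories (Remark~\ref{rem:opposite3functors}), replacing $D_{\mathrm{qc}}=\mathrm{Ind}(\mathrm{Perf})$ by $\mathrm{Pro}(\mathrm{Perf})$. Under this duality $f$-primness becomes $f$-suaveness, and the statement becomes ``$f$ is cohomologically smooth'' (Theorem~\ref{thm:option3lcismooth}). Crucially, in the opposite formalism open immersions are now cohomologically \'etale, so cohomological smoothness \emph{is} Zariski-local on source and target. One then reduces to affines, and by base-change compatibility to $\mathbb A^1_{\mathbb Z}\to\mathrm{Spec}(\mathbb Z)$ and its zero section; the case of $\mathbb A^1$ is handled via the open embedding $\mathbb A^1\hookrightarrow\mathbb P^1$ (again cohomologically \'etale in the dual picture), and for $\mathbb P^1$ and the zero section one imports the proper results from Option~1 (Proposition~\ref{prop:option1smooth}). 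The dualization is what makes your intended Zariski localization legitimate.
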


One could wonder whether in fact $\mathcal O_X$ is $f$-prim for all $f\in E$, but this turns out to be wrong. In Option 5, we will later correct this.

There is something very mind-bending about this situation. There is a way to make things more intuitive by passing to opposite categories, so we will prove the theorem after a reinterpretation.

\subsection{Option 3: Replace $\mathrm{Ind}(\mathrm{Perf})$ by $\mathrm{Pro}(\mathrm{Perf})$}

Recall that replacing $D(X)$ by $D(X)^{\mathrm{op}}$ replaces cohomologically \'etale maps by cohomologically proper maps. As in Option 2, there is an apparent mismatch between geometric notions and the abstract notions defined in terms of $D$, it may be psychologically useful to switch from
\[
D_{\mathrm{qc}}(X) = \mathrm{Ind}(\mathrm{Perf}(X))
\]
to its opposite, which is $\mathrm{Pro}(\mathrm{Perf}(X))$ (noting that $\mathrm{Perf}(X)$ is self-dual, via the naive duality functor). A formalism of this type (or maybe rather the next option below) was first investigated by Deligne in the appendix of \cite{DeligneAppendix}. This has the disadvantage that $\mathrm{Pro}(\mathrm{Perf}(X))$ is no longer presentable, so we will generally have to be content with a $3$-functor formalism (with occasionally defined right adjoints).

The putative formalism from Option 2 then dualizes to a $3$-functor formalism on $\mathrm{Pro}(\mathrm{Perf}(X))$. Let us use the standard notation $f_!$ for the functors for $f\in E$. Then $f_! = \overline{f}_\ast j_!$ as usual, where $j_!$ is a left adjoint of $j^\ast$ and $\overline{f}_\ast$ is a right adjoint of $\overline{f}^\ast$ (and $j$ is an open immersion and $\overline{f}\in E$ is proper).

\begin{example} Assume that $Y=\mathrm{Spec}(k)$ is the spectrum of a field. Then the category of Pro-objects of finite-dimensional $k$-vector spaces is the category of linearly compact $k$-vector spaces. For any separated scheme of finite type $X$ over $k$ and any $E\in \mathrm{Perf}(X)$, the object
\[
R\Gamma_c(X,E):=f_! E\in \mathrm{Pro}(\mathrm{Perf}(k))
\]
defines a bounded complex of linearly compact $k$-vector spaces. Concretely, to compute it take any compactification $\overline{X}\supset X$ whose boundary is a Cartier divisor $D\subset \overline{X}$. Assume that $E$ extends to $\overline{E}$ on $\overline{X}$ (which always happens up to retracts). Then
\[
R\Gamma_c(X,E) = \mathrm{lim}_n R\Gamma(\overline{X},\overline{E}(-nD))\in \mathrm{Pro}(\mathrm{Perf}(k)).
\]
\end{example}

In this language, Theorem~\ref{thm:option2lciproper} becomes the following result.

\begin{theorem}\label{thm:option3lcismooth} Let $f: X\to Y$ be any map in $E$ that is a local complete intersection. Then $f$ is cohomologically smooth.
\end{theorem}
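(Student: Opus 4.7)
The plan is to verify the criterion of Theorem~\ref{thm:critcohomsmooth} in the Option~3 formalism, producing an invertible $L\in\mathcal D(X)=\mathrm{Pro}(\mathrm{Perf}(X))$ together with maps $\alpha,\beta$ with the two displayed compatibilities; by Remark~\ref{rem:opposite3functors} this is equivalent to Theorem~\ref{thm:option2lciproper}, so the same argument establishes both. Cohomological smoothness is stable under composition and base change, and the data $(L,\alpha,\beta)$ is Zariski-local on $X$. Since an lci morphism $f\in E$ factors Zariski-locally on $X$ as a regular closed immersion into a relative projective space followed by its structure map $\mathbb{P}^N_V\to V$ to some open $V\subset Y$, followed by the open inclusion $V\hookrightarrow Y$, it suffices to handle three cases: open immersions, proper smooth morphisms, and regular closed immersions.

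Open immersions $j$ lie in $I$ in Option~3, so $j_!$ is left adjoint to $j^\ast$ and $j^\ast=j^!$; thus $j$ is cohomologically \'etale, hence cohomologically smooth with $L=\mathcal O_X$. For a proper smooth morphism $p\colon X\to Y$ of relative dimension $d$, I follow Proposition~\ref{prop:option1smooth}: the diagonal $\Delta\colon X\to X\times_Y X$ is a regular closed immersion of codimension $d$ with normal bundle $\mathcal T_{X/Y}$, so the regular closed immersion case (handled below) furnishes it with an invertible dualizing object $\Delta^!\mathcal O_{X\times_Y X}\cong \det(\mathcal T_{X/Y})[-d]$; Verdier's calculation $\mathcal O_X=\Delta^!p_1^!\mathcal O_X$ then identifies $p^!\mathcal O_Y$ with $\Omega^d_{X/Y}[d]$, which is invertible, and base-change compatibility is inherited from that of the diagonal.

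The main case is a regular closed immersion $i\colon X\hookrightarrow Y$ of codimension $c$ with normal bundle $\mathcal N:=\mathcal N_{X/Y}$ locally free of rank $c$. I take $L=\det(\mathcal N)[-c]$, which is invertible. Choosing (locally on $Y$) a regular sequence $t_1,\ldots,t_c$ cutting out $X$, the Koszul complex presents $i_\ast\mathcal O_X$ as a perfect $\mathcal O_Y$-module and is self-dual up to the twist $L$; applying $\mathrm{RHom}_{\mathcal O_Y}(-,\mathcal O_Y)$ yields the trace $\beta\colon i_!L\to\mathcal O_Y$. The diagonal $\Delta\colon X\to X\times_Y X$ of $i$ is itself a regular closed immersion cut out by $p_1^\ast t_k-p_2^\ast t_k$, and the analogous Koszul self-duality produces $\alpha\colon\Delta_!\mathcal O_X\to p_2^\ast L$. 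By Lemma~\ref{lem:adjunctioncheat} it suffices to check that the two composites of Theorem~\ref{thm:critcohomsmooth} are merely isomorphisms; each reduces, via the explicit Koszul description, to an automorphism of the invertible $L\in\mathrm{Pic}(\mathrm{Perf}(X))$, which is an isomorphism by a direct computation.

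The main obstacle is making this Koszul/Grothendieck duality argument rigorous and coordinate-free in the derived setting, where $Y$ is an arbitrary animated commutative ring and the regular sequence $t_1,\ldots,t_c$ must be interpreted via the derived Koszul complex. The cleanest method is to globalize via the deformation to the normal cone, as in \cite[Lecture 13]{ClausenScholzeComplex} or \cite{Zavyalov}: this reduces the construction of $(L,\alpha,\beta)$ to the universal case of the zero section of the normal bundle of $X$ in $Y$, where homogeneity and the explicit Koszul description pin down all data canonically.
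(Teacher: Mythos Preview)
Your approach differs from the paper's. The paper does not factor into regular-closed-immersion plus proper-smooth and treat each case by hand; instead it localizes to affine $X$ and $Y$, then reduces via composition and base change to the two universal lci building blocks $\mathbb A^1_{\mathbb Z}\to\mathrm{Spec}(\mathbb Z)$ and its zero section. After replacing $\mathbb A^1$ by $\mathbb P^1$, both of these are proper, and the key step is to \emph{import from Option~1}: by Theorem~\ref{thm:option1ULA} and Proposition~\ref{prop:option1smooth}, the right adjoint $f^!$ to $f_!=f_\ast$ exists on $\mathrm{Ind}(\mathrm{Perf})$, preserves $\mathrm{Perf}$, and commutes with base change; restricting the adjunction to $\mathrm{Perf}$ and then $\mathrm{Pro}$-extending yields the required right adjoint with all its properties in $\mathrm{Pro}(\mathrm{Perf})$. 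This single mechanism handles both cases at once.

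That import step is precisely what is missing from your proper-smooth case. When you invoke ``Verdier's calculation $\mathcal O_X=\Delta^! p_1^!\mathcal O_X$'' you are presupposing that $p^!$ and $p_1^!$ exist on $\mathrm{Pro}(\mathrm{Perf})$ and satisfy base change---but Proposition~\ref{prop:option1smooth} lives in Option~1, and right adjoints do not pass for free to the opposite formalism; the transport via $\mathrm{Perf}$ is the substantive point the paper isolates. Once you insert it, your argument goes through. (A minor slip: your factorization claims a regular closed immersion into $\mathbb P^N_V$, but in general an lci map only admits a regular closed immersion into $\mathbb A^N_V$; the passage to $\mathbb P^N_V$ is then via the open inclusion, which you must handle separately.) Your Koszul and deformation-to-the-normal-cone treatment of regular closed immersions is more explicit than the paper's reduction to the zero section and is correct, but it does more work than is strictly needed here.
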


We can then define $\omega_{X/Y} = f^! \mathcal O_Y\in \mathrm{Perf}(X)$ (which is then defined). We note that this is automatically local on $X$.

\begin{proof} In this formalism, open immersions are cohomologically \'etale, so we can localize (on $X$ and $Y$), and hence assume that $X$ and $Y$ are affine. By the assumption that $f$ is a local complete intersection, and using base change compatibility, one can in fact reduce further to the case of $\mathbb A^1_{\mathbb Z}\to \mathrm{Spec}(\mathbb Z)$ or its zero section. The case of $\mathbb A^1$ can be reduced to $\mathbb P^1$. In those cases, the right adjoint $f^!$ to $f_!=f_\ast$ actually exists, preserves perfect complexes, and commutes with any base change (using Proposition~\ref{prop:option1smooth}, upon restricting from $\mathrm{Ind}(\mathrm{Perf})$ to $\mathrm{Perf}$, and extending back to $\mathrm{Pro}(\mathrm{Perf})$). Thus, $\mathcal O_X$ is $f$-suave, but by Proposition~\ref{prop:option1smooth} the object $f^! \mathcal O_X$ is invertible.
\end{proof}

As we observed above, it is very annoying that the maps in $E$ have to be restricted to be of finite Tor-dimension; this precludes in particular the 2-out-of-3 property. The underlying reason is that in Theorem~\ref{thm:finitenesscoherentcohom} this restriction appears in relation to perfect complexes.

\subsection{Option 4: Replace $\mathrm{Pro}(\mathrm{Perf})$ by $\mathrm{Pro}(\mathrm{Coh})$}

However, we can artificially replace $\mathrm{Perf}$ by $\mathrm{Coh}$, the coherent complexes, at least for noetherian schemes $X$ (i.e.~the underlying classical scheme is noetherian, and $\pi_i \mathcal O_X$ is a coherent $\pi_0 \mathcal O_X$-module for all $i$).

Restricting to noetherian derived schemes, we send any $X$ to the stable $\infty$-category $\mathrm{Pro}(\mathrm{Coh}(X))$. We note that by restricting the symmetric monoidal $\infty$-category $D_{\mathrm{qc}}(X)$ to $\mathrm{Coh}(X)$, one still has an $\infty$-operad, which after passage to $\mathrm{Pro}(\mathrm{Coh}(X))$ is again a symmetric monoidal $\infty$-category. Concretely, the tensor product of two objects of $\mathrm{Coh}(X)$ need not itself lie in $\mathrm{Coh}(X)$ (in general, it just lies in $\mathrm{PCoh}(X)$), but it still defines a pro-object of $\mathrm{Coh}(X)$ (the Pro-system of its truncations $\tau_{\leq n}$). Similar remarks apply to pullback functoriality. We note that the resulting pullback and tensor product functors on $\mathrm{Pro}(\mathrm{Coh})$ automatically commute with all limits.

We let $I$ be the class of open immersions, and $P$ the class of proper maps; then $E$ is the class of separated maps of almost finite type (of noetherian derived schemes).

\begin{theorem}\label{thm:option4works} The functor $X\mapsto \mathrm{Pro}(\mathrm{Coh}(X))$ satisfies the hypotheses of Theorem~\ref{thm:construct6functors} with respect to $I$ and $P$, and hence defines a $3$-functor formalism on noetherian derived schemes.
\end{theorem}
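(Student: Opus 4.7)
The plan is to verify the four hypotheses of Theorem~\ref{thm:construct6functors} for the functor $\mathcal D_0:X\mapsto\mathrm{Pro}(\mathrm{Coh}(X))$ on qcqs noetherian derived schemes, with $I$ the open immersions and $P$ the proper (almost of finite presentation) maps. Hypothesis (1) is mostly formal: open immersions and proper maps are each stable under pullback, composition, and diagonals; their intersection consists of clopen immersions, which are monomorphisms and hence $(-1)$-truncated; and any $f\in E$ factors as $f=\overline{f}\circ j$ with $j$ an open immersion and $\overline{f}$ proper by derived Nagata compactification (Theorem~\ref{thm:derivednagata}). Crucially, working with $\mathrm{Coh}$ rather than $\mathrm{Perf}$ removes the troublesome finite Tor-dimension constraint that spoiled Option 2. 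The pullback $f^\ast$ on $\mathrm{Pro}(\mathrm{Coh})$ is defined for every $f$ via the truncation trick: $f^\ast A$ is pseudocoherent, and by Proposition~\ref{prop:truncatepseudocoherent} (using noetherianness) each $\tau_{\le n}f^\ast A$ is coherent, so $f^\ast A:=\flim_n\tau_{\le n}f^\ast A\in\mathrm{Pro}(\mathrm{Coh}(X))$.

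For hypothesis (3), the functor $f_\ast$ for proper $f$ preserves coherence (Theorem~\ref{thm:finitenesscoherentcohom}), so it restricts to $f_\ast:\mathrm{Coh}(X)\to\mathrm{Coh}(Y)$, and I would identify its Pro-extension as the right adjoint of $f^\ast$ on $\mathrm{Pro}(\mathrm{Coh})$ directly: for $A\in\mathrm{Coh}(Y)$ and $B\in\mathrm{Coh}(X)$, one computes $\mathrm{Hom}_{\mathrm{Pro}(\mathrm{Coh}(X))}(f^\ast A,B)=\mathrm{colim}_n\mathrm{Hom}_{D_{\mathrm{qc}}(X)}(\tau_{\le n}f^\ast A,B)=\mathrm{Hom}_{D_{\mathrm{qc}}(X)}(f^\ast A,B)=\mathrm{Hom}_{D_{\mathrm{qc}}(Y)}(A,f_\ast B)$ by Proposition~\ref{prop:quasicoherentbasechange}, and a standard limit/colimit manipulation extends this to arbitrary pro-objects on both sides. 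The base change and projection formula for $f_\ast$ on $\mathrm{Pro}(\mathrm{Coh})$ are then inherited from the corresponding statements on $D_{\mathrm{qc}}$, once one observes that all functors involved send coherent complexes to pro-coherent ones. Hypothesis (4) is automatic by Remark~\ref{rem:lastconditionvacuousexcision}, since every open immersion is a monomorphism.

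Hypothesis (2)---the existence of a left adjoint $j_!$ to $j^\ast$ on $\mathrm{Pro}(\mathrm{Coh})$ for open immersions, satisfying base change and the projection formula---is the substantive point, and the main obstacle. For existence I would invoke the adjoint functor theorem: $\mathrm{Pro}(\mathrm{Coh}(X))^{\mathrm{op}}=\mathrm{Ind}(\mathrm{Coh}(X)^{\mathrm{op}})$ is a presentable stable $\infty$-category (because $\mathrm{Coh}(X)$ is essentially small and stable, hence finitely cocomplete), and the Pro-extension of the exact functor $j^\ast$ on coherent sheaves preserves all small limits; taking opposites, this becomes a colimit-preserving functor between presentable $\infty$-categories, which automatically admits a right adjoint, and dualizing yields the desired $j_!$ on $\mathrm{Pro}(\mathrm{Coh})$. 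Verifying base change $g^\ast j_!\cong j'_!g'^{\ast}$ and the projection formula $j_!A\otimes B\cong j_!(A\otimes j^\ast B)$ for this abstractly-defined $j_!$ is the part I expect to require the most care: I would proceed by Yoneda, testing both sides against $\mathrm{Hom}(-,C)$ for coherent $C$ in the target and using the adjunction $j_!\dashv j^\ast$ together with the already-established base change/projection formula for $j^\ast$ (which is symmetric monoidal) and for $f_\ast$ to translate each desired identity into a true statement about $j^\ast$ at the coherent level. The delicate interaction between formal Pro-limits and the naive limits computed levelwise in $\mathrm{Coh}$ is where the real bookkeeping lives, and is the step for which no shortcut seems available.
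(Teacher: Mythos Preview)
Your proposal is correct and considerably more detailed than the paper's own proof, which is literally the single sentence ``The proof is a variation on the same observations as before.'' Your decomposition into the four hypotheses, the use of derived Nagata compactification (Theorem~\ref{thm:derivednagata}) for (1), the appeal to Theorem~\ref{thm:finitenesscoherentcohom} and Proposition~\ref{prop:quasicoherentbasechange} for (3), and the invocation of Remark~\ref{rem:lastconditionvacuousexcision} for (4) are all exactly right.

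On hypothesis (2), however, you overstate the difficulty when you say ``no shortcut seems available.'' There is one, and it is precisely what the paper's reference to Deligne's appendix \cite{DeligneAppendix} is pointing at: rather than producing $j_!$ abstractly via the adjoint functor theorem and then wrestling with Yoneda arguments for base change and projection formula, one can write down $j_!$ explicitly. For a noetherian derived scheme $X$ with open immersion $j:U\hookrightarrow X$ and closed complement cut out by an ideal sheaf $I$, any $M\in\mathrm{Coh}(U)$ extends to some $\overline{M}\in\mathrm{Coh}(X)$, and
\[
j_! M \;=\; \flim_n\, I^n\overline{M} \;\in\; \mathrm{Pro}(\mathrm{Coh}(X)),
\]
independently of the choice of extension. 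With this formula in hand, base change and the projection formula become direct computations on the level of $\mathrm{Coh}$ (each $I^n\overline{M}$ is coherent and the constructions are manifestly compatible with $g^\ast$ and $-\otimes B$ up to reindexing the pro-system), bypassing the ``delicate interaction between formal Pro-limits and naive limits'' you flag. This is the concrete content behind the paper's casual ``same observations as before,'' and it is also what makes the Example in Option~3 (computing $R\Gamma_c$ via $\flim_n R\Gamma(\overline{X},\overline{E}(-nD))$) work.
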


The proof is a variation on the same observations as before. Compared to the previous option, there are many more $f$-suave objects. In particular, we get a stronger result on biduality.

\begin{proposition}\label{prop:option4ula} Let $f: X\to Y$ be a separated map of noetherian derived schemes of almost finite type and assume that $K\in \mathrm{Coh}(X)\subset \mathrm{Pro}(\mathrm{Coh}(X))$ is of finite Tor-dimension over $Y$. Then $K$ is $f$-suave.

In particular, if $Y$ is a regular classical scheme, then all objects $K\in \mathrm{Coh}(X)$ are $f$-suave, and $\mathrm{Coh}(X)$ is self-dual with respect to the Verdier duality $\mathbb D_f(K) = \mathscr{H}\mathrm{om}(K,f^! \mathcal O_Y)$.
\end{proposition}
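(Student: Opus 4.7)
The plan is to apply the local criterion of Proposition~\ref{prop:checksuavesheaf}: $K$ is $f$-suave as soon as the natural map
\[
p_1^\ast K \otimes p_2^\ast \mathbb D_f(K) \longrightarrow \mathscr{H}\mathrm{om}(p_2^\ast K, p_1^! K)
\]
in $\mathrm{Pro}(\mathrm{Coh}(X \times_Y X))$ is an isomorphism. Since this is a Zariski-local condition on $X$ and $Y$, I would work locally on both throughout.

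First I would reduce the geometric setup. By derived Nagata compactification (Theorem~\ref{thm:derivednagata}) and the fact that open immersions are cohomologically étale in the $\mathrm{Pro}(\mathrm{Coh})$ formalism (analog of Proposition~\ref{prop:etalecohometaleDet}), it suffices to verify the criterion after restricting along $j:X\hookrightarrow \overline X$ in a factorization $f=\overline f\circ j$ with $\overline f$ proper. Shrinking $X$ and $Y$ further, we may factor the proper part as $X\xrightarrow{i}\mathbb P^n_Y\xrightarrow{g}Y$ with $i$ a closed immersion. The smooth proper morphism $g$ is lci, hence cohomologically smooth by the $\mathrm{Pro}(\mathrm{Coh})$-analog of Theorem~\ref{thm:option3lcismooth}, so $\mathcal O_{\mathbb P^n_Y}$ is $g$-suave with invertible dualizing complex $\omega_g$.

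The heart of the argument is to transport $K$ to $X':=\mathbb P^n_Y$ and exploit the flexibility of perfect complexes there. Because $K$ is coherent of finite Tor-dimension over $Y$ and $g$ is flat, the proper pushforward $i_\ast K$ is coherent on $X'$; smoothness of $g$ promotes finite Tor-dimension over $Y$ to finite Tor-dimension over $\mathcal O_{X'}$, so $i_\ast K$ is perfect, hence dualizable in $D_{\mathrm{qc}}(X')$. Since $g$-suave objects of $D(X')$ are stable under tensoring by dualizable objects (tensoring by a dualizable object is itself a left adjoint with right adjoint given by tensoring with its dual, so composing the $g$-suave adjunction for $\mathcal O_{X'}$ still produces a left adjoint in $\mathrm{LZ}_{\mathcal D}$), one concludes that $i_\ast K=\mathcal O_{X'}\otimes i_\ast K$ is $g$-suave. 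Applying the symmetric monoidal $3$-functor formalism to the closed immersion $i\times_Y i:X\times_Y X\to X'\times_Y X'$ and using the projection formula $i_\ast K\otimes A=i_\ast(K\otimes i^\ast A)$, the $g$-suave adjunction datum for $i_\ast K$ on $X'\times_Y X'$ pulls back along $i\times_Y i$ to the required adjunction datum for $K$ in $\mathrm{LZ}_{\mathcal D}$ over $X\times_Y X$, which is precisely what the criterion of Proposition~\ref{prop:checksuavesheaf} asks for.

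The principal obstacle is this last transport step: one must verify that the adjunction data for $i_\ast K$ in $\mathrm{LZ}_{\mathcal D}$ over $X'$ descends to adjunction data for $K$ over $X$ genuinely at the level of Fourier–Mukai kernels on $X\times_Y X$, and not merely as an equivalence of underlying functors. This is where the $2$-categorical uniqueness of Lemma~\ref{lem:adjunctionleftright} is used: it ensures that the unit and counit built from the smooth adjunction for $g$ and the closed-immersion data agree with the canonical comparison, and that the resulting $\mathbb D_f(K)$ coincides with $i^\ast\mathbb D_g(i_\ast K)\otimes \omega_i$; base-change compatibility of the whole construction follows from the base-change compatibility of each individual piece. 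For the final assertion: when $Y$ is a regular classical noetherian scheme, every object of $D_{\mathrm{qc}}(Y)$ has finite Tor-dimension, and since a coherent complex $K$ on $X$ is in particular bounded, $K$ is automatically of finite Tor-dimension over $Y$. The first part therefore applies to every $K\in\mathrm{Coh}(X)$, and Proposition~\ref{prop:suavesheafproperties}(3) then yields Verdier biduality, exhibiting $\mathrm{Coh}(X)$ as self-dual under $\mathbb D_f$.
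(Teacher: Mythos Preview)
Your strategy is in the right spirit—factor through a smooth intermediate space where $K$ becomes perfect—but you have taken a considerably more circuitous route than the paper, and your ``transport'' step is not fully justified.

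The paper's proof is three lines: localize so $X$ and $Y$ are affine, then \emph{replace the target} $Y$ by $Y':=\mathbb A^n_Y$ using that $g:Y'\to Y$ is cohomologically smooth, reducing to the case where $f=i:X\hookrightarrow Y'$ is a closed immersion. At that point $i_\ast K$ is perfect on $Y'$ (coherent plus finite Tor-dimension over $Y$, hence over the smooth $Y'$), and its naive $Y'$-dual $\mathscr{H}\mathrm{om}_{Y'}(i_\ast K,\mathcal O_{Y'})$—which is again supported on $X$ and so carries an $\mathcal O_X$-module structure—directly furnishes the $i$-suave dual. No Nagata compactification, no $\mathbb P^n$, no transport of adjunction data. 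The implicit principle is that suaveness composes: if $K$ is $i$-suave and $1_{Y'}$ is $g$-suave (cohomological smoothness), then $K$ is $(g\circ i)$-suave; this is the elementary fact that left adjoints in $\mathrm{LZ}$ compose.

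By contrast, you keep the target $Y$ fixed, push $K$ forward to $X'=\mathbb P^n_Y$, prove $i_\ast K$ is $g$-suave there, and then attempt to descend. The Nagata step is superfluous (you immediately shrink $X$ afterwards, destroying the compactification), and after localizing to affines your map $i:X\to\mathbb P^n_Y$ is only locally closed, not closed as claimed. More seriously, the descent step is a genuine gap: knowing that $i_\ast K:X'\to Y$ is a left adjoint in $\mathrm{LZ}_Y$ does not formally imply that $K:X\to Y$ is one, because these are different morphisms between different objects of $\mathrm{LZ}_Y$. Your proposed pullback of $(\alpha',\beta')$ along $(i\times i)^\ast$ can be made to work, but you must verify that the pulled-back maps satisfy the triangle identities, and your appeal to Lemma~\ref{lem:adjunctionleftright} does not address this. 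The paper sidesteps the issue entirely by changing the base rather than pushing forward and pulling back.
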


We note in particular that if $f$ is itself of finite Tor-dimension, we get the dualizing complex $\omega_{X/Y}=f^!\mathcal O_Y\in \mathrm{Coh}(X)$, in a way that localizes on $X$, fulfilling a promise made in Option 1 to show that the dualizing complex is of local nature in general.

\begin{proof} The assertion is local on $X$ and $Y$, so we can assume that they are affine. Replacing $Y$ by an affine space over $Y$, which is cohomologically smooth, one can then assume that $f$ is a closed immersion. In that case $K$ becomes perfect as an object of $\mathrm{Coh}(Y)$, and its naive $Y$-dual (with induced $\mathcal O_X$-module structure) defines the desired $f$-suave dual.
\end{proof}

\subsection{Option 5: Replace $\mathrm{Pro}(\mathrm{Coh})$ by $\mathrm{Ind}(\mathrm{Coh})$}

Finally, we can dualize again, and replace $\mathrm{Pro}(\mathrm{Coh}(X))$ by its opposite $\infty$-category $\mathrm{Ind}(\mathrm{Coh}(X)^{\mathrm{op}})$. If one restricts to schemes almost of finite type over a classical regular base scheme $Y$ (e.g., a field $k$), one can use the self-duality from Proposition~\ref{prop:option4ula} to identify $\mathrm{Ind}(\mathrm{Coh}(X)^{\mathrm{op}})$ with $\mathrm{Ind}(\mathrm{Coh}(X))$. Now the $\infty$-categories are again presentable (and all $3$ functors commute with colimits), so one gets a full $6$-functor formalism. This gives exactly the $6$-functor formalism constructed by Gaitsgory--Rozenblyum \cite{GaitsgoryRozenblyum}.

Working over a classical regular base scheme $Y$, we note that under this identification
\[
\mathrm{Ind}(\mathrm{Coh}(X)^{\mathrm{op}})\cong \mathrm{Ind}(\mathrm{Coh}(X)),
\]
the pullback, tensor product, and exceptional functors on $\mathrm{Ind}(\mathrm{Coh}(X))$ are Verdier dual to the usual pullback, tensor product, and lower shriek functors; they are thus denoted by $f^!$, $\otimes^!$, and $f_\ast$ in the work of Gaitsgory--Rozenblyum, even while they play the role of $f^\ast$, $\otimes$ and $f_!$ in this $6$-functor formalism.

Let us end with a remark on the relation between $D_{\mathrm{qc}}$ and $\mathrm{Ind}(\mathrm{Coh})$. As in \cite{GaitsgoryRozenblyum}, take as our category $C$ the affine derived schemes almost of finite type over a field $k$ (of any characteristic, for now). We have two functors to symmetric monoidal presentable stable $\infty$-categories
\[
X\mapsto D_{\mathrm{qc}}(X), X\mapsto \mathrm{Ind}(\mathrm{Coh}(X))
\]
from Option 2 and Option 5, respectively. We note that both theories satisfy Zariski descent, so in the limit above it suffices to restrict to affine $X$. In \cite{GaitsgoryRozenblyum}, sometimes a restriction to truncated $X$ is made. This does not change $\mathrm{Ind}(\mathrm{Coh}(X))$, in the following sense: If $X=\mathrm{Spec}(A)$ is affine, then the functor
\[
\mathrm{Ind}(\mathrm{Coh}(A))\to \mathrm{lim}_n \mathrm{Ind}(\mathrm{Coh}(\tau_{\leq n} A))
\]
is an equivalence. (To see this, note that the pullback functors here are concretely realized as $!$-functors, and have the forgetful functors $\mathrm{Coh}(\tau_{\leq n} A)\to \mathrm{Coh}(A)$ as left adjoints. Thus, this limit can also be computed as the colimit of presentable stable $\infty$-categories along these left adjoint functors that preserve compact objects. Then it follows from
\[
\mathrm{colim}_n \mathrm{Coh}(\tau_{\leq n} A)\to \mathrm{Coh}(A)
\]
being an equivalence of small stable $\infty$-categories, i.e.~on a bounded complex the $A$-action always factors (uniquely) over $\tau_{\leq n} A$ for large enough $n$.) This means that in the limit defining $\mathrm{IndCoh}(\tilde{X})$, one can even restrict to affine $X=\mathrm{Spec}(A)$ for which $A$ is $n$-truncated for some $n$.

We note that there is a symmetric monoidal natural transformation
\[
D_{\mathrm{qc}}(X)\to \mathrm{Ind}(\mathrm{Coh}(X))
\]
given by
\[
D_{\mathrm{qc}}(X)=\mathrm{Ind}(\mathrm{Perf}(X))\cong \mathrm{Ind}(\mathrm{Perf}(X)^{\mathrm{op}})\to \mathrm{Ind}(\mathrm{Coh}(X)^{\mathrm{op}})\cong \mathrm{Ind}(\mathrm{Coh}(X))=\mathrm{IndCoh}(X),
\]
where the first equivalence is naive duality, and the last equivalence is Serre duality (over $k$). (The composite functor is actually just tensoring with the dualizing complex of $X$, which is the tensor unit of $\mathrm{IndCoh}(X)$.) If $X$ is truncated (i.e.~$\pi_i X=0$ for $i\gg 0$) then this comparison functor is fully faithful.

\begin{remark} We note that the arguments of this lecture give in particular a construction of the $6$-functor formalism $\mathrm{IndCoh}$ of Gaitsgory--Rozenblyum that does not rely on difficult properties of the Gray tensor product of $(\infty,2)$-categories; in fact, it makes use of no $(\infty,2)$-categorical machinery at all.
\end{remark}

\newpage

\section*{Appendix to Lecture VIII: $D$-modules}

As in the work of Gaitsgory--Rozenblyum \cite{GaitsgoryRozenblyum}, one can recover the $6$-functor formalism of $D$-modules (on algebraic varieties over a field $k$ of characteristic $0$), originally developed by Sato, Kashiwara, Bernstein, ...

In fact, we will give a presentation that avoids the use of the $\mathrm{IndCoh}$-formalism. Indeed, on de Rham stacks, the $\mathrm{IndCoh}$- and $D_{\mathrm{qc}}$-formalisms agree, and we will work directly with the latter, equipped with the six-functor formalism from Option 1. (This formalism seems awkward as all maps are considered proper, but it will become more natural from the point of view of analytic stacks developed later.)

Before starting to use the formalism from the lecture, let us announce directly a way to introduce the formalism. Fix a field $k$ of characteristic $0$,\footnote{We actually do not use the assumption of characteristic $0$ for a long time. It is only used in the identification with modules over the ring of differential operators, and in the discussion of Poincar\'e duality -- in characteristic $p$ the resulting theory is a $6$-functor formalism but does not satisfy Poincar\'e duality.} and consider the category $C$ of separated schemes of finite type over $k$, with all morphisms allowed in $E$. To any $X\in C$, we can associate the category of crystals on the infinitesimal site; concretely,
\[
\mathrm{Crys}(X) = \mathrm{lim}_{R,\mathrm{Spec}(R_{\mathrm{red}})\to X} D(R)
\]
as $R$ runs over $k$-algebras of finite type.\footnote{One can also allow animated $k$-algebras almost of finite type; it is not hard to see that the two results are equivalent, via a cofinality argument.} This gives a contravariant functor from $C$ to symmetric monoidal presentable stable $\infty$-categories.

\begin{theorem}\label{thm:dmodule6functors0} The functor $X\mapsto \mathrm{Crys}(X)$ satisfies the hypotheses of Theorem~\ref{thm:construct6functors} with respect to the classes $I$ of proper maps and $P$ of open immersions, and hence extends to a $6$-functor formalism.
\end{theorem}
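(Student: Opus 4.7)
My plan is to realize $\mathrm{Crys}(X) \simeq D_{\mathrm{qc}}(X_{dR})$, where $X_{dR}: R \mapsto X(R_{red})$ is the de Rham stack, and then verify the hypotheses of Theorem~\ref{thm:construct6functors} in its dual formulation (where the required factorization of $f \in E$ is $f = j \circ \overline{f}$ with $\overline{f} \in P$ an open immersion and $j \in I$ proper, as in the Option~2 discussion of Lecture~VIII). The stability conditions are immediate: proper maps and open immersions are each stable under pullback, composition, and diagonals, contain all isomorphisms, and their intersection (clopen immersions) is $(-1)$-truncated; Nagata compactification supplies the factorization. For an open immersion $j: U \hookrightarrow X$, the induced map $j_{dR}: U_{dR} \to X_{dR}$ is itself an open immersion of presheaves --- since $R$ and $R_{red}$ have the same idempotents and underlying topological space, a test map $\mathrm{Spec}(R) \to X_{dR}$ factors through $U_{dR}$ iff the corresponding map $\mathrm{Spec}(R_{red}) \to X$ factors through $U$, an open condition --- so the right adjoint $j_\ast$ of $j^\ast$ exists on crystals, and both base change and the projection formula follow from the corresponding standard statements for open immersions of stacks.

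The substantive content is the proper case: constructing the left adjoint $f_!$ of $f^\ast$ for a proper $f: X \to Y$, together with base change and projection formula. My approach is to apply Option~1 of Lecture~VIII at the level of de Rham stacks. Option~1 gives a $6$-functor formalism on $D_{\mathrm{qc}}$ of qcqs derived schemes in which $f^\ast$ always admits a right adjoint $f_\ast$ with base change and projection formula (Proposition~\ref{prop:quasicoherentbasechange}), and in which $f^\ast$ further admits a left adjoint for proper maps of finite Tor-dimension between classical schemes (Proposition~\ref{prop:leftadjointtoproperpullback}). Extending this formalism from schemes to the de Rham stacks $X_{dR}$ is a descent question: $X_{dR}$ is presented as the geometric realization of the formal groupoid $X \times_{X_{dR}} X \rightrightarrows X$, so $D_{\mathrm{qc}}(X_{dR})$ is recovered as the limit of $D_{\mathrm{qc}}$ along this \v{C}ech nerve. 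The left adjoint $f_!$ on crystals is then assembled by descent from the scheme-level left adjoints attached to $f: X \to Y$, and base change together with the projection formula descend term-wise along the cover.

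Finally, condition~(4) of Theorem~\ref{thm:construct6functors} is automatic by Remark~\ref{rem:lastconditionvacuousexcision}, since open immersions are monomorphisms. The main obstacle is precisely the descent step above: extending the Option~1 construction of $f_!$ and its base change compatibility from classical schemes to the de Rham stacks. The de Rham stack is not itself a scheme, so one must verify that the formal-groupoid cover $X \to X_{dR}$ is well-enough behaved --- in particular, that it satisfies $!$-descent in the appropriate sense --- for the machinery of the ``Passage to stacks'' appendix of Lecture~V to apply. Once this descent is in place, both the construction of $f_!$ on crystals and the verification of its base change and projection formula reduce to the already-established scheme-level statements in Option~1.
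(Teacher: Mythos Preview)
Your high-level strategy is exactly the paper's: realize $\mathrm{Crys}(X)\simeq D_{\mathrm{qc}}(X_{\mathrm{dR}})$, extend Option~1 to stacks via the Passage-to-Stacks machinery, and then verify the hypotheses of Theorem~\ref{thm:construct6functors} with $I=\text{proper}$ and $P=\text{open immersions}$. You also correctly isolate the real difficulty as the $!$-descent properties of $X\to X_{\mathrm{dR}}$.

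There is, however, a genuine gap in the proper case. You propose to ``assemble $f_!$ by descent from the scheme-level left adjoints'' supplied by Proposition~\ref{prop:leftadjointtoproperpullback}. But that proposition requires the proper map to be of \emph{finite Tor-dimension}, which a general proper map of finite-type $k$-schemes is not; and you cannot pass to a smooth resolution of the source without destroying properness. Moreover, the \v{C}ech nerve of $X\to X_{\mathrm{dR}}$ consists of formal schemes $(X\times_k X)^\wedge_\Delta$, etc., to which Proposition~\ref{prop:leftadjointtoproperpullback} does not directly apply, so the levelwise-left-adjoint-plus-Beck--Chevalley argument you sketch does not go through as stated. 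In short, having established $!$-descent for $X\to X_{\mathrm{dR}}$ only tells you that $f_{\mathrm{dR}}$ lies in the class $\tilde E$ of the stacky formalism; it does \emph{not} by itself produce a left adjoint to $f_{\mathrm{dR}}^\ast$.

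The paper closes this gap differently: rather than descending a left adjoint, it proves that $f_{\mathrm{dR}}$ is \emph{cohomologically smooth} whenever $f$ is proper (Theorem~\ref{thm:Dmodule6functors}), whence $f_{\mathrm{dR}}^!$ is a twist of $f_{\mathrm{dR}}^\ast$ and the left adjoint to $f_{\mathrm{dR}}^\ast$ is a twist of $f_{\mathrm{dR},!}$, with base change and projection formula inherited from the stacky $6$-functor formalism. The two technical inputs you are missing are: (a) Proposition~\ref{prop:formalcompletioncohometale}, that formal completions $X^\wedge_Z\hookrightarrow X$ are cohomologically \'etale (this uses descendability of nilpotent thickenings and gives both the $!$-cover property of $X\to X_{\mathrm{dR}}$ in Proposition~\ref{prop:XtoXdR} and the smoothness of $i_{\mathrm{dR}}$ for closed immersions $i$); and (b) Chow's lemma, which reduces the cohomological smoothness of $f_{\mathrm{dR}}$ for general proper $f$ to the cases of closed immersions and $\mathbb P^n_{\mathrm{dR}}\to\ast$. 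Your plan would succeed once you route the proper case through cohomological smoothness in this way rather than through a direct descent of Proposition~\ref{prop:leftadjointtoproperpullback}.
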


Our approach in this appendix is actually to construct a $6$-functor formalism directly and verify that proper maps are cohomologically smooth while open immersions are cohomologically proper; this proves Theorem~\ref{thm:dmodule6functors0}. To do so, we use the extension of the coherent $6$-functor formalism to stacks. By Theorem~\ref{thm:stacky6functors}, the formalism $X\mapsto D_{\mathrm{qc}}(X)$ (with all maps in $P$) extends to stacks on $C$ (i.e.~sheaves of anima), sending any stack $\tilde{X}$ to
\[
D_{\mathrm{qc}}(\tilde{X}) = \mathrm{lim}_{X\in C,X\to \tilde{X}} D_{\mathrm{qc}}(X).
\]

One way to motivate the upcoming definition of $D$-modules is to observe that one respect in which the coherent $6$-functor formalisms differ from the previous ones is that they are not nil-invariant: If $\mathrm{Spec}(A)\to \mathrm{Spec}(B)$ is a nilpotent immersion, i.e.~$\pi_0 B\to \pi_0 A$ is surjection with nilpotent kernel, then $D(B)\to D(A)$ is not usually an equivalence.

There is a universal way to make a theory nil-invariant. Namely, for any derived scheme $X$, let $X_{\mathrm{dR}}$ denote the prestack taking an animated commutative ring $A$ to
\[
\mathrm{colim}_{I\subset \pi_0 A} X(A_{\mathrm{red}})
\]
where $A_{\mathrm{red}}$ is the (static) reduced quotient of $A$. For a nil-invariant theory, the pullback functor $D(X_{\mathrm{dR}})\to D(X)$ is an equivalence; and in general $X\mapsto D(X_{\mathrm{dR}})$ is nil-invariant.

\begin{definition}\label{def:Dmodules} Let $k$ be a field of characteristic $0$ and let $X$ be a derived scheme almost of finite type over $k$. The symmetric monoidal presentable stable $\infty$-category of $D$-modules on $X$ is
\[
\mathrm{Dmod}(X) := D_{\mathrm{qc}}(X_{\mathrm{dR}}).
\]
\end{definition}

Thus, $\mathrm{Crys}(X) = \mathrm{Dmod}(X)$ for the above definition of $\mathrm{Crys}(X)$.

\begin{remark} From the definition it follows that if $X$ is a derived scheme almost of finite type with underlying reduced classical scheme $X_{\mathrm{red}}$, the map $X_{\mathrm{red},\mathrm{dR}}\to X_{\mathrm{dR}}$ is an isomorphism, hence $\mathrm{Dmod}(X) = \mathrm{Dmod}(X_{\mathrm{red}})$. Thus, in the following we can often restrict attention to (reduced) classical schemes.
\end{remark}

Our goal is to prove the following theorem, which implies Theorem~\ref{thm:dmodule6functors0}.

\begin{theorem}\label{thm:Dmodule6functors} For any map $f: X\to Y$ of schemes of finite type over $k$, the $!$-functors on $D_{\mathrm{qc}}$ are defined for $f_{\mathrm{dR}}: X_{\mathrm{dR}}\to Y_{\mathrm{dR}}$. Moreover, if $f$ is \'etale, then $f_{\mathrm{dR}}$ is cohomologically proper, while if $f$ is proper, then $f_{\mathrm{dR}}$ is cohomologically smooth.
\end{theorem}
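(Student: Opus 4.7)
The plan is to construct the D-module 6-functor formalism by extending the coherent formalism of Option 1 from Lecture VIII (where every morphism of qcqs derived schemes lies in $P$) to de Rham stacks via the passage-to-stacks machinery of Theorem 3.4.11. Tautologically $\mathrm{Dmod}(X) = D_{\mathrm{qc}}(X_{\mathrm{dR}})$, so this gives the correct value on objects, and assertion (1) reduces to showing that every $f_{\mathrm{dR}}$ lies in the class $\tilde E$ produced by that theorem. The central technical input is that for every $X$ of finite type over $k$, the projection $\pi_X : X \to X_{\mathrm{dR}}$ is surjective and of universal $!$-descent for $D_{\mathrm{qc}}$.

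The \v{C}ech nerve of $\pi_X$ is the infinitesimal groupoid of $X$, with first term $X \times_{X_{\mathrm{dR}}} X$ the formal completion of $X \times X$ along the diagonal. The desired $!$-descent is essentially Grothendieck's crystalline descent theorem: in the smooth affine case $X = \mathrm{Spec}(A)$, it recovers the equivalence between $D_{\mathrm{qc}}(X_{\mathrm{dR}})$ and modules over the ring of differential operators $D_A$. I would prove it by reducing to the smooth affine case, trivialising the formal neighbourhood of the diagonal via Hochschild--Kostant--Rosenberg (which requires $\mathrm{char}\, k = 0$), and identifying descent data with flat connections. Granted this, the ``local on the source'' closure property of Theorem 3.4.11, applied to the $D$-cover $\pi_X$ together with the factorisation $f_{\mathrm{dR}}\circ\pi_X = \pi_Y \circ f$ with $f \in E$ and $\pi_Y \in \tilde E$, places $f_{\mathrm{dR}}$ in $\tilde E$.

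For \'etale $f$, the square with verticals $\pi_X$, $\pi_Y$ and horizontals $f$, $f_{\mathrm{dR}}$ is Cartesian: an $R$-point of $X_{\mathrm{dR}} \times_{Y_{\mathrm{dR}}} Y$ consists of $\bar x \in X(R_{\mathrm{red}})$ and $y \in Y(R)$ with $y|_{R_{\mathrm{red}}} = f(\bar x)$, and the \'etale lifting property provides a unique $x \in X(R)$ over $y$ refining $\bar x$. As cohomological properness is preserved by $D$-cover base change, it remains to check that $\mathcal O_X$ is $f$-prim in Option 1. Since $\Delta_f$ is an open immersion, $\Delta_!\cong\Delta_\ast$, and the criterion of Proposition 6.14 collapses to the tautology $f_\ast\mathcal O_X \to f_\ast\mathscr{H}\mathrm{om}(\mathcal O_X,\mathcal O_X) = f_\ast \mathcal O_X$; as $\Delta_f$ is itself \'etale, the truncation-level induction of Definition 6.9 grounds out.

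For proper $f$, I would verify cohomological smoothness of $f_{\mathrm{dR}}$ via the criterion of Theorem 5.9, constructing $L$, $\alpha$, $\beta$ and checking the two required diagrams. In the smooth proper case, $L = \Omega^d_{X/Y}[d]$ by Grothendieck--Serre duality (Theorem 8.13), $\beta$ is the Serre trace, and $\alpha$ comes from the fundamental class of the diagonal, with the diagrams reducing to classical coherent Poincar\'e duality after pulling back along $\pi_Y$. The general proper case should reduce to the smooth proper case by writing $f$ as a composite of a closed immersion into a projective bundle over $Y$ and the smooth proper projection to $Y$; closed immersions between de Rham stacks are themselves cohomologically smooth (with conormal bundle as dualizing object), as one sees via Kashiwara's theorem in the D-module world. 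The main obstacle is Step 1 --- the crystalline $!$-descent of $\pi_X$ --- where $\mathrm{char}\, k = 0$ enters essentially, since it fails in positive characteristic, where one must instead pass to crystalline cohomology with divided powers. The remaining steps are formal applications of the machinery from Lectures V and VI once the descent and the extension are in place.
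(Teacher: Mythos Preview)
Your overall architecture matches the paper's: extend the Option~1 formalism to stacks, use $\pi_X:X\to X_{\mathrm{dR}}$ as a cover, and check everything by pullback. But there is a genuine gap, and one claim is wrong.

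\textbf{The gap: getting $\pi_X$ into $\tilde E$.} You assert that $\pi_X$ is of universal $!$-descent and then invoke ``local on the source'' for $f_{\mathrm{dR}}$. But to even speak of $!$-descent for $\pi_X$, or to use it as a cover in Theorem~\ref{thm:stacky6functors}, you must first show $\pi_X\in\tilde E$. The pullback of $\pi_X$ along any $Y\to X_{\mathrm{dR}}$ is a formal completion $(X\times_k Y)^\wedge_Z\to Y$, which is not a map of schemes, so it is not obviously in $\tilde E_0$. The paper handles this by a separate proposition: $X^\wedge_Z\hookrightarrow X$ is the $!$-image of the closed immersion $Z\to X$ (which is cohomologically smooth in Option~1 as a proper lci), and the identification of this image with the formal completion uses that nilpotent thickenings are \emph{descendable} in Mathew's sense, hence $D$-covers via Proposition~\ref{prop:Dprimdescent}. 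Your proposed HKR/crystalline-descent argument is about $\ast$-descent along $\pi_X$ (identifying descent data with flat connections); it does not address $!$-ability, and $\ast$-descent does not imply $!$-descent in general.

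\textbf{The wrong claim about characteristic.} You say characteristic~$0$ enters essentially in Step~1. In the paper's proof it does not: the descendability argument for nilpotent thickenings works over any $k$, and the entire proof of this theorem is characteristic-free. Characteristic~$0$ is used only later, for the identification with $D_X$-modules and for Poincar\'e duality (Theorem~\ref{thm:coherentDmodules}(1)).

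\textbf{On the proper case.} Your plan to verify Theorem~\ref{thm:critcohomsmooth} directly in the smooth proper case is workable but heavier than necessary. The paper instead uses Chow's lemma to reduce to closed immersions and $\mathbb P^n$. For a closed immersion $i:Z\hookrightarrow X$, the map $i_{\mathrm{dR}}$ pulls back along $X\to X_{\mathrm{dR}}$ to $X^\wedge_Z\hookrightarrow X$, which the formal-completion proposition shows is cohomologically \emph{\'etale}---so $i_{\mathrm{dR}}^!1$ is trivial, not a conormal twist as you wrote. For $\mathbb P^n_{\mathrm{dR}}\to\ast$, one simply composes the cohomologically smooth cover $\mathbb P^n\to\mathbb P^n_{\mathrm{dR}}$ with the fact that $\mathbb P^n\to\ast$ is already cohomologically smooth in Option~1 (Proposition~\ref{prop:option1smooth}). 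No explicit $(L,\alpha,\beta)$ is needed.

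\textbf{On the \'etale case.} Your Cartesian-square argument is correct and is exactly what the paper does. Your verification that $\mathcal O_X$ is $f$-prim for \'etale $f$ is unnecessary, though: in Option~1 every map lies in $P$, so $f_!=f_\ast$ tautologically and every map of schemes is cohomologically proper.
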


To get started, we have the following proposition regarding ``formal completions''.

\begin{proposition}\label{prop:formalcompletioncohometale} Let $X$ be a derived scheme almost of finite type over $k$ and let $Z\subset X$ be a closed subscheme. Let $X^\wedge_Z\subset X$ be the subfunctor of those maps $\mathrm{Spec}(A)\to X$ that set-theoretically factor over $Z$. Then the map $j: X^\wedge_Z\hookrightarrow X$ admits $!$-functors in the $D_{\mathrm{qc}}$-formalism, and is cohomologically \'etale. It is the $!$-image of $Z\to X$.
\end{proposition}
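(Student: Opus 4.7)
The plan is to realize $X^\wedge_Z$ as an explicit filtered colimit of nilpotent thickenings, extend the $!$-functors via the stacks machinery of Lecture V, and then exploit the monomorphism property of $j$ to collapse the cohomologically étale criterion to a descent-type computation.

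First, I would identify $X^\wedge_Z$ with the colimit $\mathrm{colim}_n Z_n$ in the stack category on $C$, where $Z_n$ denotes the $n$-th infinitesimal thickening of $Z$ in $X$ (locally $Z_n = V(I^n)$ for $X = \mathrm{Spec}(A)$ and $Z = V(I)$). A map $\mathrm{Spec}(R) \to X$ factors set-theoretically through $Z$ exactly when the image of the (finitely generated) ideal of $Z$ is nilpotent in $\pi_0 R$, which is equivalent to factoring through some $Z_n$. In particular $j: X^\wedge_Z \hookrightarrow X$ is a monomorphism of stacks, so $X^\wedge_Z \times_X X^\wedge_Z = X^\wedge_Z$, its diagonal $\Delta_j$ is the identity, and the inductive diagonal condition of Definition~\ref{def:cohometale} is trivially satisfied.

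Second, I would establish that $j$ admits $!$-functors by appealing to Theorem~\ref{thm:stacky6functors}. The closed immersion $i: Z \to X$ already lies in $E$ (in Option 1, all morphisms of derived schemes in $C$ are treated as proper). Consider the natural surjection $\pi: Z \to X^\wedge_Z$ factoring $i = j \circ \pi$. For any map $Y \to X^\wedge_Z$ from $Y \in C$—which must factor through some $Z_n$—the base change $Z \times_{X^\wedge_Z} Y = Z \times_{Z_n} Y \to Y$ is a nilpotent closed immersion of derived schemes. Such maps satisfy universal $!$-descent in the coherent $D_{\mathrm{qc}}$-formalism: by Theorem~\ref{thm:shriekimpliesstar} it is equivalent to universal $\ast$-descent, and the latter holds because any animated ring is automatically derived complete along a nilpotent ideal, so $D_{\mathrm{qc}}(Y)$ is recovered as the limit of the Čech nerve of the nilpotent quotient. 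The "local on source" step in the proof of Theorem~\ref{thm:stacky6functors} then places $j$ in the class $\tilde{E}$.

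Third, with the diagonal condition already handled, I would apply Proposition~\ref{prop:checkcohometale}: cohomological étaleness of $j$ reduces to showing that the natural map $j^!(1_X) \to 1_{X^\wedge_Z}$ is an isomorphism on global sections. Using the $!$-descent along $\pi$ established above, $D_{\mathrm{qc}}(X^\wedge_Z)$ is computed as the limit over the Čech nerve of $\pi$, and both $j^!(1_X)$ and $1_{X^\wedge_Z}$ pull back compatibly along this cover: the pullback of $j^!(1_X)$ to the Čech nerve is $(j\pi_\bullet)^!(1_X)$ by base change, while $1_{X^\wedge_Z}$ pulls back to $\pi_\bullet^!(1_{X^\wedge_Z})$. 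The agreement of these two systems—hence of the global sections—follows tautologically from the definition of $j_!$ via the descent construction. The final assertion that $X^\wedge_Z$ is the $!$-image of $Z \to X$ then records the universal property built into the construction: $X^\wedge_Z$ is exactly the colimit closure of $Z$ under nilpotent thickenings inside $X$, which is precisely the minimal substack through which $Z \to X$ factors with a cohomologically étale inclusion to $X$.

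The main obstacle is the explicit verification in the third step. In the extended formalism, $j_!$ is not the naive right adjoint $j_\ast$ (which would make $j^!$ an ill-behaved right-adjoint-to-a-right-adjoint) but rather the functor produced by the descent/Kan-extension machinery, and tracing through this definition to verify the identification of $j^!(1_X)$ with $1_{X^\wedge_Z}$ requires care. The monomorphism property—which forces the Čech nerve of $j$ itself to collapse—together with base-change compatibility of $!$-functors ultimately makes this tractable, but the bookkeeping between $\ast$-pullbacks used to define $D_{\mathrm{qc}}(X^\wedge_Z) = \lim_n D_{\mathrm{qc}}(Z_n)$ and the $!$-pullbacks along $\pi$ furnished by Theorem~\ref{thm:shriekimpliesstar} is where the real content lies.
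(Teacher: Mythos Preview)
Your step 2 misreads Theorem~\ref{thm:shriekimpliesstar}: the equivalence there is between universal $!$-descent and universal $\ast$-descent \emph{together with} universal $!$-descent (condition (4)); the remark following the theorem explicitly says that universal $!$-descent implies universal $\ast$-descent, not the converse. So derived completeness along a nilpotent ideal, which gives $\ast$-descent, does not by itself yield what you need. This is repairable: the right input is descendability in the sense of Mathew for the nilpotent surjection, which via Proposition~\ref{prop:Dprimdescent} gives both $\ast$- and $!$-descent. The paper invokes exactly this for $k[T]/T^n\to k$.

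The more serious problem is step 3, which is circular as written (and which you yourself flag as the obstacle). You want to test $j^!(1_X)\to 1_{X^\wedge_Z}$ after $!$-pullback along $\pi:Z\to X^\wedge_Z$. The left side becomes $(j\pi)^!(1_X)=i^!(1_X)$; the right side becomes $\pi^!(1_{X^\wedge_Z})$. But you have no independent computation of $\pi^!(1_{X^\wedge_Z})$: identifying it with $i^!(1_X)$ amounts to $j^!\simeq j^\ast$ on the unit, which is exactly the statement to be proved. The descent bookkeeping you allude to does not break this loop.

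The paper avoids this difficulty by a different mechanism. After localizing and inducting on the number of defining equations, it reduces to the universal case $X=\mathrm{Spec}(k[T])$, $Z=V(T)$. The crucial point is that in this regular codimension-one situation the closed immersion $i:Z\to X$ is \emph{cohomologically smooth} in the $D_{\mathrm{qc}}$-formalism (a proper local complete intersection). Since the sheaf-theoretic image $j:\mathrm{Im}(i)\hookrightarrow X$ is a monomorphism, the surjection $\pi:Z\to\mathrm{Im}(i)$ is literally the base change of $i$ along $j$, hence also cohomologically smooth with the \emph{same} dualizing object. This forces $j^!(1_X)\cong 1$: one has $\pi^! j^!(1_X)=i^!(1_X)=\omega_i=\omega_\pi=\pi^!(1)$, and $\pi^!$ is conservative. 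The identification of $\mathrm{Im}(i)$ with $X^\wedge_Z$ is then a separate, short step using descendability of nilpotent thickenings. The reduction to a single equation is not cosmetic: without it $i$ need not be cohomologically smooth, and the leverage disappears.
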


\begin{proof} The question is local on $X$, so we can assume that $X$ is affine, and $Z$ is the vanishing locus of some functions $f_1,\ldots,f_n$; and by induction on $n$ and stability under pullback, we can assume that $n=1$. In that case, the situation comes via pullback from $X=\mathrm{Spec}(k[T])$ and $Z=\mathrm{Spec}(k)$ embedded as $T=0$. The map $Z\to X$ is cohomologically smooth as a proper local complete intersection, and hence its image (in the sheaf-theoretic sense) admits a $!$-functor and is a cohomologically \'etale monomorphism. Indeed, this image admits a cohomologically smooth surjection from $Z$ (as any base change agrees with a base change of $Z\to X$) and $Z\to X$ is cohomologically smooth.

We claim that the image of $Z\to X$ is given by $X^\wedge_Z\subset X$. It is clearly contained in $X^\wedge_Z\subset X$. Conversely, if $Y=\mathrm{Spec}(B)$ is some affine derived scheme mapping towards $X^\wedge_Z$, then it factors over $\mathrm{Spec}(k[T]/T^n)$ for some $n$, so it suffices to see that $\mathrm{Spec}(k[T]/T^n)\to X=\mathrm{Spec}(k[T])$ is contained in the image of $Z\to X$, i.e.~$\mathrm{Spec}(k)\to \mathrm{Spec}(k[T]/T^n)$ is a cover in the $D$-topology. But this follows from Proposition~\ref{prop:Dprimdescent} as $k[T]/T^n\to k$ -- like any nilpotent thickening -- is a descendable map in the sense of Mathew \cite[Proposition 3.33]{MathewDescendable}.
\end{proof}

It may be useful to describe what happens in terms of the derived categories.

\begin{proposition}\label{prop:DXonZ} In the situation of Proposition~\ref{prop:formalcompletioncohometale}, the functor
\[
j_!: D_{\mathrm{qc}}(X^\wedge_Z)\to D_{\mathrm{qc}}(X)
\]
is fully faithful and its essential image is given by $D_{\mathrm{qc}}(X\ \mathrm{on}\ Z)$, i.e.~those objects that vanish outside $Z$.
\end{proposition}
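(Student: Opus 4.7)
The plan is to establish three assertions: fully faithfulness of $j_!$, the containment of its essential image in $D_{\mathrm{qc}}(X\ \mathrm{on}\ Z)$, and the reverse containment. The first two follow from base change. Since $X^\wedge_Z\hookrightarrow X$ is a subfunctor inclusion, $j$ is a monomorphism and the diagonal $X^\wedge_Z\to X^\wedge_Z\times_X X^\wedge_Z$ is an equivalence; combined with the cohomological \'etaleness of $j$ established in Proposition~\ref{prop:formalcompletioncohometale}, base change of $j_!$ along $j$ itself yields $j^\ast j_!\cong\mathrm{id}$, so the unit of the adjunction $j_!\dashv j^\ast$ is an equivalence and $j_!$ is fully faithful. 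Writing $i\colon U=X\setminus Z\hookrightarrow X$ for the complementary open, the fibre product $X^\wedge_Z\times_X U$ is empty (no test map $\mathrm{Spec}(A)\to U$ can set-theoretically factor through $Z$ unless $A=0$), so base change gives $i^\ast j_!=0$, placing the essential image of $j_!$ in $D_{\mathrm{qc}}(X\ \mathrm{on}\ Z)$.

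For the reverse containment, let $\mathcal{C}\subseteq D_{\mathrm{qc}}(X)$ denote the essential image of $j_!$. Since $j_!$ is a colimit-preserving fully faithful functor, $\mathcal{C}$ is closed under colimits, so it suffices to exhibit a set of compact generators of $D_{\mathrm{qc}}(X\ \mathrm{on}\ Z)$ inside $\mathcal{C}$. Any closed immersion $i_{Z'}\colon Z'\hookrightarrow X$ from a (possibly derived) closed subscheme whose underlying classical scheme lies inside $Z$ factors uniquely through $X^\wedge_Z$, so $(i_{Z'})_!=j_!\circ(i_{Z'\to X^\wedge_Z})_!$, and the corresponding pushforwards all lie in $\mathcal{C}$. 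In the affine case $X=\mathrm{Spec}(A)$, $Z=V(f_1,\ldots,f_k)$, this recovers the Koszul complex $A/^L(f_1,\ldots,f_k)=(i_{\tilde Z})_!\mathcal{O}_{\tilde Z}$, for $\tilde Z=\mathrm{Spec}(A/^L(f_1,\ldots,f_k))$, as a member of $\mathcal{C}$; these Koszul complexes are the standard compact generators of $D_{\mathrm{qc}}(X\ \mathrm{on}\ Z)$.

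The main obstacle is globalizing the compact generation of $D_{\mathrm{qc}}(X\ \mathrm{on}\ Z)$ from affines to arbitrary qcqs $X$. This is handled by Lemma~\ref{lem:compactgenerationglobalizes}, applied to the assignment $V\mapsto D_{\mathrm{qc}}(V\ \mathrm{on}\ V\cap Z)$ on affine opens of $X$; the hypotheses (local compact generation, together with Bousfield localization on restrictions $V'\subseteq V$ that preserves compact objects and has compactly generated kernel) are verified via excision, exactly as in the proof of the Thomason--Trobaugh theorem recalled in the lecture. Once global compact generators are located inside $\mathcal{C}$, closure of $\mathcal{C}$ under colimits gives $\mathcal{C}\supseteq D_{\mathrm{qc}}(X\ \mathrm{on}\ Z)$, completing the proof.
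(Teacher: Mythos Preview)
Your argument for fully faithfulness and for the containment $\mathrm{im}(j_!)\subseteq D_{\mathrm{qc}}(X\ \mathrm{on}\ Z)$ matches the paper's. For the reverse containment, however, you take a genuinely different route. The paper does not use compact generation at all: since $j_!$ is fully faithful with right adjoint $j^\ast$, every $M$ sits in a fiber sequence $j_!j^\ast M\to M\to C$ with $j^\ast C=0$. Because $Z\to X^\wedge_Z$ is a surjection (it was constructed in Proposition~\ref{prop:formalcompletioncohometale} as the $!$-image), $\ker(j^\ast)$ coincides with the kernel of pullback to $Z$. If $M\in D_{\mathrm{qc}}(X\ \mathrm{on}\ Z)$ then also $C\in D_{\mathrm{qc}}(X\ \mathrm{on}\ Z)$, so the elementary conservativity of $D_{\mathrm{qc}}(X\ \mathrm{on}\ Z)\to D_{\mathrm{qc}}(Z)$ (if $f$ acts invertibly on a complex and the complex is supported on $V(f)$, it vanishes) forces $C=0$. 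This is shorter and sidesteps Thomason--Trobaugh entirely.

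Your approach via compact generators is also valid, but as written it has a small gap in the globalization. Lemma~\ref{lem:compactgenerationglobalizes} gives you compact generation of $D_{\mathrm{qc}}(X\ \mathrm{on}\ Z)$, but you have not explained why the resulting \emph{global} compact generators lie in $\mathcal{C}$; your Koszul argument only places the affine-local ones there, and the Thomason lifting procedure does not obviously stay inside $\mathcal{C}$. The cleanest patch is to observe that membership in $\mathcal{C}=\mathrm{im}(j_!)$ is Zariski-local on $X$: it is the condition that the counit $j_!j^\ast M\to M$ be an isomorphism, and both $j_!$ and $j^\ast$ commute with base change along open immersions of $X$. So it suffices to check $D_{\mathrm{qc}}(X\ \mathrm{on}\ Z)\subseteq\mathcal{C}$ on affines, where your Koszul argument applies directly and the appeal to Lemma~\ref{lem:compactgenerationglobalizes} becomes unnecessary.
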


\begin{proof} For any cohomologically \'etale monomorphism $j$, the functor $j_!$ is fully faithful as $j^\ast j_!=\mathrm{id}$ by base change. As $Z\to X^\wedge_Z$ is surjective, the kernel of $j^\ast$ agrees with the kernel of $D_{\mathrm{qc}}(X)\to D_{\mathrm{qc}}(Z)$; and $D_{\mathrm{qc}}(X\ \mathrm{on}\ Z)\to D_{\mathrm{qc}}(Z)$ is known to be conservative. (The point is that if the base change of a complex to the vanishing locus of some function $f$ is zero, then $f$ acts invertibly on that complex, and hence lives on the open subset where $f$ is invertible.)
\end{proof}

\begin{proposition}\label{prop:XtoXdR} For a smooth $k$-scheme $X$, the map $\pi: X\to X_{\mathrm{dR}}$ admits $!$-functors in the $D_{\mathrm{qc}}$-formalism and is a cohomologically smooth $!$-cover.
\end{proposition}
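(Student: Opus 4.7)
The plan is to identify the fiber product $X \times_{X_{\mathrm{dR}}} X$ with a formal completion and then apply Proposition~\ref{prop:formalcompletioncohometale} together with the infinitesimal lifting that comes from smoothness of $X$.

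First, I would verify that $X \times_{X_{\mathrm{dR}}} X$ is canonically the formal completion $(X \times X)^\wedge_\Delta$ of the diagonal. On $A$-points, an element of the fiber product is a pair $(f,g): \mathrm{Spec}(A) \to X \times X$ with $f_{\mathrm{red}} = g_{\mathrm{red}}$, which is precisely the condition that $(f,g)$ factors set-theoretically through $\Delta(X)$. The same argument shows that for any $Z \in C$ together with a map $Z \to X_{\mathrm{dR}}$ (given by a morphism $Z_{\mathrm{red}} \to X$), the base change $X \times_{X_{\mathrm{dR}}} Z$ is the formal completion of $X \times Z$ along the graph of the composite $Z_{\mathrm{red}} \to X \hookrightarrow X \times Z$.

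Since $X$ is smooth over $k$ of some dimension $d$, the diagonal $\Delta \colon X \hookrightarrow X \times X$ is a proper regular immersion, hence a proper local complete intersection. Proposition~\ref{prop:formalcompletioncohometale} therefore applies: the inclusion $(X \times X)^\wedge_\Delta \hookrightarrow X \times X$ is a cohomologically \'etale monomorphism admitting $!$-functors, and the factorization $\Delta_\pi \colon X \to (X \times X)^\wedge_\Delta$ is a cohomologically smooth $!$-cover. Identical statements hold after any base change along $Z \to X_{\mathrm{dR}}$; hence, by the ``local on the target'' property of the class $\tilde{E}$ in Theorem~\ref{thm:stacky6functors}, the morphism $\pi$ lies in $\tilde{E}$, establishing the existence of $!$-functors for $\pi$. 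For the $!$-cover property, I would then apply Proposition~\ref{prop:Dprimdescent}: the key input is the formal smoothness of $X$, which implies that after replacing any $Z \to X_{\mathrm{dR}}$ by a nilpotent thickening, the map $Z_{\mathrm{red}} \to X$ lifts to $Z \to X$, i.e.\ the Cech nerve of $\pi$ is universally split on nilpotent thickenings, supplying the descendability needed in Proposition~\ref{prop:Dprimdescent}.

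For cohomological smoothness, I would apply Theorem~\ref{thm:critcohomsmooth} with dualizing sheaf $L = \mathcal{O}_X$. The datum $\alpha \colon \Delta_{\pi !}(\mathcal{O}_X) \to p_2^\ast \mathcal{O}_X = \mathcal{O}_{(X \times X)^\wedge_\Delta}$ is the tautological isomorphism expressing that the $!$-image of $\Delta_\pi$ is all of $(X \times X)^\wedge_\Delta$, while $\beta \colon \pi_!(\mathcal{O}_X) \to \mathcal{O}_{X_{\mathrm{dR}}}$ is the counit of the emerging adjunction. The two zigzag composites of Theorem~\ref{thm:critcohomsmooth} then reduce, via $p_1 \circ \Delta_\pi = p_2 \circ \Delta_\pi = \mathrm{id}_X$ and the cohomological \'etaleness of $(X \times X)^\wedge_\Delta \hookrightarrow X \times X$, to identities along the section $\Delta_\pi$ where they become tautological. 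In fact this shows that $\pi$ is cohomologically \emph{\'etale}, strictly stronger than cohomologically smooth, which fits with the intuition that the fibers of $\pi$ are formal disks of ``cohomological dimension zero.''

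The main obstacle will be to verify the zigzag identities in Theorem~\ref{thm:critcohomsmooth} cleanly: while the first identity follows almost tautologically from $p_2 \circ \Delta_\pi = \mathrm{id}$, the second requires tracking the cohomological \'etaleness of the formal neighborhood through a composite of base change and projection formula isomorphisms, which is a careful diagram chase in the $2$-category $\mathrm{LZ}_{D_{\mathrm{qc}}}$. Conceptually the content is just that the Cech nerve of $\pi$ is a ``cohomologically \'etale equivalence relation'' and $\pi$ is its quotient, but converting this picture into explicit $2$-categorical compatibilities requires care.
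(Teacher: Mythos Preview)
Your approach has a genuine gap in the cohomological smoothness step. The claim that $\alpha: \Delta_{\pi!}(\mathcal O_X)\to \mathcal O_{(X\times X)^\wedge_\Delta}$ is a ``tautological isomorphism'' is false. Saying that the $!$-image of $\Delta_\pi$ is all of $(X\times X)^\wedge_\Delta$ means $\Delta_\pi$ is a $!$-cover, not that $\Delta_{\pi!}(1)=1$. Concretely, $\Delta_{\pi!}(\mathcal O_X)=j^\ast\Delta_\ast\mathcal O_X$ is the diagonal skyscraper restricted to the formal neighborhood; for $X=\mathbb A^1$ this is $k[x][[t]]/t$, not $k[x][[t]]$. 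Consequently your conclusion that $\pi$ is cohomologically \'etale is wrong: the dualizing object of $\pi$ is (the pullback of) $\omega_{X/k}$, which is nontrivial whenever $\dim X>0$.

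There is a second, structural, problem. You implicitly want to factor the base-changed map as
\[
(X\times_k Z)^\wedge_\Gamma\hookrightarrow X\times_k Z\to Z,
\]
with the first arrow cohomologically \'etale (fine) and the second cohomologically smooth. But in the $D_{\mathrm{qc}}$-formalism in use here (Option~1, all maps ``proper'', $f_!=f_\ast$), the projection $X\times_k Z\to Z$ is cohomologically smooth only when $X$ is \emph{proper} and smooth (Proposition~\ref{prop:option1smooth}); for affine $X$ it is not. This is precisely why the paper first localizes, then uses that $X\to X_{\mathrm{dR}}$ is pulled back from $\mathbb A^n\to \mathbb A^n_{\mathrm{dR}}$ to reduce to $X=\mathbb A^1$, and then replaces $\mathbb A^1$ by $\mathbb P^1$. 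Only after this reduction does the factorization through $X\times_k Y$ yield the desired cohomological smoothness. Your argument, as written, never invokes properness of $X$ and therefore cannot go through in this formalism.
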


In particular, by Barr--Beck--Lurie, $\pi^! \pi_!$ defines a monad on $D_{\mathrm{qc}}(X)$ and modules over it identify with $D_{\mathrm{qc}}(X_{\mathrm{dR}})$. Unraveling the description of the monad yields the description of $D_{\mathrm{qc}}(X_{\mathrm{dR}})$ as $D_X$-modules (with the pullback to $D_{\mathrm{qc}}(X)$ corresponding to passing to the underlying quasicoherent $\mathcal O_X$-module).

\begin{proof} The question is local on $X$, so we can restrict first to affine $X$, and assume that $X$ has an \'etale map to $\mathbb A^n$. Then $X\to X_{\mathrm{dR}}$ is pulled back from $\mathbb A^n\to \mathbb A^n_{\mathrm{dR}}$, so we can reduce to $X=\mathbb A^n$. Taking products, we can assume $n=1$. We may also consider $X=\mathbb P^1$ instead. Thus, assume from now on that $X$ is proper (and smooth).

We have to see that for any derived scheme $Y$ almost of finite type over $k$ with a map $Y\to \mathbb P^1_{\mathrm{dR}}$, the fibre product $X\times_{X_{\mathrm{dR}}} Y\to Y$ admits $!$-functors, and is cohomologically smooth and surjective. We note that as we work everywhere with sheaves of anima for the $D$-topology, the map $Y\to X_{\mathrm{dR}}$ is a map to the sheafified version of $X_{\mathrm{dR}}$; but because the condition of admitting $!$-functors is local on source and target, we can assume that $Y=\mathrm{Spec}(B)$ is affine and the map $Y\to X_{\mathrm{dR}}$ arises from a map $Y_{\mathrm{red}}\to X$.

Now we first claim that the monomorphism $X\times_{X_{\mathrm{dR}}} Y\to X\times_k Y$ admits $!$-functors and is cohomologically \'etale. In fact, the graph of the map $Y_{\mathrm{red}}\to X$ gives a closed subset $Z\subset X\times_k Y$ and then
\[
X\times_{X_{\mathrm{dR}}} Y = (X\times_k Y)^\wedge_Z,
\]
so the result follows from Proposition~\ref{prop:formalcompletioncohometale}. But the projection $X\times_k Y\to Y$ is also cohomologically smooth (as $X$ is proper and smooth). It is easy to see that the map $X\to X_{\mathrm{dR}}$ is surjective.
\end{proof}

At this point, we can prove Theorem~\ref{thm:Dmodule6functors}.

\begin{proof}[Proof of Theorem~\ref{thm:Dmodule6functors}] First, $f_{\mathrm{dR}}: X_{\mathrm{dR}}\to Y_{\mathrm{dR}}$ admits $!$-functors. As the class of maps admitting $!$-functors is stable under diagonals and composites, it suffices to prove this for the projection maps to the point. We can also work locally, so assume that $X$ is affine. Then $X$ is a fibre product $X=\mathbb A^n\times_{\mathbb A^m} \ast$, and by stability under pullbacks, we can also reduce to the case $X=\mathbb A^n$. Now Proposition~\ref{prop:XtoXdR} gives a cohomologically smooth $!$-able cover $X\to X_{\mathrm{dR}}$, giving the claim.

If $f$ is \'etale, then $f_{\mathrm{dR}}$ is cohomologically proper. Namely, $f_{\mathrm{dR}}$ pulls back to $f$ under the cover $Y\to Y_{\mathrm{dR}}$, and $f$ is cohomologically proper (as all maps of schemes are in $P$ in the $D_{\mathrm{qc}}$-formalism).

If $f$ is proper, we want to show that $f_{\mathrm{dR}}$ is cohomologically smooth. By Chow's theorem, we can assume that $f$ is projective. This reduces us to proving the assertion for closed immersions, and for projections $\mathbb P^n_{\mathrm{dR}}\to \ast$. For a closed immersion, the result follows from Proposition~\ref{prop:formalcompletioncohometale}. For $\mathbb P^n_{\mathrm{dR}}$, it follows from Proposition~\ref{prop:XtoXdR} and $\mathbb P^n$ itself being cohomologically smooth.
\end{proof}

\begin{remark} We warn the reader that the $6$-functor formalism from Theorem~\ref{thm:dmodule6functors0} is not quite the specialization of the $6$-functor formalism $D_{\mathrm{qc}}$ when applied to de Rham stacks: Namely, for proper maps, we defined $f_{\mathrm{Dmod},!}$ as the left adjoint of the pullback functor, while $f_{D_{\mathrm{qc}},!}$ differs from it by a shift (by twice the dimension).
\end{remark}

The $D$-module $6$-functor formalism actually has a property stronger than nil-invariance: Namely, it satisfies excision. This result is classically known as ``Kashiwara's lemma''.

\begin{proposition}\label{prop:Dmoduleexcision} Let $X$ be a finite type $k$-scheme with a closed subscheme $j: Z\subset X$ and open complement $i: U\subset X$.\footnote{We apologize for the nonstandard notation; it is however dictated by the $6$-functor formalism, where the closed immersion $j$ is cohomologically \'etale and the open immersion $i$ is cohomologically proper.} For any $A\in \mathrm{Dmod}(X)$, the triangle
\[
j_! j^\ast A\to A\to i_\ast i^\ast A
\]
is exact, yielding a semi-orthogonal decomposition of $\mathrm{Dmod}(X)$ into $\mathrm{Dmod}(Z)$ and $\mathrm{Dmod}(U)$.
\end{proposition}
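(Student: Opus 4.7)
The plan is to verify that $j_!$ and $i_\ast$ are fully faithful with mutually semi-orthogonal essential images and that the pair $(j^\ast, i^\ast)$ is jointly conservative on $\mathrm{Dmod}(X)$; the excision triangle and semi-orthogonal decomposition then follow formally. By Theorem~\ref{thm:Dmodule6functors}, $j_{\mathrm{dR}} : Z_{\mathrm{dR}} \to X_{\mathrm{dR}}$ is cohomologically smooth, and since $j$ is a monomorphism the diagonal of $j_{\mathrm{dR}}$ is an isomorphism, so base-change compatibility of the dualizing sheaf forces $j_{\mathrm{dR}}^!(1) \cong 1$, making $j_{\mathrm{dR}}$ cohomologically \'etale; likewise $i_{\mathrm{dR}}$ is cohomologically proper. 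Moreover $Z_{\mathrm{dR}} \times_{X_{\mathrm{dR}}} U_{\mathrm{dR}} = \emptyset$ (a simultaneous lift would require the underlying reduced map to factor through $Z \cap U = \emptyset$), so base change yields $i^\ast j_! = 0$ and $j^\ast i_\ast = 0$. Combined with $j^\ast j_! = \mathrm{id}$ (from base change along the diagonal of the monomorphism $j_{\mathrm{dR}}$) and $i^\ast i_\ast = \mathrm{id}$, this gives full faithfulness of $j_!$ and $i_\ast$ and the semi-orthogonality of their images.

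The main obstacle is joint conservativity of $(j^\ast, i^\ast)$; I handle it by reducing to the coherent recollement already established in Proposition~\ref{prop:DXonZ}. Pull back along the cohomologically smooth $!$-cover $\pi : X \to X_{\mathrm{dR}}$ of Proposition~\ref{prop:XtoXdR}: the functor $\pi^!$ is conservative by $!$-descent (Proposition~\ref{prop:Dsuavedescent}), and $\pi^\ast$ differs from $\pi^!$ only by tensoring with an invertible sheaf, so $\pi^\ast$ is conservative as well. Suppose $A \in \mathrm{Dmod}(X)$ satisfies $j^\ast A = 0 = i^\ast A$, and set $B = \pi^\ast A \in D_{\mathrm{qc}}(X)$. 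Identifying the base changes $Z_{\mathrm{dR}} \times_{X_{\mathrm{dR}}} X = X^\wedge_Z$ and $U_{\mathrm{dR}} \times_{X_{\mathrm{dR}}} X = U$, base change shows that the restrictions of $B$ to both $X^\wedge_Z$ and $U$ vanish. Vanishing on $U$ places $B$ in $D_{\mathrm{qc}}(X\ \mathrm{on}\ Z)$, which by Proposition~\ref{prop:DXonZ} is equivalent via the fully faithful $!$-pushforward to $D_{\mathrm{qc}}(X^\wedge_Z)$; vanishing on $X^\wedge_Z$ then forces $B = 0$, and hence $A = 0$.

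For arbitrary $A \in \mathrm{Dmod}(X)$, let $B$ be the cofiber of the counit $j_! j^\ast A \to A$. Applying $j^\ast$ and using $j^\ast j_! = \mathrm{id}$ gives $j^\ast B = 0$; applying $i^\ast$ and using $i^\ast j_! = 0$ yields $i^\ast A \xrightarrow{\sim} i^\ast B$. The unit $B \to i_\ast i^\ast B$ becomes an equivalence after $j^\ast$ (both sides are zero, using $j^\ast i_\ast = 0$) and after $i^\ast$ (using $i^\ast i_\ast = \mathrm{id}$), hence is an equivalence by joint conservativity. Composing with $i^\ast B \cong i^\ast A$ identifies the cofiber as $i_\ast i^\ast A$, completing the excision triangle. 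The semi-orthogonal decomposition of $\mathrm{Dmod}(X)$ into $\mathrm{Dmod}(Z)$ (via $j_!$) and $\mathrm{Dmod}(U)$ (via $i_\ast$) then follows from the full faithfulness, semi-orthogonality, and the existence of the triangle.
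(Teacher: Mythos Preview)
Your proof is correct and follows the same strategy as the paper: pull back along $\pi: X \to X_{\mathrm{dR}}$, identify the base-changed strata as $U$ and $X^\wedge_Z$, and reduce to Proposition~\ref{prop:DXonZ}; you have simply spelled out the formal reduction to joint conservativity of $(j^\ast,i^\ast)$ and the derivation of the triangle that the paper's two-line proof leaves implicit. One minor quibble: Proposition~\ref{prop:XtoXdR} is stated only for smooth $X$, whereas the conservativity of $\pi^\ast$ you actually use holds in general (any $\mathrm{Spec}(R)\to X_{\mathrm{dR}}$ lifts to $X$ after the $D$-cover $\mathrm{Spec}(R_{\mathrm{red}})\to\mathrm{Spec}(R)$)---the paper glosses over this point as well.
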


\begin{proof} It suffices to check this after pullback along the surjection $X\to X_{\mathrm{dR}}$, and then the strata become $X\times_{X_{\mathrm{dR}}} U_{\mathrm{dR}} = U$ and $X\times_{X_{\mathrm{dR}}} Z_{\mathrm{dR}} = X^\wedge_Z$. The result follows from Proposition~\ref{prop:DXonZ}.
\end{proof}

Moreover, we have the following important result on duality.

\begin{theorem}\label{thm:coherentDmodules} For all finite type $k$-schemes $X$, the presentable stable $\infty$-category $\mathrm{Dmod}(X)$ is compactly generated, and the compact objects agree with the $f_{\mathrm{Dmod}}$-prim objects for $f: X\to \mathrm{Spec}(k)$. In particular, duality for $f_{\mathrm{Dmod}}$-prim objects gives a selfduality on the compact objects of $\mathrm{Dmod}(X)$.

Moreover:
\begin{enumerate}
\item If $X$ is smooth of dimension $d$, then $1_{\mathrm{Dmod(X)}}$ is prim, and the $f_{\mathrm{Dmod}}$-prim dual of $1_{\mathrm{Dmod(X)}}$ is isomorphic to $1_{\mathrm{Dmod}(X)}[-2d]$.
\item If $f: X\to Y$ is a proper map, then $f_{!,\mathrm{Dmod}}$ preserves compact objects, and commutes with prim duality.
\end{enumerate}
\end{theorem}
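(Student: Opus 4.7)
My plan is to bootstrap from the smooth case. I will prove part (1) for smooth $X$ first, deduce part (2), and then combine with excision (Proposition~\ref{prop:Dmoduleexcision}) and Noetherian induction to handle general finite-type $X$.

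For part (1) in the smooth proper case, let $\overline{X}$ be smooth proper of dimension $d$. Proper maps lie in the class $I$ of the construction of the $\mathrm{Dmod}$-formalism (Theorem~\ref{thm:Dmodule6functors}), so $\overline{f}^!_{\mathrm{Dmod}} = \overline{f}^\ast_{\mathrm{Dmod}}$ on the nose; however, the right adjoint $\overline{f}_{\mathrm{Dmod},\ast}$ of $\overline{f}^\ast$ differs from $\overline{f}_{\mathrm{Dmod},!}$ by a shift $[-2d]$, as per the warning remark closing Lecture VIII comparing $\mathrm{Dmod}$- and $D_{\mathrm{qc}}$-pushforwards via $(-)_{\mathrm{dR}}$. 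The resulting functor-level isomorphism $\overline{f}_\ast(-) \cong \overline{f}_!(- \otimes 1_{\overline{X}}[-2d])$ exhibits the right adjoint of $\overline{f}^\ast$ as coming from the kernel $1_{\overline{X}}[-2d] \in \mathrm{Dmod}(\overline{X})$ in $\mathrm{LZ}_{\mathrm{Dmod}}$, which is precisely $\overline{f}$-primness of $1_{\overline{X}}$ with dual $1_{\overline{X}}[-2d]$; the required triangle identities on 2-morphisms can be extracted from the canonical $(\overline{f}^\ast, \overline{f}_\ast)$-adjunction via Lemma~\ref{lem:adjunctioncheat}. For a general smooth $X$ of dimension $d$, choose (via resolution of singularities in characteristic $0$) a smooth proper compactification $j: X \hookrightarrow \overline{X}$; since $j$ is an open immersion, hence cohomologically proper in $\mathrm{Dmod}$, one has $j_! = j_\ast$, and combining with $\overline{f}$-primness of $1_{\overline{X}}$ gives $f_\ast \cong f_!(- \otimes 1_X[-2d])$, whence $f$-primness of $1_X$ with dual $1_X[-2d]$.

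For part (2), if $f: X \to Y$ is proper, then $f^!_{\mathrm{Dmod}} = f^\ast_{\mathrm{Dmod}}$ preserves all colimits, so its left adjoint $f_!$ preserves compact objects. Commutation of $f_!$ with prim duality is the 2-categorical analogue of Proposition~\ref{prop:suavesheafproperties}~(4): the pushforward functor $\mathrm{LZ}_{\mathrm{Dmod}, X} \to \mathrm{LZ}_{\mathrm{Dmod}, Y}$ induced by postcomposition with $f$ is a functor of 2-categories and hence preserves adjoint pairs, sending any prim object with its dual to a prim object with the pushed-forward dual.

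For general finite-type $X$, I will argue by Noetherian induction on $\dim X$. Generic smoothness gives a smooth dense open $j: U \hookrightarrow X$ with closed complement $i: Z \hookrightarrow X$ of strictly smaller dimension. Excision (Proposition~\ref{prop:Dmoduleexcision}) yields a semi-orthogonal decomposition of $\mathrm{Dmod}(X)$ into $\mathrm{Dmod}(U)$ and $\mathrm{Dmod}(Z)$, both compactly generated with compacts $=$ prims by the smooth case and the inductive hypothesis. The gluing functors $j_!$ and $i_\ast$ preserve compacts (by part (2) for $i$, and since $j^\ast$ preserves compacts as $j$ is cohomologically proper, so does its left adjoint $j_!$) and preserve prim objects (via 2-functoriality of the respective $f_!$'s on $\mathrm{LZ}_{\mathrm{Dmod}}$); combined with the implication prim $\Rightarrow$ compact from Proposition~\ref{prop:primsheafcompact}, this gives both assertions on $X$. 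The main obstacle is part (1) in the smooth proper case: one must show that $[-2d]$ is the correct discrepancy between the two adjoints of $\overline{f}^\ast$ in the precise normalization of the $\mathrm{Dmod}$-formalism, and lift the resulting functor-level identity to the 2-morphism data of Theorem~\ref{thm:critcohomsmooth}. This rests on de Rham Poincar\'e duality for smooth proper varieties, recast inside $\mathrm{LZ}_{\mathrm{Dmod}}$ via the self-duality of the 2-category.
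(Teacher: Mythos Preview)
Your route is genuinely different from the paper's, and the inductive step does not go through. The paper avoids induction and excision entirely: it picks a resolution $\tilde X\to X$, observes that the composite $g\colon\tilde X\to\tilde X_{\mathrm{dR}}\to X_{\mathrm{dR}}$ is a cohomologically smooth cover in the $D_{\mathrm{qc}}$-formalism (combining Proposition~\ref{prop:XtoXdR} with Theorem~\ref{thm:Dmodule6functors}), and then uses that perfect complexes on $\tilde X$ are prim over $\mathrm{Spec}(k)$ together with the principle that cohomologically \emph{smooth} pushforward preserves prim objects. This produces compact generators of $\mathrm{Dmod}(X)$ that are automatically prim, for every $X$ at once.

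The gap in your argument is at the open stratum. First a slip: in the $\mathrm{Dmod}$ formalism the open $j$ lies in $P$, so $j_{!,\mathrm{Dmod}}=j_\ast$ is the \emph{right} adjoint of $j^\ast$, not the left, and your sentence ``so does its left adjoint $j_!$'' misfires. More seriously, post-composition in $\mathrm{LZ}$ with the morphism encoding $g_!$ preserves left adjoints (i.e.\ prim objects) only when that morphism is itself a left adjoint, which unwinds to $1$ being $g$-\emph{suave}; this is exactly why the paper pushes along the cohomologically smooth $g$, and why part~(2) works for proper (hence cohomologically \'etale) maps. For open $j$ one has $1_U$ $j$-prim but not $j$-suave, and in fact $j_\ast$ does \emph{not} preserve compact objects: already for $j\colon\mathbb G_m\hookrightarrow\mathbb A^1$, the free module $D_{\mathbb G_m}$ is compact in $\mathrm{Dmod}(\mathbb G_m)$, but $j_\ast D_{\mathbb G_m}$ is not finitely generated over $D_{\mathbb A^1}$ (pass to the associated graded for the order filtration to see $k[t,t^{-1},\xi]$ is not finite over $k[t,\xi]$). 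Hence neither compact generation nor ``compact $=$ prim'' on $X$ can be read off from the strata as you propose; and separately, part~(1) alone does not furnish these statements on the smooth stratum $U$ either.
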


Under the identification of $\mathrm{Dmod}(X)$ with the derived category of $D_X$-modules (for $X$ smooth), the compact objects are of course the $D_X$-coherent objects. Assertion (1) is encoding Poincar\'e duality.

\begin{proof} Choose a resolution of singularities $\tilde{X}\to X$ (an alteration would be enough). The map $\tilde{X}\to X_{\mathrm{dR}}$ is a cohomologically smooth cover (as the composite of the cohomologically smooth covers $\tilde{X}\to \tilde{X}_{\mathrm{dR}}\to X_{\mathrm{dR}}$), and hence the (colimit-preserving) pullback $D_{\mathrm{qc}}(X_{\mathrm{dR}})\to D_{\mathrm{qc}}(\tilde{X})$ has a left adjoint, which thus preserves compact objects. As $D_{\mathrm{qc}}(\tilde{X})$ is compactly generated (and pullback is conservative), so is $\mathrm{Dmod}(X)$. Moreover, any $K\in \mathrm{Perf}(\tilde{X})$ is $\tilde{\pi}$-prim in the $D_{\mathrm{qc}}$-formalism for $\tilde{\pi}: X\to \ast$. As cohomologically smooth pushforwards (like $g: \tilde{X}\to X_{\mathrm{dR}}$) preserve prim objects, this implies that $g_{!,D_{\mathrm{qc}}} K\in D_{\mathrm{qc}}(X_{\mathrm{dR}})$ is $f_{\mathrm{dR}}$-prim, i.e.~as an object of $\mathrm{Dmod}(X)$ it is $f_{\mathrm{Dmod}}$-prim (as primness does not see the different twists in the lower-$!$-functors). These generate all compact objects, so all compact objects are $f_{\mathrm{Dmod}}$-prim. The converse follows from Proposition~\ref{prop:primsheafcompact}.

In part (1), one first shows that it is coherent and its prim dual is locally isomorphic to $1_{\mathrm{Dmod}(X)}[-2d]$; this reduces to a computation on $\mathbb A^1$ (and is the only place we use characteristic $0$). Then there are various ways to conclude a global isomorphism, for example using a deformation to the normal cone. Part (2) follows from the preservation of prim objects under cohomologically \'etale pushforward (and the preservation of prim duality).
\end{proof}

\begin{remark} There is yet another interpretation of $\mathrm{Dmod}(X)$, or rather of its opposite $\mathrm{Dmod}(X)^{\mathrm{op}}$. Namely,
\[\begin{aligned}
\mathrm{Dmod}(X)^{\mathrm{op}} &= \mathrm{ProPerf}(X_{\mathrm{dR}}) = \mathrm{lim}_{R,\mathrm{Spec}(R_{\mathrm{red}})\to X} \mathrm{Pro}(\mathrm{Perf}(R))
\end{aligned}\]
using the formalism from Option 3 in the lecture. In this interpretation, the \'etale maps are cohomologically \'etale and the proper maps are cohomologically proper. Moreover, the excision triangle takes the usual form: If $X$ has an open subscheme $j: U\subset X$ with closed complement $i: Z\subset X$, then for all $A\in \mathrm{ProPerf}(X_{\mathrm{dR}})$, one has an exact triangle
\[
j_! j^\ast A\to A\to i_\ast i^\ast A.
\]
Moreover, in this realization, Theorem~\ref{thm:coherentDmodules} says that all co-compact objects of $\mathrm{Dmod}^{\mathrm{op}}$ are $f$-suave over the point, the duality becomes actual Verdier duality, and part (1) says that the map $X\to \ast$ is cohomologically smooth if $X$ is smooth (with dualizing complex $1_{\mathrm{Dmod}^{\mathrm{op}}(X)}[2d]$).
\end{remark}

\begin{example} Let us give some examples of $D$-modules, and discuss their different realizations. Let us take the affine line $X=\mathbb A^1_k=\mathrm{Spec}(k[T])$ with its closed point $Z=\mathrm{Spec}(k)=\{0\}\subset \mathbb A^1_k=X$; let $U=\mathrm{Spec}(k[T^{\pm 1}])$ be its open complement. Following our previous terminology, we write $j: Z\to X$ and $i: U\to X$ for the morphisms, and we want to understand the different realizations of the exact triangle
\[
j_{!,\mathrm{Dmod}}(1_{\mathrm{Dmod}(Z)})\to 1_{\mathrm{Dmod}(X)}\to i_{\ast,\mathrm{Dmod}}(1_{\mathrm{Dmod(U)}}).
\]
Here $i_{\ast,\mathrm{Dmod}}$ is, as in any $6$-functor formalism, the right adjoint to $i^\ast_{\mathrm{Dmod}}$; but because $i$ is open and thus cohomologically proper, it also agrees with $i_{!,\mathrm{Dmod}}$. The functor $j_{!,\mathrm{Dmod}}$ is the left adjoint of $j^\ast_{\mathrm{Dmod}}$, as $j$ is proper and thus cohomologically \'etale.

Let us first realize under the functor $\mathrm{Dmod}(X)\to D_{\mathrm{qc}}(X)$; this is the usual realization as left $D$-modules. This functor is symmetric monoidal for the usual tensor product. Also, $i_{\ast,\mathrm{Dmod}}$ pulls back to the the functor $i_{\ast,D_{\mathrm{qc}}}$ as $U=X\times_{X_{\mathrm{dR}}} U_{\mathrm{dR}}$. Thus, we can identify the last two terms of the sequence above as $k[T]\to k[T^{\pm 1}]$, and thus the whole exact triangle must be
\[
(k[T^{\pm 1}]/k[T])[-1]\to k[T]\to k[T^{\pm 1}].
\]
In particular,
\[
j_{!,\mathrm{Dmod}}(1_{\mathrm{Dmod}(Z)}) = (k[T^{\pm 1}]/k[T])[-1].
\]
We note that this module can also be written as the local cohomology of the structure sheaf.

Now let us analyze the realization via $\mathrm{ProPerf}$. In this realization, maps within the category go the other way, so the triangle rather becomes a triangle
\[
j_{!,\mathrm{Dmod}^{\mathrm{op}}}(1_{\mathrm{Dmod}^{\mathrm{op}}(U)})\to 1_{\mathrm{Dmod}^{\mathrm{op}}(X)}\to i_{\ast,\mathrm{Dmod}^{\mathrm{op}}}(1_{\mathrm{Dmod}^{\mathrm{op}}(Z)}),
\]
where also now we switched to $j: U\subset X$ and $i: Z\subset X$. Under $\mathrm{Dmod}^{\mathrm{op}}(X) = \mathrm{ProPerf}(X_{\mathrm{dR}})\to \mathrm{ProPerf}(X)$, this realizes to
\[
(k[[T]]/k[T])[-1]\to k[T]\to k[[T]],
\]
where $k[[T]]$ denotes the pro-object $(k[T]/T^n)_n$.
\end{example}

\begin{example}\label{ex:dualrealization} Continuing our previous example, we note that all objects involved are $D$-coherent, so we also get a triangle of their duals. Let us write $\mathbb D_X$ for the duality on the compact objects of $\mathrm{Dmod}(X)$. We have $\mathbb D(1_{\mathrm{Dmod}(X)}) = 1_{\mathrm{Dmod}(X)}[-2]$ and
\[
\mathbb D(j_{!,\mathrm{Dmod}}(1_{\mathrm{Dmod}(Z)})) = j_{!,\mathrm{Dmod}}(1_{\mathrm{Dmod}(Z)}).
\]
(This equation may look confusing, but recall that $\mathbb D$ is prim duality, and this commutes with $j_!$ for cohomologically \'etale maps. Alternatively, think in terms of the $\mathrm{ProPerf}$-picture, where $j_!$ would be denoted $i_!=i_\ast$, and Verdier duality commutes with (cohomologically) proper pushforward.) Thus, the triangle dualizes to a triangle
\[
\mathbb D(i_{\ast,\mathrm{Dmod}}(1_{\mathrm{Dmod}(U)}))\to 1_{\mathrm{Dmod}(X)}[-2]\to j_{!,\mathrm{Dmod}}(1_{\mathrm{Dmod}(Z)}).
\]
In the first realization, this becomes a triangle
\[
M\to k[T][-2]\to (k[T^{\pm 1}]/k[T])[-1]
\]
for some $M\in D(k[T])$. In fact, any such triangle is split, so $M=(k[T]\oplus k[T^{\pm 1}]/k[T])[-2]$, but the $D$-module structure is more subtle. More precisely, on any basis element $(T^n,0)$ or $(0,T^{-n})$ with $n>0$ the derivative $\partial_T$ does the same as the direct sum of the two $D$-modules, but instead of killing $(T^0,1)$, it sends it to $(0,T^{-1})$.

Note that all terms here are concentrated in one cohomological degree; this is related to the exact sequence $0\to i_\ast \mathbb Z[-1]\to j_! \mathbb Z\to \mathbb Z\to 0$ in perverse sheaves on $\mathbb C$ (with $i: \{0\}\hookrightarrow \mathbb C$, $j:\mathbb C^\times\hookrightarrow \mathbb C$).

The second realization, via the $\mathrm{ProPerf}$-formalism, becomes a triangle
\[
k[[T]]\to k[T][2]\to N.
\]
\end{example}

\begin{remark} We should also translate between the functors $f^\ast_{\mathrm{Dmod}}$, $\otimes_{\mathrm{Dmod}}$ and $f_{!,\mathrm{Dmod}}$ from the $6$-functor formalism $\mathrm{Dmod}$ constructed above, and the usual functors from the literature. We take Bernstein's notes \cite{BernsteinDmodules} as our reference. Bernstein takes the perspective of left $D$-modules as the primary one, and we follow him. Our pullback functor $f^\ast_{\mathrm{Dmod}}$ matches Bernstein's naive pullback functor $f^{\triangle}_{\mathrm{Be}}$ (Be for Bernstein); he defines the actual pullback functor as a shift $f^!_{\mathrm{Be}} = f^\triangle_{\mathrm{Be}}$ by $\mathrm{dim}(Y)-\mathrm{dim}(X)$. Similarly, $f_{!,\mathrm{Dmod}}$ corresponds to a shift $f_{\ast,\mathrm{Be}}[\mathrm{dim}(Y)-\mathrm{dim}(X)]$. The tensor products agree.
\end{remark}

\begin{remark} Continuing the previous remark, the usual discussion of $D$-module $6$ functors proceeds in the following way. First, one has the big category of all $D$-modules, on which functors $f^!$ and $f_\ast$ are introduced. Then one restricts to the coherent $D$-modules, which have a selfduality $\mathbb D$, and then tries to dualize $f^!$ and $f_\ast$ to get functors $f^\ast$ and $f_!$. These are in general only well-defined as functors to the $\mathrm{Pro}$-category, yielding $\mathrm{Pro}(\mathrm{Dmod}(X)^\omega)$ with $\otimes$, $f^\ast$ and $f_!$-functors. We note that this exactly agrees with the $\otimes$, $f^\ast$ and $f_!$ we defined on $\mathrm{Dmod}(X)^{\mathrm{op}}\cong \mathrm{Pro}(\mathrm{Dmod}(X)^\omega)$.

Then one defines the class of holonomic $D$-modules and shows that this is stable under $\otimes$, $f^\ast$, $f_!$ and their right adjoints (which, for abstract reasons, are also Verdier duals of $f_!$ and $f^\ast$). The resulting $6$-functor formalism of holonomic $D$-modules will then satisfy excision, \'etale maps are cohomologically \'etale, proper maps are cohomologically proper, and smooth maps are cohomologically smooth, so is a ``usual'' $6$-functor formalism.
\end{remark}

\newpage

\section{Lecture IX: Solid Modules}

As we have seen in the last lecture, the theory of coherent sheaves does not seem to fit into the framework of a $6$-functor formalism easily. The essential reason is that for an open immersion $j: U\subset X$, we would want $j^\ast$ to admit a left adjoint $j_!$, but it does not exist. And the reason for this is that $j^\ast$ does not commute with limits; in fact, not even with countable products. Concretely: If
\[
j: U=\mathrm{Spec}(A[\tfrac 1f])\subset X=\mathrm{Spec}(A)
\]
is a standard open subset $j^\ast$ is given by $-\otimes_A A[\tfrac 1f]$, and the natural map
\[
(\prod_{\mathbb N} A)\otimes_A A[\tfrac 1f]\to \prod_{\mathbb N}(A[\tfrac 1f])
\]
is not an isomorphism. Indeed, the image consists of the subspace of sequences $(a_i/f^{n_i})_i$ where the sequence $n_i$ stays bounded.

One way one can hope to rectify the problem is by thinking of $\prod_{\mathbb N} A$ not as an abstract $A$-module, but as some kind of topological $A$-module, equipped with the product topology; and by thinking of the base change $-\otimes_A A[\tfrac 1f]$ as being some kind of ``completed tensor product'', where the completion produces the corresponding product over $A[\tfrac 1f]$. One essentially tautological way of achieving this is by treating $\prod_{\mathbb N} A$ as a Pro-$A$-module, and this is related to Options 3 and 4 from the last lecture. However, ideally we would also like to retain the good categorical properties like having a presentable ($\infty$-)category of modules, which fails for the $\mathrm{Pro}$-category.

In our lectures on condensed mathematics, \cite{Condensed}, we outlined precisely such a theory of ``complete topological $A$-modules'' and sketched that it defines a $6$-functor formalism. With the current machinery in place for constructing $6$-functor formalisms, this can be turned into a theorem.

\subsection{Reminder on solid modules}

First, we need to recall the definition of ``complete topological $A$-modules'' that we use, which is the theory of solid $A$-modules. Their theory was developed in \cite{Condensed}; let us review the definition briefly.

One problem with the category of topological abelian groups is that it is not an abelian category. In fact, you can have morphisms that are both injective and surjective, but not an isomorphism, as for example $\mathbb R_{\mathrm{disc}}\to \mathbb R$, or really any map relating two different topologies on the same underlying group. One key observation of condensed mathematics is that by replacing topological spaces by the closely related notion of condensed sets, such foundational issues disappear.

\begin{definition}\label{def:condensedset} Consider the category $\mathrm{ProFin}$ of profinite sets, equipped with the Grothendieck topology generated by finite families of jointly surjective maps. A condensed set is a sheaf on $\mathrm{ProFin}$ for this Grothendieck topology.
\end{definition}

\begin{remark} As $\mathrm{ProFin}$ is a large category, some care is required here to avoid set-theoretic problems. For example, one can pick some cutoff cardinal $\kappa$ and work with $\kappa$-small profinite sets. The resulting category of sheaves is called $\kappa$-condensed sets. As you increase $\kappa$, the corresponding categories admit compatible fully faithful embeddings. The category of all condensed sets is by definition the union of the category of $\kappa$-condensed sets over all $\kappa$.
\end{remark}

Any topological space $X$ defines a ($\kappa$-)condensed set $\underline{X}$ by sending a profinite set $S$ to the continuous maps from $S$ to $X$. Restricted to ($\kappa$-)compactly generated $X$, this functor is fully faithful. Thus, in practice, the category of condensed sets serves as an enlargement of the category of topological spaces. Inside this larger category, one can meaningfully take ``wild'' quotients such as $\mathbb R/\mathbb Q$ or even $\mathbb R/\mathbb R_{\mathrm{disc}}$.

One can similarly define condensed abelian groups, and there is a conservative forgetful functor $\mathrm{CondAb}\to \mathrm{CondSet}$ which admits a left adjoint, denoted $X\mapsto \mathbb Z[X]$. Concretely, $\mathbb Z[X]$ is the sheafification of $S\mapsto\mathbb Z[X(S)]$. Condensed abelian groups admit a tensor product, satisfying $\mathbb Z[X]\otimes \mathbb Z[X']\cong \mathbb Z[X\times X']$.

\begin{proposition}\label{prop:CondAb} The category $\mathrm{CondAb}$ is abelian. It satisfies Grothendieck's axioms (AB3)--(AB6) and (AB*3)--(AB*4). In particular, filtered colimits and infinite products exact. Moreover, $\mathrm{CondAb}$ admits projective generators, given by $\mathbb Z[S]$ for extremally disconnected profinite sets $S$.
\end{proposition}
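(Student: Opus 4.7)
The plan is to exploit the fact that the site $\mathrm{ProFin}$ has a basis consisting of extremally disconnected profinite sets (every profinite set admits a surjection from an extremally disconnected one, e.g.\ the Stone--\v{C}ech compactification of its underlying discrete set), on which the sheaf condition trivializes. Concretely, I would first prove that restriction along $\mathrm{ExtrDisc} \hookrightarrow \mathrm{ProFin}$ induces an equivalence
\[
\mathrm{CondSet} \simeq \mathrm{Fun}^{\sqcup}(\mathrm{ExtrDisc}^{\mathrm{op}}, \mathrm{Set}),
\]
the category of functors sending finite coproducts of extremally disconnected sets to finite products of sets. The point is that every cover of an extremally disconnected set splits, so the equalizer condition is automatic there; fully faithfulness and essential surjectivity then follow by writing a general profinite set as a coequalizer of extremally disconnected ones. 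The analogous statement for $\mathrm{CondAb}$, namely
\[
\mathrm{CondAb} \simeq \mathrm{Fun}^{\sqcup}(\mathrm{ExtrDisc}^{\mathrm{op}}, \mathrm{Ab}),
\]
follows formally.

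Once this description is in place, essentially everything is pointwise. Abelianness is immediate, since kernels, cokernels and the axioms (A1)--(A4) can be computed valuewise in $\mathrm{Ab}$, and the resulting functor on $\mathrm{ExtrDisc}^{\mathrm{op}}$ still preserves finite products (since kernels and cokernels commute with finite products in $\mathrm{Ab}$). For limits and colimits I would argue: limits in $\mathrm{Fun}^{\sqcup}$ are computed pointwise because limits commute with the finite-product condition; filtered colimits are computed pointwise because filtered colimits commute with finite products in $\mathrm{Ab}$. This gives (AB3) and (AB*3). Since arbitrary products are exact in $\mathrm{Ab}$ and computed pointwise, (AB*4) follows; since filtered colimits are exact in $\mathrm{Ab}$ and computed pointwise, (AB5) follows. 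For (AB6), one uses that in $\mathrm{Ab}$ arbitrary products commute with filtered colimits of specified type (``countable'' not required; the statement is the usual AB6), and both operations are pointwise.

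For the projective generators, I would use that evaluation at $S \in \mathrm{ExtrDisc}$ is the functor $\mathrm{Hom}_{\mathrm{CondAb}}(\mathbb{Z}[S],-)$, by the adjunction $\mathbb{Z}[-] \dashv (\text{forget})$ and the Yoneda lemma on the site. Under the equivalence above, evaluation at $S$ is simply the pointwise evaluation functor, which is exact by the previous paragraph; hence $\mathbb{Z}[S]$ is projective. To see that these collectively generate, note that any condensed set $X$ admits a surjection from $\bigsqcup_S \underline{X(S)}$ where $S$ ranges over extremally disconnected sets, and applying $\mathbb{Z}[-]$ to a surjection of condensed sets yields a surjection of condensed abelian groups (by right exactness of $\mathbb{Z}[-]$ and the pointwise characterization of surjectivity on extremally disconnected sets).

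The main obstacle is really just the equivalence with product-preserving functors on $\mathrm{ExtrDisc}^{\mathrm{op}}$: once that is established the rest is a formal consequence of the corresponding properties of $\mathrm{Ab}$. Verifying the equivalence requires knowing that extremally disconnected profinite sets form a basis and that covers of them split, both of which are classical facts about Stone--\v{C}ech compactifications and projectivity in compact Hausdorff spaces.
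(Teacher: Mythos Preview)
The paper does not actually prove this proposition; it is stated as a reminder of results established in \cite{Condensed}. Your approach is exactly the one taken there: one identifies $\mathrm{CondAb}$ with finite-product-preserving functors $\mathrm{ExtrDisc}^{\mathrm{op}}\to\mathrm{Ab}$, after which all the axioms reduce to the corresponding facts about $\mathrm{Ab}$, since limits, filtered colimits, direct sums, and cokernels all commute with finite products in $\mathrm{Ab}$ and are therefore computed pointwise. The argument is correct. Two small points worth tightening: you should mention (AB4) explicitly (it follows since arbitrary direct sums are also pointwise, as they commute with finite products in $\mathrm{Ab}$); and for generation it is cleaner to argue directly in $\mathrm{CondAb}$ that the canonical map $\bigoplus_{S,\,m\in M(S)}\mathbb Z[S]\to M$ is an epimorphism, checked on sections over each extremally disconnected $S$ where it is tautological, rather than passing through condensed sets.
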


Thus $\mathrm{CondAb}$ is a good place for homological algebra. One can in particular pass to its derived category $D(\mathrm{CondAb})$. (Reminding ourselves of the discussion between derived categories of abelian sheaves versus (hyper)sheaves, this means we are looking at hypersheaves on $\mathrm{ProFin}$. Indeed, in condensed mathematics, one is usually working with hypersheaves on $\mathrm{ProFin}$.)\footnote{It is an open question whether all sheaves of anima on $\mathrm{ProFin}$ are hypersheaves, but it is generally believed that this is not the case.}

At this point, we have a nice abelian category of condensed abelian groups, serving as a replacement for topological abelian groups. However, for our goal, we need to restrict to ``complete'' condensed abelian groups. The theory of ``completeness'' turns out to be surprisingly rich, and somewhat different from the setting of topological abelian groups where it is an ``absolute'' notion. Here, the notion of ``completeness'' is allowed to depend on the base ring, and we even want to allow the possibility to deal with different notions of ``completeness''.

This is encapsulated in the following definition. We state it here immediately in its ``derived'' version for wider applicability. For any condensed animated commutative ring $A^\triangleright$, we have the (connective part of) the derived $\infty$-category $D_{\geq 0}(A^\triangleright)$ of condensed $A^\triangleright$-modules. If $A^\triangleright$ is a usual ring, this is literally the expected subcategory of the derived $\infty$-category of condensed $A^\triangleright$-modules.

\begin{definition}[{\cite[Proposition 12.20]{Analytic}}]\label{def:analyticring} An analytic ring is a condensed animated commutative ring $A^\triangleright$ together with a subcategory $D_{\geq 0}(A)\subset D_{\geq 0}(A^\triangleright)$ of ``complete condensed $A^\triangleright$-modules'', with the following properties:
\begin{enumerate}
\item The subcategory contains $A^\triangleright$, is stable under all limits and colimits, as well internal $\mathrm{Hom}$, and restriction of scalars along Frobenius $A^\triangleright\to A^\triangleright/p: x\mapsto x^p$ (on the subcategory of $p$-torsion objects).
\item The inclusion admits a left adjoint.
\end{enumerate}
\end{definition}

It turns out that these conditions ensure that the left adjoint $D_{\geq 0}(A^\triangleright)\to D_{\geq 0}(A)$ is a Verdier localization whose kernel is a $\otimes$-ideal, and accordingly $D_{\geq 0}(A)$ acquires its own ``completed'' tensor product (as well as $\mathrm{Sym}^n$ operations). By stabilization, everything passes also to unbounded derived categories $D(A)$.

For any profinite set $S$, the completion of $A^\triangleright[S]$ is denoted $A[S]\in D_{\geq 0}(A)$. For $S$ extremally disconnected, these form compact projective generators of $D_{\geq 0}(A)$, which is thus freely generated under sifted colimits by these objects. One can equivalently describe analytic rings in terms of the condensed animated commutative ring $A^\triangleright$ and a functor $S\mapsto A[S]$ from extremally disconnected profinite sets towards $D_{\geq 0}(A^\triangleright)$, with properties that ensure that the $\infty$-category $D_{\geq 0}(A)\subset D_{\geq 0}(A^\triangleright)$ generated by the $A[S]$ satisfies the hypotheses of Definition~\ref{def:analyticring}.

\begin{remark} By \cite[Proposition 12.21]{Analytic}, analytic ring structures depend only on the condensed commutative ring $\pi_0 A^\triangleright$. Moreover, they are also determined by the subcategory $D_{\geq 0}(A)^\heartsuit\subset D_{\geq 0}(A^\triangleright)^\heartsuit$, as an object is complete if and only if all its cohomologies are complete.
\end{remark}

As $A^\triangleright$ is complete, it follows that all condensed $A^\triangleright$-modules that can be built from $A^\triangleright$ using limits and colimits have to be complete, too. In some cases, this minimal choice of $D_{\geq 0}(A)$ does define an analytic ring structure.

\begin{theorem} Let $R$ be a discrete animated commutative ring that is an almost finitely presented $\mathbb Z$-algebra. Then the subcategory $D_{\geq 0}(R_\square)\subset D_{\geq 0}(R)$ generated under colimits by $\prod_I R$ for varying sets $I$ defines an analytic ring structure $A=R_\square$ on $A^\triangleright=R$.

The completion of $R[S]$ for $S=\varprojlim_i S_i$ is given by the static module
\[
R_\square[S] = \varprojlim_i R[S_i].
\]
This is isomorphic to $\prod_I R$ for some set $I$, and these objects form compact projective generators of $\mathcal D_{\geq 0}(R_\square)$ stable under tensor products
\[
\prod_I R\otimes_{R_\square} \prod_J R\cong \prod_{I\times J} R.
\]
\end{theorem}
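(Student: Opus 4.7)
The plan is to bootstrap from the case $R = \mathbb{Z}_\square$, which I would take as given input. The foundational theorems on solid abelian groups in the condensed mathematics notes establish that $\mathbb{Z}_\square[S] = \varprojlim_i \mathbb{Z}[S_i]$ is static for any profinite $S$, that extremally disconnected $S = \beta I$ yields $\prod_I \mathbb{Z}$, that these are compact projective generators of $D_{\geq 0}(\mathbb{Z}_\square)$, and that they satisfy the product formula $\prod_I \mathbb{Z} \otimes_{\mathbb{Z}_\square} \prod_J \mathbb{Z} \cong \prod_{I \times J} \mathbb{Z}$. My task is to lift each of these statements from $\mathbb{Z}$ to the almost finitely presented algebra $R$.

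I would first show that the subcategory $D_{\geq 0}(R_\square) \subset D_{\geq 0}(R)$ generated under colimits by $\{\prod_I R\}$ coincides with the full subcategory of $R$-modules whose underlying condensed $\mathbb{Z}$-module is solid. The inclusion ``$\subseteq$'' is immediate since each $\prod_I R$ has solid underlying $\mathbb{Z}$-module (a product of the discrete $R$). For ``$\supseteq$'' I would resolve any $\mathbb{Z}_\square$-solid $R$-module by objects built from $\prod_I R$, using the known generation result for $D_{\geq 0}(\mathbb{Z}_\square)$ together with the $R$-module structure. The analytic ring axioms (closure under limits, colimits, internal Hom, Frobenius restriction on $p$-torsion) then follow from the corresponding properties of $D_{\geq 0}(\mathbb{Z}_\square)$, since each of these operations preserves the underlying $\mathbb{Z}$-module structure. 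The left adjoint (the ``$R$-solidification'') is obtained by applying $\mathbb{Z}$-solidification and observing that the $R$-action descends, because $\mathbb{Z}$-solidification is a symmetric monoidal Bousfield localization whose kernel is a tensor ideal.

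Next I would identify $R_\square[S]$ explicitly. For $S_i$ finite, $R[S_i] = \bigoplus_{s \in S_i} R$ is discrete and hence solid, so it already lies in $D_{\geq 0}(R_\square)$ and agrees with $R_\square[S_i]$. For general profinite $S = \varprojlim_i S_i$, the universal property of $R_\square[S]$ gives a comparison map to the static limit $\varprojlim_i R[S_i]$, which one checks is an isomorphism by verifying that both sides represent the same functor on $D_{\geq 0}(R_\square)$; this uses that maps from $\prod_I R$ into a solid $R$-module are controlled by evaluation on the profinite ``diagonal'', exactly as in the $\mathbb{Z}_\square$ case. The identification with $\prod_I R$ for $S = \beta I$ is then immediate from $\varprojlim_J \bigoplus_{j \in J} R = \prod_I R$, the limit running over finite quotients $J$ of $I$.

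The main obstacle is the tensor product formula $\prod_I R \otimes_{R_\square} \prod_J R \cong \prod_{I \times J} R$. By change of rings,
\[
\prod_I R \otimes_{R_\square} \prod_J R = \bigl(\prod_I R \otimes_{\mathbb{Z}_\square} \prod_J R\bigr) \otimes_{R \otimes_{\mathbb{Z}_\square} R} R,
\]
so the question reduces to the $\mathbb{Z}_\square$-tensor identity $\prod_I R \otimes_{\mathbb{Z}_\square} \prod_J R \cong \prod_{I \times J}(R \otimes_{\mathbb{Z}_\square} R)$. This is where almost finite presentation is essential: writing $R$ as a pseudocoherent complex over $\mathbb{Z}$, one commutes the infinite products past tensoring level by level, valid because each stage of the resolution is finite free, with the derived compatibility propagating through truncations by uniform finite presentation. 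The final collapse $\prod_{I \times J}(R \otimes_{\mathbb{Z}_\square} R) \otimes_{R \otimes_{\mathbb{Z}_\square} R} R \cong \prod_{I \times J} R$ then follows by the same finite-presentation commutation applied to the multiplication map $R \otimes_{\mathbb{Z}_\square} R \to R$, regarded as a map of pseudocoherent complexes.
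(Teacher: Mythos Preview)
Your proposal contains a genuine gap at its foundation. You claim that the subcategory generated under colimits by $\prod_I R$ coincides with the subcategory of $R$-modules whose underlying condensed $\mathbb{Z}$-module is solid, and that the left adjoint is simply $\mathbb{Z}$-solidification. This is false already for $R = \mathbb{Z}[T]$. The condensed $\mathbb{Z}[T]$-module $\mathbb{Z}((T^{-1}))$ has $\mathbb{Z}$-solid underlying module (it is a product of copies of discrete $\mathbb{Z}$), yet it does \emph{not} lie in the subcategory generated by the $\prod_I \mathbb{Z}[T]$: the analytic ring $\mathbb{Z}[T]_\square$ is obtained from the induced structure $(\mathbb{Z}[T],\mathbb{Z})_\square$ precisely by killing the idempotent algebra $\mathbb{Z}((T^{-1}))$, so these two analytic ring structures on $\mathbb{Z}[T]$ are genuinely different. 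Consequently, the $\mathbb{Z}$-solidification of $\mathbb{Z}[T][S]$ yields $\bigoplus_n \prod_I \mathbb{Z}$, not the required $\prod_I \mathbb{Z}[T] = \varprojlim_i \mathbb{Z}[T][S_i]$.

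The same error resurfaces in your tensor-product argument: you write ``writing $R$ as a pseudocoherent complex over $\mathbb{Z}$'', but $R$ is almost of finite presentation as a $\mathbb{Z}$-\emph{algebra}, not as a $\mathbb{Z}$-\emph{module}; polynomial rings are infinitely generated over $\mathbb{Z}$ and certainly not pseudocoherent. The commutation $\prod_I \mathbb{Z} \otimes_{\mathbb{Z}} R \cong \prod_I R$ you need simply fails. The paper's route is to take the polynomial case $\mathbb{Z}[T_1,\ldots,T_n]_\square$ as a second independent input (a nontrivial theorem from the condensed notes), and then observe that for general $R$, choosing a surjection $\mathbb{Z}[T_1,\ldots,T_n]\twoheadrightarrow \pi_0 R$ makes $R$ pseudocoherent over $\mathbb{Z}[T_1,\ldots,T_n]$, whence $\prod_I \mathbb{Z}[T_1,\ldots,T_n] \otimes_{\mathbb{Z}[T_1,\ldots,T_n]} R \cong \prod_I R$ and $R_\square$ is the structure induced from the polynomial ring. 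The polynomial step is not optional: it is exactly where the passage from ``finitely presented as module'' to ``finitely presented as algebra'' is bridged.
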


\begin{proof} For $R=\mathbb Z$, this is \cite[Theorem 5.8, Corollary 6.1]{Condensed}, and for $R=\mathbb Z[T_1,\ldots,T_n]$, it is \cite[Theorem 8.1]{Condensed}. If $\mathbb Z[T_1,\ldots,T_n]\to R$ yields a surjection on $\pi_0$, then $R$ is pseudocoherent as $\mathbb Z[T_1,\ldots,T_n]$-module and hence the map
\[
\prod_I \mathbb Z[T_1,\ldots,T_n]\otimes_{\mathbb Z[T_1,\ldots,T_n]} R\to \prod_I R
\]
is an isomorphism, and so this is just the induced analytic ring structure.
\end{proof}

Note that $D(R_\square)$ is compactly generated, and the subcategory of compact objects $D(R_\square)^\omega$ is generated under finite colimits, shifts and retracts by $\prod_I R$ for varying sets $I$. It follows that under naive duality, $D(R_\square)^\omega$ is anti-equivalent to the subcategory of the usual derived $\infty$-category $D(R)$ of (abstract) $R$-modules generated under finite colimits, shift and retracts by free modules $\bigoplus_I R$. Thus, $D(R_\square)$ is closely related to $\mathrm{Ind}(\mathrm{Pro}(\mathrm{Perf}(R)))$, and the solid formalism is closely related to the formalism from Option 2 of the previous lecture, but passing to $\mathrm{Ind}$-categories again.

\begin{theorem}\label{thm:solid6functorsschemes} Let $C$ be the category of derived schemes almost of finite type over $\mathbb Z$.

By descent, the functor $R\mapsto D(R_\square)$ globalizes to a functor $X\mapsto D_\square(X)$ of quasicoherent sheaves of solid modules, on any $X\in C$. With respect to the class $P$ of proper maps and $I$ of open immersions (and $E$ of separated maps), the functor $X\mapsto D_\square(X)$ defines a $6$-functor formalism.
\end{theorem}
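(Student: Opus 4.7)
My plan is to verify the hypotheses of Theorem~\ref{thm:construct6functors} for the classes $I$ of open immersions and $P$ of proper maps. The classes $I$ and $P$ are stable under pullback, composition, and diagonals, $I \cap P$ consists of clopen immersions (hence $(-1)$-truncated), and the factorization of a separated morphism $f \in E$ as $f = \overline{f} \circ j$ with $j \in I$ and $\overline{f} \in P$ is provided by the derived Nagata compactification (Theorem~\ref{thm:derivednagata}). This settles hypothesis (1), while hypothesis (4) is automatic by Remark~\ref{rem:lastconditionvacuousexcision} since open immersions are monomorphisms. The functor $X \mapsto D_\square(X)$ on all of $C$ is obtained from its affine values $R \mapsto D(R_\square)$ by Zariski descent, which one checks on the compact projective generators $R_\square[S] = \prod_S R$ using the explicit behavior of solid tensor products on profinite sets.

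Hypothesis (3) --- existence of a colimit-preserving right adjoint $f_*$ with base change and projection formula for $f \in P$ --- is proved along the same lines as Proposition~\ref{prop:quasicoherentbasechange} in the classical case. On affines, $f^*$ corresponds to base change of analytic rings along $B_\square \to A_\square$, and its right adjoint is restriction of scalars, which commutes with all colimits and satisfies the projection formula and base change formally. Globalization along Zariski covers uses the descent already established, and properness of $f$ is needed to guarantee the required finiteness properties (the solid analogue of finiteness of coherent cohomology).

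The core new content is hypothesis (2): for a Zariski open immersion $j \in I$, construction of a left adjoint $j_!$ to $j^*$ with base change and projection formula. This fails in the classical $D_{\mathrm{qc}}$ setting precisely because $j^* = - \otimes_A A[1/f]$ fails to preserve infinite products, and the solid formalism is designed to repair exactly this deficiency. The crucial input is the defining property of solid modules that
\[
A_\square[S] \otimes^\blacksquare_{A_\square} M \;=\; M_\square[S] \;=\; \prod_S M
\]
for any profinite set $S$ and any solid $A_\square$-module $M$, encoding a strong distributivity of the solid tensor product over products indexed by profinite sets. Combined with an explicit compact presentation of $A[1/f]_\square$ as a solid $A_\square$-module, this identity upgrades to the statement that $j^* = - \otimes^\blacksquare_{A_\square} A[1/f]_\square$ preserves all limits in $D(A_\square)$; by the adjoint functor theorem, $j^*$ then admits a left adjoint $j_!$. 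The projection formula for $j_!$ follows formally from the symmetric monoidality of $j^*$, and base change is checked first on the compact projective generators, where it reduces to the displayed solid identity, and then extended to arbitrary objects. I expect the main obstacle to be carefully establishing that $j^*$ preserves \emph{all} limits, not merely those of compact projective generators --- this is exactly what fails in the classical setting and is the crux of what the solid formalism contributes. Once it is in place, Theorem~\ref{thm:construct6functors} packages everything into the desired $6$-functor formalism.
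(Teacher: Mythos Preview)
Your overall strategy---verify the hypotheses of Theorem~\ref{thm:construct6functors} directly on the category of schemes with $I$ = open immersions, $P$ = proper maps, using derived Nagata compactification---is in principle viable, but it is \emph{not} the route the paper takes, and your execution of the key step has a genuine gap.

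\textbf{The paper's approach.} Rather than working on schemes, the paper constructs the $6$-functor formalism on the category of \emph{analytic rings}, where the classes $I$, $P$, $E$ are \emph{defined} so that conditions (2)--(4) of Theorem~\ref{thm:construct6functors} hold tautologically: $I$ consists of maps for which $j^\ast$ has a fully faithful linear left adjoint, $P$ of maps with induced analytic ring structure (so $f_\ast$ is linear by definition), and every $f\in E$ factors as $P$-then-$I$ \emph{by fiat}. No Nagata compactification is needed. The formalism is then extended to analytic stacks, and schemes are embedded via $X\mapsto X^{\mathrm{ad}}$ (passing through discrete adic spaces). The work shifts to checking that Zariski open immersions of schemes become open immersions of analytic rings (hence cohomologically \'etale), and that proper scheme maps correspond to maps in $P$ on the adic side---the latter via the valuative criterion, which gives $X^{\mathrm{ad}}\cong X^{\mathrm{ad}/k}$ for proper $X$ (Proposition~\ref{prop:describeadicspaces}).

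\textbf{Gaps in your argument.} Your displayed identity $A_\square[S]\otimes^\blacksquare_{A_\square} M = \prod_S M$ is false for general solid $M$: it holds for $M=\prod_J A$ by the formula $\prod_I A\otimes\prod_J A\cong\prod_{I\times J}A$, but tensoring with $\prod_I A$ does \emph{not} commute with arbitrary limits (compact objects of $D(A_\square)$ are not dualizable). So your route to ``$j^\ast$ preserves all limits'' does not go through as written. The correct mechanism is different: $\mathrm{Spa}(A[1/f],A[1/f])\subset\mathrm{Spa}(A,A)$ is a rational open whose closed complement is cut out by a \emph{compact idempotent algebra} in $D(A_\square)$; the localization away from such an idempotent is precisely what yields a fully faithful linear left adjoint $j_!$. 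Your argument for (3) is also too quick: restriction of scalars $D(B_\square)\to D(A_\square)$ satisfies the projection formula only when the analytic structure on $B_\square$ is induced from $A_\square$, which for an affine chart $U\subset X$ of a proper $f:X\to Y$ is not immediate---the map $U\to Y$ is not proper, and $(\mathcal O(U),\mathcal O(U))_\square$ is strictly finer than $(\mathcal O(U),A)_\square$. One must instead use the adic/analytic-ring description of properness, which is exactly what the paper's approach provides.
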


It turns out that both for the proof and for applications, it is better to generalize the result to ``discrete adic spaces''.\footnote{We keep discreteness here for simplicity; it's not required.} Namely, for any map $R\to A$ of animated commutative rings, with $R$ almost of finite type over $\mathbb Z$, we can form the analytic ring $(A,R)_\square$ whose underlying condensed animated commutative ring is $A$ (with discrete condensed structure), and the analytic ring structure is induced from $R$. In other words,
\[
(A,R)_\square[S] = R_\square[S]\otimes_R A
\]
for any profinite set $S$.

\begin{theorem} The analytic ring $(A,R)_\square$ depends only on $A$ and the integral closure $A^+\subset A$ of the image of $R$ in $\pi_0 A$.
\end{theorem}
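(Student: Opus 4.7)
The plan is to reduce, using the $\pi_0$-invariance of analytic ring structures cited just above the theorem, to a comparison over discrete finitely generated $\mathbb{Z}$-subalgebras of $A$, where the argument becomes the concrete statement that tensoring with a module-finite extension commutes with products.

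First, by the remark that analytic ring structures depend only on the condensed ring $\pi_0 A^\triangleright$ (\cite[Proposition 12.21]{Analytic}), it suffices to prove the statement after replacing $A$ by $\pi_0 A$ and each $R$ by $\pi_0 R$. Since $R$ is almost of finite presentation over $\mathbb{Z}$, $\pi_0 R$ is a finitely presented $\mathbb{Z}$-algebra, hence noetherian by Hilbert's basis theorem. So I may assume throughout that $A$ is a discrete commutative ring and that every ring $R$ considered (with a given map $R\to A$) is a discrete finitely presented $\mathbb{Z}$-algebra.

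Next I establish the following finite-extension lemma: if $R \to R'$ is a module-finite map of discrete finitely presented $\mathbb{Z}$-algebras together with a compatible map $R' \to A$, then $(A,R)_\square = (A,R')_\square$. Since $R$ is noetherian, $R'$ is finitely presented, indeed pseudocoherent, as an $R$-module. Using a resolution of $R'$ by finite free $R$-modules and the exactness of arbitrary products of abelian groups, one gets $(\prod_I R)\otimes_R^L R' = \prod_I R'$ for any set $I$. Taking cofiltered limits along a presentation $S = \varprojlim_i S_i$ of a profinite set $S$ yields $R_\square[S]\otimes_R^L R' = R'_\square[S]$. Therefore
\[
(A,R)_\square[S] \;=\; R_\square[S]\otimes_R A \;=\; R_\square[S]\otimes_R R'\otimes_{R'} A \;=\; R'_\square[S]\otimes_{R'} A \;=\; (A,R')_\square[S]
\]
as objects of $D(A^\triangleright)$ for every profinite $S$. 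Since these objects generate the respective analytic ring structures, $(A,R)_\square = (A,R')_\square$.

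Finally, given two such $R_1,R_2$ with compatible maps to $A$ whose images have the same integral closure $A^+\subseteq A=\pi_0 A$, let $R_3\subseteq A$ be the subring generated by the images of $R_1$ and $R_2$. Being generated over $\mathbb{Z}$ by finitely many elements, $R_3$ is a finitely presented $\mathbb{Z}$-subalgebra of $A$. By construction $R_3\subseteq A^+$, and since $A^+$ is integral over the image of $R_1$, the finitely many generators of $R_3$ over that image are integral, and a finitely generated commutative algebra whose generators are integral over the base is module-finite over it. Hence $R_3$ is module-finite over the image of $R_1$ and therefore also over $R_1$ itself (generated as an $R_1$-module by the same finite set). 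By symmetry the same holds for $R_2$. Applying the lemma to $R_1\to R_3$ and to $R_2\to R_3$ gives
\[
(A,R_1)_\square \;=\; (A,R_3)_\square \;=\; (A,R_2)_\square,
\]
as required. The main subtlety, and the reason one introduces the compositum $R_3$ rather than directly replacing $R_i$ by its image, is that the image of $R_i$ in $A$ need not be almost of finite presentation over $\mathbb{Z}$: the kernel of $R_i\to A$ need not be finitely generated. The compositum $R_3$ sidesteps this by being visibly a finitely generated $\mathbb{Z}$-subalgebra of $A$, so it lands back in the world where the finite-extension lemma applies.
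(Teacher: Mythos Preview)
Your proof is correct and rests on the same key computation as the paper: a module-finite extension $R\to R'$ over a noetherian base is pseudocoherent, so $(\prod_I R)\otimes_R R' \cong \prod_I R'$ and hence the induced analytic structures on $A$ coincide. The paper organizes the reduction slightly differently---it replaces $R$ by a polynomial presentation $\mathbb{Z}[X_1,\ldots,X_n]\twoheadrightarrow R$ and then factors through a finite $R'$-algebra $R''=R'[X_1,\ldots,X_n]/(g_1,\ldots,g_n)$---whereas you reduce to $\pi_0$ first and pass through the compositum $R_3\subset A$; both devices serve the same purpose of sidestepping the possibly non-finitely-presented image of $R_i$ in $A$.
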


\begin{proof} By definition of maps of analytic rings, there is at most one map $(A,R)_\square\to (A,R')_\square$ over the identity of $A$. We need to see that this exists if the image of $R$ in $\pi_0 A$ is contained in the integral closure of the image of $R'$ in $\pi_0 A^+$. Note first that by choosing a surjection $\mathbb Z[X_1,\ldots,X_n]\to R$, one has $R_\square = (R,\mathbb Z[X_1,\ldots,X_n])_\square$ as
\[
\prod_I R = \prod_I \mathbb Z[X_1,\ldots,X_n]\otimes_{\mathbb Z[X_1,\ldots,X_n]} R
\]
in this case (as $R$ is pseudocoherent, even perfect, over $\mathbb Z[X_1,\ldots,X_n]$); and thus also $(A,R)_\square = (A,\mathbb Z[X_1,\ldots,X_n])_\square$. If the image of $R$ in $A$ is integral over $R'$, then the map $R'[X_1,\ldots,X_n]\to A$ factors over a finite $R'$-algebra
\[
R'' = R'[X_1,\ldots,X_n] / (g_1,\ldots,g_n)\to A
\]
where $g_i$ are monic polynomials in $X_i$ with coefficients in $R'$ (expressing that the image of $X_i$ is integral over $R'$). As there is an evident map $(A,R)_\square = (A,\mathbb Z[X_1,\ldots,X_n])_\square\to (A,R'')_\square$, it suffices to see that the map $(A,R')_\square\to (A,R'')_\square$ is an isomorphism, for which it suffices that $(R'',R')_\square\to R''_\square$ is an isomorphism. But this follows from
\[
\prod_I R'\otimes_{R'} R''\to \prod_I R''
\]
being an isomorphism, which holds true as $R''$ is a finitely generated (thus, pseudocoherent) $R'$-module.
\end{proof}

\begin{definition} An (animated) discrete Huber pair is a pair $(A,A^+)$ where $A$ is an animated commutative ring and $A^+\subset \pi_0 A$ is an integrally closed subring.
\end{definition}

For a discrete Huber pair $(A,A^+)$, define
\[
(A,A^+)_\square = \mathrm{colim}_{R\subset \pi_0 A^+} (A,R)_\square
\]
where the filtered colimit runs over finitely generated subalgebras of $A^+$. (A priori, this is not well-defined as there is no map $R\to A$ to define $(A,R)_\square$; but by the previous theorem, we can define it to be $(A,\tilde{R})_\square$ for any free $\mathbb Z$-algebra $\tilde{R}$ mapping to $A$ with image $R\subset \pi_0 A$.) This still defines an analytic ring.

\begin{theorem} The functor
\[
(A,A^+)\mapsto (A,A^+)_\square
\]
from (animated) discrete Huber pairs to analytic rings is fully faithful.
\end{theorem}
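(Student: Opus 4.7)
The plan is to establish faithfulness formally, then characterize which condensed-ring maps $\phi : A \to B$ extend to morphisms of analytic rings $(A,A^+)_\square \to (B,B^+)_\square$, showing this happens exactly when $\pi_0\phi(A^+) \subseteq B^+$. Faithfulness is immediate: a morphism of analytic rings is a morphism of underlying condensed animated rings satisfying a property (that the target be complete as a module over the source), so the forgetful functor to condensed rings is already faithful, and hence so is the functor from Huber pairs.

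For the \emph{if} direction of fullness, assume $\pi_0\phi(A^+) \subseteq B^+$. For each finitely generated $R \subseteq A^+$, its image $R' := \pi_0\phi(R) \subseteq B^+$ is again finitely generated, and $R \twoheadrightarrow R'$ exhibits $R'$ as a pseudocoherent $R$-module, so $R_\square \to R'_\square$ is a morphism of analytic rings. Scalar-extending yields $(A, R)_\square \to (B, R')_\square$, and post-composing with the structural $(B, R')_\square \to (B, B^+)_\square$ and passing to the filtered colimit in $R$ produces $\phi_\square$, using the presentation $(A, A^+)_\square = \mathrm{colim}_R (A, R)_\square$.

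For the \emph{only if} direction, fix $\phi_\square$, $a \in A^+$ with lift $\tilde a \in A$, and set $R = \mathbb Z[T]$ with $T \mapsto \tilde a$. Since $a \in A^+$, the integral closure in $\pi_0 A$ of the image of $R$ lies in $A^+$, so by the previous theorem $(A, R)_\square \to (A, A^+)_\square$ is a morphism of analytic rings. The composite $R_\square \to (A, R)_\square \to (A, A^+)_\square \xrightarrow{\phi_\square} (B, B^+)_\square$ has underlying ring map $T \mapsto b := \pi_0\phi(a)$, and by the universal property of scalar extension factors uniquely as $R_\square \to (B, R)_\square \to (B, B^+)_\square$. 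The second arrow, a map of analytic rings over the identity of $B$, amounts to the inclusion $D((B, B^+)_\square) \subseteq D((B, R)_\square)$ of full subcategories of $D(B)$; the goal is to deduce $b \in B^+$.

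This deduction is the main obstacle and the only non-formal step. When $b \in B^+$, the inclusion is automatic because $\mathbb Z[b]$ is a finitely generated subring of $B^+$ and $(B, R)_\square = (B, \mathbb Z[b])_\square$ appears in the colimit presentation of $(B, B^+)_\square$. Conversely, if $b \notin B^+$, I would exhibit $M = (B, B^+)_\square[\mathbb N^+]$, with $\mathbb N^+$ the one-point compactification, as a $(B, B^+)_\square$-complete $B$-module that is not $(B, R)_\square$-complete. Concretely, $M = \mathrm{colim}_{R' \subseteq B^+ \text{ f.g.}} R'_\square[\mathbb N^+] \otimes_{R'} B$ maps to $\prod_{\mathbb N} B$ with image consisting of sequences whose coordinates lie in a fixed finitely-generated $B$-submodule spanned by elements of $R'$; since $b \notin B^+$ implies $b$ is not integral over any such $R'$, the sequence $(1, b, b^2, \ldots)$ cannot lie in this image. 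Yet the same sequence is the image of $(1, T, T^2, \ldots) \in \prod_\mathbb N \mathbb Z[T] = R_\square[\mathbb N^+]$ in $(B, R)_\square[\mathbb N^+]$, so $(B, R)_\square$-completeness of $M$ would extend the natural map $B[\mathbb N^+] \to M$ to one sending $(1, b, b^2, \ldots)$ into $M$, a contradiction. Making this argument rigorous requires careful handling of tensor products with profinite limits in the solid category, and the previous theorem's characterization of $(B, R)_\square$ via integral closures is the essential input.
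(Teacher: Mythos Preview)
Your reduction matches the paper's exactly: both observe that maps of analytic rings sit inside maps of underlying condensed rings, so the only content is showing that an analytic-ring map $(A,A^+)_\square \to (B,B^+)_\square$ forces $A^+$ to land in $B^+$; and both reduce elementwise to $(A,A^+) = (\mathbb Z[T],\mathbb Z[T])$, i.e.\ to showing that a map $\mathbb Z[T]_\square \to (B,B^+)_\square$ with $T \mapsto b$ forces $b \in B^+$. The endgame you aim for --- all powers $b^i$ lying in a single finitely generated $R'$-submodule of $B$ for some finitely generated $R' \subseteq B^+$, hence $b$ integral over $R'$ --- is also exactly the paper's conclusion.

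Where you diverge is the mechanism for extracting this, and the step you yourself flag as needing ``careful handling'' is a genuine gap. Your argument requires that the composite $(B,\mathbb Z[b])_\square[\mathbb N^+] \to M \hookrightarrow \prod_{\mathbb N} B$ sends $(1,b,b^2,\ldots)$ to $(1,b,b^2,\ldots)$; but this needs the composite to agree with the obvious map $\prod_{\mathbb N}\mathbb Z[b] \otimes_{\mathbb Z[b]} B \to \prod_{\mathbb N} B$, and uniqueness of such an extension would require $\prod_{\mathbb N} B$ to be $\mathbb Z[b]_\square$-complete --- precisely what is at issue when $b \notin B^+$. The paper avoids this entirely by working with the \emph{compact} idempotent algebra $\mathbb Z((T^{-1}))$ that is killed in passing from $(\mathbb Z[T],\mathbb Z)_\square$ to $\mathbb Z[T]_\square$. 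Compactness means its vanishing in $(B,B^+)_\square$ already happens in some $(B,R')_\square$ for a single finitely generated $R' \subseteq B^+$. Then the explicit two-term resolution
\[
0 \to \prod_{\mathbb N}\mathbb Z \otimes \mathbb Z[T] \xrightarrow{\,1-\mathrm{shift}\otimes T\,} \prod_{\mathbb N}\mathbb Z \otimes \mathbb Z[T] \to \mathbb Z((T^{-1})) \to 0
\]
base-changes to show $1-\mathrm{shift}\otimes b$ is an isomorphism on $\prod_{\mathbb N} R' \otimes_{R'} B = \mathrm{colim}_{M' \subseteq B}\prod_{\mathbb N} M'$. The preimage of $(1,0,0,\ldots)$ is then a sequence $(b_0,b_1,\ldots)$ in some $\prod_{\mathbb N} M'$ satisfying $b_0 = 1$ and $b_{i+1} = b\cdot b_i$, so $b^i \in M'$ for all $i$. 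This is the missing idea that turns your sketch into a proof: replace the test module $M$ by the test \emph{algebra} $\mathbb Z((T^{-1}))$ and exploit its compactness and its resolution.
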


\begin{warning} Unless $A$ is finitely generated over $\mathbb Z$, the free modules $(A,A)_\square[S]$ of the analytic ring $(A,A)_\square$ are not given by
\[
\varprojlim_i A[S_i];
\]
rather, they are the filtered colimit
\[
\mathrm{colim}_{R\subset A} \varprojlim_i R[S_i]
\]
as $R$ runs over finitely generated subalgebras of $A$. This may seem undesired, but actually the fully faithfulness uses this structure. In some situations, for example for $A=\mathbb Q$, there is also the ``ultrasolid'' analytic ring structure $\mathbb Q_{u\square}$ with $\mathbb Q_{u\square}[S] = \varprojlim_i \mathbb Q[S_i]$.
\end{warning}

\begin{proof} On both source and target, the maps are a subanima of the maps between $A$'s. Thus, it suffices to see that if there is a map of analytic rings $(A,A^+)_\square\to (B,B^+)_\square$, then $A^+$ maps into $B^+$. This can be checked elementwise, so we can assume $(A,A^+)=(\mathbb Z[T],\mathbb Z[T])$. Now $(\mathbb Z[T],\mathbb Z[T])_\square$ is obtained from $(\mathbb Z[T],\mathbb Z)$ by killing the compact idempotent algebra $\mathbb Z((T^{-1}))$, cf.~\cite[Lecture VIII]{Condensed}. By compactness, if this idempotent algebra becomes zero in $(B,B^+)_\square$, it already becomes zero in some $(B,R)_\square$ for some finitely generated $\mathbb Z$-algebra $R$ mapping to $\pi_0 B^+$. Now the idempotent algebra $\mathbb Z((T^{-1}))$ has the projective resolution
\[
0\to \prod_{\mathbb N} \mathbb Z\otimes \mathbb Z[T]\xrightarrow{1-\mathrm{shift}\otimes T} \prod_{\mathbb N} \mathbb Z\otimes \mathbb Z[T]\to \mathbb Z((T^{-1}))\to 0
\]
in $(\mathbb Z[T],\mathbb Z)_\square$-modules. Taking the base change to $(B,R)_\square$, and using that $\mathbb Z((T^{-1}))$ becomes zero after the base change, this means that the map
\[
\prod_{\mathbb N} R\otimes_R B\xrightarrow{1-\mathrm{shift}\otimes g} \prod_{\mathbb N} R\otimes_R B
\]
is an isomorphism, where $g\in B$ is the image of $T$. We want to see that this implies that $g$ is integral over $R$ -- then $g$ does in fact lie in $B^+$. We can assume that $B$ is static.

Consider the element
\[
(1,0,\ldots,0,\ldots)\in \prod_{\mathbb N} R\otimes_R B.
\]
By assumption, this is in the image of $1-\mathrm{shift}\otimes g$. But
\[
\prod_{\mathbb N} R\otimes_R B = \mathrm{colim}_{M\subset B} \prod_{\mathbb N} R\otimes_R M = \mathrm{colim}_{M\subset B} \prod_{\mathbb N} M
\]
as $M$ runs over finitely generated $R$-submodules of $B$. Thus, there is a finitely generated submodule $M\subset B$ and a sequence $(b_0,b_1,\ldots)$ in $\prod_{\mathbb N} M$ such that
\[
b_0 = 1,\ b_1 - gb_0 = 0,\ b_2 - gb_1 = 0,\ \ldots\ .
\]
Thus, $b_i = g^i$, all of which are contained in the finitely generated $R$-submodule $M$ of $B$; therefore $g$ is integral over $R$, as desired.
\end{proof}

Huber associates to the pair $(A,A^+)$ (or rather its static truncation $(\pi_0 A,A^+)$) the adic space
\[
\mathrm{Spa}(A,A^+)
\]
consisting of equivalence classes of valuations $|\cdot|: A\to \Gamma\cup \{0\}$ with $|f|\leq 1$ for all $f\in A^+$.\footnote{Again, this definition makes sense a priori for general subsets of $A$, but only depends on the integrally closed subring generated the subset.} We usually write $x\in \mathrm{Spa}(A,A^+)$ and denote the corresponding valuation by $f\mapsto |f(x)|$. Its topology is generated by rational subsets, which are given by elements $f,g_1,\ldots,g_n\in A$ as
\[
U\left(\frac{g_1,\ldots,g_n}{f}\right) = \{x\in \mathrm{Spa}(A,A^+)\mid \forall i: |g_i(x)|\leq |f(x)|\neq 0\}.
\]
This is homeomorphic to
\[
\mathrm{Spa}(A[\tfrac 1f],A^+[\tfrac{g_1}f,\ldots,\tfrac{g_n}f]).
\]
This lets one define a sheaf of (animated) Huber pairs over $\mathrm{Spa}(A,A^+)$. Moreover, each point induces a valuation on the stalk of the structure sheaf.

\begin{definition}\label{def:derivedadicspace} A derived discrete adic space is a triple $(X,\mathcal O_X,(|\cdot(x)|)_x)$ consisting of a topological space $X$, a sheaf of animated commutative rings $\mathcal O_X$, and a valuation $|\cdot(x)|$ on $\mathcal O_{X,x}$ for each $x\in X$, such that $(X,\mathcal O_X,(|\cdot(x)|)_x)$ admits an open cover by subspaces isomorphic to affine adic spaces
\[
\mathrm{Spa}(A,A^+):=(\mathrm{Spa}(A,A^+),\mathcal O_{\mathrm{Spa}(A,A^+)},(|\cdot(x)|)_{x\in \mathrm{Spa}(A,A^+)}).
\]
\end{definition}

Fix some base ring $k$. There are now two functors from schemes of finite type over $k$ towards adic spaces: On the one hand, we can send $\mathrm{Spec}(A)$ to $\mathrm{Spa}(A,A)$; on the other, we can send it to $\mathrm{Spa}(A,k)$. Both extend to general schemes by gluing. Let us denote the first functor by $X\mapsto X^{\mathrm{ad}}$, and the second by $X\mapsto X^{\mathrm{ad}/k}$. There is a natural comparison map
\[
X^{\mathrm{ad}}\to X^{\mathrm{ad}/k}.
\]

\begin{proposition}\label{prop:describeadicspaces}\leavevmode
\begin{enumerate}
\item[{\rm (1)}] The underlying set of $X^{\mathrm{ad}}$ is the set of equivalence classes of valuation rings $V$ over $k$ with a map $\mathrm{Spec}(V)\to X$, where $\mathrm{Spec}(V)\to X$ and $\mathrm{Spec}(V')\to X$ are considered equivalent if there is some valuation ring $V''$ over $k$ and a commutative diagram
\[\xymatrix{
& \mathrm{Spec}(V)\ar[dr]\\
\mathrm{Spec}(V'')\ar[ur]\ar[dr]\ar[rr] && X.\\
& \mathrm{Spec}(V')\ar[ur]
}\]
\item[{\rm (2)}] The underlying set of $X^{\mathrm{ad}/k}$ is the space of equivalence classes of valuation rings $V$ over $k$ with a map $\mathrm{Spec}(\mathrm{Frac}(V))\to X$; the equivalence relation is analogous to (1). The map $X^{\mathrm{ad}}\to X^{\mathrm{ad}/k}$ sends $\mathrm{Spec}(V)\to X$ to the restriction $(V,\mathrm{Spec}(\mathrm{Frac}(V))\to X)$.
\item[{\rm (3)}] The map $X^{\mathrm{ad}}\to X^{\mathrm{ad}/k}$ is open, and the structure sheaf $\mathcal O_{X^{\mathrm{ad}}}$ with its valuations is pulled back from $X^{\mathrm{ad}/k}$.
\end{enumerate}

In particular, $X$ is separated if and only if $X^{\mathrm{ad}}\to X^{\mathrm{ad}/k}$ is an open immersion of adic spaces, and $X$ is proper if and only if $X^{\mathrm{ad}}\to X^{\mathrm{ad}/k}$ is an isomorphism.
\end{proposition}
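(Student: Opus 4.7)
The plan is to unpack Huber's description of points of $\mathrm{Spa}(A,A^+)$ in the affine case, globalize by Zariski descent, and then deduce the separated/proper characterizations from the classical valuative criteria. Recall that a point of $\mathrm{Spa}(A,A^+)$ is an equivalence class of valuations $|\cdot|\colon A\to\Gamma\cup\{0\}$ with $|f|\leq 1$ for $f\in A^+$, where two valuations are equivalent iff they induce the same preorder on $A$; such a valuation is determined by its support $\mathfrak p=\{|f|=0\}\subset A$ together with a valuation ring $V\subset\kappa(\mathfrak p)=\mathrm{Frac}(A/\mathfrak p)$.

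First I would treat parts (1) and (2) in the affine case $X=\mathrm{Spec}(A)$, $A$ a finite type $k$-algebra. When $A^+=A$, the inequality $|f|\leq 1$ for all $f\in A$ forces the image of $A/\mathfrak p\hookrightarrow\kappa(\mathfrak p)$ to lie in $V$, so one obtains a map $\mathrm{Spec}(V)\to\mathrm{Spec}(A)$, and $k\to A\to V$ makes $V$ a valuation ring over $k$. When $A^+=k$, only $k\to V$ is required, and the map $A\to\kappa(\mathfrak p)=\mathrm{Frac}(V)$ gives $\mathrm{Spec}(\mathrm{Frac}(V))\to X$. Equivalence of valuations corresponds exactly to the stated equivalence of pairs $(V,\mathrm{Spec}(V)\to X)$ or $(V,\mathrm{Spec}(\mathrm{Frac}(V))\to X)$: two such pairs are equivalent iff the valuations they define on $A$ agree, iff there is a common dominating valuation ring $V''$. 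To globalize to arbitrary $X$, I would use that $\mathrm{Spec}(V)$ is local and contains the spectrum of its residue field as its unique closed point, so any map $\mathrm{Spec}(V)\to X$ factors through any open affine containing the image of the closed point; this matches the Zariski gluing description of $X^{\mathrm{ad}}$ and $X^{\mathrm{ad}/k}$.

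For (3), I would check locally that $\mathrm{Spa}(A,A)\hookrightarrow\mathrm{Spa}(A,k)$ is an open immersion of adic spaces: writing $A=k[f_1,\ldots,f_n]$, the subspace $\mathrm{Spa}(A,A)$ is cut out by $|f_1|\leq 1,\ldots,|f_n|\leq 1$, which is the rational subset $U(f_1,\ldots,f_n/1)$ of $\mathrm{Spa}(A,k)$, and standard Huber theory identifies the structure sheaf and valuations on this rational subset with those of $\mathrm{Spa}(A,A)$. Gluing gives that $X^{\mathrm{ad}}\to X^{\mathrm{ad}/k}$ is locally an open immersion with pulled-back structure sheaf and valuations. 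For the final two equivalences, I would use the interpretations in (1) and (2) to identify the underlying map of topological spaces with the forgetful map $(V,\mathrm{Spec}(V)\to X)\mapsto(V,\mathrm{Spec}(\mathrm{Frac}(V))\to X)$. By the valuative criterion of separatedness, $X/k$ is separated iff any $\mathrm{Spec}(\mathrm{Frac}(V))\to X$ has at most one extension to $\mathrm{Spec}(V)\to X$, i.e.\ iff this forgetful map is injective; combined with the local open immersion property, injectivity promotes $X^{\mathrm{ad}}\to X^{\mathrm{ad}/k}$ to a global open immersion. By the valuative criterion of properness, $X$ proper additionally requires the existence (and uniqueness) of the extension, i.e.\ bijectivity, at which point the open immersion is an isomorphism.

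The main obstacle I anticipate is the passage from the local to the global description in step (3): the identifications at the affine level are straightforward, but one must carefully check that the Zariski gluing used to construct $X^{\mathrm{ad}}$ and $X^{\mathrm{ad}/k}$ produces compatible adic spaces and that the locally defined open immersions patch to a well-defined morphism $X^{\mathrm{ad}}\to X^{\mathrm{ad}/k}$ (which globally need not be injective even though locally an open immersion). A secondary subtle point is to verify that the equivalence relation on pairs $(V,\mathrm{Spec}(V)\to X)$ defined geometrically by a common dominating $V''$ really coincides with the Huber equivalence of valuations — essentially this amounts to the fact that any two valuation rings $V,V'\subset K$ with the same preorder on a subring $A\subset K$ admit a common dominating valuation ring inside $K$, which is a standard but nontrivial fact about valuation rings.
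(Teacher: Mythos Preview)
Your proposal is correct and follows exactly the route the paper intends: the paper's own proof is the single sentence ``All assertions readily reduce to the affine case, where they are immediate from the definitions,'' and your write-up is a careful unpacking of precisely that reduction (Huber's point-set description affinely, Zariski gluing, then the valuative criteria). The two obstacles you flag---compatibility of the gluing and the match between Huber equivalence and the dominating-$V''$ relation---are real but standard, and the paper simply absorbs them into ``immediate from the definitions.''
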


\begin{remark} Any proof of finiteness of coherent cohomology for proper maps has to use properness in some way. We use it here in the form of the isomorphism $X^{\mathrm{ad}}\to X^{\mathrm{ad}/k}$, which is a direct application of the valuative criterion of properness.
\end{remark}

\begin{proof} All assertions readily reduce to the affine case, where they are immediate from the definitions.
\end{proof}

Thus, the following theorem refines Theorem~\ref{thm:solid6functorsschemes}.

\begin{theorem}\label{thm:solid6functorsadicspaces} The functor $(A,A^+)\mapsto D((A,A^+)_\square)$ satisfies descent on $\mathrm{Spa}(A,A^+)$, globalizing to a functor $X\mapsto D_\square(X)$ of solid quasicoherent sheaves on discrete adic spaces.

Moreover, $X\mapsto D_\square(X)$ upgrades to a six-functor formalism for some class $E$ of morphisms stable under base change, diagonals, and compositions, satisfying the following properties.
\begin{enumerate}
\item[{\rm (i)}] Any map $j: \mathrm{Spa}(A,A'^+)\hookrightarrow \mathrm{Spa}(A,A^+)$, where $A'^+$ is the integral closure of a finitely generated $A^+$-algebra in $A$, lies in $E$ and is cohomologically \'etale; in particular, $j_!$ is given by the left adjoint of pullback $j^\ast$.
\item[{\rm (ii)}] Any map $f: \mathrm{Spa}(B,B^+)\to \mathrm{Spa}(A,A^+)$ where $B^+$ is the integral closure of the image of $A^+$ in $B$ lies in $E$, and $f_!$ is given by the right adjoint $f_\ast$ of pullback $f^\ast$.
\item[{\rm (iii)}] If $f: X\to Y$ is an open immersion of schemes, then $X^{\mathrm{ad}}\to Y^{\mathrm{ad}}$ is cohomologically \'etale.
\item[{\rm (iv)}] If $k$ is some base ring and $f: X\to Y$ is a map of schemes of finite type over $k$, then $f^{\mathrm{ad}/k}: X^{\mathrm{ad}/k}\to Y^{\mathrm{ad}/k}$ lies in $E$ and $f^{\mathrm{ad}/k}_!$ is the right adjoint of pullback. In particular, if $f$ is proper, this applies to $X^{\mathrm{ad}}\to Y^{\mathrm{ad}}$ (which is the pullback of $X^{\mathrm{ad}/k}\to Y^{\mathrm{ad}/k}$ in this case).
\end{enumerate}
\end{theorem}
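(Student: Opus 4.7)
The plan is to apply the Liu--Zheng/Mann construction principle (Theorem~\ref{thm:construct6functors}) with carefully chosen classes $I$ and $P$ of morphisms of discrete adic spaces. First I would establish descent for the functor $(A,A^+) \mapsto D((A,A^+)_\square)$ along rational covers of $\mathrm{Spa}(A,A^+)$. The key observation is that a rational subset $U(g_1,\ldots,g_n/f) \subset \mathrm{Spa}(A,A^+)$ corresponds on the level of analytic rings to imposing a certain idempotent algebra relation, just as in the computation of $(\mathbb Z[T],\mathbb Z[T])_\square$ from $(\mathbb Z[T],\mathbb Z)_\square$ recalled in the proof of the full faithfulness theorem above; such idempotent algebras give smashing Bousfield localizations that are descendable in the sense of Mathew, so Proposition~\ref{prop:Dprimdescent} (or a direct Cech argument) yields descent. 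Combined with Zariski descent along open covers, this globalizes the construction to a presheaf $X \mapsto D_\square(X)$ of symmetric monoidal presentable stable $\infty$-categories on all qcqs discrete adic spaces.

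Next I would define $I$ to consist of maps locally of the form $\mathrm{Spa}(A,A'^+) \to \mathrm{Spa}(A,A^+)$ with $A'^+$ the integral closure in $A$ of a finitely generated $A^+$-algebra (rational localizations and their open-immersion relatives), and $P$ to consist of maps locally of the form $\mathrm{Spa}(B,B^+) \to \mathrm{Spa}(A,A^+)$ with $B^+$ the integral closure of the image of $A^+$ in $B$. The class $E$ is then generated by composites $\overline{f} \circ j$ with $j \in I$ and $\overline{f} \in P$. Stability of $I$, $P$ and $E$ under pullback and composition is a direct computation with Huber pairs; stability under diagonals requires that the diagonal of a $P$-map lies in $I$, which holds because the diagonal of $(A,A^+) \to (B,B^+)$ is the valuative condition $B \otimes_A B \to B$ on the space $\mathrm{Spa}(B \otimes_A B, (B^+)^{\otimes 2})$, an $I$-localization.

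The substantive work is verifying hypotheses (2)--(4) of Theorem~\ref{thm:construct6functors}. For $j \in I$, the analytic ring map $(A,A^+)_\square \to (A,A'^+)_\square$ induces a smashing Bousfield localization on module categories (its kernel is a $\otimes$-ideal, generated by the idempotent algebras that ``cut out'' the complement), so $j^\ast$ admits a fully faithful left adjoint $j_!$, for which projection formula and base change follow formally from the $\otimes$-ideal property and from the defining idempotent algebras being compatible with base change in $A$. For $\overline{f} \in P$, the pullback $\overline{f}^\ast$ is extension of scalars $- \otimes_{(A,A^+)_\square} (B,B^+)_\square$, and its right adjoint $\overline{f}_\ast$ is the forgetful functor; projection formula and base change amount to the assertion that $\otimes_{(A,A^+)_\square} (B,B^+)_\square$ commutes with the relevant solid products, which one verifies by reducing through the filtered colimit presentation $(A,A^+)_\square = \mathrm{colim}_R (A,R)_\square$ to the case where $R \to B$ is of finite type, where it follows from the defining pseudocoherence. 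Hypothesis (4) then reduces to Remark~\ref{rem:lastconditionvacuousexcision}, since $I$-maps are monomorphisms of adic spaces.

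The main obstacle will be the projection formula and base change for $\overline{f} \in P$ in the non-finitely-presented case, where $B^+$ may be an enormous integral closure: one must check that the filtered-colimit description of $(A,A^+)_\square$ interacts correctly with products, and this requires careful bookkeeping with the analytic ring structure rather than the abstract module structure. Granted these, properties (i) and (ii) are immediate from the definitions of $I$ and $P$. Property (iii) holds because the adification of a Zariski-open immersion of schemes is an $I$-localization. For property (iv), Proposition~\ref{prop:describeadicspaces} identifies $X^{\mathrm{ad}/k}$ as the functor of valuations defined on generic points with a specialization into $X$ over $k$; in Huber-pair language this is exactly the condition that $A^+$ is the integral closure of $k$ in $A$, placing $f^{\mathrm{ad}/k}$ in $P$. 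For proper $f: X \to Y$, the valuative criterion gives $X^{\mathrm{ad}} = X^{\mathrm{ad}/k} \times_{Y^{\mathrm{ad}/k}} Y^{\mathrm{ad}}$, so the assertion for proper maps of schemes follows by base change.
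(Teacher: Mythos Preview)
Your approach is essentially the ``first approach'' that the paper explicitly mentions and then sets aside: working directly on discrete adic spaces, choosing the classes $I$ and $P$ you describe, and feeding them into Theorem~\ref{thm:construct6functors}. The paper instead takes the second route: it defines classes $I$, $P$, $E$ directly on the opposite of the category of analytic rings (where $I$ consists of maps whose pullback has a fully faithful linear left adjoint, $P$ of maps with induced analytic ring structure, and $E$ of maps whose canonical factorization through the induced analytic ring has second step in $I$), applies the construction principle there, extends to analytic stacks via Theorem~\ref{thm:stacky6functors}, and then pulls the resulting formalism back along the fully faithful functor $(A,A^+)\mapsto (A,A^+)_\square$. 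The payoff of the paper's route is twofold: the compactification factorization $E=P\circ I$ is tautological on analytic rings (it is literally $A\to B^\triangleright\otimes_{A^\triangleright} A\to B$), so no Huber-compactification geometry is needed; and the formalism on analytic stacks is itself used in later lectures, so one gets it for free.

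A few local corrections to your outline. For descent along rational covers, the right tool is Proposition~\ref{prop:Dsuavedescent}, not Proposition~\ref{prop:Dprimdescent}: each rational localization is an open immersion of analytic rings (killing a compact idempotent algebra), hence cohomologically \'etale, and what remains is conservativity of the joint pullback, which is \cite[Lemma 10.3]{Condensed}. Your invocation of descendability in Mathew's sense does not directly apply to a smashing localization. Second, the ``main obstacle'' you flag is milder than you suggest: for $\overline{f}\in P$ the target analytic ring carries the induced structure from the source, so $\overline{f}_\ast$ is literally the forgetful functor on condensed modules and base change and projection formula are automatic from the definition of induced analytic ring structure; no finiteness on $B^+$ is needed. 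Finally, the paper warns that on adic spaces the class $I$ is \emph{not} the class of all open immersions (inverting an element is not in $I$; only enlarging $A^+$ is), so your phrase ``rational localizations and their open-immersion relatives'' should be sharpened to avoid that trap.
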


There are two approaches to this theorem. The first is to develop a sufficient amount of geometry on adic spaces, including the theory of Huber's compactification, to apply the construction principle from Lecture IV. This is slightly confusing, as the class $I$ is not the class of all open immersions of adic spaces, but only of some of them -- algebraically inverting an element is not an open immersion, only adjoining it to $A^+$ is.

The other approach is to construct the six-functor formalism directly on analytic rings and analytic stacks, and import it. As the six-functor formalism on analytic stacks will be useful later, we take this approach.

Thus, let $C$ be the opposite of the category of analytic rings.

\begin{definition}\leavevmode
\begin{enumerate}
\item[{\rm (1)}] Let $I$ be the class of maps of analytic rings $j: A\to B$ for which $j^\ast: D(A)\to D(B)$ has a fully faithful $D(A)$-linear left adjoint $j_!$; we also call these open immersions.
\item[{\rm (2)}] Let $P$ be the class of maps of analytic rings $f: A\to B$ for which $B$ has the induced analytic ring structure; equivalently, $f^\ast: D(A)\to D(B)$ has a $D(A)$-linear right adjoint $f_\ast$; we also call these proper.
\item[{\rm (3)}] Let $E$ be the class of maps of analytic rings $f: A\to B$ for which the induced map $B^\triangleright\otimes_{A^\triangleright} A\to B$ lies in $I$; we also call these $!$-able.
\end{enumerate}
\end{definition}

The prototypical example for (3) is $\mathbb Z_\square\to \mathbb Z[T]_\square = (\mathbb Z[T],\mathbb Z[T])_\square$: This factors as
\[
\mathbb Z_\square\to (\mathbb Z[T],\mathbb Z)_\square\to (\mathbb Z[T]_\square,\mathbb Z[T]_\square)
\]
where the first map lies in $P$ and the second in $I$.

It is easy to see that these classes of morphisms satisfy all the assumptions of Lecture IV. In fact, the ``compactification'' of morphisms in $E$ that is usually the hardest comes for free in this formalism. Thus, we get a six-functor formalism on $(C,E)$, taking $A$ to $D(A)$. Using Theorem~\ref{thm:stacky6functors}, we can then extend the formalism to analytic stacks $\mathrm{AnStack}$, for some (much) larger class of maps $\tilde{E}$.\footnote{A minor tweak to this story was introduced in \cite{AnStack}: Namely, one can allow certain $D$-hypercovers as well. More precisely, we invert all $\infty$-connective maps $f: Y\to X=\mathrm{AnSpec}(A)$ that can be written as a countable colimit of $!$-able $\mathrm{AnSpec}(B_i)\to \mathrm{AnSpec}(A)$, and such that $f$ and all of its diagonals satisfy universal $!$-descent (i.e.~the map $\mathrm{colim}_{i,!} D(B_i)\to D(A)$ is an equivalence, and similarly for all diagonals). In principle, a similar tweak can be done for general $6$-functor formalisms.}

The functor $(A,A^+)\mapsto (A,A^+)_\square$ takes open covers of adic spectra to covers in the $D$-topology on analytic rings. Indeed, one can reduce to standard rational covers by \cite[Lemma 2.6]{HuberGeneralization}. Then all terms in the cover go to open immersions of analytic rings, so by Proposition~\ref{prop:Dsuavedescent}, it suffices to see that pullback is conservative. This is \cite[Lemma 10.3]{Condensed}. Thus, by gluing we get the desired functor from discrete adic spaces to analytic stacks, and we can pull back the six-functor formalism.

\begin{proof}[Proof of Theorem~\ref{thm:solid6functorsadicspaces}] Properties (i) and (ii) are obvious from the construction, while (iii) follows from (i) by gluing, and similarly (iv) follows from (ii) by gluing.
\end{proof}

This finishes the construction of the solid $6$-functor formalism. Finally, this gives us a formalism where smooth maps are cohomologically smooth:

\begin{theorem} Let $f: X\to Y$ be a smooth map of derived schemes. Then $f$ is cohomologically smooth in the $D_\square(X)=D_\square(X^{\mathrm{ad}})$-formalism, with dualizing complex $\Omega^d_{X/Y}[d]$.

More generally, if $f$ has finite Tor-dimension, then $f$ is suave. If $f$ is a local complete intersection, then $f^! 1$ is invertible (thus, $f$ is cohomologically smooth), and isomorphic to $\mathrm{det}(L_{X/Y})$.
\end{theorem}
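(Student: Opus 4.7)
The plan is to apply the Lu--Zheng criterion (Theorem~\ref{thm:critcohomsmooth}) together with locality on source and target, stability under composition, and the base change compatibility of cohomological smoothness, to reduce the first statement for an arbitrary smooth $f$ to the single map $f\colon\mathbb A^1_{\mathbb Z}\to\mathrm{Spec}\,\mathbb Z$. The main technical step will be this base case, where I must exhibit explicit kernels $L,\alpha,\beta$ and verify the two composability conditions; once the base case is in hand, the rest of the theorem assembles formally, with a deformation-to-the-normal-cone argument producing the canonical identification of the dualizing sheaf as $\det L_{X/Y}$ in the local complete intersection case.

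First I would show that \'etale morphisms of schemes are cohomologically \'etale in the $D_\square$-formalism. Zariski open immersions of schemes are cohomologically \'etale by Theorem~\ref{thm:solid6functorsadicspaces}(iii); since the diagonal of an \'etale map is such an open immersion, an induction on the maximal cardinality of a geometric fibre, using \'etale descent of $f$ against itself exactly as in Proposition~\ref{prop:etalecohometaleDet}, gives the claim. Zariski-locally on $X$ a smooth map $f$ of relative dimension $d$ factors as $X\xrightarrow{e}\mathbb A^d_Y\to Y$ with $e$ \'etale, and since cohomological smoothness is local on the source, stable under composition, and implied by cohomological \'etaleness, the question reduces to $\mathbb A^d_{\mathbb Z}\to\mathrm{Spec}\,\mathbb Z$. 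Writing $\mathbb A^d=(\mathbb A^1)^d$ and invoking stability under products further reduces us to $\mathbb A^1_{\mathbb Z}\to\mathrm{Spec}\,\mathbb Z$.

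For this base case I take $L=\mathcal O_{\mathbb A^1}[1]$. The diagonal $\Delta\colon\mathbb A^1\to\mathbb A^2$ is a closed immersion cut out by the regular element $T_1-T_2$, hence cohomologically proper by Theorem~\ref{thm:solid6functorsadicspaces}(ii), so $\Delta_!=\Delta_\ast$, and the Koszul triangle
\[
\mathcal O_{\mathbb A^2}\xrightarrow{T_1-T_2}\mathcal O_{\mathbb A^2}\to\Delta_\ast\mathcal O\to\mathcal O_{\mathbb A^2}[1]
\]
supplies a canonical map $\alpha\colon\Delta_!\mathcal O\to\mathcal O_{\mathbb A^2}[1]=p_2^\ast L$ (algebraically, the first Chern class of the Cartier divisor $\Delta$, in the spirit of~\cite{Zavyalov}). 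For $\beta\colon f_!L\to\mathcal O$ I would compactify $\mathbb A^1\hookrightarrow\mathbb P^1$ using Theorem~\ref{thm:solid6functorsadicspaces}, compute $f_!\mathcal O[1]$ from the resulting excision triangle on $\mathbb P^1$, and use the first Chern class of the Cartier divisor $\infty\subset\mathbb P^1$ to produce $\beta$, as the solid analogue of the \'etale computation underlying Theorem~\ref{thm:smoothcohomsmoothDet}. This computation is the step I expect to be the main obstacle, since it requires an honest grasp of compactly supported solid cohomology of $\mathbb A^1$. One of the two composability conditions of Theorem~\ref{thm:critcohomsmooth} unwinds directly from these constructions, and by Lemma~\ref{lem:adjunctioncheat} it suffices to verify the other as an isomorphism, any residual sign ambiguity being killed by Lemma~\ref{lem:adjunctionleftright}.

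For the remaining generalizations, suaveness of $\mathcal O_X$ for $f$ of finite Tor-dimension is verified via Proposition~\ref{prop:checksuavesheaf}. Locally I would factor $f$ as $X\xrightarrow{i}\mathbb A^n_Y\xrightarrow{g}Y$ with $i$ a closed immersion of finite Tor-dimension and $g$ smooth; $g$ is cohomologically smooth by the first part of the theorem, and $i$ is cohomologically proper by Theorem~\ref{thm:solid6functorsadicspaces}(ii), which reduces the isomorphism in Proposition~\ref{prop:checksuavesheaf} to a local computation that can be checked explicitly. When $f$ is additionally a local complete intersection, $i$ is a regular immersion, so $i^!\mathcal O$ is the shifted determinant of the conormal bundle; combined with $g^!\mathcal O_Y\cong\Omega^n_{\mathbb A^n_Y/Y}[n]$ this makes $f^!\mathcal O_Y$ Zariski-locally, hence globally, invertible. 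The canonical identification $f^!\mathcal O_Y\cong\det L_{X/Y}$, independent of the chosen factorization, is then produced by a deformation to the normal cone, in the style of~\cite{Zavyalov} and \cite[Lecture 13]{ClausenScholzeComplex}, which glues the local identifications into a coordinate-free global one.
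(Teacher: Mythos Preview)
Your overall architecture matches the paper's: reduce to $\mathbb A^1_{\mathbb Z}\to\mathrm{Spec}\,\mathbb Z$, produce $(L,\alpha,\beta)$ for Theorem~\ref{thm:critcohomsmooth}, then handle the general case by factoring through a closed immersion into affine space and invoking a deformation to the normal cone for the identification $f^!1\cong\det L_{X/Y}$. The one substantive divergence is your choice of compactification for the base case. You propose to use the scheme-theoretic compactification $\mathbb A^1\hookrightarrow\mathbb P^1$, which is not affine and forces you to glue and to understand $j'_!$ for $\mathbb G_m\hookrightarrow\mathbb A^1$ on the second chart. The paper instead exploits the specifically adic feature of the solid formalism: the Huber compactification
\[
\mathrm{Spa}(\mathbb Z[T],\mathbb Z[T])\hookrightarrow\mathrm{Spa}(\mathbb Z[T],\mathbb Z)\to\mathrm{Spa}(\mathbb Z,\mathbb Z)
\]
is affine, and the $j_!$ for the first inclusion is the fibre of $\mathbb Z[T]\to\mathbb Z((T^{-1}))$, so $f_!1=T^{-1}\mathbb Z[[T^{-1}]][-1]$ drops out immediately, with $\beta$ the residue projection to the coefficient of $T^{-1}$. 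This is exactly the ``main obstacle'' you flagged, and the Huber compactification dissolves it. A second, minor difference: for suaveness under finite Tor-dimension the paper does not go through Proposition~\ref{prop:checksuavesheaf}; it simply observes that for a closed immersion $i$ of finite Tor-dimension, $i^!=\mathscr{H}\mathrm{om}_{\mathcal O_Y}(\mathcal O_X,-)$ is $D_\square(Y)$-linear because $\mathcal O_X$ is perfect over $\mathcal O_Y$, which gives $i^!1\otimes i^\ast\cong i^!$ directly.
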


\begin{proof} First, one checks that $\mathbb A^1_{\mathbb Z}\to \mathrm{Spec}(\mathbb Z)$ is cohomologically smooth. This is done in detail in \cite[Lecture VIII]{Condensed}. Using Theorem~\ref{thm:critcohomsmooth}, it is in fact enough to construct a trace map $f_! 1[1]\to 1$ which, together with the a map $\Delta_! 1 = \Delta_\ast 1\to 1[1]$ satisfies the assumptions there. But $f_! 1$ can be computed using the compactification
\[
\mathrm{Spa}(\mathbb Z[T],\mathbb Z[T])\to \mathrm{Spa}(\mathbb Z[T],\mathbb Z)\to \mathrm{Spa}(\mathbb Z,\mathbb Z)
\]
giving
\[
\mathrm{fib}(\mathbb Z[T]\to \mathbb Z((T^{-1}))).
\]
In particular $f_! 1$ is given by $T^{-1} \mathbb Z[[T^{-1}]] [-1]$, and the trace map $f_! 1[1]\to 1$ can be defined by projection to the coefficient of $T^{-1}$.

Now in general, we can factor $f$ has a closed immersion into affine space, and projection from affine space. The latter case has been handled, and for closed immersions, one has $f_! = f_\ast$ and $f^!$ is given by $\mathrm{Hom}_{\mathcal O_Y}(\mathcal O_X,-)$. If $f$ has finite Tor-dimension, this is a linear functor, as desired, yielding suaveness. If moreover $f$ is a local complete intersection, then $f^!(1)$ can also be computed to be invertible, by the Koszul resolution. A deformation to the normal cone identifies the dualizing complex, cf.~\cite[Lecture XIII]{ClausenScholzeComplex}.
\end{proof}

Let us use this formalism to prove finiteness of coherent cohomology; more precisely, Theorem~\ref{thm:finitenesscoherentcohom}, which we restate here for convenience:

\begin{theorem} Let $f: X\to Y$ be a proper map of derived schemes. Then $f_\ast$ takes pseudocoherent objects of $D_{\mathrm{qc}}(X)$ to pseudocoherent objects of $D_{\mathrm{qc}}(Y)$, and coherent objects to coherent objects. If $f$ is of finite Tor-dimension, then $f_\ast$ takes $\mathrm{Perf}(X)$ to $\mathrm{Perf}(Y)$.
\end{theorem}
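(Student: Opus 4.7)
The plan is to transfer the problem to the solid six-functor formalism of Theorem~\ref{thm:solid6functorsadicspaces}, where for a proper map the pushforward is simultaneously a left and a right adjoint to pullback and consequently commutes with all colimits, is $B$-linear, and satisfies base change. Since pseudocoherence, coherence, and perfectness are Zariski-local on $Y$, I first reduce to $Y = \mathrm{Spec}(B)$ affine. Because $f$ is proper as a map of derived schemes, the valuative criterion of properness gives $X^{\mathrm{ad}} \simeq X^{\mathrm{ad}/B}$ over $Y^{\mathrm{ad}}$, so by item~(iv) of Theorem~\ref{thm:solid6functorsadicspaces} the solid $f_!^{\square}$ is the right adjoint of pullback, i.e.\ coincides with the solid $f_\ast^{\square}$. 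Checking on affine-open generators of $D_{\mathrm{qc}}(X) \hookrightarrow D_\square(X)$ shows that the quasicoherent $f_\ast$ is computed by this solid $f_!^{\square}$ on quasicoherent inputs, so $f_\ast$ inherits $B$-linearity, commutation with filtered colimits, base-change compatibility, and a cohomological-dimension bound equal to the relative dimension of $f$.

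Next I would perform a Noetherian approximation: writing $B$ as a filtered colimit of finitely presented $\mathbb{Z}$-subalgebras and spreading out $X$ and $M$ to a finite level (using that $f$ is almost of finite presentation and $M$ is pseudocoherent), I may assume $B$ is Noetherian. Then classical Chow's lemma furnishes a projective birational modification $\pi : \tilde{X} \to X$ with $\tilde{X} \to Y$ projective; a Noetherian induction on the dimension of the non-isomorphism locus of $\pi$, using the excision/descent triangle for its closed complement, reduces the theorem for $X$ to the cases of $\tilde{X}$ and of lower-dimensional closed subschemes of $Y$. Factor the resulting projective morphism as $X \hookrightarrow \mathbb{P}^n_Y \xrightarrow{p} Y$, a closed immersion followed by the projection. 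Closed immersions preserve pseudocoherence and coherence tautologically, and preserve perfectness when they have finite Tor-dimension, so the remaining problem is the projection $p$.

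For $p : \mathbb{P}^n_Y \to Y$, the standard calculation shows that $p_\ast \mathcal{O}(d)$ is a finite free $B$-module in degree $0$ for $d \geq 0$, vanishes for $-n \leq d \leq -1$, and is a finite free $B$-module in degree $n$ for $d \leq -n-1$. Combined with the Beilinson resolution of the diagonal---which writes any perfect complex on $\mathbb{P}^n_Y$ as a finite iterated cone built from shifts of twists $\mathcal{O}(d)$ and pullbacks of perfect complexes on $Y$---and with the fact that $p_\ast$ commutes with finite (co)limits, this yields that $p_\ast$ sends $\mathrm{Perf}(\mathbb{P}^n_Y)$ into $\mathrm{Perf}(Y)$. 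For pseudocoherent $M$ and any integer $k$, choose a perfect approximation $P \to M$ with fibre in degrees $\geq k+n$; pushing forward and using the cohomological-dimension bound $\leq n$ gives $p_\ast P \to p_\ast M$ with fibre in degrees $\geq k$, proving pseudocoherence. Boundedness is preserved because $p_\ast$ has finite cohomological dimension, so coherence is likewise preserved; in the finite Tor-dimension case, Tor-amplitudes remain uniformly controlled, giving perfectness. The principal obstacle is the identification in the first paragraph of the quasicoherent and solid pushforwards for proper morphisms: the solid formalism records topological data invisible to the quasicoherent side, and one must verify that the two pushforwards agree on $D_{\mathrm{qc}}(X)$. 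Once this compatibility is in hand, everything else reduces to classical projective geometry and a pigeonhole on cohomological dimension.
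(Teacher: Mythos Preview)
Your first paragraph matches the paper: both identify the quasicoherent $f_\ast$ with the solid $f_!^\square$ for proper $f$. After that the routes diverge. The paper does not stay with proper maps; it proves the stronger statement that the solid $f_!$ preserves (pseudo)coherent objects for \emph{every separated} map. Because open immersions are cohomologically \'etale in the solid formalism, $f_!$ is local on the source as well as the target, so one reduces immediately to affine $X=\mathrm{Spec}(A)\to\mathrm{Spec}(B)$, factors through a closed immersion into $\mathbb A^n_B$ (easy), and is left with $\mathbb A^1_B\to\mathrm{Spec}(B)$, where cohomological smoothness of $\mathbb A^1$ shows that $f^!$ preserves colimits and hence $f_!$ preserves compact objects; pseudocoherence then follows by approximation. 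Your route via Noetherian approximation, Chow's lemma, dimension induction, and the Beilinson resolution on $\mathbb P^n$ is the classical one and is correct in outline, but it makes no essential use of the solid machinery beyond the first paragraph --- the properties of $f_\ast$ you extract there are already available from Proposition~\ref{prop:quasicoherentbasechange} --- and the paper's whole point is that the solid $f_!$ lets one bypass Chow and projective space entirely. Note also that your ``Noetherian induction on the non-isomorphism locus of $\pi$'' hides real work in the derived setting: the cone of $M\to\pi_\ast\pi^\ast M$ is set-theoretically supported on the exceptional locus but is not pushed forward from any fixed closed derived subscheme, so the induction does not go through as stated without a further d\'evissage.
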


\begin{proof} On discrete modules, the functor $f_\ast$ agrees with the same functor on $D_\square$; this can be checked by reduction to the affine case. By properness, $f_\ast = f_!$. It thus suffices to prove the more general statement that for any separated map of derived schemes, embedded into discrete adic spaces via $\mathrm{Spec}(A)\mapsto \mathrm{Spa}(A,A)$, the functor $f_!$ takes (pseudo)coherent objects of $D_\square(X)$ to (pseudo)coherent objects of $D_\square(Y)$, and when $f$ is of finite Tor-dimension, also compact objects to compact objects.

Via gluing, this statement immediately reduces to the affine case $X=\mathrm{Spec}(A)\to Y=\mathrm{Spec}(B)$. We can then factor over a closed immersion into affine space. It is clear that closed immersions have all the desired properties (where finite Tor-dimension is required to get a finite resolution of the ideal sheaf). Thus, we can reduce to affine space, and inductively to $\mathbb A^1_Y\to Y$. By approximation, it suffices to see that $f_!$ preserves compact objects, for which it suffices to see that $f^!$ preserves colimits, or better that $f$ is cohomologically smooth. By base change, this follows from the case of $\mathbb A^1_{\mathbb Z}\to \mathrm{Spec}(\mathbb Z)$, where it was verified in detail in \cite[Lecture VIII]{Condensed}.
\end{proof}

\newpage

\section{Lecture X: Ring Stacks}

In the rest of these lectures, we will be interested in six-functor formalisms defined for schemes (separated of finite type over a field, say). As mentioned in the introductory lecture, these can very often be factored as a composite
\[
\mathrm{Sch}\to \mathrm{AnStack}\xrightarrow{D_{\mathrm{qc}}} \mathrm{Pr}^L
\]
via some functor $X\mapsto X^?$ from schemes to analytic stacks (called a ``transmutation'', following Bhatt \cite[Remark 2.3.8]{BhattTransmutation}). This approach has the virtue of black-boxing all the category theory to the theory of analytic stacks, and reducing the construction of six-functor formalisms to a geometric construction $X\mapsto X^?$. In fact, even better, we will see that it suffices to treat the case $X=\mathbb A^1$, reducing us to the construction of the ring stack $(\mathbb A^1)^?$.

We have already seen two examples. In both of these examples, we actually do not need analysis: The functor takes values in usual stacks, i.e.~stacks on (the opposite of) the category of commutative rings, endowed with the six-functor formalism where all maps are proper, and the resulting $D$-topology. This is actually the descendable topology of Mathew \cite{MathewDescendable}.

\begin{example}[The Betti stack] For a finite-dimensional locally compact Hausdorff topological space $X$ (for example, the complex points of a scheme over $k=\mathbb C$) one can define its Betti stack
\[
X_{\mathrm{Betti}}: S\mapsto \mathrm{Cont}(|S|,X).
\]
This is actually an algebraic stack. If $X$ is profinite, then this is an affine scheme
\[
X_{\mathrm{Betti}} = \mathrm{Spec}(\mathrm{Cont}(X,\mathbb Z)).
\]
In general, we can find a surjective map $\tilde{X}=\bigsqcup_i \tilde{X}_i\to X$ where all $\tilde{X}_i$ are profinite. The induced equivalence relation $R = \tilde{X}\times_X \tilde{X} = \bigsqcup_{i,j} \tilde{X}_i\times_X \tilde{X}_j$ is then similarly a disjoint union of profinite sets, with a pro-\'etale map to $\tilde{X}$, and
\[
X_{\mathrm{Betti}} = \tilde{X}_{\mathrm{Betti}} / R_{\mathrm{Betti}}
\]
is accordingly a ``pro-\'etale algebraic space'', the quotient of a scheme by a pro-\'etale equivalence relation.

For profinite $X$, we have
\[
D_{\mathrm{qc}}(X_{\mathrm{Betti}}) = D(\mathrm{Cont}(X,\mathbb Z))=D(X,\mathbb Z),
\]
the derived category of abelian sheaves on $X$ (where the equivalence with $\mathrm{Cont}(X,\mathbb Z)$ is realized by taking global sections). In general, descent implies
\[
D_{\mathrm{qc}}(X_{\mathrm{Betti}})\cong D(X,\mathbb Z),
\]
giving the desired factorization. Moreover, this identification is compatible with the six functors.
\end{example}

\begin{example}[The de Rham stack] For $k$ a field of characteristic $0$, we have the functor $X\mapsto X_{\mathrm{dR}}$ studied in the appendix to Lecture VIII, and
\[
\mathrm{Dmod}(X) = D_{\mathrm{qc}}(X_{\mathrm{dR}}).
\]
Different authors use slightly different conventions regarding the six functors on $\mathrm{Dmod}(X)$; up to these shift conventions, the six-functor formalism imported from $D_{\mathrm{qc}}(X_{\mathrm{dR}})$ agrees with the one on $\mathrm{Dmod}(X)$. (This is true even though \'etale maps are cohomologically proper and proper maps are cohomologically smooth in this formalism: These properties are rather reflected in slightly strange geometric properties of $X_{\mathrm{dR}}$.)
\end{example}

In these examples, and in fact all examples known to the author, the functor $X\mapsto X^?$ commutes with all finite limits, and is compatible with Zariski gluing. This has the consequence that $X^?$ is determined by its values for affine $X$; and writing some affine $X$ of finite type over $k$ as a fibre product
\[\xymatrix{
X\ar[r]\ar[d] & \mathbb A^n\ar[d]\\
0\ar[r] & \mathbb A^m
}\]
for some polynomial map $\mathbb A^n\to \mathbb A^m$, we see that $X^?$ is given by the fibre product
\[\xymatrix{
X^?\ar[r]\ar[d] & [(\mathbb A^1)^?]^n\ar[d]\\
0\ar[r] & [(\mathbb A^1)^?]^m.
}\]
Suitably formalized, this means the whole theory is determined by the stack $(\mathbb A^1)^?$. However, one must remember not the abstract stack: Rather, $\mathbb A^1$ has the structure of a $k$-algebra object in schemes over $k$, and by transmutation this induces the structure of a $k$-algebra stack on $(\mathbb A^1)^?$. These are the ``ring stacks'' of this lecture's title.

The idea of ring stacks emerged in the theory of prismatic cohomology through the works of Drinfeld \cite{DrinfeldRingStack}, \cite{BhattLurieRingStack} on the stacky approach to prismatic cohomology. It makes it possible to give remarkably slick definitions of cohomology theories that otherwise take quite a bit of setup.

\begin{example}[Crystalline cohomology via ring stacks] Let $k=\mathbb F_p$. Consider the functor of $p$-typical Witt vectors $W$ defined on rings $R$ killed by some power of $p$ (i.e., over $\mathrm{Spf}(\mathbb Z_p)$). As abstractly $W(R)\cong \prod_{\mathbb N} R$ via Witt coordinates, this is representable by an affine scheme, an infinite-dimensional affine space; but it also has a ring structure, i.e.~$W$ is a ring scheme. Now consider
\[
(\mathbb A^1_{\mathbb F_p})^{\mathrm{crys}} := W/^{\mathbb L} p,
\]
the (derived) reduction of $W$ modulo $p$. This is an $\mathbb F_p$-algebra stack over $\mathrm{Spf}(\mathbb Z_p)$. The $p$-th power of the Teichm\"uller map $R\to W(R): x\mapsto [x]^p$ yields a map
\[
\mathbb A^1_{\mathrm{Spf} \mathbb Z_p}\to W
\]
and one can show that the induced map
\[
\mathbb A^1_{\mathrm{Spf} \mathbb Z_p}\to W/^{\mathbb L} p = (\mathbb A^1_{\mathbb F_p})^{\mathrm{crys}}
\]
is surjective. Moreover, this is a map of rings: It is clearly multiplicative, but in fact it is also additive as for $x,y\in R$ we have
\[
[x+y]^p - [x]^p - [y]^p = F([x+y]-[x]-[y]) = FV(z) = pz
\]
for some $z\in W(R)$ (functorially determined by $x$ and $y$), as the first Witt component of $[x+y]-[x]-[y]$ vanishes. Drinfeld shows that it induces an isomorphism of stacks
\[
(\mathbb A^1_{\mathbb F_p})^{\mathrm{crys}}\cong \mathbb A^1_{\mathrm{Spf} \mathbb Z_p} / \mathbb G_a^\sharp
\]
where $\mathbb G_a^\sharp = \mathrm{Spf} \mathbb Z_p[x]_{\mathrm{PD}}$ is the spectrum of the divided power algebra on a variable $x$ (so one adjoins also $\frac{x^n}{n!}$ for all $n\geq 1$).

Note that quasicoherent sheaves on
\[
\mathbb A^1_{\mathrm{Spf} \mathbb Z_p} / \mathbb G_a^\sharp
\]
precisely recover crystals on $\mathbb A^1_{\mathbb F_p}$ on the crystalline site; the divided powers in $\mathbb G_a^\sharp$ account for the divided power thickenings of diagonals.
\end{example}

\begin{example}[Prismatization] We have seen that taking the quotient of $W$ by $p$, we recover crystalline cohomology. It turns out that its generalization, prismatic cohomology, arises simply by replacing the element $p$ by some deformation of it. More precisely, if $A$ is some ring (still, say, killed by some power of $p$) and $\xi\in W(R)$ is some element that is primitive of degree $1$ (meaning that the zeroth Witt component is nilpotent and the first one is a unit), then we can consider the ring stack
\[
(\mathbb A^1)^{\Delta_A}\to \mathrm{Spec}(A)
\]
given by $R\mapsto W(R)/^{\mathbb L} \xi$. This has the structure of a $W(A)/^{\mathbb L} \xi$-algebra stack; in particular, a $\mathbb Z_p$-algebra stack, and induces a transmutation $X\mapsto X^{\Delta_A}$ on (formal) schemes $X$ over $\mathrm{Spf}(\mathbb Z_p)$. In fact, transmutation has an explicit functor of points:
\[
X^{\Delta_A}: R\mapsto X(W(R)/^{\mathbb L}\xi).
\]
Indeed, this is clear for $X=\mathbb A^1$, and is compatible with finite limits.

As a specific example, let $A=\mathbb Z_p[[q-1]]$ (and work over $\mathrm{Spf}(A)$). Then $A$ is a $\delta$-ring, i.e.~is equipped with a Frobenius lift, sending $q$ to $q^p$; and this induces a map $A\to W(A)$. Let $\xi$ be the image of the $q$-deformation $[p]_q = 1+q+\ldots+q^{p-1}$ in $W(A)$. This yields the ``$q$-de Rham prism''; the associated stack computes $q$-de Rham cohomology. In this case, one can compute the stack
\[
(\mathbb A^1)^{\Delta_A}\to \mathrm{Spf}(\mathbb Z_p[[q-1]])
\]
as the quotient of $\mathbb A^1_A$ by the equivalence relation corresponding to the ring
\[
\mathbb Z_p[[q-1]][X,Y,\left(\frac{(X-Y)(X-qY)\cdots(X-q^{n-1}Y)}{[n]_q!}\right)_n],
\]
a $q$-divided power thickening of the diagonal. Here $[n]_q! = \prod_{i=1}^n [i]_q$ is the $q$-factorial.
\end{example}

Let us now introduce the first genuinely analytic example.

\begin{example}[The solid de Rham stack]\footnote{This theory will also appear as part of work in progress by Aoki--Rodr\'iguez Camargo--Zavyalov.} We have seen that the six-functor formalism for quasicoherent cohomology becomes more intuitive when passing to solid modules; in that situation, smooth maps are cohomologically smooth. This idea can also be transported to the theory of $D$-modules and de Rham stacks. The idea is to replace
\[
\mathbb A^1_{\mathrm{dR}} = \mathbb A^1/\hat{\mathbb G}_a,
\]
the quotient of $\mathbb A^1$ by the formal additive group $\hat{\mathbb G}_a = \mathrm{Spf} k[[x]]$, with a similar quotient, but this time by the affine analytic stack $\mathrm{AnSpec}(k[[x]])$, where now $k[[x]]$ is considered as a solid module.

Let us execute this strategy when $k=\mathbb Q$, and we endow it with the ultrasolid analytic ring structure $\mathbb Q_{u\square}$ where the completion $\mathbb Q_{u\square}[S]$ of $\mathbb Q[S]$ is $\varprojlim_i \mathbb Q[S_i]$ (when $S=\varprojlim_i S_i$ is a profinite set). For $\mathbb Q$, and more generally finitely generated fields, this defines an analytic ring structure. For $\mathbb Q$, one kills the compact idempotent algebra $\hat{\mathbb Z}$ in solid $\mathbb Z$-modules to get the ultrasolid structure on $\mathbb Q$.

Now consider
\[
\mathbb A^{1,\square}_{\mathrm{dR}} := \mathrm{AnSpec}(\mathbb Q[T]_{u\square})/\mathrm{AnSpec}(\mathbb Q_{u\square}[[T]]).
\]
It is easy to see that $\mathrm{AnSpec}(\mathbb Q_{u\square}[[T]])\subset \mathrm{AnSpec}(\mathbb Q[T]_{u\square})$ is an ideal, and hence this quotient defines a ring stack. This globalizes to a functor $X\mapsto X^\square_{\mathrm{dR}}$ on separated schemes $X$ of finite type over $\mathbb Q$. Moreover, \'etale maps are mapped to cohomologically \'etale maps of analytic stacks; smooth maps are mapped to cohomologically smooth maps; and proper maps are mapped to cohomologically proper maps. To check this, we use Theorem~\ref{thm:conditionsonringstack} below.

For example, the diagram
\[
\ast\xrightarrow{i} \mathbb A^{1,\square}_{\mathrm{dR}}\xleftarrow{j} \mathbb G_{m,\mathrm{dR}}^\square,
\]
yields an excision sequence
\[
j_! 1\to 1\to i_\ast 1
\]
which, after pullback to $\mathrm{AnSpec}(\mathbb Q[T]_{u\square})$, realizes to the expected sequence
\[
(\mathbb Q[[T]]/\mathbb Q[T])[-1]\to \mathbb Q[T]\to \mathbb Q[[T]].
\]
\end{example}

The following theorem shows that whether a ring stack gives rise to a transmutation functor with good properties is easily checkable; it is closely related to Theorem~\ref{thm:aokithesis} from Aoki's thesis \cite{AokiThesis}.

\begin{theorem}\label{thm:conditionsonringstack} Work in the $\infty$-category $C$ of analytic stacks over some base analytic stack $S$. Let $k$ be some ring and let $R$ be a $k$-algebra stack in $C$. Assume the following conditions.
\begin{enumerate}
\item[{\rm (1)}] The map $f: R\to \ast$ is $!$-able and cohomologically smooth, and the map $1\to f_! f^! 1$ is an equivalence.
\item[{\rm (2)}] The map $i: \ast\xrightarrow{0} R$ is a closed immersion, the units $j: R^\times\hookrightarrow R$ yield an open immersion, and
\[
j_! 1\to 1\to i_\ast 1
\]
is a fibre sequence in $D_{\mathrm{qc}}(R)$.\footnote{Implicit here is that the composite is zero, which means that $R^\times$ and $0$ have empty intersection.}
\end{enumerate}
This induces a transmutation functor
\[
X\mapsto X_R
\]
taking any separated scheme $X$ of finite type over $k$ to the $0$-truncated analytic stack over $S$ given by the sheafification of the functor taking an analytic ring $A$ over $S$ to $X(R(A))$. The functor $X\mapsto X_R$ commutes with finite limits and Zariski gluing.

Moreover, the functor $X\mapsto X_R$ sends \'etale maps to cohomologically \'etale maps, proper maps to cohomologically proper maps, and smooth maps to cohomologically smooth maps, and thus $X\mapsto D_{\mathrm{qc}}(X_R)$ yields a six-functor formalism with these properties.
\end{theorem}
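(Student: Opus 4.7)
First, I would establish that the transmutation functor $X \mapsto X_R$ is well-defined and preserves finite limits and Zariski gluing. Any affine scheme $X = \mathrm{Spec}(k[x_1,\ldots,x_n]/(f_1,\ldots,f_m))$ is presented as a finite limit of copies of $\mathbb{A}^1$, so after sheafification $X_R$ is the corresponding finite limit of copies of $R$, i.e.\ the pullback of $\ast \xrightarrow{0} R^m \xleftarrow{(f_i)} R^n$. Preservation of finite limits of schemes follows because limits commute with limits and sheafification preserves them. For Zariski gluing, the crucial input is condition~(2): a basic open $D(g) \subset X$ is the preimage of $\mathbb{G}_m \subset \mathbb{A}^1$ under $g$, so $D(g)_R = X_R \times_R R^\times$, which is an open immersion of analytic stacks since $R^\times \hookrightarrow R$ is.

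Second, I would reduce the cohomological statements to a short list of key morphisms. Open immersions transmute to (cohomologically étale) open immersions by the Zariski argument just given. For a closed immersion $Z = V(f_1,\ldots,f_m) \hookrightarrow X$, preservation of finite limits writes $Z_R$ as a pullback of the zero section $\ast \to R^m$, which iteratively pulls back from $i \colon \ast \to R$; so cohomological properness of closed immersions reduces to that of $i$. Any smooth morphism factors locally as an étale map composed with a projection $\mathbb{A}^n \times X \to X$, reducing cohomological smoothness to (a) $R \to \ast$ (given by condition~(1) via Theorem~\ref{thm:critcohomsmooth}) and (b) étale morphisms. Any proper morphism, by Chow's lemma and Nagata compactification, reduces to closed immersions together with $\mathbb{P}^n \to \ast$. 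For $\mathbb{P}^n_R \to \ast$, cover $\mathbb{P}^n_R$ by $n+1$ copies of $R^n$ and use prim-descent for the resulting cohomologically smooth $D$-cover together with the case of $R^n \to \ast$, or alternatively induct on $n$ using the gluing presentation $\mathbb{P}^n = \mathbb{A}^n \cup_{\mathbb{A}^n \setminus 0} (\mathbb{P}^n \setminus 0)$ and the stability of cohomological properness under pushouts of complementary open/closed pairs.

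The zero section $i \colon \ast \to R$ is cohomologically proper: by~(2), the excision triangle $j_! 1 \to 1 \to i_\ast 1$ identifies $i_\ast 1$ with the cofiber of an open-immersion-lower-shriek, which together with $j_! 1$ being $D_{\mathrm{qc}}(R)$-linear implies that $i_\ast$ is $D_{\mathrm{qc}}(R)$-linear and compatible with all base change (since the open immersion $j$ is); thus $i$ is $!$-able with $i_! = i_\ast$, and its diagonal is an isomorphism, so the inductive criterion of Definition~\ref{def:cohomproper} is satisfied trivially.

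The main obstacle is the étale case. For a standard étale morphism $Y = \mathrm{Spec}(B[t]/(f(t)))_{f'(t)} \to X = \mathrm{Spec}(B)$, condition~(2) already gives that the locus where $f'(t) \in R^\times$ is an open immersion inside $X_R \times R$, so the problem reduces to showing that inside this open, the vanishing locus of $f(t)$ is a cohomologically étale slice of $X_R$. The plan is to deduce this from (1) and (2) by an ``implicit function theorem'' argument: the hypothesis in~(1) that $1 \to f_! f^! 1$ is an equivalence, combined with the excision pattern of~(2) applied to the function $f(t)$ and its derivative, should exhibit $\{f = 0, f' \in R^\times\} \subset R$ as cohomologically étale over $\ast$, and then general cohomological étaleness follows by base change. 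I expect that the concrete verification comes down to checking the Künneth-style compatibility between $f_!$ for $R \to \ast$ and the open/closed decomposition around the identity section, essentially formalizing that a function on $R$ with invertible derivative behaves like a coordinate.
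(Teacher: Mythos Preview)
Your reductions are mostly sound and line up with the paper: open immersions transmute to open immersions via pullback from $R^\times\hookrightarrow R$; Zariski covers go to $D$-covers because excision lets you check conservativity on a stratification; smooth reduces to \'etale plus $R^n\to\ast$; proper reduces (via Chow) to closed immersions plus projective space. Your treatment of the zero section is also fine: in the analytic-stack formalism condition~(2) already makes $i$ a closed immersion, hence $i_!=i_\ast$.

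There are, however, two genuine gaps.

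\textbf{The \'etale case.} You correctly flag this as the crux, but your ``implicit function theorem'' sketch is not a proof. The paper's argument is more specific and uses condition~(1) in a way you have not identified. After reducing to a map $g|_U\colon U\subset\mathbb A^1\to\mathbb A^1$ given by a polynomial $g$ \'etale over $U$, one knows the diagonal is an open immersion, so by Proposition~\ref{prop:checkcohometale} it remains to show a single map $(g|_U)_R^!1\to 1$ is an isomorphism. Since source and target of $g_R$ are cohomologically smooth, $(g|_U)_R^!1$ is invertible, and in fact trivial (the dualizing object of $R\to\ast$ is pulled back from $\ast\to\ast/R$). The key step is then a \emph{deformation to the linear case}: after base-changing so that a section $s\colon\ast\to U$ exists, shift so $s=0\mapsto 0$, write $g(T)=a_1T+a_2T^2+\cdots$, and put $g$ in the one-parameter family $g_b(T)=a_1T+b(a_2T^2+\cdots)$ over $\mathbb A^1_b$. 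The construction runs in this family and produces an endomorphism of $1$ over $R$ (parametrized by $b$); condition~(1), which says exactly that $R$ has trivial homology (i.e.\ $\mathbb A^1$-invariance), forces this endomorphism to be constant in $b$. At $b=0$ the map $g_0$ is multiplication by the unit $a_1$, where the claim is obvious. Your excision-plus-K\"unneth idea does not supply this deformation step, and without it there is no mechanism to pass from the linear to the nonlinear situation.

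\textbf{Projective space.} Neither of your two proposals for $\mathbb P^n_R\to\ast$ works. Covering $\mathbb P^n_R$ by copies of $R^n$ gives a cohomologically smooth $!$-cover, which is excellent for computing $D_{\mathrm{qc}}(\mathbb P^n_R)$ by descent, but does not by itself produce an identification $f_!\cong f_\ast$: the pieces $R^n\to\ast$ are smooth, not proper, so there is nothing to ``prim-descend'' from. Your second proposal glues $\mathbb P^n$ from the two \emph{open} subsets $\mathbb A^n$ and $\mathbb P^n\setminus 0$; this is not an open/closed pair, and in any case neither piece is proper, so no induction on cohomological properness can start. The paper instead observes that once smoothness is in hand, $X\mapsto D_{\mathrm{qc}}(X_R)$ is a stable homotopical $2$-functor in Ayoub's sense, and invokes Ayoub's theorem that his axioms imply proper base change for projective spaces; Chow's lemma then finishes. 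If you want to avoid Ayoub you must in effect reprove his argument, which is a nontrivial diagram chase exploiting the interplay of $f_\sharp$ for smooth $f$, excision, and $\mathbb A^1$-invariance; it is not a formal consequence of the descent or gluing statements you wrote down.
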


\begin{remark} The Betti stack $\mathbb C_{\mathrm{Betti}}$ (or also $\mathbb R_{\mathrm{Betti}}$) satisfies these conditions. However, the other examples of algebraic nature -- the algebraic de Rham stack, or the crystalline or prismatic ring stacks -- do not satisfy these conditions. But there are many interesting examples of genuinely analytic nature, listed below.
\end{remark}

\begin{remark} It is somewhat surprising how few conditions must be imposed; a priori one might imagine that the conditions that \'etale maps go to cohomologically \'etale maps, and proper maps go to cohomologically proper maps, must be imposed in addition. As these are conditions ranging over arbitrary maps of schemes, this would be a subtle condition to check. Fortunately, it turns out that both conditions are automatic, by slightly nontrivial diagram chases with six functors.
\end{remark}

\begin{proof} Principal open immersions $\mathrm{Spec}(B[\tfrac 1f])\subset \mathrm{Spec}(B)$ go to open immersions of analytic stacks, via base change from the case of $\mathbb G_m\subset \mathbb A^1$. Moreover, a cover by principal opens goes to a cover in analytic stacks. Indeed, by Proposition~\ref{prop:Dsuavedescent}, it suffices to see that pullback is conservative, and by (2), this can be checked on a stratification. But a stratification lifts to an open cover.

From here, it formally follows that $X\mapsto X_R$ is in general compatible with gluing, and that open immersions go to open immersions.

Next, we show that \'etale maps go to cohomologically \'etale maps. Working Zariski locally, we can assume the \'etale map is of the standard \'etale form $e: \mathrm{Spec}(B[T]/g(T)[\tfrac 1J])\to \mathrm{Spec}(B)$, where $g$ is some monic polynomial, and $J$ is the discriminant. By base change, we can assume $k=B$. The map $e$ is the base change of an \'etale map
\[
g|_U: U\subset \mathbb A^1\xrightarrow{g} \mathbb A^1
\]
from an open subset of $U$ of $\mathbb A^1$ towards $\mathbb A^1$. The diagonal of $g|_U$ is an open immersion and hence cohomologically \'etale. By Proposition~\ref{prop:checkcohometale}, it suffices to see that a natural map $(g|_U)_R^! 1\to 1$ is an isomorphism in $D_{\mathrm{qc}}(U_R)$. Note that as both source and target of $g|_U$ are cohomologically smooth, it follows that $(g|_U)_R^! 1$ (or even $g_R^! 1$) is invertible. In fact, we can see that it is even trivial:

Note first that the dualizing complex of $R\to \ast$ is in the essential image of the fully faithful $\ast$-pullback functor $D_{\mathrm{qc}}(S)\to D_{\mathrm{qc}}(R)$. Indeed, it is the pullback of the dualizing complex of $\ast\to \ast/R$, the map to the classifying space of the additive group $R$. Let $L\in D_{\mathrm{qc}}(S)$ denote the resulting object (i.e., the Tate twist). Then $g_R^! 1$ is isomorphic to $L|_U^{\otimes -1}\otimes L|_U\cong 1$.

By smooth base change along $U\to \ast$, we can arrange that there is a section $s: \ast\to U$ (covering the whole original $U$), and it suffices to see that the map $(g|_U)_R^! 1\to 1$ becomes an isomorphism after applying $s^\ast$.

Thus, it suffices to see that for any map $g: \mathbb A^1\to \mathbb A^1$ which is \'etale over $U\subset \mathbb A^1$, and any section $s: \ast\to U$, the map $(g|_U)_R^! 1\to 1$ becomes an isomorphism after applying $s^\ast$. By shifting, we can assume $s$ is the zero section, and maps under $g$ to the zero section. Let
\[
g(T)=a_1T+a_2T^2+\ldots+a_nT^n.
\]
We can put this in the family of polynomials
\[
g_b(T) = a_1T + b(a_2T^2+\ldots+a_nT^n)
\]
with a parameter $b\in \mathbb A^1$. This family gives a map
\[
\mathbb A^2\to \mathbb A^2: (T,b)\mapsto (g_b(T),b)
\]
which is \'etale over some open subset $U_b\subset \mathbb A^2$ containing $0\times \mathbb A^1$. We can then do the whole construction in the family over $\mathbb A^1$ with parameter $b$, giving a map $1\to 1$ over $\mathbb A^1 = 0\times \mathbb A^1\subset \mathbb A^2$. By $\mathbb A^1$-invariance, this map is constant. At $b=1$, it is the map we are trying to identify; and at $b=0$ the map $g_0$ is multiplication by the invertible number $a_1$, and the claim is clear.

Now that we know that \'etale maps are cohomologically smooth, it follows that smooth maps are cohomologically smooth, by writing them locally as \'etale over affine space. It remains to see that proper maps are cohomologically proper. For this, we make use of some results of Ayoub. More specifically, we note $X\mapsto D_{\mathrm{qc}}(X_R)$ defines a stable homotopical $2$-functor in the sense of Ayoub \cite[Definition 1.4.1]{AyoubSixFunctors}; and he shows that this implies proper base change \cite[Th\'eor\`eme 1.7.9]{AyoubSixFunctors}, at least for projective spaces. But Chow's theorem lets us reduce to this case.
\end{proof}

\begin{example}[The analytic de Rham stack] Let $K$ be a Banach field of characteristic $0$. If $K$ is nonarchimedean, we endow it with the solid ring structure (say, induced from $\mathbb Z$); if $K$ is archimedean, we endow it with the liquid analytic ring structure (for some choice of $0<\alpha\leq 1$).\footnote{Other choices are possible -- we could also always use a liquid analytic ring structure, or even the gaseous one.} We can then define the overconvergent neighborhood of $0$ in the affine line
\[
\mathbb G_a^\dagger = \mathrm{AnSpec} \{\sum_{n\geq 0} a_n T^n\mid (a_n)_n\ \mathrm{has\ at\ most\ exponential\ growth}\}.
\]
This is the ring of functions converging on small disk, and its spectrum is the intersection of all disks in $\mathbb A^1$ containing $0$. It turns out that
\[
\mathbb G_a^\dagger\subset \mathbb A^1_K
\]
is a closed subspace, but it is not an ideal. This is rectified by passing to the analytic $\mathbb A^1$, given as the union of all disks (of finite radius)
\[
\mathbb A^{1,\mathrm{an}}_K = \bigcup_{R<\infty} \mathbb D(0,R)_K
\]
where $\mathbb D(0,R)_K$ is the affine analytic stack corresponding to the overconvergent functions on the disk of radius $R$; explicitly, these are those sums $\sum_{n\geq 0} a_n T^n$ such that for some $R'>R$, $|a_n| R'^n\to 0$.

Then the analytic de Rham ring stack is
\[
\mathbb A^{1,\mathrm{an}}_{\mathrm{dR},K} = \mathbb A^{1,\mathrm{an}}_K / \mathbb G_a^\dagger.
\]
One can show that its quasicoherent sheaves are some type of analytic $D$-modules. Moreover, one again checks that open immersions go to open immersions, smooth maps to cohomologically smooth maps, proper maps to cohomologically proper maps, etc.

If $K=\mathbb C$, it turns out that
\[
\mathbb A^{1,\mathrm{an}}_{\mathrm{dR},\mathbb C} = \mathbb C_{\mathrm{Betti}}\times_{\mathrm{Spec}(\mathbb Z)} \mathrm{AnSpec}(\mathbb C)
\]
is just a base change of the Betti stack from the first example! This is the analytic Riemann--Hilbert correspondence of \cite{ScholzeRealLLC}. Indeed, $\mathbb A^{1,\mathrm{an}}_{\mathbb C}$ is the usual complex-analytic affine line, and this maps to its underlying topological space, yielding a map to $\mathbb C_{\mathrm{Betti}}$ of ring stacks. The fibre is precisely the subspace that maps to $0$, which is just $\mathbb G_a^\dagger$. Again, via transmutation, a similar statement holds for all schemes of finite type over $\mathbb C$ (or, by generalizing the argument directly, for all complex manifolds, see \cite{ScholzeRealLLC}).
\end{example}

\begin{example}[Analytic de Rham stacks of (un)tilts]\label{ex:deRhamofuntilt} Generalizing the previous example, assume that $K$ is a perfectoid Banach field, so in particular $|p|<1$ for a (unique) prime $p$; and let $K'$ be a second perfectoid field, of characteristic $0$, with an isomorphism $K'^\flat\cong K^\flat$. The rigid-analytic affine line $\mathbb A^{1,\mathrm{an}}_K$ can then be tilted, and untilted back to $K'$, as a diamond, yielding a diamond $(\mathbb A^{1,\mathrm{an}}_K)_{K'}$ over $\mathrm{Spd}(K')$. By the theory of \cite{AnalyticdeRhamFF}, any diamond over $\mathrm{Spd}(\mathbb Q_p)$ admits an analytic de Rham stack, yielding a ring stack
\[
(\mathbb A^{1,\mathrm{an}}_K)_{K',\mathrm{dR}}\to \mathrm{AnSpec}(K').
\]
This is a $K$-algebra stack, having the same properties as before, in particular yielding a transmutation $X\mapsto X_{K',\mathrm{dR}}$ from schemes of finite type $X$ over $K$ towards analytic stacks over $K'$. (This can also be defined by directly generalizing the given construction.) Note that this can be applied in particular in the case $K=\mathbb F_p((t^{1/p^\infty}))$ to yield a theory for varieties over $\mathbb F_p$; this is closely related to the theory of overconvergent isocrystals.

One can also fix $K$ and vary $K'$; this yields an interpolation of these theories over the Fargues--Fontaine curve $\mathrm{FF}_K$, or even over its analytic de Rham stack; cf.~\cite{AnalyticdeRhamFF}. In particular, this yields a six-functor formalism surrounding the de Rham--Fargues--Fontaine cohomology theory conjectured to exist in \cite[Conjecture 6.4]{ScholzeICM} and constructed by le Bras--Vezzani \cite{leBrasVezzani}.
\end{example}

\newpage

\section{Lecture XI: Motivic sheaves}

The previous lecture dealt with various explicit cohomology theories, all constructed using the stacky approach. At the other extreme, there is the motivic six-functor formalism, which can in fact be characterized as a universal example of a six-functor formalism on schemes satisfying certain simple axioms. Let us recall this theorem of Drew--Gallauer \cite{DrewGallauer}.

Let $C$ be the category of separated schemes of finite type over a noetherian base ring $k$ with $\mathrm{Spec}(k)$ of finite Krull dimension, with the usual classes $I$ of open immersions and $P$ of proper maps (so $E$ consists of all maps). We will be interested in presentable six-functor formalisms
\[
\mathrm{Corr}(C)\to \mathrm{Pr}^L
\]
for which the maps in $I$ are cohomologically \'etale and the maps in $P$ cohomologically proper. By the theorem of Dauser--Kuijper, Theorem~\ref{thm:dauserkuijper}, these are equivalent to functors
\[
C^{\mathrm{op}}\to \mathrm{CAlg}(\mathrm{Pr}^L)
\]
with the appropriate hypotheses on $I$ and $P$. Also, we will always assume that all $D(X)$ are stable.

\begin{theorem}[Drew--Gallauer, \cite{DrewGallauer}] Consider the $\infty$-category of functors
\[
D: C^{\mathrm{op}}\to \mathrm{CAlg}(\mathrm{Pr}^L_{\mathrm{st}})
\]
satisfying the following properties.
\begin{enumerate}
\item[{\rm (0)}] Pullbacks along open immersions admit left adjoints and pullbacks along proper maps admit right adjoints satisfying projection formula and base change; in particular, $D$ yields a six-functor formalism.
\item[{\rm (1)}] All smooth maps are cohomologically smooth.
\item[{\rm (2)}] Excision holds: If $X$ has a closed subset $i: Z\subset X$ with open complement $j: U\to X$, then
\[
j_! 1\to 1\to i_\ast 1
\]
is exact in $D(X)$.
\item[{\rm (3)}] The homology of $\mathbb A^1$ is trivial, i.e.~for $\pi: \mathbb A^1\to \ast$ the adjunction map
\[
1\to \pi_! \pi^! 1
\]
is an isomorphism.
\end{enumerate}
This $\infty$-category contains an initial object, which is the motivic $6$-functor formalism $SH_k$ of Morel--Voevodsky \cite{MorelVoevodsky}.
\end{theorem}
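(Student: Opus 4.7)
The plan is to exhibit $SH_k$ through the standard three-step construction of Morel--Voevodsky (presheaves, Nisnevich localization, $\mathbb{A}^1$-localization, $\mathbb{P}^1$-stabilization) and to show that each of the four axioms in the theorem forces one of these localizations on any competing $D$. By Theorem~\ref{thm:dauserkuijper}, datum (0) is equivalent to a $3$-functor formalism $\mathcal{D}:\mathrm{Corr}(C,E)\to \mathrm{Pr}^L_{\mathrm{st}}$ in which open immersions are cohomologically \'etale and proper maps are cohomologically proper. Thus it suffices to produce a unique lax symmetric monoidal natural transformation $SH_k\to \mathcal{D}$ of functors on $\mathrm{Corr}(C,E)$, and by the uniqueness in Theorem~\ref{thm:dauserkuijper} (together with Proposition~\ref{prop:extendtostacks}-style Kan extension arguments that recover the full formalism from its restriction to smooth affine schemes over $k$), this in turn reduces to producing a unique symmetric monoidal natural transformation at the level of the underlying functors $\mathrm{Sm}_k^{\mathrm{op}}\to \mathrm{CAlg}(\mathrm{Pr}^L_{\mathrm{st}})$.

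First I would show that axioms (2) and (0) together imply Nisnevich descent of $D$ restricted to $\mathrm{Sm}_k$. An elementary Nisnevich square consists of an \'etale map $p:V\to X$ and an open immersion $j:U\hookrightarrow X$ such that $p^{-1}(X\setminus U)\to X\setminus U$ is an isomorphism; cohomological \'etaleness of $p$ (which will follow from (1), but for Nisnevich descent one only needs it for open immersions and along the nilpotent thickening) combined with the excision triangle $j_!1\to 1\to i_\ast 1$ of (2) applied to $X$ and $V$ yields, via proper base change for $p$ on the closed complement, a cartesian square in $\mathcal{D}(X)$, which is exactly Nisnevich descent. Second, axiom (3) gives $\mathbb{A}^1$-invariance for $\mathcal{D}$: the projection formula reduces the claim $\pi^\ast:\mathcal{D}(X)\to \mathcal{D}(X\times \mathbb{A}^1)$ is fully faithful to the assertion that $\pi_!\pi^!1=1$ on the base, which by base change we reduce to the case of a point, where it is axiom (3). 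Third, axiom (1) applied to the proper smooth map $\mathbb{P}^1\to \mathrm{Spec}(k)$ yields that $\pi^!1$ is $\otimes$-invertible; unwinding via the localization sequence along $\infty\hookrightarrow \mathbb{P}^1$ this invertible object is the standard Tate twist $\mathbb{T}=\mathrm{cofib}(1\to \pi_\ast 1)[?]$, so $\mathcal{D}(\mathrm{Spec}(k))$ (and by base change each $\mathcal{D}(X)$) is $\mathbb{T}$-stable.

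Having established Nisnevich descent, $\mathbb{A}^1$-invariance, and $\mathbb{P}^1$-stability on $\mathrm{Sm}_k$, I would invoke the universal property of the Morel--Voevodsky construction in the sharpened form due to Robalo: $SH_k$ is the initial presheaf of presentable symmetric monoidal stable $\infty$-categories on $\mathrm{Sm}_k$ with these three properties, and this yields a unique lax symmetric monoidal natural transformation $SH_k|_{\mathrm{Sm}_k}\to \mathcal{D}|_{\mathrm{Sm}_k}$. The next task is to promote this to a map of full six-functor formalisms on $(C,E)$. Here one uses that both six-functor formalisms are, by Theorem~\ref{thm:construct6functors}, determined by their restriction to the class of open immersions and proper maps together with Nagata compactifications; combined with Nisnevich descent and the Ayoub/Deligne reduction of any scheme separated of finite type over $k$ to something built from smooth schemes by closed immersions and proper covers, this extension is forced. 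Axiom (1) for smooth maps in $\mathcal{D}$ is exactly what is needed to ensure that the extension respects $f_!$ along smooth morphisms (via the purity equivalence $f^!\cong f^\ast\otimes\omega$), and then cdh-type descent (a consequence of excision plus proper base change, compare Remark~\ref{rem:lastconditionvacuousexcision}) handles the remaining singular schemes.

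The main obstacle, and the most delicate step, is promoting the Robalo-style universal property from smooth schemes to the full six-functor formalism on $C$ while retaining uniqueness at all coherence levels. The subtlety is that axioms (0)--(3) are conditions rather than extra data, yet they must determine the functors $f_!,f^!$ together with the base-change and projection-formula coherences on \emph{all} separated morphisms of finite type, including closed immersions into possibly singular targets. The crucial inputs are Theorem~\ref{thm:dauserkuijper}, which says that once the classes $I,P$ and their adjoints are fixed (with their commutations), the full $3$-functor formalism is uniquely determined; and the fact that cohomological smoothness of smooth morphisms (axiom (1)) allows one to relate $f_!$ along a closed immersion $i:Z\hookrightarrow X$ of smooth schemes to $j_!$ for $j:X\setminus Z\hookrightarrow X$ by excision, and more generally, by Zariski-local factorization, reduces arbitrary $f\in E$ to \'etale, smooth projective, and closed-immersion primitives, each handled by one of the axioms. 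Careful bookkeeping of these reductions, and verification that the resulting map is lax symmetric monoidal, constitutes the technical heart of the argument carried out in~\cite{DrewGallauer}.
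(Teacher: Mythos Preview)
Your identification of the three key ingredients (Nisnevich descent from excision, $\mathbb{A}^1$-invariance from axiom (3), and $\otimes$-invertibility of the Tate object from axiom (1)) is correct and matches the paper's sketch. However, the architecture of your argument diverges from the paper's in a way that creates a real difficulty.

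The paper (following Drew--Gallauer) works \emph{relatively} from the start: for each $X\in C$, possibly singular, one considers the category $\mathrm{Sm}_{/X}$ of smooth schemes over $X$. Cohomological smoothness of smooth morphisms gives left adjoints $f_\sharp$ to pullback along smooth $f:Y\to X$, so $\mathcal D(X)$ receives objects $f_\sharp 1$ for all such $f$, and the initial example is the stable presheaf category on $\mathrm{Sm}_{/X}$. One then localizes this, for each $X$ simultaneously, to impose Nisnevich descent, $\mathbb{A}^1$-invariance, and $T$-invertibility, arriving at $X\mapsto \mathrm{SH}(X)$ directly as a functor on all of $C$. There is no separate extension step from smooth schemes to singular schemes: the construction is already over every base.

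Your approach instead restricts to absolute smooth $k$-schemes $\mathrm{Sm}_k$, obtains the map there via Robalo's universal property, and then attempts to extend to all of $C$. The reduction you assert---that a natural transformation of six-functor formalisms on $C$ is determined by its restriction to $\mathrm{Sm}_k$, via ``Proposition~\ref{prop:extendtostacks}-style Kan extension arguments''---is not justified. That proposition concerns extending to sheaves of anima by right Kan extension, not recovering $\mathcal D(X)$ for singular $X$ from $\mathcal D$ on smooth $k$-schemes; a singular $X$ need not admit any Nisnevich cover by smooth schemes. You then try to repair this in your final paragraph via cdh-type descent and Ayoub-style d\'evissage, which you rightly flag as the ``main obstacle,'' but this is precisely the detour the relative approach avoids. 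The paper's construction never needs resolution of singularities, alterations, or any statement about singular schemes beyond what is already packed into the axioms, because $\mathrm{SH}(X)$ is built from $\mathrm{Sm}_{/X}$ regardless of whether $X$ itself is smooth.
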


\begin{remark} Drew--Gallauer prove a slightly different statement: Namely, they do not a priori ask that proper maps are cohomologically proper. Condition (1) must then be rephrased as our notion of cohomologically smooth presumes a six-functor formalism. Indeed, Drew--Gallauer directly ask that pullbacks along smooth maps admit left adjoints satisfying base change and projection formula. This does not imply cohomological smoothness -- one still needs the invertibility of the dualizing complex. This they enforce using their last condition (4b): For the composite
\[
\ast\xrightarrow{s} \mathbb A^1\xrightarrow{p} \ast,
\]
the object $p_\sharp s_* 1\in D(\ast)$ is invertible. But this is automatic if we assume cohomological smoothness of smooth maps: Then $p_\sharp$ agrees with $p_!$ up to twist, and in any case $s_\ast=s_!$, so up to twist this agrees with $p_!s_! 1 = 1$.

Also, in (2), they ask for an apparently stronger condition that
\[
D(U)\xrightarrow{j_!} D(X)\xrightarrow{i^\ast} D(Z)
\]
is a Verdier sequence (or equivalently the one for the right adjoints), but this follows from our condition and the projection formula (which we already assumed for $i_\ast$). Similarly in (3) they make a statement about categories, but this follows again from the statement on the unit and the projection formula.
\end{remark}

We sketch the proof of the theorem, which is actually constructive and lets one recover the construction of the motivic $6$-functor formalism.

First, we enforce the existence of left adjoints $f_\sharp$ to smooth pullbacks $f^\ast$, satisfying base change and projection of formula. In this case, any $D(X)$ acquires objects $f_\sharp 1$ for smooth $f: Y\to X$, and in fact one can show that the free example is given by taking $X$ to the stable presheaf category on $\mathrm{Sm}_{/X}$.

Next, it turns out that excision implies Nisnevich descent. Indeed, Nisnevich maps are cohomologically smooth, so this follows from Proposition~\ref{prop:Dsuavedescent} and the excision axiom (which ensures that to check conservativity, we can check over a stratification; and any Nisnevich cover splits over a stratification). Thus, the next approximation is the functor taking any $X$ to $D_{\mathrm{Nis}}(\mathrm{Sm}_{/X},\mathbb S)$, the stable $\infty$-category of Nisnevich sheaves of spectra on $\mathrm{Sm}_{/X}$.

We need to enforce that the homology of $\mathbb A^1$ is trivial; this forces us to pass to $\mathbb A^1$-invariant Nisnevich sheaves $D_{\mathrm{Nis}}^{\mathbb A^1}(\mathrm{Sm}_{/X},\mathbb S)$. Finally, one has to ensure that for
\[
\ast\xrightarrow{s} \mathbb A^1\xrightarrow{p} \ast,
\]
the object $T=p_\sharp s_* 1\in D(\ast)$ is invertible. Again, as developed by Robalo \cite{RobaloSymMon}, there is a universal way to make objects $\otimes$-invertible; in this situation the object turns out to be symmetric, so one can just pass to spectrum objects. This yields the functor
\[
X\mapsto \mathrm{SH}(X) = \mathrm{Sp}_T(D_{\mathrm{Nis}}^{\mathbb A^1}(\mathrm{Sm}_{/X},\mathbb S)),
\]
of Morel--Voevodsky, and this is known to satisfy all conditions imposed.

The theorem of Drew--Gallauer makes it very easy to construct realizations of motives -- any usual six-functor formalism satisfies their axioms, thus yielding a map of six-functor formalisms from $\mathrm{SH}$.\footnote{Beware, however, that some formalisms like that of algebraic $D$-modules have their adjoints in the wrong direction, so it does not apply in this situation.} In particular, we get the following corollary, by combining it with the construction via ring stacks from the last lecture.

\begin{corollary}\label{cor:realizemotives} Let $R$ be a $k$-algebra stack over an analytic stack $S$, satisfying the hypotheses of Theorem~\ref{thm:conditionsonringstack}, yielding a transmutation functor $X\mapsto X_R$ from separated schemes of finite type over $k$ to analytic stacks over $S$. Then there is a unique map of six-functor formalisms
\[
\mathrm{SH}(X)\to D_{\mathrm{qc}}(X_R)
\]
on separated schemes of finite type $X$ over $\mathbb Z$.
\end{corollary}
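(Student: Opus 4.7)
The plan is to apply the Drew--Gallauer initiality theorem directly to the six-functor formalism $X \mapsto D_{\mathrm{qc}}(X_R)$ produced by Theorem~\ref{thm:conditionsonringstack}, restricted to separated schemes of finite type over $\mathbb{Z}$. First I would observe that this formalism already lands in $\mathrm{CAlg}(\mathrm{Pr}^L_{\mathrm{st}})$: symmetric monoidality, presentability and stability are all inherited from $D_{\mathrm{qc}}$ on analytic stacks. Drew--Gallauer's axioms (0) and (1)---namely that $D$ gives a six-functor formalism with cohomologically \'etale open immersions, cohomologically proper proper maps, and cohomologically smooth smooth maps---are then exactly the conclusion of Theorem~\ref{thm:conditionsonringstack}.

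Axiom (3), that $1 \to \pi_!\pi^! 1$ is an equivalence for $\pi: \mathbb{A}^1 \to \ast$, unwinds immediately from hypothesis (1) of Theorem~\ref{thm:conditionsonringstack}. Since the transmutation $X \mapsto X_R$ preserves finite limits (hence terminal objects) and sends $\mathbb{A}^1$ to $R$, the map $\pi_R$ is exactly the structure map $f: R \to \ast$, whose $1 \to f_! f^! 1$ is required to be an equivalence.

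Axiom (2), excision, is where the main work lies. The fundamental case of the pair $(\{0\} \hookrightarrow \mathbb{A}^1 \hookleftarrow \mathbb{G}_m)$ is exactly hypothesis (2) of Theorem~\ref{thm:conditionsonringstack}. For a closed immersion $i: Z \hookrightarrow X$ cut out by a single function $f: X \to \mathbb{A}^1$ (with complement $j: U = f^{-1}(\mathbb{G}_m) \hookrightarrow X$), the triangle $j_! 1 \to 1 \to i_\ast 1$ on $X_R$ is obtained by pulling back the fundamental triangle on $R$ along $X_R \to R$, using base-change compatibility of $j_!$ and $i_\ast$. For a closed $Z$ cut out locally by an ideal $(f_1,\ldots,f_n)$, the plan is to reduce to affine $X$ using that excision is Zariski-local on $X$ (via Proposition~\ref{prop:Dsuavedescent}, open immersions being cohomologically \'etale), and then induct on $n$ through the stratification $X \supset V(f_1) \supset V(f_1,f_2) \supset \cdots \supset Z$: at each step one applies the single-function excision and combines via the octahedral axiom. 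The main obstacle is to canonically identify the resulting cofibre with $i_\ast 1$ for the overall pair $(Z,U)$; this reduces to the joint conservativity of $i^\ast$ and $j^\ast$ on $D_{\mathrm{qc}}(X_R)$ together with the characterisation of $j_!1$ by its pullbacks, both of which follow by base change from the single-function cases.

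With all four Drew--Gallauer axioms in hand, the $\infty$-categorical initiality produces the unique map of six-functor formalisms $\mathrm{SH}(X) \to D_{\mathrm{qc}}(X_R)$, as claimed. I would expect the only non-formal input to be the induction just sketched, and this essentially amounts to writing out carefully that the fibre sequences for nested closed subschemes glue together; no additional geometric information about $R$ beyond hypotheses (1)--(2) of Theorem~\ref{thm:conditionsonringstack} is required.
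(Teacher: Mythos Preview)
Your proposal is correct and matches the paper's approach exactly. The paper states this corollary without proof, treating it as immediate from combining the Drew--Gallauer initiality theorem with Theorem~\ref{thm:conditionsonringstack}; you have simply spelled out the verification of the Drew--Gallauer axioms, which is what anyone filling in the details would do.

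A small remark on your excision argument: it is correct, but you might streamline it slightly. Once you know (from the proof of Theorem~\ref{thm:conditionsonringstack}) that arbitrary open immersions of schemes transmute to open immersions of analytic stacks, the single-function excision triangle base-changed from hypothesis~(2) already gives, via the projection formula, the full triangle $j_!j^\ast M \to M \to i_\ast i^\ast M$ for any $M$ and any principal pair $(V(f), D(f))$. Your induction then goes through cleanly: apply this to $M = j_! 1$ for the full open $U$, and the octahedral comparison identifies $\mathrm{fib}(1 \to i_\ast 1)$ with $j_! 1$. The joint conservativity of $i^\ast$ and $j^\ast$ that you invoke is indeed what makes the identification canonical, and it follows from the single-function case by the same induction.
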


\newpage

\section*{Appendix to Lecture XI: $2$-Motives and Ring Stacks}

We have seen that six-functor formalisms on schemes have, on the one hand, an initial example given by Morel--Voevodsky's $\mathrm{SH}$, i.e.~a version of motives; and on the other hand, are closely related to ring stacks. In fact, at the expense of passing to presentable symmetric monoidal $(\infty,2)$-categories, one can prove a very precise version of this.

Namely, for a noetherian ring $k$ of finite Krull dimension as in the lecture, let $\mathrm{Pr}_{\mathrm{SH}_k}$ be the presentable symmetric monoidal $(\infty,2)$-category associated to $\mathrm{SH}_k$. (We implicitly fix a regular cardinal $\kappa$ and work with $\mathrm{Pr}_\kappa$ everywhere.) There is a symmetric monoidal functor
\[
C^{\mathrm{op}}\to \mathrm{CAlg}(\mathrm{Pr}_{\mathrm{SH}_k}),
\]
and in particular the image of $\mathbb A^1_k$ is a $k$-algebra object in
\[
\mathrm{CAlg}(\mathrm{Pr}_{\mathrm{SH}_k})^{\mathrm{op}}.
\]
Psychologically, the reader is invited to think of $\mathrm{CAlg}(\mathrm{Pr}_{\mathrm{SH}_k})^{\mathrm{op}}$ as a generalized kind of geometric objects, roughly $1$-affine stacks (which are determined by their corresponding $D(-)\in \mathrm{CAlg}(\mathrm{Pr})$). The following result was suggested in \cite{ScholzeMotivesRingStacks}, and proved by Aoki in his thesis \cite{AokiThesis}.

\begin{theorem}\label{thm:aokithesis} The presentable symmetric monoidal $(\infty,2)$-category $\mathrm{Pr}_{\mathrm{SH}_k}$ is the initial presentable symmetric monoidal $(\infty,2)$-category $\mathbb C$ equipped with a $k$-algebra object
\[
[\mathbb A^1]\in \mathrm{CAlg}(\mathbb C)^{\mathrm{op}}
\]
satisfying the following conditions:
\begin{enumerate}
\item[{\rm (1)}] The map $f: [\mathbb A^1]\to \ast$ is cohomologically smooth, with trivial homology, i.e.~$f_\sharp f^\ast 1\cong 1$.
\item[{\rm (2)}] The zero section $i: \ast\xrightarrow{0} [\mathbb A^1]$ is a closed immersion, the units $j: [\mathbb A^1]^\times\subset [\mathbb A^1]$ yield an open immersion, and $j_! 1\to 1\to i_\ast 1$ is a fiber sequence.
\end{enumerate}
\end{theorem}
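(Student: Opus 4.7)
The strategy is to combine Drew--Gallauer's universal property of $\mathrm{SH}_k$, the transmutation argument of Theorem~\ref{thm:conditionsonringstack}, and the construction of the presentable $(\infty,2)$-category of kernels sketched in the appendix to Lecture V. I will first verify that $(\mathrm{Pr}_{\mathrm{SH}_k},[\mathbb{A}^1])$ itself satisfies (1) and (2); these amount to restatements of $\mathbb{A}^1$-invariance and homotopy-purity/Nisnevich excision already built into the motivic formalism, which translate directly to the associated presentable $(\infty,2)$-category.

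For initiality, suppose $(\mathbb{C},[\mathbb{A}^1])$ is given. Since $[\mathbb{A}^1]$ is a commutative $k$-algebra object in $\mathrm{CAlg}(\mathbb{C})^{\mathrm{op}}$ and that $\infty$-category admits finite limits, I would define a transmutation $X\mapsto[X]$ from affine schemes of finite type over $k$ to $\mathrm{CAlg}(\mathbb{C})^{\mathrm{op}}$ by expressing $X$ as a fibre product of copies of $\mathbb{A}^1$ along polynomial maps and applying the same fibre product to $[\mathbb{A}^1]$. Using the open immersion $[\mathbb{A}^1]^\times\subset[\mathbb{A}^1]$ provided by (2), this extends via Zariski gluing to all separated finite-type $k$-schemes. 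I would then import the proof of Theorem~\ref{thm:conditionsonringstack} verbatim into $\mathbb{C}$: condition (2) implies that principal opens map to open immersions and that Zariski covers become covers of the ambient site by the stratified conservativity argument; the explicit diagram chase with the family $g_b(T)=a_1T+b(a_2T^2+\cdots+a_nT^n)$ shows that étale maps become cohomologically étale; smooth maps then become cohomologically smooth by factoring locally through affine space and using (1); and proper maps become cohomologically proper via Chow's lemma and Ayoub's stable homotopical $2$-functor argument. The upshot is a six-functor formalism $D(X):=D_{\mathrm{qc}}([X])$ on separated finite-type $k$-schemes satisfying all Drew--Gallauer axioms.

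Applying Drew--Gallauer produces a unique symmetric monoidal natural transformation of six-functor formalisms $\mathrm{SH}_k\to D$. I would then upgrade this to a symmetric monoidal functor of presentable $(\infty,2)$-categories $\mathrm{Pr}_{\mathrm{SH}_k}\to\mathbb{C}$ via the universal property sketched in the appendix to Lecture V: $\mathrm{Pr}_{\mathrm{SH}_k}$ is generated (as an object of $2\mathrm{Pr}^L_\kappa$) by the objects $[X]$ with morphism $\infty$-categories the kernel categories $\mathrm{SH}_k(X\times Y)$, and the six-functor map just constructed supplies a coherent $\mathbb{C}$-enriched lift of these data. Uniqueness is forced: the functor is determined on $[\mathbb{A}^1]$ by hypothesis, on every $[X]$ by the finite-limit recipe, and on morphisms and $2$-morphisms by Drew--Gallauer.

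The main obstacle will be executing the Drew--Gallauer construction (Nisnevich sheaves on $\mathrm{Sm}_{/X}$, $\mathbb{A}^1$-localisation, $T$-stabilisation) and extracting its universal property \emph{internally} to the presentable $(\infty,2)$-category $\mathbb{C}$, rather than only externally in $\mathrm{Pr}^L$. This demands a robust theory of enriched presheaf $\infty$-categories, Bousfield localisations, and $\otimes$-inversions within $2\mathrm{Pr}^L_\kappa$, together with a corresponding $(\infty,2)$-categorical universal property for six-functor formalisms valued in $\mathbb{C}$. Aoki's thesis supplies exactly this apparatus; without it one would only obtain a functor after forgetting $2$-morphisms, losing the kernel-level and base-change data that distinguish $\mathrm{Pr}_{\mathrm{SH}_k}$ from $\mathrm{SH}_k$.
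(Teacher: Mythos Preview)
The paper does not prove this theorem: it states the result and attributes the proof to Aoki's thesis \cite{AokiThesis}, so there is no in-text argument to compare against. Your outline is therefore being measured against the bare citation, and on that score it is a reasonable sketch of the expected strategy.

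That said, a few points deserve sharpening. You propose to ``import the proof of Theorem~\ref{thm:conditionsonringstack} verbatim into $\mathbb{C}$'', but that proof is written for analytic stacks and leans on external inputs: Proposition~\ref{prop:Dsuavedescent} for the conservativity/descent step, and Ayoub's stable homotopical $2$-functor machinery for proper base change. Neither of these is available off the shelf inside an arbitrary presentable symmetric monoidal $(\infty,2)$-category; one has to re-prove them (or their analogues) internally. This is not a fatal gap---the arguments are formal enough to transport---but ``verbatim'' undersells the work involved, and it is precisely this internalisation that constitutes the substance of Aoki's proof.

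Your uniqueness argument is also too quick. Knowing the functor on $[\mathbb{A}^1]$ and hence on each $[X]$ does not by itself pin down a map of presentable symmetric monoidal $(\infty,2)$-categories $\mathrm{Pr}_{\mathrm{SH}_k}\to\mathbb{C}$: you need to know that $\mathrm{Pr}_{\mathrm{SH}_k}$ is \emph{freely generated} in $2\mathrm{Pr}^L_\kappa$ by the data $(\mathrm{Corr}(C),\mathrm{SH}_k)$ subject to the given relations. That is an $(\infty,2)$-categorical upgrade of Drew--Gallauer, not a consequence of it, and is again part of what Aoki supplies. You flag this in your final paragraph, which is good; just be aware that the ``main obstacle'' you name is not a loose end but the heart of the theorem.
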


In other words, if we take the Tannakian perspective that any presentable symmetric monoidal $(\infty,2)$-category $\mathbb C$ gives rise to a notion of ``geometric spaces'' via $\mathrm{CAlg}(\mathbb C)^{\mathrm{op}}$, then the presentable symmetric monoidal $(\infty,2)$-category $\mathrm{Pr}_{\mathrm{SH}}$ precisely classifies ring stacks satisfying the (very simple) conditions of Theorem~\ref{thm:conditionsonringstack}.

Aoki proves many variants in his thesis. First, one can define the \'etale version of motives by enforcing surjectivity of the Kummer and Artin--Schreier maps. Moreover, one can define the theory of Berkovich motives \cite{ScholzeBerkovich} by adding a norm map
\[
N: [\mathbb A^1]\to \mathbb R_{\geq 0,\mathrm{Betti}}
\]
(multiplicative, $N^{-1}(0)=0$, and satisfying the triangle inequality) and asking for ball-invariance. In particular, it follows that the construction of ($2$-categorical, hence also $1$-categorical, compatible with $\otimes$, $f^\ast$, $f_!$) realizations of Berkovich motives is reduced to the construction of normed ring stacks satisfying some simple axioms.

\newpage

\section{Lecture XII: A formalism related to arithmetic $D$-modules}

In this final lecture, we sketch how one can define an analytic $\mathbb F_p$-algebra stack whose corresponding six-functor formalism is closely related to the classical theory of arithmetic $D$-modules. This was originally announced in \cite{ScholzeDarmstadt}. Some results towards this were obtained by Bambozzi--Chiarellotto--Vanni \cite{BambozziChiarellottoVanni}; 
our version is also related to the notion of isocrystals with log-decay as in work Kramer-Miller \cite{KramerMiller}.

We note that one version of such a formalism was already constructed using perfection and passing to the analytic de Rham stack of an untilt, see Example~\ref{ex:deRhamofuntilt}. However, the stack associated to $\mathbb A^1_{\mathbb F_p}$ is the analytic de Rham stack of the perfectoid
\[
\widetilde{\mathbb A^{1,\mathrm{an}}_{K'}} = \varprojlim_{T\mapsto T^p} \mathbb A^{1,\mathrm{an}}_{K'}.
\]
Its $\mathbb F_p$-algebra structure is given by the usual formula
\[
(x^{1/p^n})_n + (y^{1/p^n})_n = (z^{1/p^n})_n
\]
where $z = \mathrm{lim}_{n\to \infty} (x^{1/p^n}+y^{1/p^n})^{p^n}$ (with obvious $p$-power roots), i.e.~the formula defining the tilt of perfectoid rings. The limit is well-defined on analytic de Rham stacks; passing to analytic de Rham stacks is equivalent to completing with respect to the norm
\[
N: \widetilde{\mathbb A^{1,\mathrm{an}}_{K'}}\to \mathbb R_{\geq 0}.
\]
Indeed, $N^{-1}(0)$ is precisely the overconvergent neighborhood of $0$, which is the kernel of the map to the analytic de Rham stack.

However, analytic $D$-modules on $\widetilde{\mathbb A^{1,\mathrm{an}}_{K'}}$ is not what one expects arithmetic $D$-modules to look like. Rather, arithmetic $D$-modules should be $D$-modules on an overconvergent version of the unit disc $\mathbb D_{\mathbb Q_p}$ whose connection satisfies a very strong convergence condition. If they are equipped with a Frobenius structure, one can pull them back to an overconvergent perfectoid disc $\widetilde{\mathbb D_{K'}}$ and then one use the Frobenius structure to spread them to the whole perfectoid line $\widetilde{\mathbb A^{1,\mathrm{an}}_{K'}}$. However, without the Frobenius structure, one cannot pass from the usual theory of arithmetic $D$-modules to analytic $D$-modules on $\widetilde{\mathbb A^{1,\mathrm{an}}_{K'}}$.

Still, one can define a different analytic $\mathbb F_p$-algebra stack over $\mathbb Q_{p,\square}$, which is roughly speaking the quotient of an overconvergent disk by an open disk. In fact, taking the actual overconvergent $\mathbb D^\dagger_{\mathbb Q_p}$ and open unit disk $\mathbb D^\circ_{\mathbb Q_p}$, the quotient $\mathbb D^\dagger_{\mathbb Q_p}/\mathbb D^{\circ}_{\mathbb Q_p}$ is a well-defined $\mathbb F_p$-algebra stack that structurally behaves like the algebraic de Rham stack (excision holds, \'etale maps are cohomologically proper, proper maps are cohomologically smooth), and its theory of quasicoherent sheaves basically is the theory of arithmetic $D$-modules. However, we would like to find a formalism with the properties of Theorem~\ref{thm:conditionsonringstack} (which hence yields a realization of motives): we want to have a ring stack $R$ that is cohomologically smooth (not proper), with $0\subset R$ closed (not open).

Thus, we need to find an affinoid version of the open unit disk. Moreover, the connection of arithmetic $D$-modules should be convergent on this open unit disk. Thus, for example, horizontal sections of any Gau\ss--Manin connection should converge on this affinoid version of the open unit disk. Simple examples of this are the logarithm function
\[
\mathrm{log}(1+x) = \sum_{n\geq 1} (-1)^{n-1}\frac{x^n}{n}
\]
or also polylogarithms
\[
\mathrm{Li}_k(x) = \sum_{n\geq 1} \frac{x^n}{n^k}.
\]
These have coefficients that, as $p$-adic numbers, have only polynomial growth in $n$. This turns out to be a general phenomenon (which actually follows from having a Frobenius structure). Thus, we are led to consider the ``tempered'' affinoid open unit disk
\[
\mathbb D^{\circ,\mathrm{temp}}_{\mathbb Q_p} := \mathrm{AnSpec}(\{\sum_{n\geq 0} a_n T^n \mid a_n\in \mathbb Q_p, (|a_n|)_n\ \mathrm{has\ at\ most\ polynomial\ growth}\}).
\]
(Functions with such polynomial growth conditions are often called tempered.) It is a simple exercise to check that this defines an idempotent $\mathbb Q_{p,\square}[T]$-algebra, and hence a subspace of the affine line $\mathbb A^1$ (working throughout over $\mathrm{AnSpec}(\mathbb Q_{p,\square})$.

We need to define a corresponding open version of the closed unit disk. Actually, condition (2) of Theorem~\ref{thm:conditionsonringstack} forces that the open unit disk and the complementary closed unit disk together cover $\mathbb P^1$. This forces us to take as our tempered version of the closed unit disk the complement of $\mathbb D^{\circ,\mathrm{temp}}_{\mathbb Q_p}$ in $\mathbb P^1$. Concretely, up to switching $0$ and $\infty$ in $\mathbb P^1$, this is
\[
\mathbb D^{\mathrm{temp}}_{\mathbb Q_p} = \mathrm{colim}_{k>0} \mathrm{AnSpec}(\{\sum_{n\geq 0} a_n T^n \mid a_n\in \mathbb Q_p, |a_n| = o(n^{-k})\}),
\]
using algebras of power series whose coefficients decay like $n^{-k}$.

Now one checks that $\mathbb D^{\mathrm{temp}}_{\mathbb Q_p}$ is a subring of $\mathbb A^1$, its subspace $\mathbb D^{\circ,\mathrm{temp}}_{\mathbb Q_p}$ is an ideal in $\mathbb D^{\mathrm{temp}}_{\mathbb Q_p}$, and the quotient
\[
(\mathbb A^1_{\mathbb F_p})^{\mathrm{temp}}:=\mathbb D^{\mathrm{temp}}_{\mathbb Q_p} / \mathbb D^{\circ,\mathrm{temp}}_{\mathbb Q_p}
\]
is an $\mathbb F_p$-algebra stack (indeed, $p$ lies in the open unit disk over $\mathbb Q_p$), satisfying the conditions of Theorem~\ref{thm:conditionsonringstack}.

Thus, we get a general transmutation $X\mapsto X^{\mathrm{temp}}$ from separated schemes of finite type $X$ over $k$ towards analytic stacks over $\mathbb Q_{p,\square}$, and a full six-functor formalism. Moreover, one gets a realization of motives into this theory.

\newpage

\section{Miscellaneous}

This section consists of some further remarks and examples of $6$-functor formalisms.

\subsection{$C=\ast$}

As a first example, let us take for $C=\ast$ the trivial category with just one object (and one morphism). A $3$-functor formalism is a symmetric monoidal $\infty$-category $D$. There seems to be nothing to say in this case, but actually there is something to say once one passes to stacks. Namely, the category of sheaves of anima on $C$ is just the $\infty$-category of anima $\mathrm{An}$, and if $D$ is presentable, then $D$ determines uniquely a $6$-functor formalism on $\mathrm{An}$, taking any $X\in \mathrm{An}$ to
\[
D(X) = \mathrm{Fun}(X,D),
\]
i.e.~``$X$-parametrized objects of $D$''. If $D$ is the symmetric monoidal $\infty$-category of spectra, this is the notion of parametrized spectrum of May--Sigurdsson \cite{MaySigurdsson}.

\begin{exercise} Show that the class of morphisms $\tilde{E}$ allowed by Theorem~\ref{thm:stacky6functors} is the class of morphisms $f: X\to Y$ of anima that on every connected component of $X$ are $n$-truncated for some $n$.
\end{exercise}

\begin{exercise} Show that in the $6$-functor formalism from Theorem~\ref{thm:stacky6functors}, all maps in $\tilde{E}$ are cohomologically \'etale.
\end{exercise}

One can in fact directly construct a $6$-functor formalism on $\mathrm{An}$ by sending $X$ to $D(X)=\mathrm{Fun}(X,D)$ and taking for $I$ all maps (so also $E$ consists of all maps), while $P$ consists only of isomorphisms. This makes $f_!$ in general the left adjoint of $f^\ast$, and it extends the formalism coming from Theorem~\ref{thm:stacky6functors} to not necessarily truncated morphisms.

The class of cohomologically proper morphisms depends a lot on $D$. In general, if $D$ is stable (or just preadditive), then all $0$-truncated maps with finite fibres are $D$-cohomologically proper, and this is a way of saying that finite coproducts agree with finite products in $D$. This means that $1$-truncated maps whose fibres have finite automorphism groups are ``cohomologically separated'', yielding a natural transformation $f_!\to f_\ast$ for such $f$. In particular, for a finite group $G$ and $f: BG\to \ast$, one has an identification between $D(BG)$ and $G$-equivariant objects in $D$ (in practice, this is the derived category of $G$-representations), and $f_!$ is $G$-homology, $f_\ast$ is $G$-cohomology, and $f_!\to f_\ast$ yields the norm map from $G$-homology to $G$-cohomology. If the order of $G$ is invertible in $\mathrm{End}_D(1_D)$, this is an isomorphism, but there are also other situations, notably if $D$ is the $\infty$-category of $K(1)$-local spectra, where this is related to the telescopic Tate vanishing, see for example \cite{ClausenMathewTate} for a quick proof. If this map is an isomorphism, one gets resulting comparison maps for $2$-truncated maps, etc.

In particular, a phenomenon known as ``ambidexterity'' in algebraic topology gives the following theorem:

\begin{theorem}[{Hopkins--Lurie, \cite{HopkinsLurieAmbidexterity}}]\label{thm:ambidexterityKnlocal} Let $D$ be the symmetric monoidal $\infty$-category of $K(n)$-local spectra for $n\geq 1$ (and some implicit prime $p$). Then any $n$-truncated map $f: X\to Y$ of anima all of whose fibres have finite $\pi_i$, for $i=0,\ldots,n$, is $D$-cohomologically proper.
\end{theorem}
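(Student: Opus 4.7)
The plan is to proceed by induction on the truncation level $n$, exploiting the recursive definition of cohomological properness via the diagonal. Since cohomological properness is stable under base change, and since a fibre of an $n$-truncated map with finite $\pi_i$ ($i\le n$) is itself such an anima, it suffices to prove the statement for $f: X \to \ast$ with $X$ an $n$-truncated anima whose $\pi_i$ are finite for $i \le n$. Passing to connected components, we may further assume $X$ is connected, so $X \simeq BG$ for a $\pi$-finite $(n-1)$-truncated $\infty$-group $G$. The base case $n=0$ reduces to the statement that, in a stable $\infty$-category like $\mathrm{Sp}_{K(n)}$, the canonical map from finite coproducts to finite products is an equivalence, which is automatic; here the diagonal has only isomorphism fibres, so it is trivially cohomologically proper.

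For the inductive step, the diagonal $\Delta_f: BG \to BG \times BG$ has fibre $G$, which is an $(n-1)$-truncated anima with finite homotopy groups (note the implicit loop-shift: $\pi_iG=\pi_{i+1}BG$). By the inductive hypothesis applied at truncation level $n-1$, $\Delta_f$ is cohomologically proper. Hence Proposition~\ref{prop:checkcohomproper} reduces the problem to checking that the canonical norm map
\[
f_!(1_{BG}) \to f_\ast(1_{BG})
\]
is an isomorphism on global sections, i.e.\ that the natural comparison $1_{hG} \to 1^{hG}$ in $\mathrm{Sp}_{K(n)}$ is an equivalence.

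This last comparison is precisely $\pi$-finite ambidexterity for $K(n)$-local spectra, and it is here that the main obstacle lies: the abstract six-functor machinery has reduced the whole question to one concrete analytic fact from chromatic homotopy theory. To establish it, I would run a secondary induction along the Postnikov tower of $G$. Choose a fibre sequence $K(A,k-1) \to G \to G'$ with $A$ a finite abelian group and $G'$ one stage shorter; the ambidexterity for $G$ is deduced from that of $G'$ (already known) together with that of the classifying space $BK(A,k-1) = K(A,k)$, fibrewise. Iterating, one reduces to the case of ordinary Eilenberg--MacLane spaces $K(A,j)$ for $A$ a finite abelian group and then, by one further such fibre-sequence reduction, to the case $j=0$, i.e.\ to a finite abelian group $A$ itself.

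The final and most delicate input, which I would cite as a black box, is the Tate vanishing theorem of Greenlees--Sadofsky and Hovey--Sadofsky: for a finite group $A$ and any $K(n)$-local spectrum $E$, the Tate construction $E^{tA}$ is $K(n)$-acyclic, equivalently the norm map $E_{hA} \to E^{hA}$ is a $K(n)$-local equivalence. Granting this and checking that the various norm maps constructed along the Postnikov induction agree with the ones produced by the six-functor formalism (which is a diagram chase using that the norm is characterized by a universal property involving the diagonal, matching the setup of Proposition~\ref{prop:checkcohomproper}), the inductive step closes and the theorem follows. The hard part is thus not the categorical reduction, which is clean, but the coherent propagation of ambidexterity up the Postnikov tower; this is precisely what Hopkins--Lurie carry out in detail in \cite{HopkinsLurieAmbidexterity}.
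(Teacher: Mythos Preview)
The paper does not give its own proof of this theorem; it is stated as a citation of Hopkins--Lurie, with the preceding paragraph sketching exactly the inductive mechanism you describe (the diagonal is lower-truncated, so one obtains a norm map $f_!\to f_\ast$, and one then has to check it is an isomorphism). Your translation of this into the six-functor language via Proposition~\ref{prop:checkcohomproper} is correct and is precisely how the paper's framework interfaces with the Hopkins--Lurie result.

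There is, however, a genuine gap in your final reduction. The step ``by one further such fibre-sequence reduction, to the case $j=0$'' does not work. A fibre sequence such as $K(A,j-1)\to \ast\to K(A,j)$ lets you conclude (via the inductive hypothesis on the diagonal) that the norm map for $K(A,j)$ is \emph{defined}, but it does not let you conclude that it is an \emph{equivalence}: the total space here is $\ast$, so knowing the norm for the fibre and the total space tells you nothing about the base. More generally, there is no formal categorical mechanism that propagates ``norm is an equivalence'' upward through Eilenberg--MacLane levels from the finite-group case alone. The Greenlees--Sadofsky/Hovey--Sadofsky Tate vanishing handles exactly the $1$-truncated case ($BG$ for finite $G$); for $K(A,j)$ with $j\geq 2$ one needs the Ravenel--Wilson computation of $K(n)_\ast K(\mathbb{Z}/p,j)$ and the bootstrap argument that Hopkins--Lurie build on top of it. So the black box you need to cite is not finite-group Tate vanishing but the full Hopkins--Lurie theorem --- which is what the paper does.
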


On another note, one can wonder for which $X\in\mathrm{An}$ the object $1_X\in D(X)$ is $f$-prim for the projection $f: X\to \ast$. Note that after Proposition~\ref{prop:checkprimsheaf}, we constructed a general transformation
\[
f_!(-\otimes \mathbb D_X)\to f_\ast
\]
for a certain object $\mathbb D_X\in D(X)$ which is in fact exactly the Spivak--Klein dualizing object when $D=\mathrm{Sp}$, and the map is exactly the twisted norm map; see \cite[Section I.4]{NikolausScholze} for an account. Here, the left-hand side is in fact the colimit-preserving approximation to $f_\ast$, and so the map is an isomorphism if and only if $f_\ast$ preserves all direct sums. This is the case, in particular, if $X$ is a compact object of $\mathrm{An}$. More generally, for any ``finitely dominated'' map of anima $f: X\to Y$, the sheaf $1_X$ is $f$-prim.

A lot of work in (parametrized) homotopy theory can then be recast in this language, and I will not attempt to do the vast literature justice due to ignorance on my side. Let me just cite, as one example with a similar point of view to these notes, the paper by Cnossen \cite{Cnossen}; see also its introduction and extended bibliography.

\begin{remark} Generalizing this example, one can take for $C$ the category of transitive $G$-sets, for some (abstract, say) group $G$, where one again takes for $I$ all morphisms (so also $E$ consists of all morphisms), while $P$ consists only of isomorphisms. In that case, Elmendorf's theorem \cite{Elmendorf} says that the $\infty$-category of presheaves of anima on $C$ is exactly the $\infty$-category of ``$G$-spaces''. Moreover, $6$-functor formalisms on $C$ with values in $\mathrm{Pr}^L$ are $G$-symmetric monoidal presentable $\infty$-categories, i.e.~functors from $C^{\mathrm{op}}$ to symmetric monoidal presentable $\infty$-categories. One can then again extend such $6$-functor formalisms to all ``$G$-spaces''. I believe much of the previous discussion can then be generalized, and is related to a lot of work on (genuine) $G$-equivariant homotopy theory.
\end{remark}

\subsection{$C=\mathrm{ProFin}$}

Next, let us consider $C=\mathrm{ProFin}$. As morphisms, we take for $P$ all maps (so also $E$ consists of all maps), while for $I$ we could in principle allow open immersions, but we could also just restrict to isomorphisms. There are many possible $6$-functor formalisms, so let us restrict to the case of the functor taking any $S\in \mathrm{ProFin}$ to the stable $\infty$-category $D(S,k)$ of sheaves on $S$ with values in modules over some $E_\infty$-ring $k$. Note that sheaves on $S$ are just functors taking any open closed subset $U\subset S$ to the value on $S$, subject to taking finite disjoint unions to finite products. This is very much a finitary condition, and using this it is trivial to prove proper base change, so this functor indeed extends to a $6$-functor formalism.

Moreover, the functor $S\mapsto D(S,k)$ is a hypersheaf for the Grothendieck topology used in condensed mathematics, i.e.~covers are generated by finite families of jointly surjective maps. We can thus extend $X\mapsto D(X,k)$ to all condensed anima; and if one possibly slightly restricts the Grothendieck topology (we did not know whether all covers in condensed mathematics satisfy universal $!$-descent) we also get an extended $6$-functor formalism. This applies in particular to locally profinite sets, and for open immersions of such, the functor $f_!$ is defined and agrees with the left adjoint of $f^\ast$ (i.e.~they are cohomologically \'etale). To see this, cover locally profinite sets via open and closed subsets of profinite sets, and use that $f_!$ commutes with all direct sums to see that it must be the expected functor.\footnote{Thanks to Clausen for explaining this!}

We also note that one can show that the morphism from the Cantor set to the interval is of universal $!$-descent, in fact it satisfies the hypotheses of Proposition~\ref{prop:Dprimdescent}. Thus, for finite-dimensional compact Hausdorff spaces, the $!$-functors are defined, and behave as expected. On the other hand, for a Hilbert cube, the $!$-functors are not defined.

This example of condensed anima is spelled out in detail in the work of Heyer--Mann \cite{HeyerMann}, with applications to smooth representation theory.

\subsection{$C=\mathrm{CHaus}$}

Now consider $C=\mathrm{CHaus}$, compact Hausdorff spaces. All morphisms are in $P$, but only isomorphisms in $I$, and we again use the functor $X\mapsto D(X,k)$, the $\infty$-category of sheaves on $X$ with values in $D(k)$, for some $E_\infty$-ring $k$. Again, we could also start with locally compact Hausdorff spaces, but on the level of sheaves of anima, the distinction disappears.

In this case, the resulting $D$-topology in fact depends on $k$, and certainly not all covers in the sense of condensed mathematics are allowed; for example, the cover of the Hilbert cube by a Cantor set is not allowed. However, if $X$ is finite-dimensional, then one can cover it by a Cantor set in the $D$-topology. Thus, the formalism from this section and the previous section agree when specialized to finite-dimensional spaces, but in general they differ.

We note that in both formalisms, with $C=\mathrm{ProFin}$ and $C=\mathrm{CHaus}$, the $\infty$-category of sheaves of anima mixes purely homotopy-theoretic spaces with actual topological spaces. So for a manifold $X$, one also has its associated anima $|X|$, with a map $f: X\to |X|$, and then $D(X)$ consists of all sheaves on $X$, while $D(|X|)$ is equivalent, via pullback along $f$, to the locally constant sheaves in $D(X)$. Moreover, the truncated map $f_n: X\to \tau_{\leq n} |X|$ is also such that $f_{n!}$ is defined. Unfortunately, we do not currently see how to define $f_!$.

\bibliographystyle{amsalpha}

\bibliography{SixFunctors}

\newcommand{\etalchar}[1]{$^{#1}$}
\providecommand{\bysame}{\leavevmode\hbox to3em{\hrulefill}\thinspace}
\providecommand{\MR}{\relax\ifhmode\unskip\space\fi MR }
\providecommand{\MRhref}[2]{%
  \href{http://www.ams.org/mathscinet-getitem?mr=#1}{#2}
}
\providecommand{\href}[2]{#2}
\begin{thebibliography}{BFMW96}

\bibitem[ABL{\etalchar{+}}25]{AnalyticdeRhamFF}
J.~Ansch\"utz, G.~Bosco, A.-C. {Le Bras}, J.~E. Rodr\'iguez~Camargo, and
  P.~Scholze, \emph{Analytic de {R}ham stacks of {F}argues-{F}ontaine curves},
  \url{https://arxiv.org/abs/2510.15196}, 2025.

\bibitem[Aok25a]{AokiPresentable}
K.~Aoki, \emph{Higher presentable categories and limits},
  \url{https://arxiv.org/abs/2510.13503}, 2025.

\bibitem[Aok25b]{AokiThesis}
\bysame, \emph{Phd thesis}, 2025, in preparation.

\bibitem[Ayo07]{AyoubSixFunctors}
J.~Ayoub, \emph{Les six op\'erations de {G}rothendieck et le formalisme des
  cycles \'evanescents dans le monde motivique. {I}}, Ast\'erisque (2007),
  no.~314, x+466.

\bibitem[BCV25]{BambozziChiarellottoVanni}
F.~Bambozzi, B.~Chiarellotto, and P.~Vanni, \emph{The tempered disk and the
  tempered cohomology}, \url{https://arxiv.org/abs/2410.09473}, 2025.

\bibitem[Ber]{BernsteinDmodules}
J.~Bernstein, \emph{Algebraic theory of {$D$}-modules},
  \url{http://www.math.columbia.edu/~khovanov/resources/Bernstein-dmod.pdf}.

\bibitem[BFMW96]{BryantFerryMioWeinberger}
J.~Bryant, S.~Ferry, W.~Mio, and S.~Weinberger, \emph{Topology of homology
  manifolds}, Ann. of Math. (2) \textbf{143} (1996), no.~3, 435--467.

\bibitem[Bha22]{BhattTransmutation}
B.~Bhatt, \emph{Prismatic {$F$}-gauges},
  \url{https://www.math.ias.edu/~bhatt/teaching/mat549f22/lectures.pdf}, 2022.

\bibitem[BL22]{BhattLurieRingStack}
B.~Bhatt and J.~Lurie, \emph{The prismatization of {$p$}-adic formal schemes},
  \url{https://arxiv.org/abs/2201.06124}, 2022.

\bibitem[Cho25]{ChowdhuryIII}
C.~Chowdhury, \emph{Six-{F}unctor {F}ormalisms {III}: {T}he construction and
  extension of {$6$}{FF}s}, \url{https://arxiv.org/abs/2412.20548}, 2025.

\bibitem[CLL25]{CnossenLenzLinskens}
B.~Cnossen, T.~Lenz, and S.~Linskens, \emph{Universality of span
  {$2$}-categories and the construction of {$6$}-functor formalisms},
  \url{https://arxiv.org/abs/2505.19192}, 2025.

\bibitem[CM17]{ClausenMathewTate}
D.~Clausen and A.~Mathew, \emph{A short proof of telescopic {T}ate vanishing},
  Proc. Amer. Math. Soc. \textbf{145} (2017), no.~12, 5413--5417.

\bibitem[CM21]{ClausenMathew}
\bysame, \emph{Hyperdescent and \'{e}tale {$K$}-theory}, Invent. Math.
  \textbf{225} (2021), no.~3, 981--1076.

\bibitem[Cno23]{Cnossen}
B.~Cnossen, \emph{Twisted ambidexterity in equivariant homotopy theory},
  arXiv:2303.00736, 2023.

\bibitem[{\v{C}}S19a]{CesnaviciusScholze}
K.~{\v{C}}esnavi{\v{c}}ius and P.~Scholze, \emph{Purity for flat cohomology},
  arXiv:1912.10932, 2019.

\bibitem[CS19b]{Condensed}
D.~Clausen and P.~Scholze, \emph{Lectures on {C}ondensed {M}athematics},
  \url{people.mpim-bonn.mpg.de/scholze/Condensed.pdf}, 2019.

\bibitem[CS20]{Analytic}
\bysame, \emph{Lectures on {A}nalytic {G}eometry},
  \url{people.mpim-bonn.mpg.de/scholze/Analytic.pdf}, 2020.

\bibitem[CS22]{ClausenScholzeComplex}
\bysame, \emph{{C}ondensed {M}athematics and {C}omplex {G}eometry},
  \url{https://people.mpim-bonn.mpg.de/scholze/Complex.pdf}, 2022.

\bibitem[CS24]{AnStack}
\bysame, \emph{Lectures on {A}nalytic {S}tacks}, 2024, Lecture series in Winter
  2023/24.

\bibitem[DG22]{DrewGallauer}
B.~Drew and M.~Gallauer, \emph{The universal six-functor formalism}, Ann.
  K-Theory \textbf{7} (2022), no.~4, 599--649.

\bibitem[DK25]{DauserKuijper}
A.~Dauser and J.~Kuijper, \emph{Uniqueness of six-functor formalisms},
  \url{https://arxiv.org/abs/2412.15780}, 2025.

\bibitem[Dri24]{DrinfeldRingStack}
V.~Drinfeld, \emph{Prismatization}, \url{https://arxiv.org/abs/2005.04746},
  2024.

\bibitem[Elm83]{Elmendorf}
A.~D. Elmendorf, \emph{Systems of fixed point sets}, Trans. Amer. Math. Soc.
  \textbf{277} (1983), no.~1, 275--284.

\bibitem[FS21]{FarguesScholze}
L.~Fargues and P.~Scholze, \emph{The geometrization of the local {L}anglands
  correspondence}, arXiv:2021.13459, 2021.

\bibitem[GR17]{GaitsgoryRozenblyum}
D.~Gaitsgory and N.~Rozenblyum, \emph{A study in derived algebraic geometry.
  {V}ol. {I}. {C}orrespondences and duality}, Mathematical Surveys and
  Monographs, vol. 221, American Mathematical Society, Providence, RI, 2017.

\bibitem[Ham19]{Hamacher}
P.~Hamacher, \emph{On the generalisation of cohomology with compact support to
  non-finite type schemes}, arXiv:1902.04831, 2019.

\bibitem[Har66]{DeligneAppendix}
R.~Hartshorne, \emph{Residues and duality}, Lecture Notes in Mathematics, No.
  20, Springer-Verlag, Berlin-New York, 1966, Lecture notes of a seminar on the
  work of A. Grothendieck, given at Harvard 1963/64, With an appendix by P.
  Deligne.

\bibitem[HHLN22]{HaugsengHebestreitLinskensNuiten}
R.~Haugseng, F.~Hebestreit, S.~Linskens, and J.~Nuiten, \emph{Two-variable
  fibrations, factorisation systems and {$\infty$}-categories of spans},
  arXiv:2011.11042v2, 2022.

\bibitem[HL13]{HopkinsLurieAmbidexterity}
M.~Hopkins and J.~Lurie, \emph{Ambidexterity in {$K(n)$}-{L}ocal {S}table
  {H}omotopy {T}heory},
  \url{https://www.math.ias.edu/~lurie/papers/Ambidexterity.pdf}, 2013.

\bibitem[HM24]{HeyerMann}
C.~Heyer and L.~Mann, \emph{{$6$}-{F}unctor {F}ormalisms and {S}mooth
  {R}epresentations}, \url{https://arxiv.org/abs/2410.13038}, 2024.

\bibitem[HS21]{HansenScholze}
D.~Hansen and P.~Scholze, \emph{Relative perversity}, arXiv:2109.06766, 2021.

\bibitem[Hub94]{HuberGeneralization}
R.~Huber, \emph{A generalization of formal schemes and rigid analytic
  varieties}, Math. Z. \textbf{217} (1994), no.~4, 513--551.

\bibitem[KM16]{KramerMiller}
J.~Kramer-Miller, \emph{The monodromy of {$F$}-isocrystals with log-decay},
  \url{https://arxiv.org/abs/1612.01164}, 2016.

\bibitem[KS90]{KashiwaraShapiraSheavesOnManifolds}
M.~Kashiwara and P.~Schapira, \emph{Sheaves on manifolds}, Grundlehren der
  mathematischen Wissenschaften [Fundamental Principles of Mathematical
  Sciences], vol. 292, Springer-Verlag, Berlin, 1990, With a chapter in French
  by Christian Houzel.

\bibitem[LBV23]{leBrasVezzani}
A.-C. Le~Bras and A.~Vezzani, \emph{The de~{R}ham--{F}argues--{F}ontaine
  cohomology}, Algebra Number Theory \textbf{17} (2023), no.~12, 2097--2150.

\bibitem[LR25]{LoubatonRuit}
F.~Loubaton and J.~Ruit, \emph{On the squares functor and the
  {G}aitsgory-{R}ozenblyum conjectures},
  \url{https://arxiv.org/abs/2507.07807}, 2025.

\bibitem[Lur09]{LurieHTT}
J.~Lurie, \emph{Higher topos theory}, Annals of Mathematics Studies, vol. 170,
  Princeton University Press, Princeton, NJ, 2009.

\bibitem[Lur17]{LurieHA}
\bysame, \emph{Higher {A}lgebra},
  \url{https://www.math.ias.edu/~lurie/papers/HA.pdf}, 2017.

\bibitem[LZ12a]{LiuZhengArtin}
Y.~Liu and W.~Zheng, \emph{Enhanced six operations and base change theorems for
  artin stacks}, arXiv:1211.5948, 2012.

\bibitem[LZ12b]{LiuZhengGluing}
\bysame, \emph{Gluing restricted nerves of {$\infty$}-categories},
  arXiv:1211.5294, 2012.

\bibitem[LZ20]{LuZheng}
Q.~Lu and W.~Zheng, \emph{Categorical traces and a relative
  {L}efschetz-{V}erdier formula}, arXiv:2005.08522, 2020.

\bibitem[Man22a]{MannNuclear}
L.~Mann, \emph{The {$6$}-{F}unctor {F}ormalism for {$\mathbb Z_\ell$}- and
  {$\mathbb Q_\ell$}-{S}heaves on {D}iamonds}, arXiv:2209.08135, 2022.

\bibitem[Man22b]{MannThesis}
\bysame, \emph{A {$p$}-adic {$6$}-{F}unctor {F}ormalism in {R}igid-{A}nalytic
  {G}eometry}, arXiv:2206.02022, 2022.

\bibitem[Mat16]{MathewDescendable}
A.~Mathew, \emph{The {G}alois group of a stable homotopy theory}, Adv. Math.
  \textbf{291} (2016), 403--541.

\bibitem[MS06]{MaySigurdsson}
J.~P. May and J.~Sigurdsson, \emph{Parametrized homotopy theory}, Mathematical
  Surveys and Monographs, vol. 132, American Mathematical Society, Providence,
  RI, 2006.

\bibitem[MV99]{MorelVoevodsky}
F.~Morel and V.~Voevodsky, \emph{{${\bf A}^1$}-homotopy theory of schemes},
  Inst. Hautes \'Etudes Sci. Publ. Math. (1999), no.~90, 45--143.

\bibitem[NS18]{NikolausScholze}
T.~Nikolaus and P.~Scholze, \emph{On topological cyclic homology}, Acta Math.
  \textbf{221} (2018), no.~2, 203--409.

\bibitem[Qui87]{QuinnIndex}
F.~Quinn, \emph{An obstruction to the resolution of homology manifolds},
  Michigan Math. J. \textbf{34} (1987), no.~2, 285--291.

\bibitem[Rob15]{RobaloSymMon}
M.~Robalo, \emph{{$K$}-theory and the bridge from motives to noncommutative
  motives}, Adv. Math. \textbf{269} (2015), 399--550.

\bibitem[Sch17]{EtCohDiamonds}
P.~Scholze, \emph{{\'E}tale cohomology of diamonds}, arXiv:1709.07343, 2017.

\bibitem[Sch18]{ScholzeICM}
\bysame, \emph{{$p$}-adic geometry}, Proceedings of the {I}nternational
  {C}ongress of {M}athematicians---{R}io de {J}aneiro 2018. {V}ol. {I}.
  {P}lenary lectures, World Sci. Publ., Hackensack, NJ, 2018, pp.~899--933.

\bibitem[Sch22]{ScholzeDarmstadt}
\bysame, \emph{talk at {C}onference on {A}rithmetic {A}lgebraic {G}eometry in
  {D}armstadt}, 2022.

\bibitem[Sch24a]{ScholzeRealLLC}
\bysame, \emph{Geometrization of the real {l}ocal {L}anglands
  {C}orrespondence},
  \url{https://people.mpim-bonn.mpg.de/scholze/RealLocalLanglands.pdf}, 2024.

\bibitem[Sch24b]{ScholzeMotivesRingStacks}
\bysame, \emph{Motives and {R}ing {S}tacks}, 2024, talk at IH\'ES.

\bibitem[Sch25]{ScholzeBerkovich}
\bysame, \emph{Berkovich motives}, \url{https://arxiv.org/abs/2412.03382}, to
  appear in JAMS, 2025.

\bibitem[Ste21]{StefanichCatSp}
G.~Stefanich, \emph{Higher {Q}uasicoherent {S}heaves}, ProQuest LLC, Ann Arbor,
  MI, 2021, Thesis (Ph.D.)--University of California, Berkeley.

\bibitem[Ste25]{StefanichSheafGestalt}
\bysame, \emph{Categorification of sheaf theory},
  \url{https://gstefanich.github.io/nSh.pdf}, 2025.

\bibitem[TT90]{ThomasonTrobaugh}
R.~W. Thomason and T.~Trobaugh, \emph{Higher algebraic {$K$}-theory of schemes
  and of derived categories}, The {G}rothendieck {F}estschrift, {V}ol. {III},
  Progr. Math., vol.~88, Birkh\"{a}user Boston, Boston, MA, 1990, pp.~247--435.

\bibitem[TV08]{ToenVezzosi}
B.~To\"{e}n and G.~Vezzosi, \emph{Homotopical algebraic geometry. {II}.
  {G}eometric stacks and applications}, Mem. Amer. Math. Soc. \textbf{193}
  (2008), no.~902, x+224.

\bibitem[Ver69]{VerdierGrothendieckSerre}
J.-L. Verdier, \emph{Base change for twisted inverse image of coherent
  sheaves}, Algebraic {G}eometry ({I}nternat. {C}olloq., {T}ata {I}nst. {F}und.
  {R}es., {B}ombay, 1968), Oxford Univ. Press, London, 1969, pp.~393--408.
  \MR{0274464}

\bibitem[Vol21]{VolpeThesis}
M.~Volpe, \emph{The six operations in topology}, arXiv:2110.10212, 2021.

\bibitem[Zav23]{Zavyalov}
B.~Zavyalov, \emph{Poincar{\'e} duality revisited}, arXiv:2301.03821, 2023.

\end{thebibliography}

\end{document}